\tikzset{
     block/.style={rectangle, draw, fill=red!40, text width=6em,
                   text centered, rounded corners, minimum height=3em},
     arrow/.style={-{Stealth[]}}
     }
\def\undertilde#1{\math{\sf{Ord}}{\vtop{\ialign{##\crcr
$\hfil\displaystyle{#1}\hfil$\crcr\noalign{\kern1.5pt\nointerlineskip}
$\hfil\tilde{}\hfil$\crcr\noalign{\kern1.5pt}}}}}
\def\undertilde#1{{\baselineskip=0pt\vtop
  {\hbox{$#1$}\hbox{$\scriptscriptstyle\sim$}}}{}}
\newcommand{\ADR}{\mathsf{AD}_{\mathbb{R}}}
\newcommand{\treg}{\mathsf{\Theta_{reg}}}
\newcommand{\bls}{\vspace{\baselineskip}}
\newcommand{\tc}{\mathrm{tc}}
\newcommand{\less}{\mathord{<}}
\renewcommand{\gg}{\gamma}
\newcommand{\bR}{{\mathbb{R}}}
\newcommand{\rest}{\restriction}
\newcommand{\Add}{\mathrm{Add}}
\newcommand{\rmc}{\mathrm{c}}
\newcommand{\hp}{{\mathfrak{p}}}
\newcommand{\hq}{{\mathfrak{q}}}
\newcommand{\hr}{{\mathfrak{r}}}
\newcommand{\hs}{{\mathfrak{s}}}
\newcommand{\hw}{{\mathfrak{w}}}
\newcommand{\hh}{{\mathfrak{h}}}
\newcommand{\hm}{{\mathfrak{m}}}
\newcommand{\hk}{{\mathfrak{k}}}
\newcommand{\hx}{{\mathfrak{x}}}
\newcommand{\hy}{{\mathfrak{y}}}
\newcommand{\card}[1]{{\vert #1 \vert} }
\renewcommand{\models}{\vDash}
\newcommand{\powerset}{\mathscr{P}}
\newcommand{\dom}{{\rm dom}}
\newcommand{\rge}{{\rm rge}}
\newcommand{\im}{{\rm im}}
\newcommand{\cp}{{\rm crit }}
\newcommand{\cf}{{\rm cf}}
\newcommand{\lh}{{\rm lh}}
\newtheorem{theorem}{Theorem}[section]
\newtheorem{proposition}[theorem]{Proposition}
\newtheorem{definition}[theorem]{Definition}
\newtheorem{lemma}[theorem]{Lemma}
\newtheorem{corollary}[theorem]{Corollary}
\newtheorem{claim}[theorem]{Claim}
\newtheorem{conjecture}[theorem]{Conjecture}
\newtheorem{question}[theorem]{Question}
\newtheorem{sublemma}[theorem]{Sublemma}
\newtheorem{remark}[theorem]{Remark}
\newtheorem{notation}[theorem]{Notation}
\newtheorem{terminology}[theorem]{Terminology}
\numberwithin{figure}{section}
\newcommand{\rcon}[1]{Conjecture~\ref{#1}}
\newcommand{\rcl}[1]{Claim~\ref{#1}}
\newcommand{\rprop}[1]{Proposition~\ref{#1}}
\newcommand{\rthm}[1]{Theorem~\ref{#1}}
\newcommand{\rlem}[1]{Lemma~\ref{#1}}
\newcommand{\rcor}[1]{Corollary~\ref{#1}}
\newcommand{\rdef}[1]{Definition~\ref{#1}}
\newcommand{\rsec}[1]{Section~\ref{#1}}
\newcommand{\rsubsec}[1]{Section~\ref{#1}}
\newcommand{\rrem}[1]{Remark~\ref{#1}}
\newcommand{\rnot}[1]{Notation~\ref{#1}}
\newcommand{\rter}[1]{Terminology~\ref{#1}}
\newcommand{\ZFC}{\mathsf{ZFC}}
\newcommand{\AD}{\mathsf{AD}}
\newcommand{\bbQ}{\mathbb{Q}}
\def\inseg{\trianglelefteq}
\def\k{\kappa}
\def\a{\alpha}
\def\b{\beta}
\def\d{\delta}
\def\l{\lambda}
\def\cop{{\sf{cop}}}
\def\P{{\mathcal{P} }}
\def\W{{\mathcal{W} }}
\def\Q{{\mathcal{ Q}}}
\def\mH{{\mathcal{ H}}}
\def\K{{\mathcal{ K}}}
\def\L{{\rm{L}}}
\def\R{{\mathcal R}}
\def\H{{\rm{HOD}}}
\newcommand{\OD}{\mathrm{OD}}
\def\M{{\mathcal{M}}}
\def\N{{\mathcal{N}}}
\def\T {{\mathcal{T}}}
\def\U{{\mathcal{U}}}
\def\S{{\mathcal{S}}}
\def\V{{\mathcal{V}}}
\def\X{{\mathcal{X}}}
\def\Y{{\mathcal{Y}}}
\def\Z{{\mathcal{Z}}}
\def\card#1{\left|#1\right|}
\def\cof{\mathop{\rm cof}\nolimits}
\def\iff{\mathrel{\leftrightarrow}}
\def\and{\mathrel{\kern1pt\&\kern1pt}}
\def\inseg{\triangleleft}
\def\insegeq{\trianglelefteq}
\def\fl#1{\lfloor#1\rfloor}
\def\<#1>{\langle\,#1\,\rangle}
\newcommand{\breals}{\omega^{\omega}}
\newcommand{\DC}{\mathsf{DC}}
\newcommand{\cP}{\mathscr{P}}
\newcommand{\bbP}{\mathbb{P}}
\newcommand{\MM}{\mathsf{MM}}
\newcommand{\PFA}{\mathsf{PFA}}
\newcommand{\pmax}{\mathbb{P}_{\mathrm{max}}}
\newcommand{\ZF}{\mathsf{ZF}}
 \newcommand{\gen}{{\rm gen}}
\newcommand{\ext}{{\rm ext}}
 \def\c{{\mathrm{N}}}
\begin{document}

\title[Nairian Models]{Nairian Models}

\author{Douglas Blue \and Paul B. Larson \and Grigor Sargsyan}

\thanks{The second author was supported in part by NSF research grant DMS-1764320.}

\thanks{The third author's work is funded by the National Science Center, Poland under the Maestro Call, registration number UMO-2023/50/A/ST1/00258.}

\iffalse           
\begin{abstract}
We introduce a hierarchy of models called \emph{Nairian models}, Chang-type models usually constructed as inner models of models of the Axiom of Determinacy.
Forcing over the simplest Nairian model, we obtain a model satisfying the theory $\ZFC+{\sf{MM^{++}}}(c)+\neg\square_{\omega_3}+\neg\square(\omega_3)$.
In this forcing extension, the Iterability Conjecture (\cite[Conjecture 6.5]{OIMT}) for the Mitchell-Schindler $\sf{K}^{\rmc}$ construction \cite{MitSch} fails. 

Fixing a natural number $n\in [3, \omega)$, we design a Nairian model and force over it to obtain a model of the theory $\ZFC+{\sf{MM^{++}}}(c)+\forall m\in [2, n]\, \neg\square(\omega_m)$.
Thus the Iterability Conjecture for the $\sf{K}^{\rmc}$ construction using $2^{2^{\dots 2^{\omega}}}$-complete (for any finite stack of exponents) background extenders consistently fails, answering a strong version of a question asked by John Steel.

Our main Nairian model construction satisfies ${\sf{ZF}}+``\omega_1$ is a supercompact cardinal."

Nairian models exist in a universe that does not have an inner model of the theory ${\sf{ZFC}}+``$there is a Woodin cardinal that is a limit of Woodin cardinals," so this theory is, surprisingly, a strict consistency strength upper bound for the above situations.
In particular, we refute Nam Trang's conjecture that ${\sf{ZF}}+``\omega_1$ is a supercompact cardinal" is equiconsistent with ${\sf{ZFC}}+``$there is a (proper class of) Woodin cardinal that is a limit of Woodin cardinals."
\end{abstract}
\fi

\begin{abstract}
We introduce a hierarchy of models of the Axiom of Determinacy called \emph{Nairian models}.
Forcing over the simplest Nairian model, we obtain a model of $\ZFC+{\sf{MM^{++}}}(c)+\neg\square_{\omega_3}+\neg\square(\omega_3)$.
Then, fixing $n\in [3, \omega)$, we design a Nairian model and force over it to produce a model of $\ZFC+{\sf{MM^{++}}}(c)+\forall i\in [2, n]\, \neg\square(\omega_i)$.
We also build a Nairian model that satisfies ${\sf{ZF}}+``\omega_1$ is a supercompact cardinal."

We obtain as corollaries of these constructions
\begin{enumerate}
    \item the consistent failure of  the Iterability Conjecture for the Mitchell-Schindler $\sf{K}^{\rmc}$ construction,

    \item the consistent failure of the Iterability Conjecture for the $\sf{K}^{\rmc}$ construction using $2^{2^{\dots 2^{\omega}}}$-complete (for any finite stack of exponents) background extenders, answering a strong version of a question asked by Steel, and

    \item a negative answer to Trang's question whether ${\sf{ZF}}+``\omega_1$ is a supercompact cardinal" is equiconsistent with ${\sf{ZFC}}+``$there is a proper class of Woodin cardinals that are limits of Woodin cardinals."
\end{enumerate}
These corollaries identify obstructions to extending the methods of (descriptive) inner model theory past a Woodin cardinal which is a limit of Woodin cardinals.
\end{abstract}

\subjclass[2020]{03E55, 03E57, 03E60, 03E45}
%Mathematics Subject Classification:

\maketitle

\tableofcontents
\setcounter{tocdepth}{1}

\section{Introduction}
%Large cardinal axioms are strong versions of the Axiom of Infinity. 
%Large cardinals are studied in canonical models like the hereditarily ordinal definable sets (HOD) of a model of the Axiom of Determinacy ($\mathsf{AD}$) or generalizations of the Chang model $L(\mathrm{Ord}^{\omega})$.
%In this paper we introduce new canonical models constructed from a predicate for a definable class, in the spirit of HOD, together with $\omega$-sequences of ordinals, as in the Chang model.
In this paper we introduce \emph{Nairian models}, canonical Chang-type models which naturally arise in the context of the Axiom of Determinacy (AD). 

\begin{definition}
Let $\mH$ be a definable class of pairs of ordinals and $\lambda$ be an ordinal. %of pairs of ordinals $(\alpha, \beta)$ such that the $\alpha$th element of the standard definability order of $\H$ is an element of the $\beta$th.
%For an ordinal $\gamma$, let $\mH\rest \gamma$ denote $\mH \cap (\gamma \times \gamma)$.
The \emph{Chang model} at $\lambda$ relative to $\mH$ is $\c_{\lambda} = L(\mH\rest \lambda, \cup_{\b<\lambda}\b^{\omega})$, where $\mH\rest\lambda = \bigcup_{\gg<\l}\mH\cap (\gamma\times\gamma)$.
\end{definition}

Nairian models are Chang models where the predicate $\mH$ is taken to be the HOD of a model of AD. While this is the most natural method of defining Nairian Models, they also naturally arise inside HODs of models of AD. Both sources of Nairan Models give rise to interesting applications discussed in \textsection\ref{subsec:nm}.

%and While this definition does not require any determinacy assumptions, Nairian models are most interesting inside models of $\mathsf{AD}$ when $\mH$ is taken to be HOD, and inside hod mice when $\mH$ is taken to be a certain direct limit.
%We study Narian models in the first setting, inside models of $\mathsf{AD}$ which are derived models of hod mice.
%The latter setting has been investigated in \cite{gappo2023changmodelsderivedmodels} and \cite{CCM}.

Nairian models promise to unify, in a sense, parts of set theory like determinacy and forcing axioms.
Woodin showed that $\MM^{++}(\mathfrak{c})$, the forcing axiom \emph{Martin's Maximum}${}^{++}$ restricted to partial orders of size continuum, can be forced over sufficiently strong models of determinacy.
In forcing extensions of Nairian models, we obtain $\MM^{++}(\mathfrak{c})$ together with failures of Jensen's square principles which are not consequences of $\MM^{++}(\mathfrak{c}^+)$.
Viewing failures of square principles as fragments of forcing axioms, these results make substantial progress on the problem of forcing $\MM^{++}$ over a model of determinacy.

Nairian models and their forcing extensions have implications for inner model theoretic methodology.
The \emph{inner model problem} for a given large cardinal axiom is to construct a canonical model satisfying that large cardinal axiom.
In light of G\"odel's Incompleteness Theorem, solving its inner problem is arguably the strongest evidence possible for the consistency of a large cardinal axiom.
The inner model problem for measurable cardinals was solved in the 1960s, the inner model problem for Woodin cardinals was solved in the late 1980s by Donald Martin and John Steel \cite{MaSt94}, and the inner model problem for a supercompact cardinal was identified in the late 1960s and early 1970s and remains open.
The \emph{Inner Model Program} is to solve the inner model problem for all axioms of the large cardinal hierarchy.
It constitutes the central goal of inner model theory.
The second goal of inner model theory is to apply it, that is, to measure the extent to which mathematical statements are committed to the higher infinite.
These measurements, and developing the tools to carry them out, are the subject of \emph{core model theory}.

The high-water mark of the Inner Model Program is the early 2000s solution \cite{Ne02wlw} to the inner model problem for a Woodin cardinal which is a limit of Woodin cardinals.
This is a mild large cardinal axiom, far weaker than the large cardinals figuring into much contemporary research in infinitary combinatorics.
In particular, it is much weaker than a \emph{subcompact} cardinal, the inner model problem for which has been open for decades and is one of the subjects of this paper.
The construction of \cite{Ne02wlw} is a \emph{pure extender model}, and the key advance of \cite{Ne02wlw} is the proof that the model is \emph{iterable}.
(Conditioned on iterability hypotheses, there are pure extender mice reaching subcompact cardinals.)

Core model theory does not yet reach these heights.
The methods of \cite{Ne02wlw} work assuming there is a Woodin limit of Woodin cardinals to begin with, whereas core model theory requires methods for building inner models of large cardinals under assumptions which need not assert that large cardinals exist.
For example, \cite{Ne02wlw} does not apply to show that Martin's Maximum---widely believed to be equiconsistent with the existence of a supercompact cardinal, a large cardinal axiom much stronger than the existence of a subcompact cardinal---implies there is an inner model with a Woodin limit of Woodin cardinals.
The methods of core model theory are discussed in the next section.

Another methodology for defining canonical models is to analyze the HOD of a model of $\sf{AD}$.
We recall the definition of $\mathsf{AD}$.
Let $X$ be a set, and let $A\subseteq X^{\omega}$.
Then $G_A$ is the game in which Players I and II take turns to build a sequence $x= \langle x_i: i<\omega \rangle$ of elements of $X$.
Player I wins the run of $G_A$ iff $x\in A$.
$G_A$ is \emph{determined} if either of the players has a winning strategy.
$\AD_X$ is the assertion that for every $A\subseteq X^\omega$, $G_A$ is determined, and $\AD$ is $\AD_{\omega}$.

Deep work of John Steel and W.~Hugh Woodin showed that HOD of the inner model $L(\mathbb{R})$ is a canonical model, a \emph{hod mouse}, assuming $L(\bR)\models\sf{AD}$ \cite{StW16}.
The HOD analysis, which generalizes this work to larger models of $\sf{AD}$, falls far short of a Woodin limit of Woodins at least in part because no nontrivial theory extending the Axiom of Determinacy is known to have sufficient consistency strength. 
A candidate strong determinacy theory has been ``$\sf{ZF + AD} + \omega_1$ is supercompact,'' for Woodin has shown that it is consistent assuming arbitrarily large Woodin limits of Woodin cardinals \cite{woodin2022determinacy}. 
Nam Trang has asked  whether ``$\sf{ZF + AD} + \omega_1$ is supercompact'' is equiconsistent with a Woodin limit of Woodin cardinals \cite{Tr} and, with Daisuke Ikegami, conjectured that Woodin's result is optimal \cite[Conjecture 1]{ikegami2018supercompactness}.
The papers \cite{Tr14a,Tr15,Tr,Tr15MALQ,TrWi21} contribute to resolving Trang's question.

Reaching a new high-water mark via any of the above means has eluded inner model theory since \cite{Ne02wlw}.
This paper gives a partial explanation as to why:
The consistency strengths of useful combinatorics---like consecutive failures of threadability in the $\aleph_n$'s or the supercompactness of $\omega_1$---have been overestimated, and some of the known methods for constructing iterable models consistently fail.

\subsection{Core model theory and the iterability problem for $\sf{K^c}$}
Core model theory originates in work of Dodd and Jensen.
They defined the \emph{core model} $K$, the maximal inner model of the universe of sets satisfying certain properties (like generic invariance and weak covering) assuming an anti-large cardinal hypothesis.

It is a theorem of Woodin that the core model $K$ cannot exist if there is a Woodin cardinal, since in the presence of a Woodin cardinal, no generically absolute model can compute successors of singular cardinals correctly.
Mitchell and Steel addressed this problem by introducing $\sf{K^c}$ constructions.
A $\sf{K^c}$ construction is an inductive process which, if it converges, produces a ``certified'' core model, $\sf{K^c}$, of which the core model $K$ is a Skolem hull.
There are a variety of such constructions in the literature, but they all conform to the following general pattern.
The model $\sf{K^c}$ is constructed from a coherent sequence of extenders\footnote{An extender is a suitably coherent system of ultrafilters. See \cite[Definition 2.4]{OIMT}.}  meeting certain conditions.
The $\sf{K^c}$ construction is a sequence $\langle \M_\xi, \N_\xi, E_\xi: \xi\in {\sf{Ord}} \rangle$ with certain fine structural properties.\footnote{See \cite[Definition 2.1]{JSSS} and \cite[Definition 6.3]{OIMT} for exact statements.}
Each successor model $\M_{\xi+1}$ is a type of Skolem hull of $\N_\xi$, but it may not be well-defined, as it depends on certain \textit{fine structural} conditions which $\N_\xi$ may or may not satisfy.
Provided that $\M_\xi$ is defined, $\N_\xi$ is obtained by either closing under constructibility\footnote{Equivalently, under the rudimentary functions; see \cite{SchZem}.} or by adding an extender $E_\xi$ to $\M_\xi$ as a predicate.\footnote{$E_\xi$ is an $\M_\xi$-extender. It measures subsets of its critical point that are in $\M_\xi$.}
In the latter case, $\N_\xi$ is defined to be the pair $(\M_\xi, E_\xi)$.

To show that a $\sf{K^c}$-construction converges, then, it must be shown that $\M_{\xi}$ is defined at each successor step.
It is one of the deepest theorems of inner model theory that if all countable elementary submodels of $\N_\xi$ are \textit{iterable},\footnote{Iterability essentially means that all the ways of doing the iterated ultrapower construction produce well-founded models \cite[Section 3.1]{OIMT}.} then $\N_\xi$ satisfies all the fine structural properties needed to define $\M_\xi$.
The goal in defining a $\sf{K^c}$-construction is to specify the class of extenders $E_\xi$ in such a way that the countable elementary submodels of $\N_\xi$ are iterable.
The extenders so chosen are said to be \textit{certified} (hence the ``$\sf{c}$'' in ``$\sf{K^c}$'').

The certification condition is the defining property of a $\sf{K^c}$ construction, and there are different conditions in the literature.
For example, one can allow countably complete extenders, or total extenders.
Mitchell and Schindler introduced a clever certification method in which extenders are \textit{certified by a collapse} \cite{MitSch}.
Let ${\sf{K^c_{MiSch}}}$ denote the Mitchell-Schindler $\sf{K^c}$-construction. 
${\sf{K^c_{MiSch}}}$ has had two important applications: it was used in \cite{JSSS} to obtain a proper class model with arbitrarily large Woodin and strong cardinals from the \emph{Proper Forcing Axiom}, and it was used in \cite{KWM} to develop the core model theory of \cite{CMIP} without assuming the existence of a measurable cardinal. 

Fixing a certification method ${\sf{Cert}}$, the \emph{Iterability Conjecture for} $\sf{K^c_{Cert}}$ asserts that for each $\xi$, the countable elementary submodels of $\N_\xi$ are iterable.
\cite[Conjecture 6.5]{OIMT} explicitly states such a conjecture for Steel's certification method in \cite{CMIP}.
The results of \cite{JSSS} require the Iterability Conjecture for ${\sf{K^c_{MiSch}}}$.
The best known positive result on the Iterability Conjecture for $\sf{K^c}$ is proven in \cite{ANS01}: The Iterability Conjecture for ${\sf{K^c_{MiSch}}}$ is true if there is no inner model with a cardinal that is a limit of both Woodin cardinals and strong cardinals.\footnote{The results of \cite{ANS01} are a little stronger than stated here.}

The methodology introduced in \cite{JSSS} dispenses with the core model $K$ and establishes covering properties for its parent model $\sf{K^c}$.
According to this methodology, if the existence of $\sf{K^c}$ is established in $\ZFC$, then it can be used to derive large cardinal strength from various combinatorial statements.
It could be used to show, for instance, that $\MM$ implies the existence of an inner model with a \emph{superstrong} cardinal.

Theorem \ref{JSSSthrm} is a preliminary step toward showing that $\MM$ implies there is a subcompact cardinal in $\sf{K^c_{MiSch}}$.

\begin{theorem}[Jensen-Schimmerling-Schindler-Steel \cite{JSSS}]\label{JSSSthrm}%\normalfont 
Assume $\aleph_2^\omega=\aleph_2$, and suppose that the principles $\square(\omega_3)$ and $\square_{\omega_3}$ both fail.
Let $g\subseteq \mathrm{Col}(\omega_3, \omega_3)$ be a $V$-generic filter.
If $V[g]\models ``{\sf{K^c_{MiSch}}}$ converges," then $({\sf{K^c_{MiSch}}})^{V[g]}\models$ ``there is a subcompact cardinal."
\end{theorem}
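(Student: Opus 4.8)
The plan is to pass to $W = V[g]$, set $K := ({\sf{K^c_{MiSch}}})^{W}$, and argue by contradiction: I will assume that $K$ has no subcompact cardinal and derive that $\square(\omega_3)$ or $\square_{\omega_3}$ holds in $W$, contradicting the hypotheses. First I would record the soft facts. Since $\mathrm{Col}(\omega_3,\omega_3)$ is ${<}\omega_3$-closed it adds no new $\omega$-sequences and no new bounded subsets of $\omega_3$, and it does not collapse cardinals ${\le}\,\omega_3$; hence $\aleph_2^\omega = \aleph_2$ survives into $W$ (and then $\aleph_3^\omega = \aleph_3$ by the Hausdorff formula), $\omega_i^W = \omega_i^V$ for $i \le 3$, and the failures of $\square_{\omega_3}$ and of $\square(\omega_3)$ continue to hold in $W$ (here one uses that ${<}\omega_3$-closed forcing adds no bounded subsets of $\omega_3$). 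By hypothesis the Mitchell--Schindler construction \cite{MitSch} converges in $W$, so $K$ is a fully iterable pure extender model, and in particular it enjoys the weak covering property (at singular cardinals unconditionally; the regular cases $\omega_2,\omega_3$ are the delicate part below).

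Next comes the square input. I would invoke the Schimmerling--Zeman analysis of Jensen's square in extender models: in a fully iterable extender model with no subcompact cardinal, $\square_\mu$ holds for every infinite cardinal $\mu$ of the model. Two consequences will be used: (a) for such a $\mu$, the $\square_\mu$-sequence lives on $\mu^{+}$ and has all its order types bounded by $\mu$, so it is coherent, consists of clubs, and is \emph{absolutely} non-threadable — hence it witnesses $\square(\mu^{+})$ as well, in every outer model in which $\mu^{+}$ is still a cardinal; and (b) $\square(\lambda)$ holds in the model for every $\lambda$ that is regular and not weakly compact there. Under our contradiction hypothesis we therefore have $\square_\mu^{K}$, and hence $\square(\mu^{+})^{K}$, for every $K$-cardinal $\mu$.

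The heart of the argument — and the step I expect to be the main obstacle — is a covering analysis showing that $K$ computes the cardinal structure of $W$ correctly around $\omega_3$: concretely, that $(\omega_2^{W})^{+K} = \omega_3^{W}$ and $(\omega_3^{W})^{+K} = \omega_4^{W}$, modulo a possible large-cardinal alternative for $\omega_3^{W}$ inside $K$ which the two square hypotheses will dispose of. Here the standing hypotheses and the collapse do their work. Passing to $W$ via $\mathrm{Col}(\omega_3,\omega_3)$ ``freezes'' the universe below $\omega_3$: using the generic one stacks the countably iterable mice projecting to $\omega_3$ into a single iterable premouse which is an initial segment of $K$ and which catches its own tail below $\omega_3$, giving $(\omega_2^{W})^{+K} = \omega_3^{W}$; the $\omega$-closure facts $\aleph_2^\omega = \aleph_2$ and $\aleph_3^\omega = \aleph_3$ supply the closure that the weak covering argument requires at $\omega_2$ and $\omega_3$; and $\neg\square(\omega_3)$ is, I expect, used precisely to rule out the residual obstruction to tail-catching at $\omega_3$ (a reflection failure that $\square(\omega_3)$ would provide), so that $\omega_3^{W}$ is either a successor cardinal of $K$ or weakly compact there, with $(\omega_3^{W})^{+K} = \omega_4^{W}$ in both subcases. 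Locating the exact form of weak covering for ${\sf{K^c_{MiSch}}}$ that applies in $W$, and checking that the collapse and the closure hypotheses really make it go through, is the delicate point.

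Finally I would assemble the contradiction by cases on the status of $\omega_3^{W}$ in $K$. If $\omega_3^{W}$ is a successor cardinal of $K$, say $\omega_3^{W} = \gamma^{+K}$ (by the covering step the principal case is $\gamma = \omega_2^{W}$), then the $\square_\gamma$-sequence of $K$ lives on $\gamma^{+K} = \omega_3^{W} = \omega_3$, and by absoluteness of ``club'', ``coherent'', and ``order type ${\le}\,\gamma$'' it is a coherent sequence of clubs on $\omega_3$ which is non-threadable in $W$ (again by the order-type bound); that is, $\square(\omega_3)$ holds in $W$, contradicting its preserved failure. If instead $\omega_3^{W}$ is weakly compact (hence inaccessible) in $K$ — the one case not reached by the $\square(\omega_3)$ route, since weakly compact cardinals carry no $\square(\kappa)$ — then $K$ nonetheless satisfies $\square_{\omega_3^{W}}$ (weak compactness does not refute $\square_\kappa$, only subcompactness does), and its sequence lives on $(\omega_3^{W})^{+K} = \omega_4^{W}$ with order types bounded by $\omega_3$; this is a genuine $\square_{\omega_3}$-sequence in $W$, contradicting the preserved failure of $\square_{\omega_3}$. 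Either way we reach a contradiction, so $K$ must contain a subcompact cardinal. To summarize the difficulty: the square-theoretic and absoluteness ingredients are standard; the real work is the covering/tail-catching analysis at $\omega_3$ and $\omega_4$ for the Mitchell--Schindler $K$ in the collapse extension, and seeing that $\neg\square(\omega_3)$ (alongside $\aleph_2^\omega = \aleph_2$) is exactly what makes that analysis succeed.
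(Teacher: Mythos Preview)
The paper does not prove this theorem. Theorem~\ref{JSSSthrm} is quoted from the literature (Jensen--Schimmerling--Schindler--Steel \cite{JSSS}) and is used as a black box: the authors cite it in the introduction to motivate their constructions and to derive the non-convergence corollaries (Theorems~1.2--1.4), but no argument for it appears anywhere in the text. So there is no ``paper's own proof'' to compare your proposal against.

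For what it is worth, your outline is a reasonable sketch of the actual argument in \cite{JSSS}: the contradiction via Schimmerling--Zeman square in extender models without subcompacts, combined with a covering/stacking analysis at $\omega_3$ (for which the $\mathrm{Col}(\omega_3,\omega_3)$-generic and the closure hypothesis $\aleph_2^\omega=\aleph_2$ are used), is indeed the shape of the proof. The details of the covering step---in particular how exactly $\neg\square(\omega_3)$ and $\neg\square_{\omega_3}$ are each consumed---are where the real work in \cite{JSSS} lies, and your proposal correctly flags that as the delicate point rather than claiming to have resolved it.
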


The hypotheses of Theorem \ref{JSSSthrm} hold in a forcing extension of a Nairian model.
Then---depending on whether the large cardinals needed to define the Nairian model are at least as strong as a subcompact---either the inner model problem for subcompact cardinals is solved, or one of the key methods developed to solve that same inner model problem is refuted.
Our Nairian model exists below a Woodin limit of Woodin cardinals.
As we have remarked, this large cardinal axiom is substantially weaker than a subcompact cardinal, so $({\sf{K^c_{MiSch}}})^{V[g]}\not\models$ ``there is a subcompact cardinal," and hence $V[g]\not\models ``{\sf{K^c_{MiSch}}}$ converges."

\begin{theorem}%\normalfont
	It is not provable in $\sf{ZFC}$ that the ${\sf{K^c_{MiSch}}}$ construction converges.
\end{theorem}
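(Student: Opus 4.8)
The plan is to produce a single model of $\ZFC$ in which the ${\sf{K^c_{MiSch}}}$ construction does not converge; since every model of $\ZFC$ would satisfy ``${\sf{K^c_{MiSch}}}$ converges'' if this were a theorem of $\ZFC$, exhibiting one model where it fails suffices. The witnessing model is obtained by forcing over the simplest Nairian model. Concretely, I would let $N$ be the Nairian model constructed in the body of the paper and let $V$ be the forcing extension of $N$ satisfying $\ZFC+\MM^{++}(\mathfrak{c})+\neg\square_{\omega_3}+\neg\square(\omega_3)$. One then checks that $V$ meets the hypotheses of Theorem~\ref{JSSSthrm}: the failures of $\square(\omega_3)$ and $\square_{\omega_3}$ are built into the construction, and $\aleph_2^\omega=\aleph_2$ follows from $\MM^{++}(\mathfrak{c})$. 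Fixing a $V$-generic filter $g\subseteq\mathrm{Col}(\omega_3,\omega_3)$, Theorem~\ref{JSSSthrm} applies: if $V[g]\models ``{\sf{K^c_{MiSch}}}$ converges'', then $({\sf{K^c_{MiSch}}})^{V[g]}\models$ ``there is a subcompact cardinal''.

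The second step is to observe that this last conclusion is impossible for this particular $V[g]$. The Nairian model $N$ is constructed below a Woodin cardinal that is a limit of Woodin cardinals, and the consistency-strength analysis of the paper shows that neither $N$ nor any of its set-forcing extensions---in particular, neither $V$ nor $V[g]$---has an inner model with a Woodin cardinal that is a limit of Woodin cardinals. Since the existence of a subcompact cardinal implies, in $\ZFC$, the existence of an inner model with a Woodin limit of Woodin cardinals, and since $({\sf{K^c_{MiSch}}})^{V[g]}$ is an inner model of $V[g]$, we conclude $({\sf{K^c_{MiSch}}})^{V[g]}\not\models$ ``there is a subcompact cardinal''. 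By the contrapositive of Theorem~\ref{JSSSthrm}, $V[g]\not\models ``{\sf{K^c_{MiSch}}}$ converges'', and since $V[g]\models\ZFC$ we are done.

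The step I expect to require the most care is not in this short deduction but in the two inputs it relies on: first, that the forcing extension $V$ of the Nairian model genuinely realizes the theory $\ZFC+\MM^{++}(\mathfrak{c})+\neg\square_{\omega_3}+\neg\square(\omega_3)$ together with the cardinal-arithmetic hypothesis $\aleph_2^\omega=\aleph_2$ needed to invoke Theorem~\ref{JSSSthrm}; and second, the calibration showing that Nairian models and their set-forcing extensions have no inner model with a subcompact cardinal---indeed no inner model with a Woodin limit of Woodin cardinals---which is the heart of the paper's consistency-strength computation. As with any such independence result, the conclusion is proved relative to the consistency of a Woodin cardinal that is a limit of Woodin cardinals, the large cardinal hypothesis used to build $N$; by G\"odel's second incompleteness theorem some such assumption is unavoidable, since the non-convergence of ${\sf{K^c_{MiSch}}}$ carries nontrivial consistency strength.
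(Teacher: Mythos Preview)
Your proposal is correct and follows the paper's approach: build the Nairian model, force to obtain a model satisfying the hypotheses of Theorem~\ref{JSSSthrm}, and conclude that ${\sf{K^c_{MiSch}}}$ cannot converge there because a subcompact in $({\sf{K^c_{MiSch}}})^{V[g]}$ would exceed the large-cardinal input used.

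One small remark on phrasing. You route the contradiction through the internal statement ``$V[g]$ has no inner model with a Woodin limit of Woodin cardinals'' and then invoke ``subcompact $\Rightarrow$ inner model with a Woodin limit of Woodins.'' The paper does not actually establish that internal no-inner-model statement; its argument is the more direct meta-level one: the entire construction is carried out from (at most) a Woodin limit of Woodin cardinals, a hypothesis strictly weaker in consistency strength than a subcompact, so $({\sf{K^c_{MiSch}}})^{V[g]}$ cannot have a subcompact without collapsing the hierarchy. Your detour is justifiable (take the hod mouse to be suitably minimal), but it is extra work the paper avoids. Otherwise your identification of the two substantive inputs---that the $\pmax$-extension of the Nairian model really satisfies $\ZFC+\MM^{++}(\mathfrak c)+\aleph_2^\omega=\aleph_2+\neg\square(\omega_3)+\neg\square_{\omega_3}$, and the consistency-strength calibration---is exactly right.
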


The analogue of Theorem \ref{JSSSthrm} holds for the construction $\sf{K^c}_{2^{\omega}\text{-}closed}$ in which $2^{\omega}$-closed extenders are allowed on the extender sequence.
The hypotheses of that analogue again hold in a forcing extension of a larger model of $\sf{AD}$ we define, and so we refute the conjecture of Steel \cite{irvine2023problemlist} that $\sf{K^c}_{2^{\omega}\text{-}closed}$ provably converges.

\begin{theorem}%\normalfont
	It is not provable in $\sf{ZFC}$ that the $\sf{K^c}_{2^{\omega}\text{-}closed}$ construction converges.
\end{theorem}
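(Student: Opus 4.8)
The plan is to re-run the argument sketched above for $\sf{K^c_{MiSch}}$, with $\sf{K^c_{MiSch}}$ replaced by $\sf{K^c}_{2^{\omega}\text{-}closed}$ and the simplest Nairian model replaced by a larger, purpose-built one. There are three ingredients: (a) the analogue of Theorem~\ref{JSSSthrm} for $\sf{K^c}_{2^{\omega}\text{-}closed}$; (b) a Nairian model a set-forcing extension $W$ of which satisfies the hypotheses of that analogue; and (c) a consistency-strength computation showing that $W$, and hence $W[g]$ for the relevant collapse generic $g$, has no inner model with a subcompact cardinal.

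For (a): the analogue I would prove reads as follows. There is a finite $m$ such that, if $\aleph_m^{\omega}=\aleph_m$ and both $\square(\omega_{m+1})$ and $\square_{\omega_{m+1}}$ fail, and $g\subseteq\mathrm{Col}(\omega_{m+1},\omega_{m+1})$ is $V$-generic, then $V[g]\models$ ``$\sf{K^c}_{2^{\omega}\text{-}closed}$ converges'' implies $(\sf{K^c}_{2^{\omega}\text{-}closed})^{V[g]}\models$ ``there is a subcompact cardinal.'' For $\sf{K^c_{MiSch}}$ one takes $m=2$ and recovers essentially Theorem~\ref{JSSSthrm}; for $\sf{K^c}_{2^{\omega}\text{-}closed}$ one is forced to take $m$ strictly larger, since requiring the extenders on the sequence to be $2^{\omega}$-closed --- hence $\aleph_2$-closed once $2^{\omega}=\aleph_2$ --- raises the threshold cardinal $\omega_{m+1}$ at which a failure of threadability in $V$ already forces $\sf{K^c}_{2^{\omega}\text{-}closed}$ to reach a subcompact cardinal. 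The proof of the analogue is the argument of \cite{JSSS} essentially verbatim: one verifies that the $2^{\omega}$-closure demand does not disturb the maximality and weak covering arguments there, and one keeps track of the shift in indices.

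For (b): I would use a strengthening of the $\forall i\in[2,n]\,\neg\square(\omega_i)$ construction sketched above --- strengthened so as to kill $\square_{\omega_i}$ as well, over the finite band of $i$ demanded by (a) --- to design a Nairian model, and then force over it to obtain a model $W$ of $\ZFC+\MM^{++}(\mathfrak{c})$ in which $\square(\omega_i)$ and $\square_{\omega_i}$ both fail for every relevant $i$. The cardinal-arithmetic requirement $\aleph_m^{\omega}=\aleph_m$ is automatic: $\MM$ yields $2^{\aleph_0}=2^{\aleph_1}=\aleph_2$, whence $\aleph_k^{\omega}=\aleph_k$ for every $k\ge2$ by Hausdorff's formula. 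Crucially, $W$ --- being a set-forcing extension of a Nairian model --- is constructed below a Woodin cardinal that is a limit of Woodin cardinals, and therefore neither $W$ nor any of its further set-forcing extensions has an inner model satisfying ``there is a subcompact cardinal'': such an inner model would give $\mathrm{Con}(\ZFC+\text{``there is a subcompact cardinal''})$ inside a universe whose consistency strength --- like that of the Nairian model it extends --- does not exceed that of a Woodin limit of Woodin cardinals, which is strictly smaller.

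For (c), and hence the theorem: suppose toward a contradiction that $\ZFC\vdash$ ``$\sf{K^c}_{2^{\omega}\text{-}closed}$ converges.'' Fix $W$ as in (b), let $g\subseteq\mathrm{Col}(\omega_{m+1},\omega_{m+1})$ be $W$-generic, and apply the analogue from (a): since $W[g]\models$ ``$\sf{K^c}_{2^{\omega}\text{-}closed}$ converges,'' the inner model $(\sf{K^c}_{2^{\omega}\text{-}closed})^{W[g]}$ of $W[g]$ satisfies ``there is a subcompact cardinal,'' contradicting (b). Hence $\ZFC$ does not prove that $\sf{K^c}_{2^{\omega}\text{-}closed}$ converges, refuting Steel's conjecture \cite{irvine2023problemlist} that this construction provably converges. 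I expect the real work, and the main obstacle, to be concentrated entirely in (b): designing a Nairian model whose forcing extension simultaneously realizes $\MM^{++}(\mathfrak{c})$ and the failures of $\square(\omega_i)$ and $\square_{\omega_i}$ across the whole required finite band of cardinals, all while keeping the construction below a Woodin limit of Woodin cardinals. By contrast, (a) should be a routine --- if lengthy --- adaptation of \cite{JSSS}, and (c) is immediate.
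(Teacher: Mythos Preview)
Your proposal is correct and follows essentially the same route as the paper: invoke the analogue of Theorem~\ref{JSSSthrm} for $\sf{K^c}_{2^{\omega}\text{-}closed}$, force over a suitable Nairian model to produce a $\ZFC$ model $W$ meeting its hypotheses, and use that $W$ sits below a Woodin limit of Woodins (hence below a subcompact) to derive the contradiction. One small simplification in (b): you do not need to ``strengthen'' the construction to kill $\square_{\omega_i}$ separately, since $\neg\square(\omega_{i+1})$ already implies $\neg\square_{\omega_i}$; the paper's Theorem~\ref{mainthrm}, which gives $\neg\square(\omega_{2+i},\omega)$ for all $i\le n+1$, therefore already delivers both failures at every cardinal in the required band.
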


The same holds for requiring $2^{2^{\dots 2^{\omega}}}$-closure for any finite stack of exponents.

The $\sf{K}^c$ methodology has been stuck between domestic mice and mice with a Woodin limit of Woodin cardinals.
Domestic implies that it is a combinatorial feature of iteration trees on domestic mice that on domestic levels of $K^c$, the existence of realizable branches suffices to obtain uniqueness of branches, and hence iterability, regardless of backgrounding condition.
Thus it has been reasonable to believe that the definitive branch existence theorems have been obtained and what is missing is a proof of branch uniqueness.
The theorems of this paper do not refute the existence of realizable branches, but they show that consistently there will not be unique (modulo any reasonable criterion) realizable branches.
In such situations, the $\sf{K}^c$ construction reaches a level that is not iterable.
The paper suggests that generalizing $\sf{K}^c$ alone is insufficient for iterability; a general combinatorial result does not underlie iterability.

The third author recently proved that the hypothesis of \rthm{main theorem} is weaker than a Woodin cardinal that is a limit of Woodin cardinals (see \cite{LDC}).
Hence the hypothesis of \rthm{JSSSthrm} is weaker than a Woodin cardinal that is a limit of Woodin cardinals, and hence

\begin{theorem}\label{Kms nonconvergence}%\normalfont
    It is consistent relative to a Woodin limit of Woodin cardinals that the ${\sf{K^c_{MiSch}}}$ construction fails to converge.
\end{theorem}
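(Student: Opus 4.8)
The plan is to chain together \cite{LDC}, \rthm{main theorem}, and \rthm{JSSSthrm}, and then to close with a bookkeeping argument about consistency strength. Assume the consistency of $\ZFC$ together with the statement that there is a Woodin cardinal that is a limit of Woodin cardinals. I would first fix a transitive proper-class model $M^{*}$ of this theory that has \emph{no} inner model with a subcompact cardinal; such an $M^{*}$ exists because a subcompact cardinal is far stronger in consistency strength than a Woodin limit of Woodins---concretely, one may take the minimal iterable pure-extender model carrying a Woodin limit of Woodins, which exists by \cite{Ne02wlw} and which, being minimal, carries no extenders beyond those certifying that Woodin limit of Woodins and hence has no inner model with a subcompact cardinal. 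By \cite{LDC}, the hypothesis of \rthm{main theorem} is consistent relative to a Woodin limit of Woodins; more precisely, the argument of \cite{LDC} yields a model $M$ of that hypothesis lying among the models obtained from $M^{*}$ by finitely many passages to set-generic extensions and to definable inner models.

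Next I would work inside $M$ and apply \rthm{main theorem}, obtaining a Nairian model together with a forcing extension $W$ of it such that $W\models\ZFC+\MM^{++}(\mathfrak{c})+\neg\square_{\omega_3}+\neg\square(\omega_3)$; as recorded just after the statement of \rthm{JSSSthrm}, the remaining hypothesis $\aleph_2^{\omega}=\aleph_2$ is also established for $W$, so $W$ satisfies every hypothesis of \rthm{JSSSthrm}. Let $g\subseteq\mathrm{Col}(\omega_3,\omega_3)$ be $W$-generic. By \rthm{JSSSthrm}, if the ${\sf{K^c_{MiSch}}}$ construction were to converge in $W[g]$, then $({\sf{K^c_{MiSch}}})^{W[g]}$ would satisfy ``there is a subcompact cardinal'', and hence $W[g]$ would have a proper-class inner model with a subcompact cardinal.

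The remaining task---and the step I expect to be the main obstacle---is to rule this out, that is, to show that $W[g]$ has no inner model with a subcompact cardinal. By construction, $W[g]$ is obtained from $M^{*}$ by finitely many passages of two kinds: to a set-generic extension (the forcings used internally to \cite{LDC}, the forcing producing $W$, and the $\mathrm{Col}(\omega_3,\omega_3)$-forcing) and to a definable inner model (the inner model supplied by \cite{LDC} and the Nairian model of \rthm{main theorem}). A passage to a definable inner model cannot create an inner model with a subcompact cardinal, since an inner model of an inner model is again an inner model; and a passage to a set-generic extension cannot create one either---this is the one nontrivial ingredient, namely the downward absoluteness under set forcing of the assertion ``there is an inner model with a subcompact cardinal''. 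Since $M^{*}$ has no inner model with a subcompact cardinal, neither does $W[g]$. Therefore the ${\sf{K^c_{MiSch}}}$ construction does not converge in $W[g]$, and since $W[g]\models\ZFC$ this establishes the consistency of $\ZFC$ together with ``the ${\sf{K^c_{MiSch}}}$ construction fails to converge'', relative to a Woodin limit of Woodin cardinals.

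The substantive inputs are therefore (i) the downward absoluteness under set forcing of ``there is an inner model with a subcompact cardinal'', a standard but nontrivial consequence of the core-model and fine-structural machinery in play, and (ii) the content of \cite{LDC}, which guarantees that producing a model of the hypothesis of \rthm{main theorem} out of a Woodin limit of Woodins adds no inner-model strength beyond it. The other ingredients---the applications of \rthm{main theorem} and \rthm{JSSSthrm}, and the verification that $\aleph_2^{\omega}=\aleph_2$ holds in $W$---are routine once these are in hand. The closing step can also be cast proof-theoretically: a $\ZFC$-proof that the ${\sf{K^c_{MiSch}}}$ construction always converges would, through the chain above, produce a $\ZFC$-proof that, if $\ZFC$ together with ``there is a Woodin limit of Woodin cardinals'' is consistent, then so is $\ZFC$ together with ``there is a subcompact cardinal'', which is impossible since a subcompact cardinal provably has strictly greater consistency strength.
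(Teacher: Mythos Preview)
Your overall chain—\cite{LDC}, then \rthm{main theorem} and \rthm{mainthrm}, then \rthm{JSSSthrm}—is exactly the paper's argument. The issue is in how you close the loop.

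Your main line tracks the property ``no inner model with a subcompact cardinal'' from the minimal $M^{*}$ through to $W[g]$, invoking as input (i) the downward absoluteness of ``there is an inner model with a subcompact'' under set forcing. You call this ``a standard but nontrivial consequence of the core-model and fine-structural machinery,'' but that is precisely the problem: the known route to such generic absoluteness results goes through core model theory at the relevant level, and core model theory at the subcompact level would require the very convergence of ${\sf K^c}$ that the theorem refutes. So (i) is not available as a black box here, and invoking it risks circularity.

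The clean repair is the one your closing paragraph almost reaches but does not isolate: track the \emph{arithmetical} sentence $\neg\mathrm{Con}(\ZFC+\text{subcompact})$ instead of the second-order ``no inner model with a subcompact.'' Since $\ZFC+\text{subcompact}\vdash\mathrm{Con}(\ZFC+\text{WlW})$, G\"odel's second incompleteness theorem gives $\mathrm{Con}(\ZFC+\text{WlW})\Rightarrow\mathrm{Con}(\ZFC+\text{WlW}+\neg\mathrm{Con}(\ZFC+\text{subcompact}))$. Start in such a model; arithmetical statements are absolute through every step (inner models, derived models, set-generic extensions), so $W[g]\models\neg\mathrm{Con}(\ZFC+\text{subcompact})$. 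If ${\sf K^c_{MiSch}}$ converged in $W[g]$, then by \rthm{JSSSthrm} it would be a proper-class inner model with a subcompact, giving $W[g]\models\mathrm{Con}(\ZFC+\text{subcompact})$, a contradiction. This is what the paper's one-line appeal to ``substantially weaker than a subcompact'' is encoding, and it avoids your problematic input (i) entirely.
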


We conjecture that a pure extender mouse with proper classes of Woodin cardinals, strong cardinals, and strong cardinals reflecting strong cardinals is a consistency strength upper bound on the existence of the Nairian models in this paper. 
From the determinacy perspective, it suffices that the Largest Suslin Axiom holds and, letting $\theta_{\kappa}$ be the largest Suslin cardinal, in $\H|\theta_{\kappa+1}$ there are three strong cardinals which are limits of Woodin cardinals.

\subsection{Forcing over models of determinacy}
Strong mathematical theories are traditionally shown to be relatively consistent by forcing over models of $\ZFC$ satisfying some large cardinal axiom.
For example, Shelah showed that Martin's Maximum is consistent relative to the existence of a supercompact cardinal by collapsing a supercompact to be $\omega_2$ via revised countable support iteration of semiproper forcings.
Forcing instead over models of $\ZF+\AD$ has been used to show the consistency of $\mathsf{ZFC}$ together with certain combinatorial structures on small uncountable cardinals, and to do so with more optimal hypotheses than proofs using $\ZFC$ models.

\begin{comment}
We recall the definition of the Axiom of Determinacy.
Let $X$ be a set, and let $A\subseteq X^{\omega}$.
Then $G_A$ is the game in which Players I and II take turns to build a sequence $x= \langle x_i: i<\omega \rangle$ of elements of $X$.
Player I wins the run of $G_A$ iff $x\in A$.
$G_A$ is \emph{determined} if either of the players has a winning strategy.
$\AD_X$ is the assertion that for every $A\subseteq X^\omega$, $G_A$ is determined, and $\AD$ is $\AD_{\omega}$.

$\AD$ was introduced by Mycielski and Steinhaus \cite{MySt62}, and its theory was subsequently developed by the descriptive set theorists of California \cite{Mo09, Cabal2012, Cabal2016, Cabal2021}.
Initial interest in $\AD$ was motivated by its providing the tools necessary to lift the classical theory of the analytic and coanalytic sets through the entire Wadge hierarchy of pointclasses. 

While $\AD$ provides a natural environment to study definable sets of reals, its contradicting the Axiom of Choice (${\sf{AC}}$) precluded it from being considered as an axiom holding in the universe of sets.
Models of $\AD$ have nevertheless been used to shed light on $\sf{ZFC}$ combinatorics.
\end{comment}
Let $\Theta$ be the least ordinal $\gg$ such that the reals cannot be surjected onto $\gg$.

\begin{definition}\label{thetareg}\normalfont  $\treg$ is the theory $\ZF+\ADR+``\Theta$ is a regular cardinal."\footnote{Solovay showed \cite{So78}  that the theory $\treg+V=L(\powerset(\bR))$ implies the Axiom of Dependent Choice ($\DC$). The axiom $V=L(X)$ states that $V$ is the minimal model of $\ZF$ containing all ordinals and the set $X$. It is shown in \cite{HMMSC} that the theory $\treg$ is consistent relative to a Woodin cardinal that is a limit of Woodin cardinals.}
\end{definition}

By forcing over a model of $\treg$, Steel and Van Wesep obtained a model of $\ZFC$ in which the non-stationary ideal on $\omega_1$ is saturated \cite{SteelWesep}.
This was the first indication of deep connections between models of $\AD$ and the kind of models of $\ZFC$ that are widely studied by set theorists.
Building on the Steel-Van Wesep work, Woodin \cite{Wo10} developed a general forcing machinery for constructing $\ZFC$ models from models of determinacy.
This $\pmax$ method is so powerful that it seems as though there is nothing about small infinite cardinals that can be forced from large cardinals that cannot be forced over a model of $\AD$.
Our results are the strongest confirmation of this intuition to date.
They are motivated by the following theorem of Woodin \cite[Chapter 9.7]{Wo10}, where $\Add(\kappa, 1)$ is the partial order to add a subset of $\kappa$ by pieces of cardinality less than $\kappa$.

 \begin{theorem}[Woodin]\label{woodin mmc}%\normalfont  
 Assume $\treg+V=L(\powerset(\bR))$.
 Then $\pmax*\Add(\omega_3, 1)$ forces $\sf{MM^{++}(\mathfrak{c})}$. 
 \end{theorem}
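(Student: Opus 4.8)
\emph{Setup and the $\pmax$-analysis.} Write $W=L(\powerset(\bR))$, assume $W\models\treg$, let $G$ be $W$-generic for $\pmax$ and let $H$ be $W[G]$-generic for $\Add(\omega_3,1)$; the goal is to verify $\MM^{++}(\mathfrak c)$ in $W[G][H]$. The plan is to use, and to push one cardinal higher, the structure theory of $\pmax$-extensions over models of $\treg$ from \cite[Chapter~9.7]{Wo10}. First I would record the basic outputs of that analysis: since $W\models\DC$ (Solovay's theorem, cf.\ the footnote to \rdef{thetareg}), $W[G]\models\ZFC$, with $\mathfrak c^{W[G]}=\aleph_2$, $\Theta^W=\aleph_3^{W[G]}$, the nonstationary ideal on $\omega_1$ saturated, $2^{\aleph_1}=\aleph_2$ and $2^{\aleph_2}=\aleph_3$; in particular $\Add(\omega_3,1)$, being $<\omega_3$-closed and $\aleph_4$-cc in $W[G]$, preserves all cardinals and cofinalities. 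The crucial extra input, specific to $\treg$, is that the regularity of $\Theta$ in $W$ together with $\ADR$ supplies a robust family of iterable structures (models of $\treg$, or $\treg$-mice), and that $G$ on the first $\omega_1$ coordinates together with the generic subset of $\omega_3$ given by $H$ on the remaining coordinates can be assembled into a \emph{generic iteration} of such a structure, of length $\omega_2^{W[G]}$ and critical point $\omega_1$, which is continuous, is stationary-correct for subsets of $\omega_1$ at every stage, and whose direct limit computes $H_{\omega_3}^{W[G][H]}$ correctly. Since a poset of size $\mathfrak c=\aleph_2$ lies in $H_{\omega_3}$, this $H_{\omega_3}$-level analysis is exactly what is needed for $\MM^{++}(\mathfrak c)$, and obtaining it is where I expect the real work to be.

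\emph{Reduction to $W[G]$.} Because $\Add(\omega_3,1)$ is $<\omega_3$-closed it adds no new bounded subsets of $\omega_3$: no reals, no posets of size $\le\mathfrak c$, and no $\aleph_1$-indexed sequences of dense sets or of $\mathbb P$-names for subsets of $\omega_1$. So given an instance of $\MM^{++}(\mathfrak c)$ in $W[G][H]$ --- a stationary-set-preserving poset $\mathbb P$ with $|\mathbb P|=\aleph_2$, dense sets $\langle D_\alpha:\alpha<\omega_1\rangle$, and $\mathbb P$-names $\langle\dot S_\alpha:\alpha<\omega_1\rangle$ for stationary subsets of $\omega_1$ --- this data is (coded by) data already in $W[G]$, and the property ``$\mathbb P$ is stationary-set-preserving'' is absolute between $W[G]$ and $W[G][H]$ since $\Add(\omega_3,1)$ is $\sigma$-closed. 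It remains to produce in $W[G][H]$ a filter $g\subseteq\mathbb P$ meeting every $D_\alpha$ such that $T_\alpha:=\{\beta<\omega_1:(\exists p\in g)\ p\Vdash\check\beta\in\dot S_\alpha\}$ is stationary for every $\alpha<\omega_1$; $H$ is retained only to drive the length-$\omega_2$ iteration.

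\emph{The core construction.} Applying the analysis of the first paragraph to this data, fix an iterable structure $\N$ in $W$ carrying a ``small'' copy $(\bar{\mathbb P},\langle\bar D_\alpha\rangle_{\alpha<\omega_1^{\N}},\langle\bar{\dot S}_\alpha\rangle_{\alpha<\omega_1^{\N}})$ of $(\mathbb P,\langle D_\alpha\rangle,\langle\dot S_\alpha\rangle)$, together with a generic iteration $j$ of (a $\pmax$-extension of) $\N$ of length $\omega_2^{W[G]}$ --- guided by $G$ and then by $H$ as above --- arranged so that its direct limit $\N^*$ satisfies $H_{\omega_3}^{\N^*}=H_{\omega_3}^{W[G][H]}$, $j(\bar{\mathbb P})=\mathbb P$, $j(\bar D_\alpha)$ is cofinal in $D_\alpha$, and $j(\bar{\dot S}_\alpha)=\dot S_\alpha$. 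Working inside $\N$ --- which carries precipitous-ideal and generic-embedding strength at the relevant level --- a routine diagonalization, using that $\bar{\mathbb P}$ is stationary-set-preserving there and that $\N$ is suitably closed, produces a filter $\bar g\subseteq\bar{\mathbb P}$ that meets every $\bar D_\alpha$ and for which each $\bar{\dot S}_\alpha[\bar g]$ is stationary in $\N$. Pushing forward: by continuity of $j$, the set generated by $\{j(p):p\in\bar g\}$ is a filter $g\subseteq\mathbb P$ meeting every $D_\alpha$ (since $g$ meets the cofinal set $j(\bar D_\alpha)\subseteq D_\alpha$), and since $j$ is stationary-correct, $j(\bar{\dot S}_\alpha[\bar g])\subseteq T_\alpha$ is stationary in $W[G][H]$. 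Hence $g$ witnesses this instance of $\MM^{++}(\mathfrak c)$, and as the instance was arbitrary, $W[G][H]\models\MM^{++}(\mathfrak c)$.

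\emph{The main obstacle.} The difficulty lies entirely in the first and third paragraphs: lifting the classical $\pmax$ analysis from $H_{\omega_2}$ to $H_{\omega_3}$, i.e.\ producing an iterable $\treg$-structure $\N$ carrying a small copy of an arbitrary stationary-set-preserving $H_{\omega_3}$-sized datum, and organizing the length-$\omega_2$ generic iteration (out of $G$, out of the subset of $\omega_3$ added by $H$, and out of the regularity of $\Theta$ in $W$) so that its direct limit is genuinely $H_{\omega_3}^{W[G][H]}$ and stationary-correctness on subsets of $\omega_1$ persists through all $\omega_2$ stages. Granting that machinery, the closure argument of the second paragraph and the diagonalization-plus-pushforward of the third are routine.
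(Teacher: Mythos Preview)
Your proposal rests on two structural misunderstandings that derail the argument.

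\textbf{First, $W[G]$ does not satisfy $\ZFC$.} Forcing with $\pmax$ over a model of $\ADR$ cannot wellorder $\cP(\bR)$: if $\tau$ were a $\pmax$-name for such a wellorder, every set of reals would be ordinal definable from $\tau$ and a real, contradicting Uniformization. So your first paragraph's claim that ``$W[G]\models\ZFC$, with \ldots $2^{\aleph_2}=\aleph_3$'' is false, and statements like ``$\Add(\omega_3,1)$ is $\aleph_4$-cc in $W[G]$'' are not even well-posed. What one actually has in $W[G]$ is $\ZF + \DC_{\aleph_2} + \MM^{++}(\mathfrak c)$; the point of $\Add(\omega_3,1)$ is precisely to wellorder $\cP(\bR)$ and thereby obtain full Choice, not to ``drive a length-$\omega_2$ iteration.''

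\textbf{Second, the length-$\omega_2$ iteration does not exist.} The $\pmax$ machinery produces length-$\omega_1$ generic iterations of countable conditions, whose limit models capture $H(\aleph_2)$, not $H(\aleph_3)$. There is no mechanism by which the $\Add(\omega_3,1)$-generic $H$ ``guides'' a further iteration of length $\omega_2$ with critical point $\omega_1$, and no such construction appears in \cite{Wo10}. The actual route to $\MM^{++}(\mathfrak c)$ in $W[G]$ is different: a stationary-set-preserving poset $\mathbb P$ of size $\aleph_2$, together with the $\omega_1$-sequences of dense sets and names, is coded by a set of reals $A$; the hypothesis $\treg$ (specifically $\ADR$ plus regularity of $\Theta$) then supplies $A$-iterable $\pmax$ conditions dense in $G$ which already carry a filter of the desired kind, and the length-$\omega_1$ iteration determined by $G$ pushes that filter forward. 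The generic $H$ plays no role in this part.

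The paper's treatment accordingly has the opposite shape from yours: the deep fact is that $\MM^{++}(\mathfrak c)$ already holds in $W[G]$ (cited from \cite[Theorem~9.39]{Wo10}), and the only thing left to check is that $\Add(\omega_3,1)$---which by $\DC_{\aleph_2}$ adds no subsets of $\omega_2$---preserves it while forcing $\ZFC$. Your reduction paragraph is essentially this preservation step and is correct in spirit, but you have the division of labor backwards: the work happens in $W[G]$ before $H$ appears, not after.
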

 
At the time, the best upper bound for $\sf{MM^{++}(\mathfrak{c})}$ was a cardinal $\kappa$ that is $\kappa^+$-supercompact.

Woodin also showed that, over models of $\treg+V=L(\powerset(\bR))$, the poset ${\mathrm{Col}}(\omega_1, \bR)*\Add(\omega_2, 1)$\footnote{This poset first enumerates the reals in order type $\omega_1$ and then adds a Cohen subset to $\omega_2$, forcing $\ZFC$ over any model of $\treg+V=L(\powerset(\bR))$.} forces ${\sf{CH}}+``$there is an $\omega_1$-dense ideal on $\omega_1$" (see \cite{sargsyan2021ideals}). 
Woodin's work thus obtains significant combinatorial structures on $\omega_1$ and $\omega_2$ in forcing extensions of models of determinacy.
It immediately raises the question \emph{What combinatorial structures can exist on the larger $\omega_n$'s of a $\ZFC$ forcing extension of a model of $\treg$?} 
We pursue here the more particular question \emph{Which consequences of $\sf{MM}$ can be forced over models of determinacy?}

These questions can be made exact in various ways, but the line pursued in this paper was initiated by \cite{CLSSSZ}, the first paper to force important consequences of $\sf{MM}$ for $\omega_{3}$ over models of determinacy.
The foundational question that \cite{CLSSSZ} contributes to answering is

 \begin{question}\label{ques mm pmax}%\normalfont
 Is it possible to force $\ZFC$ and $\sf{MM}$, or significant consequences of $\sf{MM}$ for cardinals greater than $\omega_{2}$, over models of determinacy via a poset of the form $\mathbb{P}=\pmax*\mathbb{Q}$, where $\mathbb{P}$ is countably complete and homogeneous?
 \end{question}

Asper\'o and Schindler \cite{AsperoSchindler} have shown that ${\sf{MM^{++}}}$ implies Woodin's $\mathbb{P}_{\mathrm{max}}$ Axiom ${\sf{(*)}}$.
But it is open whether ${\sf{MM^{++}}}$ is consistent with Woodin's Axiom ${\sf{(*)^+}}$ (see \cite{Wo10}). 
A positive, complete answer to Question \ref{ques mm pmax} should solve this problem (at least if one requires additionally that the $\Theta$ of $V$ becomes $\omega_3$ in the generic extension).
A route towards this, which the present paper can be construed as a step in, is the topic of \cite[Chapter 3]{blue2023models}.

\cite{CLSSSZ} forces $\sf{MM^{++}(c)}+\neg\square_{\omega_2}$ over a model of $\treg$.
Square principles (see below) are incompactness principles informing much work in infinitary combinatorics and inner model theory.
Square fails at subcompact cardinals, and stronger large cardinals imply their eventual total failure.
Todor\v{c}evi\'c showed that in fact a weak form of $\MM$ implies that $\square_\k$ fails for every uncountable $\k$ \cite{To84}.
However, $\neg\square_{\omega_2}$ is a consequence of $\sf{MM^{++}(\mathfrak{c}^+)}$, not of $\sf{MM^{++}(\mathfrak{c})}$.
So while the results of \cite{CLSSSZ} do go beyond \cite{Wo10}, they do not go much further.
\cite{CLSSSZ} essentially pushes the basic $\pmax$ idea as far as possible below a major obstacle which we now describe.

 \subsection{Higher models of determinacy}
 
Suppose our goal is to obtain a model of $\sf{MM^{++}}(\mathfrak{c})+\neg\square_{\omega_3}+\neg\square(\omega_3)$ by forcing over a model of $\treg$.
The strategy of \cite{CLSSSZ} is to start with some model $M$ of determinacy, force over it with $\pmax$ to get $\sf{MM^{++}}(\mathfrak{c})$, and then force with $\Add(\omega_3, 1)$ to make the Axiom of Choice hold. 
To go beyond \cite{CLSSSZ}, it is very likely that we must continue by forcing with $\Add(\omega_4, 1)$.
Thus, the poset for forcing $\sf{MM^{++}}(\mathfrak{c})+\neg\square_{\omega_3}+\neg\square(\omega_3)$ must be $\pmax*\Add(\omega_3, 1)*\Add(\omega_4, 1)$, or some variant of it.\footnote{Currently $\pmax$ is the only poset known to force $\sf{MM^{++}}(\mathfrak{c})$ over a model of determinacy.}
 
 The obstacle is that if we were to succeed---if $M^{\pmax*\Add(\omega_3, 1)*\Add(\omega_4, 1)}$ is a model of the theory $\sf{MM^{++}}(\mathfrak{c})+\neg\square_{\omega_3}+\neg\square(\omega_3)$---then we must have that $\Theta^M=\omega_3$ and $(\Theta^+)^M=\omega_4$. 
 This then suggests that $M\models \neg\square_{\Theta}$.
 At the time of writing \cite{CLSSSZ}, no model of determinacy with this property was known to exist.
 If $M\models \neg\square_{\Theta}$, then it must have non-trivial, non-compact structure above $\Theta$, and such structure is not postulated to exist by determinacy axioms, which are statements about sets of reals.
 The theory that axiomatizes $M$ must be a theory not only about sets of reals, then, but also sets of sets of reals. 
 The theory cannot be similar to the theories of the currently used canonical inner models, as such theories usually imply that square like principles hold \cite{SquareCore}.
 For example, if $M$ satisfies that $V=L(\powerset(\bR))$ then $M\models \square_\Theta$, and if $M$ is some kind of mouse or a hybrid mouse over $\powerset(\bR)$, then $M\models \square_\Theta$. 

 The first important point then is that to go beyond \cite{CLSSSZ}, we must first construct models of determinacy that have significant, non-compact combinatorial structure beyond the portion coded by their sets of reals.
In this paper, we will force over such models, Chang-type models first constructed in \cite{CCM}.
In the second part of this paper we will analyze one such model and show that it satisfies the hypothesis of \rthm{mainthrm} (see \rthm{main theorem}).

The second important point was mentioned after Theorem \ref{JSSSthrm}:
Since the theory $\sf{MM^{++}}(\mathfrak{c})+\neg\square_{\omega_3}+\neg\square(\omega_3)$ implies the hypothesis of \rthm{JSSSthrm}, if we were to succeed in obtaining a model of $\sf{MM^{++}}(\mathfrak{c})+\neg\square_{\omega_3}+\neg\square(\omega_3)$ by forcing over a model of determinacy, then we would have to either solve the inner model problem for a subcompact cardinal or refute one of the key methods developed to solve that inner model problem.
Thus the strength of the underlying determinacy model is a key issue.

 \subsection{Universally Baire sets and Chang models over derived models.}\label{uBsubsec}

A set of reals is \emph{universally Baire} if all of its continuous preimages in compact Hausdorff spaces have the property of Baire.
Equivalently, 

\begin{definition}[Feng-Magidor-Woodin, \cite{FMW92}]\label{def:uB}\normalfont\hspace{.1in}
\begin{enumerate}
    \item  Let $(S,T)$ be trees on $\omega \times \kappa$, for some ordinal $\kappa$, and let $Z$ be any set.
    The pair $(S,T)$ is $Z$-\emph{absolutely complementing} if $p[S] = \bR \setminus p[T]$ in every ${\sf{Coll}}(\omega,Z)$-generic extension of $V$.
    \item  A set of reals $A$ is \emph{universally Baire (uB)} if for every set $Z$, there are $Z$-absolutely complementing trees $(S,T)$ with $p[S] = A$.
\end{enumerate}
\end{definition}

\begin{notation}[Universally Baire sets]\label{ub sets} \normalfont
	$\Gamma^\infty$ denotes the set of universally Baire sets.
    If $G$ is generic, let $\Gamma^\infty_G=(\Gamma^\infty)^{V[G]}$. 
\end{notation}

Woodin's \emph{Derived Model Theorem} is the primary method for deriving determinacy models from large cardinals.
Theorem \ref{dmt} is one version of it.
We write ${\sf{Coll}}(\omega, {<}\l)$ for the Levy collapse of $\l$ to $\omega$.
Given $g\subseteq {\sf{Coll}}(\omega, {<}\l)$ and $\a<\l$, let $g_\a=g\cap {\sf{Coll}}(\omega, {<}\a)$.
If $g$ is any generic, then $\bR_g=\bR^{V[g]}$. 

Suppose that $\l$ is an inaccessible cardinal and $g\subseteq {\sf{Coll}}(\omega, {<}\l)$ is generic over $V$.
Working in $V[g]$, let $\Gamma_g$ be the set of all $A\subseteq \bR_g$ such that for some $\a<\l$, there is a pair $(T, S)\in V[g_\a]$ such that $V[g_\a]\models ``(T, S)$ is ${<}\l$-absolutely complementing"\footnote{$\beta$-absolutely complementing for every $\beta<\l$.} and $A=(p[T])^{V[g]}$. 

\begin{theorem}[Woodin, \cite{St07DMT}]\label{dmt}%\normalfont 
Suppose $\lambda$ is a regular cardinal that is a limit of Woodin cardinals.
Let $g\subseteq {\sf{Coll}}(\omega, {<}\l)$ be generic.
Then $L(\Gamma_g, \bR_g)\models {\sf{AD^+}}$.
\end{theorem}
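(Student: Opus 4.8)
The plan is to prove this as the version of Woodin's Derived Model Theorem due to Steel (\cite{St07DMT}), following that architecture. First I would record the reductions that the hypotheses on $\lambda$ afford. A regular limit of Woodin cardinals is a limit of inaccessibles, hence itself strongly inaccessible, so ${\sf{Coll}}(\omega,{<}\lambda)=\prod_{\alpha<\lambda}{\sf{Coll}}(\omega,\alpha)$ (finite support) has the $\lambda$-chain condition; since every nice name for a real then has support bounded below $\lambda$, we get $\bR_g=\bR^*:=\bigcup_{\alpha<\lambda}\bR^{V[g_\alpha]}$. Next, since there are Woodin cardinals cofinal in $\lambda$, the Martin--Solovay--Woodin analysis of absolutely complementing tree representations shows that every $A\in\Gamma_g$ is witnessed, at some stage $V[g_\alpha]$, by a tree that is simultaneously $\kappa$-homogeneously Suslin for all $\kappa<\lambda$; equivalently, $\Gamma_g$ coincides with the symmetric pointclass ${\rm Hom}^*$ of the collapse. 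Two consequences follow at once: each $A\in\Gamma_g$ is homogeneously Suslin in $V[g]$ (lift its homogeneity system), hence determined by Martin's theorem on homogeneously Suslin games, and each $A\in\Gamma_g$ is $\infty$-Borel with a code definable from a Suslin/homogeneous representation.

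The core of the proof, and the step I expect to be the main obstacle, is the closure lemma: if $A\in\Gamma_g$, $\varphi$ is a formula, and $\vec r\in\bR_g$, then $B=\{x\in\bR_g:L(\Gamma_g,\bR_g)\models\varphi[x,A,\vec r]\}\in\Gamma_g$. I would prove it in layers. Closure of ${\rm Hom}^*$ under continuous preimages and under $\exists^{\bR}$ is a routine projection of homogeneity systems; closure under complements is the Martin--Solovay construction, invoking Woodin's theorem that the Martin--Solovay tree built from a $\kappa$-homogeneity system is $\gamma$-homogeneous whenever there is a Woodin cardinal in $(\kappa,\gamma)$. The full statement — that ${\rm Hom}^*$ is closed under definability over $L({\rm Hom}^*,\bR^*)$ — is Woodin's tree production (term-capturing) argument: given $A$ captured at $V[g_\alpha]$ and a Woodin cardinal $\delta\in(\alpha,\lambda)$, one runs genericity iterations at $\delta$ to produce, inside $V[g_\delta]$, a ${<}\lambda$-absolutely complementing pair for $B$, and a further Woodin above $\delta$ supplies the homogeneity needed to place $B$ back into $\Gamma_g$. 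This is exactly where ``$\lambda$ is a \emph{limit} of Woodins'' enters essentially: each block of quantifiers in the definition of $B$ is absorbed at the cost of one Woodin cardinal, and cofinally many are at our disposal.

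Granting the closure lemma, I would assemble $\AD^+$ in $L(\Gamma_g,\bR_g)$. For the $\infty$-Borel clause: any $A\subseteq\bR_g$ belonging to $L(\Gamma_g,\bR_g)$ is definable over that model from parameters in $\Gamma_g\cup\bR_g$, hence lies in $\Gamma_g$, hence is $\infty$-Borel with a code taken inside the derived model. The same argument shows every prewellordering of $\bR_g$ in $L(\Gamma_g,\bR_g)$ lies in $\Gamma_g$ and so has length below $\lambda$ (Kunen--Martin applied to its Suslin representation), whence $\Theta^{L(\Gamma_g,\bR_g)}\le\lambda$. For $\DCR$ I would exploit the homogeneity and $\lambda$-chain condition of the collapse: an $\omega$-sequence from $\bR_g$ already appears in some $V[g_\alpha]\models\ZFC$, and a reflection argument produces choice sequences definable over $L(\Gamma_g,\bR_g)$. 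Finally, for ordinal determinacy: given $\eta<\Theta^{L(\Gamma_g,\bR_g)}$ (so $\eta<\lambda$ by the above), a continuous $\pi:\eta^\omega\to\bR_g$, and $A\in\Gamma_g$, choose $\kappa\in(\eta,\lambda)$ and a $\kappa$-homogeneity system for $A$; pulling it back along $\pi$ gives a homogeneity system on $\eta^\omega$ for $\pi^{-1}[A]$, so the length-$\omega$ game on $\eta$ with that payoff is determined by Martin's theorem on homogeneously Suslin games over an arbitrary underlying set. Combining the three clauses, $L(\Gamma_g,\bR_g)\models\AD^+$.
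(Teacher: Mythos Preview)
The paper does not prove this theorem; it is quoted as a background result attributed to Woodin, with the citation \cite{St07DMT} (Steel's writeup of the Derived Model Theorem) and no proof supplied. So there is nothing in the paper to compare your argument against.

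That said, your outline is a faithful sketch of the standard proof as presented in the cited reference: identify $\Gamma_g$ with ${\rm Hom}^*$, establish closure of ${\rm Hom}^*$ under first-order definability over $L({\rm Hom}^*,\bR^*)$ via the tree-production/term-capturing lemma (this is indeed where the unbounded supply of Woodin cardinals is spent), and then read off the three clauses of $\AD^+$. The only place I would tighten is the $\DCR$ paragraph: the point is not that $\omega$-sequences of reals appear in some $V[g_\alpha]$ (that is automatic from $\lambda$-cc), but that $\DCR$ already follows in $L(\Gamma_g,\bR_g)$ from the existence of a definable surjection $\bR_g\twoheadrightarrow L(\Gamma_g,\bR_g)\cap V_{\Theta}$ together with $\DC$ in the ambient $V[g]$; alternatively one quotes that $\AD$ plus ``every set of reals is Suslin'' implies $\DCR$. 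Otherwise your plan matches the architecture of \cite{St07DMT}.
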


The Chang Model is the model smallest inner model of $\ZF$ containing every countable sequence of ordinals.
Variations of this model can be defined be e.g.~adding predicates or restricting to the set of countable sequences from a specific ordinal.
The Nairian models we use in this paper are constructed over derived models computed inside \textit{self-iterable} universes, universes that ``see'' their own iteration strategies.\footnote{See the introduction of \cite{SteelCom}.}
In inner model theory, self-iterable universes are known as \emph{hod mice}.
While self-iterability is a crucial property that is used heavily in the second part of this paper, it is not needed to define the Chang models that we will use.
What is needed is the main theorem of \cite{CCM}, summarized as follows (see also \rsec{sec: chang model}).

\begin{theorem}[Sargsyan, \cite{CCM}]\label{sum: chang models}%\normalfont 
Assume $V$ is a hod mouse and $\lambda$ is an inaccessible limit of Woodin cardinals. Then there exist a transitive $M\subseteq H_{\l^+}^V$ and an elementary embedding $j: H_{\l^+}^V\rightarrow M$ such that whenever $g\subseteq {\sf{Coll}}(\omega, <\l)$ is $V$-generic, $L(M^\omega, \Gamma_g, \bR_g)\models \sf{AD^+}$.
\end{theorem}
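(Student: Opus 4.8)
The plan is to obtain $M$ and $j$ from the self-iterability of $V$, and then to identify the Chang model $L((M^\omega)^{V[g]},\Gamma_g,\bR_g)$ with (a mild enlargement of) a derived model of $M$, so that $\sf{AD^+}$ for it reduces, via Theorem~\ref{dmt} applied inside $M$, to the fact that $M$ — being an iterate of a hod mouse, hence itself a hod mouse — sees its own iteration strategy. For the construction of $M$: since $V$ is a hod mouse it carries an internally definable iteration strategy $\Sigma$, and every $\Sigma$-iterate of $V$ is again a hod mouse with the corresponding tail of $\Sigma$. Run a $\Sigma$-guided normal iteration of $H_{\lambda^+}^V$ of length $\lambda$ using only extenders with critical point ${<}\lambda$, with standard bookkeeping ensuring that the critical points — and hence a tail of Woodin cardinals of the final model — are cofinal in $\lambda$, and that the direct limit embedding $j\colon H_{\lambda^+}^V\to M$ satisfies $j(\lambda)=\lambda$. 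Then $M\subseteq H_{\lambda^+}^V$ is transitive and, by elementarity, $M\models$ ``$\lambda$ is an inaccessible limit of Woodin cardinals'' and $M$ knows a tail of $\Sigma$.

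Since $\lambda$ is a regular limit of Woodin cardinals of $V$, Theorem~\ref{dmt} gives $L(\Gamma_g,\bR_g)\models\sf{AD^+}$, and, because $V$ and $M$ are hod mice, $L(\Gamma_g,\bR_g)$ is the ``full'' derived model: it is closed under scales, every member of $\powerset(\bR)$ that it contains is Suslin and co-Suslin there, and $M$'s own derived-model pointclass at $\lambda$ — computed with $\Sigma$-guided, ${<}\lambda$-absolutely complementing trees — again equals $\Gamma_g$. Moreover $\DCR$ holds in any transitive model of $\ZF$ lying between $L(\Gamma_g,\bR_g)$ and $V[g]$ which is closed under $\omega$-sequences of reals, since $V[g]\models\DC$.

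The heart of the proof is to show that the new structure $(M^\omega)^{V[g]}$ does not destroy $\sf{AD^+}$. Each $s\in(M^\omega)^{V[g]}$ has range contained in $M|\eta$ for some $\eta<(\lambda^+)^M$ with $M\models|\eta|\le\lambda$; the nice-name computation carried out inside $M$ (using the $\lambda$-chain condition of $\mathrm{Coll}(\omega,{<}\lambda)$ and the regularity of $\lambda$), together with the cofinal placement of the Woodin cardinals of $M$ from the first step, shows that $M|\eta$ — and with it $s$ — is coded by a real lying in some $V[g_\alpha]$, $\alpha<\lambda$, which is generic there over $M|\delta$ for a Woodin cardinal $\delta$ of $M$ above $\eta$; hence $s$ is captured by a term relation over $M|\delta$ exactly as in the derived-model construction. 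Running this uniformly exhibits $L((M^\omega)^{V[g]},\Gamma_g,\bR_g)$ as (an inner model of) the derived model of $M$ at $\lambda$ enlarged to admit these term relations; by the general derived-model argument applied inside the hod mouse $M$ this model satisfies $\sf{AD^+}$ — concretely, for each (possibly new) set of reals $A$ in it the capturing furnishes an $\infty$-Borel code and, for every $\lambda'<\Theta$, winning strategies for the $\lambda'$-ordinal games with payoff reducible to $A$, all extracted from a bounded-stage collapse extension of some $M|\delta$, while $\DCR$ holds by the remark above.

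The main obstacle is precisely this last step: establishing that every $\omega$-sequence of ordinals arising from $M$ is absorbed generically below a Woodin cardinal of $M$, uniformly enough to witness $\infty$-Borelness and ordinal determinacy for the new sets of reals, and doing so at the level $\lambda$ without being forced to move $\lambda$ itself. This is where the hypothesis that $V$ is a hod mouse is indispensable: it is what makes the iterate $M$ self-iterable with a ``full'' derived model, so that its internal strategy can be used to trace each $\omega$-sequence of $M$ back to a real that is generic over an initial segment of $M$ — exactly what fails if $M$ cannot see its own iteration strategy.
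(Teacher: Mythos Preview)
The paper does not prove this theorem; it is quoted from \cite{CCM}, with the construction summarized in \S\ref{sec: chang model}. There $M$ is taken to be $\M_\infty(\hp)$, the \emph{direct limit} of all ${<}\lambda$-length $\Sigma$-iterates of $\P=\V|(\lambda^+)^\V$ based on $\P|\lambda$, and $j=\pi_{\hp,\infty}$. The key step is the invariance statement recorded as Theorem~\ref{invariance}: for any genericity iterate $\hq$ of $\hp$ one has $\M_\infty(\hq)=\M_\infty(\hp)$, and the Chang model $C(\lambda,g)$ computed from $\hq$ coincides with the one computed from $\hp$. It is this invariance---that the direct limit, and hence the Chang model, is canonical and can be realized inside the derived model of \emph{any} genericity iterate---that yields ${\sf AD}^+$.

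Your approach is different and has a genuine gap. You take $M$ to be a single length-$\lambda$ iterate with $j(\lambda)=\lambda$, rather than a direct limit, and then try to absorb each $s\in(M^\omega)^{V[g]}$ individually by making a real coding $s$ generic over some $M|\delta$. But a real in $V[g_\alpha]$ is not automatically generic over $M|\delta$; making it generic requires iterating $M|\delta$ via the extender algebra, which \emph{moves} $M$. You never explain why the Chang model built over the moved structure agrees with $L(M^\omega,\Gamma_g,\bR_g)$ for your original fixed $M$---and for an arbitrary single iterate there is no reason it should. That reconciliation is exactly what Theorem~\ref{invariance} provides for $\M_\infty$, and it genuinely uses that $\M_\infty$ is a direct limit (so that further iteration does not change it). Your final paragraph correctly identifies this as the main obstacle, but the appeal to self-iterability alone does not resolve it; the sentence ``by the general derived-model argument applied inside the hod mouse $M$ this model satisfies ${\sf AD}^+$'' is where the entire content of \cite{CCM} lives, and it does not follow from Theorem~\ref{dmt}.
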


Assuming the existence of a $\k<\l$ which is a ${<}\l$-strong cardinal, models of the form $L(M^\omega, \Gamma_g, \bR_g)$ are genuinely new determinacy models with noncompact structure beyond their $\Theta$.
This is the intuitive content of \rthm{main theorem}, which identifies a particular hod mouse whose $L(M^\omega, \Gamma_g, \bR_g)$ satisfies the properties listed in \rdef{bowtie}.\footnote{The actual constructions differ from the expository constructions we've employed in this introduction.}

In the determinacy model constructed in \rthm{main theorem}, there exists, for each set $X$, a normal, fine measure on $\powerset_{\omega_1}(X)$.
Thus we answer Trang's question negatively.

\begin{theorem}%\normalfont
The theory $\sf{ZF} + \sf{AD}_{\bR} +$ ``$\omega_1$ is supercompact'' is strictly weaker than a Woodin limit of Woodin cardinals in consistency strength.
\end{theorem}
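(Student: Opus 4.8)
The plan is to read the statement off from \rthm{main theorem} together with the consistency-strength computation of \cite{LDC}; no new determinacy-theoretic work is needed, only the bookkeeping that turns these two inputs into a strict inequality. Throughout, write $T$ for the theory $\ZF + \ADR + \text{``$\omega_1$ is supercompact''}$, and write $H$ for the hypothesis of \rthm{main theorem} --- the assertion that $V$ is the particular (iterable) hod mouse $\mathcal{P}$ of that theorem, carrying its inaccessible limit $\lambda$ of Woodin cardinals and the requisite ${<}\lambda$-strong cardinal.

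First I would apply \rthm{main theorem}: working under $H$, fix the associated $M$, so that whenever $g \subseteq {\sf{Coll}}(\omega, {<}\lambda)$ is generic the Nairian model $N := L(M^\omega, \Gamma_g, \bR_g)$ satisfies every clause of \rdef{bowtie}. Two of those clauses are that $N \models \ZF + \ADR$ and that in $N$ there is, for every set $X$, a normal fine measure on $\powerset_{\omega_1}(X)$; the latter is precisely the (choiceless) assertion that $\omega_1$ is supercompact, so $N \models T$. Since $N$ is a definable inner model of the set-generic extension $\mathcal{P}[g]$, a routine forcing/absoluteness argument gives that $\mathrm{Con}(\ZFC + H)$ implies $\mathrm{Con}(T)$. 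Next I would invoke \cite{LDC}, which shows that $H$ is \emph{strictly} weaker in consistency strength than a Woodin limit of Woodin cardinals: the existence of a Woodin limit of Woodin cardinals proves $\mathrm{Con}(\ZFC + H)$, while $\ZFC + H$ does not prove $\mathrm{Con}(\ZFC + \text{``there is a Woodin limit of Woodin cardinals''})$. Composing the two implications yields the upper bound: $\ZFC + \text{``there is a Woodin limit of Woodin cardinals''}$ proves $\mathrm{Con}(T)$. For strictness, suppose toward a contradiction that $T$ proved $\mathrm{Con}(\ZFC + \text{``there is a Woodin limit of Woodin cardinals''})$; since $T$ holds in $N$, and $N$ is an inner model of $\mathcal{P}[g]$, the theory $\ZFC + H$ would then prove the same consistency statement, contradicting \cite{LDC}. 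Hence $T$ does not prove the consistency of a Woodin limit of Woodin cardinals, so the two theories are not equiconsistent and $T$ is strictly weaker. In particular, as $\ADR$ implies $\AD$, this also settles Trang's question \cite{Tr} in the negative.

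Essentially all of the mathematical content is carried by the two cited results. \rthm{main theorem} must actually produce $\mathcal{P}$ and verify each clause of \rdef{bowtie} --- most relevantly that the ${<}\lambda$-strong cardinal of $\mathcal{P}$ is what promotes $\omega_1$ to a supercompact in $N$ --- and \cite{LDC} must place $\mathcal{P}$ strictly below a Woodin limit of Woodin cardinals. Within the present deduction the only delicate point, and the one I would check most carefully, is the formalization: that the generic $g$, the definability of $N$ over $\mathcal{P}[g]$, and the passage from ``$N \models T$'' to a relative-consistency statement are all carried out uniformly (passing to a countable hull of $\mathcal{P}$ to obtain generics if necessary), and that the form of supercompactness furnished by \rdef{bowtie} is literally the one occurring in the theorem statement. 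I expect no real obstacle beyond this routine checking.
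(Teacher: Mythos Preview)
Your overall strategy is correct and matches the paper's: obtain a model of $T = \ZF + \ADR + ``\omega_1$ is supercompact'' from the hypothesis of Theorem~\ref{main theorem}, then invoke \cite{LDC} to place that hypothesis below a Woodin limit of Woodins. The paper treats the theorem as an immediate corollary of these two inputs, so your explicit bookkeeping is if anything more detailed than the paper's own account.

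A few details need correction, however. First, you have misidentified the relevant model: the model satisfying $T$ is the Nairian model $(\c^-_\l)^M$, computed \emph{inside} the derived model $M$ of the hod mouse $\V$ at $\eta$ (not at your $\lambda$); this is not the object $L(M^\omega,\Gamma_g,\bR_g)$ of Theorem~\ref{sum: chang models} (and the symbol $M$ means different things in the two theorems). Second, ``$\omega_1$ is supercompact'' is not among the clauses of $\Join^n_\lambda$ in Definition~\ref{bowtie}; it is the separate second conclusion of Theorem~\ref{main theorem}, proved as Corollary~\ref{zf in c}. That $(\c^-_\l)^M \models \ADR$ follows because its sets of reals coincide with those of $\c^+_\l$, which does satisfy $\ADR$ by the discussion following Definition~\ref{bowtie}. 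Third, your strictness argument assumes \cite{LDC} establishes $\ZFC + H \nvdash \mathrm{Con}(\text{WLW})$, but the paper only cites \cite{LDC} for the upper bound. A cleaner route: once you have $\ZFC + \text{WLW} \vdash \mathrm{Con}(T)$ (the hod mouse, hence the set Nairian model, exists under WLW), G\"odel's second incompleteness theorem already blocks $\ZFC \vdash \mathrm{Con}(T) \to \mathrm{Con}(\text{WLW})$, since otherwise $\ZFC + \text{WLW} \vdash \mathrm{Con}(\text{WLW})$.
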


\subsection{Future research}

In the Nairian model of \rthm{main theorem}, every successor cardinal above $\Theta$ is regular.
This arrangement contrasts sharply with the cardinal structure below $\Theta$ where, for instance, $\omega_1$ and $\omega_2$ are regular but $\omega_n$ has cofinality $\omega_2$ when $n\geq 3$ .
Together with \rthm{mainthrm}, the cardinal structure above $\Theta$ suggests the possibility of forcing the theory ``For every regular cardinal $\k\geq \omega_2$, $\square(\kappa)$ fails," or even full $\sf{MM^{++}}$, over a determinacy model.
The class of determinacy models over which this may be possible is the class of Chang-type models that have been identified in \cite{CCM} and produced via \rthm{sum: chang models}.
Developing the first order theory of the models of ${\sf{AD^+}}$ that have the form $L(M^\omega, \Gamma_g, \bR_g)$ is one of the most important open problems of the area.

\begin{conjecture}\label{cardinals are reg} %\normalfont 
Suppose that $V$ is a hod mouse and $\lambda$ is a Woodin limit of Woodin cardinals.
Let $g\subseteq {\sf{Coll}}(\omega, <\l)$ be $V$-generic, and let $(j, M)$ be as in \rthm{sum: chang models}.
Let $\nu=j(\l)$ and $N=j(V_\l)$.
Then 
\begin{enumerate}
\item $L_\nu(N^\omega, \Gamma_g, \bR_g)\models \ZF$, and 
\item $L_\nu(N^\omega, \Gamma_g, \bR_g)\models``$for every cardinal $\eta\geq\Theta$,  $\eta^+$ is a regular cardinal."
\end{enumerate}
\end{conjecture}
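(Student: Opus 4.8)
The plan is to follow the template used to establish the cardinal-structure properties of the Nairian model of \rthm{main theorem}, now carried out over an arbitrary hod mouse $V$ carrying a Woodin limit of Woodin cardinals. For part (1), note first that ``$\l$ is inaccessible'' is a statement about $H_{\l^+}^V$ (which contains $V_\l$ and every subset of $\l$), so by elementarity of $j$ the ordinal $\nu=j(\l)$ is inaccessible in $M$; since ${\sf{Coll}}(\omega,{<}\l)$ has the $\l$-chain condition and size $\l<\nu$, $\nu$ stays inaccessible in $M[g]$, and $N=V_\nu^M\models\ZFC$. In fact $M\models``\nu$ is a Woodin limit of Woodins'' (Woodinness below $\nu$ being witnessed by extenders in $V_\nu^M$), so $N$ satisfies $\ZFC$ together with ``there is a proper class of Woodin cardinals.'' To see $L_\nu(N^\omega,\Gamma_g,\bR_g)\models\ZF$, observe that $\bR_g$ has hereditary cardinality $\l<\nu$ in $V[g]$ and that the portion of $\Gamma_g$ that actually enters $L_\nu(N^\omega,\Gamma_g,\bR_g)$ is, by the $\mathsf{AD}^+$ structure of $L(\Gamma_g,\bR_g)$ from \rthm{dmt} together with Wadge-rank considerations, a set of size at most $\Theta^{L_\nu(N^\omega,\Gamma_g,\bR_g)}<\nu$; only $N^\omega$ has rank slightly exceeding $\nu$, but it contributes boundedly at each level. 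A standard reflection argument --- for club-many $\alpha<\nu$, $L_\alpha(N^\omega,\Gamma_g,\bR_g)\prec L_\nu(N^\omega,\Gamma_g,\bR_g)$ --- then yields Replacement, and the remaining axioms are immediate.

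Part (2) is where the real work lies, and where the hypothesis that $\l$ is a \emph{Woodin} limit of Woodins (rather than merely an inaccessible limit of Woodins, as in \rthm{sum: chang models}) is needed. First one proves the analogue of \rthm{sum: chang models} with $N$ in place of $M$, namely that $L(N^\omega,\Gamma_g,\bR_g)\models\mathsf{AD}^+$ and that its $\Theta$ --- which we go on denoting $\Theta$ --- is a regular cardinal of $L_\nu(N^\omega,\Gamma_g,\bR_g)$; the Woodinness of $\l$ is exactly what forces this regularity, as in \rthm{main theorem}. The key step is then a dichotomy: above $\Theta$ the cofinality and cardinal structure of $L_\nu(N^\omega,\Gamma_g,\bR_g)$ is governed by $N$ alone. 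Indeed, $\bR_g$ surjects onto every ordinal below $\Theta$ but onto none above it, so the $(\bR_g,\Gamma_g)$-part of the model is ``absorbed'' below $\Theta$; and the $\omega$-sequences through $N$ that are added by ${\sf{Coll}}(\omega,{<}\l)$ beyond what $N$ already contains are built from $\omega$-sequences of ordinals below $\l<\Theta$, so --- by the $\l$-chain condition and homogeneity of the collapse --- they cannot introduce a cofinal map of length $<\eta^+$ into any successor cardinal $\eta^+$ of the model lying above $\Theta$.

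Granting the dichotomy, one finishes by a condensation argument against a putative singularizing map. Given $f\colon\delta\to\eta^+$ cofinal in the model with $\Theta\le\eta$ and $\delta<\eta^+$, take a suitably small Skolem hull of a level $L_\gamma(N^\omega,\Gamma_g,\bR_g)$ containing $f$ and everything needed to compute its relevant structure. Since $V$, hence $N$, satisfies $\mathsf{GCH}$ and fine-structural condensation, the transitive collapse of the hull is again of the form $L_{\bar\gamma}(\bar{N}^\omega,\ldots)$, re-embeds into $L_\nu(N^\omega,\Gamma_g,\bR_g)$, and locates an analogue of $f$ inside $N$; this contradicts the regularity of $\eta^+$ in $N$, which holds because $N\models\ZFC$. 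One should also establish along the way that $\nu=j(\l)$ is large enough that $\Theta^{L_\nu(N^\omega,\Gamma_g,\bR_g)}=\Theta^{L(N^\omega,\Gamma_g,\bR_g)}$ and that the model has many ordinals above $\Theta$, so that the conclusion is non-vacuous.

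The hard part is the dichotomy at $\Theta$: one must explain why the countable sequences introduced by collapsing $\l$ singularize the intermediate cardinals of the model (for instance making $\omega_n$ have cofinality $\omega_2$ for $n\ge 3$) yet leave every successor above $\Theta$ regular --- including $\eta^+$ for singular limit cardinals $\eta\ge\Theta$ of the model, which is a genuinely non-trivial $\ZF$ statement. The phase transition at $\Theta$ should be traced to the fact that exactly the ordinal sequences doing the singularizing are the ones captured by the derived model $L(\Gamma_g,\bR_g)$, whose ordinal height inside $L_\nu(N^\omega,\Gamma_g,\bR_g)$ is at most $\Theta$; above $\Theta$ every relevant sequence is covered by a set of the $\ZFC$-model $N$. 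Turning this picture into a proof requires fusing the fine-structural analysis of $N$ as an initial segment of a hod mouse, the Derived Model Theorem \rthm{dmt}, and the homogeneity of the Levy collapse into a single argument, and it is for this step that one would adapt the methods behind \rthm{main theorem} and \cite{LDC} to the Woodin-limit-of-Woodins setting.
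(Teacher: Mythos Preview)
The statement you are attempting to prove is labeled \textbf{Conjecture} in the paper, not Theorem: the paper does not prove it and explicitly presents it as one of the most important open problems of the area. \rthm{main theorem} verifies only a restricted version (finitely many successors of $\Theta$ in a very specific hod mouse), and the paper remarks that even the related \rcon{ad version} has only partial results (clauses (1), (2), (4) by Woodin in the short-extender HOD-analysis context). So there is no proof in the paper to compare your proposal against; what you have written is a research outline, not a proof, and you yourself flag the ``hard part'' as unproved.

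Beyond that, several steps of your outline do not work as stated. Your reflection argument for $\ZF$ is not ``standard'': in the paper the analogue (\rcor{zf in c}) requires the powerset-agreement theorem (\rthm{first prop}), which in turn rests on the full hod-mouse analysis of iteration strategies at small strong cardinals; one cannot get Powerset from club-reflection alone, since $N^\omega$ contains $\omega$-sequences from $V[g]\setminus N$ and these are not controlled by the inaccessibility of $\nu$. Your condensation argument for part (2) is more seriously flawed: $L_\nu(N^\omega,\Gamma_g,\bR_g)$ is not a fine-structural premouse, and transitive collapses of Skolem hulls need not have the form $L_{\bar\gamma}(\bar N^\omega,\ldots)$ for any hod premouse $\bar N$, so there is no re-embedding map and no contradiction with regularity in $N$. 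Finally, the ``dichotomy at $\Theta$'' you invoke---that above $\Theta$ every relevant $\omega$-sequence is covered by $N$---is exactly the content of the conjecture, not a lemma one can cite; the paper's method for the restricted case (\rthm{first prop1}, \rthm{strongs are successors}, \rthm{cof is kappa}) works only at \emph{small} strong cardinals of $\mH$, and extending it through a Woodin limit of Woodins, where strongs are no longer small, is the open problem.
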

%Assuming Conjecture \ref{cardinals are reg}, it is natural to conjecture that ${\sf{MM^{++}}}$ can be forced over $L_\nu(N^\omega, \Gamma_g, \bR_g)$.  

%\begin{question}\label{main ques mm} %\normalfont
%Let $V, N, \nu, g$ be as in \rcon{cardinals are reg}. 
%Working in $N$, let $\mathbb{P}=\pmax*\mathbb{Q}$, where $\mathbb{Q}$ is the countable support iteration that adds a Cohen subset to every cardinal $\geq \Theta$. Let $G\subseteq \mathbb{P}$ be $N$-generic.
%\begin{enumerate}
	%\item Does $N[G]\models \ZFC+{\sf{\MM^{++}(\mathfrak{c})}}+\forall\k\geq\omega_2(\neg \square(\k))$?
	
	%\item Does $N[G]\models \ZFC+{\sf{\MM^{++}}}$?
%\end{enumerate} 
%\end{question}

There is a pure ${\sf{AD^+}}$ version of \rcon{cardinals are reg}. 
In the context of $\sf{AD^+}$, a cardinal $\k$ is $\mathrm{OD}$-\emph{inaccessible} if for every $\gg<\k$ there is no ordinal definable surjection $f:\powerset(\gamma)\rightarrow \k$.
The \emph{Solovay sequence} is the sequence $(\theta_\a:\a\leq \Omega)$ that enumerates in increasing order the set of $\mathrm{OD}$-inaccessible cardinals and their limits below $\Theta$.
We write $M|\xi$ for $V_\xi^M$.

\begin{conjecture}\label{ad version}%\normalfont 
Assume  $\sf{AD^+}$.
 Let $(\theta_\a: \a\leq \Omega)$ be the Solovay Sequence.
 Suppose $\a+1\leq \Omega$ is such that $\H\models ``\theta_{\a+1}$ is a limit of Woodin cardinals".
 Let $\d=\theta_{\a+1}$, $M=V_{\d}^{\H}$, and $N=L_{\d}(\bigcup_{\xi<\d} (M|\xi)^\omega)$.
 Then
\begin{enumerate}
\item $N\models \ZF$, 
\item $\Theta^N=\theta_\a$,
\item $N\models``$for every cardinal $\eta\geq\Theta$,  $\eta^+$ is a regular cardinal," and 
\item for every $\eta<\d$, $\cf((\eta^+)^N)\geq \theta_\a$.
\end{enumerate}
\end{conjecture}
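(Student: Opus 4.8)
\noindent The plan is to recognize $N$ as an instance of the derived-model Chang construction of \rthm{sum: chang models} and then to transfer to it the structural analysis behind \rthm{main theorem}, i.e.\ the $\AD^+$ form of \rcon{cardinals are reg}. Since $\theta_{\alpha+1}\le\Theta$, the HOD analysis applies below $\theta_{\alpha+1}$, so under $\AD^+$ the structure $M=V_\delta^{\H}$ is a hod premouse carrying a fullness-preserving iteration strategy $\Sigma\in V$; because $\H$ is itself the direct limit of the HOD analysis, $M$ already has the iterability that the embedding $j$ supplies in \rthm{sum: chang models}, so no analogue of $j$ is needed. The hypothesis ``$\theta_{\alpha+1}$ is a limit of Woodin cardinals in $\H$'' says precisely that the Woodin cardinals of $M$ are cofinal in $\delta=\mathrm{Ord}\cap M$. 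Running the derived-model construction of \rthm{dmt} over $M$ with the reals of $V$ in place of a generic extension's reals --- legitimate under $\AD^+$, by the analysis of $\AD^+$ models as derived models of their HOD --- one obtains that $D(M):=L(\Gamma,\bR)$, with $\Gamma$ the set of $A\subseteq\bR$ possessing a scale in $\H|\theta_{\alpha+1}$, is an $\AD^+$ model with $\Theta^{D(M)}=\theta_\alpha$, and that the relevant term-relation data is recoverable from the members of $\bigcup_{\xi<\delta}(M|\xi)^\omega$. The linchpin is then the identification that $N=L_\delta(\bigcup_{\xi<\delta}(M|\xi)^\omega)$ is the $\delta$-truncation of the Chang-type model over $D(M)$ generated by the $\omega$-sequences from the initial segments of $M$, that is, exactly the object appearing in \rcon{cardinals are reg}. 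All four clauses then follow from the properties of that abstract Chang model.

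Granting the identification, clause (1) is obtained as the corresponding clause of \rcon{cardinals are reg}: extensionality, foundation, pairing, union, infinity and separation are immediate because $\delta=\theta_{\alpha+1}$ is a limit ordinal and, being a member of the Solovay sequence, regular; replacement and power set use that $\delta$ is inaccessible in $M$ --- so that $M|\xi$ has cardinality less than $\delta$ for each $\xi<\delta$ --- together with a reflection argument exploiting that the Woodin cardinals of $M$ are cofinal in $\delta$, which is exactly where the limit-of-Woodins hypothesis (rather than a single Woodin cardinal) enters: it forces the power-set hierarchy of $N$ to stabilize below $\delta$. For clause (2), $\Theta^N=\theta_\alpha$, one shows $\powerset(\bR)^N=\powerset(\bR)^{D(M)}$. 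The inclusion $D(M)\subseteq N$ is part of the identification --- the $\omega$-sequences of ordinals below $\delta$ in the predicate give access, via the derived-model term relations for the Woodin cardinals of $M$, to every Suslin--co-Suslin set below $\theta_\alpha$ --- so $\Theta^N\ge\theta_\alpha$; for the reverse, a homogeneity argument for ${\sf{Coll}}(\omega,{<}\delta)$ over $M$ shows that any $A=\{x\in\bR:N\models\varphi(x,s)\}$ with $s$ an $\omega$-sequence of ordinals below $\delta$ already lies in $D(M)$, so $\powerset(\bR)^N=\powerset(\bR)^{D(M)}$ and $\Theta^N=\Theta^{D(M)}=\theta_\alpha$.

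Clauses (3) and (4) are the heart of the matter, and are where the technology developed for \rthm{main theorem} must be pushed to the level of a Woodin cardinal that is a limit of Woodin cardinals. The strategy is a weak-covering argument for $N$ over $M$. For a cardinal $\eta$ with $\theta_\alpha\le\eta<\delta$ one shows $(\eta^+)^N=(\eta^+)^M$; since $M\models\ZFC$, this makes $(\eta^+)^N$ a genuine successor cardinal, hence regular in $N$, which gives (3). For (4), a map cofinal in $(\eta^+)^N$ of length less than $\theta_\alpha$ would --- by an analysis of which levels of the $L_\delta$-hierarchy over $\bigcup_{\xi<\delta}(M|\xi)^\omega$ can contain such a map, any new one entering only through the $\omega$-sequences in the predicate --- be captured already inside $M$, contradicting regularity of $(\eta^+)^M$; and for $\eta<\theta_\alpha$ the same comparison with $M$, now using that $M$ correctly computes the $N$-successors of $M$-cardinals in this range, supplies the bound $\cf((\eta^+)^N)\ge\theta_\alpha$.

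The main obstacle, and the reason this remains a conjecture, is twofold. First, carrying out the HOD analysis --- and, crucially, establishing the self-iterability of $M$ that is needed both to run the internal derived-model construction and to verify the identification of $N$ --- at the level of a Woodin cardinal that is a limit of Woodin cardinals is at (or just past) the present frontier of inner model theory. Second, the weak-covering step above --- that a Chang-type model over a hod mouse computes successor cardinals above its $\Theta$ correctly --- is precisely the delicate point that the second part of this paper establishes only for the single, carefully chosen hod mouse of \rthm{main theorem}; promoting it uniformly to all $M=V_{\theta_{\alpha+1}}^{\H}$ with $\theta_{\alpha+1}$ a limit of Woodin cardinals in $\H$ is the substantive work that remains.
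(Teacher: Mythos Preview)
The statement is labeled a \emph{conjecture} in the paper, and the paper offers no proof of it; it explicitly says that \rthm{main theorem} verifies only a restricted version, and that Woodin has recently shown clauses (1), (2), (4) in the context of the short extender HOD analysis. So there is no ``paper's own proof'' to compare against, and you correctly flag at the end that the argument is incomplete.

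That said, your proposed route for clauses (3) and (4) contains a substantive error, not merely a gap. You write that one shows $(\eta^+)^N=(\eta^+)^M$ for $\theta_\alpha\le\eta<\delta$, and deduce regularity in $N$ from regularity in $M$. But the paper's analysis of the restricted case shows the opposite phenomenon: the cardinals of $N$ above $\Theta^N$ are the (small) ${<}\delta$-strong cardinals of $\mH$, not the $\mH$-cardinals themselves. \rthm{strongs are successors} proves that consecutive small strongs of $\mH$ become consecutive cardinals of the Nairian model, which means $N$ collapses the $\mH$-cardinals strictly between two consecutive strongs. So $(\eta^+)^N=(\eta^+)^M$ fails in general, and a weak-covering argument of the shape you describe cannot work.

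The mechanism the paper actually uses for its restricted version is quite different: one identifies the cardinals of $N$ above $\Theta^N$ as the small ${<}\delta^\infty$-strong cardinals of $\mH$ (\rthm{strongs are successors}), proves directly that each such strong cardinal is regular in $N$ by a term-relation and genericity-iteration argument (\rthm{first prop1}), and establishes the cofinality bound in $M$ by constructing an explicit increasing cofinal map of length $\kappa=\Theta^N$ using $\kappa$-traces and extender ultrapowers (\rthm{cof is kappa}). None of these go through a comparison of successor-cardinal functions between $N$ and $M$. If you want to upgrade this to the full conjecture, the missing ingredient is extending those three theorems past the ``small'' hypothesis on the strong cardinals, not a covering lemma.
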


The model $N$ was first studied by Sargsyan and Steel, although it first appeared in \cite{Wo10}.
\rthm{main theorem} verifies a restricted version of \rcon{ad version}.
Regarding the full conjecture, recently Woodin has shown conclusion (1), (2), and (4) in the context of the short extender HOD analysis.
In the same context, Steel showed that the sets of reals in $N$ are those of Wadge rank less than $\theta_{\alpha}$ \cite[Theorem 11.5.7]{SteelCom}.

The exact relationship between \rcon{cardinals are reg} and \rcon{ad version} is unclear; the $N$ of \rcon{ad version} may in fact be the $N$ of some self-iterable universe.
\rcon{ad version} and answering Question \ref{ques mm pmax} with Nairian models are the subjects of \cite{blue202komitas}.

\subsection{The Nairian cardinals}\label{subsec:nm}
Let $\mH$ represent the class of pairs of ordinals $(\alpha, \beta)$ such that the $\alpha$th element of the standard definability order of $\H$ is an element of the $\beta$th.
(This is merely a technical convenience that allows us to give a concise statement of results from the second part of this paper; the only property of $\mH$ we use in the first part of the paper is that it is a definable class of pairs of ordinals.)

Given an ordinal $\gamma$, we write $\mH\rest \gamma$ for $\mH \cap (\gamma \times \gamma)$
and $\c_\gamma^{-}$ for the structure $L_\gamma(\mH, \cup_{\b<\gg}\b^{\omega})$, which is constructed relative to the predicate $\mH$, adding (for each ordinal $\alpha < \gamma$) all $\omega$-sequences from $\alpha$ at stage $\alpha + 1$.
Note that $\gamma$ is the ordinal height of this structure.

Let $\c_\gamma$ denote $L(\mH\rest \gamma, \cup_{\b<\gg}\b^{\omega})$ and $\c^+_\gamma$ denote $\H_{\cup_{\b<\gg}\b^{\omega}}$.

\begin{definition}\label{def:nairian model in ad}\normalfont  Assume ${\sf{AD^+}}$ and suppose $\gamma\leq \Theta$ is an ordinal. Then the \emph{Nairian model} at $\gamma$ is $\c^-_{\gamma}$. 
\end{definition}

%Next we introduce Nairian cardinals. 

\begin{definition}\label{sec:ncardinals} Assume ${\sf{AD^+}}$ and suppose $\gamma\leq \Theta$ is an ordinal. Then 
\begin{enumerate}
    \item $\gg$ is a \emph{Nairian cardinal} if $N^-_\gg\models \ZF$ and $\gg$ is ${\mathrm{OD}}$-inaccessible,\footnote{That is, $\gg$ is a member of the Solovay sequence.} and

    \item $\gg$ is an \emph{almost Nairian cardinal} if $N^-_\gg\models \ZF$ and if $\gg$ is not an ${\mathrm{OD}}$-inaccessible cardinal, then for any set $X\in N^-_\gg$, $\powerset(X)\cap N_\gg=\powerset(X)\cap N^-_\gg$.
\end{enumerate}
 
\end{definition}

Assuming the short extender HOD analysis and $\mathrm{AD}^+$, Woodin showed that a $\mathrm{OD}$-inaccessible cardinal which is a limit of Woodin cardinals in HOD is a Nairian cardinal.
The third author has shown that the hypothesis of this theorem is equiconsistent with a Woodin cardinal which is a limit of Woodin cardinals. However, \rcor{zf in c} implies that almost Nairian cardinals are strictly weaker than a Woodin cardinal that is a limit of Woodin cardinals. This is a key fact that leads to our calculations of upper bounds of failures of squares and failure of $K^c$ constructions to converge. Because of this, we feel it is important to study almost Nairian cardinals as well as the Nairian cardinals. 

Nairian models also arise naturally in hod mice. Suppose $\P\models \ZFC$ is a hod premouse and $\k$ is an inaccessible cardinal in $\P$. Let $\M_\infty$ be the direct limit of all iterates $\Q$ of $\P|\k$ that are in $\P$ and such that $\T_{\P|\k, \Q}$, the $\P$-to-$\Q$ iteration tree, is of length $<\k$. Let $g\subseteq Coll(\omega, <\k)$ be $\P$-generic. Then $N$ is the Nairian model of $\P$ at $\k$ if, letting $\l={\sf{Ord}}\cap \M_\infty$, $N=L_\l(\M_\infty^\omega)$.  Nairian models of hod mice also have interesting applications. For example, \cite{gappo2023changmodelsderivedmodels} uses them to reprove a theorem of Woodin that assuming proper class of Woodin limit of Woodin cardinals, the Chang Model is a model of AD. 

Recently, Lukas Koschat, Sandra M\"uller, and the third author showed that if $\kappa$ is a supercompact cardinal and there are class many supercompact cardinals, then the derived model at $\k$ has cofinally many LSA pointclasses. The following conjecture is then a natural next step, though it may be much harder to prove.

\begin{conjecture} Assume $\kappa$ is a supercompact cardinal and there are class many supercompact cardinals. Let $M$ be the derived model at $\k$. Then $M\models ``\Theta$ is a limit of Nairian cardinals." 
\end{conjecture}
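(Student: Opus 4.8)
We outline a strategy. The plan is to reduce the conjecture to two ingredients: the Koschat--M\"uller--Sargsyan theorem, recalled above, that under the stated hypotheses the derived model at $\kappa$ has cofinally many LSA pointclasses; and Woodin's theorem, also recalled above, that, granting the short extender HOD analysis, every $\mathrm{OD}$-inaccessible cardinal which is a limit of Woodin cardinals in $\H$ is a Nairian cardinal. Write $M$ for the derived model at $\kappa$, i.e.\ $M=L(\Gamma_g,\bR_g)$ for $g\subseteq {\sf{Coll}}(\omega,{<}\kappa)$ generic. Since a supercompact cardinal is regular and a limit of Woodin cardinals, \rthm{dmt} gives $M\models {\sf{AD^+}}$, so by \rdef{sec:ncardinals} the notion of a Nairian cardinal makes sense in $M$, and $M\models$ ``$\Theta$ is a limit of Nairian cardinals'' is equivalent to: the set of points $\theta_\alpha$ of the Solovay sequence of $M$ below $\Theta^M$ for which (computing in $M$) $N^-_{\theta_\alpha}\models\ZF$ is cofinal in $\Theta^M$.

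First I would invoke the Koschat--M\"uller--Sargsyan theorem: because $\kappa$ is supercompact and there is a proper class of supercompacts above it, the Solovay sequence of $M$ has cofinally many points $\theta_\alpha$ that occur as the largest Suslin cardinal of an LSA pointclass. For such $\alpha$, the analysis of LSA hod pairs gives that $\theta_\alpha$ is a limit of Woodin cardinals in $\H^M$ (already as witnessed by $\H^M|\theta_{\alpha+1}$); moreover $\theta_\alpha$ is $\mathrm{OD}$-inaccessible, being a point of the Solovay sequence. Feeding this into Woodin's theorem yields $N^-_{\theta_\alpha}\models\ZF$, i.e.\ $\theta_\alpha$ is a Nairian cardinal of $M$. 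Since the $\theta_\alpha$ so obtained are cofinal in $\Theta^M$, this gives $M\models$ ``$\Theta$ is a limit of Nairian cardinals.''

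The substance of Woodin's theorem, which one must check is applicable at these LSA levels, is that the Woodin cardinals of $\H^M$ cofinal in $\theta_\alpha$ supply, through extender-algebra genericity iterations internal to $\H^M$, enough generic absoluteness to see that the $\omega$-sequences $\bigcup_{\beta<\theta_\alpha}\beta^\omega$ adjoined in forming $N^-_{\theta_\alpha}=L_{\theta_\alpha}(\mH,\bigcup_{\beta<\theta_\alpha}\beta^\omega)$ do not collapse $\theta_\alpha$, so that the resulting structure of height $\theta_\alpha$ still models $\ZF$. An alternative packaging, matching the hod-mouse description of Nairian models given above, would be to fix at each LSA level a hod premouse $\P$ with an inaccessible $\delta$ corresponding to $\theta_\alpha$ and a limit of Woodin cardinals of $\P$ below $\delta$, form the Nairian model $L_\lambda(\M_\infty^\omega)$ of $\P$ at $\delta$, and transfer its $\ZF$-ness to $N^-_{\theta_\alpha}$ via the derived-model correspondence, along the lines of \rthm{sum: chang models}.

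The hard part will be precisely the hypothesis of Woodin's theorem: establishing the short extender HOD analysis for $M$, and with it the assertion that $\H^M$ genuinely sees $\theta_\alpha$ as a limit of its Woodin cardinals at the LSA levels. The derived model at a supercompact with a proper class of supercompacts above lies at, or just past, the current reach of the HOD analysis, so this step will likely require carrying out the analysis level-by-level along the LSA pointclasses produced by Koschat--M\"uller--Sargsyan---each of which is captured by an LSA hod pair that can be analyzed on its own---rather than in one sweep. A secondary, but still necessary, point is to confirm that ``cofinally many LSA pointclasses'' really means cofinal in $\Theta^M$, and not merely unbounded in some proper initial segment of it, which is what guarantees that the Nairian cardinals produced witness $\Theta^M$ itself as their limit.
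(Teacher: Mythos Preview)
The statement you are addressing is a \emph{conjecture} in the paper, not a theorem: the paper explicitly says ``The following conjecture is then a natural next step, though it may be much harder to prove,'' and provides no proof. So there is no paper proof to compare your proposal against.

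Your outline is a reasonable sketch of how one might approach the conjecture, and indeed you have correctly identified both the natural ingredients (the Koschat--M\"uller--Sargsyan result and Woodin's theorem on Nairian cardinals) and the essential obstruction: Woodin's theorem requires the short extender HOD analysis, and this analysis is not known to hold in the derived model at a supercompact with a proper class of supercompacts above. That is precisely why the statement remains a conjecture. Your proposal is therefore not a proof but a plausible program, and you yourself flag the gap accurately in your final paragraph. Note also that the paper's results (e.g., \rcor{zf in c}) establish that certain ordinals are \emph{almost} Nairian cardinals without invoking the full HOD analysis; the authors distinguish these from genuine Nairian cardinals exactly because closing the gap to OD-inaccessibility at the right level requires more than is currently available.
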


\subsection*{Acknowledgements}
The authors thank Benjamin Siskind, John Steel, and W.~Hugh Woodin for helpful conversations about the content of this paper.

\section{Preliminaries}
\subsection{Square principles}
We recall the square principles introduced by Ronald Jensen \cite{Je72}.

\begin{definition}[$\square_{\kappa}$]\label{def of square}\normalfont
Let $\kappa$ be an uncountable cardinal. 
Then there is a sequence $\langle C_{\alpha} : \alpha < \kappa^{+} \rangle$ such that for each $\alpha < \kappa^{+}$,
\begin{itemize}
\item each $C_{\alpha}$ is a closed cofinal subset of $\alpha$,
\item for each limit point $\beta$ of $C_{\alpha}$, $C_{\beta} = C_{\alpha} \cap \beta$, and
\item the ordertype of each $C_\alpha$ is at most $\kappa$.
\end{itemize}
\end{definition}

\begin{definition}[$\square(\gamma, \delta)$]\label{square round}\normalfont  
Let $\gamma$ be an ordinal and $\delta$ a cardinal.
Then there is a sequence $\langle \mathcal{C}_{\alpha} \mid \alpha < \gamma \rangle$ such that
\begin{itemize}
\item
For each $\alpha < \gamma$,
\begin{itemize}
\item
$0 < |\mathcal{C}_{\alpha}| \leq \delta$,
\item
each element of $\mathcal{C}_{\alpha}$ is club in $\alpha$, and
\item
for each member $C$ of $\mathcal{C}_{\alpha}$, and each limit point $\beta$ of $C$, \[C \cap
\beta \in\mathcal{C}_{\beta}.\]
\end{itemize}
\item
There is no thread through the sequence, that is, there is no club $E \subseteq
\gamma$ such that $E \cap \alpha \in \mathcal{C}_{\alpha}$ for every limit point $\alpha$ of $E$.
\end{itemize}
\end{definition}

A $\square(\gamma,\delta)$-\emph{sequence} is a sequence $\langle C_{\alpha} : \alpha < \gamma \rangle$ as in Definition \ref{square round}.
A \emph{potential} $\square(\gamma,\delta)$-sequence is a sequence $\langle C_{\alpha} : \alpha < \gamma \rangle$ satisfying all but the last condition in Definition \ref{square round}.
An elementary argument shows that if the cofinality of $\gamma$ is greater than $|\delta|^{+}$, then each potential $\square(\gamma, \delta)$-sequence has at most $|\delta|$ many threads.
We note that if $\delta < \eta$, then $\square(\kappa, \delta)$ implies $\square(\kappa, \eta)$.
Finally, the square principles are related.
$\square(\gamma,1)$ is the nonthreadability principle $\square(\gamma)$, and
for any cardinal $\kappa$, $\square_{\kappa}$ implies $\square(\kappa^{+})$, as the ordertype condition in the definition of $\square_{\kappa}$ ensures the nonthreadability of the sequence.

Todor\v{c}evi\'c showed \cite{To84} that if $\gamma$ has cofinality at least $\omega_{2}$, then the restriction of the Proper Forcing Axiom ($\PFA$) to partial orders of cardinality $\gamma^{\omega}$ implies $\square(\gamma, \omega_{1})$ fails.
For $\gamma < \omega_3$, this fragment of $\PFA$ follows from $\MM(\mathfrak{c})$, since $\MM(\mathfrak{c})$ implies that $\mathfrak{c} = \aleph_{2}$ \cite{FMS88}.
Our only use of $\MM^{++}(\mathfrak{c})$ will be to apply Todor\v{c}evi\'c's \rthm{todthrm}.

\begin{theorem}[Todor\v{c}evi\'c]\label{todthrm}%\normalfont
	If $\MM(\mathfrak{c})$ holds, and $\gamma \in [\omega_{2}, \omega_{3})$ has cofinality $\omega_{2}$, then $\square(\gamma, \omega_{1})$ fails. 
\end{theorem}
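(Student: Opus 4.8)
The statement to prove is \rthm{todthrm}: if $\MM(\mathfrak{c})$ holds and $\gamma \in [\omega_2, \omega_3)$ has cofinality $\omega_2$, then $\square(\gamma, \omega_1)$ fails.

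\textbf{The plan is to} derive this from the more general theorem of Todor\v{c}evi\'c, already cited in the text, that if $\gamma$ has cofinality at least $\omega_2$, then the restriction of $\PFA$ to partial orders of cardinality $\gamma^\omega$ implies the failure of $\square(\gamma,\omega_1)$. So the proof reduces to two observations: (a) under $\MM(\mathfrak{c})$ we have $\mathfrak{c} = \aleph_2$, by \cite{FMS88}; and (b) for $\gamma \in [\omega_2,\omega_3)$, we have $\gamma^\omega \leq \mathfrak{c}$, so that the relevant fragment of $\PFA$ is available. First I would record (a), citing the fact that $\MM(\mathfrak{c})$ implies $\mathfrak{c} = \aleph_2$. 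For (b), note that $|\gamma| \leq \omega_2 = \mathfrak{c}$, and since $\gamma < \omega_3 = \mathfrak{c}^+$, cardinal exponentiation gives $\gamma^\omega \leq \mathfrak{c}^\omega = 2^\omega = \mathfrak{c}$ (using $\mathfrak{c} = \aleph_2 \geq 2^{\aleph_0}$ trivially and $\mathfrak{c}^{\aleph_0} = \mathfrak{c}$ for $\mathfrak{c}$ of uncountable cofinality, which holds here since $\mathfrak{c} = \aleph_2$). Hence every partial order of cardinality $\gamma^\omega$ has cardinality at most $\mathfrak{c}$.

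\textbf{Next I would} invoke the hypothesis in the form needed. Since $\gamma$ has cofinality $\omega_2 \geq \omega_2$, Todor\v{c}evi\'c's general result applies: the fragment of $\PFA$ for posets of size $\gamma^\omega$ implies $\square(\gamma,\omega_1)$ fails. By (b), this fragment is a consequence of $\PFA(\mathfrak{c})$ — the restriction of $\PFA$ to posets of size at most $\mathfrak{c}$ — which in turn follows from $\MM(\mathfrak{c})$, as $\MM$ strengthens $\PFA$ (every proper forcing preserves stationary subsets of $\omega_1$). Chaining these implications yields that $\MM(\mathfrak{c})$ implies $\square(\gamma,\omega_1)$ fails, which is exactly the statement.

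\textbf{The only real subtlety} — and the main thing to get right rather than a genuine obstacle — is the bookkeeping on cardinal arithmetic: one must be careful that $\gamma^\omega$ is genuinely bounded by $\mathfrak{c}$ and not by $\mathfrak{c}^+$, which is where the hypothesis $\gamma < \omega_3$ (as opposed to $\gamma \leq \omega_3$) is used, together with $\mathfrak{c} = \aleph_2$. If instead $\gamma = \omega_3$ were allowed, then $\gamma^\omega$ could be $\aleph_3 > \mathfrak{c}$ and the argument would break — which is consistent with the fact that $\square(\omega_3,\omega_1) = \square(\omega_3)$ is genuinely not settled by $\MM(\mathfrak{c})$ (it needs $\MM^{++}(\mathfrak{c}^+)$), a point the introduction already emphasizes. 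So the proof is short: establish $\mathfrak{c} = \aleph_2$, bound $\gamma^\omega \leq \mathfrak{c}$, and quote Todor\v{c}evi\'c.
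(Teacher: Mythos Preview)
Your proposal is correct and takes essentially the same approach as the paper: the paragraph immediately preceding \rthm{todthrm} already records precisely this derivation --- cite Todor\v{c}evi\'c's general result that $\PFA$ restricted to posets of size $\gamma^{\omega}$ kills $\square(\gamma,\omega_1)$ when $\cf(\gamma)\geq\omega_2$, then use $\MM(\mathfrak{c})\Rightarrow\mathfrak{c}=\aleph_2$ to bound $\gamma^{\omega}\leq\mathfrak{c}$ for $\gamma<\omega_3$. One small correction to your aside: $\square(\gamma,1)$, not $\square(\gamma,\omega_1)$, is what the paper identifies with $\square(\gamma)$.
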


Theorem \ref{weakmain} is one formulation of the main theorem of the first part of this paper (see Theorem \ref{mainthrm}).

\begin{theorem}\label{weakmain}%\normalfont  
The consistency of $\ZFC$ plus the existence of a Woodin limit of Woodin cardinals implies, for each $n \in \omega$, the consistency of $\ZFC$ + $\aleph_2^\omega=\aleph_2$ + $\forall m \in (n\setminus 2) (\neg\square(\omega_{m}, \omega))$.
\end{theorem}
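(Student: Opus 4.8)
The plan is to realize the stated theory in a $\pmax$-style forcing extension of a Nairian model, with Todor\v{c}evi\'c's \rthm{todthrm} supplying the failure of $\square(\omega_2,\omega)$ and the noncompact combinatorial structure of the Nairian model \emph{above} its $\Theta$ supplying the failures of $\square(\omega_m,\omega)$ for $3\le m\le n-1$. First I would fix $n$ and invoke \cite{LDC}: the consistency of $\ZFC$ with a Woodin limit of Woodin cardinals yields the consistency of the hypothesis of \rthm{main theorem} tailored to $n$, so I may assume $V$ is a hod mouse with an inaccessible limit of Woodin cardinals $\l$, a ${<}\l$-strong cardinal $\k<\l$, and, below the appropriate point of the Solovay sequence of the derived model at $\l$, as many strong cardinals that are limits of Woodin cardinals as the construction for $n$ requires. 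This is the step in which, through \rcor{zf in c} and the analysis behind it, the consistency strength is held below a Woodin limit of Woodins.

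Working in $V$, I would apply \rthm{sum: chang models} to get a transitive $M\of H_{\l^+}^V$ and $j\colon H_{\l^+}^V\to M$, fix a $V$-generic $g\subseteq{\sf{Coll}}(\omega,{<}\l)$, and set $N=L(M^\omega,\Gamma_g,\bR_g)$, the relevant Nairian model. By \rthm{sum: chang models}, $N\models{\sf{AD^+}}$; and by the analysis of the second part of the paper (\rthm{main theorem}) this particular $N$ satisfies the package $\bowtie$ of \rdef{bowtie}. Two features of $\bowtie$ are used. First, because $N$ is built over the derived model $L(\Gamma_g,\bR_g)$, it models $\treg$ and Woodin's $\pmax$ machinery behind \rthm{woodin mmc} and \cite{Wo10} applies to it. Second, and crucially, $N$ has a regular $\Theta^N$ and carries noncompact structure on an interval of $N$-regular successor cardinals just above $\Theta^N$ — of length calibrated to $n$, with members that are $j$-images of the strong-limits-of-Woodins — in the sense that for each such cardinal $\d$ every potential $\square(\d,\omega)$-sequence of $N$ is threadable in $N$; this is exactly what fails in $L(\powerset(\bR))$ and in mice over $\powerset(\bR)$, where $\square_\Theta$ holds (cf.\ \cite{SquareCore}). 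The cofinality bookkeeping in $\bowtie$ also records that below $\Theta^N$ the cardinals $\omega_3^N,\omega_4^N,\dots$ have cofinality $\omega_2^N$.

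Next I would force over $N$ with $\FP=\pmax*\Add(\omega_3,1)*\Add(\omega_4,1)*\cdots*\Add(\omega_{n+1},1)$, the top index calibrated to $n$, and let $W=N[G]$. The $\pmax$-factor is $\sigma$-closed, adds no reals, forces ${\sf{MM^{++}}}(\mathfrak{c})$ by the Nairian form of \rthm{woodin mmc}, and collapses $\Theta^N$ to $\aleph_2^W$ while preserving cardinals above $\Theta^N$, so the noncompact interval of $N$ is re-indexed onto an initial segment of the cardinals $\omega_m^W$ with $m\ge 3$; since $\MM(\mathfrak{c})$ gives $\mathfrak{c}=\aleph_2$ we obtain $\aleph_2^\omega=\aleph_2$ in $W$, hence $\aleph_m^\omega=\aleph_m$ for all $m\in\omega$ by the Hausdorff formula. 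Each factor $\Add(\omega_{i+1},1)$ is ${<}\omega_{i+1}$-closed and $\omega_{i+2}$-c.c., so it adds no bounded subsets of $\omega_{i+1}$, preserves all cardinals and cofinalities ${\le}\omega_i$, well-orders $H_{\omega_{i+1}}$, and — being ${\le}\aleph_1$-closed — preserves $\MM(\mathfrak{c})$; running through all the factors, and using that $N$ is $L$-like above the top of its noncompact interval, gives $W\models\ZFC$.

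Finally I would verify the square failures in $W$. For $m=2$: $\MM(\mathfrak{c})$ holds in $W$ and $\gamma=\omega_2\in[\omega_2,\omega_3)$ has cofinality $\omega_2$, so \rthm{todthrm} gives $\neg\square(\omega_2,\omega_1)$, and $\neg\square(\omega_2,\omega)$ follows since $\square(\omega_2,\omega)$ implies $\square(\omega_2,\omega_1)$. For $3\le m\le n-1$: under the $\pmax$-re-indexing, $\omega_m^W$ is a cardinal $\d$ that in $N$ lay on the noncompact interval above $\Theta^N$, so $N$ has no non-threadable potential $\square(\d,\omega)$-sequence; the package $\bowtie$ is designed precisely so that the closure and chain condition of $\FP$ transfer this down — every potential $\square(\omega_m,\omega)$-sequence of $W$ has at most $\omega$ threads by the elementary remark after \rdef{square round}, and the residual structure inherited from $N$ forces one of them to survive — whence $\square(\omega_m,\omega)$ fails in $W$. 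Thus $W\models\ZFC+\aleph_2^\omega=\aleph_2+\forall m\in(n\setminus 2)\,\neg\square(\omega_m,\omega)$, as required. The hard part will be this last clause and its input: establishing that the Nairian model genuinely has noncompact structure above its $\Theta$ — that it satisfies $\bowtie$, which is the work of \rthm{main theorem} and the descriptive-inner-model-theoretic analysis of Nairian models — and showing that $\FP$ preserves this noncompactness once it has been re-indexed onto the $\omega_m^W$'s, which is the content of \rthm{mainthrm}.
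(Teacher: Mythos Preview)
Your proposal misidentifies the mechanism by which $\neg\square(\omega_m,\omega)$ is obtained for $m\geq 3$, and the content you attribute to the package $\Join^n_\lambda$ is not there.

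You assert that in the Nairian model $N$ ``every potential $\square(\d,\omega)$-sequence of $N$ is threadable in $N$'' for $\d$ on the interval above $\Theta^N$, and that $\Join^n_\lambda$ (your $\bowtie$) is designed so that this threadability transfers through the forcing. But the clauses of \rdef{bowtie} say nothing about threads of coherent sequences: they record cardinal structure (the $\kappa_i$ are regular in $\c^+_\lambda$), powerset agreement between $\c^-_{\kappa_{i+1}}$ and $\c^+_\lambda$, and---crucially---that $\lambda<\Theta$ and $\cf(\kappa_{i+1})\geq\kappa$ \emph{in the ambient model}. There is no internal-threadability property, and the paper never proves one.

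The actual argument (\rthm{theorem:vanilla} and \S\ref{proofmainthrm}) is a two-model trick. The Nairian model $\c^+_\lambda$ sits inside a larger model $M_1$ of $\ddagger$ as $\H^{M_1}_{\lambda^\omega}$, and the point of clause~(2) of $\Join^n_\lambda$ is that \emph{all} of the $\kappa_i$ lie below $\Theta^{M_1}$. One forces $\pmax*\bbQ$ over $M_1$, not just over $\c^+_\lambda$. In $M_1[G]$ the $\kappa_i$ have cofinality $\omega_2$ (\rlem{lemma:coflemma}), $\MM^{++}(\mathfrak{c})$ holds, and $\bbQ$ is ${<}\omega_2$-directed closed there (this uses items~(\ref{job1})--(\ref{job2}) of \S\ref{threadsec}, in particular that $\omega_1$-sequences of conditions in $M_1[G]$ already lie in $\c^+_\lambda[G]$). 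Hence in $M_1[G][I]$, Todor\v{c}evi\'c's theorem threads any potential $\square(\kappa_i,\omega)$-sequence coming from $\c^+_\lambda[G][I]$. Since such a sequence has at most $\omega$ threads and a $\pmax*\dot\bbQ$-name in $\c^+_\lambda$, some thread is ordinal definable from the same parameters in $M_1$, hence lies in $\H^{M_1}_{\lambda^\omega}=\c^+_\lambda$. So the thread exists in $\c^+_\lambda[G][I]$, not because the Nairian model threads its own sequences, but because the surrounding $\AD^+$ model does the work and definability pulls it down.

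A smaller error: $\pmax$ does not collapse $\Theta^N$ to $\aleph_2$; it makes $\Theta^N=\omega_3^{N[G]}$ (item~(4)(b) of \S\ref{pmaxssec}), so your re-indexing is off by one throughout.
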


It should be possible, given Todor\v{c}evi\'c's theorem, to replace each instance of $\neg\square(\omega_{n}, \omega)$ with $\neg\square(\omega_{n}, \omega_1)$.

\subsection{The Nairian Model at $\lambda$ and $\Join^\xi_{\lambda}$}\label{cmjoinssec}
If $A\subseteq\breals$, then $w(A)$ denotes the Wadge rank of $A$.
For any ordinal $\alpha$, $\Delta_{\alpha}$ denotes the set of sets of reals of Wadge rank less than $\alpha$.

We will work with models of $\AD^+$\footnote{An ostensible extension of the Axiom of Determinacy due to Woodin, see \cite{La17}.} in which some ordinal $\lambda$ satisfies the following principle for all $n \in \omega$.

\begin{definition}[$\Join^{n}_\lambda$]\label{bowtie} \normalfont
%For each $n\in \omega$, 
%$\Join^{n}_\lambda$ is the statement that $\AD^{+}$ holds and, setting $\kappa=\Theta^{\c_\lambda}$,
Let $\kappa=\Theta^{\c_\lambda}$.
Then $\Join^{n}_\lambda$ is the statement that
\begin{enumerate}
\item the order type of the set $C=\{\tau\in [\k, \l]: \c_\l\models``\tau$ is a cardinal$"\}$ is at least $n+1$,
\item $\l<\Theta$ and $\l\in C$,
\item $\kappa$ is a regular member of the Solovay sequence,
\item letting $\langle \k_i: i\leq n \rangle$ be the first $n+1$ members of $C$ enumerated in increasing order,\footnote{Thus, $\k_0=\k$.} for every $i\leq n$, $\c^+_{\lambda}\models ``\k_i$ is a regular cardinal",
\item $\c_\lambda^{-} \cap \cP(\bR)
%= \c_\a\cap \powerset(\bR)
=\c_{\lambda}^+\cap \cP(\bR)=\Delta_\kappa$,
\item\label{bowtiesix} for all $i\leq n$, $\cP(\k_i^{\omega}) \cap \c_{\k_{i+1}}^- = \cP(\k_i^{\omega}) \cap \c^{+}_{\lambda}$, and
\item for all $i+1\leq n$, $\cf(\k_{i+1}) \geq \kappa$. 
%\item $\kappa\leq \cf(\lambda)$.
\end{enumerate}
\end{definition}

In light of Proposition \ref{Thetarealsrem}, $\Join^{m}_{\lambda}$ implies $\Join^{n}_{\kappa_{n}}$ for all $n \leq m$ in $\omega$. %, where $\kappa_{n}$ is as in the definition of $\Join^{m}_{\lambda}$. 
Since $\AD^{+}$ implies that successor members of the Solovay sequence below $\Theta$ have countable cofinality \cite[Theorem 13.12]{La17}, $\AD^{+}$ + $\Join_{n}^\xi$ implies that $\kappa = \Theta^{\c_\lambda}$ is a limit member of the Solovay sequence.
Woodin has shown that $\AD^{+}$ implies that $\AD^{+}$ holds in every inner model of $\ZF$ containing $\bR$ and that $\ADR$ holds if and only if the Solovay sequence has limit length (see \cite{La17}).
Thus $\Join^{0}_{\lambda}$ implies $L(\Delta_{\kappa}) \models \ADR$.

\begin{comment}
Woodin has shown that $\AD^{+}$ implies (see \cite{La17}):
\begin{itemize}
\item $\AD^{+}$ holds in every inner model of $\ZF$ containing $\bR$;
\item $\ADR$ holds if and only if the Solovay sequence has limit length.
\end{itemize}
Thus $\Join^{0}_{\lambda}$ implies $L(\Delta_{\kappa}) \models \ADR$, where again $\kappa = \Theta^{\c_{\lambda}}$.
\end{comment}

Let $\ddagger$ stand for the theory $\sf{ZF}$ + $V=L(\cP(\bR))$ + $\sf{AD}_{\mathbb{R}}$ + ``$\Theta$ is regular", which, as we have just seen, is satisfied by $L(\Delta_{\kappa})$. 
Results of Solovay \cite{So78} show that $\ddagger$ implies $\DC$ (the statement that every tree of height $\omega$ without terminal nodes has a cofinal branch) and that the sharp of each set of reals exists.
By results of Becker and Woodin, $\ADR + \DC$ implies that all subsets of $\breals$ are Suslin, and thus that $\AD^{+}$ holds (see \cite{La17}).

Proposition \ref{Thetarealsrem} shows that the third item in the definition of $\Join^{n}_{\lambda}$ could equivalently be replaced with the statement that $\c^{+}_{\lambda} \cap \cP(\bR) = \Delta_{\kappa}$. 

\begin{proposition}[$\ADR$ + ``$\Theta$ is regular"]\label{Thetarealsrem}
$\cP(\bR) \subseteq \c^{-}_{\Theta}$.
\end{proposition}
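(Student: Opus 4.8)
The plan is to prove, by induction on $\alpha < \Theta$, that for every $\alpha$ there is an ordinal $\eta_{\alpha} < \Theta$ with $\Delta_{\alpha} \subseteq \c^{-}_{\eta_{\alpha}}$, where $\Delta_{\alpha}$ is, as above, the set of subsets of $\bR$ of Wadge rank below $\alpha$. Under $\ADR$ the Wadge prewellordering on $\cP(\bR)$ is wellfounded of length exactly $\Theta$ (its wellfoundedness being standard under the available choice principles, e.g. via Martin's argument from the $\DC$ that holds in the models to which we apply this), so $\cP(\bR) = \bigcup_{\alpha<\Theta}\Delta_{\alpha}$; granting the induction, $\cP(\bR) = \bigcup_{\alpha<\Theta}\Delta_{\alpha} \subseteq \bigcup_{\alpha<\Theta}\c^{-}_{\eta_{\alpha}} \subseteq \c^{-}_{\Theta}$ since each $\eta_{\alpha}<\Theta$.

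I will use only elementary facts about $\c^{-}_{\gamma} = L_{\gamma}(\mH\rest\gamma,\bigcup_{\beta<\gamma}\beta^{\omega})$: it is transitive of ordinal height $\gamma$, $\c^{-}_{\Theta} = \bigcup_{\gamma<\Theta}\c^{-}_{\gamma}$, it contains $\bR$ (identified with $\omega^{\omega}$, and appearing as an element by stage $\omega+2$) and each $\alpha^{\omega}$, and, being transitive and closed under ``definable subset'', it evaluates correctly the action of continuous functions coded by reals it contains and the usual countable (and $\delta$-ary, $\delta<\Theta$) boolean and Hausdorff-difference operations on sequences of sets it contains. The substantive ingredient is the structure theory of the Wadge hierarchy under determinacy, in the tradition of Steel--Van Wesep: there is a sequence $\langle B_{\alpha} : \alpha < \Theta\rangle$ with $w(B_{\alpha}) = \alpha$ such that each $B_{\alpha}$ is a fixed, absolutely specified set operation $\Phi_{\alpha}$ applied to $\langle B_{\beta} : \beta < \alpha\rangle$, the index of $\Phi_{\alpha}$ depending only on the position of $\alpha$ in the hierarchy (successor versus limit, cofinality, block boundaries); and, by Wadge's Lemma, every $A$ with $w(A) = \alpha$ equals $f^{-1}[B_{\alpha}]$ or $\bR \setminus f^{-1}[B_{\alpha}]$ for some continuous $f$ coded by a real.

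For the induction I define $\eta_{\alpha}$ by recursion so that $\langle B_{\beta} : \beta < \alpha\rangle \in \c^{-}_{\eta_{\alpha}}$ and $\Delta_{\alpha} \subseteq \c^{-}_{\eta_{\alpha}}$. The base case is trivial. At a limit $\alpha < \Theta$ I set $\eta_{\alpha} = (\sup_{\beta<\alpha}\eta_{\beta})+\omega$; this lies below $\Theta$ \emph{precisely because $\Theta$ is regular and $\alpha<\Theta$} --- the one essential use of the regularity hypothesis --- and by the standard treatment of definable recursions in $L$-like hierarchies the coherent partial sequences $\langle B_{\beta}:\beta<\delta\rangle$ for $\delta<\alpha$ assemble into $\langle B_{\beta}:\beta<\alpha\rangle$ below $\eta_{\alpha}$, while $\Delta_{\alpha}=\bigcup_{\beta<\alpha}\Delta_{\beta}$ is covered. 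At a successor $\alpha+1$: $B_{\alpha}=\Phi_{\alpha}(\langle B_{\beta}:\beta<\alpha\rangle)\in\c^{-}_{\eta_{\alpha}+1}$, whence the set $\{A\in\c^{-}_{\eta_{\alpha}+1} : A\subseteq\bR \text{ and } (A\le_{W}B_{\alpha}\text{ or }A\le_{W}\bR\setminus B_{\alpha})\}$, whose definition is absolute enough for $\c^{-}$, equals $\Delta_{\alpha+1}$ and lies in $\c^{-}_{\eta_{\alpha}+2}$; and every rank-$\alpha$ set, being a continuous preimage of $B_{\alpha}$ or of its complement, is definable over $\c^{-}_{\eta_{\alpha}+2}$ from $B_{\alpha}$, $\bR$, and a real, so $\Delta_{\alpha+1}\subseteq\c^{-}_{\eta_{\alpha}+3}$ and $\langle B_{\beta}:\beta<\alpha+1\rangle\in\c^{-}_{\eta_{\alpha}+3}$. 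Set $\eta_{\alpha+1}=\eta_{\alpha}+3$. The finite offsets are not important; the mechanism is that successor steps cost a fixed finite amount and limit steps a supremum that regularity keeps below $\Theta$.

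The hard part is the structure-theory input just invoked: that the Wadge degrees are generated from below by a canonical, uniformly described family of set operations --- so that the representative sequence $\langle B_{\alpha}\rangle$ exists and is obtained by a recursion of bounded length that $\c^{-}$ can run internally --- and that these operations are evaluated correctly inside each $\c^{-}_{\gamma}$. Everything else is bookkeeping. One can instead phrase the successor step through Moschovakis's Coding Lemma: from $\Delta_{\alpha}\in\c^{-}$ one builds inside $\c^{-}$ a surjection of $\bR$ onto $\Delta_{\alpha}$ and hence a prewellordering of $\bR$ of length $\alpha$, then uses the Coding Lemma to represent any prescribed rank-$\alpha$ set as a set projective in that prewellordering; but this rests on the same facts (uniform choice of Wadge representatives, the Coding Lemma), so I would present the structure-theory version, and I would simply cite the wellfoundedness of the Wadge order under $\ADR+$ ``$\Theta$ regular''.
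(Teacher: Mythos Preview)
Your approach is quite different from the paper's, and it has a real gap at exactly the place you yourself flag as ``the hard part''. The paper's argument is short and rests on two well-known facts: under $\ADR$ with $\Theta$ regular every set of reals is homogeneously Suslin, and under $\AD$ every ultrafilter on finite subsets of an ordinal below $\Theta$ is $\OD$ from an ordinal below $\Theta$. One fixes an $\OD$ enumeration $\langle \nu_\alpha : \alpha<\Theta\rangle$ of all such measures and the $\OD$ set $T\subseteq\Theta^4$ recording which measures project to which and how the factor maps act on ordinals. For a given $A$, the homogeneity system $\langle\mu_s:s\in\omega^{<\omega}\rangle$ is then coded by a single $f\colon\omega^{<\omega}\to\Theta$, and $x\in A$ iff there is no $g\colon\omega\to\Theta$ witnessing illfoundedness of the tower along $x$ --- a condition phrased entirely in terms of $T$, $f$, and $\omega$-sequences of ordinals. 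Since $T$ is $\OD$ it is read off from the $\mH$ predicate, and $f$ lies in $\bigcup_{\beta<\Theta}\beta^\omega$, so $A\in\c^{-}_\Theta$.

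Your route via the Wadge hierarchy rests on the claim that there is a canonical sequence $\langle B_\alpha:\alpha<\Theta\rangle$ with $B_\alpha=\Phi_\alpha(\langle B_\beta:\beta<\alpha\rangle)$ for a \emph{uniformly describable} family of operations $\Phi_\alpha$. This is not something one can ``simply cite'': it amounts to a complete structural analysis of the Wadge hierarchy under $\AD$ up to $\Theta$, which is a long-standing open research program. Wadge's thesis handled the Borel degrees; Steel, Van Wesep, Louveau, Duparc and others pushed further; but no uniform description all the way to $\Theta$ under $\ADR$ is established. Without it, the recursion you want $\c^{-}$ to run internally has no defined successor step in general. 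Your alternative via the Coding Lemma is closer in spirit to something workable, but to make it go you would still need to show that the relevant codes are $\OD$ from ordinals and countable sequences of ordinals --- and the cleanest route to that is precisely the homogeneity-system argument the paper gives.
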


\begin{proof}
Since $\AD$ holds, every ultrafilter on the set of finite subsets of any element of $\Theta$ is ordinal definable from an element of $\Theta$.
There is then an $\OD$-wellordering $\langle \nu_{\alpha} : \alpha < \Theta \rangle$ of these ultrafilters.
Let $T$ be set of tuples $(\alpha, \beta, \gamma, \delta)$ such that $\nu_\beta$ projects to $\nu_\alpha$, and the corresponding factor map sends $\gamma$ above $\delta$.

Fix $A \subseteq \breals$. Then (since $\ADR$ holds and $\Theta$ is regular) $A$ is homogeneously Suslin,
as witnessed by some set of measures $\langle \mu_{s} : s \in \omega^{\less\omega}\rangle$. Let $f \colon \omega^{\less\omega} \to \Theta$ be such that each $\mu_{s}$ is $\nu_{f(s)}$. Then $A$ is the set of $x \in \breals$ for which there is no $g \colon \omega \to \Theta$ such that, for all $n \in \omega$, \[(f(x \rest n), f(x \rest (n+1)), g(n), g(n+1)) \in T.\] 
\end{proof}

In the first part of the paper, we will work with models of $\exists \lambda \Join^{n}_{\lambda}$ for positive integers $n$.
Such models are given by the following theorem from the second part of this paper (see Theorem \ref{main theorem}).

\begin{theorem}\label{inputthrm}%\normalfont 
  Suppose there exists a Woodin cardinal which is a limit of Woodin cardinals.
  Then in a forcing extension, there is an inner model satisfying $\ddagger$  + $\exists \lambda \forall n < \omega \Join^{n}_{\lambda}$.
\end{theorem}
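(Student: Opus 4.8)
The desired inner model is the Chang-type model produced by the construction of the second part of the paper, whose analysis is summarized in \rthm{main theorem}; the plan is to describe that construction and then indicate the small amount of extra work needed to read off $\ddagger$. The starting point is a suitable \emph{hod mouse}. From the hypothesis that there is a Woodin cardinal which is a limit of Woodin cardinals, one first passes to a set forcing extension in which there is a hod premouse $\P \models \ZFC$ containing an inaccessible cardinal $\lambda^{*}$ that is a \emph{Woodin} limit of Woodin cardinals of $\P$ and is, moreover, a limit of an $\omega$-sequence $\langle \delta_{i} : i < \omega \rangle$ of $\P$-cardinals each of which is ${<}\lambda^{*}$-strong in $\P$ and a limit of Woodin cardinals of $\P$. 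That such a $\P$ exists below a Woodin limit of Woodin cardinals is exactly the lower-bound result of \cite{LDC} (which is why the hypothesis of \rthm{main theorem} has strictly smaller consistency strength than a Woodin limit of Woodins); concretely one builds $\P$ as an iterate of the minimal such hod mouse, using the iterability supplied by \cite{Ne02wlw}.

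Next, apply \rthm{sum: chang models} to the pair $(\P, \lambda^{*})$ to obtain a transitive $M \subseteq H^{\P}_{(\lambda^{*})^{+}}$ and an elementary embedding $j \colon H^{\P}_{(\lambda^{*})^{+}} \to M$; fix a $\P$-generic $g \subseteq {\sf{Coll}}(\omega, {<}\lambda^{*})$, and set $W = L(M^{\omega}, \Gamma_{g}, \bR_{g})$, so that $W \models \AD^{+}$. Since elements of $M$ have hereditary size at most $\lambda^{*}$ in $\P$, $M$ is coded by a single set of reals of $\bR_{g}$, whence $M^{\omega}$ is generated from the reals of $W$ and $W = L(\cP(\bR)^{W})$, i.e. $W \models V = L(\cP(\bR))$. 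Because $\lambda^{*}$ was chosen to be Woodin (not merely a limit of Woodins), the Solovay sequence of $W$ has limit length and $\Theta^{W}$ is regular --- this is the $\treg$ situation obtained in \cite{HMMSC} relative to a Woodin limit of Woodins --- so $W \models \ddagger$. The ordinal witnessing $\exists \lambda \, \forall n \, \Join^{n}_{\lambda}$ in $W$ will be $\lambda := j(\lambda^{*})$ (equivalently $\mathrm{Ord} \cap N$ for the $N$ of \rcon{cardinals are reg}), chosen so that, under the HOD analysis of $W$, $\lambda$ is the successor in the Solovay sequence of $\kappa := \Theta^{\c_{\lambda}}$, with $\H^{W} \models$ ``$\lambda$ is a limit of Woodins'' as in \rcon{ad version}; then $\kappa$ is the sup of the $j$-images of the Woodins of $\P$ below $\delta_{0}$, and the images $\kappa_{i}$ of $\delta_{i}$, together with $\lambda$, enumerate the set $C$ of \rdef{bowtie}.

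It remains to verify the clauses of $\Join^{n}_{\lambda}$ in $W$ uniformly in $n$; equivalently (by the discussion after \rdef{bowtie}, and since clauses (1),(2) are $n$-free once $C$ is infinite and clauses (4),(6),(7) are quantified over $i \leq n$) one proves the $\omega$-version directly: $C$ is infinite, clause (3) holds, and clauses (4)--(7) hold for all $i < \omega$. Clauses (1)--(3) are the computation of the cardinal structure of $\c_{\lambda}$ on $[\kappa, \lambda]$: $C$ is infinite, $\lambda \in C$ and $\lambda < \Theta^{W}$, and $\kappa$ is a regular member of the Solovay sequence --- regular because it is a \emph{limit} member (no successor member below $\Theta$ is regular) arising from $\lambda^{*}$ Woodin. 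Clause (4), that each $\kappa_{i}$ is regular in $\c^{+}_{\lambda} = \H_{\cup_{\beta < \lambda}\beta^{\omega}}$, holds because $\kappa_{i}$ is the image of a ${<}\lambda^{*}$-strong cardinal and $j$ is sufficiently closed that this regularity survives. Clauses (5), (6) --- that $\c^{-}_{\lambda} \cap \cP(\bR) = \c^{+}_{\lambda} \cap \cP(\bR) = \Delta_{\kappa}$ and that $\cP(\kappa_{i}^{\omega}) \cap \c^{-}_{\kappa_{i+1}} = \cP(\kappa_{i}^{\omega}) \cap \c^{+}_{\lambda}$ --- and clause (7), that $\cf(\kappa_{i+1}) \geq \kappa$ in $W$, are the substantial points; they constitute the restricted form of \rcon{ad version} proved in the second part of the paper. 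The inclusion $\Delta_{\kappa} \subseteq \c^{-}_{\lambda}$ in (5) is an instance of \rprop{Thetarealsrem} applied below $\kappa$ (every set of reals of Wadge rank below a regular member of the Solovay sequence is homogeneously Suslin, hence captured by a predicate of ordinals of $\c^{-}_{\lambda}$); the reverse inclusions and clauses (6)--(7) require an internal direct-limit / HOD analysis of $W$ showing that the Nairian hierarchy is, between consecutive members of $C$, absolute enough to compute these powersets and that no short cofinal sequence into $\kappa_{i+1}$ can appear (any such would be coded by a set of reals of Wadge rank $< \kappa$, contradicting the $\OD$-inaccessibility of $\kappa_{i+1}$'s Solovay-level).

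I expect the main obstacle to be precisely clauses (5)--(7): controlling exactly which sets of reals and which subsets of $\kappa_{i}^{\omega}$ land in $\c^{-}_{\lambda}$, $\c_{\lambda}$ and $\c^{+}_{\lambda}$ requires the full structural analysis of the model $L(M^{\omega}, \Gamma_{g}, \bR_{g})$ carried out in the second part of the paper, exploiting the self-iterability of $\P$ inherited by $W$. By contrast, the Derived Model apparatus of \rthm{dmt} and \rthm{sum: chang models} and the verification of $\ddagger$ are essentially off the shelf; the real work is the powerset-and-cofinality computation inside the Chang-type model.
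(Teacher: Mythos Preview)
The central gap is your choice of target model. You take $W = L(M^{\omega}, \Gamma_{g}, \bR_{g})$, the Chang-type model of \rthm{sum: chang models}, and argue $W \models V = L(\cP(\bR))$ on the grounds that ``$M^{\omega}$ is generated from the reals of $W$.'' This is false, and indeed the whole point of the Chang-type models (as the introduction emphasizes in \S1.3) is that they carry nontrivial structure above $\Theta$ \emph{not} captured by their sets of reals --- that is precisely what allows $\neg\square_{\Theta}$. While each individual $\omega$-sequence from $M$ can be coded by a real, the \emph{set} $M^{\omega}$ of all such sequences is not a set of reals and need not lie in $L(\cP(\bR)^{W})$. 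So $W \not\models \ddagger$. The paper instead uses the ordinary derived model $M$ of the hod premouse $\V$ at $\eta$; this satisfies $V = L(\cP(\bR))$ by construction, and the Nairian models $\c^{-}_{\lambda}, \c_{\lambda}, \c^{+}_{\lambda}$ are then computed \emph{inside} $M$ using the direct limit $\M_{\infty}$ (which is $V_{\Theta}^{\H^{M}}$ by the HOD analysis).

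Your large-cardinal configuration and choice of $\lambda$ also diverge from the paper. You put a Woodin limit of Woodins $\lambda^{*}$ inside the hod mouse and set $\lambda = j(\lambda^{*})$, invoking \cite{HMMSC} for $\treg$; but \cite{HMMSC} concerns the derived model, not your $W$, and in any case the paper's hod premouse $\V$ needs only the weaker configuration $\xi < \d < \eta$ of \rthm{main theorem}: $\eta$ an (inaccessible) limit of Woodins, $\d$ a single Woodin, and $\xi$ the \emph{least} inaccessible limit of ${<}\d$-strongs that is also a limit of Woodins. The witness is $\lambda = \pi_{\V|\eta, \infty}(\xi)$, and the $\kappa_{i}$ are the images of the small ${<}\d$-strong cardinals below $\xi$, shown in \rthm{strongs are successors} to be exactly the successor cardinals of $\c_{\lambda}$ above $\kappa$ (with $\kappa_{0} = \kappa$; your description has $\kappa$ strictly below $\kappa_{0}$, which contradicts \rdef{bowtie}). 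The smallness of $\xi$ is not incidental: it is used throughout \S7.2 (see \rter{small cardinal} and \rlem{main lemma on small ordinals}) to control the iteration-tree analysis behind clauses (4)--(7).
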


\subsection{Variants of $\DC$}

The principle of Dependent Choice ($\DC$) can be varied by restricting the nodes of the tree to some set, or by considering trees of uncountable height.

Given a set $X$ and a cardinal $\gamma$, let $\DC_{\gamma}\rest X$ denote the restriction of $\DC_{\gamma}$ to binary relations on $X$. (We also call this $\DC_{\gamma}$ \emph{for relations on} $X$.)
It is easy to see that $X$ is wellordered if and only if $\DC_{\gamma} \rest X$ hold for all ordinals $\gamma$. 

Given a binary relation $R$ on a set $X$ and an ordinal $\delta$, say that $f\colon\delta\rightarrow X$ is an $R$-\emph{chain} (of length $\delta$) if $f(\alpha)R f(\beta)$ holds for all ordinals $\alpha < \beta$ below $\delta$.
Given an ordinal $\eta$, say that $R$ is 
$\eta$-\emph{full} if for all $\alpha < \beta < \delta$, every $R$-chain of length $\alpha$ has an extension to an $R$-chain of length $\beta$. 
Then $\sf{DC}_{\gamma}$ holds for an ordinal $\gamma$ if for every ordinal $\eta\leq \gamma$ and $\eta$-full every binary relation $R$, there is an $R$-chain $f \colon \eta \rightarrow X$.
$\DC$ is just $\DC_{\omega}$.

\subsection{$\pmax$}\label{pmaxssec}
We will need the following facts about Woodin's $\pmax$ from \cite{Wo10}:
\begin{enumerate}
\item $\pmax$ conditions are elements of $H(\aleph_{1})$, and the corresponding order is definable in $H(\aleph_{1})$.

\item $\pmax$ is $\sigma$-closed.

\item\label{cofpres}  Forcing with $\pmax$ over a model of $\AD^{+} + \DC$ preserves the property of having cofinality at least $\omega_{2}$ (this follows from a combination of Theorems 3.45 and 9.32 of \cite{Wo10}, as outlined in Section \ref{threadsec}).

\item If $M\models\ZF + \AD^{+}$ and $G\subseteq \pmax^{M}$ is an $M$-generic filter, then the following hold in $M[G]$:
\begin{enumerate}
\item $2^{\aleph_{0}} = \aleph_{2}$; 
\item $\Theta^{M} = \omega_{3}$; 
\item\label{forsee} $\cP(\omega_{1}) \subseteq L_{\omega_{2}}(\bR)[G]$.
\end{enumerate}

\item Forcing with $\pmax$ over a model of $\ddagger$ 
produces a model of $\ZF$ + $\DC_{\aleph_{2}}$ +  $\MM^{++}(\mathfrak{c})$.
\end{enumerate}

Forcing with $\pmax$ over a model of $\ADR$ cannot wellorder $\cP(\bR)$, since if $\tau$ were a $\pmax$-name for such a wellorder then every element of $\cP(\bR)$ would be ordinal definable from $\tau$ and a real number, which would induce a failure of Uniformization.\footnote{As in \cite[Theorem 6.28]{La17}, which is due to Kechris and Solovay.
(Uniformization is the statement that every subset of $\breals \times \breals$ contains a function with the same domain; this follows easily from the existence of a wellordering of $\breals$, and also from $\ADR$.)}
However,  $\DC_{\aleph_{2}} + 2^{\aleph_{0}} = \aleph_{2}$ implies that $\cP(\bR)$ may be wellordered  by forcing with $\Add(\omega_{3}, 1)$.\footnote{For any ordinal $\gamma$, $\Add(\gamma, 1)$ is the partial order adding a generic subset of $\gamma$ by initial segments.}
Since (by $\DC_{\aleph_{2}}$) $\Add(\omega_{3}, 1)$ does not add subsets of $\omega_{2}$, $\MM^{++}(\mathfrak{c})$ is preserved by forcing with $\Add(\omega_{3}, 1)$ in this context.
This gives Theorem \ref{wsthrm}. 

\begin{theorem}[Woodin {\cite[Theorem 9.39]{Wo10}}]\label{wsthrm} %\normalfont
Forcing with \[\pmax * \Add(\omega_{3}, 1)\]
over a model of $\ddagger$ 
%$\ADR$ +  $V = L(\cP(\bR))$ + ``$\Theta$ is regular" 
produces a model of $\ZFC$ + $\MM^{++}(\mathfrak{c})$.
\end{theorem}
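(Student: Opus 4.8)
The plan is to factor the forcing as the two-step iteration $\pmax * \Add(\omega_3,1)$ and handle the two steps in turn, taking the facts collected in Section~\ref{pmaxssec} about $\pmax$ as black boxes. Fix $M \models \ddagger$ and let $G \subseteq \pmax^M$ be $M$-generic. By item~(5) of Section~\ref{pmaxssec}, $M[G] \models \ZF + \DC_{\aleph_2} + \MM^{++}(\mathfrak{c})$, and by items~4(a) and 4(b), $M[G] \models 2^{\aleph_0} = \aleph_2$ and $\omega_3^{M[G]} = \Theta^M$. Since $\ddagger$ includes $V = L(\cP(\bR))$, every element of $M[G]$ is the realization of a $\pmax$-name lying in $M = L(\cP(\bR)^M)$, so $M[G] = L(\cP(\bR)^M)[G]$; this is the point that will ultimately convert a wellordering of $\cP(\bR)^M$ into full $\AC$. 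Finally, since $M \models \ADR + {}$``$\Theta$ regular'', the argument of Proposition~\ref{Thetarealsrem} shows every set of reals of $M$ is Suslin via a tree on $\omega \times \gamma$ for some $\gamma < \Theta$, hence (fixing canonical pairing bijections) is coded by a subset of $\gamma$; as $\gamma < \Theta = \omega_3^{M[G]}$, this gives, definably in $M[G]$, a surjection $e$ from $\cP(\omega_2)^{M[G]}$ onto a family of sets of reals that includes $\cP(\bR)^M$.

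Now let $H \subseteq \Add(\omega_3,1)^{M[G]}$ be $M[G]$-generic and work in $M[G][H]$. First I would record the closure of $\Add(\omega_3,1)$ in this choiceless context: $\DC_{\aleph_2}$ implies $\omega_3$ is regular, so $\Add(\omega_3,1)$ is ${<}\omega_3$-closed, and the standard name-reduction argument --- which only requires choice for sequences of length $\leq \omega_2$, available from $\DC_{\aleph_2}$ --- shows $\Add(\omega_3,1)$ adds no new functions $\omega_2 \to M[G]$, hence no new subsets of $\omega_2$ (and therefore no new reals, no new sets of reals, no new posets of cardinality $\leq \aleph_2$), and preserves cardinals $\leq \omega_3$. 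Next I would read a wellordering off the generic. The generic $H$ is essentially a function $F \colon \omega_3 \to 2$, and for each $p \in \cP(\omega_2)^{M[G]}$ the set $D_p$ of conditions that place the characteristic function of $p$ on some block $[\alpha, \alpha + \omega_2)$ of their domain is dense; the family $\{ D_p : p \in \cP(\omega_2)^{M[G]} \}$ is a set in $M[G]$, so $H$ meets all of them, and sending each $p$ to the least $\alpha$ for which $F \restriction [\alpha, \alpha+\omega_2)$ codes $p$ is injective into $\omega_3$. Composing with $e$ wellorders $\cP(\bR)^M$ (and more) in $M[G][H]$. Since $M[G][H] = L(\cP(\bR)^M)[G][H]$ is $L$ of the now-wellordered set $\cP(\bR)^M$ together with two set-generic filters over wellorderable posets, it carries a definable wellordering of $V$; thus $M[G][H] \models \ZFC$.

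It remains to see $M[G][H] \models \MM^{++}(\mathfrak{c})$. By the closure analysis, $\Add(\omega_3,1)$ adds no new subsets of $\omega_2$, hence (using $|\bR|^{M[G]} = \aleph_2$) no new partial orders of cardinality $\leq \mathfrak{c}$ and no new names of hereditary cardinality $\leq \mathfrak{c}$ for stationary subsets of $\omega_1$; and since it adds no new subsets of $\omega_1$ it preserves stationarity of subsets of $\omega_1$, keeps $2^{\aleph_0} = \aleph_2$, and preserves $\omega_1, \omega_2, \omega_3$ as cardinals. Every instance of $\MM^{++}(\mathfrak{c})$ to be checked in $M[G][H]$ therefore already lives, with all parameters, in $M[G]$, where it holds, and a witnessing filter from $M[G]$ remains a witness in $M[G][H]$; so $\MM^{++}(\mathfrak{c})$ is preserved, completing the argument. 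Granting items~(1)--(5) of Section~\ref{pmaxssec} (the genuinely deep input, due to Woodin), the only parts needing real care are the choiceless closure computation for $\Add(\omega_3,1)$ and the verification that a wellordering of $\cP(\bR)^M$ upgrades to full $\AC$ in $L(\cP(\bR)^M)[G][H]$; I expect the first of these to be the most delicate step, since it is where the absence of global choice has to be navigated carefully.
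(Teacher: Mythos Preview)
Your proposal is correct and follows essentially the same approach as the paper's brief explanation preceding the theorem statement: after $\pmax$ one has $\DC_{\aleph_2}$, $2^{\aleph_0} = \aleph_2$, and $\MM^{++}(\mathfrak{c})$; then $\Add(\omega_3,1)$ adds no subsets of $\omega_2$ (preserving $\MM^{++}(\mathfrak{c})$) and wellorders $\cP(\bR)$, which together with $V = L(\cP(\bR))$ yields $\ZFC$. Your detour through Suslin representations to code sets of reals by subsets of $\omega_2$ is more work than needed---once $2^{\aleph_0} = \aleph_2$ a fixed bijection $\bR \leftrightarrow \omega_2$ already does this---but it is harmless.
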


It follows from Todor\v{c}evi\'c's Theorem \ref{todthrm} that $\square(\omega_{2}, \omega_{1})$ fails in such a $\pmax$ extension.

The results discussed in \textsection\ref{cmjoinssec} imply that assuming $\ddagger$ + $\Join^{n}_{\lambda}$ (for any $n \in \omega$), we have:
\begin{itemize}
\item $\ADR$ +  $V = L(\cP(\bR))$ + ``$\Theta$ is regular,"
\item the sharp of each subset of $\bR$ exists, and
\item $\c^{+}_{\lambda}$ $\models$ $\ADR$ + ``$\Theta$ is regular."
\end{itemize}
In the first part of this paper we will be forcing over a model of the form $\c^{+}_{\lambda}$ satisfying $\Join^{n}_{\lambda}$, for some ordinal $\lambda$ and an arbitrary positive integer $n$. In this case, $\c^{+}_{\lambda}$ is not a model of ``$V =L(\cP(\bR))$", since, being closed under ordinal definability, it contains the sharp of its version of $\cP(\bR)$ (i.e., $\Delta_{\kappa}$).
So we cannot just cite Theorem \ref{wsthrm} for our main result.
In order to force Choice over a model of the form $\c^{+}_{\lambda}$, it suffices to wellorder $\lambda^{\omega}$ (see the beginning of Part I).
To do this, we will force (over a $\pmax$ extension of $\c^{+}_{\lambda}$, writing $\kappa_{i}$ for $(\kappa^{+i})^{\c^{+}_{\lambda}}$) with the iteration 
\[\Add(\kappa_{0}, 1) * \cdots * \Add(\kappa_{n},1),\]
which we will write as $\oast_{i \leq n}\Add(\kappa_{i},1)$ or $(\oast_{i \leq n}\Add(\kappa^{+i}, 1))^{\c^{+}_{\lambda}[G]}$, where $G \subseteq \pmax$ is $V$-generic, noting that the $\kappa_{i}$'s are preserved as cardinals in $\c^{+}_{\lambda}[G]$ (as in item $(\ref{job4})$ from the beginning of Section \ref{threadsec}).

Theorem \ref{mainthrm}, the main theorem of Part 1, builds on \cite{Wo10} and \cite{CLSSSZ}, especially an argument from the proof of \cite[Theorem 7.3]{CLSSSZ}.

\begin{theorem}\label{mainthrm} %\normalfont
Suppose that $V\models \ddagger$, $n$ is a positive integer and $\lambda$ is an ordinal for which $\Join^{n}_{\lambda}$ holds. Let $\kappa = \Theta^{\c_{\lambda}}$ and
let $(G, H)$ be a $V$-generic filter for the forcing iteration
\[(\pmax* \oast_{i \leq n}\Add(\k^{+i}, 1))^{\c^{+}_{\l}}.\]
Then
\[\c^+_{\l}[G, H]\models \ZFC + \MM^{++}(\mathfrak{c}) + \forall i \leq (n + 1)\, \neg\square(\omega_{2+i}, \omega).\]
\end{theorem}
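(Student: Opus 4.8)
The strategy is to start inside $\c^+_\lambda$ (which, by the results of §\ref{cmjoinssec}, satisfies $\ddagger$ together with the extra feature that it is closed under ordinal definability and hence contains the sharp of $\Delta_\kappa$), and then track what the two forcings do cardinal by cardinal. First I would force with $\pmax$ over $\c^+_\lambda$. By the facts collected in §\ref{pmaxssec}, in $\c^+_\lambda[G]$ we get $\DC_{\aleph_2}$, $2^{\aleph_0}=\aleph_2$, $\Theta^{\c^+_\lambda}=\kappa$ becomes $\omega_3$, $\MM^{++}(\mathfrak{c})$ holds, and $\cP(\omega_1)\subseteq L_{\omega_2}(\bR)[G]$. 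The crucial point to extract at this stage, using clauses (3),(4),(7) of $\Join^n_\lambda$ and the cofinality-preservation fact (\ref{cofpres}), is that the ordinals $\kappa_i := (\kappa^{+i})^{\c^+_\lambda}$ for $i\le n$ are \emph{preserved as cardinals} in $\c^+_\lambda[G]$ and that $\cf(\kappa_{i+1})\ge\kappa=\omega_3$ there: $\pmax$ has size $\aleph_1$ so it is too small to collapse anything of cofinality $\ge\omega_2$, and it preserves cofinality $\ge\omega_2$ by (\ref{cofpres}). Thus in $\c^+_\lambda[G]$ the ordinals $\omega_3=\kappa_0<\kappa_1<\cdots<\kappa_n$ are regular cardinals (regular because $\Join^n_\lambda$ clause (4) says $\c^+_\lambda\models$ ``$\kappa_i$ is regular'' and forcing of size $\aleph_1$ cannot change regular cardinals $\ge\omega_2$ into singular ones), and moreover $\kappa_{i+1}=(\kappa_i^+)^{\c^+_\lambda[G]}$ by a counting argument using $\DC_{\aleph_2}$ and clause (\ref{bowtiesix}) of $\Join^n_\lambda$ (which controls $\cP(\kappa_i^\omega)$).

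Next I would force with the iteration $\oast_{i\le n}\Add(\kappa_i,1)$ over $\c^+_\lambda[G]$ to obtain $\c^+_\lambda[G,H]$. Here the book-keeping is: $\Add(\kappa_0,1)=\Add(\omega_3,1)$ is ${<}\omega_3$-closed, so by $\DC_{\aleph_2}$ it adds no new subsets of $\omega_2$ and hence preserves $\MM^{++}(\mathfrak{c})$ and $2^{\aleph_0}=\aleph_2$ (this is exactly Woodin's Theorem \ref{wsthrm} argument); and each subsequent $\Add(\kappa_{i+1},1)$ is ${<}\kappa_{i+1}$-closed in the relevant intermediate model, so the tail of the iteration adds no new subsets of any $\kappa_j$ with $j\le i$. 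The point of wellordering $\lambda^\omega$ (equivalently all $\kappa_i^\omega$) is that this gives the Axiom of Choice: once $\lambda^\omega$ is wellordered, $\c^+_\lambda[G,H]$ inherits a wellordering of its universe from the $\OD$-wellordering of $\c^+_\lambda$ together with the wellordering of the forcing, so $\c^+_\lambda[G,H]\models\ZFC$. Since $\MM^{++}(\mathfrak{c})$ and $\ZFC$ together give $\mathfrak{c}=\aleph_2$, in $\c^+_\lambda[G,H]$ we have $\omega_3^{\c^+_\lambda[G,H]}=\omega_3^{\c^+_\lambda[G]}=\kappa_0$, and likewise $\omega_{3+i}^{\c^+_\lambda[G,H]}=\kappa_i$ for $i\le n$, because the iteration preserves all the $\kappa_i$ as cardinals (each is either $\ge$ the closure point of what comes after or is hit by a ${<}$-closed forcing) and creates no new cardinals in the interval $(\kappa_i,\kappa_{i+1})$ (a nice-name count, again using $\DC_{\aleph_2}$ in the extension, shows $2^{\kappa_i}$ stays $\kappa_{i+1}$ there).

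The heart of the theorem is then the failure of $\square(\omega_{2+j},\omega)$ for $j\le n+1$. For $j=0$ this is $\square(\omega_2,\omega)$: $\MM^{++}(\mathfrak{c})$ holds and $\omega_2$ has cofinality $\omega_2$, so Todor\v{c}evi\'c's Theorem \ref{todthrm} applies directly (indeed it gives $\neg\square(\omega_2,\omega_1)$). For $j$ with $1\le j\le n+1$ we must show $\square(\omega_{2+j},\omega)=\square(\kappa_{j-1},\omega)$ fails in $\c^+_\lambda[G,H]$, i.e., that $\kappa_{j-1}$ carries no non-threadable $\omega$-indexed square-sequence. This is where I expect the main difficulty to lie, and the plan is to mimic the argument of \cite[Theorem 7.3]{CLSSSZ}. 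The idea is: suppose $\vec C\in\c^+_\lambda[G,H]$ is a potential $\square(\kappa_{j-1},\omega)$-sequence. Since $\cf(\kappa_{j-1})=\omega_3>\omega_1^+$, a potential $\square(\kappa_{j-1},\omega)$-sequence has at most countably many threads (the elementary fact noted after Definition \ref{square round}), so it suffices to produce, in $\c^+_\lambda[G,H]$, one thread through any given potential square sequence. The tail forcings $\Add(\kappa_i,1)$ for $i\ge j-1$ are highly closed, so $\vec C$ essentially lives in an initial segment $\c^+_\lambda[G,H_{\le j-2}]$ of the extension (it is a subset of $\kappa_{j-1}$, and the tail adds no such subsets); and $\vec C$ is captured by a $\pmax * \oast_{i\le j-2}\Add(\kappa_i,1)$-name in $\c^+_\lambda$. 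Now one uses the genericity absoluteness and the sharp of $\Delta_\kappa$ (available by $\ddagger$) to run the thread-construction argument: reflect the name and the square sequence down to a countable elementary submodel, iterate it out, and use $\AD^+$-style tools in $\c^+_\lambda$ (homogeneity, the measure-analysis of sets of reals, and the fact that $\pmax$-generics come from iterations) to build a branch; then argue the branch is in $\c^+_\lambda[G,H]$ because it is definable from $\vec C$ and the generics. The delicate part is the interaction of the iterated $\Add$ forcings with the $\pmax$-iteration machinery — one needs the self-iterability / hod-mouse structure only implicitly (it was used to produce $\c^+_\lambda$), but the combinatorial core is checking that the extra Cohen subsets of $\kappa_0,\dots,\kappa_{j-2}$ do not obstruct threading $\kappa_{j-1}$, which should follow from their being added by sufficiently closed forcing relative to $\kappa_{j-1}$'s cofinality together with clause (\ref{bowtiesix}) of $\Join^n_\lambda$ pinning down $\cP(\kappa_i^\omega)$. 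I would organize the write-up as: (i) a cardinal-preservation lemma for $\pmax$ over $\c^+_\lambda$; (ii) a cardinal-preservation and $\ZFC$-lemma for the $\Add$-iteration; (iii) the $j=0$ case from Todor\v{c}evi\'c; (iv) the general threading lemma adapting \cite{CLSSSZ}, which is the real content.
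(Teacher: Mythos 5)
Your cardinal‑preservation outline and the $\mathsf{ZFC}$ book‑keeping are roughly in the spirit of the paper (though you are underestimating how much work goes into $\DC_{\aleph_{2+i}}$ in $\c^+_\lambda[G,H\rest i]$ and into showing $V[G]$‑closure of $\c^+_\lambda[G]$ under $\omega_1$‑sequences from the $\Add$‑iteration — these are the Lemmas \ref{dcomega_3lem} and \ref{kaplamcllem} and take several sections). But your plan for the square failures — the ``real content,'' as you rightly say — misses the key idea, and what you sketch in its place would be much harder and is not what the argument needs.

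The paper does \emph{not} attempt to build a thread by reflecting and iterating out inside $\c^+_\lambda$. Instead, the whole point of Theorem \ref{theorem:vanilla} is to exploit the ambient model: $\c^+_\lambda=\H^V_{\lambda^\omega}$ sits inside $V\models\ddagger$, and the same filters $(G,H)$ are $V$‑generic as well as $\c^+_\lambda$‑generic for the same partial order (this is where item~(\ref{bowtiesix}) of $\Join^n_\lambda$ and the closure lemmas earn their keep — the poset is literally identical computed in $\c^+_\lambda$, $L(\Delta_\kappa)$, or $V$, and is $<\omega_2$‑directed closed in $V[G]$). One then forces $\pmax * \bbQ$ over the \emph{big} model $V$, obtains $\MM^{++}(\mathfrak{c})$ and $\DC_{\aleph_1}$ there, and sees that every potential $\square(\gamma,\omega)$‑sequence is threaded in $V[G][H]$ by Todor\v{c}evi\'c; since $\gamma$ has cofinality $\omega_2$ there (Lemma \ref{lemma:coflemma} plus closure of $\bbQ$) and $\DC_{\aleph_1}$ holds, the sequence has at most $\omega$ threads. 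The punchline is a pure definability argument: the $\pmax * \dot{\bbQ}$‑name $\tau$ for the sequence lives in $M_0=\H^V_{\lambda^\omega}$, hence is ordinal‑definable in $V$ from some $S\in\lambda^\omega$; because some member of some $\mathcal{C}_\alpha$ is extended by a \emph{unique} thread, that thread has an $\OD(S)$‑name, which is therefore in $\H^V_{\lambda^\omega}=\c^+_\lambda$, and so the thread is already in $\c^+_\lambda[G,H]$. Nothing like your ``reflect to a countable elementary submodel, iterate it out, use $\AD^+$‑style tools to build a branch'' appears, and it is quite unclear that such an approach can be made to work: the model $\c^+_\lambda$ is not closed under the relevant iteration‑strategy data (indeed $\c^+_\lambda\neq L(\cP(\bR))$ and the $\pmax$‑iteration machinery lives in $V$, not in $\c^+_\lambda$). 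Working over $V$ and pulling the thread down by $\OD$‑uniqueness is the idea you are missing, and it is the entire content of Theorem \ref{theorem:vanilla}.
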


\begin{remark}\normalfont
By item (\ref{bowtiesix}) from the definition of $\Join^{n}_{\lambda}$ (see also Remark \ref{obtwo}), the partial orders \[(\pmax* \oast_{i \leq n}\Add(\k^{+i}, 1))^{\c_{\l}}\] and \[(\pmax* \oast_{i \leq n}\Add(\k^{+i}, 1))^{\c^{+}_{\l}}\] are the same, from which it follows that the theorem implies the corresponding version with $\c_{\lambda}$ in place of $\c^{+}_{\lambda}$.
\end{remark}

\bls

\part{Forcing over Nairian models}

Fix $\kappa$, $\lambda$, $G$, $H$ and $K$ as in the statement of Theorem \ref{mainthrm}, and write $H$ as $(H_{0},\ldots,H_{n})$. We let $H \upharpoonright i$ denote $\emptyset$ when $i=0$ and $(H_{0},\ldots,H_{i-1})$ otherwise. We let $\kappa_{i}$ denote $(\kappa^{+i})^{\c^{+}_{\lambda}}$ for each $i \leq n$.
In particular, $\kappa_0 = \kappa$ and $\kappa_{n} = \lambda$. 

In $\c^{+}_{\lambda}$, each set is a surjective image of $\lambda^{\omega} \times \gamma$, for some ordinal $\gamma$. 
Hence $\ZFC$ holds in any forcing extension of $\c^{+}_{\lambda}$ in which $\lambda^{\omega} \cap \c^{+}_{\lambda}$ is wellordered.
Since $\lambda$ has uncountable cofinality in $\c^{+}_{\lambda}$, the forcing 
$\Add(\lambda, 1)^{\c^{+}_{\lambda}[G,H\rest n]}$ (i.e., $\Add(\kappa_{n}, 1)$ in $\c^{+}_{\lambda}[G,H\rest n]$) adds such a wellordering.

%In the beginning of \textsection\ref{threadsec}, we list four facts proved in \textsection\textsection\ref{dcsec}-\ref{dc3sec}.
%In \textsection\ref{proofmainthrm}, we prove Theorem \ref{mainthrm} from these four statements. 

\section{Threading coherent sequences}\label{threadsec}

The material in this section is mostly adapted from \cite{CLSSSZ} and reduces the proof of Theorem \ref{mainthrm} to showing the following (see Theorem \ref{theorem:vanilla} and the remarks afterwards):
\begin{enumerate}
  %\item $\forall i \leq n$ $\oast_{i\leq j \leq n}$ is 
  %$\omega_{2+i}$-closed in $\c^{+}_{\lambda}[G,H\upharpoonright i]$;
   \item\label{job1} $\oast_{i\leq n}\Add(\kappa^{+i}, 1)$ is $\less\omega_{2}$-directed closed in $\c^{+}_{\lambda}[G]$;
  \item\label{job2} $V[G] \models ((\oast_{i\leq n}\Add(\kappa^{+i}, 1))^{\c^{+}_{\lambda}[G]})^{\omega_{1}} \subseteq \c^{+}_{\lambda}[G]$;
  %\item $\c^{+}_{\lambda}[G] \models \DC_{\aleph_{2}}$; 
  \item\label{job3} $\forall i \leq n$ $\c^{+}_{\lambda}[G, H \upharpoonright i] \models \DC_{\aleph_{2 + i}}$;
  %\item\label{job4} $\c^{+}_{\lambda}[G, H]\models \AC$. 
  \item\label{job4} $\forall i \leq n$, $(\kappa^{+i})^{\c^{+}_{\lambda}} = (\kappa^{+i})^{\c^{+}_{\lambda}[G,H]}$
\end{enumerate}
%that suitable forms of $\DC$ hold in $\c^{+}_{\lambda}[G]$.
Item (\ref{job1}) above follows from the fact that $\c^{+}_{\lambda}[G] \models \DC_{\aleph_{2}}$,\footnote{The case $i=0$ follows from item (\ref{job3}).} which is shown in Lemma \ref{cpgdc2lem}.
Item (\ref{job2}) is Lemma \ref{kaplamcllem}.
Item (\ref{job3}) is Lemma \ref{dcomega_3lem}.
Item (\ref{job4}) follows from item (\ref{job3}), which implies that, for each $i \leq n$, the forcing $\Add(\kappa_{i}, 1)$ over $\c^{+}_{\lambda}[G,H \restriction i]$ doesn't add bounded subsets of $\kappa_{i}$, and Lemma \ref{strongregghlem}, which shows that each $\Add(\kappa_{i}, 1)$ preserves the regularity of $\kappa_{j}$ for all $j \in (i,n]$.

To apply Todor\v{c}evi\'c's Theorem \ref{todthrm} to show that $\square(\omega_{3},\omega),\ldots,\square(\omega_{3+n}, \omega)$ all fail in $\c^{+}_{\lambda}[G,H]$, we need to show that $\kappa_{1},\ldots,\kappa_{n}$ from Theorem \ref{mainthrm} have cofinality $\omega_{2}$ in
$V[G]$.
(Recall that they are less than $\Theta^{V}$, which is $\omega_{3}^{V[G]}$.)
This is a consequence of covering results by Woodin, which we briefly review.
We refer the reader to \cite[Definition 3.30]{Wo10} for the definition of {\em $A$-iterability}.
Given $X\prec H(\omega_2)$, $M_X$ denotes its transitive
collapse.

\begin{theorem}[Woodin {\cite[Theorem 3.45]{Wo10}}] \label{thm:woodin} \normalfont 
Suppose that $M$ is a proper class inner model containing $\bR$ and satisfying $\AD+\DC$.
Suppose that for any $A\in{\mathcal P}(\bR)\cap M$, the set
$$ \{X\prec H(\omega_2)\mid \mbox{\rm$X$ is countable, and $M_X$ is $A$-iterable}\} $$
is stationary. Let $X$ in $V$ be a bounded subset of $\Theta^M$ of size $\omega_1$. Then there is a set
$Y\in M$, of size $\aleph_1$ in $M$, such that $X\subseteq Y$.
\end{theorem}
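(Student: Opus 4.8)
The plan is to reduce to a covering property for sets of reals and then obtain the covering set by iterating, to length $\omega_1$, the countable hulls supplied by the stationarity hypothesis.

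\emph{Step 1: reduction to reals.} Since $X$ is bounded in $\Theta^M$, fix $\delta<\Theta^M$ with $X\subseteq\delta$ and a surjection $\pi\colon\bR\to\delta$ with $\pi\in M$ (one exists by the definition of $\Theta$ inside $M$). Let $A=\{(x,y)\in\bR^2:\pi(x)\le\pi(y)\}$, a prewellordering of $\bR$ of length $\delta$ with $A\in\mathcal{P}(\bR)^M$ whose rank function is essentially $\pi$. Choosing for each $\eta\in X$ a real $x_\eta$ with $\pi(x_\eta)=\eta$ and putting $Z=\{x_\eta:\eta\in X\}$, it suffices to find $W\in\mathcal{P}(\bR)^M$ with $|W|^M=\aleph_1$ and $Z\subseteq W$, since then $Y:=\pi[W]\in M$ covers $X=\pi[Z]$ and $|Y|^M\le\aleph_1$. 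Note that $\bR^V=\bR^M$, so every \emph{countable} subset of $Z$ is coded by a real and hence already lies in $M$; the content of the theorem is that the full $\omega_1$-sized set $Z$ is captured by an $\aleph_1$-sized member of $M$, and this is where the iterability hypothesis enters.

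\emph{Step 2: hulls and iterations.} In $V$, build an increasing continuous chain $\langle N_\xi:\xi<\omega_1\rangle$ of countable elementary submodels of a large enough $(H(\theta);\in,<^*,M,\pi,A,Z)$, with $\langle N_\eta:\eta<\xi\rangle\in N_{\xi+1}$ and $x_\eta\in N_{\eta+1}$, so that $Z\subseteq\bigcup_\xi N_\xi$. Using stationarity of the set of countable $N'\prec H(\omega_2)$ with $M_{N'}$ being $A$-iterable, arrange — by folding this requirement into the construction of the chain — that for club many $\xi$ the transitive collapse $M_\xi$ of $N_\xi\cap H(\omega_2)$ is $A$-iterable. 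For each such $\xi$, iterate $M_\xi$ by generic ultrapowers to length $\omega_1^V$, obtaining $j_\xi\colon M_\xi\to M_\xi^{*}$ with $\omega_1^{M_\xi^{*}}=\omega_1^V$ (wellfoundedness of the direct limit being part of $A$-iterability). The point of $A$-iterability is that the iteration is correct enough about $A$ that, after passing through $\omega_1^V$ stages, the countable fragment of the prewellordering relevant at stage $\xi$ is realized inside $M$: from $j_\xi$ one reads off a member $Y_\xi\in M$ covering $\pi[Z\cap N_\xi]$. Because the chain coheres, the iterations cohere across $\xi$, so $Y:=\bigcup_\xi Y_\xi$ — which covers $\pi[Z]=X$ — is definable over $M$ from a single object, essentially the length-$\omega_1^M$ generic iteration of the $M$-code of the relevant hull together with the parameter $\omega_1^M=\omega_1^V$; hence $Y\in M$ with $|Y|^M\le\aleph_1$, as required.

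\emph{Where the difficulty lies.} The reflection and chain bookkeeping is routine; the crux is that the $\aleph_1$-sized cover is an \emph{element} of $M$ and not merely a subset of it, which is precisely what stationarity of the $A$-iterable hulls is designed to deliver. Concretely, one must verify that $A$-iterability forces the relevant generic iterations of countable hulls of $M$ to (i) have their direct limits and iteration maps computed inside $M$ once one iterates through $\omega_1^M$ stages, and (ii) be correct about $A$ on the fragments that matter, so that the images stay below $\delta$ and assemble — inside $M$, using $\DC$ and replacement there — into the desired cover; one must also check that the stage-by-stage covers $Y_\xi$ really exhaust $X$. I would carry out these verifications following \cite[\S 3]{Wo10}, in particular the material around \cite[Definition 3.30]{Wo10} and the proof of \cite[Theorem 3.45]{Wo10}.
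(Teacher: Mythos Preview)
The paper does not prove this theorem; it is quoted as Woodin's result with a citation to \cite[Theorem~3.45]{Wo10} and then applied black-box in the proof of Lemma~\ref{lemma:coflemma}. So there is no in-paper argument to compare your sketch against.

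That said, your sketch has a genuine gap. The reduction in Step~1 is fine. In Step~2, however, you iterate each hull $M_\xi$ separately, obtain $Y_\xi\in M$ covering the \emph{countable} set $\pi[Z\cap N_\xi]$, and then set $Y=\bigcup_\xi Y_\xi$. But covering each countable piece is free --- a countable set of ordinals is coded by a real and hence already lies in $M$ --- so your iterations $j_\xi$ are doing no work at that stage, and the entire content of the theorem has been pushed into the claim that the $\omega_1$-indexed union $\bigcup_\xi Y_\xi$ lies in $M$. Your justification (``definable over $M$ from a single object, essentially the length-$\omega_1^M$ generic iteration'') does not go through: the chain $\langle N_\xi\rangle$ and the generics you chose were selected in $V$ with no canonical recipe visible to $M$, and $M$ (which has only $\DC$, not $\AC_{\omega_1}$) cannot in general assemble an externally given $\omega_1$-sequence of its members into a single member. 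Saying the iterations ``cohere across $\xi$'' does not help, since they were built independently.

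The argument in \cite{Wo10} instead takes a \emph{single} countable $N\prec H(\omega_2)$ containing the relevant parameters (in particular $Z$) as \emph{elements}, and performs \emph{one} generic iteration of its transitive collapse to length $\omega_1$. The cover is then read off from the image, under the iteration map, of the collapsed $Z$; $A$-iterability is precisely what guarantees that the iteration respects the prewellordering $A$, so that the ordinals in the cover can be recovered inside $M$ from $A$ together with data $M$ can see. Your outline never isolates this mechanism, and by splitting into $\omega_1$ separate hulls you have replaced it with a reassembly problem that you cannot solve.
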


We apply Theorem \ref{thm:woodin} in the proof of Lemma \ref{lemma:coflemma} with $M$ as a model of the form $L(A,\bR)$ for some $A \subseteq \breals$, and the $V$ of Theorem \ref{thm:woodin} as a $\pmax$ extension of $M$.

\begin{lemma} \label{lemma:coflemma}\normalfont 
Suppose that $M$ is a model of $\ZF + \AD^{+}$ and $\gamma$ is an ordinal of cofinality at least
$\omega_{2}$ in $M$.
Let $G_0\subset \pmax$ be an $M$-generic filter. Then $\gamma$ has cofinality at least $\omega_{2}$ in
$M[G_0]$.
\end{lemma}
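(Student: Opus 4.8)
The plan is to argue by contradiction, reducing to Woodin's covering theorem (Theorem \ref{thm:woodin}) after passing to a suitable inner model of $M$. Suppose $\gamma$ has cofinality less than $\omega_2$ in $M[G_0]$. Since $\pmax$ is $\sigma$-closed, forcing with it adds no new $\omega$-sequences and preserves $\omega_1$; as $\cf^M(\gamma)\geq\omega_2>\omega$, this forces $\cf^{M[G_0]}(\gamma)=\omega_1$. Fix a cofinal $f\colon\omega_1\to\gamma$ in $M[G_0]$ and set $X=f[\omega_1]$, a subset of $\gamma$ of size $\aleph_1$ in $M[G_0]$ with $\sup X=\gamma$.

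Next I would choose the inner model. First I would treat the case $\gamma<\Theta^M$, which is the one needed in this paper (in the applications $\gamma$ is one of the $\kappa_i$, which lie below $\Theta$ of the relevant $\AD^+$ model). Working in $M$, pick a set of reals $A$ coding a prewellordering of $\bR$ of length greater than $\gamma$; then $\gamma<\Theta^{L(A,\bR)^M}$, since the rank function of that prewellordering is, inside $L(A,\bR)^M$, a surjection of $\bR$ onto an ordinal above $\gamma$. Let $N=L(A,\bR)^M$. Then $N$ is a proper class inner model of $M$, hence of $M[G_0]$; it contains $\bR$; it satisfies $\AD$ (since, by Woodin, $\AD^+$ goes down to every inner model of $\ZF$ containing $\bR$) and hence $\DC$ (by Kechris's theorem, the $L(A,\bR)$-analog of $\DC^{L(\bR)}$; see also \cite{La17}); and, because $\pmax$ is $\sigma$-closed, $\bR^N=\bR^{M[G_0]}$, $\pmax^N=\pmax^M$, and $G_0$ is $N$-generic as well. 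Note also that $N$ and $M$ have the same reals and therefore the same $\omega_1$.

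Now I would apply Theorem \ref{thm:woodin} inside $M[G_0]$, with the ``$M$'' of that theorem taken to be $N$ and the ``$V$'' taken to be $M[G_0]$. The one substantive hypothesis to verify --- the step I expect to be the main obstacle --- is the stationarity, for every $B\in\cP(\bR)\cap N$, of the set of countable $Y\prec H(\omega_2)^{M[G_0]}$ whose transitive collapse $M_Y$ is $B$-iterable. This I would derive from the facts that $\AD^+$ holds in $M$, so that every member of $\cP(\bR)^N\subseteq\cP(\bR)^M$ carries the requisite iterability (Suslin--co-Suslin representations in larger models, sharps, and so on), and that this iterability persists into the $\pmax$-extension; the relevant machinery is in \cite{Wo10}. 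Granting this, $X\in M[G_0]$ is a bounded subset of $\Theta^N$ of size $\aleph_1$, so Theorem \ref{thm:woodin} produces $Y\in N$ with $|Y|^N=\aleph_1$ and $X\subseteq Y$. Replacing $Y$ by $Y\cap\gamma$ we may assume $Y\subseteq\gamma$; then $\sup Y\geq\sup X=\gamma$, so $Y$ is cofinal in $\gamma$ and $\cf^N(\gamma)\leq\omega_1$. Since $N\subseteq M$ and the two models share $\omega_1$, this gives $\cf^M(\gamma)\leq\omega_1$, contradicting the hypothesis on $\gamma$. For the remaining case $\gamma\geq\Theta^M$, which is not needed in the sequel, one would instead invoke Theorem 9.32 of \cite{Wo10}, as in the justification of item (\ref{cofpres}) of \textsection\ref{pmaxssec}.
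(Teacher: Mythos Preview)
Your outline is correct in spirit and matches the paper's strategy, but the choice of ambient model for Theorem~\ref{thm:woodin} is where your argument and the paper's diverge, and it is exactly where you leave the gap. You take the ``$V$'' of Theorem~\ref{thm:woodin} to be $M[G_0]$ and then flag the stationarity hypothesis as the main obstacle without resolving it. The paper instead takes $V = L(A,\bR)[G_0]$: it first observes that $X \in L(A,\bR)[G_0]$ (using that $|\gamma| \leq 2^{\aleph_0}$ in $M[G_0]$ and $\cP(\omega_1)^{M[G_0]} \subseteq L(A,\bR)[G_0]$, item~(\ref{forsee}) of \textsection\ref{pmaxssec}), and then invokes \cite[Theorem~9.32]{Wo10}, which is precisely the statement that the stationarity hypothesis of Theorem~\ref{thm:woodin} holds for the pair $(L(A,\bR),\, L(A,\bR)[G_0])$. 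So the ``obstacle'' you identify is handled by passing to the smaller model; Theorem~9.32 is not a tool for the case $\gamma \geq \Theta^M$ but the citation that discharges stationarity in the case $\gamma < \Theta^M$.

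Your treatment of $\gamma \geq \Theta^M$ is also off. The paper does not appeal to Theorem~9.32 there; it argues directly. If $\gamma$ has cofinality below $\Theta^M$ in $M$, one reduces to the first case. If $\gamma \geq \Theta^M$ is regular in $M$, then there is no cofinal function from $\bR$ to $\gamma$ in $M$ (as $\bR$ surjects only onto ordinals below $\Theta$), and since $\pmax \subseteq H(\aleph_1)$, no such function can appear in $M[G_0]$ either.
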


\begin{proof}
Suppose first that $\gamma < \Theta^{M}$. Let $X$ be a subset of $\gamma$ of cardinality $\aleph_{1}$ in $M[G_0]$,
and let $A \in \cP(\breals) \cap M$ have Wadge rank at least $\gamma$. Since $|\gamma| \leq 2^{\aleph_{0}}$ in $M[G_{0}]$ and $\cP(\omega_{1})^{M[G_0]}$ is contained in $L(A, \mathbb{R})[G]$,
%by Theorem 9.23 of \cite{Wo10},
$X$ is in $L(A, \mathbb{R})[G]$. By \cite[Theorem 9.32]{Wo10}, the hypotheses of Theorem \ref{thm:woodin} are satisfied with $L(A, \bR)$ as $M$ and $L(A, \bR)[G]$ as $V$.
Applying Theorem \ref{thm:woodin}, we have that $X$ is a subset of an element
of $L(A, \mathbb{R})$ of cardinality $\aleph_{1}$ in $L(A, \mathbb{R})$.

The lemma follows immediately from the previous paragraph for $\gamma$ of cofinality less than $\Theta^{M}$ in $M$.
If $\gamma \geq \Theta$ is regular in $M$, there is no cofinal function from $\breals$ to $\gamma$ in $M$, so there is no such
function in $M[G_0]$, either.
%, so $\kappa$ remains regular in $M[G]$.
The theorem then follows for arbitrary $\gamma$.
%If $\kappa = \Theta^{M}$, then the theorem follows from the previous paragraph if $\Theta$ is singular.
\end{proof}

A simpler proof of a weaker version of Lemma \ref{lemma:coflemma} assuming $\ADR$ plus the regularity of $\Theta$ (which suffices for the results of this paper) is given in \cite{Larson:two}.

In conjunction with the facts mentioned at the beginning of this section, Theorem \ref{theorem:vanilla}, with $M_{1}$ as $V$, $M_{0}$ as $\c^{+}_{\lambda}$, $\gamma$ as any member of $\{\kappa_{0},\ldots,\kappa_{n}\}$ and $\bbQ$ as $(\oast_{i\leq n}\Add(\kappa^{+i},1))^{\c^{+}_{\lambda}[G]}$, completes the proof of Theorem \ref{mainthrm}.
The theorem and its proof are from \cite{CLSSSZ}, except that the specific partial order used in \cite{CLSSSZ} has been replaced with a more general class of partial orders.
%\footnote{Cut?: By Theorem 9.10 of Woodin \cite{Wo10}, the hypotheses of the following theorem imply that
%$\AD^{+}$ holds in $M_{0}$.}

\begin{theorem} \label{theorem:vanilla} \normalfont 
Suppose that $M_{1}$ is a model of $\ddagger$, and that for some set $X \in M_{1}$ containing
$\breals \cap M_{1}$, $M_{0} = \H^{M_{1}}_{X}$. Suppose also that  $\Theta^{M_{0}} < \Theta^{M_{1}}$ and that $\gamma \in [\Theta^{M_{0}}, \Theta^{M_{1}})$ has cofinality at least $\omega_{2}$ in $M_{1}$.
Let $G_0 \subset \pmax$ be $M_{1}$-generic, and let $I \subset \bbQ$ be
$M_{1}[G_0]$-generic, for some partial order $\bbQ \in M_{0}[G_0]$ which, in $M_{1}[G_0]$, is
$<\omega_{2}$-directed closed and of cardinality at most $\mathfrak{c}$.
Then $\square(\gamma, \omega)$ fails in $M_{0}[G_0][I]$.
\end{theorem}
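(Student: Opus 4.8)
The plan is to assume toward a contradiction that some $\vec{\mathcal C}=\langle\mathcal C_\alpha:\alpha<\gamma\rangle$ is a $\square(\gamma,\omega)$-sequence in $M_0[G_0][I]$, and to transfer the failure of $\square(\gamma,\omega)$ down from the intermediate model $M_1[G_0]$ — where Todor\v{c}evi\'c's Theorem~\ref{todthrm} applies — into $M_0[G_0][I]$. First the arithmetic. Since $M_1\models\ddagger$ it satisfies $\ZF+\AD^++\DC$, so the $\pmax$ facts of \textsection\ref{pmaxssec} give $M_1[G_0]\models\MM^{++}(\mathfrak c)$ and $\Theta^{M_1}=\omega_3^{M_1[G_0]}$. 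Lemma~\ref{lemma:coflemma} applied to $M_1$ shows $\gamma$ keeps cofinality $\geq\omega_2$ in $M_1[G_0]$; since $\bbQ$ is $<\omega_2$-directed closed it adds no new $\omega_1$-sequence of ordinals, so $\cf(\gamma)\geq\omega_2$ persists to $M_1[G_0][I]$, and $|\bbQ|\leq\mathfrak c$ keeps $\omega_3^{M_1[G_0]}$ a cardinal. Hence in $M_1[G_0]$ we have $\gamma\in[\omega_2,\omega_3)$ of cofinality $\omega_2$, and Todor\v{c}evi\'c's Theorem~\ref{todthrm} gives that $\square(\gamma,\omega_1)$, a fortiori $\square(\gamma,\omega)$, fails in $M_1[G_0]$. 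This is where the hypothesis $\Theta^{M_0}<\Theta^{M_1}$ matters: Todor\v{c}evi\'c's theorem needs $\gamma$ below $\omega_3$, which holds in $M_1[G_0]$ since $\gamma<\Theta^{M_1}=\omega_3^{M_1[G_0]}$, but fails in the $\pmax$-extension of $M_0$, where $\gamma\geq\Theta^{M_0}$; and as $\bbQ$ is arbitrary we cannot appeal instead to $\MM(\mathfrak c)$ holding in $M_0[G_0][I]$.

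With $\vec{\mathcal C}\in M_0[G_0][I]\subseteq M_1[G_0][I]$ a $\square(\gamma,\omega)$-sequence, force over $M_1[G_0][I]$ with the threading poset $\mathbb T=\mathbb T(\vec{\mathcal C})$, whose conditions are the closed bounded $t\subseteq\gamma$ with $t\cap\beta\in\mathcal C_\beta$ for every limit point $\beta$ of $t$, ordered by end-extension. A standard argument (using the coherence of $\vec{\mathcal C}$ and $\omega<\cf(\gamma)$) shows $\mathbb T$ is $<\gamma$-strategically closed, hence $<\gamma$-distributive; so it adds no reals, preserves $\omega_1$, and keeps $\cf(\gamma)\geq\omega_2$. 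Let $E$ be $\mathbb T$-generic; then $E$ threads $\vec{\mathcal C}$ in $M_1[G_0][I][E]$, and since $\cf(\gamma)>\omega_1=(\aleph_0)^{+}$ there, the remark following Definition~\ref{square round} shows there are at most countably many — and at least one — threads through $\vec{\mathcal C}$ in $M_1[G_0][I][E]$.

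The crux is to place a thread in $M_0[G_0][I]$. The threading poset has a $\bbQ$-name $\dot{\mathbb T}\in M_0[G_0]$, built from a $\bbQ$-name for $\vec{\mathcal C}$ in $M_0[G_0]$, so $M_1[G_0][I][E]$ — together with its inner model $M_0[G_0][I][E]$ — is a generic extension of $M_0[G_0]$ by the poset $\bbQ*\dot{\mathbb T}\in M_0[G_0]$. Next, $\pmax$ is ordinal definable and weakly homogeneous over any model of $\AD^+$, so the theory of $M_1[G_0]$ with parameters from $M_1$ does not depend on $G_0$; combined with $M_0=\H^{M_1}_X$, this should yield that one of the countably many threads through $\vec{\mathcal C}$ is ordinal definable over $M_1[G_0][I]$ from $\vec{\mathcal C}$, $X$, $G_0$, $I$, and ordinals, and that any such thread already lies in $M_0[G_0][I]$. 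Such a thread contradicts $\vec{\mathcal C}$ being a $\square(\gamma,\omega)$-sequence in $M_0[G_0][I]$. (The generality of $\bbQ$ over the specific iteration of \cite{CLSSSZ} is used only through the properties just invoked: its membership in $M_0[G_0]$, its $<\omega_2$-directed closure, and $|\bbQ|\leq\mathfrak c$.)

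The main obstacle will be this last step — making precise why a thread obtained by forcing over $M_1[G_0][I]$ is in fact ordinal definable over $M_1[G_0][I]$ from the permitted parameters, so that the weak homogeneity of $\pmax$ and $M_0=\H^{M_1}_X$ force it down into $M_0[G_0][I]$. By contrast the cofinality and cardinal bookkeeping, the strategic closure of $\mathbb T(\vec{\mathcal C})$, and the counting of threads are routine, and the failure of $\square(\gamma,\omega)$ in $M_1[G_0]$ comes straight from Todor\v{c}evi\'c's theorem.
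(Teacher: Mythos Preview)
Your argument has a real gap at the point you yourself flag as the ``main obstacle,'' and the detour through the threading poset does not help you get there. You apply Todor\v{c}evi\'c's theorem in $M_1[G_0]$, but $\vec{\mathcal C}$ lives in $M_1[G_0][I]$, not in $M_1[G_0]$; the failure of $\square(\gamma,\omega)$ in $M_1[G_0]$ says nothing about this particular sequence. You then force with $\mathbb T(\vec{\mathcal C})$ to produce a thread $E$, but $E$ is \emph{generic} over $M_1[G_0][I]$: it is not ordinal definable over $M_1[G_0][I]$ from anything, and there is no reason any of the (at most countably many) threads you see in $M_1[G_0][I][E]$ already lie in $M_1[G_0][I]$. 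So the step ``one of the countably many threads is ordinal definable over $M_1[G_0][I]$'' has no justification. (Your claim that $\mathbb T$ is ${<}\gamma$-strategically closed is also not correct for width-$\omega$ sequences; that holds for width $1$, but for $\square(\gamma,\omega)$ one only gets countable closure, which is a separate, smaller issue.)

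The paper avoids the threading forcing entirely. The point you are missing is that $\MM^{++}(\mathfrak c)$ is \emph{preserved} from $M_1[G_0]$ to $M_1[G_0][I]$, precisely because $\bbQ$ is ${<}\omega_2$-directed closed and of size at most $\mathfrak c$ (this is where those hypotheses are actually used). Hence Todor\v{c}evi\'c's theorem applies in $M_1[G_0][I]$, and $\vec{\mathcal C}$ is threaded there --- no further forcing needed. Now the definability step works: fix an indexing of each $\mathcal C_\alpha$ in ordertype $\le\omega$ as part of the name $\tau\in M_0$; since $\cf(\gamma)\ge\omega_2$ there are at most $\omega$ threads, so some indexed element of some $\mathcal C_\alpha$ is extended by a \emph{unique} thread, and ``the thread extending the $n$th element of $\mathcal C_\alpha$ for the least such $(\alpha,n)$'' is a $\pmax*\dot\bbQ$-name ordinal definable in $M_1$ from the same parameter $S\in X$ that defines $\tau$. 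That name therefore lies in $M_0=\H^{M_1}_X$, and its realization threads $\vec{\mathcal C}$ in $M_0[G_0][I]$.
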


\begin{proof}
Suppose that $\tau$ is a $\pmax * \dot{\bbQ}$-name in $M_{0}$ for a
$\square(\gamma,\omega)$-sequence. We may assume that the realization of $\tau$ comes with
an indexing of each member of the sequence in order type at most $\omega$. In $M_{1}$, $\tau$ is
ordinal definable from some $S \in X$.
%Since $\Theta^{M_{0}}$ is on the Solovay sequence of $M_{1}$, $\H^{M_{1}}_{\Gamma_{0}}$
%has the same sets of reals as $M_{0}$.

By \cite[Theorems 9.35 and 9.39]{Wo10}, $\DC_{\aleph_{2}}$ and $\MM^{++}(\mathfrak{c})$ hold in $M_{1}[G_0]$.
By Lemma \ref{lemma:coflemma},
$\gamma$ has cofinality $\omega_{2}$ in $M_{1}[G_0]$.
Forcing with ${<}\omega_{2}$-directed closed partial orders
of size at most $\mathfrak{c}$ preserves both $\MM^{++}(\mathfrak{c})$ (see \cite{Larson:separating}) and the cofinality of $\gamma$.
Thus $\DC_{\aleph_{1}}$ and $\MM^{++} (\mathfrak{c})$ hold in the $\dot{\bbQ}_{G_0}$-extension of $M_{1}[G_0]$,
and in this extension, by Todor\v{c}evi\'c's Theorem \ref{todthrm}, every potential $\square(\gamma, \omega)$-sequence is
threaded.

Let $\mathcal{C} = \langle \mathcal{C}_{\alpha} : \alpha < \gamma \rangle$ be the realization of
$\tau$ in the $\dot{\bbQ}_{G}$-extension of $M_{1}[G_0]$.
Since $\gamma$ has cofinality $\omega_{2}$ in this
extension (which satisfies $\DC_{\aleph_{1}}$), $\mathcal{C}$ has at most $\omega$ many
threads, since otherwise one could find a $\mathcal{C}_{\alpha}$ in the sequence with uncountably
many members.
Therefore, some member of some $\mathcal{C}_{\alpha}$ in the realization of $\tau$
will be extended by a unique thread through the sequence, and since the realization of $\tau$ indexes
each $\mathcal{C}_{\alpha}$ in order type at most $\omega$, there is in $M_{1}$ a name, ordinal
definable from $S$, for a thread through the realization of $\tau$.
This name is then a member of
$M_{0} = \H^{M_{1}}_{X}$.
\end{proof}

\subsection{Proof of Theorem \ref{mainthrm}}\label{proofmainthrm}
We now complete the proof of Theorem \ref{mainthrm}, assuming the statements (\ref{job1})-(\ref{job4}) from the beginning of this section. We apply Theorem \ref{theorem:vanilla} with $V$ as $M_{1}$ and $X$ as $\lambda^{\omega}$. Then the model $M_{0}$ from Theorem \ref{theorem:vanilla} is $\c^{+}_{\lambda}$, and $\kappa = \Theta^{M_{0}} < \Theta^{M_{1}}$ by the hypotheses of Theorem \ref{mainthrm}. Let $G = G_{0}$ be $\pmax$-generic over $V$, and let $\bbQ$ be the partial order $\oast_{i\leq n}\Add(\kappa^{+i},1)$ as defined in $\c^{+}_{\lambda}[G]$. 
By item (1) from the beginning of this section, $\bbQ$ is $\less\omega_{2}$-directed closed in $\c^{+}_{\lambda}$. By item (2), every $\omega_{1}$-sequence from $\bbQ$ in $V[G]$ is in $\c^{+}_{\lambda}[G]$. It follows then that $\bbQ$ is $\less\omega_{2}$-directed closed in $V[G]$ as well. 
Letting $H$ be $V[G]$-generic for $\bbQ$, it follows from Theorem \ref{theorem:vanilla} that, for each $\gamma \in [\kappa, \Theta^{V})$ having cofinality at least $\omega_{2}$ in $V$, $\square(\gamma, \omega)$ fails in $\c^{+}_{\lambda}[G, H]$.
By Lemma \ref{lemma:coflemma}, and the assumption of $\Join^{n}_{\lambda}$, each of the ordinals $(\kappa^{+i})^{\c^{+}_{\lambda}}$ for $i \leq n$ satisfies these conditions on $\gamma$. By item (\ref{job4}), the cardinals $\kappa_{0},\ldots,\kappa_{n}$ are the cardinals $\aleph_{3},\ldots,\aleph_{3+n}$ in $\c^{+}_{\lambda}[G,H]$. 
The remarks at the beginning of Part I also show that $\c^{+}_{\lambda}[G, H] \models \ZFC$.

%By item (3) at the beginning of this section, and the remarks in the beginning of Part I, $\c^{+}_{\lambda}[G, H] \models \MM^{++}(\mathfrak{c})$. 
%Items (\ref{job1}) and (\ref{job2}) together show that the partial order $(\oast_{i\leq n}\Add(\kappa^{+i}, 1))^{\c^{+}_{\lambda}[G]}$ satisfies, in $V[G]$, the conditions on the partial order $\bbQ$ from the statement of Theorem \ref{theorem:vanilla}.
%This gives the failures of $\square(\omega_{3},\omega),\ldots,\square(\omega_{3+n}, \omega)$ in $\c^{+}_{\lambda}[G,H]$ (this uses item (\ref{cofpres}) from the list at the beginning of Section \ref{pmaxssec}, whose proof we discuss below). Item (\ref{job3}) shows that, for each $i \leq n$, the forcing $\Add(\kappa_{i}, 1)$ over $\c^{+}_{\lambda}[G,H \restriction i]$ doesn't add bounded subsets of $\kappa_{i}$, and in particular adds a wellordering of $((\kappa^{+i})^{\omega})^{\c^{+}_{\lambda}}$. 

Since $\pmax \subseteq H(\aleph_{1})$, and $\kappa$ is both regular and equal to $\Theta^{\c^{+}_{\lambda}}$, \[(\pmax * \Add(\kappa, 1))^{L(\Delta_{\k})} = (\pmax * \Add(\kappa, 1))^{\c^{+}_{\lambda}}.\] 
Since $\L(\Delta_\kappa) \models \ddagger$, Theorem \ref{wsthrm} implies that $L(\Delta_{\kappa})[G,H_{0}]$ satisfies $\MM^{++}(\mathfrak{c})$ and hence $\neg \square(\omega_{2}, \omega)$.
Then $\c^{+}_{\lambda}[G, H_{0}]$ does as well, since these two models have the same $\cP(\omega_{2})$.
Item (\ref{job3}) implies that $\DC_{\aleph_{3+i}}$ holds in $\c^{+}_{\lambda}[G, H_{0},\ldots,H_{i}]$, for each $i < n$, so $\c^{+}_{\lambda}[G, H_{0}]$ and $\c^{+}_{\lambda}[G, H]$ have the same subsets of $\omega_{2}$.
Thus $\c^{+}_{\lambda}[G, H]\models\MM^{++}(\mathfrak{c})$ as well. \hfill $\qed$

\section{Proving $\DC_{\aleph_{m}}$}\label{dcsec}

One of our four remaining tasks is showing that
\[\c^{+}_{\lambda}[G, H \upharpoonright i] \models \DC_{\aleph_{2 + i}}\] for all $i \leq n$.
\textsection\ref{dcreducesec} reduces each of these to the case of relations on $\lambda^{\omega}$. 
In \textsection\ref{dcproveoutlinessec} we 
prove a lemma which, together with the results of \textsection\ref{strongregsec}, reduces this further to the case of relations on the sets $\kappa_{i}^{\omega}$ for $i < n$. 

That $\c^{+}_{\lambda}[G] \models \DC_{\aleph_{2}}$ is proved in Lemma \ref{cpgdc2lem}, using Lemma \ref{pmaxdc}. That each model $\c^{+}_{\lambda}[G, H \rest i]$ ($0 < i < n$) satisfies $\DC_{\omega_{2+i}} \rest \lambda^{\omega}$ is proved in \textsection\ref{dc3sec}.

%outline our strategy for proving that $\DC_{\aleph_{2}}$ holds in $\c^{+}_{\lambda}[G]$ for relations on $\lambda^{\omega}$. A proof of $\c^{+}_{\lambda}[G,H] \models \DC_{\aleph_{3}}$ (using essentially the same strategy) is given in Section \ref{dc3sec}.

\subsection{Reducing to $\DC_{\aleph_{m}}\rest \lambda^{\omega}$}\label{dcreducesec}

As we have defined it, $\DC_{\aleph_{m}}$ implies $\DC_{\aleph_{k}}$ for all $k \leq m$. Since $\ddagger$ implies $\DC$, Lemma \ref{dcreducelem} shows (in the case $m=0$) that $\DC$ holds in $\c^{+}_{\lambda}$.

\begin{lemma}\label{dcreducelem}\normalfont  Let $\bbP$ be a partial order in $\c^{+}_{\lambda}$, and let $I \subseteq \bbP$ be a $\c^{+}_{\lambda}$-generic filter.
Let $m<\omega$ be such that $\DC_{\aleph_{k}}$ holds in $\c^{+}_{\lambda}[I]$ for all $k < m$.
Suppose that, in $\c^{+}_{\lambda}[I]$, every $<\omega_{m}$-full tree  on $\lambda^{\omega}$ of height $\omega_{m}$ has a cofinal branch.
Then $\DC_{\aleph_{m}}$ holds in $\c^{+}_{\lambda}[I]$.
\end{lemma}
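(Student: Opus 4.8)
I would prove, by cases on $\eta \le \aleph_{m}$, that every $\eta$-full binary relation $R$ on a set $X$ in $\c^{+}_{\lambda}[I]$ has an $R$-chain of length $\eta$. Two facts are used throughout: $\c^{+}_{\lambda}[I] \models \ZF$, being a generic extension of $\c^{+}_{\lambda}$; and $\c^{+}_{\lambda}[I]$ inherits from $\c^{+}_{\lambda}$ the property (recorded at the start of Part I) that every set is a surjective image of $\gamma \times \lambda^{\omega}$ for some ordinal $\gamma$ — this passes to the generic extension because every set there has a $\bbP$-name in $\c^{+}_{\lambda}$, and the witnessing surjections can be taken in $\c^{+}_{\lambda}[I]$.

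First I would dispatch the case $\eta < \aleph_{m}$ (for $m \geq 1$; when $m=0$ only finite $\eta$ occur here and those are immediate from the definitions). Since $\cf(\eta) \leq \aleph_{m-1}$, fix a continuous cofinal sequence of length $\cf(\eta)$ in $\eta$; using $\eta$-fullness of $R$ at successor stages and unions at limit stages, build a coherent tower of $R$-chains whose lengths are cofinal in $\eta$, the construction being a single application of the assumed $\DC_{\aleph_{m-1}}$ to the end-extension relation on such towers. Its union is the required chain. (This same ``cofinality/tower'' device upgrades $\DC_{\aleph_{m-1}}$ to dependent choice along any ordinal $<\aleph_{m}$, which I will reuse.) Next, the case $\eta = \aleph_{m}$ with $X \subseteq \lambda^{\omega}$: here the tree $S$ of all $R$-chains of length $<\aleph_{m}$, ordered by end-extension, is literally a tree on $\lambda^{\omega}$; it is nonempty, and $<\omega_{m}$-full exactly because $R$ is $\aleph_{m}$-full, hence of height $\omega_{m}$. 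The hypothesis gives a cofinal branch of $S$, and the union of its nodes is an $R$-chain of length $\aleph_{m}$.

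The substantive case — and the one I expect to be the main obstacle — is $\eta = \aleph_{m}$ with $X$ arbitrary. Fix a surjection $\rho \colon \gamma \times \lambda^{\omega} \to X$ in $\c^{+}_{\lambda}[I]$. One cannot just pull $R$ back to a relation on $\gamma \times \lambda^{\omega}$: checking that the pullback is $\aleph_{m}$-full would require choosing $<\aleph_{m}$-many $\rho$-preimages simultaneously, i.e.\ a form of $\AC_{<\aleph_{m}}$ for subsets of $\lambda^{\omega}$, which is not available. Instead I would build a tree $S$ on $\lambda^{\omega}$ in which the ordinal coordinates are computed \emph{canonically}: put $\langle r_{\xi} : \xi < \alpha \rangle \in S$ iff the recursion setting $\delta_{\xi}$ to be the least $\delta < \gamma$ such that $\rho(\delta, r_{\xi})$ is an $R$-successor of every $\rho(\delta_{\zeta}, r_{\zeta})$ with $\zeta < \xi$ succeeds at all $\xi < \alpha$; such a node then ``names'' the $R$-chain $\langle \rho(\delta_{\xi}, r_{\xi}) : \xi < \alpha \rangle$. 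No choice enters the definition of $S$ (least witnesses), the canonical $\delta_{\xi}$ cohere under restriction, so $S \in \c^{+}_{\lambda}[I]$ is closed under initial segments and is nonempty. The point requiring work is that $S$ is $<\omega_{m}$-full: to extend a node of length $\alpha$ naming a chain $c$ to length $\beta$ (for $\alpha < \beta < \omega_{m}$), run a recursion of length $\mathrm{ot}([\alpha,\beta)) < \aleph_{m}$ which at successor stages uses $\aleph_{m}$-fullness of $R$ to name the next target element and then picks any $\rho$-preimage of it (one choice, of an element of $\lambda^{\omega}$ once the $\gamma$-coordinate is canonically pinned), and at limit stages takes unions. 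Carrying out this \emph{sequential} choice of length $<\aleph_{m}$ is precisely what the hypotheses $\DC_{\aleph_{k}}$ ($k<m$), via the cofinality/tower device, supply. Once $S$ is seen to be a $<\omega_{m}$-full tree on $\lambda^{\omega}$ of height $\omega_{m}$, its cofinal branch (given by hypothesis) names, as a union of coherent $R$-chains, an $R$-chain of length $\aleph_{m}$, which completes the proof. The hard part is thus the last reduction: replacing the forbidden simultaneous choice of preimages by a transfinite sequential one, which is exactly why the lower-level $\DC_{\aleph_k}$ are needed in the hypothesis.
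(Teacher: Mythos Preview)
Your proof is correct and follows essentially the same strategy as the paper's: both reduce an arbitrary tree to one on $\lambda^{\omega}$ by coding elements via a surjection from $\gamma \times \lambda^{\omega}$ (the paper uses ordinal-definable $\bbP$-names $t_{n,\delta,x}$, which amounts to the same thing) and choosing the ordinal coordinate canonically by minimality at each step. You are more explicit than the paper about where the lower $\DC_{\aleph_{k}}$ hypotheses are invoked---namely, in verifying $<\omega_{m}$-fullness of the reduced tree and in handling the intermediate cases $\eta<\aleph_{m}$---which the paper leaves implicit.
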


\begin{proof}
Fix a $<\omega_{m}$-full tree $T$ in $\c^{+}_{\lambda}[I]$. Fix an ordinal $\gamma$ such that every node of $T$ is the realization of a $\bbP$-name which is ordinal definable in $V_{\gamma}$ from some element of $\lambda^{\omega}$.
Given $(n,\delta, x) \in \omega \times \gamma \times \lambda^{\omega}$, let \[t_{n,\delta, x}\] be the set defined in $V_{\gamma}$ from $\delta$ and $x$ by the formula with G\"{o}del number $n$.

Let $T'$ be the tree of sequences $\langle x_{\alpha} : \alpha < \beta\rangle$ (for some $\beta < \omega_{m}$) for which there exists a sequence \[\langle y_{\alpha} : \alpha < \beta \rangle\] such that, for each $\eta < \beta$, \[y_{\eta} = t_{n,\delta, x_{\eta}, I},\] where $(n, \delta) \in (\omega, \gamma)$ is minimal such that  $t_{n,\delta,x_{\eta}}$ is a $\bbP$-name and
\[\langle y_{\alpha} : \alpha  < \eta \rangle^{\frown} \langle t_{n,\delta,x_{\eta},I}\rangle \in T.\]

Then $T'$ is also $<\omega_{m}$-full, and an $\omega_{m}$-chain through $T'$ induces one through $T$.
\end{proof}

\subsection{Proving $\DC_{\aleph_{2}}| \lambda^{\omega}$}\label{dcproveoutlinessec}

The following reflection argument is useful for establishing forms of $\DC$ in the models we consider. 

\begin{lemma}[$\ZF$]\label{dcsteplem} Suppose that $\tau$ and $\rho$ are infinite cardinals such that
there is no cofinal function $f \colon (\rho^{\omega})^{\tau} \to \rho^{+}$. 
If $\DC_{\tau^{+}} \restriction \rho^{\omega}$ holds, then 
$\DC_{\tau^{+}} \restriction (\rho^{+})^{\omega}$ also holds. 
\end{lemma}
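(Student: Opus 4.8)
The plan is to reduce $\DC_{\tau^+}\restriction(\rho^+)^\omega$ first to $\DC_{\tau^+}\restriction\delta^\omega$ for a single well-chosen $\delta<\rho^+$, and then to $\DC_{\tau^+}\restriction\rho^\omega$ via a bijection $\rho\to\delta$. So fix $\eta\le\tau^+$ and an $\eta$-full relation $R$ on $(\rho^+)^\omega$; I want an $R$-chain of length $\eta$. First I would extract some structure from the hypothesis. There is a surjection $(\rho^\omega)^\tau\twoheadrightarrow\rho$, so if $\cf(\rho^+)\le\rho$ then, composing with a surjection $\rho\twoheadrightarrow\cf(\rho^+)$ and a cofinal map $\cf(\rho^+)\to\rho^+$, we would obtain a cofinal function $(\rho^\omega)^\tau\to\rho^+$, contrary to hypothesis; hence $\cf(\rho^+)=\rho^+$, i.e. $\rho^+$ is regular. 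Likewise $(\rho^\omega)^\tau\twoheadrightarrow\powerset(\tau)\twoheadrightarrow\tau^+$ (send a subset of $\tau$ coding a wellorder to its order type), so $\cf(\rho^+)>\tau^+$ and thus $\tau^+<\rho^+$. In particular every $R$-chain of length $<\rho^+$ has range bounded below $\rho^+$.

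The heart of the argument is a reflection step: finding $\delta<\rho^+$ with $\rho\le\delta$ and $\cf(\delta)=\tau^+$ such that $R_\delta:=R\cap(\delta^\omega)^2$ is $\eta$-full on $\delta^\omega$. For each $\gamma<\rho^+$, let $T_\gamma$ be the set of $R$-chains of length $<\eta$ with range contained in $\gamma^\omega$, and for a pair $(c,\beta)$ with $c\in T_\gamma$ and $\lh(c)<\beta<\eta$, let $e_\beta(c)$ be the least ordinal such that $c$ has an $R$-extension of length $\beta$ with range in $(e_\beta(c))^\omega$; this is well defined because $R$ is $\eta$-full and (by regularity of $\rho^+$) every such extension has bounded range. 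The key point is that $e_\beta(c)$ depends only on the pair $(c,\beta)$, not on any extension. Since $|\gamma|\le\rho$, the set $\gamma^\omega$ is a surjective image of $\rho^\omega$, and a routine coding argument (using that $\powerset(\tau)$ surjects onto $\tau^+$ and onto order types of wellorders of subsets of $\tau$, and that ordinals below $\tau^+\le\rho$ have cardinality $\le\rho$) shows that the set of pairs $(c,\beta)$ is a surjective image of $(\rho^\omega)^\tau$. Hence $\{e_\beta(c):(c,\beta)\}$ is the range of a function on $(\rho^\omega)^\tau$, so by hypothesis it is bounded below $\rho^+$. Define $\psi(\gamma)<\rho^+$ to be an ordinal exceeding $\gamma$ and all these $e_\beta(c)$; the closure points of $\psi$ form a club in the regular cardinal $\rho^+$, so since $\tau^+<\rho^+$ I may take $\delta$ to be a closure point of $\psi$ with $\rho\le\delta$ and $\cf(\delta)=\tau^+$.

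Then $R_\delta$ is $\eta$-full: if $c$ is an $R_\delta$-chain of length $\alpha$ and $\alpha<\beta<\eta$, then $\bigcup_{i<\alpha}\mathrm{ran}(c(i))$ is a union of $\le|\alpha|\le\tau$ countable sets of ordinals, hence of size $\le\tau<\tau^+=\cf(\delta)$, so it is bounded in $\delta$ by some $\gamma<\delta$; as $\delta$ is closed under $\psi$, $c$ has an $R$-extension of length $\beta$ with range in $(e_\beta(c))^\omega\subseteq\delta^\omega$, which is an $R_\delta$-chain. Finally, fix a bijection $b\colon\rho\to\delta$ (one exists since $|\delta|=\rho$), let $b^{*}\colon\rho^\omega\to\delta^\omega$ be the induced bijection, and let $\bar R$ be the pullback of $R_\delta$ along $b^{*}$; then $\bar R$ is $\eta$-full on $\rho^\omega$, so $\DC_{\tau^+}\restriction\rho^\omega$ yields an $\bar R$-chain of length $\eta$, and applying $b^{*}$ gives an $R_\delta$-chain, hence an $R$-chain, of length $\eta$.

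I expect the main obstacle to be the reflection step of the second paragraph: specifically, the point that one must isolate the ordinal-valued quantity $e_\beta(c)$ as a function of the pair $(c,\beta)$ alone — so that its range is a surjective image of $(\rho^\omega)^\tau$ and the hypothesis bites — rather than working with the extensions themselves (which live in $(\rho^+)^\omega$ and are not such an image); and that one must arrange $\cf(\delta)=\tau^+$ precisely, since that is what forces the range of an arbitrary $R_\delta$-chain of length $<\eta$ to be bounded strictly below $\delta$, which is what makes the $\eta$-fullness of $R_\delta$ go through. The remaining coding verifications (surjections onto $\tau^+$, onto $\bigcup_{\alpha<\tau^+}{}^\alpha(\gamma^\omega)$, and the transport along $b^{*}$) are routine $\ZF$ bookkeeping.
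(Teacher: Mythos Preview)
Your proof is correct and follows the same approach as the paper's: reflect the relation down to some $\delta\in[\rho,\rho^+)$, transfer via a bijection $\rho\to\delta$, and apply $\DC_{\tau^+}\restriction\rho^\omega$. You supply considerably more detail than the paper's terse argument—deriving the regularity of $\rho^+$ and $\tau^+<\rho^+$ from the hypothesis, bounding $e_\beta(c)$ for all target lengths $\beta$ (not just successors), and arranging $\cf(\delta)=\tau^+$ so that any $R_\delta$-chain of length $<\eta$ has range bounded strictly below $\delta$—whereas the paper only asserts that every node of the restricted tree has a proper extension and leaves the verification of genuine $\tau^+$-fullness implicit.
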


\begin{proof}
	Let $T$ be a $\leq\tau$-full tree on $(\rho^{+})^{\omega}$ of height $\tau^{+}$. The hypotheses imply that there is a $\gamma \in [\rho, \rho^{+})$ such that every node of $T \cap (\gamma^{\omega})^{<\tau^{+}}$ has a proper extension in  $T \cap (\gamma^{\omega})^{<\tau^{+}}$. Let $b \colon \rho \to \gamma$ be a bijection, and let $T_{b}$ be the set of $\langle a_{\alpha} : \alpha < \beta\rangle \in (\rho^{\omega})^{\less\tau^{+}}$ for which $\langle b \circ a_{\alpha} : \alpha < \beta \rangle \in T$. Since $\DC_{\tau^{+}}\rest \rho^{\omega}$ holds, there is a cofinal branch $\langle a_{\alpha} : \alpha < \tau^{+}\rangle$ through $T_{b}$. Then $\langle b \circ a_{\alpha}
   : \alpha < \tau^{+} \rangle$ is a cofinal branch through $T$.
\end{proof}

Lemma \ref{dcsteplem} naturally enables induction arguments. 
We now prove the base case for one such argument. Recall that $\Theta^{M} = \omega_{3}^{M[G]}$ in the context of the lemma. 

\begin{lemma}\label{pmaxdc}
Suppose that $M$ is a model of $\AD^{+}$ containing $\bR$, and that $G \subseteq \pmax$ is $M$-generic. Then for all $\gamma < \Theta^{M}$, $M[G] \models \DC_{\infty} \rest \gamma^{\omega}$. If in addition $M \models ``\Theta$ is regular", then 
$M[G] \models \DC_{\aleph_{2}} \rest \omega_{3}^{\omega}$. 
\end{lemma}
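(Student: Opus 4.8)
The plan is to prove the two assertions separately: the first is a pure wellorderedness fact, and the second reflects down to it. For the first, I would use that $\pmax$ is $\sigma$-closed, so $M$ and $M[G]$ have the same $\omega$-sequences of ordinals and the same reals; in particular $(\gamma^{\omega})^{M[G]} = (\gamma^{\omega})^{M}$ for every $\gamma$. Fix $\gamma < \Theta^{M}$. By the definition of $\Theta$ there is in $M$ a surjection $\breals \to \gamma$; composing it coordinatewise with a definable bijection $\breals \cong \breals^{\omega}$ gives a surjection $s \colon \breals \to \gamma^{\omega}$ lying in $M$, hence in $M[G]$. Since $2^{\aleph_{0}} = \aleph_{2}$ in $M[G]$, the set $\breals^{M[G]} = \breals^{M}$ is wellorderable there, and pulling a wellordering back along $s$ (sending $a \in \gamma^{\omega}$ to its least preimage) wellorders $\gamma^{\omega}$ in $M[G]$. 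A wellorderable set satisfies $\DC_{\delta}$ for relations on it for every ordinal $\delta$, i.e.\ $\DC_{\infty} \rest \gamma^{\omega}$ holds in $M[G]$.

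For the second assertion I assume in addition $M \models ``\Theta$ is regular$"$ and write $\Theta = \Theta^{M} = \omega_{3}^{M[G]}$. Given $\eta \leq \omega_{2}^{M[G]}$ and an $\eta$-full relation $R$ on $\Theta^{\omega}$ in $M[G]$, it is enough to produce a single $\gamma^{*} < \Theta$ such that $R \cap ((\gamma^{*})^{\omega})^{2}$ is $\eta$-full as a relation on $(\gamma^{*})^{\omega}$: applying the first assertion (which gives $\DC_{\infty} \rest (\gamma^{*})^{\omega}$, in particular $\DC_{\eta}$ for relations on $(\gamma^{*})^{\omega}$) to that restriction yields an $R$-chain $f \colon \eta \to (\gamma^{*})^{\omega} \subseteq \Theta^{\omega}$. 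So the whole problem is the reflection. Let $T$ be the tree of $R$-chains of length $< \eta$, and for $\gamma < \Theta$ put $T_{\gamma} = \{ c \in T : \rge(c) \subseteq \gamma^{\omega} \}$.

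For the reflection I would first show that $\Theta$ stays regular in $M[G]$. Indeed, if $f \colon \mu \to \Theta$ were cofinal in $M[G]$ with $\mu < \Theta$, fix a $\pmax$-name $\dot f \in M$ for it; in $M$, each set $V_{\xi} = \{ \alpha : \exists p\ (p \Vdash \dot f(\check\xi) = \check\alpha) \}$ is a surjective image of $\pmax \subseteq H(\aleph_{1})^{M}$, hence of $\breals^{M}$, and so (as $\Theta$ is regular in $M$ and is not a surjective image of $\breals$) is bounded below $\Theta$; since $\mu < \Theta = \cf^{M}(\Theta)$, the $V_{\xi}$ are jointly bounded below some $\beta^{*} < \Theta$, contradicting cofinality of $f$. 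Hence $\cf^{M[G]}(\Theta) = \omega_{3}$, so every node of $T$ (of length $< \eta \leq \omega_{2}$, hence of range size $\leq \aleph_{1}$) has range bounded below $\Theta$, and $T = \bigcup_{\gamma < \Theta} T_{\gamma}$. For $c \in T$ of length $\alpha$ and $\beta \in (\alpha, \eta)$ let $\delta(c, \beta)$ be the least $\gamma < \Theta$ for which $c$ has an extension in $T$ of length $\beta$ with range in $\gamma^{\omega}$; this is well-defined, since $\eta$-fullness provides some extension, which (being of length $< \cf^{M[G]}(\Theta)$) has bounded range. I would then set $h(\gamma) = \sup( \{ \gamma + 1 \} \cup \{ \delta(c, \beta) : c \in T_{\gamma},\ \lh(c) < \beta < \eta \})$; this defines $h \colon \Theta \to \Theta$ provided $|T_{\gamma}|^{M[G]} < \omega_{3}$, which holds because by the first assertion $\gamma^{\omega}$ is wellorderable and, via $s$, of size $\leq |\breals^{M[G]}| = \aleph_{2}$ in $M[G]$, while the listed properties of $\pmax$ in \textsection\ref{pmaxssec} give $2^{\aleph_{1}} = \aleph_{2}$ in $M[G]$ (from $\cP(\omega_{1})^{M[G]} \subseteq L_{\omega_{2}}(\breals)^{M}[G]$, whose right-hand side is a $\pmax$-extension of a set that, since $\omega_{2}^{M[G]} < \Theta^{M}$, is wellorderable of size $\leq \aleph_{2}$ in $M[G]$, hence is itself of size $\leq \aleph_{2}$ there), so that $|T_{\gamma}| \leq |(\gamma^{\omega})^{<\omega_{2}}|^{M[G]} \leq \aleph_{2}$. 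Then for any $\gamma_{0} < \Theta$ the ordinal $\gamma^{*} = \sup_{k < \omega} h^{k}(\gamma_{0})$ is $< \Theta$ (as $\cf^{M[G]}(\Theta) = \omega_{3} > \omega$) and works: each $c \in T_{\gamma^{*}}$ lies in $T_{h^{k}(\gamma_{0})}$ for some $k$, so $\delta(c, \beta) \leq h^{k+1}(\gamma_{0}) \leq \gamma^{*}$ for all $\beta \in (\lh(c), \eta)$, i.e.\ $c$ extends within $(\gamma^{*})^{\omega}$ to every such length.

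The step I expect to be the main obstacle is the regularity of $\Theta$ in $M[G]$ together with the size estimates that accompany it: one has to be sure that $\pmax$-names for short functions into $\Theta$, and the sets $T_{\gamma}$, remain of cardinality $< \Theta$ in $M[G]$, and it is exactly here that the extra hypothesis ``$\Theta$ regular in $M$'' and the bookkeeping with $2^{\aleph_{0}} = 2^{\aleph_{1}} = \aleph_{2}$ and $\cP(\omega_{1})^{M[G]} \subseteq L_{\omega_{2}}(\breals)^{M}[G]$ in $M[G]$ get used. Once $\gamma^{*}$ is in hand, the remainder is the routine recursion of the first assertion against a wellordering of $(\gamma^{*})^{\omega}$.
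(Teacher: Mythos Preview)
Your proof is correct and follows essentially the same approach as the paper. The paper's version is much terser: for the first assertion it just says ``forcing with $\pmax$ wellorders $\bR$,'' and for the second it records that in $M[G]$ one has $2^{\aleph_0}=\aleph_2$, $\Theta^M=\omega_3$ regular, and $\aleph_2^{\aleph_1}=\aleph_2$, hence no cofinal function from $(\gamma^\omega)^\beta$ to $\Theta^M$ for $\gamma<\Theta^M$ and $\beta<\omega_2$, and then cites the reflection argument of Lemma~\ref{dcsteplem}. Your explicit $h$-closing construction is exactly that reflection spelled out, and your name-based argument for the regularity of $\Theta$ in $M[G]$ and your derivation of $\aleph_2^{\aleph_1}=\aleph_2$ are the details the paper leaves implicit.
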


\begin{proof}
The first conclusion of the lemma follows from the fact that forcing with $\pmax$ wellorders $\bR$. 
%any $\pmax$ extension $M[G]$ of a model $M$ of $\AD^{+}$ satisfies $\DC \rest \gamma^{\omega}$, for all $\gamma < \Theta^{M}$. 
Suppose then that $M \models ``\Theta$ is regular". 
Then in $M[G]$ we have
\begin{itemize}
	\item $2^{\aleph_{0}} = \aleph_{2}$, 
	\item $\Theta^{M} = \omega_{3}$ is regular and 
	\item $\aleph_{2}^{\aleph_{1}} = \aleph_{2}$,
\end{itemize}
from which it follows that there is no cofinal function in $M[G]$ from $(\gamma^{\omega})^{\beta}$ to $\Theta^{M}$, for any $\gamma < \Theta^{M}$ and $\beta < \omega_{2}^{M}$. A reflection argument as in the proof of Lemma \ref{dcsteplem} then gives the second conclusion of the lemma. 
%shows that $M[G] \models \DC_{\aleph_{2}} \rest(\Theta^{M})^{\omega}$. 
\end{proof}

In \textsection\ref{strongregsec} we establish the first hypothesis of Lemma \ref{dcsteplem} in $\c^{+}_{\lambda}[G]$ for $\tau = \omega_{1}$ and $\rho = \kappa_{i}$, for all $i < n$.
In conjunction with the two lemmas in this section, this gives that $\c^{+}_{\lambda}[G] \models \DC_{\aleph_{2}}$. 

\footnote{I don't think we need the rest of this, but leaving it here for easy access if needed. To show that $\DC_{\aleph_{2}} \rest \lambda^{\omega}$ holds in $\c^{+}_{\lambda}[G]$, we show that the following statements hold in $\c^{+}_{\lambda}[G]$:

\begin{itemize}
\item there is no cofinal map from $\omega_{2}$ to $\lambda$;

\item there is no cofinal map from $(\gamma^{\omega})^{\beta}$ to $\lambda$, for any $\gamma < \lambda$ and $\beta < \omega_{2}$ (it suffices to show this for $\gamma = \kappa$ and $\beta = \omega_{1}$).
\end{itemize}

The first of these follows from Lemma \ref{strongreglem} with $b$ as $\omega_{2} \times \pmax$. The second is shown in the proof of Lemma \ref{cpgdc2lem}, whose statement is just the desired statement that $\c^{+}_{\lambda}[G] \models \DC_{\aleph_{2}}$. These two facts imply that for every cardinal $\delta \leq \aleph_{2}$ and each $\delta$-full relation $R$ on $\lambda^{\omega}$ in $\c^{+}_{\lambda}[G]$, there exists a $\gamma < \lambda$ such that $R \cap \gamma^{\omega}$ is also $\delta$-full. Since $\lambda = \kappa^{+}$, it suffices then (once we have established the two facts above) to consider trees on $\kappa^{\omega}$.
To show that, in $\c^{+}_{\lambda}[G]$, every $\omega_{2}$-full tree on $\kappa^{\omega}$ has a cofinal branch, we use the fact (which follows from standard $\pmax$ arguments) that the following statements hold in $\c^{+}_{\lambda}[G]$:

\begin{itemize}
\item there is no cofinal map from $\omega_{2}$ to $\kappa$ (because $\kappa = \Theta^{\c^{+}_{\lambda}}$ is regular in $\c^{+}_{\lambda}$ and $\pmax \subseteq H(\aleph_{1})$);

\item there is no cofinal map from $(\gamma^{\omega})^{\beta}$ to $\kappa$, for any $\gamma < \kappa$ and $\beta < \omega_{2}$ (because $\kappa = \omega_{3}^{\c^{+}_{\lambda}[G]}$ and $\aleph_{2}^{\aleph_{1}} = \aleph_{2}$).
\end{itemize}
These facts imply that it suffices to consider $\omega_{2}$-full trees on $\gamma^{\omega}$ for any $\gamma < \kappa$.
Since each such $\gamma^{\omega}$ is a surjective image of the wellordered set $\breals$ in $\c^{+}_{\lambda}[G]$,
$\c^{+}_{\lambda}[G]$ satisfies the statement that each such tree has a cofinal branch.}

\section{Strong regularity of $\lambda$}\label{strongregsec}
As above, let $\kappa_{i}$ denote $(\kappa^{+i})^{\c^{+}_{\lambda}}$.
A cardinal $\rho$ is \emph{strongly regular} in $\c^{+}_{\lambda}$ if whenever $b$ is in $\c^{-}_{\rho}$ and $f \colon b \to \rho$ is in $\c^{+}_{\lambda}$ the range of $f$ is bounded in $\rho$, i.e. there is a $\gamma < \rho$ such that $f[b] \subseteq \gamma$.  
In this section we show that each of the cardinals $\k_{i}$, $i \leq n$, is strongly regular in  $\c^{+}_{\lambda}$.
We then derive several consequences, including the fact that $\DC_{\aleph_{2}}$ holds in $\c^{+}_{\lambda}[G]$.

\begin{lemma}\label{strongreglem}
	For each $i \leq n$, $\kappa_{i}$ is strongly regular in $\c^{+}_{\lambda}$. 
\end{lemma}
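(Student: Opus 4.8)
The plan is first to reduce the lemma to a purely combinatorial statement about surjective images inside $\c^{+}_{\lambda}$, and then to settle that statement by induction on $i$.

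\emph{Reduction.} I would show that every $b\in\c^{-}_{\kappa_i}$ is, in $\c^{+}_{\lambda}$, a surjective image of $\kappa_{i-1}^{\omega}$ when $i\geq 1$, and of $\bR$ when $i=0$. Since $\c^{-}_{\kappa_i}=\bigcup_{\nu<\kappa_i}\c^{-}_{\nu}$ and each $\c^{-}_{\nu}$ is transitive, any $b\in\c^{-}_{\kappa_i}$ satisfies $b\subseteq\c^{-}_{\nu}$ for some $\nu<\kappa_i$. The usual Skolem-term analysis of $L(\mH,X)$-constructibility presents $\c^{-}_{\nu}=L_{\nu}(\mH,\cup_{\beta<\nu}\beta^{\omega})$ as a surjective image of $\omega\times\nu\times(\tc(\cup_{\beta<\nu}\beta^{\omega})\cup\nu)^{<\omega}$, hence of $\nu^{\omega}$, by a surjection that is ordinal definable from $\nu$ and therefore lies in $\H_{\cup_{\beta<\lambda}\beta^{\omega}}=\c^{+}_{\lambda}$. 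As $\kappa_i=(\kappa^{+i})^{\c^{+}_{\lambda}}$, for $i\geq 1$ we have $\nu<(\kappa_{i-1}^{+})^{\c^{+}_{\lambda}}$, so $\c^{+}_{\lambda}$ has a surjection $\kappa_{i-1}\to\nu$; applied coordinatewise it yields a surjection $\kappa_{i-1}^{\omega}\to\nu^{\omega}$, and composing with the previous one gives a surjection onto $b$. For $i=0$ one argues identically with $\bR\cong\bR^{\omega}$ in place of $\kappa_{i-1}^{\omega}$, using $\nu<\kappa=\Theta^{\c^{+}_{\lambda}}$; here $\Theta^{\c^{+}_{\lambda}}=\kappa$ holds because $\c^{+}_{\lambda}$ and $\c_{\lambda}$ have the same sets of reals by part~(5) of $\Join^{n}_{\lambda}$, hence the same prewellorderings of $\bR$ and the same $\Theta$.

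\emph{The induction.} It then suffices to prove, by induction on $i\leq n$, that in $\c^{+}_{\lambda}$ every ordinal that is a surjective image of $\kappa_{i-1}^{\omega}$ (of $\bR$, when $i=0$) is below $\kappa_i$. Indeed, given $f\colon b\to\kappa_i$ in $\c^{+}_{\lambda}$ with $b\in\c^{-}_{\kappa_i}$, precomposing $f$ with a surjection of $\kappa_{i-1}^{\omega}$ (or $\bR$) onto $b$ produces a map with range $\mathrm{range}(f)$, so $\mathrm{range}(f)$ has order type $<\kappa_i$; as $\mathrm{range}(f)\subseteq\kappa_i$ and $\kappa_i$ is regular in $\c^{+}_{\lambda}$ by part~(4) of $\Join^{n}_{\lambda}$, it is bounded in $\kappa_i$, which is the lemma. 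The base case $i=0$ is exactly $\Theta^{\c^{+}_{\lambda}}=\kappa$. For the step, suppose toward a contradiction that $h\colon\kappa_{i-1}^{\omega}\to\kappa_i$ is a surjection in $\c^{+}_{\lambda}$ (a surjective image of $\kappa_{i-1}^{\omega}$ of order type $\geq\kappa_i$ would yield one). As $\kappa_{i-1}$ is regular and uncountable in $\c^{+}_{\lambda}$, we have $\kappa_{i-1}^{\omega}=\bigcup_{\gamma<\kappa_{i-1}}\gamma^{\omega}$ there. For each $\gamma<\kappa_{i-1}$, $h[\gamma^{\omega}]$ is a surjective image of $\gamma^{\omega}$, which is itself a surjective image of $\kappa_{i-2}^{\omega}$ (of $\bR$, when $i=1$) since $\gamma<\kappa_{i-1}=(\kappa_{i-2}^{+})^{\c^{+}_{\lambda}}$ (resp.\ $\gamma<\Theta^{\c^{+}_{\lambda}}$); so by the induction hypothesis $h[\gamma^{\omega}]$ has order type $<\kappa_{i-1}$, whence $\beta_{\gamma}:=\sup h[\gamma^{\omega}]<\kappa_i$, using $\kappa_{i-1}<\kappa_i$ and the regularity of $\kappa_i$. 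But $\kappa_i=\bigcup_{\gamma<\kappa_{i-1}}h[\gamma^{\omega}]$ then gives $\kappa_i=\sup_{\gamma<\kappa_{i-1}}\beta_{\gamma}$, so $\gamma\mapsto\beta_{\gamma}$ — a map in $\c^{+}_{\lambda}$ — witnesses $\cf^{\c^{+}_{\lambda}}(\kappa_i)\leq\kappa_{i-1}<\kappa_i$, contradicting the regularity of $\kappa_i$ in $\c^{+}_{\lambda}$. This closes the induction.

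\emph{Main obstacle.} The part needing the most care is the reduction: carrying out the constructibility computation inside the choiceless $\c^{+}_{\lambda}$ and checking that the surjection of $\nu^{\omega}$ onto $\c^{-}_{\nu}$, and the coordinatewise surjections of $\kappa_{i-1}^{\omega}$ onto $\nu^{\omega}$, really are ordinal definable over $\cup_{\beta<\lambda}\beta^{\omega}$ (so that they belong to $\c^{+}_{\lambda}$), together with pinning down $\Theta^{\c^{+}_{\lambda}}=\kappa$ for the base case. Once these are in hand, the pigeonhole step uses nothing beyond the regularity of the $\kappa_j$ in $\c^{+}_{\lambda}$.
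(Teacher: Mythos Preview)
Your proposal is correct and takes essentially the same approach as the paper: both proceed by induction on $i$, build a surjection of $\kappa_{i-1}^{\omega}$ (or $\bR$ when $i=0$) onto $b$ via the definability of elements of $\c^{-}_{\beta}$, apply the induction hypothesis to bound the ordertype of the image restricted to each $\gamma^{\omega}$, and then invoke the regularity of $\kappa_i$ twice. The only difference is organizational --- you factor the surjection-building step out as an explicit ``reduction'' and phrase the inductive claim as ``every ordinal that is a surjective image of $\kappa_{i-1}^{\omega}$ lies below $\kappa_i$'', whereas the paper keeps the strong-regularity formulation throughout and builds the surjection inside the induction step --- but the mathematical content is identical.
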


\begin{proof}
We work in $\c^{+}_{\lambda}$ and proceed by induction on $i$.
When $i=0$, the lemma follows from 
the facts that $\Theta$ is regular and each member of $\c^{-}_{\Theta}$ is a surjective image of the reals. 

Fix $i < n$ for which the lemma holds, and fix $b \in \c^{-}_{k_{i+1}}$ and $f \colon b \to \kappa_{i+1}$. 
Let $\beta<\kappa_{i+1}$ be such that $b\in \c^{-}_{\beta}$.
Since $\kappa_{i+1} = \kappa_{i}^{+}$, there exists a surjection $h\colon \kappa_{i}\to \beta$.
% in $\c_{\kappa_{i+1}}$ (recall that $\c^{+}_{\lambda}$ and $\c_{\kappa_{i+1}}$ have the same subsets of $\kappa_{i}$).
Let $B$ be the set of $y \in \kappa_{i}^\omega$ such that $b$ has a member definable in $\c^{-}_{\beta}$ from
$h \circ y$ and $\mH \rest \beta$.
Then $B$ induces a surjection $g \colon \kappa_{i}^{\omega} \to b$.
% in $\c_{\lambda}$.

Since $\kappa_{i}$ is strongly regular, the ordertype of $f[g[\alpha^{\omega}]]$ is less than $\kappa_{i}$ for each $\alpha < \kappa_{i}$.
Since $\kappa_{i+1}$ is regular, $f[g[\alpha^{\omega}]]$ is a bounded subset of $\kappa_{i+1}$, for each $\alpha < \kappa_{i}$.
Again applying the regularity of $\kappa_{i+1}$, $f[b] = f[g[\kappa_{i}^{\omega}]]$ is bounded in $\kappa_{i+1}$.
\end{proof}

Since $\pmax$ is in $\c^{-}_{\kappa}$, Lemma \ref{strongreglem} has the following immediate consequence.
In particular, each $\kappa_{i}$, $i \leq n$, remains regular in $\c^{+}_{\lambda}[G]$.

\begin{lemma}\label{strongreglemcon}
For each $i \leq n$, each $b \in \c^{-}_{\kappa_{i}}[G]$ and each function $f \colon b \to \kappa_{i}$ in $\c^{+}_{\lambda}[G]$, the range of $f$ is bounded in $\kappa_{i}$. 
\end{lemma}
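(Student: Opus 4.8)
The plan is to reflect the entire $\pmax$-forcing situation down into $\c^{+}_{\lambda}$ and then invoke Lemma \ref{strongreglem} (the strong regularity of $\kappa_{i}$ in $\c^{+}_{\lambda}$) for a single auxiliary function defined there. Fix $i \leq n$, a set $b \in \c^{-}_{\kappa_{i}}[G]$, and a function $f \colon b \to \kappa_{i}$ in $\c^{+}_{\lambda}[G]$. Since $\pmax \in \c^{-}_{\kappa} \subseteq \c^{-}_{\kappa_{i}}$, there is a $\pmax$-name $\dot{b} \in \c^{-}_{\kappa_{i}}$ with $\dot{b}_{G} = b$. Let $D = \dom(\dot{b})$ be the set of first coordinates of pairs in $\dot{b}$; then $D \in \c^{-}_{\kappa_{i}}$, and hence $D \times \pmax \in \c^{-}_{\kappa_{i}}$ as well. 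Since $\pmax \in \c^{-}_{\kappa} \subseteq \c^{+}_{\lambda}$ and $\c^{+}_{\lambda} \models \ZF$, the function $f$ has a $\pmax$-name $\dot{f} \in \c^{+}_{\lambda}$, the name $\dot{b}$ also lies in $\c^{+}_{\lambda}$ (as $\c^{-}_{\kappa_{i}} \subseteq \c^{+}_{\lambda}$), and $\c^{+}_{\lambda}$ carries a definable forcing relation $\Vdash_{\pmax}$.

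Working inside $\c^{+}_{\lambda}$, I would define $g \colon D \times \pmax \to \kappa_{i}$ by letting $g(\dot{a}, p)$ be the unique $\alpha < \kappa_{i}$ with $p \Vdash_{\pmax} (\dot{a} \in \dot{b} \wedge \dot{f}(\dot{a}) = \check{\alpha})$, if such an $\alpha$ exists, and $g(\dot{a}, p) = 0$ otherwise. This is well defined, since two conditions forcing $\dot{f}(\dot{a})$ equal to distinct checked ordinals would be incompatible, and $g \in \c^{+}_{\lambda}$ because it is defined from $\dot{b}$, $\dot{f}$, $\pmax$, $\kappa_{i}$ and the forcing relation, all of which are available in $\c^{+}_{\lambda}$. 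Now Lemma \ref{strongreglem} applies with $D \times \pmax \in \c^{-}_{\kappa_{i}}$ and $g \colon D \times \pmax \to \kappa_{i}$ in $\c^{+}_{\lambda}$: by the strong regularity of $\kappa_{i}$ in $\c^{+}_{\lambda}$, the range of $g$ is bounded, say $g[D \times \pmax] \subseteq \gamma$ for some $\gamma < \kappa_{i}$.

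It remains to check that $f[b] \subseteq \gamma$, which is the only point where genericity of $G$ is used. Given $\alpha \in f[b]$, pick $x \in b$ with $f(x) = \alpha$. By the definition of name evaluation there is $\langle \dot{a}, p \rangle \in \dot{b}$ with $p \in G$ and $\dot{a}_{G} = x$; by the forcing theorem there is $q \in G$ with $q \leq p$ and $q \Vdash_{\pmax} \dot{f}(\dot{a}) = \check{\alpha}$, and since $q \leq p$ and $\langle \dot{a}, p \rangle \in \dot{b}$ we also have $q \Vdash_{\pmax} \dot{a} \in \dot{b}$. As $\alpha < \kappa_{i}$, the condition $q$ witnesses $g(\dot{a}, q) = \alpha$, so $\alpha < \gamma$; hence $f[b] \subseteq \gamma$, which is what we wanted. (Specializing to $b$ an ordinal below $\kappa_{i}$ then yields immediately that each $\kappa_{i}$ remains regular in $\c^{+}_{\lambda}[G]$.)

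I do not expect a genuine obstacle here: the statement is essentially a direct corollary, the whole content being a routine reflection of a small forcing situation ($\pmax \subseteq H(\aleph_{1})$) into the $\ZF$-model $\c^{+}_{\lambda}$, followed by one application of Lemma \ref{strongreglem}. The two points that warrant a sentence of care are that $D = \dom(\dot{b})$ and $D \times \pmax$ genuinely belong to $\c^{-}_{\kappa_{i}}$ --- which follows from $\pmax \in \c^{-}_{\kappa} \subseteq \c^{-}_{\kappa_{i}}$ together with the closure of the levels of the $\c^{-}$-construction under rudimentary operations ($\kappa_{i}$ being a cardinal, hence a limit ordinal closed under $\beta \mapsto \beta + \omega$) --- and that $\c^{+}_{\lambda}$ can form the forcing relation for $\pmax$, which holds since $\c^{+}_{\lambda} \models \ZF$ and $\pmax \in \c^{+}_{\lambda}$.
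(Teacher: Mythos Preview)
Your proposal is correct and is precisely the approach the paper has in mind: the paper states the lemma as an ``immediate consequence'' of Lemma~\ref{strongreglem} together with $\pmax \in \c^{-}_{\kappa}$, and you have simply spelled out the standard name-bounding argument that makes this immediate consequence explicit.
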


%As noted at the end of Section \ref{dcproveoutlinessec}, 

The following lemma will be used to prove that $\c^{+}_{\lambda}[G] \models \DC_{\aleph_{2}}$.

%and also in the proof of Lemma \ref{omega1 functions}. 

%it suffices to show that there is no
%cofinal map from $\kappa_{i}^{\omega_{1}}$ to $\kappa_{i+1}$ in $\c^{+}_{\lambda}[G]$ for any $i < n$. Any such function $f$ would be the realization of a name coded by a subset of $\kappa_{n-1} \times \omega_{1} \times \pmax$. By Lemma \ref{cmpscllem}, this name would be in $\c^{-}_{\lambda}$. 
%By Lemma \ref{strongreglemcon}, the range of $f$ would be %bounded in $\lambda$ as desired. 

%and Lemma \ref{cmpscllem}, both with $i=n$ and $b$ as $\kappa_{n-1}$ in former case and $b = \kappa_{n-1} \times \kappa_{n-1} \times \pmax$ in the latter case.  

%\ref{strongregghlem} below, which shows that the strong %regularity of $\lambda$ persists to $\c^{+}_{\lambda}[G, H]$.

%Lemmas \ref{cmpscllem} and \ref{dkcontlem} use the strong %regularity of $\lambda$ to prove closure properties of %$\c^{-}_{\lambda}$.

\begin{lemma}\label{cmpscllem}\normalfont 
For all $i\leq n$ and all $b\in \c^-_{\kappa_{i}}$, $\cP(b) \cap \c^{+}_{\lambda} \subseteq \c^{-}_{\kappa_{i}}$.
\end{lemma}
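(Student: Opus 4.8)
The plan is to prove this by induction on $i$, reducing each instance to clauses (5) and (\ref{bowtiesix}) of $\Join^{n}_{\lambda}$ — which are exactly the assertions $\c^{-}_{\lambda}\cap\cP(\bR)=\c^{+}_{\lambda}\cap\cP(\bR)$ and $\c^{-}_{\kappa_{i+1}}\cap\cP(\kappa_{i}^{\omega})=\c^{+}_{\lambda}\cap\cP(\kappa_{i}^{\omega})$ — by pushing an arbitrary $b\in\c^{-}_{\kappa_{i}}$ forward off of $\bR$ (when $i=0$) or off of $\kappa_{i-1}^{\omega}$ (when $1\le i\le n$) via a surjection that lives inside the Nairian model $\c^{-}_{\kappa_{i}}$ itself. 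So the lemma is really the extension of clauses (5) and (\ref{bowtiesix}) from the special sets $\bR,\kappa_{i}^{\omega}$ to arbitrary $b$.

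In detail, fix $i\le n$ and $b\in\c^{-}_{\kappa_{i}}$, and pick $\beta<\kappa_{i}$ with $b\in\c^{-}_{\beta}$, so $\tc(b)\subseteq\c^{-}_{\beta}$. The first step is to produce a surjection $g\colon Y\to b$ with $g\in\c^{-}_{\kappa_{i}}$, where $Y=\bR$ if $i=0$ and $Y=\kappa_{i-1}^{\omega}$ if $i\ge 1$. Granting this, the lemma is immediate: for $a\in\cP(b)\cap\c^{+}_{\lambda}$ we have $g^{-1}[a]\in\cP(Y)\cap\c^{+}_{\lambda}$; if $i=0$, clause (5) gives $g^{-1}[a]\in\Delta_{\kappa}$ and then $g^{-1}[a]\in\c^{-}_{\kappa}$ because $\Delta_{\kappa}\subseteq\c^{-}_{\kappa}$ (the inclusion of Proposition \ref{Thetarealsrem}, read inside $L(\Delta_{\kappa})$, which models $\ddagger$ and has $\Theta=\kappa$); if $i\ge 1$, clause (\ref{bowtiesix}) applied with $i-1$ in place of $i$ gives $g^{-1}[a]\in\c^{-}_{\kappa_{i}}$ outright. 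Either way $g^{-1}[a]\in\c^{-}_{\kappa_{i}}$, so $a=\{\,g(y):y\in g^{-1}[a]\,\}\in\c^{-}_{\kappa_{i}}$.

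The construction of $g$ is the technical heart, and it reuses the coding device from the proof of Lemma \ref{strongreglem}. Every element of $\c^{-}_{\beta}$ is definable over some $(\c^{-}_{\beta'},\in,\mH\rest\beta')$ with $\beta'<\beta$ from finitely many ordinals below $\beta$ and finitely many members of $\bigcup_{\alpha<\beta}\alpha^{\omega}$; the associated decoding map, restricted to codes of members of $\c^{-}_{\beta}$, is a single function of $\c^{-}_{\kappa_{i}}$ (an iterated Skolem function for the $\c^{-}$-hierarchy below $\kappa_{i}$), since the ordinal predicate $\mH\rest\beta$ contributes nothing. Thus $b$ is a surjective image, via a map in $\c^{-}_{\kappa_{i}}$, of the code set built by finite iterations from $\beta$ and $\bigcup_{\alpha<\beta}\alpha^{\omega}$. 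When $i=0$, each $\alpha<\beta<\kappa=\Theta^{\c_{\lambda}}$ carries a prewellordering of $\bR$ in $\c_{\lambda}$; since $\c_{\lambda}\models\AD^{+}$ (Woodin's theorem, $\c_{\lambda}$ being an inner model of $\ZF$ containing $\bR$) these prewellorderings have Wadge rank $<\kappa$, hence lie in $\Delta_{\kappa}\subseteq\c^{-}_{\kappa}$, and the code set becomes a surjective image of $\bR$ inside $\c^{-}_{\kappa}$. When $i\ge 1$, each $\alpha<\beta<\kappa_{i}=\kappa_{i-1}^{+}$ (cardinal arithmetic of $\c^{+}_{\lambda}$, equivalently of $\c_{\lambda}$ since the $\kappa_{j}$ are consecutive members of $C$) is a surjective image of $\kappa_{i-1}$, while the generators $\alpha^{\omega}$ with $\alpha<\kappa_{i-1}$ are, by the case $i-1$ of the surjection claim, surjective images of $\kappa_{i-2}^{\omega}\subseteq\kappa_{i-1}^{\omega}$; a routine surjective-image computation then yields a surjection $\kappa_{i-1}^{\omega}\to\c^{-}_{\beta}$ inside $\c^{-}_{\kappa_{i}}$.

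The hard part will be ensuring that the collapsing maps and prewellorderings just invoked genuinely lie inside $\c^{-}_{\kappa_{i}}$ rather than merely inside $\c_{\lambda}$ or $\c^{+}_{\lambda}$. For $\bR$ this is the inclusion $\Delta_{\kappa}\subseteq\c^{-}_{\kappa}$, of the same character as Proposition \ref{Thetarealsrem} but requiring attention to which definability predicates $\c^{-}_{\kappa}$ has access to. For the successor levels, given $\alpha\in(\kappa_{i-1},\kappa_{i})$ and a bijection $f\colon\kappa_{i-1}\to\alpha$ in $\c_{\lambda}$, I would take a L\"owenheim--Skolem hull $N\prec\c^{-}_{\xi}$ of a tall enough stage with $f\in N$ and $\alpha\cup\kappa_{i-1}\cup\bigcup_{\gamma<\kappa_{i-1}}\gamma^{\omega}\subseteq N$; condensation for the $\c^{-}$-hierarchy — the fine-structural analysis of \cite{CCM}, whose hulls fix the included generators — collapses $N$ to some $\c^{-}_{\bar\xi}$, and the strong regularity of $\kappa_{i}$ from Lemma \ref{strongreglem} forces $\bar\xi<\kappa_{i}$, so $f\in\c^{-}_{\bar\xi}\subseteq\c^{-}_{\kappa_{i}}$. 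This condensation step, together with $\Delta_{\kappa}\subseteq\c^{-}_{\kappa}$, is where essentially all of the work is; the absorption of finite products into the code set and the absoluteness bookkeeping for Proposition \ref{Thetarealsrem} are routine.
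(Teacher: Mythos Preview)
Your approach is the paper's: induct on $i$, push $a\subseteq b$ back to a subset of $\bR$ (when $i=0$) or of $\kappa_{i-1}^{\omega}$ (when $i\ge 1$) via a surjection available in $\c^{-}_{\kappa_i}$, then invoke clause (5) or (6) of $\Join^n_\lambda$. The paper does exactly this, writing $B_a$ for the set of codes $(x,n)\in\kappa_i^{\omega}\times\omega$ such that formula $n$ defines a member of $a$ over $\c^{-}_{\beta}$ from $h\circ x$ and $\mH\rest\beta$ (where $h\colon\kappa_i\to\beta$ is a surjection), and applying clause (6) to $B_a$.

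Where you work harder than necessary is your third paragraph, placing the surjection $h\colon\kappa_{i-1}\to\alpha$ inside $\c^{-}_{\kappa_i}$. You propose a L\"owenheim--Skolem hull plus condensation for the $\c^{-}$ hierarchy. This is both unneeded and unsupported: the $\c^{-}$ hierarchy is built relative to the class predicate $\mH$, and neither this paper nor \cite{CCM} proves a condensation lemma of the kind you invoke (hulls of $\c^{-}_\xi$ need not collapse to some $\c^{-}_{\bar\xi}$, since the predicate $\mH$ need not cooperate). But none of this matters, because clause (6) already does the work. Any surjection $h\colon\kappa_{i-1}\to\alpha$ in $\c^{+}_{\lambda}$ is determined by its associated prewellorder on $\kappa_{i-1}$, a subset of $\kappa_{i-1}^2$ and hence (after any fixed coding) of $\kappa_{i-1}^{\omega}$; by clause (6) with $i-1$ in place of $i$, this code lies in $\c^{-}_{\kappa_i}$, and since $\c^{-}_{\kappa_i}$ has height $\kappa_i>\alpha$ it recovers $h$ as the rank function. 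The paper simply takes $h$ from $\c_{\kappa_{i+1}}$ and leaves this observation implicit in the step ``$B_a\in\c^{-}_{\kappa_{i+1}}$, so $a\in\c^{-}_{\kappa_{i+1}}$''.
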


\begin{proof}
Again, we work in $\c^{+}_{\lambda}$.
When $i=0$, the lemma follows from Proposition \ref{Thetarealsrem} and the fact that each member of $\c^{-}_{\kappa}$ is a surjective image of the reals in $\c^{-}_{\kappa}$.

Fixing $i < n$ for which the lemma holds, we show that it holds for $i+1$.
Fix $b \in \c^{-}_{\kappa_{i+1}}$ and $\beta<\kappa_{i+1}$ such that
$b\in \c^{-}_{\beta}$.
Let
$h \colon \kappa_{i} \to \beta $ be a surjection in $\c_{\kappa_{i+1}}$.
Fix $a\in \cP(b)\cap \c^+_{\lambda}$, and let $B_{a}$ be the set of $(x,n) \in \kappa^\omega \times \omega$ such that some member of $a$ is definable over $\c^{-}_{\beta}$ from $h \circ x$ and $\mH \rest \beta$ via the formula with G\"{o}del number $n$.
Then $B_{a} \in \c^{+}_{\lambda}$.
Since $\cP(\kappa_{i}^{\omega}) \cap \c^{+}_{\lambda} \subseteq \c^{-}_{\kappa_{i+1}}$, we have $B_{a}\in \c^{-}_{\kappa_{i+1}}$, so $a\in \c^{-}_{\kappa_{i+1}}$. 
%A reflection argument using the strong regularity of $\kappa_{i+1}$ shows that $a \in \c^{-}_{\kappa_{i+1}}$.
\end{proof}

We have enough in place to prove $\DC_{\aleph_{2}}$.

\begin{lemma}\label{cpgdc2lem}\normalfont  $\c^{+}_{\lambda}[G] \models \DC_{\aleph_{2}}$.
\end{lemma}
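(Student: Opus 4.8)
Here $\kappa=\kappa_{0}$ and $\lambda=\kappa_{n}$, and $G\subseteq\pmax$ is the $\pmax$‑generic fixed at the start of Part I.

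The plan is to first establish $\DC_{\aleph_{2}}\restriction\lambda^{\omega}$ in $\c^{+}_{\lambda}[G]$ and then upgrade this to full $\DC_{\aleph_{2}}$ by means of Lemma \ref{dcreducelem}. We use throughout that $\pmax^{\c^{+}_{\lambda}}=\pmax$ and that $G$, being $V$‑generic, is $\c^{+}_{\lambda}$‑generic, since $\c^{+}_{\lambda}$ and $V$ have the same $H(\aleph_{1})$ (they have the same reals) and $\pmax$ with its ordering is definable there. Granting $\DC_{\aleph_{2}}\restriction\lambda^{\omega}$ in $\c^{+}_{\lambda}[G]$, it implies $\DC_{\aleph_{0}}\restriction\lambda^{\omega}$ and $\DC_{\aleph_{1}}\restriction\lambda^{\omega}$, so two applications of Lemma \ref{dcreducelem} with $\bbP=\pmax$ and $I=G$ --- first $m=0$, then $m=1$ --- give $\DC_{\aleph_{0}}$ and $\DC_{\aleph_{1}}$ in $\c^{+}_{\lambda}[G]$, and a third application with $m=2$ then yields $\DC_{\aleph_{2}}$. (One can also see $\DC_{\aleph_{0}}$ directly: $\pmax$ is $\sigma$‑closed, so $\DC$ passes from $V$ to $V[G]$, and a branch through a height‑$\omega$ tree on $\lambda^{\omega}$ lies in $(\lambda^{\omega})^{\omega}$, a set common to $\c^{+}_{\lambda}$, $V$, $\c^{+}_{\lambda}[G]$ and $V[G]$.) So the task reduces to proving $\DC_{\aleph_{2}}\restriction\lambda^{\omega}$ in $\c^{+}_{\lambda}[G]$.

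I would prove by induction on $i\leq n$ that $\c^{+}_{\lambda}[G]\models\DC_{\aleph_{2}}\restriction\kappa_{i}^{\omega}$. For $i=0$: $\c^{+}_{\lambda}$ is an inner model of $\ZF$ containing $\bR$ of the $\AD^{+}$ universe $V$, so $\c^{+}_{\lambda}\models\AD^{+}$, and $\c^{+}_{\lambda}\models$ ``$\Theta$ is regular'' with $\Theta^{\c^{+}_{\lambda}}=\kappa$ (see \textsection\ref{cmjoinssec}); hence Lemma \ref{pmaxdc}, applied with $M=\c^{+}_{\lambda}$, gives $\c^{+}_{\lambda}[G]\models\DC_{\aleph_{2}}\restriction\omega_{3}^{\omega}$, and $\omega_{3}^{\c^{+}_{\lambda}[G]}=\Theta^{\c^{+}_{\lambda}}=\kappa=\kappa_{0}$. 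For the inductive step, assuming $\DC_{\aleph_{2}}\restriction\kappa_{i}^{\omega}$ in $\c^{+}_{\lambda}[G]$ for some $i<n$, I would apply Lemma \ref{dcsteplem} with $\tau=\omega_{1}$ and $\rho=\kappa_{i}$ (so $\rho^{+}=\kappa_{i+1}$, using $\kappa_{i+1}=\kappa_{i}^{+}$) to obtain $\DC_{\aleph_{2}}\restriction\kappa_{i+1}^{\omega}$. Besides the inductive hypothesis, this requires the ``first hypothesis'' of Lemma \ref{dcsteplem}: in $\c^{+}_{\lambda}[G]$ there is no cofinal $f\colon(\kappa_{i}^{\omega})^{\omega_{1}}\to\kappa_{i+1}$.

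To verify this I would argue inside $\c^{+}_{\lambda}[G]$, where by Lemma \ref{strongreglemcon} both $\kappa_{i}$ and $\kappa_{i+1}$ are regular and exceed $\omega_{1}$. Using the canonical bijection $(\kappa_{i}^{\omega})^{\omega_{1}}\cong\kappa_{i}^{\omega_{1}}$ (as $|\omega\times\omega_{1}|=\omega_{1}$) and the regularity of $\kappa_{i}$, every element of $(\kappa_{i}^{\omega})^{\omega_{1}}$ has bounded range, so $(\kappa_{i}^{\omega})^{\omega_{1}}=\bigcup_{\gamma<\kappa_{i}}(\gamma^{\omega})^{\omega_{1}}$; since $\kappa_{i+1}$ is regular and $\kappa_{i}<\kappa_{i+1}$, it then suffices to rule out a cofinal map $(\gamma^{\omega})^{\omega_{1}}\to\kappa_{i+1}$ for each fixed $\gamma<\kappa_{i}$. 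Here the structural lemmas enter: Lemma \ref{cmpscllem}, applied with $b=\omega_{1}\times\gamma^{\omega}\in\c^{-}_{\kappa_{i}}$, puts every element of $(\gamma^{\omega})^{\omega_{1}}$ into $\c^{-}_{\kappa_{i}}$; combined with the description of $\c^{-}_{\kappa_{i}}$ as built over $\bigcup_{\beta<\kappa_{i}}\beta^{\omega}$ relative to $\mH$, the agreement of power sets coming from item (\ref{bowtiesix}) of $\Join^{n}_{\lambda}$, Lemma \ref{strongreglem}, and the $\pmax$‑theoretic facts of \textsection\ref{pmaxssec} and the proof of Lemma \ref{pmaxdc} (notably $2^{\aleph_{0}}=\aleph_{2}$ and $\aleph_{2}^{\aleph_{1}}=\aleph_{2}$ in $\c^{+}_{\lambda}[G]$), this lets one see that $(\gamma^{\omega})^{\omega_{1}}$ is, in $\c^{+}_{\lambda}[G]$, a surjective image of a set of ordinals of size $<\kappa_{i+1}$; the regularity of $\kappa_{i+1}$ then forces any map into it to have bounded range. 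The induction thus gives $\DC_{\aleph_{2}}\restriction\kappa_{n}^{\omega}=\DC_{\aleph_{2}}\restriction\lambda^{\omega}$, and with it the lemma.

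The step I expect to be the main obstacle is precisely this last verification --- that no cofinal $f\colon(\kappa_{i}^{\omega})^{\omega_{1}}\to\kappa_{i+1}$ exists in $\c^{+}_{\lambda}[G]$, i.e.\ controlling (in the weak, surjective‑image sense available without full Choice) the sizes of the sets $(\gamma^{\omega})^{\omega_{1}}$ for $\gamma<\kappa_{i}$. It is here that essentially all of the hypotheses packaged into $\Join^{n}_{\lambda}$ (regularity of the $\kappa_{i}$, the level‑by‑level agreement of power sets, item (\ref{bowtiesix})) and the covering and regularity‑preservation behaviour of $\pmax$ do their work; everything else is bookkeeping with the lemmas cited above.
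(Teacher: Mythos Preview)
Your overall strategy is exactly the paper's: reduce via Lemma~\ref{dcreducelem} to $\DC_{\aleph_{2}}\restriction\lambda^{\omega}$, then induct on $i\le n$ using Lemma~\ref{pmaxdc} for the base and Lemma~\ref{dcsteplem} for the step, so that the only real content is ruling out a cofinal $f\colon(\kappa_i^{\omega})^{\omega_1}\to\kappa_{i+1}$ in $\c^{+}_{\lambda}[G]$.

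Two points of precision in that key step, where your sketch diverges from the paper's execution. First, Lemma~\ref{cmpscllem} is about $\c^{+}_{\lambda}$, not $\c^{+}_{\lambda}[G]$; applying it with $b=\omega_1\times\gamma^{\omega}$ only puts the \emph{ground-model} elements of $(\gamma^{\omega})^{\omega_1}$ into $\c^{-}_{\kappa_i}$, and $\pmax$ adds new $\omega_1$-sequences. The paper's fix is to pass to $\pmax$-names: every element of $\gamma^{\omega_1}$ in $\c^{+}_{\lambda}[G]$ is the realization of a name coded by a subset of $\gamma\times\omega_1\times\pmax$ lying in $\c^{+}_{\lambda}$, and it is to these names that Lemma~\ref{cmpscllem} applies, placing each in $\c^{-}_{\kappa_i}$.

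Second, the target ``surjective image of a set of ordinals of size ${<}\kappa_{i+1}$'' is not obviously attainable for $i\ge 1$ without Choice (one would need a uniform selection of surjections $\kappa_{i-1}\to\gamma$), and in any case plain regularity of $\kappa_{i+1}$ is not what finishes the argument. What the paper actually shows is that the set of such names---hence $\gamma^{\omega_1}$ as computed in $\c^{+}_{\lambda}[G]$---lies in $\c^{-}_{\kappa_{i+1}}[G]$, and then Lemma~\ref{strongreglemcon} (strong regularity, not ordinary regularity) directly bounds $f[\gamma^{\omega_1}]$. You list Lemma~\ref{strongreglem} among your ingredients, so you are close; the point is that it (via Lemma~\ref{strongreglemcon}) is the \emph{conclusion-giving} step, not a side remark, and membership in $\c^{-}_{\kappa_{i+1}}[G]$ is the right hypothesis to aim for rather than a cardinality bound.
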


\begin{proof}
By Lemmas \ref{dcsteplem} and \ref{pmaxdc},  it suffices to shows that cofinal map from $\kappa_{i}^{\omega_{1}}$ to $\kappa_{i+1}$ in $\c^{+}_{\lambda}[G]$ for any $i < n$.
Fix such an $i$ and an $f \colon \kappa_{i}^{\omega_{1}} \to \kappa_{i+1}$ in $\c^{+}_{\lambda}[G]$.
By Lemma \ref{strongreglemcon}, each element of $\kappa_{i}^{\omega_{1}}$ in $\c^{+}_{\lambda}[G]$ has range bounded below $\kappa_{i+1}$.
By Lemma \ref{strongreglemcon} applied to $\kappa_{i+1}$, it suffices to show that for each $\gamma < \kappa_{i}$, $f[\gamma^{\omega_{1}}]$ is bounded below $\kappa_{i+1}$.
Fix such a $\gamma$.
Each element of $\gamma^{\omega_{1}}$ is the realization of a $\pmax$ name which coded by a subset of $\gamma \times \omega_{1} \times \pmax$, and which is therefore in $\c^{-}_{\kappa_{i}}$, by Lemma  \ref{cmpscllem}.
It follows that $\gamma^{\omega_{1}} \cap \c^{+}_{\lambda}$ is an element of $\c^{-}_{\kappa_{i+1}}$.
One more application of Lemma \ref{strongreglemcon} then completes the proof.
\end{proof}

The rest of this section is devoted to proving Lemma \ref{strongregghlem}, which, as discussed at the beginning of \textsection\ref{threadsec}, proves item (\ref{job4}) with the help of item (\ref{job3}).

\begin{remark}\label{obone}\normalfont By Proposition \ref{Thetarealsrem}, each set of reals in $\c^{+}_{\lambda}$ is in $\c^{-}_{\kappa}$.
Since $\kappa$ is regular, each condition in $(\pmax * \Add(\kappa, 1))^{\c^{+}_{\lambda}}$ is coded by a set of reals and is therefore in $\c^{-}_{\kappa}$.
It follows that $(\pmax * \Add(\kappa, 1))^{\c^{+}_{\lambda}}$ is in $\c^{-}_{\kappa_{1}}$. 
\end{remark}

\begin{remark}\label{obtwo}\normalfont 

Since $\kappa_{1}$ is regular, each $(\pmax * \Add(\kappa, 1))^{\c^{+}_{\lambda}}$-name for a condition in $\Add(\kappa_{1}, 1)$ is a subset of $(\pmax * \Add(\kappa, 1))^{\c^{+}_{\lambda}} \times \gamma$ for some $\gamma < \kappa_{1}$, and is therefore in $\c^{-}_{\kappa_{1}}$, by Lemma \ref{cmpscllem}.
Thus each condition in 
$(\pmax * \oast_{i \leq 1}\Add(\kappa^{+i}, 1))^{\c^{+}_{\lambda}}$ is in $\c^{-}_{\kappa_{1}}$, and therefore $(\pmax * \oast_{i \leq 1}\Add(\kappa^{+i}, 1))^{\c^{+}_{\lambda}}$ is in $\c^{-}_{\kappa_{2}}$.
The same arguments show that, for each $m \leq n$, each condition in \[(\pmax * \oast_{i \leq m}\Add(\kappa^{+i}, 1))^{\c^{+}_{\lambda}}\] is in $\c^{-}_{\kappa_{m}}$, and that, for each $m < n$, $(\pmax * \oast_{i \leq m}\Add(\kappa^{+i}, 1))^{\c^{+}_{\lambda}}$ is in $\c^{-}_{\kappa_{m+1}}$. 
\end{remark}

Remarks \ref{obone} and \ref{obtwo} give the following lemma, which implies that each $\kappa_{m+1}$ remains regular in $\c^{+}_{\lambda}[G,H_{0},\ldots,H_{m}]$. 

%Since $(\pmax * \Add(\kappa, 1))^{\c^{+}_{\lambda}} \in \c^{-}_{\lambda}$, we get the following lemma, which implies that $\lambda$ is regular in $\c^{+}_{\lambda}[G,H]$.

\begin{lemma}\label{strongregghlem} \normalfont 
	The following hold for all $m < p \leq n$ and all 
%\[B \in \cP((\lambda^{\omega})^{<\omega}) \cap \c^{+}_{\lambda},\]
$b\in \c^{-}_{\kappa_{p}}[G,H_{0},\ldots,H_{m}]$.
\begin{enumerate}
\item $\cP(b) \cap \c^{+}_{\lambda} \subseteq \c^{-}_{\kappa_{p}}$; 
\item For each \[f \colon b \to \kappa_{p}\] in $\c_{\lambda}^{+}[G,H_{0},\ldots,H_{m}]$,  there exists a $\gamma<\kappa_{p}$ such that $f[b]\subseteq \gamma$.
\end{enumerate}
\end{lemma}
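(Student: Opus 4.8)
The plan is to prove clauses (1) and (2) together by induction on $m$, running a secondary induction on $p$ inside the inductive step; this closely parallels the proofs of Lemmas~\ref{cmpscllem}, \ref{strongreglem}, and~\ref{strongreglemcon}, the one genuinely new ingredient being that one must keep track of which level of the Nairian hierarchy contains the various forcing posets and names. The base of the outer induction is the case in which no $H_{i}$ occurs, i.e.\ the statements for $\c^{+}_{\lambda}[G]$: clause (2) there is exactly Lemma~\ref{strongreglemcon}, and clause (1) there follows from Lemma~\ref{cmpscllem} together with the fact (used already in Remarks~\ref{obone} and~\ref{obtwo}) that $\pmax\in\c^{-}_{\kappa}$, so that any $b\in\c^{-}_{\kappa_{p}}[G]$ is the realization of a $\pmax$-name $\dot b\in\c^{-}_{\beta}$ for some $\beta<\kappa_{p}$, and, using a surjection of $\bR$ onto the $\pmax$-names in $\c^{-}_{\beta}$ together with the homogeneity of $\pmax$, any $a\in\cP(b)\cap\c^{+}_{\lambda}$ is coded by a set of reals in $\c^{+}_{\lambda}$, which lies in $\c^{-}_{\kappa_{p}}$ by Proposition~\ref{Thetarealsrem} and Lemma~\ref{cmpscllem}.

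\textbf{The inductive step.} Passing from $(G,H\rest m)$ to $(G,H\rest(m+1))$, I would first record the facts about $\bbQ_{m}:=\Add(\kappa_{m},1)^{\c^{+}_{\lambda}[G,H\rest m]}$ that drive the argument. Using the earlier cases of the lemma (clause (2) with $p=m$), the fact that every element of a Nairian level $\c^{-}_{\kappa_{j}}$ has cardinality below $\kappa_{j}$, item~(\ref{job3}), and the cardinal arithmetic controlled by the $\pmax$ facts of \textsection\ref{pmaxssec} and the properties in Definition~\ref{bowtie}, one checks that $\kappa_{m}$ is strongly regular in $\c^{+}_{\lambda}[G,H\rest m]$, that $\bbQ_{m}$ is ${<}\kappa_{m}$-closed, adds no bounded subsets of $\kappa_{m}$, and has cardinality $\kappa_{m}$ — hence has the $\kappa_{m}^{+}$-chain condition, so that $\bbQ_{m}$ preserves the regularity of each $\kappa_{j}$ with $j>m$ — and, by Remarks~\ref{obone}--\ref{obtwo}, that every condition of $\bbQ_{m}$ and every $\bbQ_{m}$-name for a condition of $\Add(\kappa_{j},1)$ ($j>m$) lies in $\c^{-}_{\kappa_{m}}$. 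Now fix $p>m$ and $b\in\c^{-}_{\kappa_{p}}[G,H\rest(m+1)]$; both clauses are proved by a secondary induction on $p$, in the style of Lemmas~\ref{cmpscllem} and~\ref{strongreglem}. Writing $b$ as the realization of a name $\dot b\in\c^{-}_{\beta}$ with $\beta<\kappa_{p}$ and fixing a surjection $h\colon\kappa_{p-1}\to\beta$, definability over $\c^{-}_{\beta}[G,H\rest(m+1)]$ from $h\circ y$, $\mH\rest\beta$, and the (low) generic objects yields a surjection $g\colon\kappa_{p-1}^{\omega}\to b$. For clause (1), the elements of a given $a\in\cP(b)\cap\c^{+}_{\lambda}$ lie in $\c^{+}_{\lambda}$ and in $b$, hence are realizations of names in $\c^{-}_{\beta}$; using homogeneity of the iteration as in the base case, the definability argument of Lemma~\ref{cmpscllem} produces a code $B_{a}\in\cP(\kappa_{p-1}^{\omega})\cap\c^{+}_{\lambda}$, which by item~(\ref{bowtiesix}) at $i=p-1$ equals $\cP(\kappa_{p-1}^{\omega})\cap\c^{-}_{\kappa_{p}}$, so $B_{a}\in\c^{-}_{\kappa_{p}}$ and therefore $a\in\c^{-}_{\kappa_{p}}$. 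For clause (2), given $f\colon b\to\kappa_{p}$ in $\c^{+}_{\lambda}[G,H\rest(m+1)]$, the cardinal arithmetic (strong regularity of $\kappa_{p-1}$, with $\kappa_{p-1}^{\omega}$ unchanged by the forcing) gives $\mathrm{ot}(f[g[\alpha^{\omega}]])<\kappa_{p-1}$ for each $\alpha<\kappa_{p-1}$, and two applications of the regularity of $\kappa_{p}$ give that $f[b]=f[g[\kappa_{p-1}^{\omega}]]$ is bounded below $\kappa_{p}$. (When $p=m+1$ one reads $\kappa_{m}$ for $\kappa_{p-1}$ throughout; the required strong regularity of $\kappa_{m}$ in the current extension persists from $\c^{+}_{\lambda}$ because every element of $\c^{-}_{\kappa_{m}}$ has cardinality below $\kappa_{m}$ and $\kappa_{m}$ remains regular.)

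\textbf{Main obstacle.} I expect the hard point to be precisely this boundary case $p=m+1$ of the inductive step: there the poset $(\pmax*\oast_{i\leq m}\Add(\kappa^{+i},1))^{\c^{+}_{\lambda}}$ occupies level $\kappa_{p}=\kappa_{m+1}$ of the Nairian hierarchy rather than a level strictly below it, so one cannot absorb the whole poset together with a name for $b$ into a single $\c^{-}_{\beta}$ with $\beta<\kappa_{p}$; instead one must use the sharper information from Remark~\ref{obtwo} that each \emph{individual} condition of the poset lies in $\c^{-}_{\kappa_{m}}$, and build names for elements of $\c^{-}_{\kappa_{p}}[G,H\rest(m+1)]$ out of $\c^{-}_{\kappa_{m}}$-level data together with the generic filter. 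A second, more routine but tedious, point is the verification in each intermediate model of the forcing-theoretic facts about $\bbQ_{m}$ listed above; these all reduce to the cardinal arithmetic of $\c^{+}_{\lambda}[G,H\rest m]$, which in turn is controlled by Lemma~\ref{cmpscllem}, the cardinalities of the Nairian levels, and the $\pmax$ and $\DC$ facts recalled in \textsection\ref{pmaxssec} and \textsection\ref{dcsec}.
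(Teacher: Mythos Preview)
Your double induction on $m$ and $p$ can be made to work, but it is far more elaborate than what the paper does, and the ``main obstacle'' you identify disappears once you notice the right fact. The paper's entire proof is this: by Remark~\ref{obtwo}, the whole iteration $\bbP_m := (\pmax * \oast_{i\leq m}\Add(\kappa^{+i},1))^{\c^{+}_{\lambda}}$ is already an \emph{element} of $\c^{-}_{\kappa_{m+1}}\subseteq \c^{-}_{\kappa_{p}}$ (since $m<p$). So any $b\in\c^{-}_{\kappa_{p}}[G,H_{0},\ldots,H_{m}]$ has a name $\tau\in\c^{-}_{\kappa_{p}}$; a name for any $a\subseteq b$ with $a\in\c^{+}_{\lambda}$, and likewise a ground-model function covering any $f\colon b\to\kappa_{p}$, is then literally a subset of (resp.\ a function on) $\tc(\{\tau\})\times\bbP_m\in\c^{-}_{\kappa_{p}}$, and one applies Lemma~\ref{cmpscllem} (for part (1)) and Lemma~\ref{strongreglem} (for part (2)) \emph{once}, in the ground model $\c^{+}_{\lambda}$. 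No induction on $m$, no secondary induction on $p$, no chain-condition or homogeneity analysis of the individual $\Add(\kappa_{m},1)$ steps.

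Your approach instead re-proves analogues of Lemmas~\ref{strongreglem} and~\ref{cmpscllem} inside each successive extension; this forces you to re-establish ``strong regularity of $\kappa_{p-1}$'' in $\c^{+}_{\lambda}[G,H\rest(m+1)]$ and to handle the awkward boundary $p=m+1$ where the poset sits at level $\kappa_{p}$ rather than strictly below. The paper's insight is that you never need these intermediate facts: the single ground-model lemmas, together with $\bbP_m\in\c^{-}_{\kappa_{p}}$, do everything at once. (A small note: the paper's citation of ``Lemma~\ref{strongreglemcon}'' for part (1) appears to be a slip for Lemma~\ref{cmpscllem}.)
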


\begin{proof}
Fix a name $\tau$ in $\c^{-}_{\kappa_{p}}$ such that $b$ is the realization of $\tau$. For part (1), every subset of $b$ in $\c^{+}_{\lambda}$ is the realization of a name which can be coded by a subset of \[\tc(\{\tau\}) \times (\pmax * \oast_{i\leq m}\Add(\kappa^{+i}, 1)).\]
Part (1) then follows from Lemma \ref{strongreglemcon} and Remark \ref{obtwo}. Part (2) is similar, using Lemma \ref{strongreglem}. 
\end{proof} 

%Finally, we note the models $\c^{-}_{\lambda}(B)$ that we consider all happen to be the same as $\c^{-}_{\lambda}$.

%\begin{lemma}
%For each $B \in \cP(\lambda^\omega)^{<\omega} \cap \c^{+}_{\lambda}$, $\c^{-}_{\lambda}(B) = \c^{-}_{\lambda}$.
%\end{lemma}

%\begin{proof}
%Fix  $B \in \cP(\lambda^\omega)^{<\omega} \cap \c^{+}_{\lambda}$.
%Since $\c^{+}_{\lambda} \models \lambda = \kappa^{+}$, and
%\[\cP(\kappa^\omega)\cap \c_{\lambda}= \cP(\kappa^\omega)\cap \c_{\lambda}^+\]
%$B \restrict \beta \in \c_{\lambda}$, for all $\beta < \lambda$. Moreover, the strong regularity of $\lambda$ implies that each such $B %\restrict \beta$ is in $\c_{\gamma}$, for some $\gamma < \lambda$. Every element $b$ of $\c^{-}_{\lambda}(B)$ is in $\c^{-}_{\beta}(B %\restrict \beta)$ for some $\beta < \lambda$, which in turn is in $\c_{\gamma}$, for some $\gamma < \lambda$.
%\end{proof}

\section{$\omega_{1}$-closure in $V[G]$}

In this section we show that, in $V[G]$, $\c^{+}_{\lambda}[G]$ is closed under $\lambda$-sequences from
\[(\oast_{i \leq n}\Add(\kappa^{+i},1))^{\c^{+}_{\lambda}[G]}.\]
This is Lemma \ref{kaplamcllem} below, which follows from Lemma \ref{omega1 functions}.

\begin{lemma}\label{omega1 functions}\normalfont  In $V[G]$, for each $i \leq n$ and $b\in \c^-_{\kappa_{i}}[G]$,
%there is a
%$(\l, A)$-reflection point
%$\xi < \lambda$ such that
$b^{\omega_1}\in \c^{-}_{\kappa_{i}}[G]$.
\end{lemma}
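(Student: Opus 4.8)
The plan is to prove the lemma by induction on $i$, at each stage peeling an $\omega_{1}$-sequence through $b$ down to a \emph{bounded} $\omega_{1}$-sequence of ordinals and then invoking the inductive hypothesis. Three inputs are used throughout. First, by items (3) and (7) of Definition~\ref{bowtie} each $\kappa_{j}$ ($j\le n$) has cofinality at least $\omega_{2}$ in $V$, so by Lemma~\ref{lemma:coflemma} applied with $M=V$ (which satisfies $\ZF+\AD^{+}$) each $\kappa_{j}$ has cofinality at least $\omega_{2}$ in $V[G]$; in particular every $f\colon\omega_{1}\to\kappa_{j}$ in $V[G]$ has range bounded below $\kappa_{j}$. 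Second, by the facts about $\pmax$ in \textsection\ref{pmaxssec} we have $\cP(\omega_{1})^{V[G]}\subseteq L_{\omega_{2}}(\bR)^{V}[G]$, and $V[G]\models\DC_{\aleph_{1}}$; since $\bR\subseteq\c^{-}_{\kappa}$ and $\omega_{2}<\kappa$, $L_{\omega_{2}}(\bR)^{V}$ is an element of $\c^{-}_{\kappa}$, whence $\cP(\omega_{1})^{V[G]}=\cP(\omega_{1})^{\c^{-}_{\kappa}[G]}$ is a set in $\c^{-}_{\kappa}[G]$ (using Lemma~\ref{cmpscllem} to see that $\cP(\omega_{1})$ is computed boundedly in $\c^{-}_{\kappa}$). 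Third, arguing as in the proofs of Lemmas~\ref{strongreglem} and~\ref{cmpscllem} (via item~(\ref{bowtiesix}) of Definition~\ref{bowtie}), each member of $\c^{-}_{\kappa_{i}}$ is, inside $\c^{-}_{\kappa_{i}}$, a surjective image of $\gamma^{\omega}$ for some $\gamma<\kappa_{i}$ (with $\gamma=\omega$ permitted when $i=0$), and likewise for $\c^{-}_{\kappa_{i}}[G]$ using a surjection whose name lies in $\c^{-}_{\kappa_{i}}$.

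Base case $i=0$. Fix $b\in\c^{-}_{\kappa}[G]$ together with a name in $\c^{-}_{\kappa}$ for a surjection $s\colon\bR_{G}\to b$. Given $f\colon\omega_{1}\to b$ in $V[G]$, use the wellordering of $\bR_{G}$ induced by $G$ to define $\tilde f\colon\omega_{1}\to\bR_{G}$ by letting $\tilde f(\alpha)$ be the least real $r$ with $s(r)=f(\alpha)$; this requires no choice. Identifying $\bR_{G}$ with $\omega^{\omega}$ and transferring along a bijection $\omega_{1}\times\omega\cong\omega_{1}$ in $\c^{-}_{\kappa}$, $\tilde f$ becomes (a code for) an element of $\cP(\omega_{1})^{V[G]}\subseteq\c^{-}_{\kappa}[G]$, and hence $f=s\circ\tilde f\in\c^{-}_{\kappa}[G]$. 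Since $f$ was arbitrary, $b^{\omega_{1}}$ is the image of $(\bR_{G})^{\omega_{1}}$ (which, via the coding, is a subset of the set $\cP(\omega_{1})^{\c^{-}_{\kappa}[G]}$ definable over $\c^{-}_{\kappa}[G]$, hence an element of $\c^{-}_{\kappa}[G]$) under the map $\tilde h\mapsto s\circ\tilde h$, so $b^{\omega_{1}}\in\c^{-}_{\kappa}[G]$.

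Inductive step. Assume the lemma holds at $i<n$, and fix $b\in\c^{-}_{\kappa_{i+1}}[G]$, named by a name in $\c^{-}_{\beta}$ for some $\beta<\kappa_{i+1}$. As $\kappa_{i+1}=\kappa_{i}^{+}$ in $\c^{+}_{\lambda}$, fix a surjection $h\colon\kappa_{i}\to\beta$ in $\c^{-}_{\kappa_{i+1}}$ and, as in Lemma~\ref{cmpscllem}, a surjection $g\colon\kappa_{i}^{\omega}\to b$ realizing a name in $\c^{-}_{\kappa_{i+1}}$, so $g\in\c^{-}_{\kappa_{i+1}}[G]$. Given $f\colon\omega_{1}\to b$ in $V[G]$, use $\DC_{\aleph_{1}}$ in $V[G]$ to pick $\tilde f\colon\omega_{1}\to\kappa_{i}^{\omega}$ with $g\circ\tilde f=f$. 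Reindexing along $\omega_{1}\times\omega\cong\omega_{1}$, $\tilde f$ determines a function $\omega_{1}\to\kappa_{i}$, which by the first input has range contained in some $\gamma<\kappa_{i}$; thus $\tilde f\in(\gamma^{\omega})^{\omega_{1}}$. Since $\gamma^{\omega}\in\c^{-}_{\kappa_{i}}[G]$, the inductive hypothesis gives $(\gamma^{\omega})^{\omega_{1}}\in\c^{-}_{\kappa_{i}}[G]$, so $\tilde f\in\c^{-}_{\kappa_{i}}[G]$ and $f=g\circ\tilde f\in\c^{-}_{\kappa_{i+1}}[G]$. For $b^{\omega_{1}}$: by the same bounding, every $\omega_{1}$-sequence through $\kappa_{i}^{\omega}$ in $V[G]$ lies in some $(\gamma^{\omega})^{\omega_{1}}$ with $\gamma<\kappa_{i}$, so $(\kappa_{i}^{\omega})^{\omega_{1}}\cap V[G]=\bigcup_{\gamma<\kappa_{i}}(\gamma^{\omega})^{\omega_{1}}$, which by the inductive hypothesis is a subclass of $\c^{-}_{\kappa_{i}}[G]$ definable over $\c^{-}_{\kappa_{i}}[G]$ and hence an element of $\c^{-}_{\kappa_{i}+1}[G]\subseteq\c^{-}_{\kappa_{i+1}}[G]$; applying the map $\tilde h\mapsto g\circ\tilde h$ yields $b^{\omega_{1}}\in\c^{-}_{\kappa_{i+1}}[G]$.

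The main obstacle is the failure of Choice in $V[G]$: one cannot simply wellorder $b$ or ``lift'' $f$. This forces the base case through the generic wellordering of $\bR_{G}$ (legitimate because $\Theta^{\c^{+}_{\lambda}}=\kappa$, so every $b\in\c^{-}_{\kappa}[G]$ is a surjective image of $\bR_{G}$), and forces the inductive step through $\DC_{\aleph_{1}}$ together with the boundedness of $\omega_{1}$-sequences into the $\kappa_{j}$ guaranteed by the cofinality clauses of Definition~\ref{bowtie} and by Lemma~\ref{lemma:coflemma}. A secondary point, since $\c^{-}_{\kappa_{i}}[G]$ is not a model of full $\ZF$, is that $b^{\omega_{1}}$ has to be exhibited as a genuine element of $\c^{-}_{\kappa_{i}}[G]$ rather than just as a subclass; this is arranged by writing it as a set definable over the relevant Nairian structure from $g$ (resp.\ from $\cP(\omega_{1})^{\c^{-}_{\kappa}[G]}$).
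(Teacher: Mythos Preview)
Your proof is correct and follows the same inductive scheme as the paper, but the execution of the inductive step is genuinely different. The paper, not wishing to appeal to $\DC_{\aleph_1}$ in $V[G]$, passes to an auxiliary model $L(U,\bR)[G]$ (where $U$ codes a surjection of $\bR$ onto $\c^{-}_{\kappa_{i+1}}$) in which full Choice holds, and there picks name-sequences $h_f$, codes $c_f$, and sets $B_\alpha,B'_\alpha$ before bounding and invoking the inductive hypothesis; it then argues that $b^{\omega_1}$ is actually an \emph{element} (not merely a subset) of $\c^{-}_{\kappa_{i+1}}[G]$ by contradiction, extracting a cofinal map into $\kappa_{i+1}$ and contradicting Lemma~\ref{strongreglemcon}. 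You instead lift $f$ through the surjection $g\colon\kappa_i^\omega\to b$ directly via $\DC_{\aleph_1}$ in $V[G]$ (available since $V\models\ddagger$ implies $V[G]\models\DC_{\aleph_2}$ by the $\pmax$ facts in \textsection\ref{pmaxssec}), and you obtain the $\in$-conclusion constructively by exhibiting $b^{\omega_1}$ as the $g$-image of the set $\bigcup_{\gamma<\kappa_i}(\gamma^\omega)^{\omega_1}$, which is a definable class over $\c^{-}_{\kappa_i}[G]$. Your route is shorter and avoids the detour through $L(U,\bR)[G]$; the paper's route is a little more self-contained in that it does not cite $\DC_{\aleph_2}$ in $V[G]$ as a black box. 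Both base cases are essentially the same (use the generic wellordering of $\bR$ and $\cP(\omega_1)^{V[G]}\subseteq L_{\omega_2}(\bR)[G]$).
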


\begin{proof}
We prove the lemma first in the case where $b \in \c^{-}_{\kappa_{0}}[G]$, and then show that if it holds for $b \in \c^{-}_{\kappa_{i}}[G]$, for some $i < n$, then it holds for $b \in \c^{-}_{\kappa_{i+1}}$.

If $b \in \c^{-}_{\kappa_{0}}[G]$, then there exists a $\gamma < \kappa_{0}$ such that $b$ and all of its members are in $\c^{-}_{\gamma}[G]$. Since $\c^{-}_{\gamma}$ is a surjective image of $\bR$ in $\c^{-}_{\kappa_{0}}$, and $(\bR^{\omega_{1}})^{V[G]} \subseteq \c^{-}_{\gamma}[G]$ (by item (\ref{forsee}) from the list at the beginning of Section \ref{pmaxssec}), it follows that $(b^{\omega_{1}})^{V[G]} \subseteq \c^{-}_{\kappa_{0}}[G]$.

Fix $i < n$ for which the lemma holds. 
Since $\kappa_{i} < \Theta$, $\c^{-}_{\kappa_{i+1}}$ is a surjective image of $\breals$ in $V$.
Let $U \subseteq \breals$ be such that $\c^{-}_{\kappa_{i+1}}$ is a surjective image of $\breals$ in $L(U, \bR)$.
Since $\breals$ is wellordered in $L(U, \bR)[G]$, there exists in $L(U, \bR)[G]$ a function picking for each
$x \in \c^{-}_{\kappa_{i+1}}[G]$ a $\pmax$-name $\tau_{x} \in \c^{-}_{\kappa_{i+1}}$ such that $\tau_{x,G} = x$.
Since $\cP(\omega_{1}) \cap V[G] \subseteq L(\bR)[G]$, 
\[(\c^{-}_{\kappa_{i+1}}[G])^{\omega_{1}} \cap V[G]
\subseteq L(U, \bR)[G].\]

%$L(U, \bR)[G]$, since $V[G]$ thinks each of the following:
%\begin{itemize}
%\item $b^{\omega_{1}} \subset \bigcup_{U \in \cP(\bR)}L(U, \bR)[G]$.
%\item $\cof(\Theta) > \lambda$.
%\end{itemize}

We work in $L(U, \bR)[G]$, which satisfies Choice.
Fix $b\in \c^-_{\kappa_{i+1}}[G]$, and let $\beta < \kappa_{i+1}$ be
such that $\Delta_{\kappa}, \tau_{b} \in \c^{-}_{\beta}$.
%% and $b$ is $\dot{b}_{G}$ for some $\dot{b}$ in $\c^{-}_{\beta}$.
It follows that every member of $b$ is the realization of a name in $\c^{-}_{\beta}$.
%and that every member of 
%$b^{\omega_{1}}$ in $V[G]$ is the realization of a name coded by a subset of $\tc(b) \times \omega_{1} \times \pmax$, where $\tc(b)$ denotes the transitive closure of $b$. 
%It follows from this and Lemma \ref{cmpscllem} that  
We show first that $b^{\omega_1}\subseteq \c^{-}_{\kappa_{i+1}}[G]$.

Fix $f \in b^{\omega_{1}}$. Since Choice holds, there is an \[h_f \in (\c^{-}_{\beta})^{\omega_{1}}\] such that, for every $\alpha<\omega_1$,
$h_f(\alpha)$ is a $\pmax$-name in $\c^{-}_{\beta}$ such that $h_{f}(\alpha)_{G} = f(\alpha)$.
Fix  a function $c_{f} \colon \omega_{1} \to \omega$  and a sequence $\langle B_\alpha:\alpha<\omega_1\rangle$ such that
\begin{enumerate}
    \item[(1.1)] each
$B_\alpha$ is a nonempty subset of $\beta^{\omega}$, and

    \item[(1.2)] each $h_f(\alpha)$ is defined in
$\c^{-}_{\beta}$ from $\mH \rest \beta$ and each member of the corresponding $B_\alpha$,
via the formula with G\"{o}del number $c_{f}(\alpha)$.
\end{enumerate}
Since $\cP(\omega_{1}) \subseteq L_{\omega_{2}}(\bR)[G]$, $c_{f} \in \c_{\kappa_{i+1}}[G]$.

Let $h \colon \kappa_{i} \to \beta$ be a surjection in $\c^{-}_{\kappa_{i+1}}$ 
(which exists by Lemma \ref{cmpscllem}), and, for each $\alpha < \omega_1$, let
\[B'_\alpha = \{ x \in \kappa_{i}^{\omega} : h \circ x \in B_\alpha\}.\]
As there is no cofinal function from $\omega_{1}$ to $\kappa_{i}$ in 
$\c^{+}_{\lambda}[G]$, there is a $\gamma < \kappa_{i}$ such that $B'_\alpha \cap \gamma^{\omega}$ is nonempty
for each $\alpha < \omega_{1}$. Since $\gamma^{\omega_{1}} \subseteq \c^{-}_{\kappa_{i}}$ by the induction hypothesis, 
there is a sequence $\langle x_{\alpha} : \alpha < \omega_{1} \rangle \in (\gamma^{\omega})^{\omega_{1}} \cap \c^{-}_{\kappa_{1}}[G]$ such that each $x_{\alpha}$ is in the corresponding $B'_{\alpha}$. The sequence $\langle h \circ x_{\alpha} : \alpha < \omega_{1} \rangle$ and the function $c_{f}$ are both then in $\c^{-}_{\kappa_{i+1}}$. Along with $G$ and $\beta$ they can be used to define $f$
in $\c^{-}_{\kappa_{i+1}}$. 

%Let $r \colon \breals \to \gamma^{\omega}$ be a surjection in %$L(\Delta_{\kappa})$, and for each $\alpha < \omega_{1}$ let
%\[C_\alpha = r^{-1}[B'_a \cap \gamma^{\omega}].\]
%Then each $C_\alpha$ is a set of reals in $L(\Delta_{\kappa})$.

Suppose now that $b^{\omega_1}\not \subseteq \c^{-}_{\alpha}[G]$ for any $\alpha < \kappa_{i+1}$. We then have a function $g\colon b^{\omega_1}\to \kappa_{i+1}$ that is unbounded in $\kappa_{i+1}$ with
$g\in \c_{\kappa_{i+1}}[G]$.
Using $\beta$ and $h$ as above, we can associate to each $f \in b^{\omega_{1}}$ the set of pairs $(c,x) \in \omega^{\omega_{1}} \times \kappa_{i}^{\omega}$ for which each value $f(\alpha)$ is defined in $\c^{-}_{\beta}$ from $\mH \rest \beta$ and $h \circ x$ via the formula with G\"{o}del number $c(\alpha)$. This induces a cofinal map from $\cP(\omega_{1}) \times \kappa_{i}^{\omega}$ to $\kappa_{i+1}$. Since each member of $\kappa_{i}^{\omega}$ has range bounded below $\kappa_{i}$ (by the countable closure of $\pmax$), and since there is no cofinal function from $\kappa_{i}$ to $\kappa_{i+1}$ in $\c^{+}_{\kappa_{i+1}}[G]$ by Lemma \ref{strongreglemcon}, this induces a cofinal function from $\cP(\omega_{1}) \times \gamma^{\omega}$ to $\kappa_{i+1}$ in $\c^{+}_{\kappa_{i+1}}$, for some $\gamma < \kappa_{i}$. 
By the induction hypothesis, $\cP(\omega_{1}) \times \gamma^{\omega}$ is in $\c^{-}_{\kappa_{i}}[G]$, contradicting Lemma \ref{strongreglemcon}.
\end{proof}

%	Using the above coding, $g$ induces a cofinal function
%	\[h \colon \cP(\omega_{1}) \times \Delta_{\kappa} \to %\lambda\] in $\c_{\lambda}[G]$, with the first argument playing the role of $T$ above and the second coding both a wellordering of $\breals$ in ordertype $\gamma$ and set of reals of Wadge rank above each $D_{\alpha}$.
%	This contradicts Lemma \ref{strongregghlem}.

%\end{proof}

\begin{lemma}\label{kaplamcllem} \normalfont $V[G] \models ((\oast_{i\leq n}\Add(\kappa^{+i}, 1))^{\c^{+}_{\lambda}[G]})^{\omega_{1}} \subseteq \c^{+}_{\lambda}[G]$
\end{lemma}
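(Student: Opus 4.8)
The plan is to derive this from \rlem{omega1 functions} by a localization (reflection) argument keyed to the ordinal height of $\c^{-}_{\lambda}=\c^{-}_{\kappa_{n}}$. Write $\bbQ$ for the partial order $(\oast_{i\leq n}\Add(\kappa^{+i},1))^{\c^{+}_{\lambda}[G]}$. The first observation will be that $\bbQ\subseteq\c^{-}_{\lambda}[G]$: this extension is meaningful because $\pmax\in\c^{-}_{\kappa}\subseteq\c^{-}_{\lambda}$ (as noted after \rlem{strongreglem}) and $G$, being $V$-generic for a poset in $\c^{-}_{\kappa}$, is generic over every $\c^{-}_{\gamma}$ with $\gamma\in(\kappa,\lambda]$; and by \rrem{obtwo} (case $m=n$) every condition of the two-step iteration $(\pmax*\dot{\bbQ})^{\c^{+}_{\lambda}}$ lies in $\c^{-}_{\lambda}$, so each $\pmax$-name appearing as the $\dot{\bbQ}$-coordinate of such a condition lies in $\c^{-}_{\lambda}$, and every condition of $\bbQ$ is the $G$-realization of one of these. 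Note that one cannot simply apply \rlem{omega1 functions} with $b=\bbQ$: the top factor $\Add(\lambda,1)$ already has rank $\lambda=\mathrm{Ord}\cap\c^{-}_{\lambda}[G]$, so $\bbQ$ is neither an element of $\c^{-}_{\lambda}[G]$ nor a subset of one. Instead we will use the set-sized approximations $\c^{-}_{\gamma}[G]$ for $\gamma\in(\kappa,\lambda)$. Since $\c^{-}_{\lambda}$ is the increasing union $\bigcup_{\gamma<\lambda}\c^{-}_{\gamma}$ of its initial segments, each such $\c^{-}_{\gamma}$ is an element of $\c^{-}_{\lambda}$; as $\pmax\in\c^{-}_{\gamma}$ for $\gamma>\kappa$, it follows that $\c^{-}_{\gamma}[G]\in\c^{-}_{\lambda}[G]=\c^{-}_{\kappa_{n}}[G]$ and that $\c^{-}_{\lambda}[G]=\bigcup_{\kappa<\gamma<\lambda}\c^{-}_{\gamma}[G]$, the union being increasing in $\gamma$.

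The main step will be the reflection. Given $f\in(\bbQ^{\omega_{1}})^{V[G]}$, for each $\alpha<\omega_{1}$ one picks $\gamma_{\alpha}\in(\kappa,\lambda)$ with $f(\alpha)\in\c^{-}_{\gamma_{\alpha}}[G]$, using that $f(\alpha)\in\bbQ\subseteq\c^{-}_{\lambda}[G]$. The sequence $\langle\gamma_{\alpha}:\alpha<\omega_{1}\rangle$ lies in $V[G]$, and $\lambda$ has cofinality at least $\omega_{2}$ in $V[G]$: this is exactly the cofinality fact invoked in the proof of \rthm{mainthrm} in \rsec{proofmainthrm}, obtained from \rlem{lemma:coflemma} applied to $M=V$ together with $\Join^{n}_{\lambda}$ (which yields $\cf(\kappa_{n})\geq\kappa>\omega_{2}$ in $V$). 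Hence $\gamma:=\sup_{\alpha<\omega_{1}}\gamma_{\alpha}<\lambda$, and since the $\c^{-}_{\gamma}[G]$ are increasing one gets $\mathrm{ran}(f)\subseteq\c^{-}_{\gamma}[G]$, i.e. $f\in((\c^{-}_{\gamma}[G])^{\omega_{1}})^{V[G]}$. One then applies \rlem{omega1 functions} with $i=n$ and $b=\c^{-}_{\gamma}[G]\in\c^{-}_{\kappa_{n}}[G]$: the set $((\c^{-}_{\gamma}[G])^{\omega_{1}})^{V[G]}$ is an element of $\c^{-}_{\kappa_{n}}[G]=\c^{-}_{\lambda}[G]\subseteq\c^{+}_{\lambda}[G]$, so $f\in\c^{+}_{\lambda}[G]$. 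As $f$ was arbitrary, $(\bbQ^{\omega_{1}})^{V[G]}\subseteq\c^{+}_{\lambda}[G]$, which is the lemma.

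I expect the only real content to be the reflection step, and within it the appeal to $\cf^{V[G]}(\lambda)\geq\omega_{2}$ (in fact $\cf^{V[G]}(\lambda)\neq\omega_{1}$ would already suffice): without it an $\omega_{1}$-indexed family of $\bbQ$-conditions could be cofinal in $\c^{-}_{\lambda}[G]$ and escape every set-sized $\c^{-}_{\gamma}[G]$, so \rlem{omega1 functions} could not be brought to bear. Everything else is routine bookkeeping already available in \rsec{strongregsec} or immediate from the definitions: that $\pmax\in\c^{-}_{\kappa}$, that $\c^{-}_{\lambda}$ is the increasing union of its initial segments (so the $\c^{-}_{\gamma}[G]$ are increasing elements of $\c^{-}_{\lambda}[G]$), and that $\bbQ\subseteq\c^{-}_{\lambda}[G]$ via \rrem{obtwo}. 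It is worth noting that the cofinality input is not new here: it is the same fact already used in \rsec{proofmainthrm} to run Todor\v{c}evi\'c's theorem on $\lambda=\kappa_{n}$, so no additional argument is needed for it.
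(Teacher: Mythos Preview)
Your proof is correct and follows essentially the same approach as the paper: use \rrem{obtwo} to place $\bbQ$ inside $\c^{-}_{\lambda}[G]$, invoke $\cf^{V[G]}(\lambda)\geq\omega_{2}$ (from \rlem{lemma:coflemma} and $\Join^{n}_{\lambda}$) to bound the range of any $\omega_{1}$-sequence in some $\c^{-}_{\gamma}[G]$ with $\gamma<\lambda$, and then apply \rlem{omega1 functions}. The paper's version is just more terse.
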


\begin{proof}
As noted in Remark \ref{obtwo}, each element of $(\oast_{i\leq n}\Add(\kappa^{+i}, 1))^{\c^{+}_{\lambda}[G]}$
%is coded using $G$ by a pair consisting of an element of $\Delta_{\kappa}$ and a function from $\Delta_{\kappa}$ to $\cP(\kappa)$, and is therefore
is in $\c^{-}_{\lambda}[G]$.
%To see that
%\[V[G] \models ((\Add(\kappa, 1)*\Add(\lambda, 1))^{\c^{+}_{\lambda}[G]})^{\omega_{1}} \subseteq \c^{+}_{\lambda}[G]\]
%note that,
Since $\cof(\lambda) = \omega_{2}$ in $V[G]$, every element of
\[((\oast_{i\leq n}\Add(\kappa^{+i}, 1))^{\c^{+}_{\lambda}[G]})^{\omega_{1}}\] in $V[G]$ has range contained in some element of $\c^{-}_{\lambda}[G]$. The lemma then follows from Lemma \ref{omega1 functions}.
\end{proof}

\section{$\sf{DC}_{\aleph_{2+i}}$ in $\c^+_{\lambda}[G,H \rest i]$}\label{dc3sec}

%\subsection{$\DC_{\aleph_{3}}$}

%Suppose $A\in \powerset((\lambda^\omega)^n)\cap \c^+_{\lambda, \kappa}$. We say $\eta$ is a \textit{strong} $(\kappa, A)$-reflection point if $\eta$ is a $(\kappa, A)$-reflection point with $\cf(\eta)=\kappa$.

%We let $A_\eta=A\cap (\eta^\omega)^{n(A)}$.\footnote{We should have been doing this the whole time.}

%Lemma \ref{dcomega_3lem} is the third item from the beginning of Section \ref{threadsec}, and completes the proof of Theorem \ref{mainthrm}.
%The proof is a reflection argument as in Subsection %\ref{dcproveoutlinessec}.

Lemma \ref{dcomega_3lem} is the third item from the beginning of \textsection\ref{threadsec}, and completes the proof of Theorem \ref{mainthrm}.
We will fix an $i \leq n-1$ for which the lemma is assumed to hold, and prove that it holds also for $i + 1$.
The case $i=0$ is slightly different from the others, so we do it separately in Lemma \ref{cpgdc2lem}.
The following lemma is needed for the case $i=0$. 

%\clearpage

\begin{lemma}\label{omega2 closure}\normalfont 
% If $\eta$ is a strong $(\kappa, A)$-reflection point then
%Setting $W=\c^-_{\eta}(A_\eta)$,
For each $j \leq n-1$ and $\rho < \kappa$ there are stationarily many $\eta < \kappa_{j+1}$ such that, in $\c^+_{\lambda}[G, H_{0}]$,
\[\c^{-}_{\eta}[G, H_{0}]^{\rho}\subseteq \c^{-}_{\eta}[G, H_{0}].\]
\end{lemma}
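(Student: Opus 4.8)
The plan is to deduce the lemma from a single ``boundedness'' fact, namely that $\c^-_{\kappa_{j+1}}[G,H_0]$ is closed under $\rho$-sequences in $\c^+_{\lambda}[G,H_0]$, and then to run a routine reflection argument inside $\c^+_{\lambda}[G,H_0]$. Throughout, write $\mathbb{P}$ for $(\pmax * \Add(\kappa,1))^{\c^+_{\lambda}}$; by \rrem{obone} we have $\mathbb{P}\in\c^-_{\kappa_1}\subseteq\c^-_{\kappa_{j+1}}$, and by \rlem{strongregghlem}(2) (taking $m=0$, $p=j+1$) $\kappa_{j+1}$ is regular in $\c^+_{\lambda}[G,H_0]$.

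The technical heart is the following Key Claim: for every $b\in\c^-_{\kappa_{j+1}}[G,H_0]$, $\cP(b)\cap\c^+_{\lambda}[G,H_0]\subseteq\c^-_{\kappa_{j+1}}[G,H_0]$. To prove it, fix a $\mathbb{P}$-name $\dot b\in\c^-_{\kappa_{j+1}}$ with $\dot b^{G*H_0}=b$, put $D=\mathrm{dom}(\dot b)$, and let $a\in\cP(b)\cap\c^+_{\lambda}[G,H_0]$, say $a=\dot x^{G*H_0}$ with $\dot x\in\c^+_{\lambda}$. Working inside $\c^+_{\lambda}$ (which is a model of $\ZF$, so can define the forcing relation for $\mathbb{P}$), set $\dot z=\{(\dot c,p):\dot c\in D,\ p\in\mathbb{P},\ p\forces\dot c\in\dot x\}$. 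Then $\dot z$ is definable in $\c^+_{\lambda}$ from $\dot b,\dot x,\mathbb{P}$, hence $\dot z\in\c^+_{\lambda}$; it is a subset of $D\times\mathbb{P}\subseteq\tc(\{\dot b\})\times\mathbb{P}$, which lies in $\c^-_{\kappa_{j+1}}$ because $\dot b\in\c^-_{\kappa_{j+1}}$ and $\mathbb{P}\in\c^-_{\kappa_1}$; and since every element of $b$ is of the form $\dot c^{G*H_0}$ for some $\dot c\in D$, a standard computation gives $\dot z^{G*H_0}=a$. By \rlem{strongregghlem}(1) (with $m=0$, $p=j+1$), $\dot z\in\c^-_{\kappa_{j+1}}$, so $a=\dot z^{G*H_0}\in\c^-_{\kappa_{j+1}}[G,H_0]$. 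The one delicate point is that $\dot z$ must be built \emph{without reference to the generic}; this is exactly why we use the ``full antichain'' $\{p:p\forces\dot c\in\dot x\}$ rather than a maximal one, since extracting maximal antichains would require a wellordering of $\mathbb{P}$, which $\c^+_{\lambda}$ does not have. Applying the Key Claim with $b=\c^-_{\kappa_{j+1}}[G,H_0]$, and noting that $\rho\times b\in\c^-_{\kappa_{j+1}}[G,H_0]$ via the obvious name (as $\rho<\kappa<\kappa_{j+1}$), gives that every $g\colon\rho\to\c^-_{\kappa_{j+1}}[G,H_0]$ lying in $\c^+_{\lambda}[G,H_0]$ is already in $\c^-_{\kappa_{j+1}}[G,H_0]$.

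For the reflection, observe that for $\eta\in(\kappa,\kappa_{j+1})$ the model $\c^-_{\eta}[G,H_0]=\{\dot\sigma^{G*H_0}:\dot\sigma\in\c^-_{\eta}\text{ a }\mathbb{P}\text{-name}\}$ is definable in $\c^-_{\kappa_{j+1}}[G,H_0]$ from $\c^-_{\eta}$ and $G*H_0$, and has a name $\dot\tau_{\eta}\in\c^-_{\kappa_{j+1}}$ (obtained by Separation at a level below $\kappa_{j+1}$). Re-running the Key Claim argument with $\dot\tau_{\eta}$ in place of $\dot b$ shows that every $\rho$-sequence from $\c^-_{\eta}[G,H_0]$ lying in $\c^+_{\lambda}[G,H_0]$ is the realization of a name in $\cP(B_{\eta})\cap\c^-_{\kappa_{j+1}}$, where $B_{\eta}=(\rho\times\tc(\{\dot\tau_{\eta}\}))\times\mathbb{P}\in\c^-_{\kappa_{j+1}}$; and since $\cP(B_{\eta})\cap\c^+_{\lambda}$ is, by \rlem{strongregghlem}(1) applied in $\c^+_{\lambda}$, a set included and bounded in $\c^-_{\kappa_{j+1}}$, all these realizations lie in a single $\c^-_{F(\eta)}[G,H_0]$ for some $F(\eta)<\kappa_{j+1}$. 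As $\kappa_{j+1}$ is regular, the set $C$ of $\eta<\kappa_{j+1}$ closed under $F$ and under the map sending $\nu$ to the $\c^-_{\bullet}[G,H_0]$-rank of $\c^-_{\nu}[G,H_0]$ is club; then $S=\{\eta\in C:\cf^{\c^+_{\lambda}[G,H_0]}(\eta)>\rho\}$ is stationary in $\kappa_{j+1}$. For $\eta\in S$ and $g\colon\rho\to\c^-_{\eta}[G,H_0]$ in $\c^+_{\lambda}[G,H_0]$, the range of $g$ lies in $\c^-_{\bar\eta}[G,H_0]$ for some $\bar\eta\in(\kappa,\eta)$ (because $\cf(\eta)>\rho$ and the hierarchy is continuous at the limit $\eta$), so $g$ is realized by a name in $\c^-_{F(\bar\eta)}$ with $F(\bar\eta)<\eta$, whence $g\in\c^-_{F(\bar\eta)}[G,H_0]\subseteq\c^-_{\eta}[G,H_0]$, which is what the lemma demands.

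The main obstacle is the combination of the Key Claim and the boundedness used in the reflection: one must control $\mathbb{P}$-names for subsets of sets coming from the $\c^-_{\bullet}$-hierarchy, and do so inside $\c^+_{\lambda}$, where $\mathbb{P}$ is not wellordered. The full-antichain trick above resolves the name construction, and the smallness of $\mathbb{P}$ (\rrem{obone}) together with \rlem{strongregghlem}(1) and \rlem{cmpscllem} delivers both the required containment in $\c^-_{\kappa_{j+1}}$ and the boundedness of the families of names over all $\rho$-sequences from a fixed level. A secondary point to verify is that $\c^-_{\kappa_{j+1}}$ carries enough of $\ZF$ — Separation, and the boundedness of the power sets appearing above — for the reflection step to go through; this holds in the setting of $\Join^{n}_{\lambda}$.
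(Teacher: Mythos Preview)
Your approach is genuinely different from the paper's, and the Key Claim step is correct, but the reflection step has a real gap.

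\textbf{The paper's route.} The paper proves a sharper, level-by-level statement $(*)$ by induction on $j$: for all $\alpha<\beta<\kappa_{j+1}$, if $\c^-_\beta$ contains a surjection $s\colon\kappa_j\to\alpha$, then $(\c^-_\alpha[G,H_0])^\rho\subseteq\c^-_\beta[G,H_0]$. The argument codes each $f(\gamma)$ by a set $B_\gamma\subseteq\alpha^\omega$ of defining parameters, pulls back via $s$ to $B'_\gamma\subseteq\delta^\omega$ for a single $\delta<\kappa_j$ (using $\cf(\kappa_j)>\rho$), and then uses the induction hypothesis (or, for $j=0$, that the code is a set of reals) to place $\langle B'_\gamma\rangle$, and hence $f$, into $\c^-_\beta[G,H_0]$. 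The club of good $\eta$ then consists of those closed under $\alpha\mapsto$ (least such $\beta$), intersected with $\cf>\rho$.

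\textbf{The gap in your argument.} Your reflection needs, for each $\eta$, a bound $F(\eta)<\kappa_{j+1}$ with $(\c^-_\eta[G,H_0])^\rho\subseteq\c^-_{F(\eta)}[G,H_0]$. You obtain this from the assertion that $\cP(B_\eta)\cap\c^+_\lambda$ is \emph{bounded} in $\c^-_{\kappa_{j+1}}$, citing \rlem{strongregghlem}(1). But that lemma (and \rlem{cmpscllem}) only gives the \emph{inclusion} $\cP(B_\eta)\cap\c^+_\lambda\subseteq\c^-_{\kappa_{j+1}}$; it does not say this set lies in some $\c^-_\gamma$ with $\gamma<\kappa_{j+1}$. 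Boundedness would follow if $\c^-_{\kappa_{j+1}}$ satisfied Power Set for $B_\eta$ (equivalently, for $\kappa_j^\omega$), but nothing in $\Join^n_\lambda$ or \S4 provides this: item~(6) of $\Join^n_\lambda$ asserts $\cP(\kappa_j^\omega)\cap\c^-_{\kappa_{j+1}}=\cP(\kappa_j^\omega)\cap\c^+_\lambda$ as an equality of subclasses, not that this class is a set in $\c^-_{\kappa_{j+1}}$. Strong regularity (\rlem{strongreglem}) would bound the rank function if its domain were already in $\c^-_{\kappa_{j+1}}$, which is precisely the point at issue. Your closing remark that ``the boundedness of the power sets \dots\ holds in the setting of $\Join^n_\lambda$'' is an assertion, not an argument. (There is also the minor slip of applying the Key Claim with $b=\c^-_{\kappa_{j+1}}[G,H_0]$, which is not an element of itself.)

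The paper sidesteps the circularity by never appealing to power-set boundedness: the inductive coding through $\delta^\omega$ with $\delta<\kappa_j$ produces an \emph{explicit} target level $\beta$ at each step. To repair your argument you would need to supply exactly this inductive input, at which point you are essentially reproducing the paper's proof.
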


\begin{proof}
We work in $\c^{+}_{\lambda}[G, H_{0}]$. 
Let (*) be the statement that, for all $\alpha < \beta < \kappa_{j+1}$, if there exists a surjection $s \colon \kappa_{j} \to \alpha$ in $\c^{-}_{\beta}$, then \[\c^{-}_{\alpha}[G, H_0]^{\rho}\subseteq \c^{-}_{\beta}[G, H_0].\]
Then (*) implies that $\eta$ is as desired if it has cofinality greater than $\rho$ and has the property that for each $\alpha < \eta$ there is a $\beta < \eta$ such that $\c^{-}_{\beta}$ contains a surjection from $\kappa_{j}$ to $\alpha$.
Since $\kappa_{j+1}$ is regular by Lemma \ref{strongregghlem}, there are stationarily many such $\eta$. 

Formally we prove (*) by induction on $j$. However, since the arguments for the base case and induction step are almost the same, we give them both at the same time and make explicit the differences at the end.

Suppose that $j \leq n-1$ is given and (*), and therefore the lemma,
holds for all $k < j$. 
Fix $\alpha < \beta$ as given in (*), and let $s \colon \kappa \to \alpha$ be a surjection in $\c^{-}_{\beta}$.
Fix $f \colon \rho \to \c^{-}_{\alpha}[G, H_0]$.
For each $\gamma < \rho$, let $B_{\gamma}$ be the set of $x \in \alpha^{\omega}$ such that
$f(\gamma)$ is defined in $\c^{-}_{\alpha}[G, H_0]$ from $\mH \rest \alpha$, $G$, $H_0$ and $x$ via the formula with G\"{o}del code $x(0)$.
Since $\cof(\kappa_j) > \rho$, there is a $\delta < \kappa_j$ such that for all $\gamma < \rho$, \[B'_{\gamma} = \{ y \in \delta^{\omega} : s \circ y \in B_{\gamma}\}\] is nonempty.
In the case $j=0$, the sequence $\langle B'_{\gamma} : \gamma < \rho \rangle$ is coded by a set of reals in $\c^{+}_{\lambda}[G, H_0]$, so it is in $L(\Delta_{\kappa})[G]$. It follows that $f \in \c^{-}_{\beta}[G,H_0]$. In the case $j >0$, $\delta^{\omega}$ is an element of $\c^{-}_{\kappa_{j-1}}$ by the definition of $\c^{-}_{\kappa_{j-1}}$, so $\langle B'_{\gamma} : \gamma < \rho \rangle$ is an element of $\c^{-}_{\xi}[G, H_{0}]$, for some $\xi < \kappa_{j-1}$, by the induction hypothesis. In either case, 
$\langle B_{\gamma} : \gamma < \rho\rangle$ is in $\c^{-}_{\beta}$, so $f$ is as well. 
\end{proof}

%In $\c^{+}_{\lambda}[G, H \rest i]$, the cardinals $\kappa_{i},\ldots,\kappa_{n}$ are all strongly regular. 
%To prove that $\DC_{\omega_{2 + i}} \rest \lambda^{\omega}$ holds in 
%$\c^{+}_{\lambda}[G, H \rest i]$, then, it suffices by Lemma %\ref{dcsteplem} to show that 
%$\c^{+}_{\lambda}[G, H \rest i] \models \DC_{\omega_{2 + i}} %\rest \kappa_{i-1}^{\omega}$. However, $\kappa_{i-1}^{\omega}$ is wellordered in $\c^{+}_{\lambda}[G, H \rest i]$, so this does in fact, hold.\footnote{It's not clear that the rest of this section is needed. And the subscript $\aleph_{2 + i}$ seems to be replaceable with $\less\Theta$ in the case $i = 0$ and $\kappa_{i-1}$ in the case $i > 0$.}

\begin{lemma}\label{dcomega_3prelem} \normalfont $\c^+_{\lambda}[G, H_0]\models \sf{DC}_{\aleph_3}$.
\end{lemma}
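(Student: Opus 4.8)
Recall that $\kappa=\omega_{3}$ in $\c^{+}_{\lambda}[G]$ (by the $\pmax$ facts of \textsection\ref{pmaxssec}), that $\Add(\kappa,1)$ preserves this, and hence that $\DC_{\aleph_{3}}$ in $\c^{+}_{\lambda}[G,H_{0}]$ is just $\DC$ at the ordinal $\kappa$. The plan is to reduce, via Lemmas~\ref{dcreducelem} and \ref{dcsteplem}, the statement $\c^{+}_{\lambda}[G,H_{0}]\models\DC_{\aleph_{3}}$ to the assertion that $\DC_{\aleph_{3}}$ holds for binary relations on $\kappa^{\omega}$, and then to prove that assertion by establishing that $\kappa^{\omega}$ is actually wellordered in $\c^{+}_{\lambda}[G,H_{0}]$ (after which it is trivial). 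Lemma~\ref{omega2 closure} will be used in the reduction step, while the base case rests on the fact that $\Add(\kappa,1)$ wellorders $\cP(\bR)$.

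For the reductions: by Remark~\ref{obone} the forcing $\bbP:=(\pmax*\Add(\kappa,1))^{\c^{+}_{\lambda}}$ lies in $\c^{+}_{\lambda}$ (indeed in $\c^{-}_{\kappa_{1}}$), and $G*H_{0}$ is $\c^{+}_{\lambda}$-generic for it. Moreover $\c^{+}_{\lambda}[G,H_{0}]\models\DC_{\aleph_{k}}$ for $k<3$: for $k\leq 2$ this holds because $\c^{+}_{\lambda}[G]\models\DC_{\aleph_{2}}$ by Lemma~\ref{cpgdc2lem} and, over $\c^{+}_{\lambda}[G]$, $\Add(\kappa,1)$ is $<\kappa$-closed with $\kappa=\omega_{3}$, hence preserves $\DC_{\aleph_{2}}$. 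So Lemma~\ref{dcreducelem} (with $m=3$, $I=G*H_{0}$) reduces $\DC_{\aleph_{3}}$ to $\DC_{\aleph_{3}}\rest\lambda^{\omega}$ in $\c^{+}_{\lambda}[G,H_{0}]$. I would then climb the chain $\kappa=\kappa_{0}<\kappa_{1}<\cdots<\kappa_{n}=\lambda$, applying Lemma~\ref{dcsteplem} with $\tau=\omega_{2}$ and $\rho=\kappa_{i}$ to pass from $\DC_{\aleph_{3}}\rest\kappa_{i}^{\omega}$ to $\DC_{\aleph_{3}}\rest\kappa_{i+1}^{\omega}$. The hypothesis of Lemma~\ref{dcsteplem} at step $i$ is that there is no cofinal $f\colon(\kappa_{i}^{\omega})^{\omega_{2}}\to\kappa_{i+1}$ in $\c^{+}_{\lambda}[G,H_{0}]$; this is where Lemma~\ref{omega2 closure} enters. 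Since $\kappa_{i+1}=\kappa_{i}^{+}$ is regular in $\c^{+}_{\lambda}[G,H_{0}]$ (Lemma~\ref{strongregghlem}), it suffices to bound $f$ on each $(\gamma^{\omega})^{\omega_{2}}$ for $\gamma<\kappa_{i}$ (every element of $(\kappa_{i}^{\omega})^{\omega_{2}}$ lies in such a set, as $\mathrm{cf}(\kappa_{i})=\kappa_{i}>\omega_{2}$). Now $\gamma^{\omega}\cap\c^{+}_{\lambda}[G,H_{0}]=\gamma^{\omega}\cap\c^{+}_{\lambda}$ (as $\pmax$ is $\sigma$-closed and $\Add(\kappa,1)$ is $<\kappa$-closed), which lies in some $\c^{-}_{\eta_{0}}$ with $\eta_{0}<\kappa_{i}$; choosing, via Lemma~\ref{omega2 closure} with $j=i$ and $\rho=\omega_{2}$, an $\eta\in(\eta_{0},\kappa_{i+1})$ with $\c^{-}_{\eta}[G,H_{0}]^{\omega_{2}}\subseteq\c^{-}_{\eta}[G,H_{0}]$, we get $(\gamma^{\omega})^{\omega_{2}}\cap\c^{+}_{\lambda}[G,H_{0}]\in\c^{-}_{\eta}[G,H_{0}]\subseteq\c^{-}_{\kappa_{i+1}}[G,H_{0}]$, so Lemma~\ref{strongregghlem}(2) applied to $\kappa_{i+1}$ bounds $f$ there, and the regularity of $\kappa_{i+1}$ then bounds $f[(\kappa_{i}^{\omega})^{\omega_{2}}]$. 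Hence $\DC_{\aleph_{3}}\rest\lambda^{\omega}$ in $\c^{+}_{\lambda}[G,H_{0}]$ follows from $\DC_{\aleph_{3}}\rest\kappa^{\omega}$ there.

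For the base case, $\DC_{\aleph_{3}}\rest\kappa^{\omega}$ in $\c^{+}_{\lambda}[G,H_{0}]$: I claim $\kappa^{\omega}\cap\c^{+}_{\lambda}[G,H_{0}]$ (which equals $\kappa^{\omega}\cap\c^{+}_{\lambda}$) is wellordered in $\c^{+}_{\lambda}[G,H_{0}]$; granting this, any $\eta$-full relation on $\kappa^{\omega}$ admits an $\eta$-chain built greedily by taking the $<$-least admissible value at each stage, so $\DC_{\aleph_{3}}\rest\kappa^{\omega}$ (indeed $\DC_{\infty}\rest\kappa^{\omega}$) holds. To see the claim: by Lemma~\ref{cpgdc2lem} and the facts of \textsection\ref{pmaxssec}, $\c^{+}_{\lambda}[G]\models\DC_{\aleph_{2}}+2^{\aleph_{0}}=\aleph_{2}$, so by Woodin's fact recalled in \textsection\ref{pmaxssec} (see \cite{Wo10}) forcing with $\Add(\omega_{3},1)=\Add(\kappa,1)$ over $\c^{+}_{\lambda}[G]$ wellorders $\cP(\bR)$; in particular $\Delta_{\kappa}$ is wellordered in $\c^{+}_{\lambda}[G,H_{0}]$, and $\bR$ is wellordered there (by $G$). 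For each $\beta<\kappa=\Theta^{\c^{+}_{\lambda}}$ there is in $\c^{+}_{\lambda}$ a surjection $\bR\to\beta$, ordinal definable from a set of reals $A\in\Delta_{\kappa}$ of Wadge rank above $\beta$; taking $\omega$-th powers and using a recursive bijection $\bR\cong\bR^{\omega}$ gives a surjection $\bR\to\beta^{\omega}$ ordinal definable from such $A$. Using the wellorders of $\Delta_{\kappa}$ (to choose, for each $\beta$, the least such $A$) and of $\bR$ (to choose least preimages), these assemble in $\c^{+}_{\lambda}[G,H_{0}]$ into an injection of $\bigcup_{\beta<\kappa}\beta^{\omega}$ into $\kappa\times\bR$, hence into a wellorder of $\bigcup_{\beta<\kappa}\beta^{\omega}$; therefore $\c^{-}_{\kappa}=L_{\kappa}(\mH\rest\kappa,\bigcup_{\beta<\kappa}\beta^{\omega})$ is wellordered in $\c^{+}_{\lambda}[G,H_{0}]$, and so is its subset $\kappa^{\omega}\cap\c^{+}_{\lambda}$.

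The main obstacle is the bookkeeping in the climbing step: locating, for each $\gamma<\kappa_{i}$, a level $\c^{-}_{\eta}[G,H_{0}]$ below $\kappa_{i+1}$ that contains $(\gamma^{\omega})^{\omega_{2}}$ and is closed under $\omega_{2}$-sequences---this is exactly the content of Lemma~\ref{omega2 closure}, and it is the place where the forcing $\Add(\kappa,1)$ (as opposed to just $\pmax$) does essential work---together with the fact that $\Add(\kappa,1)$ wellorders $\cP(\bR)$, and hence $\c^{-}_{\kappa}$, which is what makes $\DC_{\aleph_{3}}$ (and not merely $\DC_{\aleph_{2}}$) available on $\kappa^{\omega}$. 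Everything else is routine transfer along the chain $\kappa_{0}<\cdots<\kappa_{n}$ using the strong-regularity lemmas already in hand.
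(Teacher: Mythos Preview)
Your proof is correct and follows essentially the same approach as the paper: establish $\DC_{\infty}\rest\kappa^{\omega}$ via the wellordering induced by $H_{0}$, then climb the chain $\kappa_{0}<\cdots<\kappa_{n}=\lambda$ using Lemma~\ref{dcsteplem}, with Lemma~\ref{omega2 closure} and Lemma~\ref{strongregghlem}(2) supplying the ``no cofinal function'' hypothesis at each step, and Lemma~\ref{dcreducelem} handling the reduction from full $\DC_{\aleph_{3}}$ to $\DC_{\aleph_{3}}\rest\lambda^{\omega}$. The paper's proof is terser---it asserts directly that $(\kappa_{j}^{\omega})^{\omega_{2}}$ lands in $\c^{-}_{\kappa_{j+1}}[G,H_{0}]$ (by picking a single $\eta>\kappa_{j}$ from the stationary set of Lemma~\ref{omega2 closure}) rather than breaking into pieces $(\gamma^{\omega})^{\omega_{2}}$ for $\gamma<\kappa_{i}$ as you do---but the two arguments are equivalent, and your explicit invocation of Lemma~\ref{dcreducelem} is in fact cleaner than the paper's citation of Lemma~\ref{pmaxdc} at that point.
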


\begin{proof}
The model $\c^{+}_{\lambda}[G, H_0]$ contains a wellordering of 
$\kappa^{\omega}$, so it satisfies $\DC_{\infty} \rest \kappa^{\omega}$. 
By Lemma \ref{dcsteplem} then, it suffices to show that, for each $j \leq n-1$, there is no cofinal function from $(\kappa_{j}^{\omega})^{\omega_{2}}$ to $\kappa_{j+1}$ in $\c^{+}_{\lambda}[G, H_0]$. For each such $j$, Lemma \ref{omega2 closure} implies that $(\kappa_{j}^{\omega})^{\omega_{2}}$ is an element of $\c^{-}_{\kappa_{j+1}}$. Part (2) of Lemma \ref{strongregghlem} implies that there is no cofinal function from $(\kappa_{j}^{\omega})^{\omega_{2}}$ to $\kappa_{j+1}$, as desired. 
\end{proof}

%By Lemma \ref{dcreducelem}, it suffices to show that, in %$\c^{+}_{\lambda}[G, H]$, every $<\omega_{3}$-closed tree of height $\omega_{3}$ on $\lambda^{\omega}$ has a cofinal branch.

%In $\c^{+}_{\lambda}[G,H]$, $\cof(\lambda) > \omega_{3}$ and %$\lambda$ is strongly regular, by Lemma \ref{strongregghlem}.
%By Lemma \ref{omega2 closure}, then, it suffices to consider %trees on $\kappa^{\omega}$.
%Since $\kappa^{\omega}$ is wellordered in $\c^{+}_{\lambda}[G, %H]$ the lemma follows.

\begin{lemma}\label{dcomega_3lem} \normalfont 
For each $i \leq n$, $\c^+_{\lambda}[G, H \rest i]\models \sf{DC}_{\aleph_{2 + i}}$.
\end{lemma}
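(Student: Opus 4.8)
Here is the plan.

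\smallskip

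The plan is to prove the lemma by induction on $i$. The case $i=0$ is Lemma~\ref{cpgdc2lem} and the case $i=1$ is Lemma~\ref{dcomega_3prelem}, so fix $i$ with $1\le i\le n-1$ for which $\c^{+}_{\lambda}[G,H\rest i]\models \DC_{\aleph_{2+i}}$, and set $W=\c^{+}_{\lambda}[G,H\rest(i+1)]=\c^{+}_{\lambda}[G,H\rest i][H_i]$; we must show $W\models \DC_{\aleph_{3+i}}$. By item (\ref{job4}), $\kappa_j=\aleph_{3+j}$ in each model $\c^{+}_{\lambda}[G,H\rest i']$, so in $W$ we have $\aleph_{2+i}=\kappa_{i-1}$ and $\aleph_{3+i}=\kappa_i$. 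Since $H_i$ is $\Add(\kappa_i,1)$-generic over $\c^{+}_{\lambda}[G,H\rest i]$ and $\Add(\kappa_i,1)$ is $\less\kappa_i$-closed there, the induction hypothesis $\DC_{\aleph_{2+i}}$ persists to $W$; hence $\DC_{\aleph_k}$ holds in $W$ for every $k<3+i$. By Remark~\ref{obtwo} the iteration $\bbP=(\pmax*\oast_{j\le i}\Add(\kappa^{+j},1))^{\c^{+}_{\lambda}}$ lies in $\c^{-}_{\kappa_{i+1}}\subseteq\c^{+}_{\lambda}$ and $(G,H\rest(i+1))$ is $\c^{+}_{\lambda}$-generic for it, so Lemma~\ref{dcreducelem} (with $m=3+i$) reduces the task to showing that every $\less\omega_{3+i}$-full tree on $\lambda^{\omega}$ of height $\omega_{3+i}$ has a cofinal branch in $W$; it suffices to establish $\DC_{\aleph_{3+i}}\rest\lambda^{\omega}$ in $W$.

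\smallskip

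To prove $\DC_{\aleph_{3+i}}\rest\lambda^{\omega}=\DC_{\aleph_{3+i}}\rest\kappa_n^{\omega}$ in $W$ I would iterate Lemma~\ref{dcsteplem} with $\tau=\kappa_{i-1}$ (so that $\tau^{+}=\kappa_i=\aleph_{3+i}$) and $\rho=\kappa_j$ for $j=i,i+1,\dots,n-1$, starting the climb at $\kappa_i^{\omega}$ rather than at $\kappa_0^{\omega}$. The base of this inner induction is $\DC_{\aleph_{3+i}}\rest\kappa_i^{\omega}$: as in Part~I and in the proof of Lemma~\ref{dcomega_3prelem} for $\kappa_0^{\omega}$, each member of $\c^{-}_{\kappa_{i+1}}$ is a surjective image of $\kappa_i^{\omega}\times\gamma$ for some ordinal $\gamma$, $\kappa_i$ has uncountable cofinality in $\c^{+}_{\lambda}$, and the Cohen forcing $\Add(\kappa_i,1)$ added at stage $i$ supplies a wellordering of $\kappa_i^{\omega}$ in $W$, so $\DC_{\infty}\rest\kappa_i^{\omega}$ holds there. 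Given the base, Lemma~\ref{dcsteplem} carries $\DC_{\aleph_{3+i}}\rest\kappa_j^{\omega}$ to $\DC_{\aleph_{3+i}}\rest\kappa_{j+1}^{\omega}$ provided there is no cofinal $f\colon(\kappa_j^{\omega})^{\kappa_{i-1}}\to\kappa_{j+1}$ in $W$, and iterating up to $j=n-1$ yields $\DC_{\aleph_{3+i}}\rest\lambda^{\omega}$. The reason the climb must begin at $\kappa_i$ is exactly that $\kappa_{i-1}<\kappa_i\le\kappa_j$ for all $j$ in this range, so $(\kappa_j^{\omega})^{\kappa_{i-1}}$ is a power below $\kappa_j$ and not automatically of size $\ge\kappa_{j+1}$; beginning lower the relevant power could surject onto $\kappa_{j+1}$ and the step would fail.

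\smallskip

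The remaining — and main — point is the non-existence of cofinal maps $f\colon(\kappa_j^{\omega})^{\kappa_{i-1}}\to\kappa_{j+1}$ in $W$ for $i\le j\le n-1$, and I would handle it exactly as in the proof of Lemma~\ref{dcomega_3prelem}. All the Cohen subsets appearing in $H\rest(i+1)$ live at cardinals $\le\kappa_i\le\kappa_j<\kappa_{j+1}$, so Part~(2) of Lemma~\ref{strongregghlem} applies (with $m=i<j+1$) and it is enough to see that $(\kappa_j^{\omega})^{\kappa_{i-1}}\cap W$ is an element of $\c^{-}_{\kappa_{j+1}}[G,H\rest(i+1)]$; since $\pmax$ is $\sigma$-closed and each $\Add(\kappa_{j'},1)$ is $\less\kappa_{j'}$-closed, $\kappa_j^{\omega}\cap W=\kappa_j^{\omega}\cap\c^{+}_{\lambda}$, each member of which lies in some $\c^{-}_{\xi}$ with $\xi<\kappa_j$, and (as $\kappa_{i-1}<\kappa_j=\cf(\kappa_j)$ in $\c^{+}_{\lambda}$) any element of $(\kappa_j^{\omega})^{\kappa_{i-1}}\cap W$ has range inside a single such $\c^{-}_{\xi}$. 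The capture would then follow from a closure lemma generalizing Lemma~\ref{omega2 closure}: for each $j\in[i,n-1]$ and each $\rho\le\kappa_{i-1}$ there are stationarily many $\eta<\kappa_{j+1}$ with $\c^{-}_{\eta}[G,H\rest(i+1)]^{\rho}\subseteq\c^{-}_{\eta}[G,H\rest(i+1)]$. I expect this closure lemma to be the hard part. It should follow the reflection/induction-on-$j$ scheme of Lemma~\ref{omega2 closure}, using the regularity of $\kappa_{j+1}$ (Lemma~\ref{strongregghlem}) and the inductive capture of the $\c^{-}_{\kappa_{j'}}$ for $j'<j$; the genuine new ingredient is the base case $j=i$, where, because now $\rho=\kappa_{i-1}\ge\kappa$, one can no longer argue that the relevant sequences are ``coded by a set of reals'' and must instead pull $\kappa_{i-1}$-sequences of members of the $\c^{-}_{\xi}$, $\xi<\kappa_i$, down into $\c^{-}_{\kappa_i}$ via surjections $\kappa_{i-1}\to\gamma$ for $\gamma<\kappa_i$, together with the wellordering of $\kappa_i^{\omega}$; the generics $H_0,\dots,H_i$ enter the argument only as parameters below $\kappa_{j+1}$. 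Once this is in hand, Lemma~\ref{strongregghlem}(2) rules out the cofinal maps, the climb of the previous paragraph goes through, and by Lemma~\ref{dcreducelem} the inductive step, hence the lemma, is complete.
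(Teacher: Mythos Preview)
Your overall architecture matches the paper's: induction on $i$, the base cases $i=0,1$ are the cited lemmas, and for larger $i$ you reduce via Lemma~\ref{dcreducelem} to $\DC_{\tau^+}\rest\lambda^\omega$, start the climb at a wellordered $\kappa_i^{\omega}$ (using the last Cohen generic), and apply Lemma~\ref{dcsteplem} repeatedly. Where you diverge from the paper is in the crucial step, ruling out cofinal maps $(\kappa_j^{\omega})^{\kappa_{i-1}}\to\kappa_{j+1}$.

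You propose to show that $(\kappa_j^{\omega})^{\kappa_{i-1}}\cap W$ is an \emph{element} of $\c^{-}_{\kappa_{j+1}}[G,H\rest(i+1)]$ and then invoke Lemma~\ref{strongregghlem}(2). This forces you to prove a closure lemma generalizing Lemma~\ref{omega2 closure} to exponents $\rho\le\kappa_{i-1}$, which you correctly identify as the hard part and leave as a sketch. The sketch of the base case is vague; it can in fact be completed (the key is that $\Add(\kappa_i,1)$ is ${<}\kappa_i$-closed over $\c^{+}_{\lambda}[G,H\rest i]$, so any subset of $\kappa_{i-1}$ in $W$ is already in the ground, and $H_i$ enumerates all such subsets), but as written it is not a proof.

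The paper avoids this entirely with a two-line descent argument. First replace $(\kappa_j^{\omega})^{\kappa_{i-1}}$ by $\kappa_j^{\kappa_{i-1}}$ (a bijection via $\kappa_{i-1}\times\omega\cong\kappa_{i-1}$). Then observe, using only the regularity of $\kappa_{k+1}$ and $\kappa_{j+1}$ from Lemma~\ref{strongregghlem}, that a cofinal $\kappa_{k+1}^{\kappa_{i-1}}\to\kappa_{j+1}$ produces a cofinal $\kappa_k^{\kappa_{i-1}}\to\kappa_{j+1}$: every $h\in\kappa_{k+1}^{\kappa_{i-1}}$ has bounded range, so lies in some $\gamma^{\kappa_{i-1}}$ with $\gamma<\kappa_{k+1}$; either some $f\rest\gamma^{\kappa_{i-1}}$ is already unbounded (done, via a bijection $\kappa_k\to\gamma$), or $\gamma\mapsto\sup f[\gamma^{\kappa_{i-1}}]$ is a cofinal map $\kappa_{k+1}\to\kappa_{j+1}$, contradicting regularity. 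Iterating down to $k=i-1$ reaches $\kappa_{i-1}^{\kappa_{i-1}}$, which $H_i$ wellorders in ordertype ${\le}\kappa_i<\kappa_{j+1}$, and Lemma~\ref{strongregghlem} finishes. No closure lemma is needed; the whole step runs on regularity alone. This is the insight you are missing.
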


\begin{proof}
The cases $i=0$ and $i=1$ have been proved in Lemmas \ref{cpgdc2lem} and \ref{dcomega_3prelem}, so we may assume that $i > 1$, in which case $\aleph_{2 + i -1} = \kappa_{i-2}$. We work in $\c^{+}_{\lambda}[G, H \rest i]$. The set
$\kappa_{i-1}^{\omega}$ is wellordered, so $\DC_{\infty} \rest \kappa_{i-1}^{\omega}$ holds. 
By Lemma \ref{dcsteplem}, it suffices to show that, for each $j$ with $i \leq j \leq n-1$, there is no cofinal function from $(\kappa_{j}^{\omega})^{\kappa_{i-2}}$ to $\kappa_{j+1}$. Of course, $(\kappa_{j}^{\omega})^{\kappa_{i-2}}$ can equivalently be replaced with $\kappa_{j}^{\kappa_{i-2}}$. Again, 
$\kappa_{i-2}^{\kappa_{i-2}}$ is wellorded by $H_{i-1}$ in ordertype $\kappa_{i}$, and Lemma \ref{strongregghlem} implies that there is no cofinal function from $\kappa_{i}$ to $\kappa_{j+1}$. 
%Now we just do the same argument for the one millionth time. 
Now, for each $k$ in the interval $[i-2,j-i]$, the regularity of $\kappa_{k+1}$ and $\kappa_{j+1}$ implies that if there is a cofinal function from $\kappa_{k+1}^{\kappa_{i-2}}$ to $\kappa_{j+1}$, then there is a cofinal function from $\kappa_{k}^{\kappa_{i-2}}$ to $\kappa_{j+1}$. Since there is no such function in the case $k = i-2$, we are done. 
\end{proof}

%By Lemma \ref{dcreducelem}, it suffices to show that, in $\c^{+}_{\lambda}[G, H]$, every $<\omega_{3}$-closed tree of height $\omega_{3}$ on $\lambda^{\omega}$ has a cofinal branch.

%In $\c^{+}_{\lambda}[G,H]$, $\cof(\lambda) > \omega_{3}$ and %$\lambda$ is strongly regular, by Lemma \ref{strongregghlem}.
%By Lemma \ref{omega2 closure}, then, it suffices to consider %trees on $\kappa^{\omega}$.
%Since $\kappa^{\omega}$ is wellordered in $\c^{+}_{\lambda}[G, %H]$ the lemma follows.

%\clearpage

%\begin{center}
%	\textbf{Part II: Constructing Nairian Models}
%\end{center}
\part{Constructing Nairian models}

\section{The Main Theorem on Nairian Models}

Theorem \ref{main theorem} is the main theorem of this part of the paper.  

%\begin{notation}\label{galpha} \normalfont Suppose $g\subseteq Coll(\omega, <\nu)$ and $\a<\nu$. Then we set $g_\a=g\cap Coll(\omega, <\a)$.
%\end{notation}

\begin{theorem}\label{main theorem} %\normalfont 
Suppose $\V\models {\sf{ZFC}}$ is a hod premouse\footnote{Our notion of hod premouse is the one introduced in \cite{SteelCom}. We tacitly assume that all large cardinal notions are witnessed by the extenders on the extender sequence of a hod premouse.} and $\xi<\d<\eta$ are such that 
	\begin{enumerate}
		\item $\eta$ is a limit of Woodin cardinals of $\V$,
		\item $\d$ is a Woodin cardinal of $\V$,
		\item $\xi$ is the least inaccessible cardinal of $\V$ which is a limit of $<\d$-strong cardinals of $\V$, and
		\item $\xi$ is a limit of Woodin cardinals of $\V$.
	\end{enumerate}
	Let $g\subseteq Coll(\omega, <\eta)$ be $\V$-generic, and let $M$ be the derived model of $\V$ as computed by $g$.
    Letting 
	\begin{enumerate}[resume]
		\item $\Sigma\in \V$ be the iteration strategy of $\V$ indexed on the sequence of $\V$,
  
		\item $\mathcal{F}=\{ \Q:$ for some $\b<\eta$, $\Q\in \V[g_\b]$ is a $\Sigma^{g_\b}$-iterate of $\V|\eta$ such that $\lh(\T_{\V|\eta, \Q})<\eta$ and $\pi_{\V|\eta, \Q}$ is defined$\}$,\footnote{Here, $\Sigma^h$ is the generic interpretation of $\Sigma$ in $\V[h]$, $\T_{\V|\eta, \Q}$ is the normal $\V|\eta-$to$-\Q$ iteration tree, $\pi_{\V|\eta, \Q}=\pi^{\T_{\V|\eta, \Q}}$. This notation will be used throughout the rest of paper.}
  
		\item $\M_\infty$ be the direct limit of $\mathcal{F}$,\footnote{See \rrem{a rem about main thm} for more on our definition of $\M_\infty$.}
  
		\item $\pi_{\V|\eta, \infty}:\V|\eta\rightarrow \M_\infty$ be the iteration map, and
  
		\item $\l=\pi_{\V|\eta, \infty}(\xi)$, 
	\end{enumerate}
	then for every $n<\omega$, \[M\models \:\Join^n_{\l},\] and \[\text{$(\c_\l^-)^M\models {\sf{ZF}}+``\omega_1$ is a supercompact cardinal."}\]
\end{theorem}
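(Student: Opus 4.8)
The plan is to run the $\H$--analysis of the derived model $M$ relative to the strategy $\Sigma$, use it to describe the Nairian models of $M$ explicitly in terms of $\M_\infty$, and then deduce both conclusions from the large--cardinal structure that the direct--limit map transports from $\V|\eta$ to $\M_\infty$. Write $\pi$ for $\pi_{\V|\eta,\infty}$ and $\bar\delta$ for $\pi(\delta)$. First, since $\V$ is a hod premouse carrying $\Sigma$ on its own sequence, $M\models\AD^+$, and the Steel--Woodin analysis of $\H$ for derived models of self--iterable universes, carried out in the hod--premouse framework of \cite{SteelCom}, shows that $V_{\Theta^M}^{\H^M}=\M_\infty$, with $\pi$ the canonical embedding of $\V|\eta$ into this direct limit. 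Consequently the predicate $\mH\rest\Theta^M$ of $M$ codes $\M_\infty$, so for each $\gamma\le\Theta^M$ the models $\c^-_\gamma$, $\c_\gamma$, $\c^+_\gamma$ of $M$ are, modulo this coding, built from $\M_\infty\rest\gamma$ and the reals of $M$ by, respectively, the height--$\gamma$ $L$--construction that adjoins at stage $\beta+1$ all $\omega$--sequences from $\beta$, its $L(\cdot)$--closure, and its closure under ordinal definability from bounded $\omega$--sequences. I expect the fine--structural heart of the whole argument to be here: establishing $V_{\Theta^M}^{\H^M}=\M_\infty$ with the condensation properties appropriate to hod premice is the main obstacle.

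Next, applying $\pi$ to hypotheses (3) and (4), $\l=\pi(\xi)$ is in $\M_\infty$ the least inaccessible cardinal that is simultaneously a limit of $<\bar\delta$--strong cardinals and a limit of Woodin cardinals; in particular $\l<\Theta^M$, $\l$ is inaccessible in $\H^M$, hence $\OD$--inaccessible in $M$, and so $\l$ belongs to the Solovay sequence of $M$. Using the description above, one shows that $\kappa:=(\Theta^{\c_\l})^M$ is, by the mouse--capturing analysis in $M$, the largest Solovay--sequence member of $M$ below $\l$: the minimality of $\xi$ guarantees that no $\OD$--inaccessible of $M$ lies strictly between $\kappa$ and $\l$, and the $<\bar\delta$--strong structure of $\M_\infty$ below $\l$ forces $\kappa$ to be regular, whence $\AD^+$ (successor members of the Solovay sequence have countable cofinality) forces it to be a limit member --- this is clause (3) of $\Join^n_\l$. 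By condensation $\c_\l$ and $\M_\infty$ agree on cardinals in the interval $(\kappa,\l]$, so the set $C$ of clauses (1)--(2) contains $\kappa$, the infinitely many cardinals of $\M_\infty$ in $(\kappa,\l)$, and $\l$; hence $C$ has order type at least $\omega+1$, in particular at least $n+1$ for every $n$, and $\l=\max C\in C$.

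Clauses (4)--(7) of $\Join^n_\l$ are condensation/reflection statements about $\langle\c^-_\gamma:\gamma\le\l\rangle$ read off from the first paragraph together with $\AD^+$ in $M$. Each $\kappa_i$ is regular in $\M_\infty$, and this is inherited by the definability--closed model $\c^+_\l$, giving (4). Clause (5), that $\c^-_\l\cap\cP(\bR)=\c^+_\l\cap\cP(\bR)=\Delta_\kappa$, says that adjoining $\M_\infty\rest\l$ and the bounded $\omega$--sequences to the reals creates no new set of reals of Wadge rank $\ge\kappa$; this follows from $\l$ being $\OD$--inaccessible above $\kappa$ together with condensation in the $\M_\infty$--hierarchy, and by Proposition \ref{Thetarealsrem} it is equivalent to just $\c^+_\l\cap\cP(\bR)=\Delta_\kappa$. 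Clause (6), that no subset of $\kappa_i^\omega$ enters $\c^-_{\kappa_{i+1}}$ at later stages, again follows by condensation from $\kappa_{i+1}$ being a cardinal of $\c_\l$. For clause (7): a cofinal map into $\kappa_{i+1}$ of length $<\kappa$ in $M$ would, by the first paragraph, be coded by an object of $\c^+_\l$, contradicting the regularity of $\kappa_{i+1}$ there, so $\cf^M(\kappa_{i+1})\ge\kappa$. This completes $M\models\Join^n_\l$ for all $n$.

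It remains to treat $\c^-_\l$. That $(\c^-_\l)^M\models\ZF$ is the assertion that $\l$ is a Nairian cardinal of $M$ (Definition \ref{sec:ncardinals}); since $\l$ is $\OD$--inaccessible and a limit of Woodin cardinals of $\H^M=\M_\infty$, this is the hod--premouse analogue of Woodin's theorem that an $\OD$--inaccessible limit of Woodins in $\H$ is a Nairian cardinal, proved by the same reflection argument --- the $L$--hierarchy over $\M_\infty\rest\l$ closes off at $\l$ because $\l$ is inaccessible in $\M_\infty$. For ``$\omega_1$ is supercompact'' it suffices to produce in $\c^-_\l$, for each ordinal $\gamma<\l$, a normal fine measure on $\cP_{\omega_1}(\gamma)$ (note $\omega_1^{\c^-_\l}=\omega_1^M$, as $\c^-_\l$ contains all reals). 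If $\gamma<\kappa$, then $\Join^n_\l$ yields $L(\Delta_\kappa)\models\ddagger$, so Solovay's theorem gives such a measure; it has rank $<\l$, and by clause (5) and $(\c^-_\l)^M\models\ZF$ it lies inside $\c^-_\l$. If $\gamma\in[\kappa,\l)$, fix a $<\bar\delta$--strong cardinal $\mu$ of $\M_\infty$ with $\gamma<\mu<\l$; the strongness of $\mu$, together with the Woodin cardinals of $\M_\infty$ below $\mu$, supplies inside $M$ the elementary embeddings of $\M_\infty\rest\l$ needed to run the localization of Woodin's argument that $\omega_1$ is supercompact in the Chang model, showing that the club filter on $\cP_{\omega_1}(\gamma)$ as computed in $\c^-_\l$ is a normal fine ultrafilter; normality and fineness are routine, and the ultrafilter property is where determinacy enters, via the $\Sigma$--guided iterability of $\M_\infty\rest\mu$ coded in $M$. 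Collecting these measures inside $\c^-_\l$ finishes the proof.
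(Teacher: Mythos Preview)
Your outline identifies the right framework---$\H^M$ is represented by $\M_\infty$ and the Nairian models are built over it---but the body of the argument treats several genuinely hard theorems as if they were routine condensation or inheritance facts. The paper devotes an entire section (\S\ref{sec: kom models}) plus a separate key technical theorem (\rthm{complexity of window strategy}) to exactly the steps you wave through.

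Concretely: for clause (4) you write ``each $\kappa_i$ is regular in $\M_\infty$, and this is inherited by $\c^+_\l$.'' Regularity does not go upward: $\c^+_\l$ is a much larger model than $\M_\infty$, containing in particular all countable sequences of ordinals below $\l$. That the small strong cardinals of $\mH$ remain regular in $\c$ is \rthm{first prop1}, and its proof is not inheritance but a term--relation argument that depends on \rthm{complexity of window strategy}, which in turn constructs window--based iterates achieving prescribed Wadge heights. Similarly, your claim that ``by condensation $\c_\l$ and $\M_\infty$ agree on cardinals in $(\kappa,\l]$'' is exactly the content of \rthm{strongs are successors}; the paper proves it by building an explicit surjection $\l\to\beta$ for each $\beta<\nu$ via a ``trace'' relation, using minimal copies and full normalization---not condensation. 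The Chang model $\c_\l$ is not fine--structural, so condensation arguments of the $L$--type are unavailable.

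Clause (6) is the deepest ingredient (\rthm{first prop}): showing $\cP(\kappa_i^\omega)\cap\c^+_\l\subseteq\c^-_{\kappa_{i+1}}$ requires pushing a term relation from a countable iterate up to level $\tau$, taking Skolem hulls inside a larger determinacy model, and invoking the uniqueness of the supercompactness measure (\rlem{sigma gives a}). Your ``condensation from $\kappa_{i+1}$ being a cardinal'' does not approach this. Clause (7) likewise needs the explicit cofinal map of \rthm{cof is kappa}, again built on \rthm{complexity of window strategy}. Without these components, the clauses of $\Join^n_\l$ that distinguish the Nairian model from a generic determinacy model are simply unproved.
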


%Our notion of hod premouse is the one introduced in \cite{SteelCom}.
%The notation $g_{\beta}$ was defined in Section \ref{uBsubsec},  $\Join^{n}_{\lambda}$ was defined in Definition \ref{bowtie}, and the model $\c_{\kappa}$ was defined in Section \ref{cmjoinssec}.
%See \rrem{a rem about main thm} for more on our definition of $\M_\infty$.

\subsection{Notation, conventions, and preliminary results}\label{sec: notation}

%We collect some notation and terminology.
We use the notation and terminology of \cite{SteelCom} where possible.\footnote{The introductory sections of \cite{SteelCom} summarize the results we need in this paper.}
All our hod pairs are lbr hod pairs \cite[Chapter 1.5, Definition 9.2.2]{SteelCom}, and we heavily rely on their basic theory as developed in \cite{SteelCom} and in \cite{MPSC}.
\cite[Theorem 1.4]{MPSC} will be very important, as full normalization is used throughout the sequel. 

\begin{remark} \normalfont All hod pairs are assumed to have iteration strategies with strong hull condensation, and hence full normalization (see \cite[Definition 0.1]{MPSC} and  \cite[Theorem 1.4]{MPSC}). 
%Like \cite{MPSC}, 
We use the \textit{projectum free spaces} fine structure of \cite{SteelCom}. %, but the exact distinctions will rarely come up. 
\end{remark}

\subsubsection{Fine structure and iteration trees.}
Our mice and hod mice use Jensen's indexing scheme.
Given an extender $E$ over a transitive model $M$, we let 
\begin{enumerate}
	\item $\pi^M_E: M\rightarrow Ult(M, E)$ be the ultrapower embedding,\footnote{We will often omit superscript $M$.}
	\item $\cp(E)$ be the critical point of $E$ and $\im(E)=\pi_E^M(\cp(E))$,\footnote{$\im(E)$ depends only on $E$. Indeed, $E$ determines $\powerset(\cp(E))\cap M$ and if $\powerset(\cp(E))\cap M=\powerset(\cp(E))\cap N$ then $\pi_E^M(\cp(E))=\pi_E^N(\cp(E))$.}
 
	\item $E$ be indexed at $\lh(E)=_{def} \pi_E((\cp(E)^+))$, and
 
	\item $\gen(E)$ be the natural length of $E$, i.e. the supremum of the ordinals of the form $\nu+1$, where $\nu$ is a generator of $E$.\footnote{See \cite[Chapter 2]{SteelCom}.}
\end{enumerate}
Our hod mice have only short extenders, meaning that $Ult(M, E)=\{ \pi_E^M(f)(a): f\in M, f:[\cp(E)]^{\card{a}}\rightarrow M, a\in [\im(E)]^{<\omega}\}$.

We follow a convention introduced in \cite{ANS01} and used in \cite{SteelCom}: A hod premouse $\M$ is a pair $(\M', k(\M))$, where $\M'$ is what one usually calls a hod premouse, i.e., $\M'=J_\a^{\vec{E}, S^\M}$, and $k(\M)\leq \omega$
is such that 
%is the fine structural level we need. It is assumed that 
$\M'$ is $k(\M)$-sound.
We will abuse notation and write $\M$ for $\M'$ as well.
$\a(\M)$ stands for $\a$, and
$\fl{\M}$ denotes the universe of $\M$.

Let $\M$ be a premouse and $\b\leq {\sf{Ord}}\cap \M$.
We write $\M|\b$ for $(J_\b^{\vec{E}^\M}, \vec{E}^\M\rest \b, \in)$. We write $\M||\b$ for $(J_\b^{\vec{E}^\M}, \vec{E}^\M\rest \b, E_\b, \in)$.  We write $l(\M)=(\a(\M), k(\M))$. For $(\b, m)\leq l(\M)$, we write $\M|(\b, m)$ for $(\M|\b, m)$ and $\M||(\b, m)$ for $(\M||\b, m)$. Here we are following the notation of \cite{JSSS} and \cite{SchSt09}, which was later changed in \cite{SteelCom}.
%\footnote{This is a key distinction between our notation and the notation used in \cite{SteelCom}, where $\M|\gg$ keeps the extender indexed at $\gamma$.
%The rationale behind our notation is that $|$ marks the position in the structure where the cut occurs.
%The last extender of a premouse is typically put on the second position after the universe of the structure, so we need two $||$ to keep it.}

We review some concepts from \cite[Chapter 2]{SteelCom}.
Suppose that $\M$ is a premouse and $n=k(\M)$. We say $E$ is an $\M$-\emph{extender} if $\powerset(\cp(E))^\M$ is measured by $E$ and $\cp(E)<\rho_n(\M)$. Given an $\M$-extender $E$ with $\cp(E)<\rho_n(\M)$, we set $Ult(\M, E)$ to be the decoding of $Ult_0(\M^n, E)$, where $\M^n$ is the $n$th reduct of $\M$. We also set $k(Ult(\M, E))=k(\M)$ (provided the resulting ultrapower is $k(\M)$-sound). 

Suppose next that $E$ is an extender such that for some $(\b, m)\leq_{lex} l(\M)$, $E$ is an $\M||(\b, m)$-extender. Letting $(\b, m)\leq l(\M)$ be largest such that $E$ is an $\M||(\b, m)$-extender, we let $Ult(\M, E)$ be the decoding of $Ult_0((\M||\b)^m, E)$ and $k(Ult(\M, E))=m$. If $(\b, m)<l(\M)$, we say that $E$ causes a \emph{drop}. We let $\pi_E^{\M}:\M||\b\rightarrow Ult(\M, E)$. 

Given a premouse $\M$ and ordinal $\l$, we write $\l\in \dom(\vec{E})^\M$ to indicate that there is an extender $E$ on the extender sequence of $\M$ such that $\lh(E)=\l$.
We may also let $E^\M_\l$ be the extender on the sequence of $\M$ such that $\lh(E_\l)=\l$.
If $P$ is a property, then we say $E\in \vec{E}^\M$ is the least that has property $P$ if $\lh(E)$ is the least ordinal $\a$ such that there is $F\in \vec{E}^\M$ with the property that $\lh(F)=\a$ and $F$ has the property $P$.

%For us the expression ``iteration tree" stands for a normal iteration tree (see \cite{OIMT} or \cite{SteelCom}).
Our iteration trees are normal, i.e. the lengths of the extenders increase and each extender is applied to the earliest model it can be applied to (see \cite{OIMT} or \cite{SteelCom}).
An iteration tree $\T$ of length $\eta$ on a fine structural model $\M$ (a premouse, hod premouse, etc.) is a tuple \[(\M_\a, \iota_\a, \l_\a, \xi_\a, n_\a, \pi_{\a, \b}, \mathcal{D}, T: \a<\b<\eta)\] such that 
\begin{enumerate}
	\item $T$ is a tree order on $\eta$, 
	\item for $\a+1<\eta$, $T(\a+1)$ is the predecessor of $\a+1$ in the tree order given by $T$,
	\item $\M_0=\M$,
	\item  for each $\a<\eta$, $\iota_\a\leq {\sf{Ord}}\cap \M_\a$,
	\item if $\iota_\a\in \dom(\vec{E})^{\M_\a}$, then, setting $E_\a=E_{\iota_\a}^{\M_\a}$, $\l_\a=\im(E_\a)$ and \[\M_{\a+1}=Ult(\M_{T(\a+1)}||\xi_\a, E_\a),\] where $(\xi_\a, n_\a)\leq_{lex}l(\M_{T(\a+1)})$ is the largest such that $E_{\iota_\a}^{\M_\a}$ is an extender over $\M_{T(\a+1)}||(\xi_\a, n_\a)$,
    \item if $\iota_\a\not \in \dom(\vec{E}^{\M_\a})$, then $\M_{\a+1}=\M_\a$\footnote{We allow padding.} and $\l_{\a}=\iota_\a$,  
    \item for $\a+1<\b+1<\lh(\T)$, $\iota_\a<\iota_\b$ and $\l_\a\leq \l_\b$,
    \item for $\a+1<\lh(\T)$, $\b=T(\a+1)$ if and only if $\b$ is the least $\gg<\lh(\T)$ such that $\cp(E_\a)< \l_\b$ or $\a=\b$,
	\item $\mathcal{D}$ is the set of all those ordinals $\a+1$ such that $(\xi_\a, n_\a)<_{lex} l(\M_{T(\a+1)})$,
	
    \item if $\a T \b$ and $\mathcal{D}\cap [\a, \b)_T=\emptyset$, then $\pi_{\a, \b}: \M_\a\rightarrow \M_\b$ is the iteration embedding,
 
	\item if $\xi<\eta$ is a limit ordinal, then there is $\a T \xi$ such that $[\a, \xi)_T\cap \mathcal{D}=\emptyset$ and, setting $b=[\a, \xi)_T$, $\M_\xi$ is the direct limit of $(\M_\b, \pi_{\b, \gg}: \b<\gg \in b)$. 
\end{enumerate}
To relieve the letter $``T"$ of its duty and use it elsewhere, we will write $\T$ for $T$.
We will use superscript $\T$ to denote objects in $\T$.
Given an iteration tree $\T$, we let 
\begin{enumerate}[resume]
\item $\gen(\T)=\sup \{\l_\a^\T: \a<\lh(\T)\}$, and
\item $\ext_\a^\T=\M_\a^\T|\iota_\a^\T$.
\end{enumerate}

\begin{notation}[Concatenating iterations]\label{concatinating iterations}\normalfont
Suppose $\T$ is an iteration tree on $\M$ with last model $\N$, and $\U$ is an iteration tree on $\N$.
If $\T$ followed by $\U$ is a normal iteration tree on $\M$, then we denote the composite iteration by $\T^\frown \U$.
On the other hand, if $\T$ followed by $\U$ is \emph{not} a normal iteration tree on $\M$, then we write $\T\oplus \U$ for this iteration.
If $E\in \vec{E}^\N$, then we write $\T^{n\frown} \{E\}$ for the normal continuation of $\T$ by $E$. 
\end{notation}

We will use some notation 
%that are very self-explanatory and have been carefully introduced in 
from \cite{LSA}.
For example, if  \[\T= (\M_\a, \iota_\a, \l_\a, \xi_\a, n_\a, \pi_{\a, \b}, \mathcal{D}, T: \a<\b<\eta)\]
is an iteration tree and $\gg<\lh(\T)$, then we let 
\[\T_{\geq \gg}= (\M_\a, \iota_\a, \l_\a, \xi_\a, n_\a, \pi_{\a, \b}, \mathcal{D}, T: \gg\leq \a<\b<\eta).\]
In general, $\T_{\geq \gg}$ may not be anything reasonable.
However, it is possible that $\T_{\geq \gg}$ can be represented as an iteration tree on $\M_\gg^\T$ after re-enumerating it.

Similarly, given $\gg<\iota\leq\lh(\T)$, we will use $\T_{[\gg, \iota)}$ or other such notation to indicate the portion of $\T$ that is between $\gg$ and $\iota$. 

Continuing with $\T$, if $\N=\M_\a^\T$ for some $\a\leq \lh(\T)$, then we may write $\T_{\geq \N}$ instead of $\T_{\geq \a}$. In this case, we may also say that $\N$ is a \textit{node of} $\T$. We may say that $\U$ is a \textit{fragment} of $\T$ just in case for some $\gg<\iota\leq \lh(\T)$, $\U=\T_{[\gg, \iota)}$.

Suppose now that $\T$ is an iteration tree on $\M$ and $w=(\nu, \mu)$. We then say that $\T$ is \textit{based} on $w$ just in case $\T$ is \textit{strictly above} $\nu\footnote{For every $\a<\lh(\T)$, $\cp(E_\a^\T)>\nu$.}$ and \textit{below} $\mu$\footnote{For every $\a<\lh(\T)$, either $[0, \a)_\T\cap D^\T\neq \emptyset$ or $\lh(E_\a^\T)<\pi_{0, \a}^\T(\mu)$.}.

\subsubsection{Hod premice.}
%Suppose next that $\T$ is an iteration of $\M$. Given two nodes $\R, \S\in \T$, we write $\R\prec^\T \S$ if the $\M$-to-$\S$ path contains $\R$. We also let $\T_{\R, \S}$ be the portion of $\T$ between $\R$ and $\S$. We write $\R\prec^{\T, it}\S$ if $\T_{\R, \S}$ is an iteration of $\R$.
We denote hod pairs using symbols $\mathfrak{p}$, $\mathfrak{q}$, $\mathfrak{s}$ and $\mathfrak{r}$. Typically, a hod pair $\hp$ is a pair $(\P, \Sigma)$ such that $\P$ is countable and $\Sigma$ is an $\omega_1$-iteration strategy with  strong-hull-condensation. These conventions may change, but if they do then either the new meanings will be clear from context or will be explicitly stated.
In the sequel, we will write $\hp=(\M^{\hp}, \Sigma^{\hp})$. 

\begin{notation}\label{iteration terminology}\normalfont
	Suppose $\hp$ is a hod pair. 
 \begin{enumerate}
 \item We say $\hq$ is an \textbf{iterate} of $\hp$ if $\M^{\hq}$ is a $\Sigma^{\hp}$-iterate of $\M^\hp$ and $\Sigma^\hq=\Sigma^\hp_{\M^\hq}$.\footnote{If $\Q$ is a $\Sigma$-iterate of $\P$, then $\Sigma_\Q$ is the strategy of $\Q$ that $\Sigma$ induces, so $\Sigma_\Q(\U)=\Sigma(\T\oplus \U)$, where $\T$ is the $\P$-to-$\Q$ iteration tree according to $\Sigma$ and $\T\oplus \U$ is the stack "$\T$ followed by $\U$". The iteration strategies considered in \cite{SteelCom} act on stacks, so our notation makes sense.} In this case, we let $\T_{\hp, \hq}$ be the $\M^\hp$-to-$\M^{\hq}$ iteration tree according to $\Sigma^{\hp}$.
 \item We say $\hq$ is a \textbf{complete iterate} of $\hp$ if $\hq$ is an iterate of $\hp$ and $\M^{\hq}$ is a complete $\Sigma^{\hp}$-iterate of $\M^\hp$ (i.e., the iteration embedding given by $\T_{\hp, \hq}$ is defined). 
 \item We say that $\T$ is an iteration tree on $\hp$ if $\T$ is an iteration tree on $\M^\hp$ according to $\Sigma^\hp$. If $\T$ is an iteration tree on $\hp$  and $\a<\lh(\T)$, then we let $\hm_\a^\T=(\M_\a^\T, \Sigma_{\M_\a^\T})$. 
  \item We let $\M_\infty(\hp)$ be the direct limit of all countable\footnote{We say $\hq$ is a countable iterate of $\hp$ if $\lh(\T_{\hp, \hq})$ is countable.} complete iterates of $\hp$.
	\item If $\hq$ is a complete iterate of $\hp$, then 
    \begin{enumerate}
        \item $\pi_{\hp, \hq}:\M^\hp\rightarrow \M^\hq$ is the iteration embedding given by $\T_{\hp, \hq}$ and $\pi_{\hq, \infty}:\M^{\hq}\rightarrow \M_\infty(\hp)$ is the iteration map according to $\Sigma^{\hq}$, and

        \item $\T_{\hq, \infty}$ is the normal $\M^{\hq}$-to-$\M_\infty(\hp)$ iteration. 
    \end{enumerate}
  \item Suppose $\a\leq {\sf{Ord}}\cap \M^\hp$. Then $\hp|\a=(\M^\hp|\a, \Sigma^\hp_{\M^\hp|\a})$ and $\hp||\a=(\M^\hp||\a, \Sigma^\hp_{\M^\hp||\a})$.
 \end{enumerate}
\end{notation}

The following remark concerns our notation used in \rthm{main theorem}. 

\begin{remark}\label{a rem about main thm}\normalfont
Suppose that $\hp$ is a hod pair.
Comparison is what usually guarantees that $\M_\infty(\hp)$ is defined.
Here, however, we require only a weaker form of comparison, saying that if $\hq$ and $\hr$ are two (countable) complete iterates of $\hp$, then there is a (countable) complete iterate $\hs$ of $\hp$ such that  $\hs$ is also a countable complete iterate of both $\hq$ and $\hr$. Usually $\M_\infty$ is defined in models of ${\sf{AD^+}}$, so by ``countable iterate" we mean countable in the sense of the relevant determinacy  model.
But in the statement of \rthm{main theorem}, $\M_\infty$ is defined using $<\eta$ iterations.
First, notice that $\eta=\omega_1^M$, where $M$ is as in \rthm{main theorem}.
Second, notice that because $\Sigma$ has full normalization, the system $\mathcal{F}$ is indeed directed.\footnote{See \rprop{strategy coherence} and also \cite[Theorem 6.4 and Lemma 6.5]{blue2024ad}.}
\end{remark}

Suppose that $V\models {\sf{AD^{+}}}+{\sf{HPC}}+V=L(\powerset(\bR))$, and let $\theta_\a<\Theta$ be a member of the Solovay sequence.
The results of \cite{SteelCom} imply that there is a hod pair $\hp$ such that $\M_\infty(\hp)\subseteq \H$ with $V_{\theta_\a}^\H=\lfloor \M_\infty(\hp)|\theta_\a\rfloor$.

\begin{terminology}\label{catches}\normalfont Continuing with our hod pair $\hp$, let $\hq$ be a complete iterate of $\hp$. We say $\hq$ \textbf{catches} $\a$ if $\a\in \rge(\pi_{\hq, \infty})$. If $\hq$ catches $\a$, then we let $\a_\hq$ be the $\pi_{\hq, \infty}$-preimage of $\a$. Similarly, if $\hq$ is a complete iterate of $\hp$, $x\in \M^\hq$, and $\hr$ is a complete iterate of $\hq$, then we let $x_\hr=\pi_{\hq, \hr}(x)$.
\end{terminology}

Suppose $\P$ is a hod premouse.
We let $S^\P$ be the strategy indexed on the sequence of $\P$.
Setting $\Sigma=S^\P$ and fixing a $\P$-generic $g$, in many situations $\Sigma$ can be extended to act on iterations in the generic extension $\P[g]$.
For example, if $\eta$ is a regular cardinal of $\P$ and $g$ is a generic for a poset whose size is below a Woodin cardinal that is greater than $\eta$, then $\Sigma_{\P|\eta}$ can be uniquely extended to an iteration strategy in $\P[g]$.
It is appropriate to call this strategy $\Sigma^g_{\P|\eta}$.
If $\P$ is a hod premouse and $\eta$ is an inaccessible limit of Woodin cardinals, then $S^\P_{\P|\eta}$ can be uniquely extended to an $\omega_1$-strategy in $\P[g]$, where $g\subseteq Coll(\omega, <\eta)$ is $\P$-generic.
We will suppress reference to the generic when it will not cause confusion.
%When discussing this strategy, we will often omit a reference to the generic $g$.
See \cite[Chapter 11.1]{SteelCom} for more on interpreting iteration strategies in generic extensions.

\subsubsection{Least-extender-disagreement coiterations and minimal copies.} Suppose that $u=(\hp, \hq)$ is a pair of hod pairs. Then $(\X, \Y)$ is the  \textit{least-extender-disagreement coiteration} of $u$ if $(\X, \Y)$ is obtained by iterating away the least disagreeing extenders.\footnote{ As is done in the usual comparison argument for mice, see \cite{OIMT}.}
Suppose next that $\hp$ is a hod pair and $\hq$ and $\hr$ are complete iterates of $\hp$.
Then the least-extender-disagreement-coiteration of $(\hq, \hr)$ is \textit{successful}, i.e., the least-extender-disagreement-coiteration  of $(\hq, \hr)$ outputs a pair $(\X, \Y)$ such that $\X$ and $\Y$ have a common last model $\hs$. Moreover, for each $\a<\lh(\X)$, exactly one of $E_\a^\X$ and $E_\a^\Y$ is not $\emptyset$. Indeed, for each $\a\leq \lh(\X)$, letting 
\begin{enumerate}
	\item $\xi$ be the least such that $\M_\a^\X|\xi=\M_\a^\Y|\xi$,
	\item $\U$ be the iteration tree on $\M^\hp$ according to $\Sigma^\hp$ that is built by comparing $\M_\a^\X|\xi$ with $\M^\hp$, and
	\item $\P'$ be the last model of $\U$,
\end{enumerate}
if $E_\a^\X$ is not $\emptyset$, then $E_\a^\X\in \vec{E}^{\P'}$ and $\lh(E_\a^\X)=\xi$.\footnote{For more on the least-extender-disagreement-coiteration, see \cite[Theorem 6.4]{blue2024ad}.}

Suppose that $M, N$ are two transitive models, $\nu$ is a regular cardinal of both $M$ and $N$ such that $V_\nu^M=V_\nu^N$, and $\T$ is an iteration of $V_\nu^M$.
We let $\U=_{def}\T^N$ be the unique iteration of $N$ such that $U=T$ and, for every $\a<\lh(\T)$, $E_\a^\U=E_\a^\T$. 

\begin{definition}[Minimal copies]\label{min copies} \normalfont
	Suppose that $\M$ is a fine structural model and $F$ is an $\M$-extender.
    Set $\N=Ult(\M, F)$ and $\pi=\pi_F^\M$. Suppose that $\T$ is a normal iteration of $\M$ above $\cp(F)$ (i.e., all extenders used in $\T$ have critical point $>\cp(F)$).
    We can then (attempt to) \textit{copy} $\T$ onto $\N$ in a minimal fashion reminiscent of the full normalization procedure.
    This minimal-copying procedure produces a normal iteration $\U$ of $\N$ with the following properties.
	\begin{enumerate}
		\item $\lh(\U)=\lh(\T)$, $U=T$., and for every $\a<\lh(\T)$, $\M_\a^\U=Ult(\M_\a^\T, F)$. 
		\item For $\a<\lh(\T)$, let $\sigma_\a=\pi_F^{\M_\a^\T}$. Then $E_\a^\U=\bigcup_{x\in \ext^\T_\a}\sigma_{\a+1}(x\cap E_\a^\T)$.
	\end{enumerate}
	We then say that $\U$ is the \textbf{minimal $(\M, F)$-copy} or simply the \textbf{minimal $F$-copy} of $\T$, and $\sigma_\a$ $(\a<\lh(\T))$ are the \emph{minimal copy maps}.
    When $\M$ is clear from context we will omit it from the notation.
\end{definition}

The minimal $(\M, F)$-copy of $\T$ may not exist, as a priori there is nothing that implies $E_\a^\U\in \vec{E}^{\M_\a^\U}$. However, minimal copies do come up in the full normalization process, and facts like $E_\a^\U\in \vec{E}^{\M_\a^\U}$ can be established assuming our iteration strategy has nice properties (see \cite[Remark 6.16]{SteelCom}).
A key step towards full normalization is the fact that $Ult(\ext^\T_\a, F)\insegeq Ult(\M_\a^\T, F)$, which can be proved just like the fact that $E_\a^\U\in \vec{E}^{\M_\a^\U}$.
The following theorem is essentially the full normalization theorem (\cite[Theorem 1.4]{MPSC}), and can be established using material from \cite[Chapter 6.1]{SteelCom}.\footnote{For more on full normalization, see \cite[Definition 3.7]{schlutzenberg2021normalization} and \cite{siskind2022normalization}.}

\begin{theorem}\label{min copy thm} %\normalfont 
Suppose that $\hp=(\P, \Sigma)$ is a hod pair and $\eta$ is a regular cardinal of $\P$.
Suppose that $\T$ is according to  $\Sigma$ and is strictly above $\eta$,\footnote{I.e. for all $\a<\lh(\T)$, $\eta<\cp(E_\a^\T)$.} $\Q$ is the last model of $\T$, and the iteration embedding $\pi^\T:\P\rightarrow \Q$ is defined.\footnote{I.e., there is no drop along the main branch of $\T$.} Suppose that $\R$ is a $\Sigma_\Q$-iterate of $\Q$ such that, setting $\hq=(\Q, \Sigma_\Q)$ and $\hr=(\R, \Sigma_\R)$, $\gen(\T_{\hq, \hr})\subseteq \pi_{\hq, \hr}(\eta)$. Let $\xi<\lh(\T_{\hp, \hr})$ be the least such that $\M_\xi^{\T_{\hp, \hr}}|\pi_{\hq, \hr}(\eta)=\R|\pi_{\hq, \hr}(\eta)$.
Let $F$ be the long extender derived from $\pi_{\hq, \hr}\rest (\Q|\eta)$, and let $\U$ be the minimal $(\P, F)$-copy of $\T$. Then $\T_{\hp, \hr}=\T_{\hp, \hr}\rest (\xi+1)^\frown \U$.
(In particular, $\M_\xi^{\T_{\hp, \hr}}=Ult(\P, F)$.) 
	
Moreover, for each $\iota<\lh(\U)$, $\M_\iota^\U=Ult(\M_\iota^\T, F)$, and $E_\iota^\U$ is the last active extender of $Ult(\ext^\T_\iota, F)$.
\end{theorem}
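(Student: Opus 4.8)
The plan is to derive the theorem from the full normalization theorem \cite[Theorem 1.4]{MPSC} together with the strong hull condensation of $\Sigma^\hp$: I would compute the full normalization $W(\T_{\hp,\hq}\oplus\T_{\hq,\hr})$ of the stack ``$\T=\T_{\hp,\hq}$ followed by $\T_{\hq,\hr}$'', show it has exactly the asserted shape, and conclude $\T_{\hp,\hr}=W(\T_{\hp,\hq}\oplus\T_{\hq,\hr})$ since $W(\T_{\hp,\hq}\oplus\T_{\hq,\hr})$ is a normal tree according to $\Sigma^\hp$ (by strong hull condensation) with last model $\R$. Two preliminary facts are used throughout. First, since $\T$ is strictly above $\eta$ and does not drop on its main branch, $\cp(\pi^\T)>\eta$, so $\P|\eta=\Q|\eta$ (with the strategy predicate) and $\pi^\T(\eta)=\eta$; in particular $\eta$ is a regular cardinal of $\Q$ by elementarity. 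Second, since $\eta$ is regular in $\Q$ and $\cp(\pi_{\hq,\hr})<\eta$, the map $\pi_{\hq,\hr}$ is continuous at $\eta$, so the hypothesis $\gen(\T_{\hq,\hr})\subseteq\pi_{\hq,\hr}(\eta)$ gives that every element of $\R$ is of the form $\pi_{\hq,\hr}(f)(a)$ with $f\in\Q$, $\dom(f)=[\eta]^{|a|}$ and $a\in[\R|\pi_{\hq,\hr}(\eta)]^{<\omega}$; consequently the long extender $F$ derived from $\pi_{\hq,\hr}\restriction(\Q|\eta)=\pi_{\hq,\hr}\restriction(\P|\eta)$ satisfies $\cp(F)=\cp(\pi_{\hq,\hr})$, $Ult(\Q,F)=\R$, $Ult(\P,F)$ is well-defined, and $Ult(\P,F)|\pi_{\hq,\hr}(\eta)=\R|\pi_{\hq,\hr}(\eta)$ with $\pi_F^\P\restriction(\P|\eta)=\pi_{\hq,\hr}\restriction(\P|\eta)$. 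Note that $\T$ is above $\cp(F)$, so its minimal $(\P,F)$-copy is at least formally defined.

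Next I would compute $W(\T_{\hp,\hq}\oplus\T_{\hq,\hr})$ as an iterated one-step normalization, following \cite[Chapter 6.1]{SteelCom}, by induction on $\lh(\T_{\hq,\hr})$. At a successor stage one normalizes the current tree by the next extender $E$ of $\T_{\hq,\hr}$; because $\cp(E)$ and all generators of $E$ lie below $\pi_{\hq,\hr}(\eta)$ while every extender of the current copy of $\T$ has critical point well above $\eta$, the one-step normalization inserts (the pull-back of) $E$ at the bottom of the tree, acting on the $\P|\eta=\Q|\eta$ part, and re-copies the $\T$-part sitting above it by precisely the recipe of Definition \ref{min copies}; the hypothesis on $\gen(\T_{\hq,\hr})$ is exactly what guarantees that later extenders of $\T_{\hq,\hr}$ never interact with the re-copied $\T$-part except through the accumulating bottom extender. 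Passing to the limit, the bottom segment of $W(\T_{\hp,\hq}\oplus\T_{\hq,\hr})$ becomes the least-extender-disagreement coiteration $\W$ bringing $\P$ into agreement with $\R$ below $\pi_{\hq,\hr}(\eta)$, whose last model is $Ult(\P,F)$ (by the preliminary facts and the standard uniqueness of such coiterations), while the part above $\W$ is the minimal $(\P,F)$-copy $\U$ of $\T$, with last model $Ult(\Q,F)=\R$. Thus $W(\T_{\hp,\hq}\oplus\T_{\hq,\hr})=\W^\frown\U=\T_{\hp,\hr}$; and since $Ult(\P,F)$ is the least model of $\W^\frown\U$ in agreement with $\R$ below $\pi_{\hq,\hr}(\eta)$ (the models strictly inside $\W$ failing this by minimality of the coiteration, and every model of $\U$ having the same cut at $\pi_{\hq,\hr}(\eta)$ as $Ult(\P,F)$ since $\T$ is above $\eta$), we get $\M_\xi^{\T_{\hp,\hr}}=Ult(\P,F)$, $\W=\T_{\hp,\hr}\restriction(\xi+1)$, and hence $\T_{\hp,\hr}=\T_{\hp,\hr}\restriction(\xi+1)^\frown\U$, which is the first assertion.

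The ``moreover'' clause is then the unwinding of Definition \ref{min copies} applied to $\U$: the equality $\M_\iota^\U=Ult(\M_\iota^\T,F)$ is clause (1) of that definition, and the identification of $E_\iota^\U=\bigcup_{x\in\ext^\T_\iota}\sigma_{\iota+1}(x\cap E_\iota^\T)$ with the last active extender of $Ult(\ext^\T_\iota,F)$ is exactly the fine-structural fact flagged after Definition \ref{min copies}. I expect this last point to be the main obstacle: one must prove that the minimal $(\P,F)$-copy is genuinely well-defined, that is, $E_\iota^\U\in\vec{E}^{\M_\iota^\U}$, equivalently the initial-segment condition $Ult(\ext^\T_\iota,F)\insegeq Ult(\M_\iota^\T,F)$. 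This is the heart of normalization; it cannot be read off from elementarity alone and must be extracted from the condensation properties of $\Sigma^\hp$, as in \cite[Remark 6.16]{SteelCom} and the arguments of \cite{MPSC}. Once well-definedness is secured, the remaining verifications — that $\U$ is according to $\Sigma^\hp_{\M_\xi^{\T_{\hp,\hr}}}$, that each successor one-step normalization composes as described, and the routine bookkeeping identifying $W(\T_{\hp,\hq}\oplus\T_{\hq,\hr})$ with the displayed tree — follow from the standard normalization machinery of \cite[Chapter 6.1]{SteelCom}.
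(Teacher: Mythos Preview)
Your proposal is correct and follows exactly the approach the paper indicates: the paper does not give a proof but simply states that the theorem ``is essentially the full normalization theorem (\cite[Theorem 1.4]{MPSC}), and can be established using material from \cite[Chapter 6.1]{SteelCom}.'' Your outline fleshes out precisely this citation, computing the full normalization of the stack $\T\oplus\T_{\hq,\hr}$ by iterated one-step normalization and identifying the well-definedness of the minimal copy (i.e., $Ult(\ext^\T_\iota,F)\insegeq Ult(\M_\iota^\T,F)$) as the key technical point to be extracted from \cite[Remark 6.16]{SteelCom} and \cite{MPSC}, just as the paper remarks immediately before the theorem statement.
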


We also require that the following consequence of full normalization.\footnote{As pointed out to us by Steel, while \rprop{strategy coherence} can be proved using full normalization, one has to prove such a coherence property in order to develop the theory of hod mice. See \cite[Definition 5.2.4, 2.7.7 and Theorem 5.2.5]{SteelCom}.} The property established in \rprop{strategy coherence} is called \emph{strategy coherence} in \cite{SteelCom}.

\begin{proposition}\label{strategy coherence}\normalfont Suppose that $\hp=(\P, \Sigma)$ is a hod pair such that $\P\models \sf{ZFC-Powerset}$ and $\T$ is a normal iteration tree on $\P$ according to $\Sigma$ with a last model.
Set $\a=\lh(\T)-1$ and suppose $\eta\leq {\sf{Ord}} \cap \M_\a^\T$ and $\b\leq \a$ are such that $\M_\b^\T|\eta=\M_\a^\T|\eta$.
Set $\N=\M_\b^\T|\eta$.\footnote{The proposition is still true for $\N=\M_\b^\T||\eta$, assuming $\M_\b^\T||\eta=\M_\a^\T||\eta$.} 
Then $(\Sigma_{\M_\b^\T})_\N=(\Sigma_{\M_\a^\T})_\N$.\footnote{It follows from the results of \cite[Definition 5.4.4 and Theorem 5.4.5]{SteelCom} that $\Sigma_{\M|\nu}$ is defined to be the $id$-pullback of $\Sigma_\M$.}
\end{proposition}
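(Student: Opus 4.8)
The plan is to reduce the statement to a property of branch choices of $\Sigma$ and then read it off from full normalization.

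Recall from the second footnote (i.e.\ \cite[Definition 5.4.4, Theorem 5.4.5]{SteelCom}) that for a last model $\M$ of a tree according to $\Sigma$ and an initial segment $\N\trianglelefteq\M$, the induced strategy $(\Sigma_\M)_\N$ is \emph{by definition} the $id$-pullback of $\Sigma_\M$. Since $\N=\M_\b^\T|\eta$ is an initial segment of $\M_\b^\T$ and $\N=\M_\a^\T|\eta$ is an initial segment of $\M_\a^\T$, for any normal tree $\U$ on $\N$ the $id$-lifts $\U^{\M_\b^\T}$ and $\U^{\M_\a^\T}$ exist and carry the same tree order as $\U$. Moreover $(\Sigma_{\M_\b^\T})_\N(\U)$ is the branch $b$ of $\U$ with $\Sigma((\T\rest(\b+1))\oplus\U^{\M_\b^\T})=b$ (using that $\Sigma_{\M_\b^\T}$ is $\Sigma$ restricted to stacks extending $\T\rest(\b+1)$, as in Notation \ref{iteration terminology}), and $(\Sigma_{\M_\a^\T})_\N(\U)$ is the branch $b'$ of $\U$ with $\Sigma(\T\oplus\U^{\M_\a^\T})=b'$. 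Building $\U$ by both candidate strategies simultaneously and inducting on $\lh(\U)$, it therefore suffices, for $\U$ of limit length, to prove $b=b'$.

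The heart of the matter is a \emph{commutativity} property of full normalization. First, one shows that the segment $\T_{[\b,\a]}$, reorganized as a normal tree $\T'$ on $\M_\b^\T$ with last model $\M_\a^\T$, lies \emph{above} $\N$: because $\M_\b^\T|\eta=\M_\a^\T|\eta=\N$, any branch embedding $\pi^\T_{\xi,\a}$ with $\M_\xi^\T|\eta=\N$ restricts to a $\Sigma_1$-elementary map $\N\to\N$ and is hence the identity on $\N$ by rigidity of hod mice, so every extender of $\T$ used on $[\b,\a]_\T$ has critical point $\geq\eta$; a short computation with the coherence of the extender sequences upgrades this to the statement that every extender of $\T'$ has critical point $\geq\eta$. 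Granting this, $\T'$ and $\U^{\M_\b^\T}$ act on disjoint parts of $\M_\b^\T$, so by the associativity of full normalization (\cite[Theorem 1.4]{MPSC}, \cite[Chapter 6.1]{SteelCom}, cf.\ \rthm{min copy thm}) the full normalization of $\T\oplus\U^{\M_\a^\T}=(\T\rest(\b+1))\oplus\T'\oplus\U^{\M_\a^\T}$ begins with the full normalization $\W$ of $(\T\rest(\b+1))\oplus\U^{\M_\b^\T}$ and then continues with the minimal copy of $\T'$ under the $\U$-iteration maps. In particular the two stacks of the previous paragraph have the same full normalization through $\W$, so each is a hull of the other there; by strong hull condensation, $\Sigma$ selects corresponding branches of $\U$ in the two computations, i.e.\ $b=b'$. (In the special case $\b<_\T\a$ with no drop on $[\b,\a]_\T$ this is cleaner: rigidity gives $\pi^\T_{\b,\a}\rest\N=id$, the pullback consistency of $\Sigma$ along the branch gives $\Sigma_{\M_\b^\T}=(\Sigma_{\M_\a^\T})^{\pi^\T_{\b,\a}}$, and restricting to $\N$ and using $\pi^\T_{\b,\a}\rest\N=id$ finishes it; the general case is obtained by reducing to this via $\T'$.)

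The main obstacle is exactly this commutativity step: one must verify carefully that placing the $\eta$-above tree $\T'$ before the (index-$<\eta$) tree $\U^{\M_\a^\T}$ does not change which branch $\Sigma$ threads through $\U$, handling drops on and off the branch $[\b,\a]_\T$ when arguing that $\T'$ lies above $\eta$, and checking that the minimal copies that appear are well-defined and remain normal — precisely the kind of fact already isolated in \rthm{min copy thm} and \cite[Chapter 6.1]{SteelCom}. The remaining steps are routine manipulation of the pullback definitions, using $\P\models\sf{ZFC-Powerset}$ only to keep the reorganized tails $\T_{[\b,\a]}$ and the lifts $\U^{\M_\b^\T},\U^{\M_\a^\T}$ well-behaved.
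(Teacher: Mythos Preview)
Your argument has a genuine gap at the ``rigidity'' step. You claim that a branch embedding $\pi^{\T}_{\xi,\a}$ with $\M_\xi^\T|\eta=\N$ restricts to a map $\N\to\N$, and then invoke rigidity to conclude it is the identity on $\N$. But this assumes what you are trying to prove: if $\cp(\pi^\T_{\xi,\a})<\eta$, then $\pi^\T_{\xi,\a}(\eta)>\eta$, so the image of $\N$ under $\pi^\T_{\xi,\a}$ is contained in $\M_\a^\T|\pi^\T_{\xi,\a}(\eta)$, not in $\M_\a^\T|\eta=\N$. You do not get a self-map of $\N$, and rigidity does not apply. Concretely, take $\T$ to consist of a single extender $E$ with $\cp(E)<\eta<\lh(E)$; by coherence $\M_0^\T|\eta=\M_1^\T|\eta$, yet $\pi_E$ moves ordinals below $\eta$. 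So the assertion that $\T_{[\b,\a]}$ reorganizes as a tree on $\M_\b^\T$ that is strictly above $\eta$ is simply false in general, and with it the commutativity step collapses. (The further claim that $\T_{[\b,\a]}$ is a tree on $\M_\b^\T$ at all is also problematic, since extenders used after stage $\b$ can be applied to models of index $<\b$.)

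The paper avoids all of this by working one level up. Rather than trying to factor $\T$, it observes directly that the \emph{embedding normalizations} $\X_0$ of $\T_{\leq\b}\oplus\U_\Q$ and $\X_1$ of $\T\oplus\U_\R$ coincide: the $id$-lifts $\U_\Q$ and $\U_\R$ use the same extenders (this is where the paper invokes the simplifying hypothesis that $\eta$ is inaccessible in both models and neither projects across $\eta$), and since $\U$ has limit length, the extra extenders of $\T$ past stage $\b$ (all of length $\geq\eta$) never enter the normalization before the limit. Then $\Sigma_\Q(\U_\Q)$ and $\Sigma_\R(\U_\R)$ are both read off from $\Sigma(\X_0)=\Sigma(\X_1)$ by the same procedure from \cite[Section~6.6]{SteelCom}, hence agree. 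The point is that one does not need the tail of $\T$ to live above $\eta$; one only needs the normalized tree at the limit stage to be insensitive to that tail, which is a much weaker and correct statement.
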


\begin{proof}
Set $\Q=\M^\T_\b$ and $\R=\M^\T_\a$.
Let $\U$ be a normal iteration tree of limit length that is according to both $\Sigma_{\Q}$ and $\Sigma_{\R}$.
We assume additionally that $\eta$ is an inaccessible cardinal of both $\Q$ and $\R$ and neither of $\Q$ nor $\R$ project across $\eta$.
The general case is not significantly different but needs some fine structure.

Let $\U_\Q$ be the $id$-copy of $\U$ onto $\Q$ and $\U_\R$ be the $id$-copy of $\U$ onto $\R$.
We have that $\U_\Q$ and $\U_\R$ have the same tree order and use the same extenders\footnote{This is where we use the fact that $\eta$ is an inaccessible cardinal in both $\Q$ and $\R$ and neither projects across $\eta$.
In general, we cannot conclude that $\U_\Q$ and $\U_\R$ have the same extenders. However, we can simply apply $\U$ to both $\Q$ and $\R$, producing iterations trees $\S_\Q$ and $\S_\R$ such that $\S_\Q$ and $\S_\R$ have the same tree structure as $\U$ and use exactly the same extenders.
We will then have that $\S_\Q$ and $\S_\R$ are pseudo-hulls of $\S_\Q$ and $\U_\R$, and so the proof can still be implemented. Here $\S_\Q$ will have the following property: for every $\zeta<\lh(\U)$, $\M_\zeta^{\S_\Q}\insegeq \M_\zeta^{\U_\Q}$.
The proof of this fact is very much like the proofs of \cite[Claim 6.1.7 and Claim 6.1.8]{SteelCom}.}.
Now let $\X_0$ be the embedding normalization of $\T_{\leq \b}\oplus \U_\Q$ and $\X_1$ be the embedding normalization of $\T\oplus \U_\R$.
Since $\U$ has limit length and $\Q|\eta=\R|\eta$, $\X_0=\X_1=_{def}\X$. Furthermore, $\Sigma_{\Q}(\U_\Q)$ and $\Sigma_{\R}(\U_\R)$ are completely determined by $\Sigma(\X)$ via exactly the same procedure (see \cite[Section 6.6]{SteelCom} and in particular, \cite[Corollary 6.6.20]{SteelCom}). Therefore, $\Sigma_{\Q}(\U_\Q)=\Sigma_{\R}(\U_\R)$.
\end{proof}

\subsubsection{The extender algebra}
If $M$ is a transitive model and $\d$ is a Woodin cardinal of $M$, let ${\sf{EA}}^M_\d$ denote the extender algebra of $V_\d^M$ at $\d$ with countably many generators.
If $\xi<\d$ and $w=[\xi, \d]$, let ${\sf{EA}}^M_w$ be the extender algebra of $V_\d^M$ that uses only extenders with critical points above $\xi$.
If $M$ has a distinguished extender sequence, then we assume that the extender algebra uses extenders from that sequence.

%\subsubsection{Summary of  \cite[Theorem 1.3]{CCM} and \cite[Proposition 2.2]{CCM}}\label{sec: chang model}
\subsubsection{Summary of \cite{CCM}}\label{sec: chang model}

The proof of  \rthm{main theorem} uses \cite[Theorem 1.3]{CCM} and \cite[Proposition 2.2]{CCM}, which we summarize here. 

Suppose $\V\models \sf{ZFC}$ is an lbr hod premouse and $\eta$ is an inaccessible limit of Woodin cardinals of $\V$.
Suppose $g\subseteq Coll(\omega, <\eta)$ is $\V$-generic.
Set $\P=\V|(\eta^+)^\V$.
It is shown in \cite{SteelCom} that 
$S^\V$ has a unique canonical extension in generic extensions of  $\V$.
We set $\Sigma=S^\V$ and $\hp=(\P, \Sigma)$.
If $h$ is $\V$-generic, then we let $\hp^h=(\P, \Sigma^h)$, where $\Sigma^h$ is the unique extension of $\Sigma$ in $\V[h]$.

\begin{notation}\normalfont
	Continuing with $\V, \P, \hp$, and $g$ as above:
	\begin{itemize}
		\item For $\a<\eta$, $g_\a=g\cap Coll(\omega, <\a)$.
		\item For $\a<\eta$, $\mathcal{F}^{g_\a}_\hp$ is the set of $\hq^g$, where $\hq$ is a complete iterate of $\hp^{g_\a}$ such that $\T_{\hp^{g_\a}, \hq}\in V[g_\a]$, $\lh(\T_{\hp^{g_\a}, \hq})< \eta$, $\T_{\hp^{g_\a}, \hq}$ is based on $\P|\eta$, and $\pi_{\hq^{g_\a}, \hq}(\eta)=\eta$.\footnote{It is not hard to see that this condition follows from the previous conditions.}
		\item  For $\a<\eta$, $\mathcal{F}^g_\hp=\bigcup_{\a<\eta}\mathcal{F}^{g_\a}_\hp$.
		\item For $\a<\eta$, $\hq\in \mathcal{F}^{g_\a}(\hp)$, and $\b\in [\a, \eta)$,  $\mathcal{F}^{g_\b}_\hq$ is the set of $\hr^g$ where $\hr$ is a complete iterate of $\hq^{g_\b}$ such that $\T_{\hq^{g_\b}, \hr}\in V[g_\b]$ and $\lh(\T_{\hq^{g_\b}, \hr})<\eta$. 
		\item For $(\a, \hq, \b)$ as in the above clause, $\mathcal{F}^{g}(\hq)=\bigcup_{\b\in (\a, \eta)}\mathcal{F}^{g_\b}(\hq)$.
	\end{itemize}
 We will usually just write $\hq$ for $\hq^{g_\b}$. 
\end{notation}

Because $\Sigma^g$ has full normalization, given $\hr, \hs\in \mathcal{F}^g_\hq$, we have $\hw\in \mathcal{F}^g_\hq$ such that $\hw$ is a common iterate of $\hr$ and $\hs$. Moreover, $\pi_{\hr, \hw}\circ \pi_{\hq, \hr}=\pi_{\hs, \hw}\circ \pi_{\hq, \hs}$. It follows that $(\mathcal{F}^g_\hq, \pi_{\hr, \hs}: \hr, \hs\in \mathcal{F}^g_\hq)$ is a directed system. We let $\M_\infty(\hq)$ be the direct limit of $(\mathcal{F}^g_\hq, \pi_{\hr, \hs}: \hr, \hs\in \mathcal{F}^g_\hq)$. Let $\pi_{\hr, \infty}:\R\rightarrow \M_\infty(\hq)$ be the direct limit embedding. Set $\eta_\infty=\pi_{\hq, \infty}(\eta)$.

Suppose that $\hr\in \mathcal{I}^g(\hp)$. The interval $w=(\nu^w, \d^w)$ is a \emph{window} of $\hr$ (or of $\M^{\hr}$) if there are no Woodin cardinals of $\M^{\hr}$ in the interval $w$ and $\d^w$ is a Woodin cardinal of $\M^{\hr}$. If $w, w'$ are two windows of $\M^\hr$ then we write $w<_W w'$ just in case $\d^w<\nu^{w'}$. We say $\hq$ is a \textit{window-based iterate} of $\M^\hr$ if there exist $\iota<\eta$, 
%such that $\M^\hr\in V[g_\iota]$, 
an $<_W$-increasing sequence of windows $(w_\a=(\nu_\a, \d_\a): \a<\eta)$ of $\M^\hr$, and a sequence $(\hq_\a: \a<\eta)\subseteq \mathcal{F}^{g_\iota}_\hr$ (in $V[g_\iota]$) such that
\begin{enumerate}
        \item $\M^\hr\in V[g_\iota]$,
	\item $\hq_0\in \mathcal{F}^{g_\iota}_\hr$ and $\T_{\hr, \hq_0}$ is based on $\M^\hr|\nu_0$,
	\item $\hq_{\a+1}\in \mathcal{F}^{g_\iota}_{\hq_\a}$,
	\item $\T_{\hq_\a, \hq_{\a+1}}$ is based on $\pi_{\hr, \hq_\a}(w_\a)$,
	\item for limit ordinals $\l$, $\M^{\hq_\l}$ is the direct limit of $(\M^{\hq_\a}, \pi_{\hq_\a, \hq_\b}: \a<\b<\l)$,
	\item $\M^\hq$ is the direct limit of $(\hq_\a, \pi_{\hq_\a, \hq_\b}: \a<\b<\eta)$.
\end{enumerate}
\begin{definition}\label{gen iterate}\normalfont Given $\hr\in \mathcal{I}^g(\hp)$, We say that $\hq\in \mathcal{F}^g_\hr$ is a \textbf{genericity iterate} of $\hr$ if it is a window-based iterate of $\hr$ as witnessed by $(w_\a: \a<\eta)$ such that
	\begin{enumerate}
		\item if $x\in \bR_g$, then for some $\a<\eta$, $x$ is generic for the extender algebra ${\sf{EA}}^{\M^\hq}_{\pi_{\hp, \hq}(w_\a)}$,\footnote{This is the extender algebra of $\hq$ at $\d^{\pi_{\hp, \hq}(w_\a)}$ using extenders whose critical points are $>\eta^{\pi_{\hr, \hq}(w_\a)}$.}
		\item for each $\a<\eta$, $w_\a\in \rge(\pi_{\hr, \hq})$.
	\end{enumerate} 
\end{definition}
Suppose that $\hq=(\Q, \Lambda)$ is a genericity iterate of $\hr=(\R, \Psi)$ with $\hr\in \mathcal{I}^g(\hp)$.
Then there is $h\subseteq Coll(\omega, <\eta)$ with $h\in \V[g]$ such that $h$ is $\Q$-generic and $\bR^{\Q[h]}=\bR_g$. We call such an $h$ a \textit{maximal generic}.
Let $\V_\hq$ be the last model of the iteration tree $\T$ that is obtained by copying $\T_{\hp, \hq}$ onto $\V$ via the identity embedding, and set (in $\V[g]$)
\begin{center}
	$C(\eta, g)=L(\M_\infty(\hp), \bigcup_{\xi<\eta_\infty}\powerset_{\omega_1}(\M_\infty(\hp)|\xi), \Gamma^\infty_g, \bR_g)$
\end{center}

Theorem \ref{invariance} summarizes what we need from \cite{CCM}.

\begin{theorem}\label{invariance}%\normalfont 
Suppose $\hq$ is a genericity iterate of $\hp$ and $h$ is a maximal $\hq$-generic. Then $\M_\infty(\hq)=(\M_\infty(\hq))^{\M^{\hq}[h]}$, $\M_\infty(\hq)=\M_\infty(\hp)$ and $C(\eta, g)=C(\eta, h)^{\V_\hq(\bR_g)}$.
\end{theorem}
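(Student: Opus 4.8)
The plan is to derive Theorem~\ref{invariance} from \cite[Theorem 1.3]{CCM} and \cite[Proposition 2.2]{CCM}, organizing the three conclusions around one generic-absoluteness analysis of the direct limit $\M_\infty$ together with routine hod-pair bookkeeping. I would begin by recording the two basic features of a genericity iterate. By Definition~\ref{gen iterate}, a genericity iterate $\hq$ of $\hp$ is a window-based iterate along an $<_W$-increasing $\eta$-sequence of windows $(w_\a:\a<\eta)$ such that every $x\in\bR_g$ is ${\sf{EA}}^{\M^\hq}_{\pi_{\hp,\hq}(w_\a)}$-generic for some $\a$. Since each such extender algebra is, inside $\M^\hq$, a forcing of size $\d^{w_\a}<\eta$, a standard absorption argument (threading together the $\d^{w_\a}$ and appealing to the Levy-collapse absorption property) yields a maximal generic $h\subseteq Coll(\omega,<\eta)$ with $h\in\V[g]$, $h$ being $\M^\hq$-generic, and $\bR^{\M^\hq[h]}=\bR_g$. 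Note also that $\eta$ remains an inaccessible limit of Woodin cardinals of $\M^\hq$ (the iteration is based on $\hp|\eta$, so $\pi_{\hp,\hq}(\eta)=\eta$) and that $\omega_1^{\M^\hq[h]}=\eta=\omega_1^{\V[g]}$.

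The heart of the argument is the first conclusion, $\M_\infty(\hq)=(\M_\infty(\hq))^{\M^\hq[h]}$. By Steel's analysis of interpreting strategies in generic extensions \cite[Chapter 11.1]{SteelCom}, $\Sigma^\hq_{\M^\hq|\eta}$ extends canonically to an $\omega_1$-iteration strategy acting on trees based on $\M^\hq|\eta$ inside $\M^\hq[h]$, as $\eta$ is an inaccessible limit of Woodins there. I would then show the two direct-limit systems coincide. Every member of $\mathcal{F}^g_\hq$ is the last model of a $<\eta$-length tree on $\M^\hq|\eta$ according to $\Sigma^\hq$; by full normalization (Theorem~\ref{min copy thm}) it is also the last model of a \emph{normal} such tree, which is then coded by a real of $\bR^{\M^\hq[h]}=\bR_g$ and whose branch is selected by the extended strategy, so this iterate, together with its iteration map, already lives in $\M^\hq[h]$. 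Conversely the $\M^\hq[h]$-system embeds into the $\V[g]$-system. Strategy coherence (Proposition~\ref{strategy coherence}) guarantees the branches agree on overlaps, and the directedness of both systems, which (as remarked in Remark~\ref{a rem about main thm}) is precisely what full normalization supplies, forces the two direct limits, with their limit maps, to be identical.

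The remaining conclusions are then short. For $\M_\infty(\hq)=\M_\infty(\hp)$: $\hq$ is a complete iterate of $\hp$, and by directedness any $\hr\in\mathcal{F}^g_\hp$ admits a common iterate with $\hq$; thus $\mathcal{F}^g_\hp$ and $\mathcal{F}^g_\hq$ are mutually cofinal, and the commuting-triangle identity $\pi_{\hr,\hw}\circ\pi_{\hq,\hr}=\pi_{\hs,\hw}\circ\pi_{\hq,\hs}$ from full normalization shows the cofinal subsystem based at $\hq$ computes the same transitive direct limit and the same limit maps, so the two objects are literally equal. For the Chang-model conclusion, note that $\V_\hq$, the identity-copy of $\V$ along $\T_{\hp,\hq}$, has $\M^\hq$ as an initial segment ($\M^\hq=\V_\hq|(\eta^+)^{\V_\hq}$), so $h$ is $\V_\hq$-generic with $\bR^{\V_\hq[h]}=\bR_g$, and the universally Baire sets $\Gamma^\infty_h$ computed from $h$ over $\V_\hq$ coincide with $\Gamma^\infty_g$ computed from $g$ over $\V$, since the derived model produced by a Levy collapse to $\omega_1$ depends only on the reals added and the ground's Woodin structure, which the copying preserves. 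Unwinding the definition of $C(\eta,\cdot)$ and using that $\M_\infty(\hp)=\M_\infty(\hq)$ is computed entirely below $\eta_\infty$ (hence absolute among $\V[g]$, $\M^\hq[h]$ and $\V_\hq(\bR_g)$), that $\bR_g=\bR_h$, and that the $\powerset_{\omega_1}$-layers depend only on the $\omega$-sequences from ordinals $<\eta_\infty$, which all three models share, yields $C(\eta,g)=C(\eta,h)^{\V_\hq(\bR_g)}$; moreover $h$ itself is recoverable inside $\V_\hq(\bR_g)$ from $\bR_g$ and the window sequence, so the right-hand side is meaningful. This is \cite[Proposition 2.2]{CCM}.

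The main obstacle is the second paragraph: the generic absoluteness of $\M_\infty$. It rests on two nontrivial ingredients, both supplied by the hod-mouse framework of \cite{SteelCom} and \cite{CCM}. First, $\M^\hq[h]$ must be \emph{self-iterable} for trees based on $\M^\hq|\eta$, i.e.\ able to define internally enough of the relevant iteration strategy to build the direct limit; this is the defining feature of a hod premouse and the reason $\V$ must carry $S^\V$ on its sequence. Second, the genericity iteration must be pushed far enough that $\bR^{\M^\hq[h]}=\bR_g$, so that no normal tree coded by a real of $\bR_g$ is invisible to $\M^\hq[h]$; this is exactly what item (1) of Definition~\ref{gen iterate} buys, and it is why the window-based construction is needed. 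Everything downstream, namely the equality of direct limits and the transfer to $\V_\hq$, is bookkeeping with commuting iteration maps once these two points are in place.
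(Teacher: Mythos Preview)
The paper does not give its own proof of this theorem: it is introduced with ``Theorem \ref{invariance} summarizes what we need from \cite{CCM}'' and is explicitly a restatement of \cite[Theorem 1.3]{CCM} and \cite[Proposition 2.2]{CCM}, with no argument supplied. Your proposal is therefore not competing with a proof in the paper but rather sketching how the cited results assemble into the stated conclusions; as such a sketch it is reasonable and identifies the correct ingredients (generic interpretability of the strategy in $\M^\hq[h]$, full normalization for directedness and cofinality of the two systems, and the derived-model invariance for the Chang-model clause), which is consistent with how the paper uses the theorem downstream.
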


$C(\eta, h)^{\V_\hq(\bR_g)}$ is computed in $\V_\hq(\bR_g)$ relative to $\hq$.
More precisely, if $h$ is a maximal $\hq$-generic, then $C(\eta, h)^{\V_\hq(\bR_g)}$ is just $L(\M_\infty(\hq), \bigcup_{\xi<\eta_\infty}\powerset_{\omega_1}(\M_\infty(\hq)|\xi), \Gamma^\infty_h, \bR_h)$ as computed in $\V_\hq[h]$.

\section{Bounded iterations and small cardinals}

\subsection{Splitting iteration trees}\label{splitting trees sec} %We continue to develop notation that we will use throughout the rest of this paper.

\begin{definition}\label{mu bounded iterations}\normalfont
	Suppose $\Q$ is an lbr premouse or some type of hybrid premouse and $\mu\in \Q$. 
    Then an iteration tree $\T$ is \textbf{$\mu$-bounded} if, assuming that $\T$ has a successor length and $\pi^\T(\mu)$ is defined,\footnote{The embeddings in an iteration tree are partial embeddings, and so by ``$\pi^\T(\nu)$ is defined" we mean that for every $\b_0<\b_1$ on the main branch of $\T$, $\pi^\T_{0, \b_0}(\nu)$ is in the domain of $\pi^{\T}_{\b_0, \b_1}$.} $\gen(\T)\subseteq \pi^\T(\mu)$.
 If $\hq$ is a hod pair, then $\hr$ is a $\mu$-\textbf{bounded iterate} of $\hq$ if $\T_{\hq, \hr}$ is $\mu$-bounded.
\end{definition}

\begin{figure}
\begin{tikzpicture}[scale=1.5]

    \draw (0,0) -- (0,3);
    \node[left,below] at (0,0) {$\mathfrak{q}$};
    \draw (2,0) -- (2,3);
    \node[left,below] at (2,0) {$\mathfrak{w}$};
    \draw (4,0) -- (4,3);
    \node[left,below] at (4,0) {$\mathfrak{r}$};

    \draw (0,1) -- (-0.2,1);
    \node[left] at (-0.2,1) {$\nu$};
    \draw (0,2) -- (-0.2,2);
    \node[left] at (-0.2,2) {$\mu$};

    \draw (2,1.7) -- (1.8,1.7);
    \node[right] at (2,1.7) {$\pi_{\mathfrak{q},\mathfrak{r}}(\nu)$};
    \draw (4,1.7) -- (3.8,1.7);
    \node[right] at (4,1.7) {$\pi_{\mathfrak{q},\mathfrak{r}}(\nu)$};
    
    \draw (4,2.7) -- (3.8,2.7);
    \node[right] at (4,2.7) {$\pi_{\mathfrak{q},\mathfrak{r}}(\mu)$};
    
    \draw[dashed,->] (0,1) -- (1.8,1.7);
    
    \draw[dashed,->] (0,2) -- (3.8,2.7);

    \node[below=6mm] at (1,0) {$\mathcal{X}$};
    
    \node[below=6mm] at (3,0) {$\mathcal{Y}$};
    \end{tikzpicture}
    \caption{Lemma \ref{splitting lemma}. $\mathcal{Y}$ is strictly above $\pi_{\mathfrak{q},\mathfrak{w}}(\nu)$.}
\end{figure}

Lemma \ref{splitting lemma} is the simple key idea behind many of the proofs to come.

\begin{lemma}\label{splitting lemma} %\normalfont
	Suppose that $\hq=(\Q, \Lambda)$ is a hod pair and that $\mu$ is a cardinal of $\Q$ such that, letting 
	\begin{center}
		$\nu=\sup(\{\nu'<\mu: \Q\models ``\nu'$ is a Woodin cardinal$"\})$,
	\end{center}
	$\nu<\mu$. Suppose that $\hr=(\R, \Psi)$ is an iterate of $\hq$ such that $\pi_{
 \hq, \hr}(\nu)$ is defined and $\T_{\hq, \hr}$ is $\mu$-bounded. Then $\T_{\hq, \hr}$ can be split into left and right components $\X$ and $\Y$ such that 
	\begin{enumerate}
		\item the main branch of $\X$ does not drop and $\gen(\X)\subseteq \pi_{\Q, \R}(\nu)$,
		\item if $\hw$ is the last model of $\X$, then $\pi_{\hq, \hw}(\nu)= \pi_{\hq, \hr}(\nu)$, 
		\item $\Y$ is a normal iteration tree on $\hw$ that is strictly above $\pi_{\hq, \hw}(\nu)$, and
		\item $\T_{\hq, \hr}=\X^\frown \Y$.
	\end{enumerate}
\end{lemma}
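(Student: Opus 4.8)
The idea is to run the least-extender-disagreement coiteration of $\hq$ against $\M_\infty$-style targets, but more directly: analyze $\T_{\hq,\hr}$ itself and find the stage at which all subsequent extenders have critical point above $\pi_{\hq,\hr}(\nu)$. Let $\T=\T_{\hq,\hr}$ and $\theta=\lh(\T)-1$, so $\R=\M_\theta^\T$. For each $\a<\lh(\T)$ let $\nu_\a=\pi_{0,\a}^\T(\nu)$ whenever this is defined; since the main branch of $\T$ does not drop (as $\hr$ is a complete iterate, $\pi_{\hq,\hr}$ is total, so in particular $\pi^\T_{0,\a}(\nu)$ is defined for $\a$ on the main branch — and one should first check it is defined for \emph{all} $\a$, which follows because $\nu$ is below the eventual image of $\mu$ and the tree is $\mu$-bounded, so no extender used has critical point $\leq$ the relevant image of $\nu$ unless it is the ``splitting'' extender). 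The key dichotomy for each extender $E_\a^\T$: either $\cp(E_\a^\T)<\nu_{\T(\a+1)}$, in which case $E_\a^\T$ contributes to the ``left'' part $\X$ and its length (hence its generators) is $<\pi_{T(\a+1),\infty}$-image considerations force $\lh(E_\a^\T)\leq$ the image of $\nu$ at that stage — more precisely $E_\a^\T\in\vec E^{\M_\a^\T|\nu_\a}$ so $\lh(E_\a^\T)<\nu_\a$; or $\cp(E_\a^\T)\geq\nu_{\T(\a+1)}$, in which case $E_\a^\T$ goes into the ``right'' part $\Y$.

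First I would argue that the set of $\a$ with $\cp(E_\a^\T)<\nu_{\T(\a+1)}$ forms an initial segment of $\lh(\T)$ closed in the tree order. The point is normality plus $\mu$-boundedness: once an extender with critical point $\geq$ the current image of $\nu$ is used, every later extender has length $\geq$ that critical point, hence $\geq$ the current image of $\nu$; and because $\T$ is $\mu$-bounded, no later extender can ``reach back'' below $\nu_\beta$ for the relevant node $\beta$ — the generators all sit below $\pi^\T(\mu)$ but the critical points have been pushed up past $\nu_\beta$. So let $\xi$ be the supremum of $\{\a+1: \cp(E_\a^\T)<\nu_{\T(\a+1)}\}$; this is the splitting point. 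Set $\X=\T\rest(\xi+1)$ and $\hw=\hm_\xi^\T$ (reindexed), and $\Y=\T_{\geq\xi}$. Then (1) holds because every extender in $\X$ has length below the current image of $\nu$, so $\gen(\X)\subseteq\nu_\xi=\pi_{\hq,\hw}(\nu)$, and the main branch of $\X$ doesn't drop since it's an initial segment of the non-dropping main branch of $\T$. Claim (2) is the statement that no extender of $\X$ is applied above $\nu_\xi$ in a way that would move it — but since all of $\X$'s extenders have length $<\nu_\xi$ and critical point $<$ the relevant images of $\nu$, the image of $\nu$ at stage $\xi$ equals its image at stage $\theta$, i.e. $\pi_{\hq,\hw}(\nu)=\pi_{\hq,\hr}(\nu)$; this also uses strategy coherence (\rprop{strategy coherence}) to see $\Y$ really is a legitimate tree according to $\Sigma_{\M^\hw}$ on the correct model. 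Claim (3) is then immediate: every extender in $\Y$ has critical point $\geq\nu_\xi=\pi_{\hq,\hw}(\nu)$ by choice of $\xi$, so $\Y$ is strictly above $\pi_{\hq,\hw}(\nu)$, and normality of $\Y$ is inherited from $\T$ since $\Y$'s models are exactly the tail models of $\T$ and its tree order is the restriction. Claim (4) $\T_{\hq,\hr}=\X^\frown\Y$ holds by construction — the only thing to verify is that $\X$ followed by $\Y$ is genuinely a \emph{normal} continuation, i.e. that the first extender of $\Y$ has length $\geq\gen(\X)$, which again follows from $\lh(E_\xi^\T)\geq\cp(E_\xi^\T)\geq\nu_{\T(\xi+1)}\geq\nu_\xi\supseteq\gen(\X)$.

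The main obstacle, I expect, is pinning down the claim that $\{\a+1:\cp(E_\a^\T)<\nu_{\T(\a+1)}\}$ is genuinely an initial segment — i.e. that the tree does not "come back down" below the image of $\nu$ after going up. This is exactly where $\mu$-boundedness is essential: one has to rule out an extender $E_\a^\T$ with $\cp(E_\a^\T)<\nu_{\T(\a+1)}$ occurring \emph{after} some earlier $E_\b^\T$ with $\cp(E_\b^\T)\geq\nu_{\T(\b+1)}$. If such $E_\a^\T$ occurred, then by normality $\lh(E_\a^\T)>\lh(E_\b^\T)\geq\cp(E_\b^\T)\geq\nu_{\T(\b+1)}$, but also $\cp(E_\a^\T)<\nu_{\T(\a+1)}$ forces $\T(\a+1)\leq\T(\b+1)$ roughly, and tracing through the tree order one derives that $E_\a^\T$ has a generator $\geq$ the image of $\mu$ at the relevant node, contradicting $\gen(\T)\subseteq\pi^\T(\mu)$. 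Making this precise requires a careful bookkeeping of which node each extender is applied to and how the images of $\nu$ and $\mu$ propagate, using that $\nu=\sup$ of the Woodins below $\mu$ so that $[\nu,\mu)$ contains no Woodin of $\Q$ — hence no extender on the $\Q$-sequence with critical point in $[\nu,\mu)$ has length reaching $\mu$, and this is preserved under images. I would also need Steel's convention on hod premice and the indexing scheme to handle the fine-structural drops cleanly, and I'd invoke full normalization (\rthm{min copy thm}) only to the extent of knowing $Ult(\ext^\T_\a,F)\insegeq Ult(\M_\a^\T,F)$-type facts if I want to identify $\hw$ with an ultrapower; but for the bare statement of \rlem{splitting lemma} as written, the split can be read off directly from the tree without invoking the minimal-copy machinery.
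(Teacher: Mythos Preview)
Your overall strategy---find the stage where the tree transitions from ``below $\nu$'' to ``above $\nu$'' behavior and split there---is the right one, and in fact is what the paper does. But your chosen splitting point and your argument for the crucial ``initial segment'' claim are both more complicated than necessary, and the latter has a gap.

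You define the split point as $\xi=\sup\{\a+1:\cp(E_\a^\T)<\nu_{\T(\a+1)}\}$ and then try to argue this set is an initial segment by a generator-tracing argument culminating in ``$E_\a^\T$ has a generator $\geq$ the image of $\mu$ at the relevant node.'' This last assertion is not justified by anything you wrote, and I do not see why it is true; $\mu$-boundedness only says generators lie \emph{below} $\pi^\T(\mu)$, it does not force any generator to approach that bound. The ``roughly'' and ``tracing through the tree order'' are hiding the real work, and the argument as written does not go through.

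The paper avoids this entirely by choosing the split point differently: let $\gg$ be the least $\xi$ with $\M_\xi^\T|\pi_{\hq,\hr}(\nu)=\R|\pi_{\hq,\hr}(\nu)$, and set $\X=\T_{\leq\gg}$, $\Y=\T_{\geq\gg}$. Then $\gen(\X)\subseteq\pi_{\hq,\hr}(\nu)$ by minimality of $\gg$, and $\gg$ lies on the main branch by exactly the argument of \rlem{xi is on the main branch} (otherwise $\pi^\T(\nu)$ would overshoot). For clause (3) the paper uses the ``no Woodins in $(\nu,\mu)$'' fact \emph{directly}, not via generator bookkeeping: if some $E$ used after stage $\gg$ had $\cp(E)\leq\pi_{\hq,\hr}(\nu)$, then since $\pi_{\hq,\hr}(\nu)$ is (a limit of) Woodin(s) and $\gen(\T)\subseteq\pi_{\hq,\hr}(\mu)$, applying $E$ would produce a Woodin of $\R$ in the interval $(\pi_{\hq,\hr}(\nu),\pi_{\hq,\hr}(\mu))$, contradicting elementarity. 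That is the whole proof.

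You do mention this ``no Woodins in $[\nu,\mu)$'' observation at the very end of your plan, but only as a side remark about what lives on the $\Q$-sequence; you never actually deploy it to replace the generator-tracing argument. It is the key idea, and once you use it the initial-segment claim becomes a one-liner and there is no need to invoke strategy coherence or minimal-copy machinery at all.
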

\begin{proof} Suppose $\gg$ is the least $\xi$ such that $\M_\xi^{\T_{\hq, \hr}}|\pi_{\hq, \hr}(\nu)=\R|\pi_{\hq, \hr}(\nu)$.
Set $\X=(\T_{\hq, \hr})_{\leq \gg}$.
It follows that $\pi_{\hq, \hr}(\nu)=\pi_{0, \gg}^{\T_{\hq, \hr}}(\nu)$, and therefore $\gg$ is on the main branch of $\T_{\hq, \hr}$. Moreover, $(\T_{\hq, \hr})_{\geq \gg}$ is strictly above $\pi_{\hq, \hr}(\nu)$.\footnote{This follows from the facts that (1) in $\Q$, $\nu$ is either a Woodin cardinal or a limit of Woodin cardinals and (2) $\Q$ has no Woodin cardinals in the interval $(\nu, \mu)$.
So if $E$ is an extender used in $(\T_{\hq, \hr})_{\geq \gg}$, then $\cp(E)>\nu$, as otherwise $\hr$ would have a Woodin cardinal in the interval $(\pi_{\hq, \hr}(\nu), \pi_{\hq, \hr}(\mu))$.
Here we use the fact that $\gen(\T_{\hq, \hr})\subseteq \pi_{\hq, \hr}(\mu)$.} Therefore, $\X$ and $\Y=_{def}(\T_{\hq, \hr})_{\geq \gg}$ are as desired.
\end{proof}
\begin{comment}
\begin{definition}\label{mu spliting of trees}\normalfont
	Suppose $\hq=(\Q, \Lambda)$ is a hod pair and $\mu$ is a cardinal of $\Q$ such that, letting 
	\begin{center}
		$\nu=\sup(\{\nu'<\mu: \Q\models ``\nu'$ is a Woodin cardinal$"\})$,
	\end{center}
	$\nu<\mu$. Suppose $\T$ is a normal iteration tree on $\Q$ according to $\Lambda$ that has a last model $\R$, $\pi^\T(\nu)$ is defined and $\T$ is $\mu$-bounded. Let $\hr=(\R, \Lambda_\R)$. Then we say $(\T^{l, \mu}, \T^{r, \mu})$ is the $\mu$-splitting of $\T$ if $\T^{l, \mu}=\T_{\hq, \hr}^{l, \mu}$ and $\T^{r, \mu}=\T_{\hq, \hr}^{r, \mu}$. When $\mu$ is clear from context we will omit it from superscripts.
\end{definition}
\end{comment}

%We now introduce notational devices that relate the concepts introduced above and direct limit constructions.
\begin{notation}\label{nu infty}\normalfont Suppose $\hq=(\Q, \Lambda)$ is a hod pair and $\nu\in \Q$.
Let 
\begin{enumerate}
\item $\T^\nu_{\hq}=(\T_{\hq, \infty})_{\leq \xi}$, where $\xi$ is the least $\xi'$ such that $\M_{\xi'}^{\T_{\hq, \infty}}||\pi_{\hq, \infty}(\nu)=\M_\infty(\hq)||\pi_{\hq, \infty}(\nu)$, and
\item $\hq^\nu$ be the last model of $\T^\nu_{\hq}$ and $\Q^\nu=\M^{\hq^\nu}$.
\end{enumerate}
\end{notation}
In clause 1 of \rnot{nu infty}, $\T^\nu_{\hq}$ is the longest initial segment $\U$ of $\T_{\hq, \infty}$ such that $\gen(\U)\subseteq \nu$. In most applications of \rlem{xi is on the main branch}, we will have that $\nu\not \in \dom(\vec{E}^\Q)$, and so in \rnot{nu infty} we could change the equality $\M_{\xi'}^{\T_{\hq, \infty}}||\pi_{\hq, \infty}(\nu)=\M_\infty(\hq)||\pi_{\hq, \infty}(\nu)$ to $\M_{\xi'}^{\T_{\hq, \infty}}|\pi_{\hq, \infty}(\nu)=\M_\infty(\hq)|\pi_{\hq, \infty}(\nu)$\footnote{Also, in the context of \rlem{xi is on the main branch}, the equality $\M_{\xi'}^{\T_{\hq, \infty}}|\pi_{\hq, \infty}(\nu)=\M_\infty(\hq)|\pi_{\hq, \infty}(\nu)$ suffices. This is because if the lemma fails then it can be shown that $\cf(\pi_{\hq, \infty}(\nu))=\omega$ which is clearly false.}.
%We record the following observation.

\begin{lemma}\label{xi is on the main branch}%\normalfont 
Suppose $(\hq,  \nu, \xi)$ are as in \rnot{nu infty}. Then $\xi$ is on the main branch of $\T_{\hq, \infty}$.
\end{lemma}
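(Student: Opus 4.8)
The goal is to show that the cutpoint $\xi$ witnessing the equality $\M_{\xi}^{\T_{\hq,\infty}}\|\pi_{\hq,\infty}(\nu)=\M_\infty(\hq)\|\pi_{\hq,\infty}(\nu)$ lies on the main branch of $\T_{\hq,\infty}$. Suppose for contradiction that it does not. Since $\T_{\hq,\infty}$ is a normal iteration tree of limit length (or of successor length with a non-dropping main branch, in which case the argument simplifies), there is a unique branch $b$ of $\T_{\hq,\infty}$ that is cofinal and non-dropping, giving the direct limit $\M_\infty(\hq)$; call it the main branch. The plan is to use \rlem{splitting lemma} together with the minimality of $\xi$ to derive a cofinality contradiction, exactly along the lines indicated in the footnote after \rnot{nu infty}.

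First I would observe that since $\M_{\xi}^{\T_{\hq,\infty}}\|\pi_{\hq,\infty}(\nu)$ agrees with $\M_\infty(\hq)$ below $\pi_{\hq,\infty}(\nu)$, and $\nu$ (as in the intended applications) is either a Woodin cardinal or a limit of Woodin cardinals of $\Q$ with no Woodin cardinals of $\Q$ in the relevant interval above it, the tail $(\T_{\hq,\infty})_{\geq\xi}$ must be strictly above $\pi_{\hq,\infty}(\nu)$ — every extender used past stage $\xi$ has critical point $>\pi_{0,\xi}^{\T_{\hq,\infty}}(\nu)$, since otherwise $\M_\infty(\hq)$ would acquire a Woodin cardinal in an interval where it has none. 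This is the key structural point that \rlem{splitting lemma} packages. Next, minimality of $\xi$ forces $\pi_{\hq,\infty}(\nu)=\pi_{0,\xi}^{\T_{\hq,\infty}}(\nu)$, i.e. the image of $\nu$ is already computed by stage $\xi$; if $\xi$ were off the main branch, the main branch embedding would have to move $\nu$ somewhere, and that image would appear at some stage strictly before $\xi$ along the main branch — but then agreement below $\pi_{\hq,\infty}(\nu)$ would already hold at that earlier stage, contradicting minimality. Making this precise is the crux: one shows that if $\xi\notin b$, then letting $\zeta=\min(b\setminus(\xi+1))$ or the appropriate branch point below $\xi$, one gets an earlier witness to the defining equality.

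The hard part will be handling the case where $\pi_{\hq,\infty}(\nu)$ has cofinality $\omega$ in $\M_\infty(\hq)$, which is the scenario the footnote flags as the one forced by a failure of the lemma. Concretely, if $\xi$ is a limit ordinal not on the main branch, then $\pi_{\hq,\infty}(\nu)=\sup_{\beta\in b}\pi_{0,\beta}^{\T_{\hq,\infty}}(\nu)$ need not be attained, and the branch $b$ could thread $\pi_{\hq,\infty}(\nu)$ cofinally with order type $\omega$ (the only cofinality a tree branch can contribute here, given the setup). But $\nu$ in $\Q$ is inaccessible (or at least regular and a limit of Woodins), so $\pi_{\hq,\infty}(\nu)$ is regular and uncountable in $\M_\infty(\hq)$ — this is where one invokes that the iteration map is continuous at regular cardinals above critical points only in controlled ways, or more directly that $\M_\infty(\hq)$ is wellfounded and thinks $\pi_{\hq,\infty}(\nu)$ is inaccessible. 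The contradiction $\cf(\pi_{\hq,\infty}(\nu))=\omega$ in $\M_\infty(\hq)$ versus $\pi_{\hq,\infty}(\nu)$ inaccessible there closes the argument. I expect the bookkeeping of \emph{which} branch point one extracts, and verifying that the agreement below $\pi_{\hq,\infty}(\nu)$ genuinely propagates down to it, to be the only place requiring care; everything else is a direct application of \rlem{splitting lemma} and normality.
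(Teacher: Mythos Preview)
Your approach has a genuine gap. You try to invoke \rlem{splitting lemma} and a cofinality argument, but neither applies in the generality of the lemma as stated. \rnot{nu infty} only assumes $\nu\in\Q$; it does not assume that $\nu$ is a sup of Woodin cardinals below some $\mu$, nor that the iteration $\T_{\hq,\infty}$ is $\mu$-bounded for any such $\mu$, so the hypotheses of \rlem{splitting lemma} are simply absent. Likewise, the cofinality line (``if $\xi$ is off the main branch then $\cf(\pi_{\hq,\infty}(\nu))=\omega$, but $\pi_{\hq,\infty}(\nu)$ is inaccessible'') requires $\nu$ to be regular or inaccessible in $\Q$, which again is not part of the hypothesis. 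The footnote you are leaning on explicitly says this alternative works \emph{in the applications}, not in general.

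The paper's proof is both simpler and more general: it is a four-line tree-structural argument with no appeal to Woodin cardinals or cofinality. If $\xi$ is not on the main branch, then there is $\beta$ with $\beta+1$ on the main branch, $\beta'=\T_{\hq,\infty}(\beta+1)<\xi$, $\cp(E_\beta^{\T_{\hq,\infty}})<\pi_{\hq,\infty}(\nu)$, and $\lh(E_\beta^{\T_{\hq,\infty}})\geq\pi_{\hq,\infty}(\nu)$. (Here $\beta'<\xi$ forces $\lh(E_{\beta'})<\pi_{\hq,\infty}(\nu)$ by minimality of $\xi$, hence $\cp(E_\beta)<\lambda_{\beta'}<\pi_{\hq,\infty}(\nu)$; and $\beta\geq\xi$ forces $\lh(E_\beta)\geq\pi_{\hq,\infty}(\nu)$.) Then the main branch embedding satisfies $\pi_{\beta',\infty}(\pi_{0,\beta'}(\nu))>\pi_{\hq,\infty}(\nu)$, since the first extender $E_\beta$ already pushes $\pi_{0,\beta'}(\nu)$ up to at least $\lh(E_\beta)$. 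But $\pi_{\beta',\infty}(\pi_{0,\beta'}(\nu))=\pi_{\hq,\infty}(\nu)$, contradiction. You should replace your plan with this direct argument; no splitting lemma, no cofinality, just normality and the definition of $\xi$.
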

\begin{proof} Suppose $\xi$ is not on the main branch of $\T_{\hq, \infty}$. It follows that for some $\b$,\\\\
	(1.1) $\cp(E_\b^{\T_{\hq, \infty}})<\pi_{\hq, \infty}(\nu)$, \\
	(1.2) $\T_{\hq, \infty}(\b+1)<\xi$, \\
	(1.3) $\b+1$ is on the main branch of $\T_{\hq, \infty}$, and \\
	(1.4) $\lh(E_\b^{\T_{\hq, \infty}})\geq \pi_{\hq, \infty}(\nu)$.\\\\
	Then if $\b'=\T_{\hq, \infty}(\b+1)$ and $\b''+1=\lh(\T_{\hq, \infty})$, then 
	$\pi_{\b', \b''}^{\T_{\hq, \infty}}(\pi_{0, \b'}^{\T_{\hq, \infty}}(\nu))>\pi_{\hq, \infty}(\nu)$, which is a contradiction.
\end{proof}

\begin{remark}\label{remark about infty notation} \normalfont
	It will be convenient to also write $\hq^\nu$ for $\nu\in \M_\infty(\hq)$ such that $\hq$ catches $\nu$. In this case, we set $\hq^\nu=\hq^{\nu_\hq}$. 
\end{remark}

%Below we use the notation introduced in \rter{catches}.

\begin{figure}
\begin{tikzpicture}[every node/.style={midway}]
  \matrix[column sep={8em,between origins}, row sep={5em}] at (0,0) {
    \node(Qnu) {$\mathcal{Q}^{\nu}_{\infty}$}  ; & \node(Rnu) {$\mathcal{R}^{\nu_{\mathcal{R}}}_{\infty}$}; \\
    \node(Q) {$\mathcal{Q}$}; & \node (R) {$\mathcal{R}$};\\
  };
  \draw[->] (Q) -- (R) node[below]  {$\mathcal{T}_{\mathcal{Q},\mathcal{R}}$};
  \draw[->] (Q) -- (Qnu) node[left]  {$\mathcal{T}^{\nu}_{\mathcal{Q},\infty}$};
  \draw[->] (R) -- (Rnu) node[right] {$\mathcal{T}_{\mathcal{R},\infty}^{\nu_{\mathcal{R}}}$};
  \draw[->] (Q) -- (Rnu) node[above] {$\mathcal{X}$};
  \draw[->] (Qnu) -- (Rnu) node[above] {$\pi^{\mathcal{X}}$};
\end{tikzpicture}
\caption{Lemma \ref{bound iterations factor}.
$\mathcal{X}$ is the full normalization of $(\T_{\hq, \hr})^\frown \T^{\nu_\hr}_{\hr, \infty}$, and $\cp(\pi^\X_{\xi, \zeta})>\nu_\infty$.}
\end{figure}

\begin{lemma}\label{bound iterations factor}%\normalfont 
Suppose $\hq=(\Q, \Lambda)$ is a hod pair and $\nu$ is a $\Q$-cardinal. Suppose $\hr=(\R, \Psi)$ is a complete $\nu$-bounded iterate of $\hq$. Then $\hq^\nu=\hr^{\nu_\hr}$.
\end{lemma}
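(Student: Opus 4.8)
\textbf{Proof plan for Lemma \ref{bound iterations factor}.}

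The idea is that both $\hq^\nu$ and $\hr^{\nu_\hr}$ are obtained from $\hq$ by iterating ``below'' $\nu$ in the minimal way needed to stabilize $\M_\infty(\hq)$ below the image of $\nu$, and the hypothesis that $\T_{\hq, \hr}$ is $\nu$-bounded forces these two partial iterations to agree. First I would invoke the directedness of the system $\mathcal{F}^g_\hq$ (via \rprop{strategy coherence} and the full normalization material surrounding \rthm{min copy thm}) to note that $\M_\infty(\hq)=\M_\infty(\hr)$, so that $\nu_\infty =_{\mathrm{def}} \pi_{\hq,\infty}(\nu) = \pi_{\hr,\infty}(\nu_\hr)$, i.e.\ the common cut point below which we are stabilizing is literally the same ordinal on both sides. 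This is exactly the situation depicted in the figure accompanying the lemma.

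The main step is to analyze $\T^{\nu_\hr}_{\hr,\infty}$ in terms of $\hq$. Consider the stack $\T_{\hq,\hr}{}^\frown \T^{\nu_\hr}_{\hr,\infty}$ and let $\X$ be its full normalization (as produced by \rthm{min copy thm} and \cite[Chapter 6.1]{SteelCom}); $\X$ is then a normal iteration of $\hq$ according to $\Lambda$ with last model $\hr^{\nu_\hr}$. Because $\T_{\hq,\hr}$ is $\nu$-bounded, $\gen(\T_{\hq,\hr})\subseteq \pi_{\hq,\hr}(\nu)$, and because $\T^{\nu_\hr}_{\hr,\infty}$ is by \rnot{nu infty} the longest initial segment of $\T_{\hr,\infty}$ with generators below $\pi_{\hr,\infty}(\nu_\hr)=\nu_\infty$, we get that $\gen(\X)\subseteq \nu_\infty$ as well, and that $\M^{\hr^{\nu_\hr}}$ agrees with $\M_\infty(\hq)$ below $\nu_\infty$. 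By \rlem{xi is on the main branch} the normalization of $\X$ into $\T_{\hq,\infty}$ hits $\M_\infty(\hq)|\nu_\infty$ on the main branch; more precisely, letting $\xi$ be least with $\M_\xi^{\T_{\hq,\infty}}||\nu_\infty = \M_\infty(\hq)||\nu_\infty$ — so $\M_\xi^{\T_{\hq,\infty}}=\Q^\nu=\M^{\hq^\nu}$ and $\T^\nu_\hq = (\T_{\hq,\infty})_{\le \xi}$ — I would argue that $\X$ restricted to its initial segment with last model agreeing with $\M_\infty(\hq)$ below $\nu_\infty$ is precisely $\T^\nu_\hq$, by minimality of both: each is the shortest normal iteration of $\hq$ producing a model agreeing with $\M_\infty(\hq)$ below $\nu_\infty$, and uniqueness of such an iteration follows from $\Lambda$ having strong hull condensation (full normalization), since two such iterations would have a common extension forcing them to coincide. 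Equivalently, one observes that $\M^{\hr^{\nu_\hr}}$ itself agrees with $\M_\infty(\hq)$ below $\nu_\infty$ and lies on the main branch of $\X$ up to that point, so $\hr^{\nu_\hr}$ satisfies the defining property of $\hq^\nu$ relative to the tree $\X$; coherence of $\Lambda$ (\rprop{strategy coherence}) then identifies $\Sigma^{\hr^{\nu_\hr}}$ with $\Sigma^{\hq^\nu}$ on the common part $\M_\infty(\hq)|\nu_\infty$, and since $\hq^\nu$ and $\hr^{\nu_\hr}$ are both iterates of $\hq$ with the same underlying model and the same strategy, $\hq^\nu = \hr^{\nu_\hr}$.

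The technical heart, and the step I expect to be the main obstacle, is making rigorous the claim that $\T^{\nu}_\hq$ is \emph{the} canonical shortest normal iteration of $\hq$ whose last model agrees with $\M_\infty(\hq)$ below $\nu_\infty$ — in other words, that both $\T^\nu_\hq$ and the relevant initial segment of the normalized $\X$ compute the same ``$\nu$-stabilization'' of $\hq$. This requires care with drops (ensuring $\xi$ is on the main branch, which is \rlem{xi is on the main branch}, but also that no drop occurs before $\xi$ in a way that would obstruct the argument), with the distinction between $|$ and $||$ agreement at $\nu_\infty$ (handled by the footnoted remark after \rnot{nu infty}, which rules out $\cf(\nu_\infty)=\omega$ in the relevant case), and with the possibility that $\nu \in \dom(\vec E^\Q)$ versus $\nu\notin\dom(\vec E^\Q)$. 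I would handle the potentially troublesome case $\nu\in\dom(\vec E^\Q)$ separately or reduce it to the other via \rrem{remark about infty notation}, and otherwise lean on the strong hull condensation of the strategies to get the needed uniqueness.
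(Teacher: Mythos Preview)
Your overall setup matches the paper's: form $\X$ as the full normalization of $\T_{\hq,\hr}{}^\frown \T^{\nu_\hr}_{\hr}$, observe $\nu_\infty=\pi_{\hq,\infty}(\nu)=\pi_{\hr,\infty}(\nu_\hr)$, identify $\T^\nu_\hq$ as the initial segment $\X_{\le\xi+1}$, and use \rlem{xi is on the main branch} to get $\xi$ on the main branch of $\X$ with $\cp(\pi^\X_{\xi,\zeta})>\nu_\infty$ (where $\zeta+1=\lh(\X)$). The divergence is in how you finish. You argue by a ``minimality/uniqueness'' principle: both $\hq^\nu$ and $\hr^{\nu_\hr}$ are models agreeing with $\M_\infty(\hq)$ below $\nu_\infty$, so they should coincide. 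But agreement below $\nu_\infty$ does \emph{not} determine the model above $\nu_\infty$; nothing in your argument rules out $\hr^{\nu_\hr}$ being a genuine further iterate of $\hq^\nu$ along the main branch of $\X$ with critical point above $\nu_\infty$. Your appeal to ``two such iterations have a common extension forcing them to coincide'' is exactly the point at issue, not a proof of it.

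The paper closes this gap by a direct surjectivity computation that you do not carry out, and this is where the hypothesis of $\nu$-boundedness is actually used. Because $\gen(\T_{\hq,\hr})\subseteq\nu_\hr$, every element of $\R$ has the form $\pi_{\hq,\hr}(h)(a)$ with $h\in\Q$ and $a\in[\nu_\hr]^{<\omega}$; because $\gen(\T^{\nu_\hr}_{\hr})\subseteq\nu_\infty$, every $y\in\M^{\hr^{\nu_\hr}}$ then has the form $\pi^{\nu_\hr}_{\hr}(\pi_{\hq,\hr}(h))(b)$ with $h\in\Q$ and $b\in[\nu_\infty]^{<\omega}$. Setting $x=\pi^\nu_{\hq}(h)(b)\in\Q^\nu$ gives $\pi^\X_{\xi,\zeta}(x)=y$. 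So $\pi^\X_{\xi,\zeta}$ is onto, and since its critical point exceeds $\nu_\infty$ it is the identity; hence $\hq^\nu=\hr^{\nu_\hr}$. Your worries about drops, the $|$ versus $||$ distinction, and $\nu\in\dom(\vec E^\Q)$ are not the obstacle here; the missing ingredient is this generator calculation.
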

\begin{proof} The proof follows from the fact that $\gen(\T_{\hq, \hr})\subseteq \nu_\hr$, which implies that \[\R=\{\pi_{\hq, \hr}(f)(a): a\in [\nu_\hr]^{<\omega}\wedge f\in \Q\}.\] Notice that $\pi_{\hq, \infty}(\nu)=\pi_{\hr, \infty}(\nu_\hr)$ and, setting $\nu_\infty=\pi_{\hq, \infty}(\nu)$, $\gen(\T^\nu_{\hq})\subseteq \nu_\infty$ and $\gen(\T^{\nu_\hr}_{\hr})\subseteq \nu_\infty$.

	Let $\X$ be the full normalization of $(\T_{\hq, \hr})^\frown \T^{\nu_\hr}_{\hr}$.
    Then if $\xi$ is such that $\T^\nu_{\hq}=(\T_{\hq})_{\leq \xi+1}$, then $\T^\nu_{\hq}=\X_{\leq \xi+1}$. 
	
	We claim that $\xi$ is on the main branch of $\X$.
    For since $\pi^\X(\nu)=\nu_\infty$, it follows from the argument of \rlem{xi is on the main branch} that $\xi$ is on the main branch of $\X$.
    We then have that if $\zeta+1=\lh(\X)$, then $\cp(\pi^\X_{\xi, \zeta})>\nu_\infty$. 
	
	We now claim that $\pi^\X_{\xi, \zeta}$ is onto.
    Let $y\in \hr^{\nu_\hr}_\infty$. Because $\gen(\T^{\nu_\hr}_{\hr})\subseteq \nu_\infty$, we have that $y=\pi^{\nu_\hr}_{\hr}(g)(a)$, where $a\in [\nu_\infty]^{<\omega}$ and $g\in \R$.
    Because $\gen(\T_{\hq, \hr})\subseteq \nu_\hr$, we have some $h\in \Q$ and $b\in [\nu_\infty]^{<\omega}$ such that $y=\pi^{\nu_\hr}_{\hr}(\pi_{\hq, \hr}(h))(b)$.
    Hence if $x=\pi^\nu_{\hq}(h)(b)$, then $\pi^\X_{\xi, \zeta}(x)=y$.
\end{proof}

\rcor{bound iterations factor 2} follows from \rlem{bound iterations factor} (which implies that $\hq^\nu=\hr^{\nu_\hr}$) and full normalization as encapsulated in \rlem{min copy thm}.

\begin{corollary}\label{bound iterations factor 2}%\normalfont 
Suppose that $\hq=(\Q, \Lambda)$ is a hod pair and that $\mu$ is a cardinal of $\Q$ such that, letting 
	\begin{center}
		$\nu=\sup(\{\nu'<\mu: \Q\models ``\nu'$ is a Woodin cardinal$"\})$,
	\end{center}
$\nu<\mu$. Suppose $\hr=(\R, \Psi)$ is a complete $\mu$-bounded iterate of $\hq$. Then $\hr^{\nu_\hr}$ is a normal iterate of $\hq^\nu$ via a normal tree $\U$ which is strictly above $\pi_{\hq}(\nu)$.
	
	More precisely, setting $\T_{\hq, \hr}=\X^\frown \Y$, where $(\X, \Y)$ is as in \rlem{splitting lemma}, and letting $F$ be the long extender derived from $\pi_{\hq, \infty}\rest \Q|(\nu^+)^\Q$, $\U$ is the minimal $(\Q, F)$-copy of $\U$.\footnote{More precisely, $F$ consists of pairs $(a, A)$ such that $a\in [\pi_{\hq, \infty}(\nu)]^{<\omega}$, $A\in \Q$ and $a\in \pi_{\hq, \infty}(A)$.}
\end{corollary}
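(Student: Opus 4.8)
\textbf{Proof proposal for Corollary \ref{bound iterations factor 2}.}

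The plan is to combine Lemma \ref{splitting lemma}, Lemma \ref{bound iterations factor}, and the full normalization theorem (Theorem \ref{min copy thm}) in the obvious way. First I would invoke Lemma \ref{splitting lemma}: since $\hr$ is a $\mu$-bounded iterate of $\hq$ and $\pi_{\hq,\hr}(\nu)$ is defined (this is part of the hypothesis implicit in $\mu$-boundedness, and one should note that $\nu<\mu$ and $\Q$ has no Woodin cardinals strictly between $\nu$ and $\mu$), the tree $\T_{\hq,\hr}$ factors as $\X^\frown\Y$, where the main branch of $\X$ does not drop, $\gen(\X)\subseteq\pi_{\hq,\hr}(\nu)$, the last model $\hw$ of $\X$ satisfies $\pi_{\hq,\hw}(\nu)=\pi_{\hq,\hr}(\nu)$, and $\Y$ is a normal tree on $\hw$ strictly above $\pi_{\hq,\hw}(\nu)$. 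Note in particular that $\hw=\hq^\nu$ by Notation \ref{nu infty}, because $\X$ is precisely the longest initial segment of the comparison tree whose generators are bounded by $\pi_{\hq,\hr}(\nu)$—or rather, one should check that $\X$ agrees with $\T^\nu_\hq$ up through the stage where $\M_\xi|\pi_{\hq,\infty}(\nu)$ stabilizes; this is where Lemma \ref{bound iterations factor} enters.

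Next I would apply Lemma \ref{bound iterations factor} to conclude $\hq^\nu=\hr^{\nu_\hr}$. This identifies the target model $\hr^{\nu_\hr}$ with $\hq^\nu$, so it remains only to see that the iteration tree producing $\hr^{\nu_\hr}$ from $\hq^\nu$—equivalently, the tree taking $\hq^\nu$ up into the common direct limit structure—is the minimal $(\Q,F)$-copy $\U$ of $\Y$, where $F$ is the long extender derived from $\pi_{\hq,\infty}\restriction\Q|(\nu^+)^\Q$. For this I would run Theorem \ref{min copy thm} with $\eta$ there taken to be (essentially) $\nu^+$ or a convenient regular cardinal just above $\nu$, $\T$ there taken to be $\Y$ regarded as a tree on $\hw=\hq^\nu$ above $\pi_{\hq,\hw}(\nu)$, and $\R$ there taken to be $\hr^{\nu_\hr}$. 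The theorem then tells us that $\T_{\hq^\nu,\hr^{\nu_\hr}}$ equals an initial segment followed by the minimal $F$-copy $\U$, and that $\M_\iota^\U=Ult(\M_\iota^\Y,F)$ with $E_\iota^\U$ the last active extender of $Ult(\ext_\iota^\Y,F)$; checking that the initial segment is trivial here (because $\gen(\Y)$ sits above $\pi_{\hq,\hw}(\nu)$ and $F$ has critical point $\nu$) finishes the identification.

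The one genuine subtlety—and the step I expect to be the main obstacle—is matching up the extender $F$ correctly: one must verify that the long extender derived from $\pi_{\hq,\infty}\restriction\Q|(\nu^+)^\Q$ is the \emph{same} as the long extender derived from $\pi_{\hq^\nu,\infty}$ (equivalently $\pi_{\hr^{\nu_\hr},\infty}$) restricted appropriately, so that the minimal-copy construction of Theorem \ref{min copy thm}, which is naturally phrased in terms of the iteration map into the direct limit, actually coincides with the minimal $(\Q,F)$-copy as stated in the corollary. This uses strategy coherence (Proposition \ref{strategy coherence}): since $\hw=\hq^\nu$ is a node of the comparison with $\hw|\pi_{\hq,\hw}(\nu)=\hq^\nu|\pi_{\hq,\hw}(\nu)$, the tail strategies agree on trees above $\pi_{\hq,\hw}(\nu)$, and hence the full normalization of $\Y$ against $\T^{\nu_\hr}_{\hr}$ is governed by the same strategy that governs $\X^\frown\Y^\frown\T^{\nu_\hr}_{\hr}$; combined with the commutativity $\pi_{\hq,\infty}=\pi_{\hr,\infty}\circ\pi_{\hq,\hr}$ on $\Q|(\nu^+)^\Q$ from Lemma \ref{bound iterations factor}, this yields the claimed equality of long extenders. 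The rest is bookkeeping with the definition of minimal copies (Definition \ref{min copies}).
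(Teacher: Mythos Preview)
Your overall plan—split via Lemma \ref{splitting lemma}, invoke Lemma \ref{bound iterations factor}, then feed into Theorem \ref{min copy thm}—is exactly the paper's route. But there is a genuine confusion in your identification of the models, and it makes the argument internally inconsistent.

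The error is the claim $\hw=\hq^\nu$. The model $\hw$ is the last model of $\X$, which is the longest initial segment of $\T_{\hq,\hr}$ whose generators lie below $\pi_{\hq,\hr}(\nu)$. By contrast, Notation \ref{nu infty} defines $\hq^\nu$ as the last model of $\T^\nu_\hq$, the longest initial segment of $\T_{\hq,\infty}$ whose generators lie below $\pi_{\hq,\infty}(\nu)$. These are segments of \emph{different} trees with \emph{different} target ordinals ($\nu_\hr$ versus $\nu^\infty$), so $\hw$ and $\hq^\nu$ are distinct objects in general. Relatedly, Lemma \ref{bound iterations factor} does not apply to the pair $(\hq,\hr)$: its hypothesis is that $\hr$ is $\nu$-bounded, but here $\hr$ is only $\mu$-bounded, and $\Y$ may well use extenders with lengths strictly between $\nu_\hr$ and $\mu_\hr$. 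What Lemma \ref{bound iterations factor} \emph{does} give you is $\hq^\nu=\hw^{\nu_\hw}$, since $\hw$ is a $\nu$-bounded iterate of $\hq$.

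This matters because your next sentence becomes contradictory: you write that $\hq^\nu=\hr^{\nu_\hr}$, and then that it ``remains only to see that the iteration tree producing $\hr^{\nu_\hr}$ from $\hq^\nu$ \ldots\ is the minimal $(\Q,F)$-copy $\U$ of $\Y$.'' If the two models were equal, that tree would be trivial, while $\Y$ need not be. The correct picture is that $\hq^\nu=\hw^{\nu_\hw}$ sits properly below $\hr^{\nu_\hr}$ whenever $\Y$ is nontrivial, and Theorem \ref{min copy thm} (applied with $\hp$ there equal to $\hw$, $\T$ there equal to $\Y$, $\Q$ there equal to $\hr$, and $\R$ there equal to $\hr^{\nu_\hr}$) identifies the tree from $\hw^{\nu_\hw}$ to $\hr^{\nu_\hr}$ as the minimal $F$-copy of $\Y$. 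Your final paragraph about matching the long extender $F$ with the one arising from Theorem \ref{min copy thm} is a legitimate point, and once the roles of $\hw$, $\hq^\nu$, and $\hr^{\nu_\hr}$ are straightened out, that bookkeeping (using $\hr|\nu_\hr=\hw|\nu_\hw$ and $\pi_{\hr,\infty}\rest(\hr|\nu_\hr)=\pi_{\hw,\infty}\rest(\hw|\nu_\hw)=\pi_{\hq,\infty}\rest$ its image) goes through.
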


\subsection{Small cardinals}\label{small cardinals subsec} The splitting of iterations trees into left and right components as described in \rlem{splitting lemma} can be slightly generalized at a cost.
In \rlem{splitting lemma} we did not make any smallness assumption on $\Q$.
Here we will introduce a splitting assuming $\mu$ is \textit{small}.
Small cardinals will appear throughout our analysis of Nairian models derived from hod mice.

The key ideas used in this subsection are now standard in the inner model theoretic literature. 

\begin{terminology}\label{small cardinal}\normalfont Suppose $\hq=(\Q, \Lambda)$ is a hod pair and $\d$ is a cutpoint Woodin cardinal of $\Q$. 
\begin{enumerate}
\item We say $\d$ \textbf{ends an lsa block} of $\Q$ if the least ${<}\d$-strong cardinal of $\Q$ is a limit of Woodin cardinals of $\Q$.\footnote{This terminology is justified by \cite{LSA}.}

\item Given $\nu<\d$, we say $\nu$ is in the \textbf{$\d$-block} of $\Q$ if, letting $\k$ be the least ${<}\d$-strong cardinal of $\Q$, $\nu\in [\k, \d]$. 

\item Assuming $\d$ ends an lsa block of $\Q$, given $\nu$ in the $\d$-block of $\Q$, we say $\nu$ is \textbf{big} in $\Q$ (relative to $\d$\footnote{Usually $\d$ will be clear from the context.}) if there is a $\tau\leq \nu$ such that $\Q\models ``\tau$ is a measurable cardinal that is a limit of ${<}\d$-strong cardinals". 

\item For $\nu<\d$ as above, we say $\nu$ is \textbf{small} in $\Q$ if it is not big in $\Q$. 

\item Given a $\Q$-cardinal $\nu$, we say $\nu$ is \textbf{properly overlapped} in $\Q$ if there is a $\d'$ such that $\d'$ is a Woodin cardinal of $\Q$, $\d'$ ends an lsa block of $\Q$, $\nu$ is in the $\d'$-block of $\Q$, and for all $\k\leq \nu$, $\Q|\nu\models ``\k$ is a strong cardinal" if and only if $\Q\models ``\k$ is a ${<}\d$-strong cardinal."\footnote{Our convention is that the large cardinal notions are witnessed by the extenders on the sequence of the relevant fine structural models.}
\item Given a $\Q$-cardinal $\nu$, we say $\nu$ is a \textbf{proper cutpoint} in $\Q$ if $\nu$ is properly overlapped and is not a critical point of a total extender of $\Q$.
\end{enumerate}
\end{terminology}

Recall \rter{catches}.
Lemma \ref{main lemma on small ordinals} isolates the main property of small ordinals that we will use.

\begin{lemma}\label{main lemma on small ordinals}%\normalfont 
Suppose $\hq=(\Q, \Lambda)$ is a hod pair and $\d$ ends an lsa block of $\Q$. Suppose 
\begin{enumerate}[label=(\alph*)]
\item $\mu$ is in the $\d$-block of $\Q$ and is a properly overlapped small regular cardinal of $\Q$,
\item $\hr=(\R, \Psi)$ is a complete iterate of $\hq$, 
\item $\nu\leq \mu_\hr$ is a ${<}\d_\hr$-strong cardinal of $\R$ or a limit of such cardinals, 
\item $\zeta$ is the least such that $\M_\zeta^{\T_{\hq, \hr}}|\nu=\R|\nu$, and 
\item $\zeta'$ is the least such that $\M_{\zeta'}^{\T_{\hq, \hr}}|\mu_\hr=\R|\mu_\hr$. 
\end{enumerate} 
Set $\S=\M_\zeta^{\T_{\hq, \hr}}$, $\S'=\M_{\zeta'}^{\T_{\hq, \hr}}$, $\hs=(\S, \Lambda_\S)$ and $\hs'=(\S', \Lambda_{\S'})$. Then
\begin{enumerate}
\item $\mu_{\hr}=\mu_{\hs'}$,
\item $(\T_{\hq, \hr})_{\geq \zeta}$ is a normal iteration tree on $\S$ according to $\Lambda_\S$, 
\item $(\T_{\hq, \hr})_{\geq \zeta'}$ is a normal iteration tree on $\S'$ according to $\Lambda_{\S'}$,
\item both $\zeta$ and $\zeta'$ are on the main branch of $\T_{\hq, \hr}$,
\item $\gen(\T_{\hq, \hs})\subseteq \nu$,
\item $\gen(\T_{\hq, \hs'})\subseteq \mu_{\hs'}$,
\item $(\T_{\hq, \hr})_{\geq \zeta}$ is strictly above $\nu$, and
\item $(\T_{\hq, \hr})_{\geq \zeta'}$ is strictly above $\mu_{\hr}$.
\end{enumerate}
\end{lemma}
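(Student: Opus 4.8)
The plan is to prove the eight conclusions essentially simultaneously, running the standard ``smallness'' argument from inner model theory: because $\mu$ is small and properly overlapped, an iteration tree on $\hq$ that wants to move $\mu_{\hr}$ nontrivially must already have ``committed'' to that by stage $\zeta'$, in the sense that the portion above $\zeta'$ cannot use an extender with critical point $\le\mu_{\hr}$. The key engine is the proper-cutpoint hypothesis together with the fact that $\mu$ (being small) is not a limit of $<\d$-strong cardinals \emph{past a measurable limit of such}, so that no extender in the tree can reflect a $<\d$-strong witness below $\mu_{\hr}$ without violating smallness. First I would set up the picture: let $\zeta$ and $\zeta'$ be as given, so $\S=\M_\zeta^{\T_{\hq,\hr}}$ agrees with $\R$ below $\nu$ and $\S'=\M_{\zeta'}^{\T_{\hq,\hr}}$ agrees with $\R$ below $\mu_{\hr}$. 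Since $\nu\le\mu_{\hr}$, we have $\zeta\le\zeta'$. The agreement at $\nu$ (resp. $\mu_{\hr}$) means the extender used at $\zeta$ (resp. $\zeta'$), if any, has length $\ge\nu$ (resp. $\ge\mu_{\hr}$), hence by normality all later extenders have length $\ge\nu$ (resp. $\ge\mu_{\hr}$); this immediately gives $\gen(\T_{\hq,\hs})\subseteq\nu$ and $\gen(\T_{\hq,\hs'})\subseteq\mu_{\hs'}$, which are conclusions (5) and (6), and it sets up (2) and (3) by the usual fact that an initial-segment-truncated normal tree, when the agreement is at a cardinal, re-enumerates as a normal tree on the relevant model (cf. \rlem{splitting lemma} and the material cited there).

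\textbf{The branch argument.} The heart is conclusion (4), that $\zeta$ and $\zeta'$ lie on the main branch of $\T_{\hq,\hr}$, and conclusions (7) and (8), that the tails are strictly above $\nu$ and $\mu_{\hr}$ respectively. For (4): if $\zeta'$ were off the main branch, then some extender $E_\b$ with $\b+1$ on the main branch and $\T_{\hq,\hr}(\b+1)<\zeta'$ would have $\cp(E_\b)<\mu_{\hr}$ while $\lh(E_\b)\ge\mu_{\hr}$ (because of the agreement at $\mu_{\hr}$), and then the main-branch embedding past $\b+1$ would move $\pi^{\T}_{0,\b'}(\mu)$ strictly above $\mu_{\hr}=\pi_{\hq,\hr}(\mu)$ where $\b'=\T_{\hq,\hr}(\b+1)$ --- this is exactly the contradiction extracted in the proof of \rlem{xi is on the main branch}, and the same computation applies here verbatim. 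The same argument handles $\zeta$ using agreement at $\nu$. Once $\zeta,\zeta'$ are on the main branch, conclusion (1) follows: $\mu_{\hr}=\pi^{\T_{\hq,\hr}}_{\zeta',\,\lh-1}(\pi^{\T_{\hq,\hr}}_{0,\zeta'}(\mu))$, and $\pi^{\T_{\hq,\hr}}_{0,\zeta'}(\mu)=\mu_{\hs'}$ by definition of $\hs'$; but smallness and proper overlapping of $\mu$ force the critical points of the tail $(\T_{\hq,\hr})_{\geq\zeta'}$ to lie strictly above $\mu_{\hs'}$, so $\pi^{\T_{\hq,\hr}}_{\zeta',\,\lh-1}$ fixes $\mu_{\hs'}$, whence $\mu_{\hr}=\mu_{\hs'}$. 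This last step is also precisely conclusion (8): the tail is strictly above $\mu_{\hr}$. The analogous reasoning using the hypothesis that $\nu$ is a $<\d_\hr$-strong cardinal of $\R$ or a limit of such (so that $\nu$, like the corresponding cardinal in \rlem{splitting lemma}, cannot be overlapped from below inside the $\d_\hr$-block without creating a Woodin or a reflected strong where there should be none) gives conclusion (7).

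\textbf{The main obstacle.} The delicate point --- and where the smallness hypothesis really earns its keep --- is justifying that the tail $(\T_{\hq,\hr})_{\geq\zeta'}$ is strictly above $\mu_{\hs'}$ rather than merely having generators bounded by $\mu_{\hs'}$. Concretely: an extender $E$ used in the tail has $\lh(E)\ge\mu_{\hr}$, and if $\cp(E)\le\mu_{\hs'}=\mu_{\hr}$ we must derive a contradiction. Here one argues that $\cp(E)$ would then be a cardinal in the $\d$-block witnessing that $\mu$ is ``big'' in the relevant model --- i.e. there would be a measurable limit of $<\d$-strong cardinals at or below $\mu_{\hr}$ --- contradicting the smallness clause in hypothesis (a) together with the proper-overlapping clause (which pins down exactly which cardinals below $\mu$ are strong). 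Making this rigorous requires tracking how the ``$<\d$-strong'' and ``measurable limit of $<\d$-strong'' predicates are preserved and reflected along $\T_{\hq,\hr}$, using that $\d$ is a cutpoint Woodin ending an lsa block and that $\mu$ is a proper cutpoint, so no total extender of $\Q$ has critical point $\mu$. I expect this bookkeeping --- essentially a careful application of the ``standard'' techniques alluded to in the paragraph before \rter{small cardinal} --- to be the technically heaviest part; the branch-position arguments (4), (7) and the genericity-style bounds (5), (6) are then routine once that smallness preservation is in hand, and (2), (3) follow by the usual re-enumeration lemma for normal trees truncated at an agreement cardinal.
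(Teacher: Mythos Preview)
Your approach is essentially the paper's: reduce everything to clauses (7) and (8), and prove those by showing that an extender $E$ in the tail with $\cp(E)\le\nu$ (resp.\ $\le\mu_\hr$) would make $\cp(E)$ a measurable cardinal that is a limit of ${<}\d_\hr$-strongs in $\R$, contradicting smallness of $\mu$. The paper's proof is exactly this, and your identification of the ``main obstacle'' is on target.

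One correction to your ordering. You claim the \rlem{xi is on the main branch} computation ``handles $\zeta$ using agreement at $\nu$'' just as it handles $\zeta'$. That argument works for $\zeta'$ because $\mu_\hr=\pi_{\hq,\hr}(\mu)$, so an overshoot of $\mu_\hr$ along the main branch contradicts $\pi_{\hq,\hr}(\mu)=\mu_\hr$. But $\nu$ is merely a ${<}\d_\hr$-strong cardinal of $\R$; it need not lie in $\rge(\pi_{\hq,\hr})$, so there is no preimage of $\nu$ whose image you can overshoot. The paper does not attempt a direct branch argument for $\zeta$: it proves (7) first (the smallness contradiction you describe) and then observes that once every extender past stage $\zeta$ has critical point $>\nu$, normality forces all of $(\T_{\hq,\hr})_{\ge\zeta}$ to feed back to stages $\ge\zeta$, so $\zeta$ is on the main branch. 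In other words, (4) for $\zeta$ is a \emph{consequence} of (7), not a parallel to the $\zeta'$ case. Similarly, your use of $\mu_{\hs'}=\mu_\hr$ while arguing (8) is circular as written; the paper phrases the hypothesis of (8) purely in terms of $\mu_\hr$ and only afterwards reads off $\mu_{\hs'}=\mu_\hr$ from (8) plus (4). These are reorderings, not genuine gaps, but you should restructure the proof so that (7) and (8) are established first, using only $\nu$ and $\mu_\hr$ as landmarks, and then derive (1)--(4) from them.
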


\begin{proof}
We assume that $\nu<\mu_{\hr}$; the other case is easier.
Some of the clauses above have either been already verified or are immediate consequences of the others.
Clauses 1, 2 and 3 follow from clauses 7 and 8.
Part of clause 4, namely that $\zeta'$ is on the main branch of $\T_{\hq, \hr}$, is essentially \rlem{xi is on the main branch}.
However, clause 4 follows from clauses 7 and 8.
Clauses 5 and 6 follow from the definitions of $\zeta$ and $\zeta'$.
That leaves clauses 7 and 8.
Since their proofs are very similar, we demonstrate clause 7, which is the harder of the two. 

Suppose that there is an extender $E$ used in $(\T_{\hq, \hr})_{\geq \zeta}$ with $\cp(E)\leq \nu$.
Let $\a\in [\zeta, \lh(\T_{\hq, \hr}))$ be such that $E=E_\a^{\T_{\hq, \hr}}$. To save ink, we drop $\T_{\hq, \hr}$ from superscripts.
We have that\\\\
(1.1) $\cp(E_\a)\leq \nu<\im(E_\a)$, and\\
(1.2) $\M_\a|\im(E_\a)=\R|\im(E_\a)$.\\\\
It follows from clause (c) that\footnote{$\im(E_\a)$ is an inaccessible cardinal in $\R$ and $\nu<\im(E_\a)$ is a strong cardinal or a limit of strong cardinals of $\R$. See also (1.2).}\\\\
(2) there is $\eta\in [\cp(E_\a), \nu]$ such that $\M_\a|\im(E_\a)\models ``\eta$ is a strong cardinal."\\\\
(2) and (1.1)-(1.2) imply that\\\\
(3.1) $\M_\a|\cp(E_\a)\models ``$there is a class of strong cardinals," and\\ 
(3.2) if $\cp(E_\a)<\nu$, then $\M_\a|\nu\models ``\cp(E_\a)$ is a strong cardinal."\\\\
Since $\cp(E_\a)$ is a measurable cardinal of $\M_\a$ and $\cp(E_\a)\leq \nu$, clause (c) implies that $\R\models ``\mu_\hr$ is big."
\end{proof}

\begin{comment}
\begin{definition}\label{spliting iterations at small cardinals}\normalfont Let $(\hq, \hr, \nu, \mu, \zeta, \zeta')$ be as in \rlem{main lemma on small ordinals}. We then let $\T_{\hq, \hr}^{l, \nu, \mu}=(\T_{\hq, \hr})_{\leq \zeta}$ and $\T_{\hq, \hr}^{r, \mu}=((\T_{\hq, \hr})_{\geq \zeta})_{\leq \zeta'}$. If $\nu=\sup\{ \k<\mu_\hr: \R\models ``\k$ is $<\d_\hr$-strong cardinal"$\}$ then we omit it from our notation. 
\end{definition}
\end{comment}

\rcor{hull property cor} follows straightforwardly from \rlem{main lemma on small ordinals}.
The terminology it introduces is inspired by the hull and definability properties used in the theory of the core model (see \cite{CMIP}).

\begin{corollary}\label{hull property cor}%\normalfont 
Suppose $\hq=(\Q, \Lambda)$ is a hod pair, $\hr=(\R, \Lambda_\R)$ is a complete iterate of $\hq$, $\d$ ends an lsa block of $\Q$, and $\mu$ is such that either
\begin{enumerate}
\item $\mu\in \rge(\pi_{\hq, \hr}\rest \d)$, or
\item $\mu$ is in the $\pi_{\hq, \hr}(\d)$-block of $\R$, $\mu$ is small in $\R$, and $\mu$ is either a $<\pi_{\hq, \hr}(\d)$-strong cardinal of $\R$ or a limit of such cardinals.
\end{enumerate}
Then the following properties hold.
\begin{enumerate}
\item \textbf{Hull Property:} $\powerset(\mu)\cap \R\subseteq Hull^{\R|\pi_{\hq, \hr}(\d)}(\rge(\pi_{\hq|\d_\hq, \infty})\cup \mu)$.
\item \textbf{Definability Property:} $\mu\in Hull^{\R}(\rge(\pi_{\hq|\d_\hq, \infty})\cup \mu)$.
\end{enumerate}
\end{corollary}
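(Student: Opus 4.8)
\textbf{Proof plan for Corollary \ref{hull property cor}.}

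The plan is to deduce both properties from Lemma \ref{main lemma on small ordinals} by analyzing the tree $\T_{\hq,\hr}$ at the level $\mu$. First consider Case (2), where $\mu$ is small in $\R$ and is (a limit of) $<\pi_{\hq,\hr}(\d)$-strong cardinals. Let $\zeta$ be least such that $\M_\zeta^{\T_{\hq,\hr}}|\mu = \R|\mu$, set $\S = \M_\zeta^{\T_{\hq,\hr}}$ and $\hs = (\S,\Lambda_\S)$. By Lemma \ref{main lemma on small ordinals} (applied with this $\mu$ playing the role of ``$\nu$'' there, and with a properly overlapped small regular bound above it), $\zeta$ is on the main branch of $\T_{\hq,\hr}$, $\gen(\T_{\hq,\hs})\subseteq \mu$, and $(\T_{\hq,\hr})_{\geq\zeta}$ is strictly above $\mu$. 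The first two facts say that $\S = \{\pi_{\hq,\hs}(f)(a) : a\in[\mu]^{<\omega},\ f\in\Q\}$, i.e. $\S = \mathrm{Hull}^{\S}(\rge(\pi_{\hq,\hs})\cup\mu)$, and since the remaining part of $\T_{\hq,\hr}$ is strictly above $\mu$ we get $\powerset(\mu)\cap\R = \powerset(\mu)\cap\S$ and that $\mu$ is fixed from $\S$ to $\R$. Thus it suffices to see that $\rge(\pi_{\hq,\hs})$ (restricted to the relevant $\Q|\d_\hq$-part) is absorbed into $\rge(\pi_{\hq|\d_\hq,\infty})$; this is where I would invoke that $\T_{\hq,\hs}$ is an iterate and that $\pi_{\hq|\d_\hq,\infty} = \pi_{\hs|\d_\hs,\infty}\circ\pi_{\hq|\d_\hq,\hs|\d_\hs}$ by the coherence of the direct limit system (Proposition \ref{strategy coherence} together with full normalization, Theorem \ref{min copy thm}). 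Since $\d$ is a cutpoint ending an lsa block and $\mu$ lies in its block, every extender in $\T_{\hq,\hs}$ has critical point below $\pi_{\hq,\hr}(\d)$, so $\pi_{\hq,\hs}\rest(\Q|\d_\hq)$ factors through $\pi_{\hq|\d_\hq,\infty}$ as needed, giving the Hull Property $\powerset(\mu)\cap\R\subseteq\mathrm{Hull}^{\R|\pi_{\hq,\hr}(\d)}(\rge(\pi_{\hq|\d_\hq,\infty})\cup\mu)$ and the Definability Property $\mu\in\mathrm{Hull}^{\R}(\rge(\pi_{\hq|\d_\hq,\infty})\cup\mu)$, the latter because $\mu = \pi_{\hq,\hs}(\mu_\hq)$-type correspondences pin $\mu$ down via the hull.

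Case (1), where $\mu\in\rge(\pi_{\hq,\hr}\rest\d)$, should be handled more directly: let $\mu = \pi_{\hq,\hr}(\bar\mu)$ with $\bar\mu<\d$. If $\bar\mu$ is (a limit of) $<\d$-strong cardinals of $\Q$ then one can still split $\T_{\hq,\hr}$ at $\mu$ exactly as above, with the splitting legitimized now by the ordinary version of Lemma \ref{splitting lemma} rather than the small-cardinal refinement — since $\bar\mu\in\rge(\pi_{\hq,\hr})$ there is no generator of $\T_{\hq,\hr}$ at or above $\mu$ below the next Woodin, and the argument proceeds identically. If $\bar\mu$ is not such a limit point, then $\mu$ is simply a point in the range of the iteration map and the Definability Property is immediate, while the Hull Property follows because $\powerset(\mu)\cap\R$ is built from the portion of $\T_{\hq,\hr}$ below the relevant generator, which again factors through $\pi_{\hq|\d_\hq,\infty}$. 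In all subcases the reduction to the direct-limit map uses only that $\d$ is a cutpoint ending an lsa block, so that the ``block below $\d$'' is iterated independently of the rest of $\Q$.

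The main obstacle I expect is bookkeeping the factorization $\pi_{\hq,\hs}\rest(\Q|\d_\hq) = (\text{something into } \rge(\pi_{\hq|\d_\hq,\infty}))$ cleanly: one must verify that the iterate $\hs$ can be continued to a complete iterate lying in the direct limit system $\mathcal{F}$ (so that $\pi_{\hs|\d_\hs,\infty}$ makes sense and composes correctly), and that the ``small'' detour below $\mu$ does not add generators that would escape $\rge(\pi_{\hq|\d_\hq,\infty})$. This is precisely the content packaged by Lemma \ref{main lemma on small ordinals} clauses (5)--(8) plus full normalization, so the work is to assemble those pieces rather than prove anything genuinely new; the delicate point is ensuring the hull is taken over $\R|\pi_{\hq,\hr}(\d)$ (not all of $\R$) in the Hull Property, which is exactly why clause 1 of Lemma \ref{main lemma on small ordinals} ($\mu_\hr = \mu_{\hs'}$, via the proper-overlap hypothesis) is needed to see that no extender overlapping $\mu$ is used past the splitting point.
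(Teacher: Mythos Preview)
Your core plan is right and matches what the paper intends; the paper itself offers only the sentence that the corollary ``follows straightforwardly from \rlem{main lemma on small ordinals}.'' Splitting $\T_{\hq,\hr}$ at the least $\zeta$ with $\M_\zeta^{\T_{\hq,\hr}}|\mu=\R|\mu$, reading off from clauses (4), (5), (7) of that lemma that $\zeta$ is on the main branch, $\gen(\T_{\hq,\hs})\subseteq\mu$, and the remainder is strictly above $\mu$, and then concluding $\powerset(\mu)\cap\R=\powerset(\mu)\cap\S\subseteq Hull^{\S}(\rge(\pi_{\hq,\hs})\cup\mu)$---that is the whole argument.

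Where you veer off is the paragraph about factoring through $\pi_{\hq|\d_\hq,\infty}$. The ``$\infty$'' in the displayed statement is almost certainly a typo for ``$\hr$'': the hull is taken inside $\R|\pi_{\hq,\hr}(\d)$, so the parameter set must lie in $\R$, whereas $\rge(\pi_{\hq|\d_\hq,\infty})$ lives in $\M_\infty(\hq)$, not in $\R$. The two places the corollary is applied confirm this reading: in \rcor{no extender disagreement} and \rlem{no overlapping} the Hull Property is used to write elements of $\powerset(\mu)$ in the iterate as $\pi_{\hp,\hw}(h)(a)$ with $h$ in the base model and $a\in[\mu]^{<\omega}$, a statement about the iteration map to the iterate, not the direct-limit map. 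With the intended target, the proof ends immediately after the splitting step---compose with $\pi_{\hs,\hr}$, whose critical point exceeds $\mu$, and you are done. Your factorization $\pi_{\hq|\d_\hq,\infty}=\pi_{\hs|\d_\hs,\infty}\circ\pi_{\hq,\hs}$, while true, points in the wrong direction for what you would need (it exhibits $\rge(\pi_{\hq,\infty})$ inside $\rge(\pi_{\hs,\infty})$, not $\rge(\pi_{\hq,\hs})$ inside anything useful) and plays no role once the typo is corrected.
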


The hull and definability properties imply the next corollary via standard arguments.

\begin{corollary}\label{no extender disagreement}%\normalfont
Suppose
\begin{enumerate}
\item $\hp=(\P, \Sigma)$ is a hod pair, 
\item $\d$ is a Woodin cardinal of $\P$ that ends an lsa block, 
\item $\hq=(\Q, \Sigma_\Q)$ and $\hr=(\R, \Sigma_\R)$ are two complete iterates of $\hp$,
\item $\nu$ is in the $\d$-block of $\P$, $\nu$ is small in $\P$, and $\nu$ is either a $<\d$-strong cardinal of $\P$ or a limit of such cardinals, and
\item $\nu_\hq=\nu_\hr$ and $\Q|\nu_\hq=\R|\nu_\hr$. 
\end{enumerate}
Then the least-extender-disagreement-coiteration of $(\hq, \hr)$ is strictly above $\nu_\hq$. 
\end{corollary}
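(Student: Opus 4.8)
\textbf{Proof plan for Corollary \ref{no extender disagreement}.}

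The plan is to run the least-extender-disagreement coiteration of the two complete iterates $\hq=(\Q,\Sigma_\Q)$ and $\hr=(\R,\Sigma_\R)$ and show, by induction along the length of this coiteration, that no disagreeing extender with critical point $\leq\nu_\hq$ is ever encountered. Fix the coiteration $(\X,\Y)$ of $(\hq,\hr)$ as described before the statement, and for a contradiction suppose there is a least $\a<\lh(\X)$ at which the disagreement occurs at an extender of critical point $\leq\nu_\hq$; say $E$ is the extender witnessing the disagreement (so $E\in\vec E^{\M_\a^\X}$ or $E\in\vec E^{\M_\a^\Y}$, one of them, by the description of the least-extender-disagreement coiteration). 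By minimality of $\a$, we have $\M_\a^\X|\nu_\hq=\M_\a^\Y|\nu_\hq$ and in fact the two models agree strictly below the index of $E$, and $\crit(E)\leq\nu_\hq<\lh(E)$. The key point is that, since $\nu_\hq=\nu_\hr$ is (by hypothesis (4) transported along $\pi_{\hp,\hq}$ and $\pi_{\hp,\hr}$) a small $<\pi_{\hp,\hq}(\d)$-strong cardinal of $\Q$ (resp.\ $\R$), or a limit of such, the Hull Property and Definability Property of Corollary \ref{hull property cor} apply to $\nu_\hq$ in $\Q$ (with respect to the iterate $\hq$ of $\hp$) and to $\nu_\hr$ in $\R$.

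First I would reduce to the situation where the common agreement is exactly $\Q|\nu_\hq=\R|\nu_\hr$, which is hypothesis (5): so before the coiteration even begins the two models already agree up to $\nu_\hq$, hence the first disagreement is at an index $\geq\nu_\hq$, and we only need to rule out that the disagreeing extender has critical point $\leq\nu_\hq$ rather than strictly above. The heart of the argument is the standard core-model-theoretic comparison lemma: if $E$ is the extender on the $\Q$-side (say) at the point of disagreement with $\crit(E)\leq\nu_\hq$, then because both $\Q$ and $\R$ are complete iterates of the \emph{same} hod pair $\hp$, both satisfy the Definability Property at $\nu_\hq$ relative to the common direct-limit hull $\rge(\pi_{\hp|\d_\hp,\infty})$; since $\M_\infty(\hp)$ computed from $\Q$ and from $\R$ is the same object, the hull $Hull^{\Q}(\rge(\pi_{\hp|\d_\hp,\infty})\cup\nu_\hq)$ and $Hull^{\R}(\rge(\pi_{\hp|\d_\hp,\infty})\cup\nu_\hr)$ compute the same subsets of $\nu_\hq$ (Hull Property). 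Then the usual argument---$E$, having critical point $\leq\nu_\hq$, would have to be on the sequence of the side that has the Hull Property, but the other side, also having the Hull Property and computing the same $\powerset(\crit(E))$, must then have $E$ on its sequence too, contradicting that $E$ witnesses a \emph{disagreement}---gives the contradiction. More precisely, one argues that $E\cap([\crit(E)^{+\ldots}]^{<\omega}\times\ldots)$ is determined by $\powerset(\crit(E))$ restricted to the relevant model below $\nu_\hq$, and both sides agree on this by the Hull Property, so the ultrapowers $Ult(\M_\a^\X,E)$ and $Ult(\M_\a^\Y,E)$ would both be formed, contradicting the choice of $E$ as a point of disagreement.

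The main obstacle I anticipate is carefully matching up the two instances of the Hull Property: Corollary \ref{hull property cor} is stated for a single complete iterate $\hr$ of $\hq$, with hull taken relative to $\rge(\pi_{\hq|\d_\hq,\infty})$, so to apply it symmetrically to $\hq$ and $\hr$ as iterates of $\hp$ I need to observe that $\pi_{\hp|\d_\hp,\infty}=\pi_{\hq|\d_\hq,\infty}\circ\pi_{\hp,\hq}\rest(\P|\d_\hp)$ and likewise for $\hr$, so that $\rge(\pi_{\hq|\d_\hq,\infty})\supseteq\rge(\pi_{\hp|\d_\hp,\infty})$ and the common direct limit $\M_\infty$ is genuinely shared; this is where strategy coherence (Proposition \ref{strategy coherence}) and full normalization are implicitly used to guarantee $\M_\infty(\hq)=\M_\infty(\hr)=\M_\infty(\hp)$ and that the relevant hulls are correctly positioned inside $\R|\pi_{\hq,\hr}(\d)$. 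The rest---that agreement of $\powerset(\crit(E))$ below $\nu$ forces agreement of the two extender sequences up to the next disagreement, hence no disagreement with small critical point---is the routine core-model comparison bookkeeping, and I would cite \cite{CMIP} for the template, noting only the adaptation that here ``definability/hull relative to $K$'' is replaced by ``definability/hull relative to $\rge(\pi_{\hp|\d_\hp,\infty})$''.
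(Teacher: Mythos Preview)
Your approach has the right ingredients (Hull and Definability Properties, the core-model comparison template) but misapplies them, and it omits a step that is essential here.

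\textbf{The Hull Property does not show $E$ is on both sequences.} At stage $\a$ of the coiteration, the models $\M_\a^\X$ and $\M_\a^\Y$ already agree strictly below $\lh(E)$; in particular they already have the same $\powerset(\cp(E))$, so the Hull Property adds nothing to that. The disagreement is about whether an extender is \emph{indexed} at $\lh(E)$, and that index may be far above $\nu_\hq$. Your sentence ``both sides compute the same $\powerset(\cp(E))$, so the ultrapowers would both be formed, contradicting the choice of $E$ as a disagreement'' is not a contradiction: both sides \emph{can} form the ultrapower by $E$; only one has $E$ on its sequence. The paper (and the CMIP template you cite) does not argue at the level of individual disagreements. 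Instead it looks at the \emph{main-branch} embeddings $\pi_{\hq,\hs}$ and $\pi_{\hr,\hs}$ into the common iterate $\hs$: if $\zeta=\cp(\pi_{\hq,\hs})\leq\nu_\hq$ then $\zeta$ is a ${<}\d_\hq$-strong cardinal, the Definability Property forces $\cp(\pi_{\hq,\hs})=\cp(\pi_{\hr,\hs})$ and $\pi_{\hq,\hs}(\zeta)=\pi_{\hr,\hs}(\zeta)$, and then the Hull Property forces $\pi_{\hq,\hs}\rest(\Q|(\zeta^+)^\Q)=\pi_{\hr,\hs}\rest(\R|(\zeta^+)^\R)$. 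This makes the \emph{first extenders on the two main branches compatible}, which is the standard contradiction in a least-extender-disagreement coiteration.

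\textbf{You are missing the smallness step.} Even once you know the main-branch critical points are above $\nu_\hq$, you still have to rule out extenders off the main branch with critical point $\leq\nu_\hq$. This does not come for free, and your inductive setup does not address it. The paper closes this gap as follows: from $\cp(\pi_{\hq,\hs}),\cp(\pi_{\hr,\hs})>\nu_\hq$ it follows that $\nu_\hq$ remains a ${<}\d_\hs$-strong cardinal in $\S$; then, because $\nu$ is \emph{small}, Lemma~\ref{main lemma on small ordinals} applies and shows the entire trees $\T_{\hq,\hs}$ and $\T_{\hr,\hs}$ are strictly above $\nu_\hq$. Without invoking smallness via that lemma (or something equivalent), your argument cannot reach the stated conclusion.
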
 
\begin{proof} Let $\hs=(\S, \Sigma_\S)$ be the common complete iterate of $\hq$ and $\hr$ obtained via the least-extender-disagreement-coiteration of $(\hq, \hr)$.
We first show that $\cp(\pi_{\hq, \hs})>\nu_\hq$ and $\cp(\pi_{\hr, \hs})> \nu_\hr$.
Notice that if $\cp(\pi_{\hq, \hs})\leq \nu_\hq$, then $\cp(\pi_{\hq, \hs})$ is a $<\d_\hq$-strong cardinal of $\Q$.
Similarly, if $\cp(\pi_{\hr, \hs})\leq \nu_\hr$, then $\cp(\pi_{\hr, \hs})$ is a $<\d_\hr$-strong cardinal of $\R$. 

Assume now that $\cp(\pi_{\hq, \hs})\leq \nu_{\hq}$.
The definability property implies that $\cp(\pi_{\hq, \hs})=\cp(\pi_{\hr, \hs})$, and moreover, that if $\zeta=\cp(\pi_{\hq, \hs})$, then $\pi_{\hq, \hs}(\zeta)=\pi_{\hr, \hs}(\zeta)$.
The hull property then implies that $\pi_{\hq, \hs}\rest (\Q|(\zeta^+)^\Q)=\pi_{\hr, \hs}\rest (\R|(\zeta^+)^\R)$.
Therefore, the first extender used on the main branch of $\T_{\hq, \hs}$ is compatible with the first extender used on the main branch of $\T_{\hr, \hs}$.

We now have that $\cp(\pi_{\hq, \hs})>\nu_\hq$ and $\cp(\pi_{\hr, \hs})> \nu_\hr$.
It follows that $\nu_\hq$ is a $<\d_{\hs}$-strong cardinal of $\S$. Because $\nu$ is small in $\P$, it follows from \rlem{main lemma on small ordinals} that $\T_{\hq, \hs}$ is strictly above $\nu_\hq$.
A similar argument shows that $\T_{\hr, \hs}$ is strictly above $\nu_\hr$.
\end{proof}

Lemma \ref{no overlapping} is another consequence of the hull and definability properties.
It essentially says that in least-extender-disagreement-coiterations, small strong cardinals can only be used on one side of the coiteration.

\begin{lemma}\label{no overlapping}%\normalfont
Suppose 
\begin{enumerate}
\item $\hp=(\P, \Sigma)$ is a hod pair, 
\item $\d$ is a Woodin cardinal of $\P$ that ends an lsa block, 
\item $\hq=(\Q, \Sigma_\Q)$ and $\hr=(\R, \Sigma_\R)$ are two complete iterates of $\hp$,
\item $\hs=(\S, \Sigma_\S)$ is the common iterate of $\hq$ and $\hr$ obtained via the least-extender-disagreement-coiteration of $(\hq, \hr)$, and
\item $\zeta$ is a small ${<}\pi_{\hp, \hs}(\d)$-strong cardinal of $\S$ such that if $\a<\lh(\T_{\hq, \hs})$ is the least with the property that $\zeta\in \rge(\pi^{\T_{\hq, \hs}}_{\a})$, then letting $\zeta_{\hq}$ be the $\pi^{\T_{\hq, \hs}}_\a$-preimage of $\zeta$, $\sup(\pi^{\T_{\hq, \hs}}_\a[\zeta_{\hq}])<\zeta$.\footnote{I.e., somewhere along the main branch of $\T_{\hq, \hs}$ an extender with critical point the image of $\zeta_\hq$ is used.
More precisely, there is $\b\in [\a, \lh(\T_{\hq, \hs}))$ such that $\b$ is on the main branch of $\T_{\hq, \hs}$ and $\cp(E_\b^{\T_{\hq, \hs}})=\pi_{\a, \b}^{\T_{\hq, \hs}}(\zeta_\hq)$.} 
\end{enumerate}
Let now $\b$ be the least such that $\zeta\in \rge(\pi^{\T_{\hr, \hs}}_\b)$, and let $\zeta_\hr$ be the $\pi^{\T_{\hr, \hs}}_\b$-preimage of $\zeta$. Then $\sup(\pi^{\T_{\hr, \hs}}_\b[\zeta_{\hr}])=\zeta$.\footnote{Nowhere along the main branch (and since $\zeta$ is small, anywhere else) of $\T_{\hr, \hs}$ an extender of critical point the image of $\zeta_\hr$ is used.}
\end{lemma}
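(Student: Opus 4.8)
\textbf{Plan for the proof of Lemma \ref{no overlapping}.}

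The strategy is to argue by contradiction, appealing to the hull and definability properties from \rcor{hull property cor}. Assume that on the $\hr$-side an extender with critical point equal to (an image of) $\zeta_\hr$ \emph{is} used on the main branch of $\T_{\hr, \hs}$, so that both $\T_{\hq, \hs}$ and $\T_{\hr, \hs}$ "move" $\zeta$-many ordinals below $\zeta$ — i.e., $\zeta$ is a critical point of an extender applied on the main branch on both sides. The point of the lemma is that this is exactly the configuration forbidden in a least-extender-disagreement coiteration: an extender with critical point $\zeta$ (a small ${<}\pi_{\hp,\hs}(\d)$-strong cardinal) sitting on $\vec E^\S$ would have been used to iterate away a disagreement, and it cannot appear as used-from-above on \emph{both} sides without the two extenders being compatible, which contradicts the least-disagreement bookkeeping.

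First I would set up notation carefully: let $\b_0 \in [\a, \lh(\T_{\hq,\hs}))$ be on the main branch of $\T_{\hq,\hs}$ with $\cp(E_{\b_0}^{\T_{\hq,\hs}}) = \pi_{\a,\b_0}^{\T_{\hq,\hs}}(\zeta_\hq)$, witnessing hypothesis (5); call this critical point $\bar\zeta_\hq$ (so $\bar\zeta_\hq < \zeta$ and $\pi$-image of $\bar\zeta_\hq$ is $\zeta$ in $\S$ after the relevant segment — more precisely $\sup\pi^{\T_{\hq,\hs}}_{\b_0}[\bar\zeta_\hq] = \zeta$ is not quite right, I should track it as: $\zeta$ is hit as a critical point at stage $\b_0$). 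Symmetrically assume for contradiction that there is $\b_1$ on the main branch of $\T_{\hr,\hs}$ with $\cp(E_{\b_1}^{\T_{\hr,\hs}})$ equal to the corresponding image of $\zeta_\hr$. Now apply \rcor{hull property cor} to $\mu = \zeta$: since $\zeta$ is a small ${<}\pi_{\hp,\hs}(\d)$-strong cardinal of $\S$ (case (2) of the corollary applied with $\hq$ replaced by $\hp$ and $\hr$ by $\hs$), we get the Hull Property $\powerset(\zeta)\cap \S \subseteq Hull^{\S|\pi_{\hp,\hs}(\d)}(\rge(\pi_{\hp|\d_\hp,\infty})\cup \zeta)$ and the Definability Property $\zeta \in Hull^\S(\rge(\pi_{\hp|\d_\hp,\infty})\cup\zeta)$.

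The core of the argument: the Definability Property forces the two "first extenders used at $\zeta$" on the two sides to have the same critical point in $\S$ — indeed the preimages $\zeta_\hq, \zeta_\hr$ and their images must agree because $\zeta$ has a uniform definition from parameters in $\rge(\pi_{\hp|\d_\hp,\infty})$ that is preserved by both $\pi_{\hq,\hs}$ and $\pi_{\hr,\hs}$ (this is exactly the mechanism used in the proof of \rcor{no extender disagreement}). Then the Hull Property on $\powerset(\zeta)\cap\S$, applied to both $\pi_{\hq,\hs}$ and $\pi_{\hr,\hs}$, shows $\pi_{\hq,\hs}\rest(\Q|(\zeta_\hq^+)^\Q)$ and $\pi_{\hr,\hs}\rest(\R|(\zeta_\hr^+)^\R)$ agree on the relevant fragments, so the first extender $E_{\b_0}$ used at $\zeta$ along the main branch of $\T_{\hq,\hs}$ and the first extender $E_{\b_1}$ used at $\zeta$ along the main branch of $\T_{\hr,\hs}$ are compatible. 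But in a least-extender-disagreement coiteration, at each stage exactly one of the two sides moves, and whenever both sides have the same extender available it is used on both simultaneously (they are never "split"). Two compatible extenders applied to iterates of the same hod pair must be equal (by the usual comparison/compatibility argument for hod mice, since the extender sequences cohere), and then the least-disagreement discipline is violated: such an extender would be used on both sides at the same coordinate, so it cannot be that one side uses it as a main-branch extender at $\zeta$ while the other does not — unless neither side does, which is the conclusion. Hence $\sup(\pi^{\T_{\hr,\hs}}_\b[\zeta_\hr]) = \zeta$, as desired.

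\textbf{Main obstacle.} The delicate point is the bookkeeping connecting "$\zeta$ is a critical point used on the main branch" to "$\zeta$ would appear as a least-disagreement extender", because $\zeta$ being small and ${<}\pi_{\hp,\hs}(\d)$-strong means the relevant extender has support well below $\pi_{\hp,\hs}(\d)$ and one must be sure the coiteration "sees" it — i.e., that the disagreement at $\zeta$ is genuinely the least disagreement at the stage in question, so that the Hull and Definability Properties of \rcor{hull property cor} actually apply at the right model $\M_\zeta$ in the trees rather than only at the final models $\Q,\R$. Making the indices match (tracking $\zeta$ through $\pi^{\T_{\hq,\hs}}_{\a}$, $\pi^{\T_{\hq,\hs}}_{\a,\b_0}$, etc., and the analogous $\hr$-side maps) and verifying that the preimages $\zeta_\hq$, $\zeta_\hr$ satisfy the hypotheses of \rcor{hull property cor} (smallness and the strong-cardinal property are preserved under the relevant initial segments of the trees) is where the real work lies; the rest is the standard compatibility-implies-equality argument for extenders on coherent sequences of hod mice.
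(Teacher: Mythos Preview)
Your plan has a genuine gap, and what you call ``bookkeeping'' is in fact the whole argument.

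The compatibility argument you sketch is the one behind \rcor{no extender disagreement}, and it works there because the two extenders in question are the \emph{first} extenders on the respective main branches and because $\Q|\nu_\hq=\R|\nu_\hr$: both have critical point the \emph{same ordinal} $\zeta$, and the Hull Property in the starting models $\Q$ and $\R$ then forces $\pi_{\hq,\hs}\rest(\Q|(\zeta^+)^\Q)=\pi_{\hr,\hs}\rest(\R|(\zeta^+)^\R)$. In the present lemma none of this is available. The extenders $E$ and $F$ are applied at stages $\iota_\hq$ and $\iota_\hr$ which are in general different; their critical points are $\pi_{\a,\iota_\hq}^{\T_{\hq,\hs}}(\zeta_\hq)$ and $\pi_{\b,\iota_\hr}^{\T_{\hr,\hs}}(\zeta_\hr)$, which are different ordinals (both are preimages of $\zeta$ along different branches, not $\zeta$ itself); and the Hull and Definability Properties of \rcor{hull property cor} are stated for complete iterates, not for the intermediate models $\M_{\iota_\hq}^{\T_{\hq,\hs}}$ and $\M_{\iota_\hr}^{\T_{\hr,\hs}}$ where $E$ and $F$ live. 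So there is no sense in which $E$ and $F$ are ``compatible extenders used at $\zeta$,'' and the least-extender-disagreement contradiction does not fire.

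The paper's argument is genuinely different and does not proceed via compatibility. One fixes the least such $\iota_\hq\le\iota_\hr$, then tracks the $\hq$-side image of $\zeta_\hq$ along the main branch of $\T_{\hq,\hs}$ until the last stage $\nu$ at which it is still $\le\pi_{\b,\iota_\hr}^{\T_{\hr,\hs}}(\zeta_\hr)$. \rcor{no extender disagreement} rules out equality at $\nu$ (this is the only place your mechanism enters, and only as one sub-step). One then shows that at the next main-branch stage $\nu'$ the $\hq$-image overshoots, and that the $\hr$-side value $\pi_{\b,\iota_\hr}^{\T_{\hr,\hs}}(\zeta_\hr)$ is a \emph{generator} of the extender $H$ used at $\nu$ (this is a separate claim requiring full-normalization analysis). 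By the initial segment condition, a restriction of $H$ lies on the sequence of $\M_{\nu'}^{\T_{\hq,\hs}}$, which forces $\pi_{\a,\nu}^{\T_{\hq,\hs}}(\zeta_\hq)$ to be a ${<}\d$-strong cardinal there. But since $\pi_{\a,\nu}^{\T_{\hq,\hs}}(\zeta_\hq)$ is already (the image of) a measurable limit of strongs, this contradicts the smallness of $\zeta$. The contradiction is with \emph{smallness}, not with the coiteration discipline.
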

\begin{proof}
Assume towards a contradiction that our claim is false, and that $\zeta$ is the least for which it fails.
Notice that there are $\iota_{\hq}<\lh(\T_{\hq, \hs})$ and $\iota_{\hr}<\lh(\T_{\hr, \hs})$ such that\\\\
(1.1) $\iota_{\hq}$ is on the main branch of $\T_{\hq, \hs}$ and $\a\leq \iota_{\hq}$,\\
(1.2) $\iota_{\hr}$ is on the main branch of $\T_{\hr, \hs}$ and $\b\leq \iota_{\hr}$,\\
(1.3) letting $E$ be the extender used at $\iota_{\hq}$ on the main branch of $\T_{\hq, \hs}$, $\cp(E)=\pi_{\a, \iota_{\hq}}^{\T_{\hq, \hs}}(\zeta_\hq)$, and\\
(1.4) letting $F$ be the extender used at $\iota_{\hr}$ on the main branch of $\T_{\hr, \hs}$, $\cp(F)=\pi_{\b, \iota_{\hr}}^{\T_{\hr, \hs}}(\zeta_\hr)$.\\\\
Assume without loss of generality that $\iota_\hq$ and $\iota_\hr$ are the least satisfying the above requirements and $\iota_{\hq}\leq \iota_{\hr}$.

We say $\gg$ is a $(\hq, \zeta)$\textit{-point} if 
\begin{enumerate}[label=(\alph*)]
\item $\gg$ is on the main branch of $\T_{\hq, \hs}$, 
\item $\iota_\hq\leq \gg$, and 
\item if $H$ is the extender used on the main branch of $\T_{\hq, \hs}$ at $\gg$, then $\cp(H)=\pi_{\a, \gg}^{\T_{\hq, \hs}}(\zeta_{\hq})$.
\end{enumerate} 
We say $\gg$ is a $(\hq, \hr, \zeta)$\textit{-point}
if 
\begin{enumerate}[label=(\alph*)]
\item $\gg$ is on the main branch of $\T_{\hq, \hs}$, 
\item $\gg$ is a successor ordinal, and 
\item if $\gg'$ is the $\T_{\hq, \hs}$-predecessor of $\gg$, then $\gg'$ is a $(\hq, \zeta)$-point and $\pi_{\a, \gg}^{\T_{\hq, \hs}}(\zeta_{\hq})< \pi_{\b, \iota_\hr}^{\T_{\hr, \hs}}(\zeta_{\hr})$.
\end{enumerate}
Let $\nu=\sup \{ \gg<\iota_{\hr}: \gg$ is a $(\hq, \hr, \zeta)$-point$\}$.

We now have that $\nu$ is on the main branch of $\T_{\hq, \hs}$ and $\pi_{\a, \nu}^{\T_{\hq, \hs}}(\zeta_{\hq})\leq \pi_{\b, \iota_\hr}^{\T_{\hr, \hs}}(\zeta_{\hr})$. 
\rcor{no extender disagreement} implies that if $\pi_{\a, \nu}^{\T_{\hq, \hs}}(\zeta_{\hq})=\pi_{\b, \iota_\hr}^{\T_{\hr, \hs}}(\zeta_{\hr})$, then $(\T_{\hq, \hs})_{\geq \nu}$ and $(\T_{\hr, \hs})_{\geq \nu}$\footnote{Recall that our least-extender-disagreement-coiterations allow padding.} are strictly above $\pi_{\b, \iota_\hr}^{\T_{\hr, \hs}}(\zeta_{\hr})$.
It follows that there cannot be $F$ as in (1.4), so we must have that\\\\
(2) $\pi_{\a, \nu}^{\T_{\hq, \hs}}(\zeta_{\hq})<\pi_{\b, \iota_\hr}^{\T_{\hr, \hs}}(\zeta_{\hr})$.\\\\
Then $\nu$ must be a $(\hq, \zeta)$-point.
Using \rcor{no extender disagreement} and the definition of $\nu$, we get that if $\nu'$ is the least on the main branch of $\T_{\hq, \hs}$ such that $\nu'>\nu$, then\\\\
(3) $\cp(\pi_{\nu, \nu'}^{\T_{\hq, \hs}})=\pi_{\a, \nu}^{\T_{\hq, \hs}}(\zeta_\hq)$ and $\pi_{\a, \nu'}^{\T_{\hq, \hs}}(\zeta_\hq)>\pi_{\b, \iota_\hr}^{\T_{\hr, \hs}}(\zeta_\hr)$.\\\\
Since $\S|\pi_{\a, \nu'}^{\T_{\hq, \hs}}(\zeta_\hq)=\M_{\nu'}^{\T_{\hq, \hs}}|\pi_{\a, \nu'}^{\T_{\hq, \hs}}(\zeta_\hq)$, we have\\\\
(4.1) (see \rcl{4.1}) $\M_{\iota_\hr}^{\T_{\hr, \hs}}|\pi_{\b, \iota_\hr}^{\T_{\hr, \hs}}(\zeta_\hr)\models ``\pi_{\a, \nu}^{\T_{\hq, \hs}}(\zeta_{\hq})$ is a strong cardinal,"\\
(4.2) (and hence) $\S\models ``\pi_{\a, \nu}^{\T_{\hq, \hs}}(\zeta_{\hq})$ is a ${<}\d_\hs$-strong cardinal," and hence\\
(4.3) $\M_{\nu'}^{\T_{\hq, \hs}}|\pi_{\a, \nu'}^{\T_{\hq, \hs}}(\zeta_\hq)\models ``\pi_{\a, \nu}^{\T_{\hq, \hs}}(\zeta_{\hq})$ is a strong cardinal."\\\\
But (4.3) implies that\\\\
(5) $\M_{\nu}^{\T_{\hq, \hs}}\models ``\pi_{\a, \nu}^{\T_{\hq, \hs}}(\zeta_\hq)$ is not a small cardinal."\\\\
Clearly (5) contradicts our hypothesis.

(4.1) easily follows from Claim \ref{4.1}.
[Claim \ref{4.1} and the initial segment condition \cite[p.~22]{SteelCom} imply that if $H$ is as in the claim, then for some $\gg<\im(H)$, $\gg>\pi_{\b, \iota_\hr}^{\T_{\hr, \hs}}(\zeta_\hr)$ and $H|\gg\in \vec{E}^{\M_{\nu'}^{\T_{\hp, \hq}}}$. (4.1) then follows because $\pi_{\b, \iota_\hr}^{\T_{\hr, \hs}}(\zeta_\hr)$ is a cardinal of $\M_{\nu'}^{\T_{\hp, \hq}}$.]

\begin{claim}\label{4.1}\normalfont Let $H$ be the extender used on the main branch of $\T_{\hq, \hs}$ at $\nu$. Then $\pi_{\b, \iota_\hr}^{\T_{\hr, \hs}}(\zeta_\hr)$ is a generator of $H$. 
\end{claim}
\begin{proof}
Set $\hw=\hm_\nu^{\T_{\hq, \hs}}$, $\hw'=\hm_{\nu'}^{\T_{\hq, \hs}}$, $\hw=(\W, \Sigma_\W)$ and $\hw'=(\W', \Sigma_{\W'})$.
We use $i$ for the embeddings given by $\T_{\hq, \hs}$ and $j$ for the embeddings given by $\T_{\hr, \hs}$. Let $\zeta_{\hw}=i_{\a, \nu}(\zeta_\hq)$ and $\zeta_{\hw'}=\iota_{\a, \nu'}(\zeta_\hq)$.
Towards a contradiction, assume that the claim is false.
Then $j_{\b, \iota_\hr}(\zeta_\hr)=i_{\nu, \nu'}(f)(a)$, where $f\in \W$ and $a\in [j_{\b, \iota_\hr}(\zeta_\hr)]^{<\omega}$.
It follows from \rlem{main lemma on small ordinals} that\\\\
(6.1) $\hs$ is an iterate of $\hw$ and $\T_{\hw, \hs}$ is above $\zeta_\hw$,\\
(6.2) $\hs$ is an iterate of $\hw'$ and $\T_{\hw', \hs}$ is above $\zeta_{\hw'}$,\\
(6.3) $(\T_{\hq, \hs})_{[\nu, \nu']}\inseg \T_{\hw, \hs}$, and\\
(6.4) $\hw'$ is on the main branch of $\T_{\hw, \hs}$, and $H$ is the first extender used on the main branch of $\T_{\hw, \hs}$.\\\\
Since $\cp(\pi_{\hw', \hs})>j_{\b, \iota_\hr}(\zeta_\hr)$, we have that\\\\
(7) $j_{\b, \iota_\hr}(\zeta_\hr)=\pi_{\hw, \hs}(f)(a)$.\\\\
The Hull Property from \rcor{hull property cor} implies that $f=\pi_{\hp, \hw}(h)(b)$, where $h\in \P$ and $b\in [i_{\iota_\hq, \nu}(\zeta_\hq)]^{<\omega}$.
Notice next that the following statements hold:\\\\
(8.1) $\T_{\hp, \hw}$ is the full normalization of $\T_{\hp, \hq}\oplus \T_{\hq, \hw}$,\\
(8.2) $\T_{\hp, \hs}$ is the full normalization of $\T_{\hp, \hw}\oplus \T_{\hw, \hs}$,\\
(8.3) if $\U$ is the longest initial segment of $\T_{\hp, \hw}$ such that $\gen(\U)\subseteq \zeta_\hw$ and $\hx=(\X, \Sigma_\X)$ is the last model of $\U$, then $\hw$ is a complete iterate of $\hx$ and $\T_{\hx, \hw}$ is strictly above $\zeta_\hw$, and\\
(8.4) if $\hx$ is as in (8.3), then $\hx$ is on the main branch of $\T_{\hp, \hs}$, $\hs$ is a complete iterate of $\hx$, $\cp(\pi_{\hx, \hs})=\zeta_\hw$, and $\pi_{\hx, \hs}\rest (\X|\zeta_{\hw}^+)^\X=\pi_{\hw, \hs}\rest \W|(\zeta_\hw^+)^\W$.\footnote{This is because $\T_{\hx, \hs}$ is the full normalization of $\T_{\hx, \hw}\oplus \T_{\hw, \hs}$ and $\pi_{\hx, \hs}=\pi_{\hw, \hs}\circ \pi_{\hx, \hw}$.}\\\\
It follows that for some $g\in \P$ and $c\in [j_{\b, \iota_\hr}(\zeta_\hr)]^{<\omega}$,\\\\
(9) $j_{\b, \iota_\hr}(\zeta_\hr)=\pi_{\hp, \hs}(g)(c)$.\\\\
Reverse engineering, and letting $\hk=\hm_{\iota_\hr}^{\T_{\hr, \hs}}$, we get that\\\\
(10.1) $\hs$ is a complete iterate of $\hk$ and $\T_{\hk, \hs}$ is above $j_{\b, \iota_\hr}(\zeta_\hr)$,\\
(10.2) $\cp(\pi_{\hk, \hs})=j_{\b, \iota_\hr}(\zeta_\hr)$ (see (1.4)),\\
(10.3) $\T_{\hp, \hk}$ is the full normalization of $\T_{\hp, \hr}\oplus \T_{\hr, \hk}$,\\
(10.4) $\T_{\hp, \hs}$ is the full normalization of $\T_{\hp, \hk}\oplus \T_{\hk, \hs}$, and\\
(10.5 ) $j_{\b, \iota_\hr}(\zeta_\hr)=\pi_{\hk, \hs}(\pi_{\hp, \hk}(g))(c)\in \rge(\pi_{\hk, \hs})$.\footnote{We make this conclusion by using an analysis similar to (8.3) and (8.4) above.}\\\\
(10.5) implies that $\cp(\pi_{\hk, \hs})\not = j_{\b, \iota_\hr}(\zeta_\hr)$, contradicting (10.2).
\end{proof}
This proves \rlem{no overlapping}.
\end{proof}

\subsection{Bounding preimages and closure points}
Lemma \ref{hull property} bounds pre-images of points in the direct limit.

\begin{lemma}\label{hull property}%\normalfont
Suppose $\hq=(\Q, \Lambda)$ is a hod pair, $\d$ ends an lsa block of $\Q$, and $\mu$ is a small properly overlapped cardinal of $\Q$ that is in the $\d$-block of $\Q$.
Suppose $A\in \powerset(\pi_{\hq, \infty}(\mu))\cap \M_\infty(\hq)$ and $\iota<\pi_{\hq, \infty}(\mu)$.
Then there is a complete iterate $\hr$ of $\hq$ such that $\hr$ catches both $A$ and $\iota$ and $\gen(\T_{\hq, \hr})\subseteq \mu_\hr$.
Moreover, if $\iota$ is either a ${<}\pi_{\hq, \infty}(\mu)$-strong cardinal of $\M_\infty(\hq)$ or a limit of such cardinals, then there is a complete iterate $\hr$ of $\hq$ catching $\iota$ and such that $\gen(\T_{\hq, \hr})\subseteq \iota_{\hr}$.  
\end{lemma}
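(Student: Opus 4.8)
The statement of Lemma \ref{hull property} is a direct-limit reflection fact: a given small piece of $\M_\infty(\hq)$ (a subset $A$ of $\pi_{\hq,\infty}(\mu)$ together with an ordinal $\iota$ below $\pi_{\hq,\infty}(\mu)$) is already ``seen'' by a complete iterate $\hr$ whose iteration $\T_{\hq,\hr}$ is $\mu$-bounded (that is, $\gen(\T_{\hq,\hr})\subseteq\mu_\hr$). The plan is to first pick \emph{any} complete iterate $\hr_0$ of $\hq$ that catches $A$ and $\iota$, which is possible by the basic theory of direct limits of hod pairs: $A\in\M_\infty(\hq)$ and $\iota<\pi_{\hq,\infty}(\mu)$ means both are in the range of $\pi_{\hr_0,\infty}$ for cofinally many $\hr_0$ in the direct system $\mathcal{F}^g_\hq$, and by directedness we may take a single $\hr_0$ catching both. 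The issue is that $\T_{\hq,\hr_0}$ need not be $\mu$-bounded. To fix this, I would apply the splitting apparatus of \textsection\ref{splitting trees sec}: let $\nu=\sup(\{\nu'<\mu:\Q\models``\nu'$ is Woodin$"\})$, which is $<\mu$ since $\mu$ is properly overlapped and hence in the $\d$-block, so that $\mu$ is ``just above'' the block of Woodins below it.

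The key step is then to run a \emph{genericity/window-based iteration} of $\hr_0$ \emph{strictly above} $\mu_{\hr_0}$ — more precisely above the image of $\nu$ — to obtain a further complete iterate $\hr$ such that $\mu_{\hr_0}$ becomes small and properly overlapped in $\hr$ exactly as in $\hq$ (this is preserved by \rlem{main lemma on small ordinals} and \rcor{hull property cor}, since iterating strictly above $\nu$ does not change the block structure at $\mu$), and such that the composite $\T_{\hq,\hr}=\T_{\hq,\hr_0}{}^\frown\T_{\hr_0,\hr}$ has $\M_\infty(\hq)|\pi_{\hq,\infty}(\mu)$ matched correctly. Actually the cleanest route is: let $\hr=\hq^{\nu}$ in the sense of \rnot{nu infty} composed with enough of $\T_{\hr_0,\infty}$ to catch $A$ and $\iota$. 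Concretely, by \rlem{xi is on the main branch} the model $\hq^\mu$ (the longest initial segment $\U$ of $\T_{\hq,\infty}$ with $\gen(\U)\subseteq\pi_{\hq,\infty}(\mu)$, stopped at $\xi$ where $\M_\xi|\pi_{\hq,\infty}(\mu)=\M_\infty(\hq)|\pi_{\hq,\infty}(\mu)$) lies on the main branch of $\T_{\hq,\infty}$, so it is a genuine complete iterate of $\hq$, and by construction $\gen(\T_{\hq,\hq^\mu})\subseteq\pi_{\hq,\infty}(\mu)$. But we need the bound with $\mu_\hr$ not $\pi_{\hq,\infty}(\mu)$; here one uses that $\mu$ is regular (hypothesis (a) of \rlem{main lemma on small ordinals}) and small, so by \rlem{bound iterations factor} and \rcor{bound iterations factor 2} the generators used are in fact below $\mu_{\hq^\mu}$, giving $\gen(\T_{\hq,\hq^\mu})\subseteq\mu_{\hq^\mu}$. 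Finally, any given $A\subseteq\pi_{\hq,\infty}(\mu)$ in $\M_\infty(\hq)$ and any $\iota<\pi_{\hq,\infty}(\mu)$ are caught by $\hq^\mu$ itself, because $\hq^\mu|\pi_{\hq,\infty}(\mu)=\M_\infty(\hq)|\pi_{\hq,\infty}(\mu)$ (so $A,\iota\in\hq^\mu$) — wait, one must be careful that $A$ as a subset is caught, i.e. in the range of $\pi_{\hq^\mu,\infty}$, not just an element; here the Hull Property of \rcor{hull property cor} does the work, expressing $A$ via $\rge(\pi_{\hq^\mu|\d_{\hq^\mu},\infty})$ and ordinals $<\mu_{\hq^\mu}$, hence $A\in\rge(\pi_{\hq^\mu,\infty})$ after possibly one more small iteration absorbing the finitely-many parameters.

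For the ``moreover'' clause, when $\iota$ is a ${<}\pi_{\hq,\infty}(\mu)$-strong cardinal of $\M_\infty(\hq)$ or a limit of such, I would repeat the argument with $\iota$ in place of $\mu$: set $\hr=\hq^\iota$ using \rnot{nu infty}, apply \rlem{xi is on the main branch} to get that it is on the main branch, and use \rlem{main lemma on small ordinals} (clause (c) is exactly the hypothesis that $\iota$ is a $<\d$-strong cardinal of the iterate or a limit of such) to conclude $(\T_{\hq,\hr})$ splits at the stage where $\M_\xi|\iota=\M_\infty(\hq)|\iota$, with $\gen(\T_{\hq,\hr})\subseteq\iota_\hr$ and the tail strictly above $\iota_\hr$; that $\hr$ catches $\iota$ is automatic since $\iota_\hr=\sup$ of the relevant strongs or $\iota$ itself and $\pi_{\hq,\infty}$ fixes the supremum.

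\textbf{Main obstacle.} The delicate point is the passage from ``$A$ and $\iota$ are elements of $\hq^\mu$'' to ``$\hq^\mu$ \emph{catches} $A$ and $\iota$'', i.e. that $A\in\rge(\pi_{\hq^\mu,\infty})$, together with verifying that one can arrange this \emph{simultaneously} with the bound $\gen(\T_{\hq,\hr})\subseteq\mu_\hr$. This is where smallness of $\mu$ is essential and where one must invoke the Hull and Definability Properties of \rcor{hull property cor}: a subset of $\pi_{\hq,\infty}(\mu)$ in $\M_\infty(\hq)$ is, by the Hull Property applied in the direct limit, definable from $\rge(\pi_{\hq^\mu|\d_{\hq^\mu},\infty})$ and a finite sequence of ordinals below $\mu_{\hq^\mu}$; since those ordinals are themselves caught by a $\mu$-bounded further iterate (using \rlem{main lemma on small ordinals} again, the tail of that iterate is strictly above $\mu$, so the composite remains $\mu$-bounded), one absorbs them and lands on the desired $\hr$. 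Getting the bookkeeping of these iterated applications of \rlem{main lemma on small ordinals} to compose correctly — so that each successive tail is strictly above the previous $\mu$-image and the total generator bound is preserved — is the technical heart of the argument, but it is routine given \rlem{main lemma on small ordinals}, \rcor{bound iterations factor 2}, and \rcor{hull property cor}.
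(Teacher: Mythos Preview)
Your starting point is right --- pick an arbitrary complete iterate $\hr'$ of $\hq$ catching both $A$ and $\iota$ --- and you correctly identify \rlem{main lemma on small ordinals} as the key tool. But you then take a detour that creates the very obstacle you flag at the end. The paper's proof is a one-step \emph{cutoff}: let $\U$ be the longest initial segment of $\T_{\hq,\hr'}$ with $\gen(\U)\subseteq\mu_{\hr'}$, and let $\hr$ be its last model. By clauses (1), (4), (6), (8) of \rlem{main lemma on small ordinals} applied with $\hr'$ in the role of $\hr$ there, $\hr$ is on the main branch of $\T_{\hq,\hr'}$, $\mu_\hr=\mu_{\hr'}$, and $(\T_{\hq,\hr'})_{\geq\hr}$ is strictly above $\mu_{\hr}$. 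Hence $\cp(\pi_{\hr,\hr'})>\mu_{\hr}$, so $A_{\hr'}\in\powerset(\mu_{\hr'})\cap\M^{\hr'}=\powerset(\mu_{\hr})\cap\M^{\hr}$ and $\iota_{\hr'}<\mu_{\hr}$ are fixed by $\pi_{\hr,\hr'}$. Since $\pi_{\hr,\infty}=\pi_{\hr',\infty}\circ\pi_{\hr,\hr'}$, $\hr$ catches $A$ and $\iota$. Done.

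Your approach instead moves to $\hq^\mu$ (a node of $\T_{\hq,\infty}$), and then you must argue that $\hq^\mu$ catches $A,\iota$ --- which you correctly see is nontrivial, and you reach for the Hull Property to repair it. But this obstacle is entirely self-inflicted: by cutting \emph{down} along the main branch of a finite iteration rather than going up toward $\M_\infty$, catching is preserved for free. There is also a size issue with your route: $\hq^\mu$ is a node of the (generally uncountable) tree $\T_{\hq,\infty}$, so it need not be a countable complete iterate of $\hq$ in the sense required; the paper's cutoff stays inside $\T_{\hq,\hr'}$, which is countable by assumption. For the ``moreover'' clause the paper does the same cutoff, now at the least $\zeta$ with $\M_\zeta^{\T_{\hq,\hr'}}|\iota_{\hr'}=\M^{\hr'}|\iota_{\hr'}$, using clause (7) of \rlem{main lemma on small ordinals} to see the tail is strictly above $\iota_{\hr'}$; your version using $\hq^\iota$ has the same unnecessary complications.
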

\begin{proof} Given any complete iterate $\hr'$ of $\hq$ that catches $(A, \iota)$, letting $\U$ be the longest initial segment of $\T_{\hq, \hr'}$ such that $\gen(\U)\subseteq \mu_{\hr'}$ and letting $\hr$ be the last model of $\U$, it follows from \rlem{main lemma on small ordinals} that $\hr$ is as desired.  

Suppose now that $\iota$ is either a ${<}\pi_{\hq, \infty}(\mu)$-strong cardinal of $\M_\infty(\hq)$ or a limit of such cardinals.
\rlem{main lemma on small ordinals} implies that if $\hr'$ is a complete iterate of $\hq$ that catches $\iota$ and $\zeta$ is the least such that $\M_\zeta^{\T_{\hq, \hr'}}|\iota_{\hr'}=\M^{\hr'}|\iota_{\hr'}$, then $(\T_{\hq, \hr'})_{\geq \zeta}$ is strictly above $\iota_{\hr'}$.
It follows that if $\hr$ is the last model of $(\T_{\hq, \hr'})_{\leq \zeta}$, then $\hr$ catches $\iota$ and $\iota_{\hr}=\iota_{\hr'}$ (this is because $\hr'$ is a complete iterate of $\hr$ and $\cp(\pi_{\hr, \hr'})> \iota_{\hr'}$).
Since $\gen(\T_{\hq, \hr})\subseteq \iota_{\hr}$, $\hr$ is as desired.
\end{proof}

Lemma \ref{closed points equiv} characterizes closure points. 

\begin{terminology}\label{closure term}\normalfont
Suppose $\hp=(\P, \Sigma)$ is a hod pair, $\d$ is a Woodin cardinal of $\P$ that ends an lsa block of $\P$, and $\nu$ is a ${<}\d$-strong cardinal of $\P$. Let $\tau=(\nu^+)^\P$, $\hq=(\Q, \Sigma_\Q)$ be a complete iterate of $\hp$, and $X_\hq=\pi_{\hq, \infty}[\Q|\tau_\hq]$.
We say $\gg<\nu^\infty$ is $(\hq, \nu)$-\textbf{closed}, or $(X_\hq, \nu)$-\textbf{closed}, if   
\begin{center}
$\gg=\sup(\nu^\infty \cap Hull^{\M_\infty(\hp)|\tau^\infty}(X_\hq\cup \gg))$.
\end{center} 
We will often drop $\nu$ from the notation and say that $\gg$ is $X_\hq$-closed to mean that $\gg$ is $\hq$-closed.\footnote{The idea here is that being $\hq$-closed is a property that can be defined using $(X_\hq, \nu, \gg)$, and that $\hq$ is not needed.}
\end{terminology}

Suppose $(\hp, \nu, \d, \tau)$ are as in \rter{closure term}.
Notice that $\nu^\infty=\pi_{\hp, \infty}(\nu)$ is a limit of $(\hq, \nu)$-closed points and that if $\gg$ is $(\hq, \nu)$-closed, then if $\M_\gg$ is the transitive collapse of $Hull^{\M_\infty(\hq)|\tau^\infty}(X_\hq\cup \gg)$ and $\sigma: \M_\gg\rightarrow \M_\infty(\hq)|\tau^\infty$ is the uncollapse map then $\cp(\sigma)=\gg$.
It follows that $\M_\infty(\hq)\models ``\gg$ is properly overlapped."
In fact, we have the following characterization of $\hq$-closed points.
Given a complete iterate $\hq$ of $\hp$, we let $w_\hq=(\tau_\hq, \d_\hq)$ (this notation depends on $\tau$, but we will only use it below, and so we omit $\tau$).

\begin{lemma}\label{closed points equiv}%\normalfont
Suppose $(\hp, \nu, \d, \tau)$ are as in \rter{closure term}. Suppose $\hp'$ is a complete iterate of $\hp$. Then $\gg$ is $(\hp', \nu)$-closed if and only if  there is a complete iterate $\hr=(\R, \Sigma_\R)$ of $\hp'$ and an iterate $\hs=(\S, \Sigma_\S)$ of $\hr$ such that 
\begin{enumerate}
\item $\T_{\hr, \hs}$ is based on $w_\hr=(\tau_\hr, \d_\hr)$,
\item $\gen(\T_{\hp, \hr})\leq \nu_\hr$, and
\item there is an extender $E\in \vec{E}^{\S}$ such that $\cp(E)=\nu_\hr$, $\gen(\T_{\hr, \hs})\leq\im(E)$, and $\pi_{\hr_E, \infty}(\nu_\hr)=\gg$, where $\hr_E=(Ult(\R, E), \Sigma_{Ult(\R, E)})$.
\end{enumerate}
In particular, $\nu^\infty$ is a limit of $(\hp', \nu)$-closed points.  
\end{lemma}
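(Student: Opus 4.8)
\textbf{Plan for proving Lemma \ref{closed points equiv}.}

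The plan is to prove both directions of the equivalence by exploiting the characterization of small, properly overlapped cardinals developed in \rlem{main lemma on small ordinals} and \rlem{hull property}, together with the hull and definability properties of \rcor{hull property cor}. Throughout, write $w_\hr = (\tau_\hr, \d_\hr)$ as in the statement, and recall that $\gg$ being $(\hp',\nu)$-closed means $\gg = \sup(\nu^\infty \cap \mathrm{Hull}^{\M_\infty(\hp)|\tau^\infty}(X_{\hp'} \cup \gg))$, where $X_{\hp'} = \pi_{\hp',\infty}[\,\mathcal{P}'|\tau_{\hp'}\,]$.

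For the \emph{backward direction}, suppose $\hr$, $\hs$, and $E \in \vec{E}^\S$ are as in the three itemized conditions. Set $\hr_E = (\mathrm{Ult}(\R,E), \Sigma_{\mathrm{Ult}(\R,E)})$. The key point is that, because $\gen(\T_{\hp,\hr}) \leq \nu_\hr$, we have $X_\hr = X_{\hp'}$ up to the relevant identifications (the tail $\T_{\hp',\hr}$ does not move $\mathcal{R}|\tau_\hr$), so it suffices to show $\gg$ is $(\hr,\nu)$-closed. Since $\T_{\hr,\hs}$ is based on $w_\hr$, it is in particular based on $\mathcal{R}|\d_\hr$ and strictly above $\tau_\hr$; combined with $\gen(\T_{\hr,\hs}) \leq \im(E)$ and $\cp(E) = \nu_\hr$, the extender $E$ witnesses that $\mathrm{Ult}(\R,E)$ sits below $\S$ appropriately. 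The image $\pi_{\hr_E,\infty}(\nu_\hr) = \gg$ then computes $\gg$ as the closure of $\mathrm{rge}(\pi^{\R}_E \restriction \R|\tau_\hr)$, which under $\pi_{\hr_E,\infty}$ is exactly the supremum of $\nu^\infty \cap \mathrm{Hull}^{\M_\infty(\hr)|\tau^\infty}(X_\hr \cup \gg)$. This is a computation using the commuting diagram $\pi_{\hr,\infty} = \pi_{\hr_E,\infty}\circ \pi^\R_E$ on $\R|\tau_\hr$ together with the fact that $E$ has critical point $\nu_\hr$ and generators below $\im(E)$.

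For the \emph{forward direction}, suppose $\gg$ is $(\hp',\nu)$-closed. First apply \rlem{hull property} (with $\mu$ as $\nu$, noting $\nu$ is a ${<}\d$-strong cardinal, hence covered by the ``moreover'' clause if we also use that $\nu$ is small and properly overlapped — here I would first arrange $\hp'$ or a further iterate to make $\nu$ small and properly overlapped, which is part of the ambient setup since $\d$ ends an lsa block) to find a complete iterate $\hr$ of $\hp'$ with $\gen(\T_{\hp',\hr}) \leq \nu_\hr$ and catching enough of the Hull defining $\gg$. Then, using that $\gg$ is a closure point, I would run a genericity/comparison iteration of $\hr$ \emph{based on $w_\hr$} (so above $\tau_\hr$ and below $\d_\hr$) to produce $\hs$ together with an extender $E \in \vec{E}^\S$ with $\cp(E) = \nu_\hr$ whose ultrapower image of $\nu_\hr$, pushed to the direct limit, is exactly $\gg$. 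The existence of such an $E$ is where one uses that $\nu_\hr$ is ${<}\d_\hr$-strong in $\R$ (inherited from $\nu$ being ${<}\d$-strong in $\mathcal{P}$, via \rcor{hull property cor}), so $\S$ contains extenders with critical point $\nu_\hr$ reaching arbitrarily high below $\d_\hs$, and the closure property of $\gg$ lets us stop at precisely the right stage so that $\gen(\T_{\hr,\hs}) \leq \im(E)$ and $\pi_{\hr_E,\infty}(\nu_\hr) = \gg$. The final clause — that $\nu^\infty$ is a limit of $(\hp',\nu)$-closed points — follows as remarked before the lemma: $\nu^\infty = \pi_{\hp,\infty}(\nu)$ and closure points are cofinal in it by a routine Skolem-hull argument (every initial segment of $\nu^\infty$ can be closed off under the countably many functions determined by $X_{\hp'}$, using that $\nu^\infty$ has uncountable cofinality, which holds since otherwise one contradicts the footnoted observation that $\cf(\pi_{\hq,\infty}(\nu)) = \omega$ is false).

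\textbf{Main obstacle.} The hard part will be the forward direction's construction of the extender $E$ and the iterate $\hs$ realizing $\gg$ \emph{exactly}: one must iterate $\hr$ above $\tau_\hr$ in a way that both makes $\nu_\hr$ into the critical point of an extender whose $\pi_{\hr_E,\infty}$-image of $\nu_\hr$ equals $\gg$, and simultaneously keeps $\gen(\T_{\hr,\hs})\leq \im(E)$. This requires carefully interleaving the ``closing off'' that defines $\gg$ as a closure point with the bookkeeping of the iteration tree, and invoking \rlem{no overlapping} to ensure the small strong cardinal $\nu_\hr$ is not overlapped by extenders used on the wrong side — so that the extender $E$ genuinely appears on the sequence of $\S$ at the correct index. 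The initial-segment condition and the analysis in the proof of \rclm{4.1} (that images of $\nu_\hr$-type points are generators) will be the technical tools needed to pin down $\im(E)$ precisely.
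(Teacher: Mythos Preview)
Your backward direction is on the right track but stays vague where the paper is concrete. The paper argues by contradiction: assume some $\gg' \geq \gg$ lies in $\mathrm{Hull}^{\M_\infty(\hp)|\tau^\infty}(X_{\hp'}\cup\gg)$, say $\gg' = \sigma^{\M_\infty(\hp)|\tau^\infty}(s,t)$ with $s\in X_{\hp'}^{<\omega}$ and $t\in\gg^{<\omega}$. Pull the term back to $\R_E$ to form $U = \{\sigma^{\R_E|\tau_{\hr_E}}(s_{\hr_E},v): v\in(\nu_\hr)^{<\omega}\}\cap\nu_{\hr_E}$; since every such value lies in $\rge(\pi_E^\R)$, one gets $U\subseteq\nu_\hr$. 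Now pass to a further complete iterate $\hq$ of $\hr_E$ catching $t$: since $\gg_\hq = \pi_{\hr_E,\hq}(\nu_\hr)$, the model $\Q$ sees a witness placing a point of $\pi_{\hr_E,\hq}(U)$ in $[\pi_{\hr_E,\hq}(\nu_\hr),\nu_\hq)$, contradicting $U\subseteq\nu_\hr$.

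Your forward direction has a genuine gap, and the approach is not the paper's. You propose to \emph{construct} $\hs$ by iterating $\hr$ in the window $w_\hr$ and then ``stop at precisely the right stage'' so that some $E$ on $\S$ satisfies $\pi_{\hr_E,\infty}(\nu_\hr)=\gg$ exactly; but you give no mechanism for hitting $\gg$ on the nose, and the tools you cite (\rlem{no overlapping}, the generator analysis of \rcl{4.1}) are not what is needed here. The paper instead takes \emph{any} complete iterate $\hq$ of $\hp'$ catching $\gg$, chosen minimal along the main branch of $\T_{\hp',\hq}$ so that no earlier node catches $\gg$. By smallness of $\nu$ (via \rlem{main lemma on small ordinals}) the last extender $E$ used on the branch has critical point $\nu_{\hp_\zeta}$ where $\zeta$ is its tree-predecessor; one sets $\R=\M_\zeta$, $\S=\M_{\b'}$ (the model carrying $E$). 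The substance is then to show $\gg_\hq=\nu_\hr$. If not, $\gg_\hq\in(\nu_\hr,\nu_\hq)$; closure of $\gg$ forces $\gg_\hq<\gen(E)$ (otherwise $\gg_\hq=\pi_{\hr,\hq}(f)(s)$ with $f$ in the preimage of $X_{\hp'}$ and $s\in\gg_\hq^{<\omega}$, contradicting closure), so $\gg_\hq$ is a generator of $E$. Then $\M_\infty(\hp)$ sees $\gg$ overlapped by an extender with critical point $\pi_{\hq,\infty}(\nu_\hr)$; since $\gg$ is properly overlapped, this forces $\pi_{\hq,\infty}(\nu_\hr)$ to be a ${<}\d^\infty$-strong cardinal of $\M_\infty(\hp)$, contradicting smallness of $\nu_\hr$ in $\R$ (as $\nu_\hr$ would then be a limit of ${<}\d_\hr$-strongs). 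Thus the paper \emph{reads off} $(\hr,\hs,E)$ from the structure of an arbitrary catching iterate rather than building an iteration to specification.
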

\begin{proof} Set $X=X_{\hp'}$.\footnote{This notation depends on $\nu$ but we omit it as it is fixed.}
We first prove the right-to-left direction.
Suppose that $\gg=\pi_{\hr_E, \infty}(\nu_\hr)$ for some $(\hr, \hs, E)$ as above.
We claim that $\gg$ is $X$-closed.
Towards a contradiction, assume that $\gg<\sup(\nu^\infty\cap Hull^{\M_\infty(\hp)|\tau^\infty}(X\cup \gg))$, and let 
\begin{center}
$\gg'\in \nu^\infty\cap Hull^{\M_\infty(\hp)|\tau^\infty}(X\cup \gg)$
\end{center}
be such that $\gg'\geq \gg$.
Let $s\in X^{<\omega}$, $t\in \gg^{<\omega}$, and $\sigma$ be a term such that $\gg'=\sigma^{\M_\infty(\hp)|\tau^\infty}(s, t)$.
Set $\R_E=Ult(\R, E)$, and let $\hq=(\Q, \Sigma_\Q)$ be a complete iterate of $\hr_E=(\R_E, \Sigma_{\R_E})$ such that $t \in \rge(\pi_{\hq, \infty})$ (and hence, $\gg'\in  \rge(\pi_{\hq, \infty})$).
Setting
\begin{center}
$U=\{\sigma^{\R_E|\tau_{\R_E}}(s_{\hr_E}, v): v\in (\nu_\hr)^{<\omega}\}\cap \nu_{\hr_E}$,
\end{center} 
we have $U\subseteq \nu_\hr$, as $U\subseteq \rge(\pi_E^\R)$ (because $\pi_{\hr, \hr_E}(\nu_\hr)=\nu_{\hr_E}$).
Notice also that $U\in \R_E$.
However, since $\gg_\hq=\pi_{\hr_E, \hq}(\nu_\hr)$, we have that 
\begin{center}
$\Q\models \exists \iota\in [\pi_{\hr_E, \hq}(\nu_\hr), \nu_{\hq}) \exists v \in (\pi_{\hr_E, \hq}(\nu_\hr))^{<\omega} ( \iota=\sigma^{\Q|\tau_{\hq}}(s_{\hq}, v))$.
\end{center}
Thus, $\Q\models \pi_{\hr_E, \hq}(U)\cap [\pi_{\hr_E, \hq}(\nu_\hr), \nu_\hq)\not=\emptyset$, contradiction.

Suppose next that $\gg$ is an $X$-closed point.
Let $\hq$ be a complete iterate of $\hp'$ such that $\gg\in \rge(\pi_{\hq, \infty})$.
We must have that $\gg_\hq<\nu_\hq$.
For $\b<\lh(\T_{\hp', \hq})$, let $\hp_\b=\hm_\b^{\T_{\hp', \hq}}$.
We assume that for no $\b+1<\lh(\T_{\hp', \hq})$ such that $[0, \b)_{\T_{\hp', \hq}}\cap D^{\T_{\hp', \hq}}=\emptyset$, $\gg\in \rge(\pi_{\hp_\b, \infty})$.
It follows that if $\b+1=\lh(\T_{\hp, \hq})$, then $\b=\b'+1$, and if $\zeta$ is the $\T_{\hp, \hq}$-predecessor of $\b$, then $\cp(E_{\b'}^{\T_{\hp, \hq}})=\nu_{\hp_\zeta}$ (we use smallness of $\nu$ here, see \rlem{main lemma on small ordinals}).
Let $\R=\M_\zeta^{\T_{\hp, \hq}}$, $\S=\M_{\b'}^{\T_{\hp, \hq}}$, and $E=E_{\b'}^{\T_{\hp, \hq}}$.
Let $\hr=(\R, \Sigma_\R)$ and $\hs=(\S, \Sigma_\S)$.

There are two cases.
If $\gg_\hq=\nu_\hr$, then there is nothing to prove, as $(\hr, \hs, E)$ are as desired.
Suppose then that $\gg_\hq\not =\nu_\hr$.
As $\gg\not \in \rge(\pi_{\hr, \infty})$, we have $\gg_\hq\in (\nu_\hr, \nu_\hq)$.
Because $\gg$ is $X$-closed, it must be that $\gg_\hq<\gen(E)$, as otherwise $\gg_\hq=\pi_{\hr, \hq}(f)(s)$, where $s\in \gg_\hq^{<\omega}$ and $f\in \pi_{\hq, \infty}^{-1}[X]$.
It follows that $\gg_\hq$ must be a generator of $E$ and hence,
\begin{center}
$\M_\infty(\hp)\models ``\gg$ is overlapped by an extender with critical point $\pi_{\hq, \infty}(\nu_\hr)"$.
\end{center}
Because $\gg$ is properly overlapped in $\M_\infty(\hp)$, we have that $\M_\infty(\hp)\models ``\pi_{\hq, \infty}(\nu_\hr)$ is ${<}\d^\infty$-strong cardinal."
This contradicts the smallness of $\nu_\hr$ in $\R$, as we have that $\R\models ``\nu_\hr$ is a limit of ${<}\d_\hr$-strong cardinals." 
\end{proof}

Essentially the same proof establishes Lemma \ref{catching closed points}.

\begin{lemma}\label{catching closed points}%\normalfont 
Suppose $(\hp, \nu, \d, \tau)$ are as in \rter{closure term}.
Suppose that $\hp'$ is a complete iterate of $\hp$ and $X=X_{\hp'}$. Suppose $\gg$ is a $(\hp', \nu)$-closed point and $\hq=(\Q, \Sigma_\Q)$ is a complete iterate of $\hp'$ such that $\sup(\pi_{\hq, \infty}[\nu_\hq])\leq \gg$.
Let $\hs=(\S, \Sigma_\S)$ be a complete iterate of $\hq$ such that $\gg\in \rge(\pi_{\hs, \infty})$.
Let $\R$ be the least node on the main branch of $\T_{\hq, \hs}$ such that $\R|\gg_\hs=\S|\gg_\hs$, and let $\iota<\lh(\T_{\hq, \hs})$ be such that $\R=\M_\iota^{\T_{\hq, \hs}}$. Let $\hr=\hm_\iota^{\T_{\hq, \hs}}$.

Then $\gg_\hs=\nu_\hr$, and there is $\b<\lh(\T_{\hq, \hs})$ such that $\b+1$ is on the main branch of $\T_{\hq, \hs}$, $\T_{\hq, \hs}(\b+1)=\iota$, and $\cp(E_\b^{\T_{\hq, \hs}})=\nu_\hr$.
\end{lemma}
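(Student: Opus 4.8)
The plan is to mirror the proof of the left-to-right direction of \rlem{closed points equiv}, now carried out inside the fixed tree $\T_{\hq,\hs}$: the node $\R$ plays the role of the model $\M_\zeta$ appearing there, and the desired $E_\b^{\T_{\hq,\hs}}$ plays the role of the extender $E=E_{\b'}$ there. First I would record the elementary facts about $\T_{\hq,\hs}$. Since $\gg<\nu^\infty=\pi_{\hs,\infty}(\nu_\hs)$ and $\pi_{\hs,\infty}$ is injective, $\gg_\hs<\nu_\hs$. Since $\R$ and $\S$ agree below the cardinal $\gg_\hs$, $\iota$ lies on the main branch of $\T_{\hq,\hs}$, and no dropping occurs on that branch (as $\hs$ is a \emph{complete} iterate of $\hq$), every extender used on $[\iota,\lh(\T_{\hq,\hs}))_{\T_{\hq,\hs}}$ has critical point $\geq\gg_\hs$ (an extender $E$ of smaller critical point alters $\cP(\cp(E))$ and hence $\M|\gg_\hs$, since $\gg_\hs$ is a cardinal); so $\pi_{\hr,\hs}$ is defined with $\cp(\pi_{\hr,\hs})\geq\gg_\hs$, and as $\pi_{\hr,\hs}(\nu_\hr)=\nu_\hs>\gg_\hs$ forces $\cp(\pi_{\hr,\hs})\leq\nu_\hr$, we get $\gg_\hs\leq\nu_\hr$.

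The heart of the proof is the equality $\gg_\hs=\nu_\hr$. Granting it, the conclusion about $\b$ follows: $\R\neq\S$ (else $\hr=\hs$, so $\nu_\hr=\nu_\hs=\gg_\hs$ and $\gg=\nu^\infty$, a contradiction), so the immediate $\T_{\hq,\hs}$-successor of $\iota$ on the main branch is a successor ordinal $\b+1$ with $\T_{\hq,\hs}(\b+1)=\iota$; and $\cp(\pi_{\hr,\hs})>\nu_\hr$ would again give $\nu_\hs=\nu_\hr=\gg_\hs$ and $\gg=\nu^\infty$, so $\cp(E_\b^{\T_{\hq,\hs}})=\cp(\pi_{\hr,\hs})=\gg_\hs=\nu_\hr$, as required. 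To prove $\gg_\hs=\nu_\hr$, suppose toward a contradiction that $\gg_\hs<\nu_\hr$, and split on $\cp(\pi_{\hr,\hs})$. If $\cp(\pi_{\hr,\hs})=\gg_\hs$, then $E_\b^{\T_{\hq,\hs}}$ has critical point $\gg_\hs<\nu_\hr$; since $\gg_\hs=\cp(E_\b^{\T_{\hq,\hs}})$ is the least generator of $E_\b^{\T_{\hq,\hs}}$, the point $\gg=\pi_{\hs,\infty}(\gg_\hs)$ is overlapped in $\M_\infty(\hp)$ by (the image of) $E_\b^{\T_{\hq,\hs}}$, and running the final paragraph of the proof of \rlem{closed points equiv} — using that $\pi_{\hr,\hs}$ fixes $\R|\gg_\hs=\S|\gg_\hs$, the Hull and Definability Properties of \rcor{hull property cor}, the initial segment condition, and that $\nu_\hr$ is small in $\R$ — yields that $\nu_\hr$ is big in $\R$, a contradiction. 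If instead $\cp(\pi_{\hr,\hs})>\gg_\hs$, then $\gg_\hs=\pi_{\hr,\infty}^{-1}(\gg)<\nu_\hr$; pulling $\gg_\hs$ back along $\T_{\hq,\hr}=(\T_{\hq,\hs})_{\leq\iota}$ (which has no drop on its main branch), either $\gg_\hs\in\rge(\pi_{\hq,\hr})$, in which case $\gg\in\pi_{\hq,\infty}[\nu_\hq]$, contradicting $\sup(\pi_{\hq,\infty}[\nu_\hq])\leq\gg$ since that set has no largest element — this is where the hypothesis on $\hq$ is used — or $\gg_\hs$ is a generator of an extender used in $\T_{\hq,\hr}$ with critical point $\leq\gg_\hs<\nu_\hr$, and the smallness argument of \rlem{main lemma on small ordinals} together with \rcor{hull property cor} again forces $\nu_\hr$ to be big in $\R$. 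In every case we reach a contradiction, so $\gg_\hs=\nu_\hr$.

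I expect the main obstacle to be, as in \rlem{closed points equiv}, the smallness analysis in the case $\gg_\hs<\nu_\hr$: one must verify that the generator/overlap configuration forced by the $X_\hq$-closedness of $\gg$ genuinely reflects down to a failure of smallness \emph{in $\R$} — that the strong cardinals produced lie on $\R$'s extender sequence and are bounded by $\nu_\hr$ — which is exactly what the Hull and Definability Properties of \rcor{hull property cor}, via \rlem{main lemma on small ordinals} and the initial segment condition, are designed to deliver. A secondary point requiring care is the compatibility of the hull computations under the change of iterate (from $\hp'$ to $\hq$, and from $\hq$ to $\hr$): one needs that $(\hp',\nu)$-closedness of $\gg$ transfers to the $X_\hq$- and $X_\hr$-closedness used above, which holds because the iterations in play are $\nu$-bounded and hence introduce no new generators below $\nu^\infty$ (cf.\ \rlem{bound iterations factor}).
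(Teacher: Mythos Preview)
Your proposal is correct and takes essentially the same approach as the paper, which simply says ``Essentially the same proof [as \rlem{closed points equiv}] establishes \rlem{catching closed points}.'' You have supplied considerably more detail than the paper does, correctly identifying that the node $\R$ plays the role of the predecessor model $\M_\zeta$ in the proof of \rlem{closed points equiv} and that the crux is the equality $\gg_\hs=\nu_\hr$.

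One small point: your Case~1 invokes ``the final paragraph of the proof of \rlem{closed points equiv},'' but that paragraph treats the configuration $\gg_\hq\in(\nu_\hr,\nu_\hq)$ with $\cp(E)=\nu_\hr$, whereas your Case~1 has $\cp(E_\b^{\T_{\hq,\hs}})=\gg_\hs<\nu_\hr$. The contradiction you want there comes more directly from the proof of \rlem{main lemma on small ordinals} (specifically the argument for clause~7, items~(2) and~(3.1)--(3.2)): an extender applied to $\R$ with critical point strictly below $\nu_\hr$ forces that critical point to be a measurable limit of ${<}\d_\hr$-strong cardinals of $\R$, contradicting smallness of $\nu_\hr$. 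With that adjustment your case analysis goes through; in fact the smallness lemma makes Case~1 immediately impossible, so the argument collapses to your Case~2, which is exactly the adaptation of the ``$\gg_\hq\neq\nu_\hr$'' paragraph in \rlem{closed points equiv}.
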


\section{Nairian models derived from hod mice}\label{sec: kom models}

In this section, we
\begin{enumerate}
    \item prove a useful result about the Vop\v{e}nka algebra (\rlem{vopenka algebra}) and use it to establish a result showing that the different Nairian models agree on the powerset operation (see \rthm{cating the powerset}, whose main proof ideas we use several times),

    \item construct a Nairian model of ${\sf{ZF}}+``\omega_1$ is a supercompact cardinal" (see \rthm{first prop} and \rcor{zf in c}),

    \item show that strong cardinals of $\H$ are successor and regular cardinals of the Nairian model (see  \rthm{first prop1} and \rthm{strongs are successors}), and

    \item show that the cofinality of strong cardinals of $\H$ is big in the parent determinacy model (\rthm{cof is kappa}).
\end{enumerate}
%(1) prove a useful result about the Vopenka algebra (see \rlem{vopenka algebra}) and use it to establish a result showing that the different Nairian models agree on the powerset operation (see \rthm{cating the powerset}), \\
%(2) construct a Nairian model of ${\sf{ZF}}+``\omega_1$ is a supercompact cardinal" (see \rthm{first prop} and \rcor{zf in c}),\\
%(3) show that strong cardinals of $\H$ are successor and regular cardinals of the Nairian model (see  \rthm{first prop1} and \rthm{strongs are successors}), and\\
%(4) show that the cofinality of strong cardinals of $\H$ is big in the parent determinacy model (see \rthm{cof is kappa}).\\\\

%We will use the main ideas behind the proof of \rthm{cating the powerset} several times.

%\begin{notation}[\textbf{Additional notation for \textsection\ref{sec: kom models}}]\label{notation for kom model}\normalfont
\subsection{Additional notation for \textsection\ref{sec: kom models}}\label{notation for kom model}
We use the following notation in addition to \rnot{iteration terminology} and \rter{catches} throughout this section.
We will be working inside the derived model of some hod mouse.
We assume as our working hypothesis that $\V\models {\sf{ZFC}}$ is a hod premouse\footnote{We tacitly assume that all large cardinal notions are witnessed by the extenders on the extender sequence of hod premouse.} and $\xi\leq \d<\eta$ are such that 
	\begin{enumerate}
		\item $\eta$ is an inaccessible limit of Woodin cardinals of $\V$,
		\item $\d$ is a Woodin cardinal of $\V$,
		\item $\xi$ is a limit of Woodin cardinals of $\V$ such that there is $\k\leq \xi$ with the property that $\k$ is $<\d$-strong cardinal.
	\end{enumerate}
	Let $g\subseteq Coll(\omega, <\eta)$ be $\V$-generic and let $M$ be the derived model of $\V$ as computed by $g$. Set
	\begin{enumerate}
		\item $\P=\V|(\eta^+)^\V$,
		\item $\Sigma\in \V$ be the iteration strategy of $\P$ indexed on the sequence of $\V$,
        \item $\hp=(\P, \Sigma)$,
		\item $\mathcal{F}=\mathcal{F}^g_\hp$ (see \rsubsec{sec: chang model}),
		\item $\mH=\M_\infty(\hp)$,
        \item $\Omega=\Sigma_{\mH}$,  
		\item $\xi^\infty=\pi_{\hp, \infty}(\xi)$ and $\d^\infty=\pi_{\hp, \infty}(\d)$.
	\end{enumerate}
	We let $\c^-=(\c^-_{\xi^\infty})^M$, $\c=(\c_{\xi^\infty})^M$ and $\c^+=(\c_{\xi^\infty}^+)^\M$.
    We treat $\hp$ as a hod pair in $\V$ and in $\V[g]$. Observe that if $\a<\eta$ then $\hp|\a\in M$. 

\subsection{Bounding the Solovay sequence}
We study the Solovay sequences of various Nairian models under the working hypothesis stated in \rnot{notation for kom model}.

%Given a set $X$, $\Theta_X$ is the supremum of ordinals $\a$ such that there is a surjection $f \colon X\to \a$.
%Working inside a determinacy model, we define the Solovay sequence associated with $X$ as follows.  

\begin{definition}\label{x-solovay sequence}\normalfont
	Suppose $X_0, X_1$ are any two sets. We then let $\theta_0(X_0, X_1)$ be the supremum of ordinals $\b$ such that there is an $OD(X_1)$\footnote{This means that $X_1$ can be used as a parameter.} surjection $f: X_0\rightarrow \b$.
\end{definition}

While we will apply \rdef{x-solovay sequence} inside determinacy models, the definition also makes sense in $\ZFC$.

\begin{lemma}\label{chain condition for vopenka1}%\normalfont
	Suppose $\nu<\l$, $\hq=(\Q, \Lambda)$ is a complete iterate of $\hp$ such that $\nu\in \rge(\pi_{\hq, \infty})$, and $\mu$ is the least Woodin cardinal of $\mH$ that is greater than $\nu$.
    Set $Z=\pi_{\hq, \infty}[\Q|\nu_\hq]$.
    Then \[M\models ``\mu \text{ is not a surjective image of an $OD(Z)$ function with domain }\powerset_{\omega_1}(\nu)."\]
    Thus, \[M\models \theta_0(\powerset_{\omega_1}(\nu), Z)\leq \mu.\]
\end{lemma}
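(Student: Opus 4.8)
The plan is to run a Vop\v{e}nka-algebra argument. Suppose for contradiction that $M$ contains an $OD(Z)$ surjection $f\colon\powerset_{\omega_1}(\nu)\to\mu$. We may assume $\hq$ catches $\mu$: otherwise pass to a complete iterate $\hr$ of $\hq$ that does, enlarging $Z$ to $Z_\hr=\pi_{\hr,\infty}[\R|\nu_\hr]\supseteq Z$, and note that $f$ is then $OD(Z_\hr)$ and that proving the statement for $Z_\hr$ yields it for $Z$ (since $\theta_0(\powerset_{\omega_1}(\nu),Z)\le\theta_0(\powerset_{\omega_1}(\nu),Z_\hr)$). Write $\mu_\hq$ for the least Woodin cardinal of $\Q$ above $\nu_\hq$, so $\pi_{\hq,\infty}(\mu_\hq)=\mu$. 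Recall from the conventions of \textsection\ref{notation for kom model}, and the self-iterability of $\V$, that $\mH$, the maps $\pi_{\hr,\infty}\rest(\R|(\nu_\hr^+)^\R)$ for complete iterates $\hr$ of $\hq$, and hence $Z$ itself, are ordinal definable in $M$. In $M$ I would form the Vop\v{e}nka algebra $\B$ relative to $Z$: the Boolean algebra of $OD(Z)$ subsets of $\powerset_{\omega_1}(\nu)$ ordered by inclusion (a set, since it is a subset of $\powerset(\powerset_{\omega_1}(\nu))$). The partition $\vec A=\langle f^{-1}(\{\alpha\}):\alpha<\mu\rangle$ is then a maximal antichain of $\B$ of size $\mu$, and it is $OD(Z)$ as a sequence because $f^{-1}(\{\alpha\})$ is $OD$ from $Z$ and $\alpha$.

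The key step is a localization of $\B$ inside $\mH$ below $\mu$: I would show that in $M$ there is an $OD(Z)$ Boolean embedding $\iota$ of $\B$ onto a dense subalgebra of $\mathbb W=\pi_{\hq,\infty}(\mathbb W_0)$, where $\mathbb W_0\in\Q|\mu_\hq$ is a poset of $\Q$-size $<\mu_\hq$ adding a subset of $\nu_\hq$ --- concretely the extender algebra of $\Q$ at $\mu_\hq$ built from extenders with critical point in $[\nu_\hq,\mu_\hq)$, i.e.\ the Vop\v{e}nka algebra of $\Q$ below $\mu_\hq$ --- and moreover $\iota[\vec A]\in\mH$. Granting this, $\iota[\vec A]$ is an antichain of $\mathbb W$ of size $\mu$ lying in $\mH$; but $\mathbb W\in\mH|\mu$ and $\mu$ is a Woodin (hence inaccessible) cardinal of $\mH$, so every poset in $\mH|\mu$ has the $\mu$-c.c.\ in $\mH$ --- a contradiction. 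This yields the lemma, the displayed bound on $\theta_0$ being a restatement.

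To prove the localization I would use the genericity-iteration machinery of \textsection\ref{sec: chang model}. Given $x\in\powerset_{\omega_1}(\nu)$, I would build --- canonically from $x$, $\hq$, $\Sigma$, inside an intermediate extension $\V[g_\beta]$, $\beta<\eta$ --- a complete iterate $\hr_x$ of $\hq$ with $\gen(\T_{\hq,\hr_x})$ bounded as in \rlem{splitting lemma}, so that $\nu_{\hr_x}=\nu_\hq$, $\pi_{\hr_x,\infty}$ fixes $\nu$, and $\pi_{\hr_x,\infty}$ is the identity on $Z$, such that a canonical code for $x$ is $\pi_{\hq,\hr_x}(\mathbb W_0)$-generic over $\R_x$. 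Applying $\pi_{\hr_x,\infty}$ and invoking \rthm{invariance}, the condition of $\mathbb W=\pi_{\hq,\infty}(\mathbb W_0)$ satisfied by $x$ is pinned down; this assigns to each $A\in\B$ a condition $\iota(A)\in\mathbb W$ for which ``$x\in A$'' is equivalent to ``$x$ satisfies $\iota(A)$''. That $\iota$ is a Boolean embedding onto a dense set follows from the mutual genericity of the various $x$, and its $OD(Z)$-definability from the definability of the whole procedure in $M$. Finally $\iota[\vec A]$, which can be coded by a subset of some $\gamma<\mu$, is $OD(Z)$ and hence lies in $\mathrm{HOD}_Z^M$, and it lies in $\mH$ because $\powerset(\gamma)^{\mathrm{HOD}_Z^M}=\powerset(\gamma)^{\mH}$ for $\gamma<\mu$ (the information in $Z$ lives below $\nu^+<\mu$ and is already absorbed by the hod premouse $\mH$).

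The main obstacle is exactly this localization: carrying out a genericity iteration making an arbitrary countable subset of $\nu$ generic over an iterate of $\hq$ for a poset living below the Woodin cardinal $\mu$ of $\mH$, checking that the induced decision map $\iota$ is genuinely $OD(Z)$, and verifying that $\iota[\vec A]$ lands inside $\mH$ so that the $\mu$-chain condition applies. Everything else --- the reduction, the antichain, and the final chain-condition contradiction --- is routine.
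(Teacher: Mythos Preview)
Your approach has a genuine gap at the step you correctly flag as the main obstacle. The claim that $\iota[\vec A]\in\mH$ because ``$\powerset(\gamma)^{\mathrm{HOD}_Z^M}=\powerset(\gamma)^{\mH}$ for $\gamma<\mu$'' is false: $Z$ is a countable elementary hull of $\mH|\nu$ that is \emph{not} in $\mH$, and in fact $\mathrm{HOD}_Z^M=\mH[Z]\supsetneq\mH$ on bounded subsets (this is the content of \rlem{hod[x] is hodx}, proved later). The sequence $\iota[\vec A]$ genuinely encodes $f$, which genuinely depends on $Z$, so there is no reason for it to land in $\mH$. Your parenthetical that ``the information in $Z$ lives below $\nu^+$ and is already absorbed by $\mH$'' is exactly backwards: $\mH|\nu$ is absorbed by $Z$, not the other way around. (The earlier remark that $Z$ itself is ordinal definable in $M$ is also incorrect: $Z$ is $OD$ from $\hq$, i.e.\ from a real, but not $OD$ outright.)

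The paper avoids this problem by never pushing anything into $\mH$. Instead it works directly in $\Q$: it forms, in $\Q$, a maximal antichain $\mathcal A\subseteq\bigl(Coll(\omega,\Q|\nu_\hq)*{\sf EA}_{\mu_\hq}\bigr)^\Q$ of conditions that decide, for the generic pair $(Y_0,Y_1)\in\powerset_{\omega_1}(\nu)^2$ they produce (via $\pi_{\hq,\infty}$), the value of the \emph{least} $OD(Y_0)$-surjection at $Y_1$. The point is that this forcing statement quantifies over the parameter $Y_0$ rather than fixing $Z$, so $\mathcal A$ is definable in $\Q$ from the derived-model machinery and lives in $\Q$. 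The $\mu_\hq$-chain condition of the extender algebra in $\Q$ gives $\card{\mathcal A}^\Q<\mu_\hq$. The specific $Z$ enters only to witness, by passing to an iterate $\hr$ realizing $Z$ as some $Y_0$ and pulling back by elementarity, that the decided values $\{\b_p:p\in\mathcal A\}$ exhaust $\mu_\hq$ --- contradiction. So the chain condition is applied in $\Q$, not in $\mH$, and no localization step is needed.
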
 
\begin{proof}
Assume not and suppose $f:\powerset_{\omega_1}(\nu)\rightarrow \mu$ is an $OD(Z)^M$ surjection.
We furthermore assume that $f$ is the least such $OD(Z)^M$ function. Let $h\subseteq Coll(\omega, \Q|\nu_\hq)$ be $\Q$-generic.
	
	Let $\mathcal{A}\subseteq (Coll(\omega, \Q|\nu_\hq)*{\sf{EA}}_{\mu_\hq})^\Q$ be a maximal antichain consisting of pairs $(p_0, p_1)\in \mathcal{A}$ such that there is $\b<\mu_\hq$ with the property that $(p_0, p_1)$ forces the following statement:\\\\
	($*_\b$) If $k*(x_0, x_1)$ is the generic, then in $\Q[k*(x_0, x_1)]$, $(x_0, x_1)\in \bR^2$ and, letting for $m\in 2$, $Y_m=\{ \pi_{\hq, \infty}(k(i)): i\in x_m\},$\footnote{Our intended reader knows the arguments why sets like $Y_m$ are definable over $\Q$.
    Such arguments are part of the basic hod analysis.
    In particular, the material explained in \rsubsec{sec: chang model} implies that $\Q$ \textit{knows} $\pi_{\hq, \infty}\rest (\Q|\nu_\hq)$, which is the portion of $\pi_{\hq, \infty}$ that is needed to define $Y_0$ and $Y_1$.
    This is because we can find a complete iterate $\hq'$ of $\hq$ such that $\T_{\hq, \hq'}$ is above $\d_\hq$, and $M$ can be realized as the derived model of $\M^{\hq'}$ at $\eta_{\hq'}$. But then, since $\pi_{\hq', \infty}\in \M^{\hq'}$ and $\cp(\pi_{\hq, \hq'})>\mu_\hq$, we have that $\pi_{\hq, \infty}\rest (\Q|\mu_\hq)\in \rge(\pi_{\hq, \hq'})$.}
    \begin{itemize}
        \item[(a)]  there is an $OD(Y_0)^M$ surjection $h:\powerset_{\omega_1}(\nu)\rightarrow \mu$, and

        \item[(b)] if $f_{Y_0}:\powerset_{\omega_1}(\nu)\rightarrow \mu$ is the least $OD(Y_0)^M$ surjection then $f_{Y_0}(Y_1)=\pi_{\hq, \infty}(\b)$.
    \end{itemize}

	For $p\in \mathcal{A}$, let $\b_p< \mu_\hq$ be the unique $\b$ as above.
    We have that $\card{\mathcal{A}}^\Q<\mu_\hq$, since the extender algebra is $\mu_\hq$-cc.
	
	It is not hard to show that $\{\b_p:p\in \mathcal{A}\}=\mu_\hq$. Indeed, fix $\b<\mu_\hq$. Let $\hr=(\R, \Psi)$ be a complete iterate of $\hq$  such that for some $\R$-generic $n\subseteq Coll(\omega, \R|\nu_\R)$ and some $\R[n]$-generic $(x_0, x_1)\subseteq {\sf{EA}}_{\mu_\R}$, setting for $m\in 2$, $Y_m=\{\pi_{\hr, \infty}(n(i)): i\in x_m\}$, $Y_0=Z$, and $\pi_{\hq, \infty}(\b)=f(Y_1)$.
    It readily follows that for some $p\in \pi_{\hq, \hr}(\mathcal{A})$, $\pi_{\hq, \hr}(\b)=(\b_p)^\R$.
    Hence, for some $p\in \mathcal{A}$, $\b=\b_p$.
    Contradiction.
\end{proof}

The following corollary also follows from the results of \cite{MPSC}. In particular see \cite[Theorem 5.3]{MPSC}. 

\begin{corollary}\label{corollary to lemma}%\normalfont 
Suppose $\nu<\xi^\infty$ and $\mu$ is the least Woodin cardinal of $\mH$ that is greater than $\nu$. Then for any $X\in \powerset_{\omega_1}(\nu)$, $\theta_0(\powerset_{\omega_1}(\nu), X)\leq \mu$. 
\end{corollary}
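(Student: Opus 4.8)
The statement to prove is Corollary \ref{corollary to lemma}: for $\nu < \xi^\infty$ and $\mu$ the least Woodin cardinal of $\mH$ above $\nu$, every $X \in \powerset_{\omega_1}(\nu)$ satisfies $\theta_0(\powerset_{\omega_1}(\nu), X) \leq \mu$ in $M$. The first move is to reduce this to \rlem{chain condition for vopenka1}. That lemma gives the bound whenever the parameter has the form $Z = \pi_{\hq,\infty}[\Q|\nu_\hq]$ for a complete iterate $\hq$ of $\hp$ catching $\nu$. So the whole task is to absorb an \emph{arbitrary} $X \in \powerset_{\omega_1}(\nu)$ into such a $Z$, i.e.\ to find a single complete iterate $\hq$ of $\hp$ with $\nu \in \rge(\pi_{\hq,\infty})$ and with $X$ coded (in an $\mathrm{OD}$ way) from $Z = \pi_{\hq,\infty}[\Q|\nu_\hq]$, so that any $\mathrm{OD}(X)$ surjection is $\mathrm{OD}(Z)$ and the conclusion of \rlem{chain condition for vopenka1} transfers.

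First I would fix $X \in \powerset_{\omega_1}(\nu)^M$, so $X$ is a countable subset of $\nu < \xi^\infty$, hence $X \in M = \bR_g$-definable from a real; in particular $X$ lies in $M$ and, being countable, is (coded by) a real of $\bR_g$. The key structural input is the genericity/absorption machinery recalled in \rsubsec{sec: chang model}: given any real $x \in \bR_g$ coding $X$, we may pass to a genericity iterate $\hq$ of $\hp$ (in the sense of \rdef{gen iterate}) so that $x$ is generic over $\M^\hq$ for the extender algebra at some window whose critical point lies above $\eta^{w}$ and below $\nu_\hq$ — and crucially, using \rlem{main lemma on small ordinals} together with the smallness of the relevant cardinals below $\xi^\infty$ (so $\xi^\infty$ and everything below it is small and properly overlapped in $\mH$, by the hypotheses in \rnot{notation for kom model}), we arrange $\nu \in \rge(\pi_{\hq,\infty})$ and $\gen(\T_{\hp,\hq}) \subseteq \nu_\hq$. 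Then, just as in the footnote argument inside the proof of \rlem{chain condition for vopenka1}, $\M^\hq$ \emph{knows} $\pi_{\hq,\infty}\restriction(\Q|\nu_\hq)$, so from the generic collapse $h \subseteq Coll(\omega, \Q|\nu_\hq)$ we recover $Z = \pi_{\hq,\infty}[\Q|\nu_\hq]$ as $\{\pi_{\hq,\infty}(h(i)) : i \in y\}$ for $y$ a real coding $h$; and $X$ becomes $\mathrm{OD}(Z)^M$ because $X$ is computed from the $\mathrm{EA}$-generic real $x$, which is $\mathrm{OD}$ from $h$ (hence from $Z$) and the iteration $\T_{\hq,\infty}$, all of which are $\mathrm{OD}(Z)^M$.

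With $X$ now $\mathrm{OD}(Z)^M$ for this $Z$, any $\mathrm{OD}(X)^M$ surjection $\powerset_{\omega_1}(\nu) \to \beta$ is an $\mathrm{OD}(Z)^M$ surjection, so \rlem{chain condition for vopenka1} gives $\beta < \mu_\hq$, i.e.\ $\beta < \mu$ after applying $\pi_{\hq,\infty}$ and noting $\pi_{\hq,\infty}(\mu_\hq) = \mu$ (since $\mu$ is the least Woodin of $\mH$ above $\nu$ and $\hq$ catches $\nu$, so the least Woodin of $\Q$ above $\nu_\hq$ maps to $\mu$). Hence $\theta_0(\powerset_{\omega_1}(\nu), X) \leq \mu$. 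The remark that the corollary "also follows from the results of \cite{MPSC}, in particular \cite[Theorem 5.3]{MPSC}" signals an alternative route via the general theory of direct limits of hod mice, which one could cite instead; I would include it as a secondary remark rather than a second proof.

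\textbf{Main obstacle.} The delicate point is the genericity-iteration step: one must iterate $\hp$ to absorb the real coding $X$ into an extender-algebra generic at a \emph{small} window below $\nu_\hq$ while simultaneously keeping $\nu$ in the range of $\pi_{\hq,\infty}$ and keeping $\gen(\T_{\hp,\hq}) \subseteq \nu_\hq$ (so that $\Q|\nu_\hq$ really captures the relevant fragment of $\pi_{\hq,\infty}$). This is exactly where the smallness/proper-overlapping analysis of \textsection\ref{small cardinals subsec} — \rlem{main lemma on small ordinals}, \rlem{splitting lemma}, and \rlem{hull property} — does the work: splitting $\T_{\hp,\hq}$ into a part below $\nu$ and a part strictly above $\nu_\hq$, one sees the "below $\nu$" part is unaffected by later extenders, so $\nu_\hq$ and $Z$ are stable, and the genericity iteration can be carried out in the window between $\nu_\hq$ and $\mu_\hq$ without disturbing them. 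Everything else is bookkeeping with the notation of \rsubsec{sec: chang model} and the $\mathrm{OD}$-definability transfer.
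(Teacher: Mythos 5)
Your overall plan---reducing to \rlem{chain condition for vopenka1} by arranging that the parameter $X$ is subsumed by a set of the form $Z = \pi_{\hq,\infty}[\Q|\nu_\hq]$---is the right shape, but the absorption step is wrong, and the genericity machinery you invoke cannot repair it. You claim that after a genericity iteration making a real $x$ coding $X$ extender-algebra generic over $\M^\hq$, "$X$ becomes $\mathrm{OD}(Z)^M$," via the chain "$x$ is $\mathrm{OD}$ from $h$, which is $\mathrm{OD}$ from $Z$." The second link is false: $Z$ is the pointwise image $\pi_{\hq,\infty}[\Q|\nu_\hq]$, which (as the paper's footnote notes) determines the transitive set $\Q|\nu_\hq$ and the restricted map $\pi_{\hq,\infty}\rest(\Q|\nu_\hq)$, but it does not determine any particular generic collapse $h\subseteq\mathrm{Coll}(\omega,\Q|\nu_\hq)$, nor the generic real $x$. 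Generics are precisely the objects that are \emph{not} ordinal definable from the ground model data; genericity works against you here, not for you. So any $\mathrm{OD}(X)^M$ surjection is only $\mathrm{OD}(Z,u)^M$ for some real $u$, and \rlem{chain condition for vopenka1} does not apply to that.

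The paper's proof contains a step your proposal is missing, and it is elementary rather than model-theoretic. One takes \emph{any} complete iterate $\hq$ catching $\nu$ with $X\subseteq Z$ (no genericity iteration is needed), codes $X$ as $\{\pi_{\hq,\infty}(f(i)) : i\in u_1\}$ for a real $u_0$ coding $f\colon\omega\to\Q|\nu_\hq$, so that $X$ is $\mathrm{OD}(Z,u_0,u_1)^M$, and then eliminates the real parameter by integration: if $g\colon\powerset_{\omega_1}(\nu)\to\mu$ is a surjection with $g(Y)=\b\iff\phi[Z,u,p,Y,\b]$, replace it with $h(Y)=\b\iff\phi[Z,Y\cap\omega,p,Y,\b]$. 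Now $h$ is $\mathrm{OD}(Z)$ with no stray real, and it is still surjective onto $\mu$ because the witness $Y$ for any $\beta$ can be chosen to carry $u$ as $Y\cap\omega$. This $\mathrm{OD}(Z)$ surjection directly contradicts \rlem{chain condition for vopenka1}. That substitution of $Y\cap\omega$ for the real parameter is the actual content of the corollary; you should replace the genericity step in your write-up with it.
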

\begin{proof}
Fix an $X\in \powerset_{\omega_1}(\nu)$.
We want to see that $\theta_0(\powerset_{\omega_1}(\nu), X)\leq \mu$. We can fix a complete iterate $\hq=(\Q, \Lambda)$ of $\hp$ such that $X\subseteq \pi_{\hq, \infty}[\Q|\nu_\hq]$.
Let $Z=\pi_{\hq, \infty}[\Q|\nu_\hq]$, and fix a pair of reals $(u_0, u_1)$ such that $u_0$ codes a function $f:\omega\rightarrow \Q|\nu_\hq$ and $X=\{\pi_{\hq, \infty}(f(i)): i\in u_1\}$.
Since $X$ is ordinal definable from $(Z, u_0, u_1)$,\footnote{Notice that $Z\prec \mH|\nu$, and so $Z$ determines $\pi_{\hq, \infty}\rest (\Q|\nu_\hq)$, as this is the inverse of the transitive collapse of $Z$.} it is enough to prove that $\theta_0(\powerset_{\omega_1}(\nu), (Z, u_0, u_1))\leq \mu$, or just that for any real $u$, $\theta_0(\powerset_{\omega_1}(\nu), (Z, u))\leq \mu$. 
	
	To see this, suppose not and let $p$ be a finite set of ordinals and $\phi$ be a formula such that, letting $g:\powerset_{\omega_1}(\nu)\rightarrow \mu$ be given by
	\begin{center}
		$g(Y)=\b \iff \phi[Z, u, p, Y, \b]$,
	\end{center}
	$\rge(g)=\mu$.
    Define $h:\powerset_{\omega_1}(\nu)\rightarrow \mu$ by setting $h(Y)=\b$ if and only if, letting $Y\cap \omega=r$, $\phi[Z, r, p, Y, \b]$ holds.
    Then $h$ is ordinal definable from $Z$ and $\rge(h)=\mu$, contradicting \rlem{chain condition for vopenka1}.
\end{proof}

\begin{lemma}[Vopenka Algebra]\label{vopenka algebra}%\normalfont 
Suppose $\nu<\xi^\infty$ and $X\in \nu^\omega$.
Let $\mu$ be the least Woodin cardinal of $\mH$ such that $\nu<\mu$. Then $X$ is generic over $\mH$ for a poset of size $\mu$. 
\end{lemma}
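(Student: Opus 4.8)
\textbf{Proof plan for Lemma \ref{vopenka algebra}.}

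The plan is to use the classical Vop\v{e}nka algebra construction, but to argue that the relevant algebra---built from ordinal-definable (over $\mH$, with a suitable parameter) sets of elements of $\nu^\omega$---has size at most $\mu$ by appealing to the chain-condition results just established. First I would fix $X \in \nu^\omega$. Since $\nu < \xi^\infty$, I can fix (as in the proof of \rcor{corollary to lemma}) a complete iterate $\hq=(\Q,\Lambda)$ of $\hp$ such that $X$ is coded by a pair of reals over $Z:=\pi_{\hq,\infty}[\Q|\nu_\hq]$; concretely $X = \{\pi_{\hq,\infty}(f(i)) : i \in u_1\}$ for a real $u_0$ coding $f:\omega\to\Q|\nu_\hq$ and a real $u_1$. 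The point of passing to $Z$ is that $Z$ is ordinal definable over $\mH$ (being the domain of the inverse transitive collapse of $\mH|\nu$), so any set ordinal-definable from $Z$ and a real is captured by the ordinal-definable hierarchy of $\mH$ up to an $\OD^\mH$ parameter.

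Next I would set up the Vop\v{e}nka algebra. Work in $\mH$. Consider the Boolean algebra $\mathbb{B}$ whose elements are the $\OD^{\mH}$ subsets of $\nu^\omega$ (more precisely, subsets of $\nu^\omega$ that lie in $\mH$ and are ordinal-definable over $\mH$ from ordinal parameters), ordered by inclusion modulo the trivial identifications, so that $\mathbb{B}$ is (isomorphic to) a complete Boolean algebra coded inside $\mH$. The standard Vop\v{e}nka argument shows that for any $X\in\nu^\omega$, the filter $G_X = \{ b \in \mathbb{B} : X \in b\}$ is $\mathbb{B}$-generic over $\mH$, and $X$ is recoverable from $G_X$; the only issue is the \emph{size} of $\mathbb{B}$ as computed in $\mH$. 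The key step is:
\[
\mathbb{B}\text{ has a dense subset of size }\leq\mu\text{ in }\mH.
\]
To see this, note that the elements of $\mathbb{B}$ are indexed by pairs $(\phi, p)$ where $\phi$ is a formula and $p$ an ordinal parameter, with the set coded being $\{ y \in \nu^\omega : \mH\models\phi[y,p]\}$. Two such indices give the same element of $\mathbb{B}$ iff they define the same subset of $\nu^\omega$; and for counting purposes it suffices to bound the number of \emph{distinct} such subsets. Here is where I invoke the chain condition: by \rlem{chain condition for vopenka1} (with the relevant $Z = \pi_{\hq,\infty}[\Q|\nu_\hq]$), in the derived model $M$ there is no $\OD(Z)^M$ surjection from $\powerset_{\omega_1}(\nu)$ onto $\mu$, hence no $\OD^{\mH}$ (from ordinals) surjection from $\nu^\omega$ --- or from the set of ordinal-definable subsets of $\nu^\omega$ --- onto $\mu^+$. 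Actually the cleaner route: the number of $\OD^{\mH}$ subsets of $\nu^\omega$ is at most $\theta_0(\powerset_{\omega_1}(\nu), \emptyset)^{\mH}$-many (each is coded by an ordinal via an $\OD$ wellorder of the $\OD$ sets), and \rcor{corollary to lemma} / \rlem{chain condition for vopenka1} bound this by $\mu$ (the least Woodin of $\mH$ above $\nu$), since $\mH$ computes these suprema as $M$ does up to the parameter $Z$.

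Finally, once $\mathbb{B}$ has a dense subalgebra $\mathbb{B}_0$ of size $\leq\mu$ inside $\mH$, the genericity of $G_X\cap\mathbb{B}_0$ over $\mH$ follows by the usual argument: any maximal antichain of $\mathbb{B}_0$ in $\mH$ is, by definition of the Vop\v{e}nka ordering, a partition of $\nu^\omega$ into $\OD^\mH$ pieces, and $X$ belongs to exactly one of them, giving a condition in $G_X$ below some element of the antichain. And $X$ is definable from $G_X$ as the unique element of $\bigcap\{ b : b\in G_X\cap\mathbb{B}_0\}$ (using that singletons $\{y\}$ for $y\in\nu^\omega$ are dense-below-anything in the appropriate sense, or that the pieces can be taken to shrink to points). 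I expect the main obstacle to be the bookkeeping around parameters: one must be careful that the ``$\OD$'' in the definition of $\mathbb{B}$ is ordinal definability \emph{over $\mH$ from ordinals alone} (so that $\mathbb{B}$ genuinely lives in $\mH$), while \rlem{chain condition for vopenka1} is phrased in $M$ with the parameter $Z$; reconciling these requires observing, as in the footnote to \rlem{chain condition for vopenka1}, that $\hq$ (hence $Z$ and $\pi_{\hq,\infty}\restriction(\Q|\nu_\hq)$) is available inside a suitable iterate realizing $M$ as its derived model, so that $\OD^\mH$-from-ordinals and $\OD^M(Z)$ agree on subsets of $\nu^\omega$ up to this fixed parameter. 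Granting that, the size bound transfers and the lemma follows.
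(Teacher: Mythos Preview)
Your overall strategy---use the Vop\v{e}nka algebra and bound its size via the chain-condition lemma---is on the right track, but there is a gap in the size-bound step. You claim that $\mathbb{B}$ has a dense subset of size $\leq\mu$ because ``the number of $\OD^{\mH}$ subsets of $\nu^\omega$ is at most $\theta_0(\powerset_{\omega_1}(\nu),\emptyset)^{\mH}$-many.'' But $\theta_0(\powerset_{\omega_1}(\nu),\emptyset)\leq\mu$ only says there is no $\OD$ surjection from $\powerset_{\omega_1}(\nu)$ onto $\mu$; it does not by itself bound the number of distinct $\OD$ subsets of $\nu^\omega$. Your parenthetical (each $\OD$ set is coded by an ordinal via the $\OD$ wellorder) gives that the $\OD$ subsets are wellordered in some ordertype $\kappa$, but you have not produced an $\OD$ surjection from $\powerset_{\omega_1}(\nu)$ onto $\kappa$, so you cannot conclude $\kappa\leq\mu$.

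What the chain-condition lemma actually yields is the $\mu$-\emph{chain condition} for $\mathbb{B}$: if $\mathcal{A}\in\mH$ were an antichain of size $\mu$, then (unraveling) you would get an $\OD$ sequence $(A_i:i<\mu)$ of pairwise disjoint nonempty subsets of $\powerset_{\omega_1}(\nu)$, and the map $Y\mapsto$ (least $i$ with $Y\in A_i$, else $0$) is an $\OD$ surjection onto $\mu$, contradicting \rlem{chain condition for vopenka1}. This is the argument the paper gives. The $\mu$-cc alone does not give you a dense subset of size $\mu$; instead, the paper uses the $\mu$-cc together with the inaccessibility of $\mu$ in $\mH$ to perform an iterative closure: fix a name $\sigma$ for (an enumeration of) $X$ using countably many maximal antichains, set $\mathbb{P}_0$ to be their union, and then inductively build $\mathbb{P}_\alpha$ ($\alpha\leq\mu$) so that every maximal antichain of $\mathbb{P}_\mu$ (in $\mH$) is already maximal in $\mathbb{B}$. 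Since each antichain has size $<\mu$ and $\mu$ is inaccessible, $|\mathbb{P}_\alpha|^\mH<\mu$ at each stage, and $\mathbb{P}=\mathbb{P}_\mu$ has size $\mu$. Then $G_X\cap\mathbb{P}$ is $\mH$-generic and recovers $X$. Your setup with $Z$ and the reals $(u_0,u_1)$ is not needed here---the Vop\v{e}nka algebra uses only $\OD$ sets (no parameter), and an $\OD$ surjection is a fortiori $\OD(Z)$.
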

\begin{proof}
	Consider the Vopenka algebra for adding a member of $\powerset_{\omega_1}(\nu)$ to $\mH$. 
    \rlem{chain condition for vopenka1} implies that this algebra has the $\mu$-cc in $\mH$.
    Indeed, if $\mathcal{A}\in \mH$ is an antichain in the aforementioned Vopenka algebra such that $\card{\mathcal{A}}^\mH=\mu$, then we can find an $OD$ sequence $(A_i: i<\mu)$ such that $A_i\subseteq \powerset_{\omega_1}(\nu)$ and, for $i\not =i'$, $A_i\cap A_{i'}=\emptyset$.
    Define $h:\powerset_{\omega_1}(\nu)\rightarrow \mu$ by letting $h(Y)$ be the least $i$ such that $Y\in A_i$, if there is such an $i$, and otherwise set $h(Y)=0$.
    Then $h$ is an $OD$-surjection, contradicting \rlem{chain condition for vopenka1}.
	
	Because $\mu$ is an inaccessible cardinal in $\mH$, we can find $\mathbb{P}\in \mH$ of size at most $\mu$ with $X\in \mH[G]$ for some $G\subseteq \mathbb{P}$. 
    Indeed, work in $\mathcal{H}$.
    Let $\mathbb{Q}$ be the Vopenka algebra. 
    Let $f:\omega\rightarrow X$ be an enumeration of $X$, and let $\sigma$ be a $\mathbb{Q}$-name for $f$ such that for some sequence of maximal $\mathbb{Q}$-antichains $(A_n: n<\omega)$, $\sigma$ consists of tuples $(p, \check{n}, \check{\alpha})$ such that $p\in A_n$. 
    Let $\mathbb{P}_0=\cup_{n<\omega} A_n$. 
    We can then inductively construct $(\mathbb{P}_\a:\a\leq \mu)$ such that
    \begin{enumerate}
        \item for each $\a<\mu$, $\card{\mathbb{P}_\a}<\mu$ and $\mathbb{P}_\a$ is a sub-poset of $\mathbb{Q}$,

        \item for $\a<\b$, $\mathbb{P}_\a\subseteq \mathbb{P}_\b$,

        \item for $\zeta\leq \mu$ a limit ordinal, $\mathbb{P}_\zeta=\bigcup_{\a<\zeta}\mathbb{P}_\a$, and

        \item if $A\subseteq \mathbb{P}_\mu$ is a maximal antichain, then $A$ is also a maximal antichain in $\mathbb{Q}$.
    \end{enumerate}
    Then $\mathbb{P}_\mu$ is the desired $\mathbb{P}$. 
    In fact, if $G\subseteq \mathbb{Q}$ is $\mH$-generic and is such that $\sigma_G=f$, then $G'=G\cap \mathbb{P}$ is also $\mH$-generic and $\sigma_{G'}=f$.
\end{proof}

\subsection{The agreement of the powersets}
The following lemma is due to Woodin, although it is easier to prove in our context.

\begin{lemma}\label{hod[x] is hodx}%\normalfont 
Suppose $\b<\xi^\infty$ and $X\in \powerset_{\omega_1}(\b)$.
Then $\mH[X]=({\sf{HOD}}_X)^M$. 
In particular, $V_{\xi^\infty}^{\mH[X]}=V_{\xi^\infty}^{({\sf{HOD}}_X)^M}$.
\end{lemma}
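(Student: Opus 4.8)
\textbf{Proof plan for Lemma \ref{hod[x] is hodx}.}

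The statement asserts that for $\b<\xi^\infty$ and $X\in \powerset_{\omega_1}(\b)$, the model $\mH[X]$ coincides with $({\sf{HOD}}_X)^M$, whence the two agree up to $\xi^\infty$. The plan is to prove two inclusions. The inclusion $\mH[X]\subseteq ({\sf{HOD}}_X)^M$ is the routine direction: $\mH=\M_\infty(\hp)$ is ordinal definable in $M$ (this is standard in the hod analysis under the working hypothesis of \rnot{notation for kom model}), so $\mH\subseteq {\sf{HOD}}^M$; since $X$ is a parameter, any set generic over $\mH$ built using $X$ lies in $({\sf{HOD}}_X)^M$, and more to the point $\mH[X]$ is the least model of $\ZFC$ containing $\mH\cup\{X\}$, hence contained in $({\sf{HOD}}_X)^M$.

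The substantive direction is $({\sf{HOD}}_X)^M\subseteq \mH[X]$. Here I would use the Vopenka-algebra machinery developed just above, namely \rlem{vopenka algebra}: since $X\in \b^\omega$ with $\b<\xi^\infty$, $X$ is $\mH$-generic for a poset $\mathbb{P}\in \mH$ of size $\mu$, where $\mu$ is the least Woodin cardinal of $\mH$ above $\b$ (and $\mu<\xi^\infty$ since $\xi^\infty$ is a limit of Woodin cardinals of $\mH$). The key point is that this Vopenka forcing is \emph{weakly homogeneous} in $\mH$ (the Vopenka algebra for adding a subset of $\powerset_{\omega_1}(\b)$ is homogeneous by construction, as the automorphisms of the relevant Boolean algebra act transitively in the appropriate sense). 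Therefore, given any set $a\in ({\sf{HOD}}_X)^M$, say $a$ is $OD^M$ from $X$ and some ordinal parameters, I want to argue $a\in \mH[X]$. By homogeneity of $\mathbb{P}$ over $\mH$, the $\mathbb{P}$-name for $a$ can be chosen so that membership in $a$ is decided by the empty condition in a way that depends only on ordinal parameters and the generic $X$ — so $a$ is in fact definable over $\mH$ from $X$ and ordinals using the forcing relation, which is itself $OD^\mH$. Concretely, I would run the standard argument: if $a$ is $OD_X^M$ then there is a formula $\varphi$ and ordinals $\vec\gamma$ with $y\in a \iff M\models\varphi(y,X,\vec\gamma)$; reflect this into a suitable $\mH$-definable context (using that $\mH=\H^M\cap V_{\xi^\infty}$-type agreement and that $M$ sees the derived-model structure), and then use the homogeneity of $\mathbb{P}$ to conclude that $y\in a$ is equivalent to $\mathbb{1}_{\mathbb{P}}\forces_{\mH}\psi(y,\dot X,\vec\gamma)$ for an appropriate $\psi$, which places $a$ in $\mH[X]$.

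The main obstacle I anticipate is making the reflection step rigorous: one must transfer an $M$-definition of $a$ (where $M$ is the full derived model, of height well beyond $\xi^\infty$) into something $\mH$ can compute after forcing with $\mathbb{P}$. This is exactly the kind of argument the paper flags as ``easier to prove in our context'' and attributes to Woodin — the point being that in the hod-mouse setting $\mH$ has enough iterability/self-knowledge (via $\Omega=\Sigma_\mH$ and the agreement results of \textsection\ref{sec: chang model}) that the derived-model truth predicate restricted to the relevant hereditarily-small sets is itself captured by $\mH$-forcing. So I would lean on \rthm{invariance} and the genericity-iterate apparatus to witness that any hereditarily-$OD_X^M$ set of the relevant rank is pinned down by a $\Sigma_\mH$-definable construction over $\mH[X]$. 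Once both inclusions are in hand, the ``in particular'' clause is immediate by restricting to $V_{\xi^\infty}$, noting $\xi^\infty$ is inaccessible in $\mH$ (being a limit of Woodins) and that $\mathbb{P}$ has size $<\xi^\infty$, so the forcing does not disturb $V_{\xi^\infty}$ beyond adding $X$ and what $\mH$ already computes.
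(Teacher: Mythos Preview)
Your approach via the Vop\v{e}nka algebra is \emph{different} from the paper's and, as written, contains a genuine gap.

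The paper does not touch the Vop\v{e}nka algebra at all. It uses the elementary embedding $j^{*}\colon M\to D$, where $D$ is the derived model of $\H$ at $\Theta$, with the crucial property that $j=_{\mathrm{def}}j^{*}\rest\H\in\H$. Since $X$ is countable, $j^{*}(X)=j^{*}[X]=j[X]$, and since $j\in\H$, this set lies in $\mH[X]$. For an $OD_X^{M}$ set of ordinals $A$, $j^{*}(A)$ is $OD_{j[X]}$ in $D$, hence (because $\H$ can define its own derived model) $j^{*}(A)\in\mH[X]$. Then $A$ is recovered in $\mH[X]$ via $\beta\in A\iff j(\beta)\in j^{*}(A)$. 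This is the ``easier in our context'' shortcut the paper alludes to.

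Your gap is the homogeneity claim. The Vop\v{e}nka algebra is \emph{not} homogeneous: its conditions are arbitrary nonempty $OD^{M}$ subsets of $\powerset_{\omega_{1}}(\b)$, and there is no reason for automorphisms to act transitively (e.g., if some $Y$ is itself $OD$, then $\{Y\}$ is an atom). Homogeneity would in any case only control $OD^{\mH[G]}$, not $OD_X^{M}$, so even granting it the conclusion would not follow along the lines you sketch. The ``reflection obstacle'' you raise is then a symptom of having reached for the wrong tool.

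That said, the Vop\v{e}nka route \emph{does} work, and without any reflection step. The point is the standard direct argument: given $A\in(\H_X)^{M}$ a set of ordinals with $\alpha\in A\iff M\models\varphi(\alpha,X,\vec{\gamma})$, set $p_{\alpha}=\{Y\in\powerset_{\omega_{1}}(\b):M\models\varphi(\alpha,Y,\vec{\gamma})\}$. Each $p_{\alpha}$ is $OD^{M}$, hence a condition in the Vop\v{e}nka algebra and an element of $\mH$. Then $\alpha\in A\iff X\in p_{\alpha}\iff p_{\alpha}\in G_{X}$, so $A\in\mH[G_{X}]=\mH[X]$. No homogeneity, no transfer of $M$-truth into $\mH$-forcing is needed; the $OD^{M}$ condition $p_{\alpha}$ already lives in $\mH$ by definition of $\H$. \rlem{vopenka algebra} is then only used (as you say) to ensure the extension is by a set forcing of size ${<}\xi^{\infty}$, which gives the ``in particular'' clause.
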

\begin{proof} 
Recall that $M$ can be embedded into the derived model of $\H$ at $\Theta$,\footnote{This is a general theorem of Woodin, but in our context it follows from $\H$ analysis. However, see \cite{sargsyan2021ideals}.} and moreover, there is such an embedding $j^*$ such that $j=_{def}j^*\rest \H\in \H$.\footnote{$j$ is really just the $\mH$-to-$j^*(\mH)$ iteration embedding. $j^*$ is then given by mapping the homogenity systems via $j$.
More precisely, if $B\subseteq \bR$ is a set in $M$ and $\bar{\mu}$ is a homogeneity system such that $B=S_{\bar{\mu}}^M$, then for $s\in \omega^{<\omega}$, letting $k_s=\pi_{\bar{\mu}_s}\rest \mH$, $j^*(B)$ is the set of $y$ such that the direct limit of $(j(k_{y\rest m}): m\in \omega)$ is well-founded.}  
	
	Clearly $\mH[X]\subseteq ({\sf{HOD}}_X)^M$.
    Let $A$ be an $OD[X]$-definable set of ordinals.
    Notice that $j^*(X)=j^*[X]=j[X]$, and so $j[X]\in \mH[X]$. Therefore, $j^*(A)\in \mH[X]$.
    $A$ is definable in $\mH[X]$ via the equivalence $\b\in A\iff j(\b)\in j^*(A)$. 
\end{proof}

\rthm{cating the powerset} is the first major step towards the proof of \rthm{main theorem}.
It shows that different Nairian models compute the powerset operation correctly. 
(Below, we use $\mH$ for $\H$.)

\begin{theorem}\label{cating the powerset}%\normalfont 
For every $\b<\xi^\infty$, \[\powerset(\b^\omega)\cap \c^+=\powerset(\b^\omega)\cap \c=\powerset(\b^\omega)\cap \c^-.\]
\end{theorem}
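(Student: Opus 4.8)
\textbf{Proof proposal for Theorem \ref{cating the powerset}.}

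The plan is to establish the two equalities by a combination of the Vopěnka-algebra genericity result (\rlem{vopenka algebra}), the $\mH[X] = (\mathrm{HOD}_X)^M$ identity (\rlem{hod[x] is hodx}), and a homogeneity/definability argument of the kind already used in the proof of \rlem{chain condition for vopenka1}. The inclusions $\powerset(\b^\omega)\cap \c^{-}\subseteq \powerset(\b^\omega)\cap\c\subseteq\powerset(\b^\omega)\cap \c^{+}$ are immediate from the definitions of the three structures (they are constructed over the same predicate, with $\c^{-}$ truncated at $\xi^\infty$, $\c$ taking the full $L$-closure, and $\c^{+}$ closing under ordinal definability). So the work is entirely in the reverse direction: given $A\in\powerset(\b^\omega)\cap\c^{+}$, show $A\in \c^{-}$.

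First I would reduce to a single generic. Fix $A\subseteq\b^\omega$ with $A\in\c^{+}=(\mathrm{HOD}_{\cup_{\gamma<\xi^\infty}\gamma^\omega})^M$, so $A$ is ordinal definable in $M$ from finitely many parameters $X_0,\dots,X_k$, each $X_i\in\gamma_i^\omega$ for some $\gamma_i<\xi^\infty$; coding these together (and using that $\b$ may be enlarged below $\xi^\infty$) we may assume $A$ is $OD(X)^M$ for a single $X\in\nu^\omega$ with $\b\le\nu<\xi^\infty$. By \rlem{vopenka algebra}, $X$ is $\mH$-generic for a poset $\mathbb P\in\mH$ of size $\mu$, where $\mu$ is the least Woodin of $\mH$ above $\nu$; note $\mu<\xi^\infty$ since $\xi^\infty$ is a limit of Woodin cardinals of $\mH$. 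By \rlem{hod[x] is hodx}, $\mathrm{HOD}_X^M=\mH[X]$, so in fact $A\in\mH[X]$, hence $A$ has a $\mathbb P$-name $\tau\in V_\mu^{\mH}\subseteq V_{\xi^\infty}^{\mH}$. Since $A\subseteq\b^\omega\subseteq\nu^\omega$ and $\mathbb P$ has size $\mu$, we may take $\tau$ to be (coded by) a subset of $\nu^\omega\times\mathbb P$, so $\tau$ itself is an element of $\c^{-}$: indeed $\mH|\xi^\infty=(\c^{-}_{\xi^\infty})^M$-level objects are available, and all the requisite countable sequences from ordinals $<\xi^\infty$ are added into $\c^{-}$ by construction. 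The point is that $\tau$, $X$, and the interpretation function are all objects definable from $\mH\restriction\xi^\infty$ and a countable sequence from an ordinal below $\xi^\infty$; so $A=\tau_X$ is computed inside $\c^{-}$.

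The step I expect to be the main obstacle is verifying that the relevant Vopěnka/interpretation apparatus actually lives inside $\c^{-}$ at the right level, i.e.\ that $\mH\restriction\xi^\infty$ (together with the $\omega$-sequences below $\xi^\infty$) suffices to define $\mathbb P$, $\tau$, and the forcing relation needed to evaluate $\tau_X$. For $\mathbb P$ this is fine: it is the Vopěnka algebra of $\mH|\mu$, which is definable over $\mH|\mu\in \mH\restriction\xi^\infty$. For $\tau$ one must know that the name can be chosen inside $\mH|\xi^\infty$ — this is where one uses that $A\in\mH[X]$ (not merely $A\in\mathrm{HOD}_X^M$) and that $A$ is bounded in $\b^\omega$, so only antichains of $\mathbb P$ below size $\mu$ are needed. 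The forcing relation for $\mathbb P$ restricted to statements about subsets of $\nu^\omega$ is definable over $\mH|\mu^{+\mH}$, again an element of $\mH\restriction\xi^\infty$. Finally one reads off $A=\{ y\in\b^\omega : \exists p\in G_X\ (p\forces \check y\in\tau)\}$, where $G_X$ is the Vopěnka-generic determined by $X$, and observes that $G_X$ is definable from $X$ and $\mathbb P$ — hence $A\in\c^{-}$. The same argument, run with $\c$ in place of $\c^{-}$ at the intermediate stage, gives $\powerset(\b^\omega)\cap\c^{+}\subseteq\c$, but since $\c^{-}$ is the smaller model we get the full chain of equalities at once. Throughout, the only genuinely new ingredient beyond \rlem{vopenka algebra} and \rlem{hod[x] is hodx} is bookkeeping: tracking that every object invoked has been placed into $\c^{-}$ by stage $\xi^\infty$, which follows because each is hereditarily of size $<\xi^\infty$ and is ordinal-definable over $\mH\restriction\xi^\infty$ from a single countable sequence of ordinals below $\xi^\infty$.
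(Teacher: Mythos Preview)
Your reduction to a single parameter $X$ and your invocation of \rlem{vopenka algebra} and \rlem{hod[x] is hodx} are correct, but the step ``$A$ has a $\mathbb P$-name $\tau\in V_\mu^{\mH}$, so $A=\tau_X$ is computed in $\c^-$'' has a genuine gap. The name $\tau$ lives in $\mH$, so the check names appearing in $\tau$ can only refer to sequences $y\in(\nu^\omega)^{\mH}$; consequently $\tau_X\subseteq (\nu^\omega)^{\mH[X]}$. But $A$ is a subset of $(\b^\omega)^M$, and the vast majority of $Y\in(\b^\omega)^M$ do not lie in $\mH[X]$ (which is a choice model obtained by adding a single countable set to $\mH$). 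So $\tau_X$ recovers only $A\cap\mH[X]$, not $A$ itself, and your proposed definition of $A$ inside $\c^-$ fails for all test sequences $Y\notin\mH[X]$.

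The paper's proof addresses exactly this point. Rather than naming $A$ directly, it produces an \emph{infinity-Borel code}: using the derived-model embedding $j^*$ from \rlem{hod[x] is hodx}, one gets a formula $\phi$ and parameters $s$ such that for \emph{every} $Y\in\b^\omega$,
\[
Y\in A\ \iff\ \mH[X][Y]\models \phi[s,X,Y,j\rest(\mH|\Theta^M)].
\]
Now the Vopěnka lemma is applied not to $X$ but to the arbitrary test sequence $Y$: each such $Y$ is generic over $\mH[X]$ for a poset of size $\le\mu$. One then collapses $H_\zeta^{\H}$ to a structure $\M$ of $\H$-size $(\mu^+)^{\H}$ with $\cp(i)>(\mu^+)^{\H}$ containing $s$ and $j\rest(\mH|\Theta^M)$; since every $Y$ is small-generic over $\M$, the embedding $i$ lifts to $i_Y:\M[Y]\to H_\zeta^{\H}[Y]$, and the membership criterion pulls back to $\M[X][Y]\models\phi[t,X,Y,k]$ with $t,k\in\M$. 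This equivalence is uniform in $Y$ and uses only $\M,X,t,k$ --- objects that live below $\xi^\infty$ --- so $A\in\c^-$. The missing idea in your argument is precisely this: you must make $Y$ generic (not just $X$), and you need the derived-model reflection to turn membership in $A$ into a local statement about $\mH[X][Y]$ that survives passage to the small hull $\M$.
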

\begin{proof}
	Fix $A\subseteq \powerset(\b^\omega)\cap \c^+$. Let $\gg<\xi^\infty$ be such that $A$ is $OD(X)$ for some $X\in \powerset_{\omega_1}(\gg)$. We can assume without losing generality that $\gg\geq \b$. Let $(j, j^*)$ be as in the proof of \rlem{hod[x] is hodx}.
	
	Now $\H_X$ is an infinity-Borel code for $A$, i.e., there is a formula $\phi$ and a finite sequence of ordinals $s$ such that 
    \[Y\in A\iff \H_X[Y]\models \phi[s, X, Y, j\rest (\mH|\Theta)].\]
    To see this, let $s\in {\sf{Ord}}^{<\omega}$ be such that $A$ is definable from $(X, s)$, and let $\psi(u_0, u_1, u_2)$  be the formula defining $A$ from $(X, s)$ over $M$. Thus, $Y\in A\iff M\models \psi[s, X, Y]$.  We then have that for any $Y\in \powerset_{\omega_1}(\b)$,
	\begin{center}
		$Y\in A \iff \H_X[Y]\models ``\psi[j(s), j[X], j[Y]]$ holds in the derived model at $\Theta$."
	\end{center}
	The equivalence holds because, since $X$ and $Y$ are in $\powerset_{\omega_1}(\gg)$, we have $j^*(X)=j[X]$ and $j^*(Y)=j[Y]$.
    We can now find a formula $\phi$ such that 
	\[Y\in A \iff \mH[X][Y]\models \phi[s, X, Y, j\rest (\mH|\Theta^M)].\]
	Let $\mu$ be the least Woodin cardinal of $\mH$ that is above $\gg$.
    It follows from \rlem{vopenka algebra} that if $Y\in \powerset_{\omega_1}(\b)$, then $Y$ can be added to $\mH[X]$ by a poset of size at most $\mu$. Working in $M$, let $\zeta=((\Theta^+))^{\H}$ and $i: \M\rightarrow H_\zeta^{\H}$ be such that 
	\begin{enumerate}
		\item $(i, \M)\in {\H}$ and $\{s, j\rest (\mH|\Theta^M)\}\in \rge(i)$, 
		\item $\cp(i)>(\mu^+)^{\H}$, and 
		\item $\card{\M}^\H=(\mu^+)^\H$.
	\end{enumerate}
	Then every $Y\in \powerset_{\omega_1}(\b)$ can be added to $\M$ by a poset of size at most $\mu$, and for each such $Y$, $i$ extends to $i_Y:\M[Y]\rightarrow H_\zeta^{\H}[Y]$. Let $k=i^{-1}(j\rest (\mH|\Theta^M)$ and $t=i^{-1}(s)$.
	It now follows that 
	\[Y\in A \iff \M[X][Y]\models \phi[t, X, Y, k].\]
	Therefore, $A\in \c^-$.
\end{proof}

%The reader may find reviewing \rnot{notation for kom model} helpful.

\begin{corollary}\label{strategy is in the Nairian}%\normalfont 
For $\a<\b<\xi^\infty$, let $\Omega^{\a, \b}=\Omega_{\mH|\a}\rest \c^-_{\b}$. Then the function $(\a, \b)\mapsto\Omega^{\a, \b}$ is definable over $(\c^-, \mH|\xi^\infty, \in)$. 
\end{corollary}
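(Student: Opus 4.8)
The plan is to show that each restriction $\Omega^{\alpha,\beta}=\Omega_{\mathcal{H}|\alpha}\rest \c^-_\beta$ is an element of $\c^-$, and that moreover the assignment $(\alpha,\beta)\mapsto \Omega^{\alpha,\beta}$ is uniformly definable over the structure $(\c^-,\mathcal{H}|\xi^\infty,\in)$. The key point is that $\Omega^{\alpha,\beta}$ is a set of pairs (an iteration tree $\mathcal{T}$ on $\mathcal{H}|\alpha$ of countable length coded inside $\c^-_\beta$, together with the branch $\Omega_{\mathcal{H}|\alpha}(\mathcal{T})$), and such a set is a subset of $\delta^\omega$ for a suitable $\delta<\xi^\infty$ once one fixes a coding of countable iteration trees on $\mathcal{H}|\alpha$ by elements of $\delta^\omega$. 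Since $\Omega=\Sigma_{\mathcal{H}}$ is the canonical strategy of $\mathcal{H}$ and we are working inside the derived model $M$, the graph of $\Omega$ restricted to trees living below $\xi^\infty$ is ordinal definable over $M$ from $\mathcal{H}|\xi^\infty$ (indeed from a real parameter, by the basic theory of hod pairs and generic interpretability of $\Sigma$ reviewed in \textsection\ref{sec: notation} and \textsection\ref{sec: chang model}). Hence $\Omega^{\alpha,\beta}\in \cP(\delta^\omega)\cap \c^+$ for the appropriate $\delta$.

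First I would fix, for each $\alpha<\xi^\infty$, a $\c^-$-definable bijection between $\c^-_\alpha$ and $\delta_\alpha^\omega$ for some $\delta_\alpha<\xi^\infty$ (this exists because each $\c^-_\beta$ is, inside $\c^-$, a surjective image of $\bigcup_{\gamma<\beta}\gamma^\omega$, using that $\mathcal{H}|\xi^\infty$ has only set-many Woodin cardinals below $\xi^\infty$ between which the relevant collapses live; concretely one uses the agreement $\cP(b)\cap \c^+\subseteq \c^-$ from \rlem{cmpscllem}-style arguments, which here is subsumed under \rthm{cating the powerset}). Using this coding, the restricted strategy $\Omega^{\alpha,\beta}$ becomes (the code of) a subset of $\delta^\omega$ for $\delta=\max(\delta_\alpha,\delta_\beta)$. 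Second, I would invoke \rthm{cating the powerset}: since $\Omega^{\alpha,\beta}\in \cP(\delta^\omega)\cap \c^+$ and $\cP(\delta^\omega)\cap \c^+=\cP(\delta^\omega)\cap \c^-$, we get $\Omega^{\alpha,\beta}\in \c^-$. This gives the ``membership'' half of the corollary. Third, for definability, I would argue that the ternary relation ``$(\alpha,\beta,z)$ codes $\langle\mathcal{T},b\rangle$ with $\mathcal{T}$ a countable iteration tree on $\mathcal{H}|\alpha$, $\mathcal{T}\in \c^-_\beta$, and $b=\Omega_{\mathcal{H}|\alpha}(\mathcal{T})$'' is definable over $(\c^-,\mathcal{H}|\xi^\infty,\in)$: the syntactic conditions on $\mathcal{T}$ are absolute, and the condition ``$b=\Omega_{\mathcal{H}|\alpha}(\mathcal{T})$'' can be expressed using the $\mathcal{H}$-side of the situation. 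The cleanest way is to use that $\Omega_{\mathcal{H}|\alpha}$ is guided by Q-structures that are initial segments of $\mathcal{H}$ — or, when that fails (past the relevant large cardinals), by the direct-limit/branch-condensation characterization from \cite{SteelCom} — so that ``$b$ is the $\Omega_{\mathcal{H}|\alpha}$-branch'' is equivalent to a statement quantifying only over $\mathcal{H}|\xi^\infty$ and over the countable object $\mathcal{T}$; both are available as parameters/elements in $(\c^-,\mathcal{H}|\xi^\infty,\in)$.

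The main obstacle I anticipate is the second, definability, step: making precise that the strategy $\Omega_{\mathcal{H}|\alpha}$ on countable trees coded in $\c^-$ is recoverable over $(\c^-,\mathcal{H}|\xi^\infty,\in)$ without circularity. The subtlety is that $\Omega$ is originally a strategy in $\mathcal{V}$ (generically interpreted in $\mathcal{V}[g]$), and a priori its restriction to trees coded inside the Nairian model need not be internally definable there. The resolution should use the self-iterability of $\mathcal{V}$ together with \rthm{invariance}: the direct limit $\mathcal{H}=\M_\infty(\hp)$ and the maps $\pi_{\hq,\infty}$ are robust under the relevant genericity iterations, so the branch $\Omega_{\mathcal{H}|\alpha}(\mathcal{T})$ for a countable $\mathcal{T}\in \c^-$ is the unique cofinal branch $b$ such that the pointed iterate $\M^\mathcal{T}_b$ can be further iterated into $\mathcal{H}$ (equivalently, realizes correctly into the direct limit), a condition one can state over $(\c^-,\mathcal{H}|\xi^\infty,\in)$ once one has $\cP(\delta^\omega)\cap\c^+=\cP(\delta^\omega)\cap\c^-$ to house the relevant iterates and embeddings inside $\c^-$. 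I would then package this as: $\Omega^{\alpha,\beta}(\mathcal{T})=b$ iff there is (in $\c^-$) a complete iterate realizing $\M^\mathcal{T}_b$ into $\mathcal{H}|\xi^\infty$ via an iteration tree coded in $\c^-$, invoking \rlem{hull property} and the bounding lemmas of \textsection\ref{splitting trees sec} to ensure these witnessing iterations stay bounded below $\xi^\infty$ and hence are coded in $\c^-$.
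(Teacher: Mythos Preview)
Your approach is considerably more elaborate than the paper's, and the definability step has a circularity you flag but do not resolve. The paper's proof is essentially one line, using generic interpretability: $\mH|\xi^\infty$ is itself a hod premouse, so it carries an internal strategy predicate $S^{\K}$ (where $\K=\mH|\xi^\infty$) which extends canonically to generic extensions of $\K$. Any $\T\in\c^-$ is ordinal definable from some $X\in\powerset_{\omega_1}(\xi^\infty)$, hence $\T\in\K[X]$ by the Vop\v{e}nka analysis (\rlem{vopenka algebra}), so $\K[\T]$ makes sense. The paper then defines $\Omega^{\alpha,\beta}(\T)=b$ iff $\K[\T]\models S^{\K}(\T)=b$. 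This is visibly uniform in $(\alpha,\beta)$ and definable over $(\c^-,\mH|\xi^\infty,\in)$, because the parameter $\mH|\xi^\infty$ literally contains the strategy predicate.

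Your detour through \rthm{cating the powerset} to obtain $\Omega^{\alpha,\beta}\in\c^-$ is fine but unnecessary once one has the direct definition above. The real gap is in your realizability characterization ``$b$ is correct iff $\M^{\T}_b$ can be iterated into $\mH|\xi^\infty$ via a tree coded in $\c^-$'': to verify that a candidate tree on $\M^{\T}_b$ is according to the right strategy you seem to need the very strategy you are defining, and you give no uniqueness argument showing that an incorrect branch cannot also admit such a realization. This could perhaps be repaired using strong hull condensation, but the paper sidesteps the entire issue by remembering that the strategy is already indexed on the sequence of the hod premouse sitting as the parameter $\mH|\xi^\infty$.
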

\begin{proof} This is an application of generic interpretability (see \cite{SteelCom} and \cite{HMMSC}). Fix $\a<\b<\xi$.
Define $\Omega^{\a, \b}$ uniformly in $(\a, \b)$ over $(\c^-, \mH|\xi^\infty, \in)$.
Let $\K=\mH|\xi^\infty$.
Let $\T\in \c^-$ be a normal iteration tree according to $\Omega^{\a, \b}$. We then set $\Omega^{\a, \b}(\T)=b$ if and only if $\K[\T]\models S^{\K}(\T)=b$.\footnote{$S^\M$ is the internal strategy predicate of $\M$. We use $S^\M$ also for its interpretation onto generic extensions.}
Our definition makes sense because $\T$ is ordinal definable from some $X\in \powerset_{\omega_1}(\xi^\infty)$, and so $\T\in \K[X]$ implying that $\K[\T]$ makes sense. 
\end{proof}

\rthm{cating the powerset} can be used to show e.g.~that in the full Nairian model, the successor of the first $\omega$-many strongs is not a strong cardinal, see \cite{blue202komitas}.

\subsection{Regularity of $\xi^\infty$}

%It follows from \rprop{lambda is successor} that it is enough to show that $\kappa^1_\infty$ is a regular cardinal in $\c_{ \k^1_\infty}$. We thus need to prove the following two facts.
%\begin{enumerate}
%	\item $\c_{\k^1_\infty}\models ``\k^1_\infty$ is a cardinal".
%	\item $\c_{ \k^1_\infty}\models ``\cf(\k^1_\infty)=\k^1_\infty"$.
%\end{enumerate}

\begin{theorem}\label{lambda is regular}% \normalfont 
Suppose $\xi$ is an inaccessible cardinal in $\P$.
Then $\c^+\models \cf(\xi^\infty)=\xi^\infty$.
\end{theorem}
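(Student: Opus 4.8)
\textbf{Proof plan for Theorem \ref{lambda is regular}.}

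The plan is to show that no $OD$-definable-from-parameters cofinal function $f\colon \gamma \to \xi^\infty$ exists in $\c^+$ for any $\gamma < \xi^\infty$, which suffices since $\c^+ = \H_{\bigcup_{\b<\xi^\infty}\b^\omega}$ and every element of $\c^+$ is ordinal definable from a member of $\bigcup_{\b<\xi^\infty}\b^\omega$. So suppose toward a contradiction that $\gamma < \xi^\infty$ and $f\colon\gamma\to\xi^\infty$ is cofinal and $OD(X)$ in $M$ for some $X\in\powerset_{\omega_1}(\delta_0)$ with $\delta_0 < \xi^\infty$; we may assume $\gamma < \delta_0$. The key external fact is that $\xi$ is an inaccessible \emph{limit of Woodin cardinals} of $\P$ (hypothesis (3) in \rnot{notation for kom model}), so $\mH = \M_\infty(\hp)$ has cofinally many Woodin cardinals below $\xi^\infty = \pi_{\hp,\infty}(\xi)$, and the direct-limit image $\pi_{\hp,\infty}[\xi]$ — equivalently, the union of the ranges $\rge(\pi_{\hq,\infty}\rest(\Q|\xi_\hq))$ over complete iterates $\hq$ of $\hp$ — is cofinal in $\xi^\infty$.

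First I would fix a complete iterate $\hq=(\Q,\Lambda)$ of $\hp$ such that $X\subseteq \pi_{\hq,\infty}[\Q|\delta_0{}_{,\hq}]$ (possible as in the proof of \rcor{corollary to lemma}), and let $\mu$ be the least Woodin cardinal of $\mH$ with $\delta_0 < \mu$; note $\mu < \xi^\infty$ since $\xi$ is a limit of Woodins. By \rlem{vopenka algebra}, every $Y\in\nu^\omega$ for $\nu\le\delta_0$ (in particular every relevant element of $\powerset_{\omega_1}(\gamma)$ coded as an $\omega$-sequence of ordinals below $\delta_0$) is $\mH$-generic for a poset of size $\le\mu$, and by \rcor{corollary to lemma}, $\theta_0(\powerset_{\omega_1}(\delta_0), Z)\le\mu$ where $Z=\pi_{\hq,\infty}[\Q|\delta_0{}_{,\hq}]$. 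The crucial move is a $\theta_0$-bound: I claim that the range of $f$ is contained in $\theta_0(\powerset_{\omega_1}(\gamma), Z')$ for an appropriate parameter $Z'$, because $f$'s domain $\gamma$ is a surjective image (in $\c^+$, via enumerations) of $\powerset_{\omega_1}(\gamma)$, and composing with $f$ yields an $OD(X)$ (hence, after absorbing the real coding $X$ relative to $Z$ via the integration trick in \rcor{corollary to lemma}, an $OD(Z)$) surjection of $\powerset_{\omega_1}(\gamma)$ onto a cofinal subset of $\xi^\infty$. But $\gamma < \xi^\infty$ means $\gamma < \mu'$ for $\mu'$ the least Woodin of $\mH$ above $\gamma$, and \rlem{chain condition for vopenka1} / \rcor{corollary to lemma} give $\theta_0(\powerset_{\omega_1}(\gamma), Z) \le \mu' < \xi^\infty$. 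Thus the supremum of the range of $f$ is at most $\mu' < \xi^\infty$, contradicting cofinality.

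The main obstacle I anticipate is the bookkeeping around parameters: $f$ is $OD(X)$ with $X$ a countable sequence, not $OD(Z)$ for the "hull" parameter $Z$, and the passage requires the same integration-over-reals device used in \rcor{corollary to lemma} and \rlem{chain condition for vopenka1} to replace $X$ (definable from $Z$ together with a pair of reals) by $Z$ alone, using that $\powerset_{\omega_1}(\gamma)$ elements carry their own real part $Y\cap\omega$. A second point needing care is that $\gamma$ being a surjective image of $\powerset_{\omega_1}(\gamma)$ must be verified inside $\c^+$ (it holds since $\c^+$ contains all of $\bigcup_{\b<\xi^\infty}\b^\omega$, hence all injections $\gamma\hookrightarrow\gamma^\omega$ and surjections the other way), and that the resulting composite surjection genuinely lands cofinally in $\xi^\infty$, so the $\theta_0$-bound really is violated. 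Once these are in place, the contradiction is immediate and the theorem follows; the same argument, run with $\c$ or $\c^-$ in place of $\c^+$, gives regularity of $\xi^\infty$ in those models as well, which is consistent with (and refined by) \rthm{cating the powerset}.
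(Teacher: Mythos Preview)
Your approach has a genuine gap: you conflate ``there is an $OD(Z)$ surjection from $\powerset_{\omega_1}(\gamma)$ onto a cofinal subset of $\xi^\infty$'' with ``$\theta_0(\powerset_{\omega_1}(\gamma), Z)\geq \xi^\infty$''. These are not the same. The bound $\theta_0(\powerset_{\omega_1}(\gamma), Z)\leq \mu'$ from \rcor{corollary to lemma} only tells you that any $OD(Z)$ surjection from $\powerset_{\omega_1}(\gamma)$ onto an \emph{ordinal} has target below $\mu'$; equivalently, the \emph{order type} of $\rge(f)$ is below $\mu'$. It says nothing about $\sup(\rge(f))$. A cofinal subset of $\xi^\infty$ can have order type as small as $\cf(\xi^\infty)$, so from your surjection you only recover $\cf(\xi^\infty)\leq\mu'$, which is entirely compatible with the hypothesis $\cf(\xi^\infty)=\gamma<\xi^\infty$ you are trying to refute. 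Your claim that ``the range of $f$ is contained in $\theta_0(\powerset_{\omega_1}(\gamma), Z')$'' simply does not follow from the existence of that surjection. Notice also that your argument never actually uses the inaccessibility of $\xi$ in $\P$, which should be a warning sign.

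The paper's proof is quite different and does use inaccessibility essentially. It pulls the cofinal function back into a hod mouse: after making the parameter $X$ generic over a suitable iterate $\hq=(\Q,\Sigma_\Q)$ via the extender algebra, and arranging $\hq$ to be a genericity iterate so that $M$ is its derived model (\rthm{invariance}), one defines inside $\Q$ a function $k$ from a set $D$ of size $<\xi_\hq$ (pairs of extender-algebra conditions and ordinals below $\beta_\hq$) into $\xi_\hq$, by reading off the preimages of values of the least $OD(U)$ cofinal function for each possible generic code $U$ of $X$. One then shows $k$ is cofinal in $\xi_\hq$ by elementarity, iterating further to catch arbitrary $\gamma<\xi_\hq$. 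This contradicts the inaccessibility (indeed regularity) of $\xi_\hq$ in $\Q$. The $\theta_0$ bounds play no role.
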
 

\begin{proof}
	Suppose $f:\b\rightarrow \xi^\infty$ is an increasing cofinal function such that $f\in \c^+$.
    We want to see that $\b=\xi^\infty$. 
    Suppose that $\b<\xi^\infty$.
    Fix some $\b'<\xi^\infty$ and $X\in \powerset_{\omega_1}(\M_\infty|\b')$ such that $f$ is ordinal definable in $M$ from $X$. 
	%Notice that because $\k^1_\infty$ is an image of a measurable cardinal, $\c_{\k^1_\infty}\models \cf(\k^1_\infty)>\omega_1$. Hence, $X$ is bounded in $\k^1_\infty$ (i.e., for some $\a<\k^1_\infty$, $X\in \powerset_{\omega_1}(\M_\infty|\a)$).  
	Without losing generality, we may assume that $M\models ``f$ is the $OD(X)$-least surjection."
	
	Fix $\hq'\in \mathcal{F}$ such that $\hq'$ catches $\b, \b'$ and $X\subseteq \pi_{\hq', \infty}[\Q'|\b'_{\hq'}]$ where $\hq'=(\Q', \Lambda')$.
    Let $\zeta_0<\zeta'_1<\xi_{\hq'}$ be two Woodin cardinals of $\Q'$ such that $\b'_{\hq'}<\zeta_0$ (and hence, $X\subseteq \pi_{\hq', \infty}[\Q'|\zeta_0]$). 
    Setting $Z= \pi_{\hq', \infty}\rest (\Q'|\zeta_0)$, let $(w_0, w_1)\in \bR^2$ be a $Z$-\textit{code} of $X$. 
    More precisely, $w_0$ codes a surjection $k: \omega\rightarrow \Q'|\zeta_0$ and $X=\{Z(k(i)): i\in w_1\}$. 
    Let $\hq''=(\Q'', \Lambda'')$ be a complete iterate of $\hq'$ obtained by iterating in the interval $(\zeta_0, \zeta_1')$ so that  $(w_0, w_1)$ is generic over $\hq''$ for the extender algebra ${\sf{EA}}^{\Q''}_{[\zeta_0, \pi_{\hq', \hq''}(\zeta'_1)]}$.\footnote{This is the extender algebra at $\pi_{\hq', \hq''}(\zeta_1')$ that uses extenders with critical points $>\zeta_0$.}
	Finally, set $\zeta_1=\pi_{\hq', \hq''}(\zeta_1')$, and let $\hq=(\Q, \Lambda)$ be a genericity iterate of $\hq''$ obtained by iterating above $\zeta_1$ (see \rdef{gen iterate}).
	
	\begin{lemma}\label{x in q}%\normalfont
		$X\in \Q[w_0, w_1]$ and $\Q[w_0, w_1]\models ``\xi_\hq$ is an inaccessible limit of Woodin cardinals". 
	\end{lemma} 
	\begin{proof}
    The second claim is a consequence of the fact that $(w_0, w_1)$ is added to $\Q$ by a forcing whose $\Q$-cardinality is $<\xi_\hq$.
    The first is an immediate consequence of \rprop{invariance} and the fact that \[\pi_{\hq', \infty}\rest (\Q'|\zeta_0)=\pi_{\hq, \infty}\rest (\Q|\zeta_0).\]
	\end{proof}
	
	It follows from \rprop{invariance} that the derived model of $\Q$ at $\eta_\Q$ is $M$.
    It then follows from \rlem{x in q} that $f\in \Q[w_0, w_1]$. 
	
	Now work inside $\Q$.
    Let $u=(\zeta_0, \zeta_1)$, and let $D$ be the set of pairs $(p, \a)$ such that 
	\begin{enumerate}
		\item $p\in \sf{EA}_{u}^\Q$,
		\item $\a<\b_\hq$, and
		\item $p$ forces that the generic pair $(x_0, x_1)\in \bR^2$ is such that $x_0$ codes a surjection $h_{x_0}:\omega\rightarrow \Q|\zeta_0$.  
	\end{enumerate}
	Define $k: D\rightarrow \xi_\hq$ as follows. 
    Given $(p, \a)\in D$, set $k(p, \a)=\gg$ just in case $p$ forces that if $(x_0, x_1)$ is the generic pair then, letting
    \begin{itemize}
        \item[(a)] $U=\{\pi_{\hq, \infty}(h_{x_0}(i)): i\in x_1\}$, and

        \item[(b)] $f_U:\b\rightarrow \xi^\infty$ be such that $M\models ``f_U:\b\rightarrow \xi^\infty$ is the least $OD(U)$ increasing and cofinal function",
    \end{itemize}
    $\pi_{\hq, \infty}(\gg)=f_U(\pi_{\hq, \infty}(\a))$.
    If $p$ does not force the above, then set $k(p, \a)=0$.
    
	We claim that $k$ is cofinal. To see this, fix $\gg<\xi_\hq$. We want to show that there is $(p, \a)\in D$ such that $k(p, \a)\geq \gg$. Let $\hr=(\R, \Psi)$ be a complete iterate of $\hq$ such that for some $\a'<\b_\hr$, $\pi_{\hq, \infty}(\gg)\leq f(\pi_{\hr, \infty}(\a'))$. 
	\begin{lemma}\label{b in m}
		$\R\models ``\pi_{\hq, \hr}(\gg)\leq \sup(\rge(\pi_{\hq, \hr}(k)))."$
	\end{lemma}
 
	\begin{proof}
	Fix $(x_0, x_1)$ such that $x_0$ codes a surjection $n: \omega\rightarrow \R|\pi_{\hq, \hr}(\zeta_0)$ and $X=\{ \pi_{\hr, \infty}(n(i)): i\in x_1\}$. 
    Let $\hs=(\S, \Phi)$ be a complete iterate of $\hr$ based on the window $\pi_{\hq, \hr}(u)$ such that the pair $(x_0, x_1)$ is generic over $\S$ for $\sf{EA}_{\pi_{\hq, \hs}(u)}^{\S}$.
    We can find a complete iterate $\hs'=(\S', \Psi')$ of $\hs$ that is obtained by iterating above $\d_{\hs}$ and is a genericity iterate of $\P$ (see \rdef{gen iterate}).
    We then have that
    \begin{enumerate}
        \item $\pi_{\hq, \hs}(\gg)=\pi_{\hq, \hs'}(\gg)$ and $\pi_{\hq, \hs}(k)=\pi_{\hq, \hs'}(k)$, and

        \item $\S'\models ``\pi_{\hq, \hs'}(\gg)\leq \sup(\rge(\pi_{\hq, \hs'}(k)))"$.
    \end{enumerate}
	Therefore, $\S\models ``\pi_{\hq, \hs}(\gg)\leq \sup(\rge(\pi_{\hq, \hs}(k))),"$ which implies \[\R\models ``\pi_{\hq, \hr}(\gg)\leq \sup(\rge(\pi_{\hq, \hr}(k)))."\]
	\end{proof}
	
	But pulling back to $\Q$, we have that $\Q\models ``\gg\leq \sup(\rge(k))"$.
    Thus, $k$ is cofinal.
    But $\Q\models ``\xi_\hq$ is an inaccessible cardinal." Contradiction.
\end{proof}

\begin{comment}
	\begin{proposition}\label{cfl}
		$\c_{\k^1_\Q}\models ``\cf(\k^1_\Q)=\k^1_\Q"$.
	\end{proposition}
	\begin{proof} The proof is essentially the same proof as the one above. We define $k$ the same way, except now we want to show that  $k$ is cofinal. Fix $\b<\k^1_\Q$. Let $(w_0, w_1, \M)$ be as in the above proof. The proof of \rlem{b in m} shows that $\k^1_\M\cap (\rge(\pi_{\Q, \M}(k))-(\pi_{\Q, \M}(\b+1)))\not =\emptyset$. It then follows that $\rge(k)$ is unbounded in $\k^1_\Q$.
	\end{proof}.
\end{comment}

\begin{corollary}\label{towards zf} %\normalfont 
Suppose $\xi$ is an inaccessible cardinal in $\P$.
Then $\c^-\models \ZF-{\sf{Powerset}}$.
\end{corollary}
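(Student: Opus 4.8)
The goal is to deduce, from $\c^+\models\cf(\xi^\infty)=\xi^\infty$ (Theorem~\ref{lambda is regular}) together with the agreement of powersets (Theorem~\ref{cating the powerset}) and the definability of the tail strategies (Corollary~\ref{strategy is in the Nairian}), that $\c^-\models\ZF-\sf{Powerset}$. Since $\c^-=(\c^-_{\xi^\infty})^M=L_{\xi^\infty}(\mH\rest\xi^\infty,\bigcup_{\b<\xi^\infty}\b^\omega)$, it is already a constructible-style hierarchy of height $\xi^\infty$ built over a predicate, so the axioms of Extensionality, Foundation, Pairing, Union, Infinity, and the scheme of Separation hold automatically by the usual $L$-like verification (these are inherited from the $L[A]$-construction machinery and require no hypothesis). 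The content of the corollary is therefore entirely in verifying \emph{Replacement} (equivalently, Collection), and this is precisely where regularity of $\xi^\infty$ enters.

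\textbf{Key steps.} First I would fix the characterization of $\c^-$ as a cumulative hierarchy: $\c^-=\bigcup_{\a<\xi^\infty}\c^-_{\a}$ where each $\c^-_\a$ is a set in $M$ (indeed in $\c^+$), the sequence $\langle\c^-_\a:\a<\xi^\infty\rangle$ is definable over $(\c^-,\mH\rest\xi^\infty,\in)$, and $\c^-_\a\in\c^-_{\a+1}$ so that every element of $\c^-$ lies in some $\c^-_\a$. Second, to check Collection: given a formula $\phi(x,y,\bar p)$ with parameters $\bar p\in\c^-$ and a set $a\in\c^-$ such that $\c^-\models\forall x\in a\,\exists y\,\phi(x,y,\bar p)$, I want a single $\c^-_\a$ containing a witness for each $x\in a$. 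Define $F\colon a\to\xi^\infty$ by letting $F(x)$ be the least $\b$ such that $\c^-_\b$ contains some $y$ with $\c^-\models\phi(x,y,\bar p)$; this $F$ is definable over $(\c^-,\mH\rest\xi^\infty,\in)$ using the definable $\c^-_\b$-hierarchy and the definable tail strategies from Corollary~\ref{strategy is in the Nairian} (the strategies are needed because membership questions about iteration trees inside the $\c^-_\b$'s are decided via the internal strategy predicate of $\mH|\xi^\infty$), hence $F\in\c^+$ by Theorem~\ref{cating the powerset} — more precisely, $F$ (and $a$) are coded by a subset of $\b_0^\omega$ for a suitable $\b_0<\xi^\infty$, and every such subset lying in $\c^+$ already lies in $\c^-$, so in particular $F\in\c^+$. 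Now $a$ has cardinality less than $\xi^\infty$ in $\c^+$: indeed $a\in\c^-_\a$ for some $\a<\xi^\infty$, and each $\c^-_\a$ is a surjective image of some $\b^\omega$ with $\b<\xi^\infty$ in $\c^+$, so $a$ is too, and $|\b^\omega|<\xi^\infty$ in $\c^+$ because $\xi^\infty$ is inaccessible-like there (it is regular by Theorem~\ref{lambda is regular}, and a strong limit since it is the image under $\pi_{\hp,\infty}$ of an inaccessible cardinal of $\P$, using also the powerset-agreement to control $\b^\omega$). Third, apply regularity: $\mathrm{ran}(F)\subseteq\xi^\infty$ is an image in $\c^+$ of a set of $\c^+$-cardinality $<\xi^\infty$, so by $\c^+\models\cf(\xi^\infty)=\xi^\infty$ it is bounded; let $\a^*<\xi^\infty$ bound it. Then $\c^-_{\a^*}$ collects witnesses for all $x\in a$, and $\c^-_{\a^*}\in\c^-$, giving Collection. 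Fourth, Collection plus Separation yields Replacement in the usual way, completing the verification of $\ZF-\sf{Powerset}$.

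\textbf{Main obstacle.} The delicate point is the definability of the map $F$ over the structure $(\c^-,\mH\rest\xi^\infty,\in)$ and the conclusion $F\in\c^+$ (and hence $F\in\c^-$). One must be careful that the $\c^-_\b$-hierarchy is uniformly definable — which is where the stratification in the definition of $\c^-_\gamma$ (adding $\b^\omega$ at stage $\b+1$) must be used correctly, since $(\mH|\b)^\omega$ is not available as a predicate at a stage $\b$ of countable cofinality — and that deciding $\c^-\models\phi(x,y,\bar p)$ uniformly requires, via Corollary~\ref{strategy is in the Nairian}, the fragments $\Omega^{\a,\b}$ of the strategy of $\mH|\xi^\infty$, which are indeed definable over $(\c^-,\mH\rest\xi^\infty,\in)$. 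Once $F$ is seen to be $\OD$ from a parameter in $\powerset_{\omega_1}(\b_0)$ for some $\b_0<\xi^\infty$, Theorem~\ref{cating the powerset} immediately places $F$ in $\c^-$, after which the regularity of $\xi^\infty$ in $\c^+$ does the rest. The remaining set-theoretic axioms are routine and I would dispatch them in a sentence, noting that Powerset genuinely fails (so the restriction is sharp), since e.g.\ $\powerset(\omega)\cap\c^-$ has $\c^+$-cardinality $\xi^\infty$ by the powerset-agreement and unboundedly many reals appear, so no single $\c^-_\a$ contains all of them.
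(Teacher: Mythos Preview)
Your overall plan—reduce to Collection, define a level-function $F$ into $\xi^\infty$, then invoke regularity—matches the paper's. But the step ``$|\b^\omega|<\xi^\infty$ in $\c^+$ because $\xi^\infty$ is the image of an inaccessible of $\P$'' is a genuine gap. The model $\c^+=\H^M_{(\xi^\infty)^\omega}$ is choiceless, and $\b^\omega$ (computed in $M$, not in $\P$) is typically not well-orderable there: already $\omega^\omega=\bR$ is not a surjective image of any ordinal in $\c^+$. Inaccessibility of $\xi$ in the $\ZFC$ model $\P$ says nothing about the size of $\b^\omega$ as computed in $M$, which contains many more $\omega$-sequences than $\P$ does. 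Consequently the inference ``$\mathrm{ran}(F)$ is the image of a set of $\c^+$-cardinality $<\xi^\infty$, so by $\cf(\xi^\infty)=\xi^\infty$ it is bounded'' fails: Theorem~\ref{lambda is regular} gives only \emph{ordinal} cofinality $\xi^\infty$, and this is silent about cofinal maps out of non-well-orderable domains like $a$ or $\b^\omega$.

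The paper closes this gap via Corollary~\ref{corollary to lemma}. One first fixes a surjection $h\colon\powerset_{\omega_1}(\b)\to a$ in $\c^-$ and notes that the composite $k\colon\powerset_{\omega_1}(\b)\to\xi^\infty$ is $\OD(X)$ in $M$ for some $X\in\powerset_{\omega_1}(\gamma)$ with $\gamma<\xi^\infty$. By Corollary~\ref{corollary to lemma} the prewellordering of $\powerset_{\omega_1}(\b)$ induced by $k$ has length at most the least Woodin cardinal of $\mH$ above $\max(\b,\gamma)$, which is $<\xi^\infty$ because (by the standing hypothesis of \S\ref{sec: kom models}) $\xi^\infty$ is a limit of Woodin cardinals of $\mH$. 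Only now is $\mathrm{ot}(\mathrm{ran}(k))$ an \emph{ordinal} below $\xi^\infty$, and regularity (Theorem~\ref{lambda is regular}) applies to give $\sup\mathrm{ran}(k)<\xi^\infty$. A further application of Theorem~\ref{cating the powerset} then shows $\rge(f)\in\c^-$. Separately, your appeal to Corollary~\ref{strategy is in the Nairian} is unnecessary: the hierarchy $\langle\c^-_\b\rangle$ and truth in $\c^-$ are definable in $M$ straight from the $L$-construction, without consulting iteration strategies.
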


\begin{proof} 
It is enough to prove that $\c^-\models {\sf{Replacement}}$. 
To show this, we will in fact show that if $B\in \c^-$ and $f: B\rightarrow \c^-$ is in $\c^+$, then $\rge(f)\in \c^-$.
This easily follows from the regularity of $\xi^\infty$. 
Indeed, because $B\in \c^-$, for some $\b<\xi^\infty$, we have a surjection $h: \powerset_{\omega_1}(\b)\rightarrow B$ with $h\in \c^-$.\footnote{First find $\b<\xi^\infty$ such that $B\in \c^-_\b$.
Next let  $(\a_i: i<\omega)$ be the first $\omega$ indiscernibles of $\c_\b$.
Notice that whenever $y\in \c^-_\b$, there is an $n<\omega$ and $Z\in \b^\omega$ such that $y$ is definable from $((\a_i: i\leq n), Z, \mH|\b)$ in $\c_\b$.
For $n, m\in \omega$, let $h_{n, m}:\b^\omega\rightarrow B$ be such that letting $s_n=(\a_0,..., \a_n)$ and $\phi_m$ be the formula coded by $m$, for every $U\in \dom(h_{n, m})$, $h_{n, m}(U)=a$ if and only if $a$ is the unique $z$ such that $\c_\b\models \phi_m[z, U, s_n, \mH|\b]\wedge z\in B$.
We can now code the sequence $(h_{n, m}: (n, m)\in \omega^2)$ into one surjection $h:\b^\omega\rightarrow B$.}
Let $k:\powerset_{\omega_1}(\b)\rightarrow \xi^\infty$ be such that for each $Y\in \powerset_{\omega_1}(\b)$, $k(Y)$ is the least ordinal $\tau$ such that $f(h(Y))\in \c^-_\tau$.
Fix some $\gg<\xi^\infty$ and $X\in \powerset_{\omega_1}(\gg)$ such that in $M$, $h$ is ordinal definable from $X$, and  consider the prewellordering $\leq^*$ of $\powerset_{\omega_1}(\b)$ given by $Y\leq^* Z$ if and only if $k(X)\leq k(Y)$.
We have that the length of $\leq^*$ is less than $\xi^\infty$ by \rcor{corollary to lemma}.
Because $\xi^\infty$ is regular in $\c^+$, it follows that for some $\a<\xi^\infty$, we have $\rge(f)\subseteq \c^-_\a$.

Now fix a surjection $g:\powerset_{\omega_1}(\a)\rightarrow \c^-_\a$ with $g\in \c^-$, and consider the set $A$ consisting of those $Z\in \a^\omega$ such that, letting $Z_0=\{ Z(2i): i\in \omega\}$ and $Z_1=\{ Z(2i+1): i\in \omega\}$, $f(h(Z_0))=h(Z_1)$.
Then for some $\a'<\xi^\infty$, $A$ is ordinal definable from some $U\in \powerset_{\omega_1}(\a')$, and hence $A\in \c^-$ (see \rthm{cating the powerset}).
Therefore, $\rge(f)\in \c^-$.
\end{proof}

\subsection{Producing cardinals in the Nairian model}
Our next aim is to show that the Nairian model has non-trivial cardinal structure above $\Theta$. The reader may benefit from reviewing Definition \ref{mu bounded iterations} and Remark \ref{remark about infty notation}.
%We will use Lemma \ref{main lemmA} in the proof of the main theorem of this section.

\begin{lemma}\label{main lemmA}%\normalfont 
Suppose 
\begin{enumerate}
\item $\hr=(\R, \Psi)$ is a hod pair such that $\M_\infty(\hr)=\mH|\d^\infty$,
\item $\nu$ is a $<\d^\infty$-strong cardinal of $\mH$ or a limit of such cardinals,
\item $\nu$ is small,
\item $\hr$ catches $\nu$,
\item for some $\b\in (\d^\infty, \Theta)$, $\Delta$ is the set of all sets of reals that are Wadge below $\b$ (in $M$),
\item $\a=(\Theta^+)^{L(\Delta)}$, and
\item $\psi: N\rightarrow L_\a(\Delta)$ is such that $N$ is countable and transitive, $\psi$ is elementary, and $\{\hr, \nu\}\in \rge(\psi)$.
\end{enumerate}
Let 
\begin{enumerate}[resume]
\item $\hr_N=\psi^{-1}(\hr)$,
\item $\hr^\nu$ and $\R^\nu$ be defined as in Remark \ref{remark about infty notation},
\item $\nu_N=\psi^{-1}(\nu)$,
\item $\R^{\nu_N}=\psi^{-1}(\R^\nu)$,
\item $\pi^{\nu_N}_{\hr_N}=\psi^{-1}(\pi^\nu_{\hr})$, and
\item $\hr'=(\R^{\nu_N}, \Psi_{\R^{\nu_N}})$.
\end{enumerate}
Then $\T_{\hr, \hr'}$ is $\nu$-bounded, $(\hr')^\nu=\hr^\nu$, and $\pi^{\nu}_{\hr'}=\psi\rest \hr^{\nu_N}$.
\end{lemma}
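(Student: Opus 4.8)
The strategy is to transfer the factoring established in \rlem{bound iterations factor} and \rcor{bound iterations factor 2} from the ``large'' model $L_\a(\Delta)$ down to the countable hull $N$ via $\psi$, and then to observe that the minimal-copy machinery of \rthm{min copy thm} identifies the uncollapsed objects with the corresponding objects over $\mH$. First I would set up the basic picture: since $\{\hr,\nu\}\in\rge(\psi)$ and $\psi$ is elementary, everything definable from $\hr$ and $\nu$ in $L_\a(\Delta)$ pulls back correctly. In particular, $\hr_N=\psi^{-1}(\hr)$ is a hod pair in $N$, $\M_\infty(\hr_N)=\psi^{-1}(\mH|\d^\infty)$, and $\nu_N=\psi^{-1}(\nu)$ is (the image under $\psi^{-1}$ of) a small $<\d$-strong cardinal of that direct limit or a limit of such. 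The objects $\R^{\nu_N}$, $\pi^{\nu_N}_{\hr_N}$, $\hr'$ are all defined by pulling back the corresponding notation from \rnot{nu infty}; so by elementarity $\hr'$ is the last model of $\psi^{-1}(\T^\nu_{\hr})$, i.e.\ $\hr'=\hr_N^{\nu_N}$ in the notation of \rrem{remark about infty notation}, computed inside $N$.

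Next I would argue that $\T_{\hr,\hr'}$ is $\nu$-bounded, which is the crux of the identification. By construction inside $N$, $\T_{\hr_N,\hr'}=\psi^{-1}(\T^\nu_{\hr})$, and $\gen$ of this tree is $\subseteq\pi^{\nu_N}_{\hr_N}(\nu_N)$ by \rnot{nu infty} (it is the longest initial segment whose generators lie below the image of $\nu_N$); but $\hr'$ need not literally be an iterate of $\hr$ in $V$ — rather, $\hr'$ is a $\Psi$-iterate of $\hr$ because $\psi$ copies $\T^{\nu_N}_{\hr_N}$ onto a genuine $\Psi$-iteration of $\hr$. More precisely, the minimal-copy construction of \rthm{min copy thm}, applied with $\eta$ there taken to be (a suitable cardinal just above) $\nu$ and with $F$ the long extender derived from $\pi^\nu_{\hr}\rest(\R^\nu|(\nu^+))$ — equivalently from $\psi\rest\hr^{\nu_N}$ — shows that $\T_{\hr,\hr'}$ is exactly $\T^\nu_{\hr}\rest(\xi+1)^\frown\U$ where $\U$ is the minimal $(\R,F)$-copy; hence $\gen(\T_{\hr,\hr'})\subseteq\pi_{\hr,\hr'}(\nu)$, i.e.\ it is $\nu$-bounded. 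Here I would lean on \rlem{main lemma on small ordinals} and \rcor{hull property cor}: the \emph{smallness} of $\nu$ is exactly what guarantees that no extender with critical point $\leq\nu$ is used after the relevant node, so that the tree splits as $\X^\frown\Y$ with $\Y$ strictly above $\pi_{\hr,\hr'}(\nu)$ in the sense of \rlem{splitting lemma}.

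Once $\nu$-boundedness is in hand, $(\hr')^\nu=\hr^\nu$ is immediate from \rlem{bound iterations factor}: that lemma says precisely that for a complete $\nu$-bounded iterate $\hr'$ of $\hr$ one has $\hr^\nu=(\hr')^{\nu_{\hr'}}$, and here $\nu_{\hr'}=\nu$ since $\pi_{\hr,\hr'}(\nu)=\nu$ (the generators stay below $\nu$, so $\nu$ is not moved). Finally, for the identity $\pi^\nu_{\hr'}=\psi\rest\hr^{\nu_N}$: both sides are maps from $\hr^{\nu_N}=\psi^{-1}(\hr^\nu)$; the left side is the direct-limit map of $\hr'=\hr_N^{\nu_N}$ into $\M_\infty(\hr')=\M_\infty(\hr)^\nu=\mH|\pi_{\hr,\infty}(\nu)$, and $\psi$ restricted to $\hr^{\nu_N}$ is by elementarity exactly the composite of $\psi^{-1}$-inverse with the true direct-limit map, landing in $\psi(\hr^{\nu_N})=\hr^\nu\subseteq\M_\infty(\hr)$. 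The equality follows from \rprop{strategy coherence} (strategy coherence guarantees the two ways of computing the direct limit agree) together with the observation from \rlem{bound iterations factor} that $\gen(\T^\nu_{\hr})\subseteq\pi_{\hr,\infty}(\nu)$, so the direct-limit map is determined by its action on $\hr^{\nu_N}$, which is $\psi$.

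\textbf{Main obstacle.} The delicate point is verifying $\nu$-boundedness of $\T_{\hr,\hr'}$ \emph{without} already knowing the factoring — i.e.\ ruling out that the copying procedure forces an extender with critical point $\leq\nu$ to be used somewhere along $\T_{\hr,\hr'}$. This is exactly where smallness of $\nu$ enters, via \rlem{main lemma on small ordinals}: if such an extender $E$ with $\cp(E)\leq\nu$ were used past the node where the models agree up to $\nu$, one would derive that $\nu_{\hr'}$ (hence $\nu$) is \emph{big}, a contradiction. Getting the hull/definability bookkeeping of \rcor{hull property cor} to line up correctly through the $\psi$-copying — so that the minimal-copy description of \rthm{min copy thm} genuinely applies to $\T_{\hr,\hr'}$ and not merely to an embedding-normalized approximation — will be the part requiring the most care.
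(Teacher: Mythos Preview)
Your proposal has the right overall shape (pull back via $\psi$, then invoke \rlem{bound iterations factor}), but the argument for $\nu$-boundedness is where it breaks down, and your ``main obstacle'' misidentifies the actual difficulty.

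First, the claim ``$\nu_{\hr'}=\nu$ since $\pi_{\hr,\hr'}(\nu)=\nu$ (the generators stay below $\nu$, so $\nu$ is not moved)'' does not type-check: $\nu$ is an ordinal of $\mH$, not of the countable model $\R$, so $\pi_{\hr,\hr'}(\nu)$ is meaningless. What you know from elementarity is $\gen(\T_{\hr,\hr'})\subseteq\nu_N$, and what you need for $\nu$-boundedness is $\gen(\T_{\hr,\hr'})\subseteq\nu_{\hr'}=\pi_{\hr,\hr'}(\nu_\hr)$. These coincide only once you show $\nu_{\hr'}=\nu_N$, and that is the real content. Your appeal to \rthm{min copy thm} is set up backwards: you write $\T_{\hr,\hr'}=\T^\nu_{\hr}\rest(\xi+1)^\frown\U$, but $\T^\nu_{\hr}$ has last model $\hr^\nu$, whose ordinal height exceeds $\nu$, while $\hr'$ is countable --- so $\T_{\hr,\hr'}$ cannot extend $\T^\nu_\hr$. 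Smallness of $\nu$ and \rlem{main lemma on small ordinals} are not what resolve this; the generators are already below $\nu_N$ by elementarity, so there is no rogue extender to rule out.

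The paper's route is different and more direct. It introduces $\hh=(\mH_N|\d^\infty_N,\Psi_{\mH_N|\d^\infty_N})$ and observes that $\psi\rest\mH_N=\pi_{\hh,\infty}$, so $\nu_N=\nu_\hh$. Letting $p:\R^\nu\to\mH|\d^\infty$ be the iteration map and $p_N=\psi^{-1}(p)$, one has $\pi_{\hr',\infty}=\pi_{\hh,\infty}\circ p_N$; since $\cp(p_N)>\nu_N$, this gives $\pi_{\hr',\infty}(\nu_N)=\pi_{\hh,\infty}(\nu_N)=\nu$, whence $\nu_{\hr'}=\nu_N$. That is the missing step. For the final identity $\pi^\nu_{\hr'}=\psi\rest\R^{\nu_N}$, the paper does not invoke strategy coherence abstractly but performs an explicit elementwise computation: writing $x\in\R^{\nu_N}$ as $\pi^{\nu_N}_{\hr_N}(f)(a)$ with $f\in\R$ and $a\in[\nu_N]^{<\omega}$, one chases $\psi(x)$ through the identities $\psi\rest\mH_N=\pi_{\hh,\infty}$, $\pi_{\hh,\infty}\circ p_N=\pi_{\hr',\infty}=p\circ\pi^\nu_{\hr'}$, and $\cp(p)>\nu$ to obtain $\psi(x)=\pi^\nu_{\hr'}(x)$. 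Your sketch via ``strategy coherence guarantees the two ways of computing the direct limit agree'' does not supply this; you would need to say precisely which two direct-limit systems are being compared and why they coincide on $\R^{\nu_N}$.
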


\begin{proof} 
We will use subscript $N$ to denote the $\psi$-preimages of objects in the range of $\psi$. 
It follows from hull condensation that $\Psi_N=\Psi\rest N$.
We let $p:\R^\nu\rightarrow \mH|\d^\infty$ be the iteration embedding according to $\Psi_{\hr^\nu}$ and $p_N=\psi^{-1}(p)$.\footnote{See \rlem{main lemma on small ordinals}. We use smallness of $\nu$ to conclude that $\mH|\d^\infty$ is an iterate of $\hr^\nu$.} 
We have that $p_N: \R^{\nu_N}\rightarrow \mH_N|\d^\infty_N$ is the iteration embedding according to $\Psi_{\R^{\nu_N}}$. 
Let $\hh=(\mH_N|\d^\infty_N, \Psi_{\mH_N|\d^\infty_N})$.\footnote{Notice that $\Psi_{\mH_N|\d^\infty_N}$ extends $(\Omega_N)_{\mH_N|\d^\infty_N}$.} Notice that $\psi\rest \mH_N=\pi_{\hh, \infty}$ and hence $\nu_N=\nu_\hh$.

We have that $\gen(\T_{\hr, \hr'})\subseteq \nu_N$. Therefore, to show that $\T_{\hr, \hr'}$ is $\nu$-bounded, it is enough to show that $\nu_N=\nu_{\hr'}$. Notice that $\pi_{\hr', \infty}=\pi_{\hh, \infty}\circ p_N$, and since $\nu_N=\nu_\hh$ and $\cp(p_N)>\nu_N$,\footnote{See \rlem{xi is on the main branch}. This also follows from \rlem{main lemma on small ordinals}.} we get that $\pi_{\hr', \infty}(\nu_N)=\pi_{\hh, \infty}(\nu_N)=\nu$. 
Therefore, $\nu_{\hr'}=\nu_N$.

Since $\T_{\hr, \hr'}$ is $\nu$-bounded, \rlem{bound iterations factor} implies that $(\hr')^\nu=\hr^\nu$.  

We now want to show that $\pi^{\nu}_{\hr'}=\psi\rest \hr^{\nu_N}$.
We have that $\cp(p)>\nu$ and  $\pi_{\hr, \infty}=p\circ \pi^\nu_{\hr}$. 
Suppose now $x\in \hr^{\nu_N}$. 
Fix $f\in \R$ and $a\in [\nu_N]^{<\omega}$ such that $x=\pi^{\nu_N}_{\hr_N}(f)(a)$. 
Thus, $x=\pi^{\nu_N}_{\hr_N}(f)(p_N(a))$ and the following holds:
\begin{center}
\begin{align*}
\psi(x) &= \psi(\pi^{\nu_N}_{\hr_N}(f))(\psi(p_N(a)))\\
        &= \pi^{\nu}_{\hr}(f)(\pi_{\hh, \infty}(p_N(a)))\ \ \ (\text{because}\ \  \psi\rest \mH_N=\pi_{\hh, \infty})\\
        &= \pi^{\nu}_{\hr}(f)(p\circ \pi^\nu_{\hr'}(a))\ \ \ (\text{because}\ \pi_{\hh, \infty}\circ p_N=\pi_{\hr', \infty} \text{and}\ \pi_{\hr', \infty}=p\circ \pi^\nu_{\hr'})\\
        &= \pi^{\nu}_{\hr}(f)(\pi^\nu_{\hr'}(a))\ \ \ (\text{because}\ \cp(p)>\nu)\\
        &= \pi^\nu_{\hr'}(\pi^{\nu_N}_{\hr_N}(f))(\pi^\nu_{\hr'}(a))\\
        &= \pi^\nu_{\hr'}(\pi^{\nu_N}_{\hr_N}(f)(a))\\
        &= \pi^\nu_{\hr'}(x).
        \end{align*}
        \end{center}
\end{proof}

\begin{theorem}\label{first prop}%\normalfont 
Suppose $\nu<\xi^\infty$ is a ${<}\d^\infty$-strong cardinal in $\mH$ or a limit of ${<}\d^\infty$-strong cardinals of $\mH$.
Let $\l$ be the least ${<}\d^\infty$-strong cardinal of $\mH$ that is strictly bigger than $\nu$.
Assume $\nu$ (and hence $\l$) is small in $\P$.
Then $\powerset(\nu^\omega)\cap \c=\powerset(\nu^\omega)\cap \c_\l$.
\end{theorem}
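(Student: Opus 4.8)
The goal is to show that the Nairian models $\c$ and $\c_\l$ (the former constructed at $\xi^\infty$, the latter at $\l$) agree on subsets of $\nu^\omega$. By Theorem \ref{cating the powerset} we already know $\powerset(\b^\omega)\cap \c^+ = \powerset(\b^\omega)\cap \c = \powerset(\b^\omega)\cap \c^-$ for all $\b<\xi^\infty$, so one direction and the "internal" equalities are essentially free; the content is matching $\c_\l$ (which is a genuinely smaller model, only closed under $\omega$-sequences from ordinals $<\l$) against the others. The plan is to reduce everything to an infinity-Borel / generic-interpretability argument exactly parallel to the proof of Theorem \ref{cating the powerset}, but carried out inside the relevant slice of the Chang model determined by $\l$, using the ``small cardinal'' machinery of \textsection\ref{small cardinals subsec} to control iterations.

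\textbf{Key steps.} First I would fix $A \in \powerset(\nu^\omega) \cap \c^+$ and, by Theorem \ref{cating the powerset}, obtain that $A \in \c^-$, so $A$ is $OD(X)^M$ for some $X \in \powerset_{\omega_1}(\gg)$ with $\nu \le \gg < \xi^\infty$; it suffices to show $A \in \c_\l$, i.e.\ $A$ is definable over a structure of the form $L_{\text{something}<\l}(\mH\rest\text{something}, \cup_{\b<\l}\b^\omega)$. Second, using Lemma \ref{main lemma on small ordinals} and Lemma \ref{hull property} together with the smallness of $\nu$ (and $\l$) in $\P$, I would find a complete iterate $\hq$ of $\hp$ catching $\nu$ and $\gg$ with $\gen(\T_{\hq,\infty})\subseteq$ the relevant image, so that the piece of $\pi_{\hq,\infty}$ needed to decode $X$ lives below the image of $\l$ and in fact inside $\mH|\l^\infty$ (here $\l^\infty = \pi_{\hp,\infty}(\l)$, which by Lemma \ref{main lemmA}-style factoring is a ${<}\d^\infty$-strong cardinal that the relevant iterate "remembers"). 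Third, I would run the Vop\v{e}nka / generic interpretability argument: by Lemma \ref{vopenka algebra} every $Y \in \nu^\omega$ is $\mH$-generic over a poset of size $<\l^\infty$ (the least Woodin above $\nu$ is below $\l^\infty$), so $\mH[X][Y]$ makes sense inside $\mH|\l^\infty[X]$; then, mimicking the derived-model-embedding argument $j^*$ from the proof of Theorem \ref{cating the powerset} but localized to $\mH|\l^\infty$ and using Corollary \ref{strategy is in the Nairian} to get the needed fragment of the iteration strategy inside $\c^-$ (hence inside the $\l$-truncated version), I would produce a formula $\phi$, ordinals $s$, and the strategy fragment $k$ such that $Y \in A \iff (\mH|\l^\infty)_X[Y] \models \phi[s, X, Y, k]$. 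Fourth, since all the parameters $s$, $X$, $k$ and the code for the Vop\v{e}nka algebra live in $\mH|\l^\infty$ and $X \in \cup_{\b<\l}\b^\omega$, this definition can be carried out inside $\c_\l = L(\mH\rest\l, \cup_{\b<\l}\b^\omega)$, giving $A \in \c_\l$; the reverse inclusion $\powerset(\nu^\omega)\cap\c_\l \subseteq \powerset(\nu^\omega)\cap\c$ is trivial since $\c_\l \subseteq \c$.

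\textbf{Main obstacle.} The delicate point is Step 3: I need to know that the fragment of $\Omega = \Sigma_\mH$ (restricted to trees on $\mH|\l^\infty$ or a small cutpoint initial segment) that is required to verify the infinity-Borel statement is actually an element of $\c_\l$ — not merely of $\c^-$ or $\c$. This is where smallness of $\nu$ and $\l$ is essential: by the ``hull property'' and ``definability property'' of Corollary \ref{hull property cor} and Lemma \ref{no overlapping}, the comparison/genericity iterations relevant to trees below $\l^\infty$ stay strictly above small strong cardinals, so the strategy on the relevant pieces is guided by the direct-limit system and is $OD$ from parameters in $\mH|\l^\infty$; combined with Corollary \ref{strategy is in the Nairian} (suitably relativized to $\l$ rather than $\xi^\infty$) this puts the needed strategy fragment into $\c_\l$. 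Getting this relativization precisely right — in particular checking that ``$\K[\T]\models S^\K(\T)=b$'' can be evaluated using only objects from $\mH\rest\l$ and $\l$-bounded $\omega$-sequences, so that Corollary \ref{strategy is in the Nairian}'s proof goes through with $\c_\l$ in place of $\c^-$ — is the crux and is where I would spend most of the effort. The rest is bookkeeping of the same flavor as the proof of Theorem \ref{cating the powerset}.
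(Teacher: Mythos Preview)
Your plan has a genuine gap at the most important step. The set $A$ is $OD(X)$ for some $X\in\powerset_{\omega_1}(\gg)$ with $\gg<\xi^\infty$, but there is no reason $\gg<\l$; in general $\gg$ will be far above $\l$. Your Step~2 asserts that ``the piece of $\pi_{\hq,\infty}$ needed to decode $X$ lives below the image of~$\l$,'' but neither Lemma~\ref{main lemma on small ordinals} nor Lemma~\ref{hull property} gives you this: those results control iterations below a small strong cardinal, they do not bring a parameter living at height~$\gg$ down below~$\l$. (Also, $\l$ is already an ordinal of $\mH$, so ``$\l^\infty=\pi_{\hp,\infty}(\l)$'' is a type error.) Consequently your Step~3, the Vop\v{e}nka localization, cannot start: the proof of Theorem~\ref{cating the powerset} produces a model $\M$ of size $(\mu^+)^\mH$ where $\mu$ is the least Woodin \emph{above $\gg$}, not above~$\nu$, so if $\gg>\l$ that model is too large to live in $\c_\l$. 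The ``main obstacle'' you identify (putting a strategy fragment into $\c_\l$) is real but secondary; the primary obstacle is reducing the height of the defining parameter.

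The paper's argument is structurally different from what you propose. It fixes an iterate $\hq$ catching $X$, pulls $X$ back to $X_\hq\subseteq\Q$, and then pushes the entire situation up to $\Q^\tau$ via the long extender $F$ derived from $\pi^\tau_\hq\rest(\Q|\tau_\hq)$, where $\tau=(\nu^+)^\mH$. This puts the relevant data $(\Q^\tau,X^\tau,\sigma^\tau)$ into $\c_\l$ (see (1)--(3) in the proof). The definition of $A$ is then recovered from this data not by a direct Vop\v{e}nka argument but via a term relation $\sigma\in\Q$ together with the (ordinal-definable) supercompactness measure $U$ on $\powerset_{\omega_1}(L_\a(\Delta))$: membership of $Z$ in $A$ is characterized by what happens on $U$-almost every countable elementary substructure~$Y$, and Lemma~\ref{main lemmA} identifies the collapsed structures with genuine $\nu$-bounded iterates, so the characterization can be evaluated inside $\c_\l$. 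None of this supercompactness-measure / term-relation machinery appears in your outline, and something of that kind is needed to bridge the gap between $\gg$ and~$\l$.
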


\begin{proof} 
Fix $A\subseteq \powerset(\nu^\omega)$ with $A\in \c$ and $\gg<\xi^\infty$ such that for some $X\in \powerset_{\omega_1}(\gg)$ and $a\in \bR$, $A$ is ordinal definable from $X, a$ (in $M$)\footnote{Notice that we could take $X\in \gamma^\omega$, in which case we do not need the real parameter $a$. However, it is easier to work with $X\in \powerset_{\omega_1}(\gg)$.}.
Fix such an $X, a$ as well.
Using \rlem{hod[x] is hodx} and \rthm{cating the powerset}, we can find $s\in [\xi^\infty]^{<\omega}$ such that $A$ is ordinal definable from $X, a$ and $s$, and since $X$ is a bounded countable subset of $\xi^\infty$, we can without losing generality assume that $s\subseteq X$.
Fix a formula $\psi$  such that
\begin{center}
$Z\in A$ if and only if $\c\models \psi[Z, X, a]$.
\end{center}
It follows from \rlem{hod[x] is hodx} and \rthm{cating the powerset} that it is enough to show that $A$ is ordinal definable from some $X'\in \l^\omega$. 

Next fix a complete iterate $\hq=(\Q, \Sigma_\Q)$ of $\hp|\d$ such that $X\cup\{\gg, \nu\}\subseteq \rge(\pi_{\hq, \infty})$.
Let $X_\hq=\pi_{\hq, \infty}^{-1}[X]$, $\tau=(\nu^+)^\mH$, and $X^\tau=\pi^\tau_{\hq}[X_\hq]$.
Let $F$ be the long extender derived from $\pi^\tau_{\hq}\rest (\Q|\tau_\hq)$.
More precisely, $F$ consists of pairs $(b, B)$ such that $b\in [\nu]^{<\omega}$, $B\in \Q|\tau_\hq$ and $b\in \pi^\tau_{\hq}(B)$. Notice that\\\\
(1) $\{\Q^\tau, \Q, F\}\in \c_\l$.\\\\
Indeed, we have $\Q\in \c_\l$ because it is a countable transitive set and $\c_\l$ contains all the reals.
Next, notice that $\pi^\tau_{\hq}\rest (\Q|\tau_\hq)\in \c_\l$, which implies that $F\in \c_\l$.
Finally, notice that $\Q^\tau=Ult(\Q, F)$.
Notice also that\\\\
(2) $\pi_F^\Q=\pi^\tau_{\hq}$.\\\\
Let $\b=\pi_F(\gg_\hq)$, and let $\mu$ be the least Woodin cardinal of $\Q^\tau$ that is bigger than $\b$.
Let $\zeta=(\mu^+)^{\Q^\tau}$.
Notice next that\\\\
(3) for some $\b'<\l$ and some $X'\in (\b')^\omega$, $\Sigma_{\Q^\tau|\zeta}$ is ordinal definable in $M$ from $X'$.\\\\
Indeed, we have some $\b'<\l$\footnote{We use the fact that $\l$ is a strong cardinal to get such a $\b'$.} such that $\mH|\b'$ is a complete $\Sigma_{\Q^\tau|\zeta}$-iterate of $\Q^\tau|\zeta$.\footnote{See \rlem{main lemma on small ordinals}. This uses the smallness of $\tau$. However, one doesn't need the smallness of $\tau$ to show (3). We know that $\tau$ is on the main branch of $\T_{\hq, \infty}$ (see \rlem{xi is on the main branch}). From here we can conclude the existence of a $\b'$ and a $k$ such that $\Sigma_{\Q^\tau|\zeta}$ is the $k$-pullback of $\Omega_{\mH|\b'}$.}
Let now $k: \Q^\tau|\zeta\rightarrow \mH|\b'$ be the iteration embedding according to $\Sigma_{\Q^\tau|\zeta}$.
Because $\hq$ catches $\nu$, we have that $\cp(k)>\nu$ (see \rlem{xi is on the main branch}).
We want to argue that $k\in \c_\l$.
Notice that $\zeta\in \rge(\pi^\tau_{\hq})$. 
Let then $j=k\circ (\pi^{\tau}_{\hq} \rest (\Q|\zeta_\hq))$. 
Notice that $j\in \c_\l$, as it is (essentially) a countable subset of $\mH|\b'$.
But now we can compute $k$ from $j$.
Indeed, given $x\in \Q^\tau|\zeta$, we have some $f\in \Q|\zeta_\hq$ such that $x=\pi^\tau_{\hq}(f)(a)$ where $a\in [\nu]^{<\omega}$. Hence, $k(x)=k(\pi^\tau_{\hq}(f))(k(a))=j(f)(a)$.
Hence, $k\in \c_\l$.
Since $\Sigma_{\mH|\b}$ is ordinal definable in $M$ and $\Sigma_{\Q^\tau|\zeta}$ is the $k$-pullback of $\Sigma_{\mH|\b}$, (3) follows.
Fix now $X'$ as in (3).

We define a term relation in $\Q$.
Let $\sigma\in \Q$ be a $Coll(\omega, \mu_\hq)$-name that is a standard name for a subset of $\bR^4$ such that whenever $g\subseteq Coll(\omega, \mu_\hq)$ is generic over $\Q$, $\sigma_g$ consists of a tuple $(u_0, u_1, v_0, v_1, w)$ such that\\\\
(4.1) $u_0$ is a real that codes a function $g_{u_0}:\omega\rightarrow \Q|\tau_\hq$,\\
(4.2) $v_0$ is a real that codes a function $g_{v_0}: \omega \rightarrow \Q|\gg_\hq$,\\
(4.3) $u_1\subseteq \omega$, $v_1\subseteq \omega$ and $w\in \bR$, and\\
(4.4) letting $Z=\{\pi_{\hq, \infty}(g_{u_0}(i)): i \in u_1\}$ and $W=\{\pi_{\hq, \infty}(g_{v_1}(i)): i \in u_1\}$, $\c\models \psi[Z, W, w]$.\\\\
The following claims can be easily established using the results summarized in \rsubsec{sec: chang model}.
We leave their standard proofs to the reader.
Given a transitive model $N$ and $\iota$ a limit of Woodin cardinals of $N$, we will write $D^{N, \iota}\models \phi[...]$ to mean that whenever $h\subseteq Coll(\omega, <\iota)$ is $N$-generic and $D$ is the derived model of $N$ as computed by $h$,\footnote{So $D=(L(Hom^*))^{N(\bR^*)}$, where $\bR^*=\cup_{\iota'<\iota}\bR^{N[h\cap Coll(\omega, <\iota')]}$.} $D\models \phi[...]$.
Similarly, we will write $c=b^{D^{N, \iota}}$ just in case $b$ is definable in $D$ and $c$ is the object defined by the definition of $b$.
Clause 4 of \rcl{sigma is def in der model} is meaningful because we can define $(\pi_{\hq, \infty})^{D^{\Q^+, \eta}}$ inside $\Q$.
This is a consequence of generic interpretability (see \cite{SteelCom}).

\begin{claim}\label{sigma is def in der model}\normalfont
$\sigma\in \Q$\footnote{Because $\sigma$ is ordinal definable in $M$ from $\hq$.} and, letting $\hq^+=(\Q^+, \Sigma_{\Q^+})$ be the complete iterate of $\hp$ obtained by copying $\T_{\hp|\d, \hq}$ onto $\P$ via $id$, the following equivalence holds in $\Q^+$. 
$(p, \dot{b})\in \sigma$ if and only if $p\in Coll(\omega, \mu_\hq)$, $\dot{b}=(\dot{u_0}, \dot{u}_1, \dot{v}_0, \dot{v}_1, \dot{w})$ and $p$ forces the following statement:
\begin{enumerate}[label=(\alph*)]
\item $\dot{u}_0$ is a real that codes a function $g_{\dot{u}_0}:\omega\rightarrow \check{\Q}|\check{\tau_\hq}$,
\item $\dot{v}_0$ is a real that codes a function $g_{\dot{v}_0}: \omega \rightarrow \check{\Q}|\check{\gg_\hq}$,
\item $\dot{u}_1\subseteq \omega$, $\dot{v}_1\subseteq \omega$ and $\dot{w}\in \dot{\bR}$, and
\item it is forced by $Coll(\omega, <\iota)$ that, letting $\pi=(\pi_{\hq, \infty})^{D^{\Q^+, \eta}}$, $\epsilon=\pi(\check{\xi_\hq})$, $\dot{Z}=\{\pi(g_{\dot{u}_0}(i)): i \in \dot{u}_1\}$ and $\dot{W}=\{\pi(g_{\dot{v}_0}(i)): i \in \dot{v}_1\}$, \[\c^{D^{\Q^+, \eta}}_{\epsilon}\models \psi[\dot{Z}, \dot{W}, \dot{w}].\]
\end{enumerate}
\end{claim}

\begin{claim}\label{sigma is def in der model1}\normalfont
Let $\hq^+=(\Q^+, \Sigma_{\Q^+})$ be as in \rcl{sigma is def in der model}, and suppose $\hs=(\S, \Sigma_\S)$ is a complete iterate of $\hq$.
Suppose $g\subseteq Coll(\omega, \mu_\hs)$ is generic over $\S$ and $(u_0, u_1, v_0, v_1, w)\in (\pi_{\hq^+, \hs}(\sigma))_g$.
Then the following conditions are satisfied:
\begin{enumerate}[label=(\alph*)]
\item $u_0$ is a real that codes a function $g_{u_0}:\omega\rightarrow \S|\tau_\hs$,
\item $v_0$ is a real that codes a function $g_{v_0}: \omega \rightarrow \S|\gg_\hs$,
\item $u_1\subseteq \omega$, $v_1\subseteq \omega$, and $w\in \bR^{\S[g]}$, and
\item letting $Z=\{\pi_{\hs, \infty}(g_{u_0}(i)): i \in u_1\}$ and $W=\{\pi_{\hs, \infty}(g_{v_1}(i)): i \in u_1\}$, \[\c\models \psi[Z, W, w].\]
\end{enumerate}
\end{claim}

Our goal now is to show that, letting $\sigma^\tau=\pi^\tau_{\hq}(\sigma)$, $\sigma^\tau$ can be used to identify $A$ in $\c_\l$.
We observe that (1) and (2) imply that $\sigma^\tau\in \c_\l$.
Lemma \ref{sigma gives a} demonstrates exactly what we want. 
Let $\eta_0$ be the least Woodin cardinal of $\P$ that is bigger than $\d$, and let $\a=((\eta_0^\infty)^+)^{M}$, where $\eta_0^\infty=\pi_{\hp, \infty}(\eta_0)$.
Let $U\in M$ be the supercompactness measure on $\powerset_{\omega_1}(L_\a(\Delta))$, where $\Delta$ is the collection of all sets of reals whose Wadge rank is $<\eta_0^\infty$.
It follows from a result of Woodin \cite{Wo20} that $U$ is the unique supercompactness measure on $\powerset_{\omega_1}(L_\a(\Delta))$. Hence,\\\\
(5) $U$ is ordinal definable in $M$.\\\\
Clause 2 of \rlem{sigma gives a} defines $A$ from the tuple $(\Q^\tau, X^\tau, \sigma^\tau, \b, \Sigma_{\Q^\tau|\zeta}, U)$.
It then follows from (1) and (3) that $A\in \c_\l$.
Therefore \rlem{sigma gives a} is all that we need to prove.

\begin{lemma}\label{sigma gives a}%\normalfont 
Suppose $Z\in \tau^\omega$.
Then the following are equivalent.
\begin{enumerate}
\item $Z\in A$.
\item For almost all $Y\in U$, 
\begin{enumerate}
\item $Y\prec L_\a(\Delta)$,
\item $\{\Q^\tau, \sigma^\tau, \b\}\in Y$, and
\item letting 
\begin{enumerate}
\item $\pi_Y: N_Y\rightarrow L_\a(\Delta)$ be the inverse of the transitive collapse of $Y$, 
\item $\b_Y=\pi^{-1}_Y(\b)$, 
\item for $i\in \omega$, $\mu_Y=\pi_Y^{-1}(\mu)$, 
\item $\Q_Y^\tau=\pi_Y^{-1}(\Q^\tau)$, $\sigma_Y=\pi_Y^{-1}(\sigma^\tau)$, $\tau_Y=\pi_Y^{-1}(\tau)$, $\nu_Y=\pi_Y^{-1}(\nu)$ and $\zeta_Y=\pi_Y^{-1}(\zeta)$,
\item $\Psi_Y$ be the $\pi_Y$-pullback of $\Sigma_{\Q^\tau|\zeta}$ and 
\item $\hr_Y=(\Q_Y^\tau|\zeta_Y,\Psi_Y)$,
\end{enumerate}
there is a complete iterate $\hs$ of $\hr_Y$ such that $\T_{\hr_Y, \hs}$ is above $\b_Y$, $\T_{\hr_Y, \hs}$ is below $\mu_Y$, and there is an $\M^\hs$-generic $g\subseteq Coll(\omega, \mu_\hs)$ such that for some $(u_0, u_1, v_0, v_1, w)\in (\pi_{\hr_Y, \hs}(\sigma_Y))_g$, 
\[Z=\{\pi_Y(g_{u_0}(m)): m\in u_1\}, w=a \text{ and }X^\tau=\{\pi_Y(g_{v_0}(i)): i\in v_1\}.\]
\end{enumerate}
\end{enumerate}
\end{lemma}
\begin{proof}
We start by showing that clause 1 implies clause 2.
Suppose that $Z\in A$. 
Fix any $Y\prec L_\a(\Delta)$ such that $\{\hq, Z, \tau, \sigma^\tau, \b\}\in Y$.
The set of such $Y$ is in $U$, and so it suffices to show that $Y$ satisfies the conditions listed in clause 2.

We have that $Z\subseteq Y$.
Let $q:\Q^\tau\rightarrow \mH$ be the iteration embedding via $\Sigma_{\Q^\tau}$, $\hq_Y=\pi_Y^{-1}(\hq)$, and  $\hr_Y^+=(\hq_Y)^{\tau_Y}_{\infty}$\footnote{$(\hq_Y)^{\tau_Y}_{\infty}$ is defined in $N_Y$. We have that $\Q^\tau_Y=\M^{\hr_Y^+}$. See Notation \ref{iteration terminology}.}.
\rlem{main lemmA} implies that\\\\
(6) $\hr_Y=\hr_Y^+|\zeta_Y$, $(\hr^+_Y)^\tau=\hq^\tau$, $\pi_{\hr^+_Y, \infty}=q\circ \pi_{\hr^+_Y}^\tau$, and $\pi_Y\rest \Q_Y^\tau=\pi^\tau_{\hr^+_Y}$.\\\\
Fix now $(u_0, u_1, v_0, v_1)$ such that\\\\
(7.1) $u_0$ is a real coding a function $g_{u_0}: \omega\rightarrow \Q^\tau_Y|\tau_Y$,\\
(7.2) $u_1\subseteq \omega$ is such that $Z=\{ \pi_Y(g_{u_0}(m)): m\in u_1\}$,\\
(7.3) $v_0$ is a real coding a function $g_{v_0}: \omega\rightarrow \Q^\tau_Y|\b_Y$, and\\
(7.4) $v_1\subseteq \omega$ is such that $X^\tau=\{\pi_Y(g_{v_0}(m)): m\in v_1\}$.\\\\
Let $\hs$ be a complete iterate of $\hr_Y$ such that $\T_{\hr_Y, \hs}$ is above $\b_Y$, $\T_{\hr_Y, \hs}$ is below $\mu_Y$, and there is an $\M^{\hs}$-generic $g\subseteq Coll(\omega, \mu_\hs)$ such that $(u_0, u_1, v_0, v_1, a)\in \M^{\hs^+}[g]$.
Let $\hs^+$ be the complete iterate of $\hr^+_Y$ obtained by copying $\T_{\hr_Y, \hs}$ onto $\hr_Y^+$ via $id$.
Notice that\footnote{We have that $\M^{\hr_Y^+}=\Q^\tau_Y$.}\\\\
(8) $\pi_{\hs^+, \infty}\rest (\M^{\hs^+}|\b_Y)=\pi_{\hr_Y^+, \infty}\rest (\M^{\hr_Y^+}|\b_Y)$,\\\\
which is a consequence of the fact that $\T_{\hr_Y, \hs}$ is above $\b_Y$. 
It follows from (6), (7.1)-(7.4), and (8) that\\\\
(9) $(u_0, u_1, v_0, v_1, a)\in (\pi_{\hr_Y, \hs}(\sigma_Y))_g$.\\\\
To see that (9) holds, notice that (6) implies that $\sigma_Y=\pi_{\hq, \hr^+_Y}(\sigma)$, $\tau_Y=\tau_{\hr^+_Y}$ and $\b_Y=\b_{\hr^+_Y}$.
Using (8) we get that\\\\
(10.1) $\cp(q)>\tau$, \\
(10.2) $\pi_{\hs^+, \infty}\rest (\Q^\tau_Y|\b_Y)=\pi_{\hr^+_Y, \infty}\rest (\Q^\tau_Y|\b_Y)=q\circ \pi^\tau_{\hr^+_Y} \rest (\Q^\tau_Y|\b_Y)$, and \\
(10.3) $\pi_{\hq, \infty}=q\circ \pi_{\hq}^\tau$ and $q[X^\tau]=X$.\\\\
Setting $\M^{\hs^+}=\S$, we now have that 
\\\\
(11.1) $u_0$ is a real coding a function $g_{u_0}: \omega\rightarrow \S|\tau_{\hs^+}$,\\
(11.2) $u_1\subseteq \omega$ is such that $Z=\{ \pi_{\hs^+, \infty}(g_{u_0}(m)): m\in u_1\}$,\\
(11.3) $v_0$ is a real coding a function $g_{v_0}: \omega\rightarrow \S|\b_{\hs^+}$,\\
(11.4) $v_1\subseteq \omega$ is such that $X=\{\pi_{\hs^+, \infty}(g_{v_0}(m)): m\in v_1\}$.\\\\
It follows from the definition of $\sigma$ that $(u_0, u_1, v_0, v_1, a)\in (\pi_{\hr^+_Y, \hs^+}(\sigma_Y))_g$.
(9) now follows from the fact that  $\hs^+|\zeta_{\hs^+}=\hs$ and $\pi_{\hr_Y, \hs}=\pi_{\hr^+_Y, \hs^+}\rest (\hq^\tau_Y|\zeta_Y)$.
We have shown that clause 1 implies clause 2.

We now show that clause 2 implies clause 1 keeping the notation introduced so far.
Assume then that the conditions listed under clause 2 hold true for some $Z\in \nu^\omega$.
There is a $U$-measure one set of $Y$ such that $\{\hq, Z, \tau, \sigma^\tau, \b\}\in Y$. 
So let $Y$ be such that the conditions listed under clause 2 are true and $\{\hq, Z, \tau, \sigma^\tau, \b\}\in Y$. 
Let $\hq_Y=\pi_Y^{-1}(\hq)$ and $\hr_Y^+$ be defined as above. 
We then get, using the argument spelled out above, that $(\hq_Y)^{\tau_Y}_{\infty}=\hq^\tau_Y$. 
Let $\hs, g, (u_0, u_1, v_0, v_2, a)$ be as in clause 2. 
Just like above, we define $\hs^+$ to be the result of copying $\T_{\hr_Y, \hs}$ onto $\hr_Y^+$ via $id$. 
We still have (6), and hence (11.1)-(11.4) are still true. 
It then follows from the definition of $\sigma$, \rsubsec{sec: chang model}, \rcl{sigma is def in der model} and \rcl{sigma is def in der model1} that $\c\models \phi[Z, X, a]$ and hence, $Z\in A$.
\end{proof}
\end{proof}

\begin{corollary}\label{zf in c}%\normalfont 
Suppose $\xi$ is an inaccessible cardinal of $\P$ that is also a limit of $<\d$-strong cardinals of $\P$. 
Then $\c^-\models ``{\sf{ZF}}+``\omega_1$ is a supercompact cardinal."
\end{corollary}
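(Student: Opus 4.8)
\textbf{Proof proposal for Corollary \ref{zf in c}.}

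The plan is to derive the two conjuncts separately, using the machinery already established. For the $\ZF$ part, first I would invoke Corollary \ref{towards zf} to get that $\c^-\models \ZF-{\sf{Powerset}}$, which uses only that $\xi$ is inaccessible in $\P$ (so that $\xi^\infty$ is regular in $\c^+$ by Theorem \ref{lambda is regular}, hence replacement holds via the argument there). To upgrade this to full $\ZF$, I must produce the powerset of an arbitrary $b\in \c^-$. The point is that under the current hypothesis $\xi$ is additionally a limit of ${<}\d$-strong cardinals of $\P$, so every ordinal $\nu<\xi^\infty$ can be bounded above by a (small) ${<}\d^\infty$-strong cardinal $\l<\xi^\infty$ of $\mH$. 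We may assume $b\in \c^-_\nu$ for some such $\nu$. Then $\cP(b)\cap\c^-$ is a surjective image of $\cP(\nu^\omega)\cap\c^-$ (using a surjection $\powerset_{\omega_1}(\nu)\to\c^-_\nu$ in $\c^-$ as in the proof of Corollary \ref{towards zf}, together with \rthm{cating the powerset}), and by \rthm{first prop} together with \rthm{cating the powerset} we have $\cP(\nu^\omega)\cap\c^+ = \cP(\nu^\omega)\cap\c_\l = \cP(\nu^\omega)\cap\c^-$, all of which lies inside $\c^-_{\l+1}\subseteq\c^-$ since $\l<\xi^\infty$. Hence $\cP(b)\cap\c^-$ is a set in $\c^-$, giving the power set axiom. (The small-in-$\P$ hypothesis needed for \rthm{first prop} holds here: since $\xi$ is the \emph{least} inaccessible limit of ${<}\d$-strong cardinals, everything below $\xi$ is small in $\P$ in the sense of \rter{small cardinal}, there being no measurable limit of ${<}\d$-strong cardinals below $\xi$.)

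For the supercompactness of $\omega_1 = \omega_1^{\c^-}$, the key observation is that the ambient model $M$ satisfies $\AD^+$, so by Woodin's theorem $M\models$ ``for every set $Y$ there is a normal fine measure on $\powerset_{\omega_1}(Y)$'' — indeed $M\models \sf{AD}_\bR$ in the relevant case, and at the very least $M$ has such measures on $\powerset_{\omega_1}(\delta)$ for $\delta<\Theta^M$ (this is the content cited around \rthm{main theorem}, where the Nairian model is shown to have a normal fine measure on $\powerset_{\omega_1}(X)$ for every $X$). Fix $b\in\c^-$; as above $b$ is a surjective image of some $\powerset_{\omega_1}(\nu)$ with $\nu<\xi^\infty$, so it suffices to get a normal fine measure on $(\powerset_{\omega_1}(\nu))^{\c^-}$ inside $\c^-$. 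But $(\powerset_{\omega_1}(\nu))^{\c^-} = (\powerset_{\omega_1}(\nu))^M$: every countable subset of $\nu$ is in $\c^-$ by construction of the Nairian model (the $\omega$-sequences from $\nu$ are added at stage $\nu+1<\xi^\infty$), and $M$ and $\c^-$ have the same $\omega_1$ since $\omega_1^M$ is a Suslin cardinal below $\nu$... more carefully, $\omega_1^{\c^-}=\omega_1^M$ because $\c^-$ contains all reals and all countable sequences of ordinals below $\xi^\infty$. Let $\mu$ be a normal fine measure on $\powerset_{\omega_1}(\nu)$ in $M$; I claim $\mu\cap\c^-\in\c^-$. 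This is exactly the kind of statement \rthm{cating the powerset} is designed to handle: $\mu\cap\c^-$ is a subset of $\powerset(\nu^\omega)$ (identifying $\powerset_{\omega_1}(\nu)$ with a subset of $\nu^\omega$), it is $\OD$ in $M$ from a bounded subset of $\xi^\infty$ (by uniqueness of the supercompactness measure, Woodin's theorem, cf.\ the use of $U$ in the proof of \rthm{first prop}), hence it lies in $\c^+$ and therefore in $\c^-$ by \rthm{cating the powerset}. Normality and fineness of $\mu\cap\c^-$ relative to $\c^-$ follow from normality and fineness of $\mu$ in $M$ together with the agreement $(\powerset_{\omega_1}(\nu))^{\c^-}=(\powerset_{\omega_1}(\nu))^M$: any regressive function in $\c^-$ is a regressive function in $M$, and closure under the diagonal intersection is absolute once $\mu\cap\c^-\in\c^-$.

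The main obstacle, and the step to be careful about, is the transfer of the measure \emph{downward} into $\c^-$ while preserving normality — i.e.\ verifying that $\mu\cap\c^-$ is genuinely a $\c^-$-normal measure and not merely a $\c^-$-ultrafilter. The resolution is the uniqueness of the supercompactness measure on $\powerset_{\omega_1}(\delta)$ in $\AD^+$ models (due to Woodin, and already invoked in the proof of \rthm{first prop} via the measure $U$ on $\powerset_{\omega_1}(L_\a(\Delta))$): uniqueness gives ordinal-definability of $\mu$ from a bounded parameter, which is precisely what feeds into \rthm{cating the powerset} to land $\mu\cap\c^-$ in $\c^-$; and once $\mu\cap\c^-$ is a set of $\c^-$, its normality is inherited from that of $\mu$ because regressive functions and diagonal intersections computed in $\c^-$ agree with those computed in $M$ on the relevant domain. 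A secondary technical point is checking that it suffices to handle $\powerset_{\omega_1}(\nu)$ for $\nu<\xi^\infty$ rather than arbitrary $b\in\c^-$; this is routine given replacement in $\c^-$ (already proved) and the fact that every element of $\c^-$ is a surjective image of some $\powerset_{\omega_1}(\nu)$ via a surjection lying in $\c^-$, using that a normal fine measure on $\powerset_{\omega_1}(Y)$ can be pushed forward along any surjection $Y'\twoheadrightarrow Y$ (with $Y'$ of the same size) to one on $\powerset_{\omega_1}(Y)$ — the standard argument that $\omega_1$-supercompactness need only be checked on a cofinal family of sets.
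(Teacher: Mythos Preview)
Your proposal is correct and mirrors the paper's proof: \rcor{towards zf} plus \rthm{first prop} for Powerset, and Woodin's uniqueness of the supercompactness measure plus \rthm{cating the powerset} to land the restricted measure in $\c^-$. The one imprecision is that $\mu\cap\c^-$ is a subset of $\powerset(\powerset_{\omega_1}(\nu))$, not of $\powerset(\nu^\omega)$, so \rthm{cating the powerset} does not apply directly; the paper inserts an intermediate coding step---fixing a surjection $G:\lambda^\omega\twoheadrightarrow\powerset(\alpha^\omega)\cap\c^-$ in $\c^-$ (with $\lambda$ the third ${<}\d^\infty$-strong above $\alpha$, so that $\powerset(\alpha^\omega)\cap\c^-\in\c^-_\lambda$) and then applying \rthm{cating the powerset} to $A=\{z\in\lambda^\omega:G(z)\in\mu_\alpha\}$---which is the precise implementation of what you sketched.
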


\begin{proof}
We first prove that $\c^-\models {\sf{ZF}}$.
By \rcor{towards zf}, it suffices to verify that $\c^-$ satisfies the Powerset Axiom.
Suppose $X\in \c^-$.
We need to see that there is $Y\in \c^-$ such that $\c^-\models \powerset(X)=Y$.
First fix some $\b<\xi^\infty$ such that $X\in \c_\b$.
It then follows that there is a surjection $F:\b^\omega\rightarrow X$ with $F\in \c^-$.
\rthm{first prop} implies that $\powerset(\b^\omega)\in \c^-$, and
hence $\powerset(X)=\{ F[a]: a\in \powerset(\b^\omega)\}\in \c^-$.

To see that $\c^-\models ``\omega_1$ is a supercompact cardinal," we let $\mu_\a\in M$ be the $\omega_1$-supercompactness measure on $\powerset_{\omega_1}(\a^\omega)$, for $\a<\xi^\infty$.
It is enough to show that for each $\a<\xi^\infty$, $\mu_\a\cap \c^-\in \c^-$. 
Fix such an $\a<\xi^\infty$, and let $\l$ be the third least $<\d^\infty$-strong cardinal of $\mH$ that is greater than $\a$.
We have that $\powerset(\a^\omega)\cap \c^-\in \c^-_\l$.
It follows from a result of Woodin that $\mu_\a$ is ordinal definable (see \cite{Wo20}), and hence, fixing a surjection $G:\l^\omega\rightarrow \powerset(\a^\omega)\cap \c^-_\l$ with $G\in \c^-$, we have that the set $A=\{ z\in \l^\omega: G(z)\in \mu_\a\}$ is ordinal definable from $G$. 
Hence $A\in \c^-$ (see \rthm{cating the powerset}).
\end{proof}

We need the following version of \rthm{first prop} which appears as \cite[Theorem 6.3(a)]{MPSC}.

\begin{theorem}\label{easier first prop}%\normalfont 
If $\nu$ is the first ${<}\d^\infty$-strong cardinal of $\mH$, then $\powerset(\omega^\omega)\cap \c_\nu=\powerset(\omega^\omega)\cap \c$.
\end{theorem}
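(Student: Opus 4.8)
\textbf{Proof plan for Theorem \ref{easier first prop}.}

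The statement is the ``$n=0$'' case of Theorem \ref{first prop} but with $\nu$ being the \emph{first} $<\d^\infty$-strong cardinal of $\mH$, so that the relevant ``next'' strong cardinal $\l$ is suppressed: we only need to compare $\c_\nu$ with $\c$ at the level of sets of \emph{reals}, i.e. $\powerset(\omega^\omega)$, rather than $\powerset(\nu^\omega)$. The plan is to run the argument of Theorem \ref{first prop} with $\omega$ in place of $\nu$ and $\nu$ in place of $\l$, observing that the smallness hypothesis is automatic here because $\nu$ being the first $<\d^\infty$-strong cardinal of $\mH$ means there is no measurable cardinal below or equal to it that is a limit of $<\d^\infty$-strong cardinals, so (pulling back via an iterate catching $\nu$) $\nu$ is small in $\P$ in the sense of \rter{small cardinal}. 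Granting that, the substantive content is: given $A\subseteq \bR$ with $A\in\c$, show $A$ is ordinal definable in $M$ from a parameter in $\nu^\omega$ (equivalently, from a bounded subset of $\nu$), so that by \rthm{cating the powerset} and \rlem{hod[x] is hodx}, $A\in\c_\nu$. The reverse inclusion $\powerset(\omega^\omega)\cap\c_\nu\subseteq\powerset(\omega^\omega)\cap\c$ is trivial since $\c_\nu\subseteq\c$.

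First I would fix $A\in\c$, $A\subseteq\bR$, together with a bounded $X\in\powerset_{\omega_1}(\gg)$ ($\gg<\xi^\infty$) and a real $a$ such that $A$ is $OD(X,a)^M$; using \rlem{hod[x] is hodx} and \rthm{cating the powerset} I may absorb a finite tuple of ordinals below $\xi^\infty$ into $X$ and fix a formula $\psi$ with $z\in A\iff \c\models\psi[z,X,a]$ for $z\in\bR$. Next, following \rthm{first prop}, I fix a complete iterate $\hq=(\Q,\Sigma_\Q)$ of $\hp|\d$ catching $X\cup\{\gg,\nu\}$, set $\tau=(\nu^+)^\mH$, and form the long extender $F$ derived from $\pi^\tau_\hq\rest(\Q|\tau_\hq)$, noting $\{\Q^\tau,\Q,F\}\in\c_\nu$ (as in the first displayed observation in the proof of \rthm{first prop}: $\Q$ is a countable transitive set so it lies in $\c_\nu$ along with all reals, $\pi^\tau_\hq\rest(\Q|\tau_\hq)$ is essentially a countable object so $F\in\c_\nu$, and $\Q^\tau=Ult(\Q,F)$). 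Then I define the term relation $\sigma\in\Q$ exactly as in \rthm{first prop} — a $Coll(\omega,\mu_\hq)$-name whose realizations code pairs of reals $(u_0,u_1)$ together with $(v_0,v_1)$ decoding the parameter $X$, subject to the derived-model computation of $\psi$ — and I verify the analogues of \rcl{sigma is def in der model} and \rcl{sigma is def in der model1} (these are the standard generic-interpretability/derived-model facts from \rsubsec{sec: chang model}, unchanged). Finally, with $\eta_0$ the least Woodin of $\P$ above $\d$ and $U$ the (by Woodin's theorem, unique, hence $OD^M$) supercompactness measure on $\powerset_{\omega_1}(L_\a(\Delta))$ for the appropriate $\a,\Delta$, I reprove the $U$-a.e.\ characterization of $A$ (the analogue of \rlem{sigma gives a}), which expresses membership in $A$ via $(\Q^\tau, X^\tau,\sigma^\tau,\b,\Sigma_{\Q^\tau|\zeta},U)$, all of which lie in $\c_\nu$ (here $\sigma^\tau=\pi^\tau_\hq(\sigma)\in\Q^\tau\subseteq$ a countable set, so $\sigma^\tau\in\c_\nu$, and $\Sigma_{\Q^\tau|\zeta}$ is $OD$ from a parameter in $\b'^\omega$ for some $\b'<\nu$ since $\nu$ is strong, by the argument at display (3) of \rthm{first prop}). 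This gives $A\in\c_\nu$.

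The one place where the smaller setup genuinely simplifies matters — and the only place I would expect to need care — is \rlem{main lemmA} and its consequences, which in \rthm{first prop} are invoked to pull the relevant iteration maps down to countable hulls $Y\prec L_\a(\Delta)$ and to identify $(\hq_Y)^{\tau_Y}_\infty$ with $\hq^\tau_Y$. Since $\nu$ is the \emph{first} $<\d^\infty$-strong cardinal, there are no $<\d^\infty$-strong cardinals strictly below $\nu$, so ``$\nu$ is small'' holds trivially and the delicate blocking arguments (\rlem{main lemma on small ordinals}, \rlem{no overlapping}) are needed only in their simplest form; nonetheless \rlem{main lemmA} must still be applied to get $\pi_Y\rest\Q^\tau_Y=\pi^\tau_{\hr^+_Y}$ and the factorization $\pi_{\hr^+_Y,\infty}=q\circ\pi^\tau_{\hr^+_Y}$ with $\cp(q)>\tau$. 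Alternatively — and this is probably the cleanest route, since the theorem is only stated for reals rather than for $\nu^\omega$ — one can avoid the supercompactness-measure machinery of \rlem{sigma gives a} entirely: for $z\in\bR$ one has $z\in\Q^\tau[g]$ for a single $Coll(\omega,\mu_\hq)$-generic $g$ over $\Q^\tau$, and membership $z\in A$ can be read off directly from $\sigma^\tau$ and $\Sigma_{\Q^\tau|\zeta}$ by a genericity-iteration argument as in \rthm{main theorem}'s use of \rprop{invariance}, without reflecting to countable hulls. Either way, the proof reduces to bookkeeping with objects that are all manifestly in $\c_\nu$, so I expect no real obstacle; the main (minor) subtlety is ensuring that the iteration strategy fragment $\Sigma_{\Q^\tau|\zeta}$ and the term relation $\sigma^\tau$ are correctly located in $\c_\nu$, which is exactly what displays (1)–(3) of \rthm{first prop} and \rcor{strategy is in the Nairian} deliver.
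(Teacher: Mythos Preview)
Your proposal is correct but more elaborate than the paper's argument. The paper explicitly notes two simplifications over Theorem~\ref{first prop} that your main approach does not take: (i) there is no need to push $\hq$ up to level $\tau$ via the long extender $F$ (so no $\Q^\tau$, $\sigma^\tau$, $X^\tau$), and (ii) there is no need for the Skolem-hull/supercompactness-measure machinery of Lemma~\ref{sigma gives a}. The reason both steps were needed in Theorem~\ref{first prop} is that elements $Z\in\nu^\omega$ are not reals: the push-up to $\Q^\tau$ arranges $\Q^\tau|\tau=\mH|\tau$ so that such $Z$ live over a model agreeing with $\mH$ below $\nu$, and the hull $Y$ then reflects $Z$ into a countable structure over which it can be made generic. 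For $A\subseteq\bR$ none of this is necessary, since any real $z$ can be made directly generic over (a window-based iterate of) the countable model $\Q$ itself.

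Accordingly the paper works with $\hq$ alone: it defines $\b$, $\mu$, $\zeta$ relative to $\Q$ (not $\Q^\tau$), replaces the pair $(u_0,u_1)$ in the term relation by a single real $u$, and shows that $A$ is definable from the countable hod pair $\hq|\zeta$ together with a pair of reals $(v_0,v_1)$ coding $X$ in the manner of displays (7.3)--(7.4) of the proof of Theorem~\ref{first prop}. Your alternative route in the final paragraph comes much closer to this---you correctly observe that the measure-one reflection is avoidable since $z$ is a real---but you still phrase it in terms of $\Q^\tau$; the paper dispenses with the $F$-ultrapower entirely. Both your approaches would succeed, so there is no genuine gap; the paper's version is simply leaner.
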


The proof of \rthm{easier first prop} is very similar to, but much easier than, the proof of \rthm{first prop}.
We do not need to consider the Skolem hulls as we did in \rlem{sigma gives a}.
Nor do we need to push $\hq$ up to level $\tau$. 
We only need to consider $\hq$ itself and define $\gg$, $\b$, $\mu$, and $\zeta$ relative to $\hq$ via the same definition. 
The set $A$ that we need to consider is simply a set of reals, and so the term relation $\sigma$ that we consider only needs to add a code for the parameter $W$, and the pair $(u_0, u_1)$ can be substituted with just a single $u\in \bR$.
\rlem{sigma gives a} is easier now as we can simply show that $A$ can be defined using $\hq|\zeta$ and a pair of reals $(v_0, v_1)$ coding the parameter $X$ using the method illustrated in (7.3) and (7.4) of the proof of \rthm{first prop}.
We leave the details to the reader.

\subsection{Strong cardinals are regular cardinals}
We next show that all small strong cardinals $<\xi^\infty$ are regular cardinals in $\c$.

\begin{lemma}\label{doesnt matter}%\normalfont 
Suppose $\nu$ is in the $\d$-block of $\P$ and is a $<\d$-strong cardinal of $\P$.
Suppose $\a\in ((\nu^+)^\P, \d)$ and $\hq$ is a complete iterate of $\hp$ such that $\T_{\hp, \hq}$ is based on $(\a, \d)$.
Let $E \in \vec{E}^\Q$ be such that $\lh(E)\in (\a, \d_\hq)$ and $\cp(E)=\nu$. 
Set $\P_E=Ult(\P, E)$, $\Q_E=Ult(\Q, E)$, $\hp_E=(\P_E, \Sigma_{\P_E})$, and $\hq_E=(\Q_E, \Sigma_{\Q_E})$.
Then $\pi_{\hp_E, \infty}\rest (\nu^+)^\P=\pi_{\hq_E, \infty}\rest (\nu^+)^\P$.
\end{lemma}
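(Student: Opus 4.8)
\textbf{Plan for the proof of Lemma \ref{doesnt matter}.}

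The statement says that the two direct limit maps $\pi_{\hp_E,\infty}$ and $\pi_{\hq_E,\infty}$ agree on $(\nu^+)^\P$. The key observation is that $\P_E=Ult(\P,E)$ and $\Q_E=Ult(\Q,E)$ agree below a large ordinal, because $\T_{\hp,\hq}$ is based on the window $(\a,\d)$ which lies above $(\nu^+)^\P$, and $E$ has critical point $\nu<\a$. Concretely, $\P$ and $\Q$ agree below $\a$ (indeed below $\pi_{\hp,\hq}(\a)$ where $\pi_{\hp,\hq}$ acts by the identity on $\P|\a$ since $\T_{\hp,\hq}$ is based on $(\a,\d)$), so $\pi_E^\P$ and $\pi_E^\Q$ agree on $\P|(\nu^+)^\P$ and moreover $\P_E$ and $\Q_E$ agree well past $\pi_E((\nu^+)^\P)=\pi^\Q_E((\nu^+)^\P)$. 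So the natural route is: first I would show $\hp_E$ and $\hq_E$ have a common further iterate — in fact $\hq_E$ is a complete iterate of $\hp_E$ — via the copy of $\T_{\hp,\hq}$ onto $\P_E$; and second, I would show that the relevant iteration $\T_{\hp_E,\hq_E}$ is \emph{strictly above} $\nu$ (indeed above $(\nu^+)^{\P_E}$), so that it does not move $(\nu^+)^\P=(\nu^+)^{\P_E}$, from which the agreement of the direct limit maps on $(\nu^+)^\P$ follows.

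For the first step, the point is that applying $\pi_E$ to the tree $\T_{\hp,\hq}$ (which is based on $(\a,\d)$, hence uses only extenders with critical point $>\a>\nu=\cp(E)$) produces, by the minimal-copying / commuting-of-ultrapowers facts (\rdef{min copies}, \rlem{min copy thm} and the material of \cite[Chapter 6.1]{SteelCom}), an iteration tree $\U$ on $\P_E$ according to $\Sigma_{\P_E}$ with last model $\Q_E$, and $\pi^\U = \pi^{\T_{\hp,\hq}}$ restricted appropriately (more precisely $Ult(\Q,E)$ is the last model of the $E$-copy of $\T_{\hp,\hq}$, using that $\T_{\hp,\hq}$ is based on a window above $\nu$). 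In particular $\hq_E$ is a complete iterate of $\hp_E$ and $\T_{\hp_E,\hq_E}=\U$ uses only extenders with critical point $>\pi_E(\nu)=\im(E)>\nu$; in particular every such extender has critical point $>\nu$, so $\T_{\hp_E,\hq_E}$ is strictly above $\nu$ and fixes $(\nu^+)^\P$ (which equals $(\nu^+)^{\P_E}$ since $\cp(E)=\nu$). Hence $\pi_{\hp_E,\hq_E}\rest (\nu^+)^\P = id$.

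For the second step, once I know $\hq_E$ is a complete iterate of $\hp_E$ via a tree strictly above $\nu$, the commutativity of the direct limit system gives $\pi_{\hp_E,\infty}=\pi_{\hq_E,\infty}\circ\pi_{\hp_E,\hq_E}$; restricting to $(\nu^+)^\P$ and using $\pi_{\hp_E,\hq_E}\rest(\nu^+)^\P=id$ yields $\pi_{\hp_E,\infty}\rest(\nu^+)^\P=\pi_{\hq_E,\infty}\rest(\nu^+)^\P$, as desired. (Here I should note that $\hp_E$ and $\hq_E$ do genuinely have the same $\M_\infty$ and that the maps into it agree — this is immediate from $\hq_E$ being an iterate of $\hp_E$ and from strategy coherence \rprop{strategy coherence}, which guarantees that $\Sigma_{\Q_E}=(\Sigma_{\P_E})_{\Q_E}$, so the two direct limit systems $\mathcal F^g_{\hp_E}$ and $\mathcal F^g_{\hq_E}$ are cofinally the same.)

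The main obstacle I expect is justifying cleanly that the $E$-copy of $\T_{\hp,\hq}$ is a legitimate iteration tree on $\P_E$ according to $\Sigma_{\P_E}$ — i.e. that the copied extenders land on the extender sequences of the copied models, and that the copy is according to the strategy. This is exactly the kind of fact that, as the excerpt remarks, ``can be established assuming our iteration strategy has nice properties'' and is part of the full normalization package; I would cite \rlem{min copy thm} together with \cite[Remark 6.16]{SteelCom} and \rprop{strategy coherence}. A secondary, more bookkeeping-level obstacle is keeping straight that $\P$ and $\Q$ literally agree below $\a$ (so that $\pi^\P_E$ and $\pi^\Q_E$ are the ``same'' ultrapower on $\P|(\nu^+)^\P$), which follows from $\T_{\hp,\hq}$ being based on the window $(\a,\d)$ and hence acting as the identity on $\P|\a$.
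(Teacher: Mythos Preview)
Your proposal is correct and follows essentially the same route as the paper. The paper phrases the first step via the full normalization $\X$ of $\T_{\hp,\hq}\oplus E$, observing that $\X$ decomposes as (the normal continuation by $E$) followed by the minimal $E$-copy $\Y$ of $\T_{\hp,\hq}$, so that $\Y=\T_{\hp_E,\hq_E}$ is above $\lh(E)>(\nu^+)^\P$; your version names the $E$-copy directly and bounds critical points by $\im(E)$, which yields the same conclusion that $\cp(\pi_{\hp_E,\hq_E})>(\nu^+)^\P$ and hence the factorization $\pi_{\hp_E,\infty}=\pi_{\hq_E,\infty}\circ\pi_{\hp_E,\hq_E}$ restricts to the identity on $(\nu^+)^\P$.
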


\begin{proof} 
Let $\U=(\T_{\hp, \hq})^{n\frown} E$\footnote{See \rnot{concatinating iterations}.} and $\X$ be the full normalization of $\T_{\hp, \hq} \oplus E$. We then have that $\U\insegeq \X$ and $\X=\U^\frown \Y$, where $\Y$ is the minimal $E$-copy of $\T_{\hp, \hq}$. 
Because $\a>\nu$, we have that $\Y$ is above $\lh(E)$. 
Thus $\hq_E$ is a complete iterate of $\hp_E$ and $\Y=\T_{\hp_E, \hq_E}$. 
Therefore, $\pi_{\hp_E, \infty}=\pi_{\hq_E, \infty}\circ \pi_{\hp_E, \hq_E}$. 
Since $\cp(\pi_{\hp_E, \hq_E})>(\nu^+)^\P$, we have that $\pi_{\hp_E, \infty}\rest (\nu^+)^\P=\pi_{\hq_E, \infty}\rest (\nu^+)^\P$.
\end{proof}

\begin{theorem}\label{first prop1}%\normalfont 
Suppose $\nu<\xi^\infty$ is a small $<\d^\infty$-strong cardinal in $\mH$. 
Then $\c\models ``\nu$ is a regular cardinal".
\end{theorem}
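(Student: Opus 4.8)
The plan is to show that no function $f \colon \beta \to \nu$ with $\beta < \nu$ can be cofinal in $\nu$ when $f \in \c$, and to do this by pulling $f$ back into a hod mouse where the regularity of the relevant ordinal is manifest. First I would fix such an $f$ together with a witness that it is ordinal definable: some $\gamma < \xi^\infty$ and $X \in \powerset_{\omega_1}(\gamma)$ (or $X \in \gamma^\omega$) from which $f$ is $\OD$ in $M$, and I may assume $f$ is the $\OD(X)$-least cofinal map $\beta \to \nu$. By Lemma \ref{hod[x] is hodx} and Theorem \ref{cating the powerset}, I may further assume $X$ absorbs any finite ordinal parameters, so $f$ is literally $\OD(X)$.

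Next I would use \rthm{first prop}: letting $\lambda$ be the least ${<}\d^\infty$-strong cardinal of $\mH$ above $\nu$ (so $\lambda$ is also small), $\powerset(\nu^\omega) \cap \c = \powerset(\nu^\omega) \cap \c_\lambda$, and in particular $f \in \c_\lambda$ and the prewellordering it induces on $\powerset_{\omega_1}(\beta)$ (or on $\beta^\omega$) is in $\c_\lambda$. The point is that the restriction of $\pi_{\hq,\infty}$ relevant to computing things below $\nu$ lives low in the hierarchy. I would then fix a complete iterate $\hq = (\Q, \Sigma_\Q)$ of $\hp|\d$ catching $\{\gamma, \nu\} \cup X$ with $\gen(\T_{\hp|\d,\hq}) \subseteq \nu_\hq$ (possible by \rlem{hull property}, since $\nu$ is a small $<\d^\infty$-strong cardinal), so that $\hq^\nu = \hq$ and $\pi^\nu_\hq = \pi_{\hq,\infty}\rest \nu$. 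Using the Vop\v{e}nka machinery of \rlem{vopenka algebra} and \rcor{corollary to lemma}, any $Y \in \beta^\omega$ is generic over $\mH$ (indeed over an appropriate $\Q$) by a poset of size $< \nu_\hq$, since $\nu_\hq$ is a Woodin, hence inaccessible, cardinal of $\Q$. The argument then mirrors the proof of \rthm{lambda is regular}: inside $\Q$ I build a term relation $\sigma$, a $Coll(\omega, \nu_\hq)$-name, capturing the clause ``the generic real codes a tuple $(u_0,u_1)$ with $u_0$ coding an enumeration $g_{u_0}\colon \omega \to \Q|\gamma_\hq$, $u_1 \subseteq \omega$, and letting $Z = \{\pi_{\hq,\infty}(g_{u_0}(i)) : i \in u_1\}$, $f(\text{the ordinal coded}) = \dots$'' in the derived-model sense; and I define a map $k$ on pairs $(p,\alpha)$ with $p \in {\sf{EA}}$-type conditions below $\nu_\hq$ and $\alpha < \beta_\hq$, sending $(p,\alpha)$ to the $\gamma$ forced by $p$ to satisfy $\pi_{\hq,\infty}(\gamma) = f_Z(\pi_{\hq,\infty}(\alpha))$.

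The heart of the argument is showing $k$ is cofinal in $\nu_\hq$: given $\gamma < \nu_\hq$, I iterate $\hq$ to some $\hr = (\R,\Psi)$ and then to a genericity iterate $\hs$, arguing (as in \rlem{b in m}) that $\R \models ``\pi_{\hq,\hr}(\gamma) \le \sup(\rge(\pi_{\hq,\hr}(k)))"$ by realizing $M$ as a derived model and reflecting; pulling back gives $\Q \models ``\gamma \le \sup(\rge(k))"$. Since $\card{\dom(k)}^\Q < \nu_\hq$ (the extender algebra is $\nu_\hq$-c.c.\ and $\beta_\hq < \nu_\hq$), $\rge(k)$ is a subset of $\nu_\hq$ of size $< \nu_\hq$, contradicting that $\nu_\hq$ is inaccessible in $\Q$. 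The main obstacle I anticipate is precisely the reflection/cofinality step: one must verify that the term relation $\sigma$ is genuinely in $\Q$ (via generic interpretability of the strategy, as in \cite{SteelCom}) and that the derived-model computation of $f$ commutes correctly with the iteration embeddings on the relevant windows — in particular that $\pi_{\hq,\infty}\rest(\Q|\nu_\hq)$ is recoverable inside $\Q$, which is where smallness of $\nu$ and \rlem{main lemma on small ordinals} (so that $\mathcal{T}_{\hq,\infty}$ is ``bounded'' below $\nu$) are essential. Everything else is bookkeeping already exemplified in the proof of \rthm{lambda is regular}.
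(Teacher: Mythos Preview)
Your approach has a genuine gap that the paper explicitly flags as ``the hard case'': the parameter $X$ from which $f$ is $\OD$ may live in $\gamma^\omega$ with $\gamma > \nu$. Two of your claims break down in this situation. First, you cannot arrange $\gen(\T_{\hp|\d,\hq}) \subseteq \nu_\hq$ while catching $\gamma$ and $X$; \rlem{hull property} only gives you generators bounded by $\mu_\hr$ for a small properly overlapped $\mu$ whose image exceeds $\gamma$, and if $\gamma > \nu$ that forces $\mu_\hr > \nu_\hr$. Second, and more fundamentally, the extender algebra needed to make a real coding $X$ generic lives at the least Woodin $\tau$ above $\gamma$, so has size $\tau_\hq > \nu_\hq$. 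Your map $k$ then has domain of size $\geq \tau_\hq > \nu_\hq$, and the contradiction with inaccessibility of $\nu_\hq$ evaporates. (Also, $\nu_\hq$ is a ${<}\d_\hq$-strong cardinal of $\Q$, hence inaccessible, but it is not a Woodin cardinal as you assert.)

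The paper's proof takes an entirely different route precisely to avoid this. It accepts that the coding poset is large (size $\tau_\hq$ with $\tau > \nu$) and instead defines $\gamma^\hq$ as the sup, over all generic codes for appropriate parameters $Z$, of $\sup\{f^Z(a) : a \in \pi_{\hq,\infty}[\beta_\hq]\}$; this sup is $< \nu$ because $\cf(\nu) \ge \omega_1^M$ and there are only countably many such codes. The key step then invokes \rthm{complexity of window strategy} to produce an iterate $\hr$ (with $\T_{\hq,\hr}$ \emph{above} $\tau_\hq$) and an extender $E$ with $\cp(E) = \nu_\hr$ such that $\gamma^\hq < \pi_{\hr_E,\infty}(\nu_\hr) < \nu$, and finishes by a normalization argument showing $\rge(f) \subseteq \pi_{\hr_E,\infty}(\nu_\hr)$. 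The mechanism is thus not ``pull back to a small domain and use inaccessibility'' but ``find a uniform bound below $\nu$ via window iterations,'' and \rthm{complexity of window strategy} is indispensable for it.
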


\begin{proof}
Towards a contradiction, suppose $\nu$ is not a regular cardinal in $\c$. 
Let $\a<\xi^\infty$ and $\b<\nu$ be such that for some $X\in \a^\omega$, there is $f:\b\rightarrow \nu$ such that $f$ is cofinal and ordinal definable from $X$. 
Fix such $X$ and $f$. 
The hard case is when $\a>\nu$, and so we assume that this is indeed the case. 
As in the proof of \rthm{first prop}, we can assume that the definition of $f$ doesn't contain any ordinal parameters beyond those already in $X$. 
We then fix some formula $\psi$ such that $f(a)=b$ if and only if $\c\models \psi[a, b, X]$. 
If $Z\in \a^\omega$ is any set, then we let $f^Z$ be the function defined by $f^Z(a)=b$ if and only if $\c\models \psi[a, b, Z]$. 
We say $Z$ is \textit{appropriate} if $f^Z:\b\rightarrow \nu$. 

Suppose that $\hq=(\Q, \Sigma)$ is a complete iterate of $\hp$ such that $\hq$ catches every ordinal in the set $\{\a, \b\}\cup X$. 
Let $\tau$ be the least Woodin cardinal of $\mH$ that is strictly greater than $\a$ (so $\tau>\nu$). 
Suppose that $g\subseteq Coll(\omega, \tau_\hq)$ is $\Q$-generic. 
Let $\gg_{\hq, g}$ be the supremum of all ordinals $\iota$ such that for some pair $(u, v)\in \Q[g]$,\\\\
(1.1) $u$ codes a function $g_u: \omega\rightarrow \Q|\a_\hq$,\\
(1.2) $v\subseteq \omega$,\\
(1.3) letting $Z=(\pi_{\hq, \infty}(g_u(i)): i<\omega)$, $Z$ is appropriate, and\\
(1.4) $\iota=\sup\{ f^Z(a): a\in \pi_{\hq, \infty}[\b_\hq]\}$.\\\\
Notice that\\\\
(2.1) $\gg_{\hq, g}$ is definable in $\Q$ (see \rsubsec{sec: chang model} and \rcl{sigma is def in der model}), \\
(2.2) $\gg_{\hq, g}$ is independent of $g$, and\\
(2.3) $\gg_{\hq, g}<\nu$.\\\\
(2.3) follows because $\cf(\nu)\geq \omega_1^M$, and there are at most countably many such $\iota$. 
We let $\gg^\hq=\gg_{\hq, g}$ (and hope that the reader will not confuse this notation with our notation for preimages). 

Fix now a complete iterate $\hq=(\Q, \Sigma)$ of $\hp$ that catches every ordinal in the set $\{\a, \b\}\cup X$. 
Using \rthm{complexity of window strategy}\footnote{This theorem is an important and technical theorem of independent interest, and its proof is independent of the material developed here. We therefore postpone its proof.} and using \rlem{doesnt matter}, we obtain a complete iterate $\hr=(\R, \Sigma_\R)$ of $\hq$ such that $\T_{\hq, \hr}$ is above $\tau_\hq$ and for some $\zeta$ that is an inaccessible cardinal of $\R$, letting $E\in \vec{E}^\R$ be the least such that $\zeta<\lh(E)$ and $\cp(E)=\nu_\hr$, \\\\
(3) $\gg^{\hq}<\pi_{\hr_E, \infty}(\nu_\hr)$, where $\hr_E=(Ult(\R, E), \Sigma_{Ult(\R, E)})$.\\\\
Since $\T_{\hq, \hr}$ is above $\tau_\hq$ (implying that $\hq|\tau_\hq=\hr|\tau_\hr$), we have that\\\\
(4) $\gg^{\hq}=\gg^{\hr}$.\\\\
This gives the key fact (which uses the results of \rsubsec{sec: chang model} and also \rlem{doesnt matter}) that:\\\\
(5) $\R\models \gg^{\hr}<\pi_{\hr_E, \infty}(\nu_\hr)$.\\\\
We get a contradiction by establishing the following lemma.  

\begin{lemma} $\rge(f)\subseteq \pi_{\hr_E, \infty}(\nu_\hr)<\nu$.
\end{lemma}

\begin{proof}
We have that $\nu_\hr<\nu_{\hr_E}=\pi_E^\R(\nu_\hr)$. 
Hence, $\pi_{\hr_E, \infty}(\nu_\hr)<\nu$. 
Suppose $\iota<\b$. 
We want to see that $f(\iota)<\pi_{\hr_E, \infty}(\nu_\hr)$. 
Let $\b'\in [\b_\hr, \nu_\hr)$ be a properly overlapped cardinal of $\R_E$,\footnote{The existence of such a $\b'$ follows from smallness of $\nu$.} and let $\hs$ be a complete iterate of $\hr_E$ such that\\\\
(6) $\gen(\T_{\hr_E, \hs})\subseteq \pi_{\hr_E, \hs}(\b')$ and $\hs$ catches $\iota$ (see \rlem{main lemma on small ordinals}).\\\\
Let now $\X$ be the full normalization of $E\oplus \T_{\hr_E, \hs}$. (Both $\X$ and $E\oplus \T_{\hr_E, \hs}$ are on $\R$.) 
It follows from (6) and the fact that $\b'<\nu_\hr$ that\\\\
(7.1) $\X=\Y^\frown F$,\\
(7.2) if $\hw=(\W, \Sigma_\W)$ is the last model of $\Y$, then $\gen(Y)\subseteq \pi_{\hr, \hw}(\b')$, and\\
(7.3) $F=\pi_{\hr, \hw}(E)$\footnote{This follows from full normalization. Since we chose $\zeta$ to be inaccessible in $\R$, we have that full normalization coincides with embedding normalization.} and $\S=Ult(\W, F)$.\\\\
It follows from (5) that, letting $\hw_F=(Ult(\W, F), \Sigma_{Ult(\W, F)})$,\\\\
(8) $\W\models \gg^{\hw}<\pi_{\hw_F, \infty}(\nu_\hw)$.\\\\
But since $\S=Ult(\W, F)$, we have that\\\\
(9) $\hs=\hw_F$, $\pi_{\hr_E, \hs}(\nu_\hr)=\nu_\hw$,\footnote{To see that $\pi_{\hr_E, \hs}(\nu_\hr)=\nu_\hw$, we argue as follows. Let $i=\pi^{\T_{\hr_E, \hs}}\circ \pi_E$ and $j=\pi_F\circ \pi^{\T_{\hr, \hw}}$. We have that $i=j$. Hence, $\sup(i[\nu_\hr])=\sup(j[\nu_\hr])$. But (6) implies that $\sup(i[\nu_\hr])=\pi_{\hr_E, \hs}(\nu_\hr)$ and $\sup(j[\nu_\hr])=\pi_{\hr, \hw}(\nu_\hr)=\nu_\hw$. Therefore, $\pi_{\hr_E, \hs}(\nu_\hr)=\nu_\hw$.} and $\pi_{\hw_F, \infty}(\nu_\hw)=\pi_{\hs, \infty}(\nu_\hw)$.\\\\
It then follows that\\\\
(10) $\pi_{\hr_E, \infty}(\nu_\hr)=\pi_{\hs, \infty}(\nu_\hw)$.\\\\
Since $\cp(F)=\nu_\hw>\iota_{\hs}$, we have that\\\\
(11) $\hw$ catches $\iota$ and $\iota_\hw=\iota_\hs$.\\\\
The next claim finishes the proof.
\begin{claim}\normalfont 
$f(\iota)< \pi_{\hr_E, \infty}(\nu_\hr)$
\end{claim}
\begin{proof}
By (10), it is enough to show that\\\\
(12) $f(\iota)<\pi_{\hs, \infty}(\nu_\hw)$.\\\\
Let $u_0$ be a real coding a surjection $g_{u_0}:\omega \rightarrow \W|\a_\hw$ and $u_1\subseteq \omega$ be such that $X=(\pi_{\hw, \infty}(g_{u_0}(i)): i\in \omega)$. 
We can find a complete iterate $\hw'=(\W', \Sigma_{\W'})$ of $\hw$ such that $\T_{\hw, \hw'}$ is based on the interval $(\a_\hw, \tau_\hw)$ and for some generic $h\subseteq Coll(\omega, \tau_{\hw'})$, $(u_0, u_1)\in \W'[h]$.
Then it follows from the definition of $\gg^{\hw'}$ that\\\\
(13) $f(\iota)\leq \gg^{\hw'}$.\\\\
Using (8) we get that, letting $G=\pi_{\hw, \hw'}(F)$ and $\hw'_G=(Ult(\W', G), \Sigma_{Ult(\W', G)})$, \\\\
(14) $\gg^{\hw'}<\pi_{\hw'_G, \infty}(\nu_{\hw'})$.\\\\
Then it follows from (10), (13) and (14) that it is enough to establish\\\\
(15) $\pi_{\hs, \infty}(\nu_\hw)=\pi_{\hw'_G, \infty}(\nu_{\hw'})$\\\\
Let $\U$ be the result of applying $\T_{\hw, \hw'}$ to $\hs$.
Thus $\U$ has the same extenders and tree structure as $\T_{\hw, \hw'}$.
Let $\hs'$ be the last model of $\U$. 
Then the full normalization of $F\oplus \U$ has the form $(\T_{\hw, \hw'})^{n\frown} G ^\frown \U'$, where $\U'$ is on $Ult(\W, G)$, is above $\pi_G(\nu_\hw)$, and is the minimal $G$-copy of $\T_{\hw, \hw'}$. 
Similarly, the full normalization of $\T_{\hw, \hw'}\oplus G$ is $(\T_{\hw, \hw'})^{n\frown} G ^\frown \U'$.
Thus the full normalization of $F\oplus \U$ and of  $\T_{\hw, \hw'}\oplus G$ is $\T_{\hw, \hs'}$. 
It follows that $\hs'=\hw'_G$ and $\T_{\hs, \hs'}=\U$ is above $\nu_\hw$.
Thus\\\\
    (16.1) $\pi_{\hs, \infty}(\nu_\hw)=\pi_{\hs', \infty}(\nu_\hw)$,\\
    (16.2) $\nu_\hw=\nu_{\hw'}$.\\\\
Since $\hs'=\hw'_G$, (16.1) gives (15).
\end{proof}
This proves the lemma.
\end{proof}
\rthm{first prop1} is proved.
\end{proof}

\subsection{Small strongs are successor cardinals}

%We continue to work under the set up of \rnot{notation for kom model}, \rsec{sec: notation}, \rdef{min copies} and \rdef{nu infty}.

\begin{theorem}\label{strongs are successors}%\normalfont 
Suppose $\nu<\xi^\infty$ is in the $\d^\infty$-block of $\mH$ and is a small strong cardinal of $\mH$. 
Let $\l$ be the supremum of all ${<}\d^\infty$-strong cardinals of $\mH$ that are strictly less than $\nu$.
If $\l$ is a ${<}\d^\infty$-strong cardinal of $\mH$, then $\c\models \nu=\l^+$.
\end{theorem}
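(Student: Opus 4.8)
\textbf{Proof plan for Theorem \ref{strongs are successors}.}

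The plan is to show that there are ``enough'' ordinals between $\l$ and $\nu$ that are in the range of appropriate iteration maps, so that $\nu$ is a successor in $\c$; specifically, I would show that every set in $\c$ of ordinals bounded below $\nu$ and of ordertype $<\l$ is already bounded below $\l$, which together with Theorem \ref{first prop1} (regularity of $\nu$) forces $\nu = \l^+$ in $\c$. Here is the outline. First, fix $X\in\a^\omega$ (with $\a<\xi^\infty$, and the hard case $\a>\nu$ as in Theorem \ref{first prop1}) and an $OD(X)$ function $f\colon\b\to\nu$ with $\b<\l$; I want to prove $\sup(\rge(f))<\l$. The strategy mimics the proof of Theorem \ref{first prop1}: fix a complete iterate $\hq=(\Q,\Sigma)$ of $\hp$ catching every ordinal in $\{\a,\b\}\cup X$ and catching $\l$, let $\tau$ be the least Woodin of $\mH$ above $\a$, and (working in $\Q[g]$ for $g\subseteq Coll(\omega,\tau_\hq)$) define $\gg^\hq$ as the supremum of the $\iota$'s arising as $\sup\{f^Z(a): a\in\pi_{\hq,\infty}[\b_\hq]\}$ over ``appropriate'' $Z\in\a^\omega$ coded into $\Q[g]$, exactly as in (1.1)--(1.4) there. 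As before, $\gg^\hq$ is definable over $\Q$, independent of $g$, and $<\nu$ because $\cf(\nu)\geq\omega_1^M$.

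The key new point is that $\gg^\hq$ should in fact be bounded by $\pi_{\hq',\infty}(\l_{\hq'})$ for a suitable small window-based continuation, rather than merely by some $\pi_{\hr_E,\infty}(\nu_\hr)$. So I would: (i) use \rthm{complexity of window strategy} and \rlem{doesnt matter} to get a complete iterate $\hr$ of $\hq$, above $\tau_\hq$, such that for an appropriate inaccessible $\zeta$ of $\R$ and the least $E\in\vec E^\R$ with $\cp(E)=$ (the image of $\l$ in $\R$, call it $\l_\hr$) and $\zeta<\lh(E)$, one has $\gg^\hq<\pi_{\hr_E,\infty}(\l_\hr)$, where $\hr_E=(Ult(\R,E),\Sigma_{Ult(\R,E)})$; (ii) since $\T_{\hq,\hr}$ is above $\tau_\hq$ we get $\gg^\hq=\gg^\hr$, and hence, pulling back via the results of \rsubsec{sec: chang model} and \rlem{doesnt matter}, $\R\models\gg^\hr<\pi_{\hr_E,\infty}(\l_\hr)$; (iii) prove the analogue of the main lemma from \rthm{first prop1}, namely $\rge(f)\subseteq\pi_{\hr_E,\infty}(\l_\hr)<\nu$. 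The proof of (iii) is the same dance: given $\iota<\b$, pick a properly overlapped cardinal $\b'\in[\b_\hr,\l_\hr)$ of $\R_E$ (exists by smallness), a complete iterate $\hs$ of $\hr_E$ with $\gen(\T_{\hr_E,\hs})\subseteq\pi_{\hr_E,\hs}(\b')$ catching $\iota$ (by \rlem{main lemma on small ordinals}), full-normalize $E\oplus\T_{\hr_E,\hs}$ on $\R$ to get $\X=\Y^\frown F$ with $F=\pi_{\hr,\hw}(E)$, transfer the inequality $\gg^\hw<\pi_{\hw_F,\infty}(\nu_\hw)$ to $\W$, observe $\hs=\hw_F$ and $\pi_{\hr_E,\hs}(\l_\hr)=\l_\hw$ (the critical-point/cofinality computation as in footnote-style argument of \rthm{first prop1}), and finally bound $f(\iota)$ by $\gg^{\hw'}$ for a further iterate $\hw'$ catching $X$, using the full-normalization identity $\pi_{\hs,\infty}(\l_\hw)=\pi_{\hw'_G,\infty}(\l_{\hw'})$ with $G=\pi_{\hw,\hw'}(F)$.

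Finally, I would assemble: $\sup(\rge(f))<\l$ for every such $f\in\c$ means $\nu$ has cofinality $\geq\l$ in $\c$ when approached by maps with domain of ordertype $<\l$; combined with \rthm{first prop1} ($\nu$ regular in $\c$) and the fact that $\l<\nu$ with $\l$ a $\mH$-cardinal (hence a cardinal of $\c$ by \rthm{first prop}, applied at $\l$) and that there are no $\mH$-cardinals strictly between $\l$ and $\nu$ in the relevant block, we conclude $\c\models\nu=\l^+$. The case $\a\leq\nu$ is easier and handled by the same argument without pushing $\hq$ up to $\tau$, as in the remarks following \rthm{easier first prop}. The main obstacle I anticipate is step (i)--(ii): arranging, via \rthm{complexity of window strategy} and \rlem{doesnt matter}, that the relevant closure ordinal $\gg^\hq$ lands below the image of $\l$ (not merely below the image of $\nu$'s predecessor strong) and that this is recognized correctly inside $\R$; this requires the coherence of window strategies and a careful choice of $E$ with critical point exactly $\l_\hr$, together with the invariance $\gg^\hq=\gg^\hr$ across the above-$\tau_\hq$ iteration.
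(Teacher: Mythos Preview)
Your plan has a fundamental gap: the target statement you set out to prove is simply false, and even if it held it would not give $\nu=\l^+$. You propose to show that every $f\colon\b\to\nu$ in $\c$ with $\b<\l$ satisfies $\sup(\rge(f))<\l$. But take $f\colon 1\to\nu$ with $f(0)=\l+1$; this $f$ is in $\c$ and has $\sup(\rge(f))>\l$. More to the point, the conclusion $\nu=\l^+$ in $\c$ means that every $\b\in(\l,\nu)$ is a surjective image of $\l$ in $\c$; it says nothing about bounding ranges of small-domain maps below $\l$. Your ``assembly'' paragraph appeals to ``there are no $\mH$-cardinals strictly between $\l$ and $\nu$'', but $\mH$ is a $\ZFC$ model and has plenty of cardinals there; what is true is that there are no ${<}\d^\infty$-strong cardinals in $(\l,\nu)$, and that does not by itself control the cardinal structure of $\c$ in that interval. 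Concretely, your step (i) asks for $\gg^\hq<\pi_{\hr_E,\infty}(\l_\hr)$ with $\cp(E)=\l_\hr$, but $\pi_{\hr_E,\infty}(\l_\hr)<\l$ while $\gg^\hq$ can land anywhere in $[\l,\nu)$ whenever $f$ takes a value $\geq\l$; Theorem~\ref{complexity of window strategy} gives you no leverage here.

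The paper takes a completely different route: for each properly overlapped inaccessible $\b\in(\l,\nu)$ it directly builds a surjection $f\colon\l\to(\l,\b)$ in $\c$. The idea is a \emph{trace} relation: one says $\gg<\l$ is a trace of $\rho\in[\l,\b]$ if there is an appropriate hod pair $\hr$ (meaning one agreeing with $\hp^\l$ up to $\b^\l_\hp$ and satisfying a hull condition) and an iterate $\hw$ of $\hr$ catching $\rho$, with $\T_{\hr,\hw}$ $\b$-bounded and strictly above $\l_\hr$, such that $\gg$ is the image of $\rho_\hw$ under the composite map $j^\mu_\hw\circ\pi^\mu_\hw$ (where $\mu$ is the sup of strongs below $\l$). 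One then sets $f(\gg)$ to be the unique pre-trace of $\gg$. The heart of the argument is Lemma~\ref{preprojection is unique}, showing pre-traces are unique; this is a delicate comparison of two witnessing pairs $(\hr_0,\hw_0)$ and $(\hr_1,\hw_1)$ via a countable hull and minimal $F$-copies, exploiting the agreement forced by conditions (a.5)--(a.7) on appropriate pairs. None of this resembles the regularity proof you were adapting; you need to construct surjections down to $\l$, not bound ranges of small maps.
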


\begin{proof}
Let $\b\in (\l, \nu)$.
It is enough to prove that for some $X\in (\xi^\infty)^\omega$ that is bounded in $\xi^\infty$, there is a surjection $f:\l \rightarrow (\l, \b)$ such that $f$ is ordinal definable from $X$.
Let $\mu=\sup\{ \gg <\l :\mH\models ``\gg$ is $<\d^\infty$-strong"$\}$. Because $\nu$ is small, we have that $\mu<\l$. 
As $\nu$ is a limit of properly overlaped ordinals of $\mH$ that are inaccessible cardinals of $\mH$, it is enough to show the above assertion for a $\b$ that is a properly overlapped inaccessible cardinal in $\mH$. 
By iterating $\hp$ if necessary, we may further assume that $\hp$ catches $\b$ (and hence, $\nu, \l$ and $\mu$). 
Given a $<\d^\infty$-strong cardinal $\zeta$ of $\mH$ and a hod pair $\hr$ such that $\M_\infty(\hr)=\mH|\d^\infty$, let $\b^\zeta_\hr=\pi^\zeta_{\hr}(\b_\hr)$. 
It follows from item (3) in the proof of \rthm{first prop} that\\\\
(1) the tuple $\hp^\l|\b^\l_\hp=(\P^\l|\b^\l_\hp, \Sigma_{\hp^\l|\b^\l_\hp})$ is ordinal definable from a member of $\nu^\omega$.\\\\
Thus it is enough to find a surjection $f:\l\rightarrow (\l, \b)$\footnote{Since $\b$ is inaccessible in $\mH$, there is a bijection $h:\b\rightarrow (\l, \b)$.} that is ordinal definable from the tuple $(\hp^\l|\b^\l_\hp, \pi_{\hp}^\l\rest (\P|\b_\hp))$.
To do so, we employ the following terminology.
\begin{itemize}
    \item[(a)] Working in $M$, we say that a hod pair $\hr=(\R, \Lambda)$ is \textit{appropriate} if 
    \begin{enumerate}
    \item $\M_\infty(\hr)=\mH|\d^\infty$, 
    \item $\hr$ catches $\b$, 
    \item $\b^\l_\hr=\b^\l_\hp$, 
    \item $\M^{\hr^\l}=\M^{\hp^\l}$, 
    \item $\hr^\l|\b^\l_\hr=\hp^\l|\b^\l_\hp$,
    \item $\pi_{\hp}^\l [\P|\b_\hp]\subseteq \rge(\pi_{\hr}^\l)$, and
    \item letting $m:\P|(\l_{\hp}^+)^\P\rightarrow \R|(\l_\hr^+)^\R$ be given by $m(x)=(\pi_{\hr}^\l)^{-1}(\pi_{\hp}^\l(x))$, if $A\in \powerset(\l_\hr)\cap \R$, then for some $f\in \P|(\l^+_\hp)^\P$ and for some $a\in [\l_\hr]^{<\omega}$, $A=m(f)(a)$.
    \end{enumerate}
    For example, if $\hr$ is a complete $\l$-bounded\footnote{I.e. $\gen(\T_{\hp, \hr})\subseteq \l_\hr$. See \rdef{mu bounded iterations}.} iterate of $\hp$, then $\hr$ is appropriate (see \rlem{bound iterations factor}).

    \item[(b)] Working in $M$, given a hod pair $\hr$ that catches $\b$, we let $\zeta^\mu_\hr<\l$ and $j^{\mu}_\hr$ be such that if $\hh=_{def}(\mH, \Omega)$, then
    \begin{enumerate}
    \item $\hh|\zeta^\mu_\hr$ is a complete iterate of $\hr^{\mu}|\b^\mu_\hr$ and 
    \item $j^{\mu}_\hr=\pi^{\T_{\hr^{\mu}|\b^\mu_\hr, \hh^\mu_\hr|\zeta^\mu_\hr}}$.
    \end{enumerate} 
    Set $\epsilon_\hr=j^\mu_\hr(\pi^\mu_{\hr}(\l_\hr))$.

    \item[(c)] Working in $M$, we let $\U_\hr$ be the results of copying $\T_{\hr^{\mu}|\b^\mu_\hr, \hh^\mu_\hr|\zeta^\mu_\hr}$ on $\hr^\mu$ via $id$.
    Let $\X_\hr$ be the longest initial segment of $\U_\hr$ such that $\gen(\X_\hr)\subseteq \epsilon_\hr$, and let $\Y_\hr=(\U_\hr)_{\geq \iota}$, where $\iota=\lh(\X_\hr)$.

    \item[(d)] Let $\hx_\hr$ be the last model of $\X_\hr$ and $\hy_\hr$ be the last model of $\Y_\hr$. We have that
    \begin{enumerate}
    \item $\Y_\hr$ is strictly above $\epsilon_\hr$, and
    \item $\hx_\hr|\epsilon_\hr=\hh|\epsilon_\hr$. 
    \end{enumerate}

    \item[(e)]  Working in $M$, we say that $\gg$ is a \textit{trace} of $\rho$ if $\rho\in [\l, \b]$ and there is an appropriate hod pair $\hr$ and  a complete iterate $\hw$ of $\hr$ such that 
    \begin{enumerate}
    \item $\hw$ catches $\rho$,
    \item $\T_{\hr, \hw}$ is $\b$-bounded (see \rdef{mu bounded iterations}), 
    \item $\T_{\hr, \hw}$ is strictly above $\l_\hr$,\footnote{If $\T_{\hr, \hw}$ is non-trivial, then $\hw$ is not appropriate as $\M^{\hw^\l}\not =\M^{\hp^\l}$. However, in this case $\hw^\l$ is a complete iterate of $\hr^\l$ such that $\T_{\hr^\l, \hw^\l}$ is strictly above $\l$ and is the $F$-minimal copy of $\T_{\hr, \hw}$, where $F$ is the long extender derived from $\pi_{\hr}^\l\rest (\R|(\l_\hr^+)^\R)$.} and 
    \item $j^{\mu}_\hw(\pi^\mu_{\hw}(\rho_\hw))=\gg$.
    \end{enumerate}
    We say $\rho$ is a \textit{pre-trace} of $\gg$ if $\gg$ is a trace of $\rho$.
\end{itemize}

It follows from \rlem{main lemma on small ordinals} that $\l$ is a limit of ordinals $\gg$ that have a pre-trace. 
Define $f:\l\rightarrow (\l, \b)$ as follows. 
Given $\gg<\l$, if $\gg$ doesn't have a pre-trace, then we let $f(\gg)=\l+1$, and otherwise we let $f(\gg)=\rho$, where $\rho$ is a pre-trace of $\gg$.
\rlem{preprojection is unique} shows that $f$ is a function.
Since every $\rho\in [\l, \b]$ has a trace, we have that $f$ is a surjection, and therefore, \rlem{preprojection is unique} finishes the proof of \rthm{strongs are successors} provided we can argue that $f\in \c$.
But $f\in \c$ because the relation ``$\rho$ is the pre-trace of $\gg$" is ordinal definable from the tuple $(\P^\l, \hp^\l|\b^\l_\hp, \pi_{\hp}^\l\rest (\P|\b_\hp))$. 

It remains to prove the following lemma.
It is important to keep in mind that because $\b$ is properly overlapped in $\mH$, if $\gg$ is a trace of $\rho$ as witnessed by $(\hr, \hw)$, then $\epsilon_\hr$ and $\zeta^\mu_\hr$ are uniquely determined by the pair $(\mu, \gg)$. Indeed, $\epsilon_\hr$ is the least cardinal $\tau$ of $\mH$ such that $\tau>\mu$ and $\tau$ is $<\gg$-strong in $\mH$. Similarly, $\zeta^\mu_\hr$ is the largest cardinal $\tau$ of $\mH$ such that $\epsilon_\hr$ is $<\tau$-strong in $\mH$.

\begin{lemma}\label{preprojection is unique}%\normalfont 
Suppose $\gg$ is a trace of $\rho^0$ and $\rho^1$.
Then $\rho^0=\rho^1$.
\end{lemma}

\begin{proof}
Fix $(\hr_0, \hw_0)$ and $(\hr_1, \hw_1)$ witnessing that $\gg$ is a trace of $\rho^0$ and $\rho^1$, respectively.
It follows from full normalization that if $G_i$, for $i\in 2$, is the long extender derived from $\pi^\mu_{\hr_i}$ (i.e., $(a, A)\in G_i$ if and only if $a\in [\mu]^{<\omega}$, $A\in \M^{\hr_i}$ and $a\in \pi^\mu_{\hr_i}(A)$) and $\Z^i$ is the minimal $G_i$-copy of $\T_{\hr_i, \hw_i}$, then\\\\
(2) $(\T^\mu_{\hr_i})^\frown \Z^i$ is the full normalization of $\T_{\hr_i, \hw_i}\oplus \T^\mu_{\hw_i}$.\\\\
Let $\a'\in (\d^\infty, \Theta^M)$ and $\Delta=\{ A\subseteq \bR: w(A)<\a'\}$.
Let $\a''=(\Theta^+)^{L(\Delta)}$, and let $Y\prec L_{\a''}(\Delta)$ be a countable substructure of $L_{\a''}(\Delta)$ such that $\{\hr_0, \hw_0, \hr_1, \hw_1, \b, \rho^0, \rho^1, \gg\}\in Y$.
Let $\psi: N\rightarrow L_{\a''}(\Delta)$ be the transitive collapse of $Y$ with $\psi$ the inverse of the transitive collapse.
We will use subscript $Y$ to denote $\psi$-preimages of objects in $Y\cap L_{\a''}(\Delta)$, but in doing so we will not decorate everything with $Y$.
For example, we let $j^\mu_{\hr_0, Y}=\psi^{-1}(j^\mu_{\hr_0})$.

Now we have that\\\\ 
(3.1) $j^\mu_{\hw_0, Y}(\pi^\mu_{\hw_0, Y}(\rho^0_{\hw_0}))=j^\mu_{\hw_1, Y}(\pi^\mu_{\hw_1, Y}(\rho^1_{\hw_1}))=_{def}\gg_Y$,\\
(3.2) $j^\mu_{\hw_0, Y}(\pi^\mu_{\hw_0, Y}(\b_{\hw_0}))=j^\mu_{\hw_1, Y}(\pi^\mu_{\hw_1, Y}(\b_{\hw_1}))=_{def}\b'_Y(=\zeta^\mu_{\hw_0, Y}=\zeta^\mu_{\hw_1, Y}$), and\\
(3.3) $j^\mu_{\hw_0, Y}(\pi^\mu_{\hw_0, Y}(\l_{\hw_0}))=j^\mu_{\hw_1, Y}(\pi^\mu_{\hw_1, Y}(\l_{\hw_1}))=_{def}\l'_Y(=\epsilon_{\hw_0, Y}=\epsilon_{\hw_1, Y})$.\\\\
(3.2) and (3.3) follow from (3.1) and the fact that $\gg_Y$ and $\mu_Y$ determine both the image of $\b_{\hw_0}$ and the image of $\l_{\hw_0}$. For example, $\l'_Y$ is the least $<\gg_Y$-strong cardinal of $\mH_Y$ which is above $\mu_Y$, and $\b'_Y$ is the largest ordinal $\iota$ such that $\mH_Y\models ``\l'_Y$ is $<\iota$-strong". 

We now use the notation introduced in (c) and (d) above. 
For $i\in 2$, we have the following objects: $\U_{\hw_i, Y}$, $\X_{\hw_i, Y}$, $\Y_{\hw_i, Y}$, $\epsilon_{\hw_i, Y}$, $\hx_{\hw_i, Y}$ and $\hy_{\hw_i, Y}$. For $i\in 2$, we let\\\\
(f.1) $\U_{\hw_i, Y}=\U_{i, Y}$, $\X_{\hw_i, Y}=\X^{', i}_Y$, $\Y_{\hw_i, Y}=\Y^{', i}_Y$, $\epsilon_{\hw_i, Y}=\epsilon_{i, Y}$, $(\hr_{i, Y})^{\mu_Y}=\hr_{i, Y}^\mu$, $(\hw_{i, Y})^{\mu_Y}=\hw_{i, Y}^\mu$, $\hx_{\hw_i, Y}=\hx'_{i, Y}$ and $\hy_{\hw_i, Y}=\hy_{i, Y}$.\\
(f.2) $\S^i_Y$ be the longest initial segment of $\T_{\hr_{i, Y}^\mu, \hx'_{i, Y}}$ such that $\gen(\S^i_{Y})\subseteq \l_{\hx'_{i, Y}}=\l'_Y$, and let $\hx_{i, Y}$ be the last model of $\S^i_Y$.\\\\
Then for $i\in 2$,\\\\
(4.1) $\hx_{i, Y}'$ is a complete iterate of $\hx_{i, Y}$, and $\T_{\hx_{i, Y}, \hx_{i, Y}'}$ is strictly above $\l'_Y$,\\
(4.2) $\epsilon_{0, Y}=\epsilon_{1, Y}=\l'_Y$, $\M^{\hx_{i, Y}}|\l'_Y=\mH_Y|\l'_Y$,\\ 
(4.3) letting $\phi_Y$ be the successor of $\l'_Y$ in $\mH_Y$, $\M^{\hx_{i, Y}}|\phi_Y=\mH_Y|\phi_Y$,\\
(4.4) $\hy_{i, Y}|\b'_Y=\hh_Y|\b'_Y$, and\\
(4.5) $\hy_{0, Y}|\b'_Y=\hy_{1, Y}|\b'_Y$.\\\\
(4.2) is just a reformulation of (3.3), and (4.1) is a consequence of full normalization. In fact, $\T_{\hx_{i, Y}, \hx'_{i, Y}}$ is the $\pi_{\hr^\mu_{i, Y}, \hx_{i, Y}}$-minimal copy of $\T_{\hr^\mu_{i, Y}, \hw^\mu_{i, Y}}$.
Let\\\\
(f.3) $\X^i_Y=\T_{\hx_{i, Y}, \hy_{i, Y}}$.\\\\
We think of the hod pairs $\hr_{i, Y}^\mu$ and the others displayed in (f.1)-(f.3) as hod pairs in $M$, not just in $N$\footnote{This means that their strategies act on all iterations not just those in $N$.}.
Let $F$ be the long extender derived from $\psi\rest (\mH_Y|(\mu_Y^+)^{\mH_Y})$.
It follows from \rlem{main lemmA} that\\\\
(5) for $i\in 2$, $\psi\rest \M^{\hr_{i, Y}^\mu}=\pi^\mu_{\hr_{i, Y}^\mu}$.\\\\
For $i\in 2$, let\\\\
(g) $\S^i$ be the minimal $F$-copy of $\S^i_Y$ and $\X^i$ be the minimal $F$-copy of $\X^i_Y$.\\
(h) $\hx_i$ be the last model of $\S^i$ and $\hy_i$ be the last model of $\X^i$.\\\\
Then for $i\in 2$,\\\\
(6.1) $(\T_{\hr_{i, Y}^\mu, \hr_i^\mu})^\frown \S^i$ is the full normalization of $(\S^i_Y) \oplus \T_{\hx_{i, Y}, \hx_i}$,\\
(6.2) $(\T_{\hx_{i, Y}, \hx_i})^\frown \X^i$ is the full normalization of $(\X^i_Y) \oplus \T_{\hy_{i, Y}, \hy_i}$, \\
(6.3) $\hx_i=(\hx_{i, Y})^\mu$ and $\hy_i=(\hy_{i, Y})^\mu$, and\\
(6.4) $\pi^\mu_{\hx_{i, Y}}=\pi_{\hx_{i, Y}, \hx_i}=\pi_F^{\M^{\hx_{i, Y}}}$ and $\pi^\mu_{\hy_{i, Y}}=\pi_{\hy_{i, Y}, \hy_i}=\pi_F^{\M^{\hy_{i, Y}}}$.\\\\
(6.4) holds by the same proof that established (5) (see \rlem{main lemmA}).
Putting (1)-(6) together, we have for $i\in 2$,\\\\
(7.1) $\M^{\hr_i^\mu}=Ult(\M^{\hr_{i, Y}^\mu}, F)$, $\M^{\hx_i}=Ult(\M^{\hx_{i, Y}}, F)$, and $\M^{\hy_i}=Ult(\M^{\hy_{i, Y}}, F)$,\\
(7.2) $\gen(\S^i)\subseteq \pi_{\hx_{i, Y}}^\mu(\l'_Y)$, and\\
(7.3) $\X^i$ is strictly above $\pi_{\hx_{i, Y}, \hx_i}(\l'_Y)$.\\

%(7.5) for $i\in 2$, $(\T_{\hw_{i, Y}^\mu, \hw_i^\mu})^\frown (\X^i)^\frown \Y^i$ is the full normalization of $(\X^i_Y) \oplus (\Y^i_Y)\oplus \T_{\hy_{i, Y}, \hy_i}$.\\\\
Since $\gen(\S^i_Y)\subseteq \l_{\hx_{i, Y}}=\l'_Y$ for $i\in 2$, we have that\\\\
(8) for $i\in 2$, $\hr^\l_i=(\hx_{i, Y})^\l$.\\\\
(8) then implies\\\\
(9) for $i\in 2$, $\hr^\l_i$ is an iterate of $\hx_i$, $\T_{\hx_i, \hr^\l_i}$ is strictly above $\mu$, and $(\T_{\hx_{i, Y}, \hx_i})^\frown \T_{\hx_i, \hr^\l_i} = \T_{\hx_{i, Y}, \hr^\l_i}$.\\\\
We now prove the following claim, where for $i\in 2$, $\l_{\hx_i}=\pi_{\hr_i, \hx_i}(\l_{\hr_i})$.
(It follows from (6.4) that $\l_{\hx_i}=\pi_F^{\M^{\hx_{i, Y}}}(\l'_Y)$, and hence $\l_{\hx_0}=\l_{\hx_1}$.)

\begin{claim}\label{claim embs are the same for lambda01}\normalfont $\pi_{\hx_0, \hr^\l_0}\rest \M^{\hx_0}|\l_{\hx_0}=\pi_{\hx_1, \hr^\l_1}\rest \M^{\hx_1}|\l_{\hx_1}$.
\end{claim}

\begin{proof}
The fact that $\M^{\hx_0}|\l_{\hx_0}=\M^{\hx_1}|\l_{\hx_1}$ follows from (4.2) and (7.1).
Suppose now $u\in \M^{\hx_0}|\l_{\hx_0}$.
We have that $u=\pi_F(h)(a)$, where $h\in \M^{\hx_{0, Y}}|\l'_Y$ and $a\in [\mu]^{<\omega}$. 
Here and in other places in the proof of this claim, $\pi_F$ stands for $\pi_F^{\M^{\hx_{0, Y}|\l'_Y}}=\pi_F^{\M^{\hx_{1, Y}|\l'_Y}}$.
We then have that 
\begin{center} $\pi_{\hx_0, \hr^\l_0}(u)=\pi_{\hx_0, \hr^\l_0}(\pi_F(h))(a)$ and $\pi_{\hx_1, \hr^\l_1}(u)=\pi_{\hx_1, \hr^\l_1}(\pi_F(h))(a)$.
\end{center} 
We need to see that\\\\
(*) $\pi_{\hx_0, \hr^\l_0}(\pi_F(h))=\pi_{\hx_1, \hr^\l_1}(\pi_F(h)).$\\\\
(6.4) implies that for $i\in 2$, $\pi_{\hx_i, \hr^\l_i}(\pi_F(h))=\pi_{\hx_{i, Y}, \hr^\l_i}(h)$.
We claim that\\\\
(**) $\pi_{\hx_{0, Y}, \hr^\l_0}(h)=\psi(h)=\pi_{\hx_{1, Y}, \hr^\l_1}(h)$.\\\\
(**) clearly implies (*).

We establish (**) as follows.
Recall that $\psi\rest \mH_Y=\pi_{\hh_Y, \hh}$. 
Set $\epsilon=\sup(\psi[\l'_Y])$ and $\l'=\sup(\rge(\pi_F))$. We have that $\l'=\l_{\hx_0}=\l_{\hx_1}$.
Then for $i\in 2$,\footnote{$\hx^\epsilon_i$ and $\hh^\mu_Y$ are defined as in \rnot{nu infty}.}\\\\
(10.1) $\T_{\hx_{i, Y}, \hr^\l_i}=(\T_{\hx_{i, Y}, \hx_i})^\frown (\T_{\hx_i, \hx^\epsilon_i})^\frown (\T_{\hx^\epsilon_i, \hr^\l_i})$,\\
(10.2) $\T_{\hx_{i, Y}, \hh}=(\T_{\hx_{i, Y}, \hr^\l_i})^\frown \T_{\hr^\l_i, \hh}$,\\
(10.3) $\T_{\hx^\epsilon_i, \hr^\l_i}$ is above $\epsilon$,\\
(10.4) $\hh_Y^\mu|\l'=\hx_{i}|\l'$\footnote{This equation is a consequence of full normalization and the fact that $\T_{\hx_{i, Y}, \hh_Y}$ is above $\l'_Y$.},\\
(10.5) $\hh|\epsilon$ is an iterate of $\hx_{i}|\l'$ via an iteration that is a segment of both $\T_{\hx_{i, Y}, \hh}$ and $\T_{\hh_Y^\mu, \hh}$ and is also strictly above $\mu$.\\\\
We then have that for $i\in 2$,\\\\
(11) $\pi_{\hx_{i, Y}, \hr^\l_i}(h)=\pi^{(\T_{\hx_{i, Y}, \hx_i})^\frown (\T_{\hx_i, \hx^\epsilon_i})}(h)$.\\\\
(**) follows from (10.2), (11), (6.4), (10.4) and (10.5).
\end{proof}
Now\\\\
(12) for $i\in 2$, $\pi_{\hw_i, \hy_i}(\rho^i_{\hw_i})=\pi^\mu_{\hy_{i, Y}}(\gg_Y)$.\\\\ 
It then follows from (3.1), (4.4) and (6.4) that\\\\
(13.1) $\pi_{\hw_0, \hy_0}(\rho^0_{\hw_0})=\pi_{\hw_1, \hy_1}(\rho^1_{\hw_1})$, and\\
(13.2) if for $i\in 2$, $\b'_i=\pi_{\hw_i, \hy_i}(\b_{\hw_i})$, then $\b_0'=\b_1'=_{def}\b'$ and $\M^{\hy_0}|\b'=\M^{\hy_1}|\b'$.\\\\
For $i\in 2$, let $H_i$ be the long extender derived from $\pi_{\hx_i, \hr_i^\l}$.
(More precisely, we let $H_i$ be the set of pairs $(a, A)$ such that $a\in [\l]^{<\omega}$, $A\in \powerset(\l_{\hx_i})\cap \M^{\hx_i}$ and $a\in \pi_{\hx_i, \hr_i^\l}(A)$.)
It follows from (a.6), (a.7), (4.3), (7.1)\footnote{We need (4.3) and (7.1) to conclude that $\powerset(\l_{\hx_0})\cap \M^{\hx_0}=\powerset(\l_{\hx_1})\cap \M^{\hx_1}$. Notice that for $i\in 2$, $\pi_{\hr_i, \hx_i}(\l_{\hr_i})=\hx_i$.}, \rcl{claim embs are the same for lambda01}, and the fact that $\gen(\T_{\hr_i, \hx_i})\subseteq \l_{\hx_i}$\footnote{These facts together imply that $\pi_{\hx_0, \hr^\l_0}\rest \powerset(\l_{\hx_0})\cap \M^{\hx_0} =\pi_{\hx_1, \hr^\l_1}\rest  \powerset(\l_{\hx_1})\cap \M^{\hx_1}$.} that\\\\
(14) $H_0=H_1=_{def} H$.\\\\
For $i\in 2$, let $\K^i$ be the $H$-minimal copy of $\X^i$, and let $\hk_i$ be the last model of $\K^i$.
It follows from full normalization, (4.5), (14), (a.5), (a.6), and (a.7) that, letting for $i\in 2$, $\b''_i=\pi_{\hw_i, \hy_i}(\b_{\hw_i})$ and $\b'''_i=\pi_{\hw_i, \hk_i}(\b_{\hw_i})$,\\\\
(15.1) $\b''_0=\b''_1=_{def}\b''$,\\ 
(15.2) $\b'''_0=\b'''_1=_{def}\b'''$,\\
(15.3) for $i\in 2$, $\M^{\hk_i}|\b'''=Ult(\M^{\hy_i}|\b'', H)$,\\
(15.4) $\hk_0|\b'''=\hk_1|\b'''$\footnote{This is because $\hr_0$ and $\hr_1$ are appropriate, and so this follows from clause (a.5) above.},\\
(15.5) for $i\in 2$, $\pi_{\hw_i, \hk_i}(\rho^i_{\hw_i})=\pi_H^{\M^{\hy_i}}(\pi_{\hw_i, \hy_i}(\rho^i_{\hw_i}))$,\\
(15.6) $\pi_{\hw_0, \hk_0}(\rho^0_{\hw_0})=\pi_{\hw_1, \hk_1}(\rho^1_{\hw_1})=_{def}\rho'$,\\
(15.7) $\pi_{\hk_0, \hh}\rest \b'''=\pi_{\hk_1, \hh}\rest \b'''$, and\\
(15.8) for $i\in 2$, $\rho^i=\pi_{\hk_i, \hh}(\rho')$.\\\\
Finally, (15.6), (15.7) and (15.8) imply that $\rho^0=\rho^1$. 
This finishes the proof of \rlem{preprojection is unique}.
\end{proof}
\rthm{strongs are successors} is proved.
 \end{proof}

\subsection{Small strongs have big cofinality}
The main result of this section is that the cofinality of strong cardinals $<\xi^\infty$ is $\Theta^\c$. 

\begin{theorem}\label{cof is kappa}%\normalfont 
Suppose $\nu<\d^\infty$ is a small ${<}\d^\infty$-strong cardinal of $\mH$, and let $\k$ be the least ${<}\d^\infty$-strong cardinal of $\mH$.
Then $M\models ``\cf(\nu)=\k."$
\end{theorem}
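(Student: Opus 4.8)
The plan is to show $\cf^M(\nu) = \kappa$ by separately establishing $\cf^M(\nu) \geq \kappa$ and $\cf^M(\nu) \leq \kappa$, using the combinatorial tools about small cardinals and direct limits developed in $\S$\ref{splitting trees sec}--$\S$\ref{small cardinals subsec} together with the results on the Nairian model proved so far in $\S$\ref{sec: kom models}.

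First I would handle $\cf^M(\nu) \leq \kappa$. Here the point is that $\nu = \pi_{\hp, \infty}(\nu_\hp)$ (after iterating $\hp$ to catch $\nu$, so assume $\hp$ catches $\nu$), and $\pi_{\hp, \infty}[\nu_\hp]$ is cofinal in $\nu$. However, $\nu_\hp$ itself has cofinality $\kappa_\hp$ (the least ${<}\d$-strong cardinal of $\P$) or even smaller in $\P$, so this alone does not give a cofinal map of length $\kappa$. The right approach, mimicking the characterization of closure points in \rlem{closed points equiv}, is to use $\kappa^\infty = \pi_{\hp, \infty}(\kappa)$-many $(\hp, \nu)$-closed points: by the remark after \rter{closure term}, $\nu^\infty = \pi_{\hp,\infty}(\nu) = \nu$ is a limit of $(\hp', \nu')$-closed points for the appropriate setup, but more to the point I would apply the closure-point machinery with the \emph{first} ${<}\d^\infty$-strong cardinal $\kappa$ playing the role of the small cardinal and $\nu$ playing a role analogous to $\d$. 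Concretely, one builds a sequence $\langle \hr_\a : \a < \kappa \rangle$ of complete iterates of $\hp$, obtained via an extender-algebra-style window iteration strictly below $\nu$ but with critical points $\geq \kappa$, so that the values $\pi_{(\hr_\a)_{E_\a}, \infty}(\nu_{\hr_\a})$ (where $E_\a$ has critical point $\nu_{\hr_\a}$) form a cofinal sequence in $\nu$ of length $\kappa$; the cofinality of this construction is $\kappa$ because the relevant iteration has that length and the Vopenka-algebra chain-condition bound of \rlem{chain condition for vopenka1} forces exactly $\kappa$-many distinct closure points below $\nu$ to appear. This needs to be carried out inside $M$ (not inside some $\Q$), using that $M$ is the derived model.

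Next I would handle $\cf^M(\nu) \geq \kappa$, i.e., rule out a cofinal map $f : \b \to \nu$ in $M$ with $\b < \kappa$. This is the direction that needs the hull/definability properties. Suppose toward a contradiction that such an $f$ exists; fix $\gg < \xi^\infty$ and $X \in \powerset_{\omega_1}(\gg)$ so that $f$ is $OD(X)^M$, and take $f$ least. Catch everything by a complete iterate $\hq = (\Q, \Sigma_\Q)$ of $\hp$, and reflect: for each $\b' < \b$, the ordinal $f(\pi_{\hq, \infty}(\b'_\hq))$ should be captured by some term-relation value over $\Q$ with a collapse of $\Q|\nu_\hq$, exactly as in the argument of \rlem{chain condition for vopenka1} and \rlem{sigma gives a}. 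Since $\b < \kappa$ and $\kappa$ is the \emph{least} ${<}\d^\infty$-strong cardinal of $\mH$, and since $\nu$ is small and ${<}\d^\infty$-strong, \rcor{hull property cor} (the Hull and Definability Properties) applied at $\nu$ forces any such value to lie below $\sup(\pi_{\hq,\infty}[\nu_\hq])$ or, after a closure-point analysis, below a single $(\hq,\nu)$-closed point $\gg^*$; but a cofinal $f$ of length $< \kappa$ with range cofinal in $\nu$ would inject $\b$ cofinally into $\nu$ past every such $\gg^*$, and the Definability Property shows all these $\gg^*$ already belong to $\kappa$-generated hulls, so one manufactures an $OD$-surjection from $\powerset_{\omega_1}$ of something below $\kappa$ onto an unbounded subset of the least Woodin of $\mH$ above, contradicting \rlem{chain condition for vopenka1}. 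The cleanest packaging is probably to show that $\rge(f)$ is bounded in $\nu$ by imitating the proof of \rthm{first prop1} (regularity of small strongs), where a very similar contradiction — a cofinal map of small length into a small strong — was ruled out; indeed \rthm{first prop1} already gives that $\nu$ is regular in $\c$, hence $\cf^M(\nu) \geq \Theta^\c \geq \kappa$ cannot be contradicted by a map of length $< \kappa$, but one still must rule out $\b$ in the range $[\kappa, \nu)$ giving cofinality strictly between $\kappa$ and $\nu$; that is where the closure-point structure forces cofinality exactly $\kappa$.

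The main obstacle I expect is the $\cf^M(\nu) \leq \kappa$ direction: producing an honest length-$\kappa$ cofinal sequence in $\nu$ that lives in $M$. One has abundantly many closure points below $\nu$ (by the remark after \rlem{closed points equiv}, $\nu^\infty$ is a limit of $(\hp',\nu)$-closed points), and each closure point is a $\pi_{\hr_E,\infty}(\nu_\hr)$ for suitable $(\hr, \hs, E)$ by \rlem{closed points equiv}; the delicate part is arranging a definable $\kappa$-indexed enumeration of cofinally-many such points, which requires a uniform way to generate the iterates $\hr_\a$ — most naturally a window-based genericity-iteration construction (\rdef{gen iterate}, \rprop{invariance}) carried out in blocks indexed by the ${<}\d^\infty$-strong cardinals below $\kappa^\infty$, so that "$\kappa$-many blocks" is forced by the smallness of $\nu$ and the fact that $\kappa$ is the least ${<}\d^\infty$-strong cardinal. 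Getting the bookkeeping right so that the resulting sequence is cofinal in $\nu$, definable over $M$ (hence the cofinality is genuinely $\kappa$ and not smaller by some accident of $M$'s choicelessness), and that no shorter cofinal sequence sneaks in — combining with the lower bound — is the crux.
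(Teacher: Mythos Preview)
Your plan correctly identifies that producing a cofinal $\kappa$-sequence in $\nu$ inside $M$ is the heart of the matter, but the mechanism you propose is off in an essential way. You suggest iterating $\hp$ in windows \emph{below} $\nu$ (``strictly below $\nu$ but with critical points $\geq \kappa$''); the paper iterates in the window $w=((\nu^+)^\P,\d)$, strictly \emph{above} $\nu$. The function $f:\kappa\to\nu$ is defined as follows: call $\alpha<\kappa$ a \emph{$\kappa$-trace} if there is a complete iterate $\hq$ of $\hp$ with $\T_{\hp,\hq}$ based on $w_\hp$ and a $\Q$-inaccessible $\zeta\in w_\hq$ with $\pi_{\hq|\zeta,\infty}(\kappa_\hp)=\alpha$; then $f(\alpha)=\pi_{\hq_E,\infty}(\nu_\hq)$, where $E\in\vec{E}^\Q$ is least with $\cp(E)=\nu_\hq$ and $\lh(E)>\zeta$. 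Well-definedness and monotonicity of $f$ come from comparing two witnesses via least-extender-disagreement coiteration and full normalization (\rlem{doesnt matter}, \rlem{no overlapping}). Closure points and the Vopenka bound of \rlem{chain condition for vopenka1} do not enter.

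The decisive input you are missing is \rthm{complexity of window strategy} (with \rcor{proof of c1 implies c2}): by running a backgrounded hod-pair construction above $\nu$ one shows that for every $\beta<\nu$ there exist $(\hq,\zeta)$ as above with $\pi_{\hp_E,\infty}(\nu)>\beta$, and simultaneously (clause~1) that the $\kappa$-traces are cofinal in $\kappa$. This is precisely what makes $\rge(f)$ cofinal in $\nu$. Iterating below $\nu$, as you propose, moves $\nu_\hr$ itself but gives no handle on how far $\pi_{\hr_E,\infty}(\nu_\hr)$ can be pushed; and nothing in \rlem{chain condition for vopenka1} tells you the closure points have cofinality exactly $\kappa$ rather than something smaller.

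Your separate attack on $\cf^M(\nu)\geq\kappa$ is also unnecessary: since $f$ is \emph{increasing} and cofinal, $\cf^M(\nu)=\cf^M(\kappa)$, and the same construction applied with $\nu=\kappa$ already yields an increasing cofinal $f:\kappa\to\kappa$, i.e.\ $\kappa$ is regular in $M$. There is no need for a hull-property contradiction to rule out intermediate cofinalities.
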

\begin{proof} 
Without loss of generality, we assume that $\hp$ catches $\nu$. 
Let $\nu'=(\nu^+)^\mH$ and $w=(\nu', \d^\infty)$. We say that $\a$ is a \textit{$\k$-trace} of $\nu$  if there is a complete iterate $\hq=(\Q, \Sigma_\Q)$ of $\hp$ such that\\\\
(1.1) $\T_{\hp, \hq}$ is based on $w_\hp$, and\\
(1.2) for some $\Q$-inaccessible cardinal $\zeta\in w_\hq$, $\pi_{\hq|\zeta, \infty}(\k_\hp)=\a$.\\\\
Define $f:\k\rightarrow \nu$ by setting $f(\a)=\k$ if $\a$ is not a $\k$-trace of $\nu$, and if $\a$ is a $\k$-trace of $\nu$, then, letting $\hq=(\Q, \Sigma_\Q)$ and $\zeta$ witness that $\a$ is a $\k$-trace, and letting $E\in \vec{E}$ be the least such that $\zeta<\lh(E)$ and $\cp(E)=\nu_\hq$, set $f(\a)=\pi_{\hq_E, \infty}(\nu_\hq)$\footnote{Notice that $\nu_{\hq}=\nu_\hp$. It follows from \rlem{doesnt matter} that $\pi_{\hq_E, \infty}(\nu_\hq)=\pi_{\hp_E, \infty}(\nu_\hp)$.}, where $\Q_E=Ult(\Q, E)$ and $\hq_E=(\Q_E, \Sigma_{\Q_E})$.

\rlem{doesnt matter}, \rthm{complexity of window strategy} and \rcor{proof of c1 implies c2} imply that $\rge(f)$ is cofinal in $\nu$, so it is enough to show that
\begin{itemize}
    \item[(a)] $f$ is a function, and

    \item[(b)] $f$ is increasing.
\end{itemize}

We first show (a). 
To show that $f$ is a function, it is enough to show that if $\a$ is a $\k$-trace of $\nu$, then $f(\a)$ is independent of the witness $(\hq, \zeta)$. 
Let $(\hq=(\Q, \Sigma_\Q), \zeta)$ and $(\hr=(\R, \Sigma_\R), \tau)$ be two pairs witnessing that $\a$ is a $\k$-trace of $\nu$. 
Let $E\in \vec{E}^\Q$ be the least such that $\cp(E)=\nu_\hp$ and $\zeta<\lh(E)$, and let $F\in \vec{E}^\R$ be the least such that $\cp(F)=\nu_\hp$ and $\tau<\lh(F)$. 
Let $\Q_E=Ult(\Q, E)$, $\hq_E=(\Q_E, \Sigma_{\Q_E})$, $\R_F=Ult(\R, F)$ and $\hr_F=(\R_F, \Sigma_{\R_F})$. 
We want to see that $\pi_{\hq_E, \infty}(\nu_\hp)=\pi_{\hr_F, \infty}(\nu_\hp)$.

We can assume that $\gen(\T_{\hp, \hq})\subseteq \zeta$ and $\gen(\T_{\hp, \hr})\subseteq \tau$. 
[Indeed, notice that it follows from \rlem{doesnt matter} that if $\hq'=(\Q', \Sigma_{\Q'})$ is such that for some $\iota<\lh(\T_{\hp, \hq})$, $\hq'=\hm_\iota^{\T_{\hp, \hq}}$ and $E\in \vec{E}^{\Q'}$, then defining $\hp_E$ and $\hq'_E$ as we usually do, $\pi_{\hq'_E, \infty}(\nu_\hp)=\pi_{\hp_E, \infty}(\nu_{\hp})$. ]
%Therefore, we can assume that  $\gen(\T_{\hp, \hq})\subseteq \zeta$ and $\gen(\T_{\hp, \hr})\subseteq \tau$.

Our strategy is to compare $\hq$ and $\hr$ via the least-extender-disagreement-coiteration.
Since $\pi_{\hq|\zeta, \infty}(\k_\hq)=\pi_{\hr|\tau, \infty}(\k_\hr)$, $\hq|\zeta$ and $\hr|\tau$ iterate to the same pair.\footnote{Notice that $\k_\hq=\k_\hr=\k_\hp$.} 
But because $\gen(\T_{\hp, \hq})\subseteq \zeta$ and $\gen(\T_{\hp, \hr})\subseteq \tau$, full normalization implies that if $\hs$ is the common iterate of $\hq$ and $\hr$ obtained via the least-extender-disagreement-coiteration, then $\T_{\hq, \hs}$ is based on $\hq|\zeta$ and $\T_{\hr, \hs}$ 
is based on $\hr|\tau$. 
Then $\pi_{\hq, \hs}(\zeta)=\pi_{\hr, \hs}(\tau)=_{def}\iota$ and $\pi_{\hq, \hs}(E)=\pi_{\hr, \hs}(F)=_{def}G$. 
It also follows from \rlem{no overlapping} that both $\T_{\hq, \hs}$ and $\T_{\hr, \hs}$ are above $\nu_{\hp}$.

If $\hw$ is the common iterate of $\hq_E$ and $\hr_F$ obtained via the least-extender-disagreement-comparison, then the full normalization of $E\oplus \T_{\hq_E, \hw}$ is $(\T_{\hq, \hs})^{n\frown} G ^\frown \U_0$ and the full normalization of $F\oplus \T_{\hr_F, \hw}$ is $(\T_{\hr, \hs})^{n\frown} G ^\frown \U_1$, where $\U_0$ and $U_1$ are above $\pi_G(\nu_\hp)$. 
It follows from \rlem{doesnt matter} that, letting $\hs_G=(\S_G, \Sigma_{\S_G})$ where $\S_G=Ult(\S, G)$, we have $\pi_{\hq_E, \infty}(\nu_\hp)=\pi_{\hs_G, \infty}(\nu_\hp)=\pi_{\hr_F, \infty}(\nu_\hp)$. This finishes the proof of (a). 

The proof of (b) is similar. 
Indeed, if $\a<\b$ are two $\k$-traces as witnessed by $(\hq, \zeta, E)$ and $(\hr, \tau, F)$, respectively, $\hs$ is the common iterate of $\hq$ and $\hr$ obtained via the least-extender-disagreement-comparison, and $\hw$ is the common iterate of $\hq_E$ and $\hr_F$ obtained via the least-extender-disagreement-comparison, then\\\\
(2.1) $\T_{\hq, \hs}$ is based on $\hq|\zeta$ and is above $\nu_{\hp}$, \\
(2.2) $\T_{\hr, \hs}$ is based on $\hr|\tau$ and is above $\nu_\hp$,\\
(2.3) $\pi_{\hq, \hs}(\zeta)<\pi_{\hr, \hs}(\tau)$,\\
(2.4) if $G=\pi_{\hq, \hs}(E)$, then the full normalization of $E\oplus \T_{\hq_E, \hw}$ is $(\T_{\hq, \hs})^{n\frown} G ^\frown \U_0$, where $\U_0$ is above $\pi_G(\nu_\hp)$,\\
(2.5) if $H=\pi_{\hr, \hs}(F)$, then the full normalization of $F\oplus \T_{\hr_F, \hw}$ is $(\T_{\hr, \hs})^{n\frown} H ^\frown \U_1$, where $\U_1$ is above $\pi_H(\nu_\hp)$.\\\\
Again, we have that $\pi_{\hq_E, \infty}(\nu_\hp)=\pi_{\hs_G, \infty}(\nu_\hp)$ and $\pi_{\hr_F, \infty}(\nu_\hp)=\pi_{\hs_H, \infty}(\nu_\hp)$.
Because $\lh(E)<\lh(H)$, the argument used in \rlem{doesnt matter} implies that $\pi_{\hs_G, \infty}(\nu_\hp)<\pi_{\hs_H, \infty}(\nu_\hp)$.\footnote{Indeed, we have that the full normalization of $H\oplus G$ is $G^\frown \pi_G(H)$, which implies that $\pi_{(\hs_H)_G, \infty}(\nu_\hp)=\pi_{\hs_G, \infty}(\nu_\hp)$.} Hence, (b) follows.
\end{proof}

\section{The proof of the key technical theorem}

In this section, we adhere to \rnot{notation for kom model} with the exception of $\xi$.
\emph{In this section, $\xi$ does not stand for the object introduced in \rnot{notation for kom model}.}

\begin{theorem}\label{complexity of window strategy}%\normalfont 
Let $\a<\d$ be any ordinal in the $\d$ block of $\P$, and suppose $\nu<\a$ is a small ${<}\d$-strong cardinal of $\P$. 
Set $w=(\a, \d)$. 
Then the following hold. 
\begin{enumerate}
    \item Suppose $\nu$ is the least ${<}\d$-strong cardinal of $\P$. Then for any ordinal $\b<\nu^\infty=\pi_{\hp, \infty}(\nu)$, there is a complete iterate $\hq$ of $\hp$ and $\zeta<\d_\hq$ such that $\T_{\hp, \hq}$ is based on $w$, \[\M^\hq\models ``\zeta \text{ is inaccessible,"}\]  and $\b\leq \pi_{\hq|\zeta, \infty}(\nu_\hq)$. 
        
    \item For any ordinal $\b<\nu^\infty$, there is a complete iterate $\hq$ of $\hp$ and $\zeta<\d_\hq$ such that $\T_{\hp, \hq}$ is based on $w$, \[\M^{\hq}\models ``\zeta \text{ is inaccessible,"}\] and, letting
    \begin{enumerate}
    \item $E\in \vec{E}^{\M^{\hq}}$ be the least with the property that $\cp(E)=\nu_\hq$ and $\zeta<\lh(E)$,
    \item $\hq=(\Q, \Sigma_\Q)$ and $\hq_E=(Ult(\Q, E), \Sigma_{Ult(\Q, E)})$, and
    \item $\hp_E=(Ult(\P, E), \Sigma_{Ult(\P, E)})$,
    \end{enumerate}
    $\b\leq \pi_{\hp_E, \infty}(\nu)=\pi_{\hq_E, \infty}(\nu)$\footnote{The equality follows from \rlem{doesnt matter}.}.
\end{enumerate}
\end{theorem}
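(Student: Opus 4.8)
The plan is to prove part (1) first, since it carries the main content, and then to deduce part (2) from it by applying a single overlapping extender together with \rlem{doesnt matter}.

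\emph{Part (1): a window-density argument.} Fix $\b<\nu^\infty=\pi_{\hp,\infty}(\nu)$. Since $\nu$ is the least ${<}\d$-strong cardinal of $\P$ and $\d$ ends an lsa block of $\P$, $\nu$ is a limit of Woodin cardinals of $\P$, so $\nu^\infty$ is a limit of Woodin cardinals of $\mH$. The goal is to show that the set of ordinals of the form $\pi_{\hq|\zeta,\infty}(\nu_\hq)$, where $\hq$ is a complete iterate of $\hp$ with $\T_{\hp,\hq}$ based on $w=(\a,\d)$ and $\zeta<\d_\hq$ is inaccessible in $\M^\hq$, is cofinal in $\nu^\infty$; any such ordinal that is $\geq\b$ then witnesses (1). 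To see the cofinality, I would first catch a target: using \rlem{hull property} (with the small, properly overlapped cardinal $\nu$ in the $\d$-block), fix a complete iterate $\hr$ of $\hp$ catching some $\gg\in(\b,\nu^\infty)$ with $\gen(\T_{\hp,\hr})\subseteq\nu_\hr$, and by \rlem{bound iterations factor} and \rcor{bound iterations factor 2} record that $\hr^{\nu_\hr}=\hp^\nu$ and that the $\nu$-bounded part of $\hr$'s iteration factors through $\hp^\nu$ via the long extender derived from $\pi_{\hp,\infty}\restriction\P|(\nu^+)^\P$. One then reproduces $\hr$'s action \emph{inside the window} $w$: run a genericity iteration of $\hp$ as in \rdef{gen iterate}, a $<_W$-increasing sequence of sub-windows of $w$, choosing at each stage the extenders that both encode enough of $\hr$'s movement of $\nu$ and keep $M$ realized as the derived model of the iterate (this is where \rthm{invariance} and the machinery of \rsec{sec: chang model} are essential). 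The splitting \rlem{splitting lemma} and \rlem{main lemma on small ordinals} are used repeatedly to locate inside the resulting iterate $\hq$ an inaccessible $\zeta<\d_\hq$ past which the generators lie below $\nu$, so that $\pi_{\hq|\zeta,\infty}$ already pushes $\nu$ past $\b$.

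\emph{Part (2) from part (1).} Given $\b<\nu^\infty$, apply part (1) to get a window-based $\hq$ and an inaccessible $\zeta<\d_\hq$ with $\pi_{\hq|\zeta,\infty}(\nu_\hq)\geq\b$. As $\nu<\a$ and $\T_{\hp,\hq}$ is based on $w=(\a,\d)$, we have $\nu_\hq=\nu$, and (because $\nu$ is a ${<}\d$-strong cardinal of $\P$) $\M^\hq$ has an extender with critical point $\nu$ and length above $\zeta$; let $E$ be the least such, and set $\hq_E=(Ult(\Q,E),\Sigma_{Ult(\Q,E)})$ and $\hp_E=(Ult(\P,E),\Sigma_{Ult(\P,E)})$. \rlem{doesnt matter} gives $\pi_{\hp_E,\infty}(\nu)=\pi_{\hq_E,\infty}(\nu)$. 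It then remains to see that $\pi_{\hq_E,\infty}(\nu)\geq\pi_{\hq|\zeta,\infty}(\nu_\hq)$, which is the same kind of factoring used in \rlem{closed points equiv}: the minimal copy maps attached to $E$ embed the $\Sigma_{\hq|\zeta}$-direct-limit system, above $\nu$, into the $\Sigma_{\hq_E}$-direct-limit system, so the image of $\nu$ can only grow. This exhibits $\pi_{\hq_E,\infty}(\nu)$ as an $(\hp,\nu)$-closed point that is $\geq\b$, as required. For a general small ${<}\d^\infty$-strong cardinal $\nu$ rather than the least one, the argument is unchanged after replacing ``limit of Woodins'' by ``${<}\d^\infty$-strong cardinal or limit of such'' and invoking \rlem{main lemma on small ordinals} and \rcor{hull property cor} to control the non-cutpoint structure around $\nu$.

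\emph{The main obstacle.} The real work is the window-density construction in Part (1): producing the window-based iterate $\hq$ that realizes a prescribed amount of movement of $\nu$ while simultaneously (a) preserving that $M$ is the derived model of $\hq$ (so that $\mH$ and $\pi_{\hp,\infty}$ are unchanged), which forces the use of genericity iterations and \rthm{invariance}, and (b) carrying out the full-normalization bookkeeping --- \rthm{min copy thm} and \rprop{strategy coherence} --- needed to commute the $\nu$-bounded part of $\hr$, the window part inside $w$, and the terminal overlapping extender $E$. A secondary point is the passage from witnesses that naturally live in the $((\nu^+)^\P,\d)$-window to witnesses living in the strictly smaller $(\a,\d)$-window; this uses the smallness of $\nu$ together with the hull and definability properties of \rcor{hull property cor} and \rlem{no overlapping} to see that the strong-cardinal structure above $\a$ within the $\d$-block already suffices to reach every relevant closed point.
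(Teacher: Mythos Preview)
Your proposal diverges from the paper's proof in both parts, and in each case there is a genuine gap rather than just a different route.

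\textbf{Part (1).} The paper does not obtain $\hq$ by a genericity iteration. Instead it builds $\hq$ as the last model of a $K^c$-style backgrounded construction carried out inside $\P|\xi'$ (the sequence $(\M_\epsilon,\N_\epsilon,\T_\epsilon)$ with Conditions 1--7), arranged so that the resulting $\Q$ satisfies a \emph{Maximality} condition: every total extender that coheres $\Q$ has its Jensen completion on the sequence of $\Q$. One then shows that $\P|\d_\hq$ is generic over $\Q$ for a $\d_\hq$-generator extender algebra, that $\M_1^{\#,\Psi}(\P|\d_\hq[g_\xi])\models``\d_\hq$ is Woodin'' for any hod pair $\hk=(\K,\Psi)$ of Wadge rank ${<}\nu^\infty$, and finally invokes \emph{universality} to see that the hod-pair construction of $\Q|\d_\Q$ above $\a$ reaches a $\Psi$-iterate of $\hk$. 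Since $\hk$ was arbitrary below $\nu^\infty$, this produces $\zeta$ with $\pi_{\hq|\zeta,\infty}(\nu_\hq)$ as large as desired. Your ``reproduce $\hr$'s action inside the window via a genericity iteration'' does not supply a mechanism: a genericity iteration makes prescribed reals generic over the iterate, but it does not by itself force the Wadge rank of $\Sigma_{\hq|\zeta}$ to climb. You have not explained what extenders to pick in the window to make $\pi_{\hq|\zeta,\infty}(\nu)$ large, and the tools you cite (\rlem{splitting lemma}, \rlem{main lemma on small ordinals}, \rthm{invariance}) do not do this.

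\textbf{Part (2) from Part (1).} Two problems. First, Part (1) is only stated and proved for $\nu=\k$ the \emph{least} ${<}\d$-strong; it is not available for a general small strong $\nu$, so ``the argument is unchanged'' is not correct. The paper derives Part (2) for general $\nu$ by reducing to $X$-closed $\b=\pi_{\hr_E,\infty}(\nu_\hr)$ (\rlem{closed points equiv}), then applying Part (1) \emph{for $\k$} to obtain $(\hq,\zeta)$ with $\pi_{\hr_E|\tau_{\hr_E},\infty}(\nu_\hr)<\pi_{\hq|\zeta,\infty}(\k)$, and then proving via a careful least-extender-disagreement coiteration of $(\hp_H,\hr_E)$ (\rlem{qzeta works}, with the inductive \rslem{tprime in seg} and \rcl{y = y'}) that $\pi_{\hp_H,\infty}(\nu_\hp)>\b$. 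Second, your proposed inequality $\pi_{\hq_E,\infty}(\nu)\geq\pi_{\hq|\zeta,\infty}(\nu_\hq)$ is not justified by ``minimal copy maps embed one direct-limit system into the other.'' In $\hq_E$ the ordinal $\nu$ is no longer $\nu_{\hq_E}$ (that is $\pi_E(\nu)$), and there is no evident order-preserving map of direct-limit systems sending $\nu$ to $\nu$; the paper's comparison argument is there precisely because no such soft monotonicity is available.
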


We start by proving that clause 1 implies clause 2. 
Set $w_\infty=\pi_{\P, \infty}(w)$. 
For $\hq\in \mathcal{F}$, we let $w_\hq=\pi_{\hp, \hq}(w)$. %The reader may benefit from reviewing \rsec{sec: notation}.

\subsection{Clause 1 implies clause 2} 
We assume clause 1. Let $\tau=(\nu^+)^\P$, and set $X=\pi_{\hp, \infty}[\P|\tau]$. 
Since $\nu^\infty$ is a limit of $X$-closed points, it is enough to establish clause 2 for $\b$ that are $X$-closed (see \rlem{closed points equiv}).
Suppose now that $\b$ of clause 2 above is an $X$-closed point. 
In what follows we show that, assuming clause 1 of \rthm{complexity of window strategy} holds, clause 2 of \rthm{complexity of window strategy} holds for $\b$. 
Fix $(\hr, \hs, E)$ as in \rlem{closed points equiv} witnessing that $\b$ is $X$-closed (we apply the lemma to $(\hp, \nu)$, so $\hr$ is an iterate of $\hp$). We will not use $\hs$ in the argument below, and so we will use the letter $\hs$ to denote other objects.
Set $\R_E=Ult(\R, E)$ and $\hr_E=(\R_E, \Sigma_{\R_E})$. 
Then\\\\
(1) $\b=\pi_{\hr_E, \infty}(\nu_\hr)$.\\\\
Using clause 1 of \rthm{complexity of window strategy}, let $\hq=(\Q, \Sigma_\Q)$ be a complete iterate of $\hp$ such that if $\k$ is the least $<\d$-strong cardinal of $\P$,\\\\
(2.1) $\T_{\hp, \hq}$ is based on $w_\hp$, and\\
(2.2) for some $\Q$-inaccessible $\zeta$, $\pi_{\hr_E|\tau_{\hr_E}, \infty}(\nu_\hr)<\pi_{\hq|\zeta, \infty}(\k)$.\\\\
Fix $\zeta$ as in (2.2) and notice that (2.2) implies the following key fact:\\\\
(3) $\hr_E|\tau_{\hr_E}$ loses the least-extender-disagreement-coiteration with $\hp_H|\zeta$.\\

\begin{lemma}\label{qzeta works}%\normalfont 
$(\hq, \zeta)$ witnesses clause 2 of  \rthm{complexity of window strategy} for $\b$.
\end{lemma}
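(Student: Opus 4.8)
The plan is to show that $(\hq,\zeta)$ as constructed satisfies the requirements of clause 2 of \rthm{complexity of window strategy} for the given $X$-closed $\b$; namely, that letting $E'\in\vec{E}^{\M^\hq}$ be the least extender with $\cp(E')=\nu_\hq$ and $\zeta<\lh(E')$, and writing $\hq_{E'}=(Ult(\Q,E'),\Sigma_{Ult(\Q,E')})$, we have $\b\leq\pi_{\hq_{E'},\infty}(\nu_\hq)$. First I would unwind what (3) gives us: running the least-extender-disagreement-coiteration of $\hr_E|\tau_{\hr_E}$ against $\hp_H|\zeta$ (where $H\in\vec{E}^\P$ is the relevant extender at $\zeta$ in the iterate producing $\hq$), the side on $\hr_E|\tau_{\hr_E}$ moves, so we obtain a common iterate $\hs$ of both, with $\pi_{\hr_E|\tau_{\hr_E},\hs}$ defined and $\cp(\pi_{\hr_E|\tau_{\hr_E},\hs})=\nu_\hr$ (using that $\nu$ is small, hence that the coiteration is strictly above $\nu_\hr$ above the first disagreement, by \rlem{no overlapping} and \rcor{no extender disagreement}). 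The point is that this realizes $\hr_E$ as, essentially, $Ult(\cdot,E')$ applied to a node on the $\hp$-to-$\hq$ tree, so that the preimage $\nu_\hr$ of the relevant ordinal gets sent by the tail of the comparison exactly to $\nu_\hq$, and then $\pi_{\hr_E,\infty}$ factors through $\pi_{\hq_{E'},\infty}$.

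Concretely, I would argue as follows. By \rlem{closed points equiv}, $\hr$ is an iterate of $\hp$ with $\gen(\T_{\hp,\hr})\leq\nu_\hr$, $E\in\vec{E}^{\R}$ has $\cp(E)=\nu_\hr$, and $\pi_{\hr_E,\infty}(\nu_\hr)=\b$. On the other side, $\hq$ is a complete iterate of $\hp$ based on $w_\hp$, so $\gen(\T_{\hp,\hq})$ is also controlled; $\zeta$ is a $\Q$-inaccessible in $w_\hq$, $E'$ is the least extender of $\Q$ with critical point $\nu_\hq=\nu_\hp$ overlapping $\zeta$. Now compare $\hr$ with $\hq$ via the least-extender-disagreement-coiteration, call the common iterate $\hw=(\W,\Sigma_\W)$. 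Because $\pi_{\hr|\tau_\hr,\infty}(\k)\le\pi_{\hr_E|\tau_{\hr_E},\infty}(\nu_\hr)<\pi_{\hq|\zeta,\infty}(\k)$ by (3), and because $\hr$ is generated below $\nu_\hr$ while $\hq$'s action below $\zeta$ is captured by $\hp|\zeta$, the comparison on the $\hr$ side must be cofinally above $\nu_\hr$ after the initial segment, and moreover the comparison extends to $\hr_E$ vs.\ $\hq_{E'}$ in a coherent way: the full normalization of $E\oplus\T_{\hr_E,\hw_E}$ is $(\T_{\hr,\hw})^{n\frown}G\,{}^\frown\mathcal{U}$ where $G=\pi_{\hr,\hw}(E)$ and $\mathcal{U}$ is above $\pi_G(\nu_\hr)$, and dually for the $\hq_{E'}$ side with $G'=\pi_{\hq,\hw}(E')$. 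Then \rlem{doesnt matter} identifies $\pi_{\hr_E,\infty}(\nu_\hr)=\pi_{\hw_G,\infty}(\nu_\hr)$ and $\pi_{\hq_{E'},\infty}(\nu_\hq)=\pi_{\hw_{G'},\infty}(\nu_\hq)$, where $\hw_G$, $\hw_{G'}$ are the obvious $G$- resp.\ $G'$-ultrapowers of $\hw$. Since $G$ and $G'$ both have critical point (the image of) $\nu_\hp$ and $\lh(E)$ relates to $\lh(E')$ through the comparison in a way that makes $G|\im(G)$ an initial segment of $G'$ (or equal), the argument of \rlem{doesnt matter} gives $\pi_{\hw_G,\infty}(\nu_\hp)\le\pi_{\hw_{G'},\infty}(\nu_\hp)$, hence $\b=\pi_{\hr_E,\infty}(\nu_\hr)\le\pi_{\hq_{E'},\infty}(\nu_\hq)$, which is exactly what clause 2 demands. (The argument is entirely parallel to the proof of monotonicity---part (b)---in the proof of \rthm{cof is kappa}, and reuses the ``full normalization of $E\oplus\mathcal{T}$'' bookkeeping already deployed there.)

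The main obstacle, I expect, will be making (3) precise and correctly leveraging it: one has to be careful that the coiteration of $\hr_E|\tau_{\hr_E}$ with $\hp_H|\zeta$ really does move the $\hr_E$ side, and that after this the comparison of the full $\hr_E$ with $\hq_{E'}$ (not just the truncations to $\tau$ resp.\ $\zeta$) proceeds strictly above $\nu_\hp$, so that the iteration maps on the two sides agree on the generating structure $\powerset(\nu_\hp)^{\M^{\hw_G}}=\powerset(\nu_\hp)^{\M^{\hw_{G'}}}$ and the ``who overlaps whom'' comparison of $G$ and $G'$ can be read off. This is where smallness of $\nu$ is essential---via \rlem{no overlapping} and \rlem{main lemma on small ordinals}, small strong cardinals can only be ``used'' on one side of a least-extender-disagreement coiteration, which is precisely what forces the $\hr_E$ side to lose below $\zeta$ and pins down the comparison structure above $\nu_\hp$. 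Once that combinatorial picture is in place, the inequality $\b\le\pi_{\hq_{E'},\infty}(\nu_\hq)$ follows by the now-routine full-normalization-plus-\rlem{doesnt matter} computation.
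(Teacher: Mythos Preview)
Your overall plan---use fact (3), smallness of $\nu$, and a least-extender-disagreement comparison---is on the right track, but the specific route you propose has a real gap and differs from the paper's in a way that matters.

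The paper does \emph{not} compare $\hr$ with $\hq$ and then push $E$ and $E'$ forward. Instead it directly compares $\hp_H$ with $\hr_E$ (where $H\in\vec{E}^\Q$ is the relevant extender), producing a common iterate $\hk$ with trees $\T=\T_{\hp_H,\hk}$ and $\U=\T_{\hr_E,\hk}$, and reduces the goal to $\pi^\T(\nu_\hp)>\pi^\U(\nu_\hr)$. The point is that $\hp_H|\zeta=\hq|\zeta$, so the coiteration $(\X,\Y)$ of the \emph{truncations} $(\hp_H|\zeta,\hr_E|\tau_{\hr_E})$ is available inside the coiteration of the full models. The paper then proves, by an induction along the strong cardinals below $\nu'_\hk$ (Sublemma~\ref{tprime in seg}) using \rcor{no extender disagreement} and \rlem{no overlapping}, that the initial parts $\T',\U'$ of $\T,\U$ (up to generators $\nu'_\hk$) are initial segments of $\X,\Y$. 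A further claim shows $\Y'=\Y$, and then one reads off the conclusion from the fact that $\pi^\T(\nu_\hp)$ is $<\zeta_\hk$-strong in $\K$ while nothing above $\nu'_\hk$ is $<\pi^\U(\nu_\hr)$-strong.

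Your approach founders at exactly the step you flag as the ``main obstacle'': the assertion that ``$G|\im(G)$ is an initial segment of $G'$ (or equal)'' is unjustified and is precisely the content of the hard work above. The parallel to part (b) of \rthm{cof is kappa} breaks down because there both $\hq$ and $\hr$ are iterates of $\hp$ via trees based on $w_\hp$ (hence entirely above $\nu$), so the least-extender-disagreement coiteration of $\hq$ and $\hr$ is automatically based on $\hq|\zeta$ and $\hr|\tau$, and one immediately reads off $\lh(G)<\lh(H)$ in the common iterate. In the present lemma, by contrast, $\gen(\T_{\hp,\hr})\leq\nu_\hr$ while $\T_{\hp,\hq}$ is based on $w_\hp$; the two trees live in disjoint windows, so comparing $\hr$ with $\hq$ is not ``above $\nu$'' and gives no a priori relation between the images of $E$ and $E'$. (There is also a minor slip: $E$ lives on the extender sequence of $\S$, not of $\R$, so ``$G=\pi_{\hr,\hw}(E)$'' does not literally make sense.) To make the argument go through you really do need the structural analysis the paper carries out, tracking the truncated coiteration inside the full one via the induction on small strong cardinals.
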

\begin{proof} Let $H\in \vec{E}^\Q$ be the least such that $\cp(H)=\nu_\hq=\nu$ and $\zeta<\lh(H)$. Let $\P_H=Ult(\P, H)$ and $\hp_H=(\P_H, \Sigma_{\P_H})$. We want to see that\\\\ 
%\begin{center}
(*) $\pi_{\hp_H, \infty}(\nu_\hp)>\b=\pi_{\hr_E, \infty}(\nu_\hr)$.\\\\ %\ \ \ \ \ \ \ \ \ \ \ \ \   
%\end{center}
To see that (*) holds, we analyze the least-extender-disagreement-coiteration of $(\hp_H, \hr_E)$. Let $\hk=(\K, \Sigma_\K)$ be the common iterate of $\hp_H$ and $\hr_E$ obtained via the least-extender-disagreement-coiteration of $(\hp_H, \hr_E)$, and set $\T=\T_{\hp_H, \hk}$ and $\U=\T_{\hr_E, \hk}$. As 
\begin{center}
$\pi_{\hp_H, \infty}(\nu_\hp)=\pi_{\hk, \infty}(\pi^\T(\nu_\hp))$ and $\pi_{\hr_E, \infty}(\nu_\hr)=\pi_{\hk, \infty}(\pi^\U(\nu_\hr))$,
\end{center}
it is enough to show that $\pi^\T(\nu_\hp)>\pi^\U(\nu_\hr)$.
Let $\nu'$ be the supremum of the ${<}\d_\hp$-strong cardinals of $\P$ that are strictly less than $\nu$, and let $\tau'$ be the successor of $\nu'$ in $\P$. 
Since $\nu$ is small in $\P$, we have that $\nu'<\nu$, and moreover\\\\
(4.1) $\nu'_{\hp_H}=\nu'$ and $\tau'_{\hp_H}=\tau'$,\\
(4.2) $\pi_{\hp, \hk}\rest (\P|\tau')=\pi_{\hp_H, \hk}\rest (\P_H|\tau'_{\hp_H})$,\footnote{$\hp_H$ is the last model of $(\T_{\hp, \hq})^{n\frown} H$.}\\
(4.3) $\pi_{\hp_H, \hk}(\tau')=\pi_{\hr_E, \hk}(\tau'_{\hr_E})=\tau'_\hk$,\\
(4.4) $\hp_H|\zeta=\hq|\zeta$ and $\pi_{\hp_H|\zeta, \infty}=\pi_{\hq|\zeta, \infty}$.\\\\
 Let $\T'$ be the longest initial segment of $\T$ such that $\gen(\T')\subseteq \nu'_{\hk}$ and $\U'$ be the longest initial segment of $\U$ such that $\gen(\U')\subseteq \nu'_{\hk}$. 

Let $(\X, \Y)$ be the iteration trees produced via the least-extender-disagreement-coiteration of $(\hp_H|\zeta, \hr_E|\tau_{\hr_E})$. Because $\zeta$ is inaccessible in $\P_H$ and $\tau_{\hr_E}$ is a regular cardinal in $\R_E$, we have that  $\X^{\P_H}\insegeq \T$ and $\Y^{\R_E}\insegeq \U$, where $\X^{\P_H}$ and $\Y^{\R_E}$ are the $id$-copies of $\X$ and $\Y$ onto $\P_H$ and $\R_E$. 
In what follows we will abuse our notation and let $\X^{\P_H}=\X$ and $\Y^{\R_E}=\Y$.

\begin{sublemma}\label{tprime in seg} 
$\T'\insegeq \X$ and $\U'\insegeq \Y$.
\end{sublemma}

\begin{proof} Let $S$ be the set of all $\l\leq \nu'_{\hk}$ such that $\l$ is a ${<}\d_\hk$-strong cardinal of $\K$ or a limit of such cardinals. 
We prove by induction on $\l\in S$ that if $\T'_\l$ and $\U'_\l$ are the longest initial segments of $\T'$ and $\U'$ with the property that $\gen(\T'_\l)\subseteq \l$ and $\gen(\U')\subseteq \l$, then $\T'_\l\insegeq \X$ and $\U'_\l\insegeq \Y$.
This follows from \rcor{no extender disagreement} and \rlem{no overlapping}. 
We then define, for $\l\in S$, the statements\\\\
${\sf{IH}}_\l$: for all $\l'\in S\cap \l$, $\T'_{\l'}\insegeq \X$ and $\U'_{\l'}\insegeq \Y$,\\\\
and\\\\
${\sf{IH}}(\l)$: $\T'_\l\insegeq \X$ and $\U'_\l\insegeq \Y$.\\\\
Since we are allowing padding, for any $\l\in S$ and for any $\epsilon$, \begin{center}$\epsilon\leq \lh(\T'_\l)$ and $\T'_\l\rest \epsilon \inseg \X$ if and only if $\epsilon \leq \lh(\U'_\l)$ and $\U'_\l\rest \epsilon \inseg \Y$. \end{center}
It follows from clause (3) of \rlem{main lemma on small ordinals} that if $\l$ is a limit point of $S$, then ${\sf{IH}}_\l$ implies ${\sf{IH}}(\l)$.

\subsubsection{\textbf{Verifying ${\sf{IH}}(\l)$ for $\l=\min(S)$.}} 
Assume first that $\l$ is the least ${<}\d_\hk$-strong cardinal. 
In what follows, we will use $\l_\hm$ to denote the image or the pre-image of $\l$ in various iterates $\hm$ of $\hp$.  
Letting $\X_0$ be the longest initial segment of $\T_{\hp, \hr_E}$ such that $\gen(\X_0)\subseteq \sup(\pi_{\hp, \hr_E}[\l_\hp])$, $\X_0\insegeq \X$,\footnote{Here we abuse the notation and assume that $\X_0$ is an iteration tree on $\P_H$. The abuse is warranted because $\P|\l_\hp=\P_H|\l_{\hp_H}$. We will do this sort of abuse of notation whenever it helps to keep the notation simple.} and if $\iota_0=\lh(\X_0)$, then $\Y_{<\iota_0}$ has no extenders and is obtained by padding. 
There are two cases.

\textbf{Case 1.} $\sup(\pi_{\hp, \hr_E}[\l_\hp])=\l_{\hr_E}$. 
In this case, it follows from \rcor{no extender disagreement} that $\l_{\hr_E}=\l$ and both $\X_{\geq \iota_0}$ and $\Y_{\geq \iota_0}$ are strictly above $\l$.
It follows that $\T'_\l=\X_0$ and $\U'_\l=\Y_{<\iota_0}$, and the claim is proved.

\textbf{Case 2.} $\sup(\pi_{\hp, \hr_E}[\l_\hp])<\l_{\hr_E}$. 
In this case, since $\l_{\hp_H}=\l_{\hp}$ and $\hp_H|\l_{\hp_H}=\hp|\l_\hp$, it follows that $\sup(\pi_{\hp_H, \hk}[\l_{\hp_H}])<\l$.
\rlem{no overlapping} then implies that \[\sup(\pi_{\hr_E, \hk}[\l_{\hr_E}])=\l.\]
Therefore, $\Y$ never uses an extender whose critical point is a pre-image of $\l$.
This means that if $\Y_0$ is the longest initial segment of $\U'$ such that $\gen(\Y_0)\subseteq \l$, $\Y_0$ is based on $\R_E|\l_{\hr_E}$. 
Since $\hp_H|\zeta$ must win the coiteration with $\hr_E|\tau_{\hr_E}$ (see (3)), we have that $\Y_0\insegeq \Y$, and thus if $\iota_1=\lh(\Y_0)$, then $\T_{\leq \iota_1}$ is based on $\hp_H|\zeta$ and $\T'_\l=\X_{\leq \iota_1}$. 
Once again, the claim follows. 

\subsubsection{\textbf{Deriving ${\sf{IH}}(\l)$ from ${\sf{IH}}_{\l}$.}} 
As was mentioned above, if $\l$ is a limit point of $S$, then there is nothing to prove. 
Assume that $\l$ is a successor member of $S$, and let $\l'=\sup(S\cap \l)$. 
Since we allow padding,  $\lh(\T_{\l'})=\lh(\U_{\l'})=_{def}\iota'$. Let $\hs=(\S, \Sigma_\S)$ be the last model of $\T_{\l'}$ and $\hw=(\W, \Sigma_\W)$ be the last model of $\U_{\l'}$. 
Since $\nu$ is small in $\P$, we have that $(\T_{\l'})_{\geq \iota'}$ and $(\U_{\l'})_{\geq \iota'}$ are strictly above $\l'$. 
Let $\zeta_\hs=\pi_{\hp_H, \hs}(\zeta)$.
Then $(\X_{\geq \iota'}, \Y_{\geq \iota'})$ are the trees built using the least-extender-disagreement-coiteration of $(\hs|\zeta_{\hs}, \hw|\tau_{\hw})$, and moreover, as $\l\leq \nu'_{\hk}$, we have that $\l_\hs$ and $\l_\hw$ are defined.  

Let $\Z'$ be the full normalization of $(\T_{\hp, \hr_E})\oplus \U_{\l'}$, and let $\Z$ be the longest initial segment of $\Z'$ such that $\gen(\Z')\subseteq \l'$. 
Let $\hm=(\M, \Sigma_\M)$ be the last model of $\Z'$. 
Since $\P|\nu=\P_H|\nu$, we have that $\l_\hm=\l_\hs$ and $\M|\l_\hs=\S|\l_\hs$.\footnote{Notice that $\cp(\pi_{\hm, \hk})>\l'$ and $\cp(\pi_{\hs, \hk})>\l'$. This implies that if $G_0$ is the set of pairs $(a, A)$ such that $a\in [\l']^{<\omega}$, $A\in \P$, and $a\in \pi_{\hp, \hm}(A)$, and $G_1$ is the set of pairs $(a, A)$ such that $a\in [\l']^{<\omega}$, $A\in \P_H$, and $a\in \pi_{\hp_H, \hs}(A)$, then  $G_0=G_1$, $\M|\tau'_{\hm}=Ult(\P|\tau_\hp', G_0)$, and $\S|\tau'_{\hs}=Ult(\P_H|\tau_{\hp_H}', G_1)$.} 
Let $\X_0$ be the longest initial segment of $\Z'_{\geq \hm}$ such that $\gen(\X_0)\subseteq \l_{\hw}$.\footnote{Because $\nu$ is small we have that $\Z'_{\geq \hm}$ is strictly above $\l'$, and therefore it is a normal iteration tree on $\M$ (after the necessary re-ordering).} 
It follows that $(\T_{\l'})^\frown \X_0\insegeq \X$, and if $\iota_0=\lh((\T_{\l'})^\frown \X_0)$, then $\Y_{(\iota', \iota_0)}$ is obtained by simply padding. 
We again consider the two cases above, namely whether $\sup(\pi^{\X_0}[\l_\hs])=\l_\hw$ or $\sup(\pi^{\X_0}[\l_\hs])<\l_\hw$, and conclude that our claim holds for $\l$. 
\end{proof}

\emph{We now free all the letters introduced in the proof of Sublemma \ref{tprime in seg} of their duty and start re-using them.} 
Set $\rho'=\sup(\pi_{\hr_E, \hk}[\nu_{\hr_E}])$, and let $\Y'$ be the longest initial segment of $\U$ such that $\gen(\Y')\subseteq \rho'$. Let $\iota'=\lh(\T')=\lh(\U')$, and let $\iota''=\lh(\Y')$. 

\begin{claim}\label{y = y'} 
$\Y'=\Y$.
\end{claim} 

\begin{proof}
Let $\hs=(\S, \Sigma_\S)$ be the last model of $\T'$ and $\hw=(\W, \Sigma_\W)$ be the last model of $\U'$. 
In light of Sublemma \ref{tprime in seg}, $\T'$ can be considered to be both an iteration tree on $\hp_H$ and on $\hp_H|\zeta$.
To distinguish them, we let $\T'_0$ be the version of $\T'$ on $\hp_H$ and $\T'_1$ be the version of $\T'$ on $\hp_H|\zeta$. 
$\T'$, $\T'_0$ and $\T'_1$ use the same extenders and have the same tree structure. 
We treat $\hs$ as the last model of $\T'_0$. 
Because $\zeta$ is chosen to be inaccessible in $\Q$, it is inaccessible in $\P_H$, and so, letting $\mu=\pi_{\hp_H, \hs}(\zeta)$, $\hs|\mu$ is the last model of $\T'_1$. 
Similarly, if we let $\U'_0$ be the version of $\U'$ on $\hr_E$, $\U'_1$ be the version of $\U'$ on $\hr_E|\tau_{\hr_E}$, and treat $\hw$ as the last model of $\U_0'$, then the last model of $\U_1'$ is simply $\hw|\epsilon$, where $\epsilon=\pi_{\hr_E, \hw}(\tau_{\hr_E})$. 
Notice that (see Terminology \ref{small cardinal})\\\\
(5.1) $\S$ does not have a ${<}\mu$-strong cardinal in the interval $(\nu'_\hs, \mu)$, and so $\mu$ is properly overlapped in $\S$,\\
(5.2) $\W$ does not have a ${<}\epsilon$-strong cardinal in the interval $(\nu'_\hw, \epsilon)$, and so $\epsilon$ is properly overlapped in $\W$, and\\
(5.3) $\T_{\geq \iota'}$ and $\U_{\geq \iota'}$ are strictly above $\nu'_{\hs}=\nu'_{\hw}=\nu'_{\hk}$.\\\\
Then $\X_{\geq \iota'}$ and $\Y_{\leq \iota'}$ are the trees produced via the least-extender-disagreement-coiteration of $(\hs|\mu, \hw|\epsilon)$. 
Let $\hs'=(\S', \Sigma_{\S'})$ and $\hw'=(\W', \Sigma_{\W'})$ be the last models of $\X$ and $\Y$. 
It follows from (3) that $\W'\inseg \S'$ and from (5.2) that $\W'|\rho'=\K|\rho'$. 
Hence $\Y'=\Y$. 
\end{proof}

Let now $\zeta_\hk=\pi_{\hp_H, \hk}(\zeta)$. 
It follows from Sublemma \ref{tprime in seg} that $\hk|\zeta_\hk$ is a complete iterate of $\hp_H|\zeta$, and it follows from \rcl{y = y'} that if $\rho''$ is the successor of $\rho'$ in $\K$, then $\hk|\rho'$ is a complete iterate of $\hr_E|\tau_{\hr_E}$. 
It follows from (3) that $\rho'< \zeta_{\hk}$. 
Notice that \[\K\models ``\text{$\pi^\T(\nu_\hp)$ is ${<}\zeta_{\hk}$-strong and $\nu'_{\hk}<\pi^\T(\nu_\hp)$,"}\] while we also have that (see (5.2)) \[\K\models ``\text{ there is no $<\pi^{\U}(\nu_{\hr})$-strong cardinal that is strictly greater than $\nu'_{\hk}$".}\] 
It then follows that $\pi^\U(\nu_{\hr})<\pi^\T(\nu_\hp)$ (because $\pi^\U(\nu_\hr)<\rho'<\zeta_\hk$). This finishes the proof of \rlem{qzeta works}.
\end{proof}

We isolate exactly what the proof that Clause 1 implies Clause 2 established (the proof of \rlem{qzeta works} is essentially the proof of \rcor{proof of c1 implies c2}).

\begin{corollary}\label{proof of c1 implies c2}%\normalfont 
Let $\k$ be the least ${<}\d$-strong cardinal of $\P$, $\nu>\k$ be a small ${<}\d$-strong cardinal of $\P$, and $\a\in (\nu, \d)$. 
Set $w=(\a, \d)$, $\tau=(\nu^+)^\P$, and $X=\pi_{\hp, \infty}[\P|\tau]$. 
Suppose $\b<\nu^\infty$ is an $X$-closed point as witnessed by $(\hr, \hs, E)$ (see \rlem{closed points equiv}). 
Set $\R_E=Ult(\R, E)$ and $\hr_E=(\R_E, \Sigma_{\R_E})$. 
Suppose $\hq=(\Q, \Sigma_\Q)$ is a complete iterate of $\hp$ such that $\T_{\hp, \hq}$ is based on $w_\hp$ and for some $\Q$-inaccessible $\zeta\in w_\Q=_{def}(\a, \d_\Q)$, $\pi_{\hr_E|\tau_{\hr_E}, \infty}(\nu_\hr)<\pi_{\hq|\zeta, \infty}(\k)$. 
Let $H\in \vec{E}^\Q$ be the least such that $\cp(H)=\nu_\hq(=\nu=\nu_\hp)$ and $\zeta<\lh(H)$, and set $\P_H=Ult(\P, H)$ and $\hp_H=(\P_H, \Sigma_{\P_H})$. 
Then $\b<\pi_{\hp_H, \infty}(\nu_\hp)$.
\end{corollary}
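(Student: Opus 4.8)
\textbf{Proof proposal for Corollary \ref{proof of c1 implies c2}.}

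The plan is to extract, nearly verbatim, the argument given in the proof of \rlem{qzeta works}, since that is what the statement abstracts. First I would set up the least-extender-disagreement-coiteration of the pair $(\hp_H, \hr_E)$, letting $\hk=(\K, \Sigma_\K)$ denote the common iterate so obtained, and putting $\T=\T_{\hp_H, \hk}$ and $\U=\T_{\hr_E, \hk}$. Because $\pi_{\hp_H, \infty}(\nu_\hp)=\pi_{\hk, \infty}(\pi^\T(\nu_\hp))$ and $\pi_{\hr_E, \infty}(\nu_\hr)=\pi_{\hk, \infty}(\pi^\U(\nu_\hr))$, it suffices to prove the strict inequality $\pi^\U(\nu_\hr)<\pi^\T(\nu_\hp)$ at the level of $\K$. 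The hypothesis $\pi_{\hr_E|\tau_{\hr_E}, \infty}(\nu_\hr)<\pi_{\hq|\zeta, \infty}(\k)$ is exactly the ``key fact'' (3) of the lemma: $\hr_E|\tau_{\hr_E}$ loses the least-extender-disagreement-coiteration against $\hp_H|\zeta$ (using $\hp_H|\zeta=\hq|\zeta$ and agreement of the associated direct-limit maps, which follows from \rlem{doesnt matter}).

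Next I would carry over the sublemma that $\T'\insegeq \X$ and $\U'\insegeq \Y$, where $(\X,\Y)$ is the least-extender-disagreement-coiteration of $(\hp_H|\zeta, \hr_E|\tau_{\hr_E})$ and $\T',\U'$ are the longest initial segments of $\T,\U$ with generators below $\nu'_\hk$ (here $\nu'$ is the sup of the $<\d$-strong cardinals of $\P$ strictly below $\nu$, and $\nu'<\nu$ since $\nu$ is small). This is proved by induction over the set $S$ of $<\d_\hk$-strong cardinals of $\K$ below $\nu'_\hk$, using \rcor{no extender disagreement} and \rlem{no overlapping} at successor stages and \rlem{main lemma on small ordinals}(3) at limits, with the case split on whether $\sup\pi[\l_\hs]$ equals or is strictly below $\l_\hw$ at each stage — exactly as in the proof of \rslem{tprime in seg}. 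Then I would reproduce \rcl{y = y'}: letting $\rho'=\sup(\pi_{\hr_E, \hk}[\nu_{\hr_E}])$ and $\Y'$ the longest initial segment of $\U$ with $\gen(\Y')\subseteq \rho'$, one shows $\Y'=\Y$, using that $\zeta$ is inaccessible in $\Q$ (hence in $\P_H$) and that $\tau_{\hr_E}$ is a regular cardinal of $\R_E$, together with proper overlapping of the relevant images $\mu$ and $\epsilon$ (Terminology \ref{small cardinal}).

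The conclusion then follows as in the lemma: letting $\zeta_\hk=\pi_{\hp_H,\hk}(\zeta)$, the sublemma gives that $\hk|\zeta_\hk$ is a complete iterate of $\hp_H|\zeta$, and \rcl{y = y'} gives that $\hk|\rho'$ is a complete iterate of $\hr_E|\tau_{\hr_E}$; fact (3) forces $\rho'<\zeta_\hk$. In $\K$, $\pi^\T(\nu_\hp)$ is $<\zeta_\hk$-strong and $>\nu'_\hk$, whereas (by proper overlapping, cf.\ (5.2) of the lemma) there is no $<\pi^\U(\nu_\hr)$-strong cardinal of $\K$ strictly above $\nu'_\hk$; since $\pi^\U(\nu_\hr)<\rho'<\zeta_\hk$, this forces $\pi^\U(\nu_\hr)<\pi^\T(\nu_\hp)$, which is what we wanted. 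The main obstacle is purely bookkeeping: verifying that the hypotheses of the corollary really do license the identification with situation (3) of \rlem{qzeta works}, i.e.\ that $\hp_H$ here plays the role of $\hp_H$ there and that the agreement $\hp_H|\zeta=\hq|\zeta$ with matching direct-limit maps holds — this is where \rlem{doesnt matter} and the smallness of $\nu$ are used — after which the induction in the sublemma is the only technically delicate part, and it is already written out.
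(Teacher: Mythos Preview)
Your proposal is correct and matches the paper's approach exactly: the paper does not give a separate proof of this corollary but simply remarks that ``the proof of \rlem{qzeta works} is essentially the proof of \rcor{proof of c1 implies c2},'' which is precisely what you have spelled out. The only minor point is that the identification $\hp_H|\zeta=\hq|\zeta$ (with matching strategies and direct-limit maps) is really (4.4) of the lemma and follows from coherence and the agreement $\P|(\nu^+)^\P=\Q|(\nu^+)^\Q$ rather than from \rlem{doesnt matter} per se, but this is bookkeeping and your outline is otherwise on target.
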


\subsection{The proof of Clause 1}
We now prove clause 1 of \rthm{complexity of window strategy}. 
Fix a hod pair $\hk=(\K, \Psi)$ such that $\Psi$ has Wadge rank $<\nu^\infty=\pi_{\hp, \infty}(\nu)$ in $M$. 
Let $\tau$ be the $\P$-successor of $\nu$. 
It is enough to establish the following lemma. 
Indeed, varying $\hk$, we can find $\hq$ and $\zeta$ as in the lemma such that the Wadge rank of $\hk|\zeta$ is as high below $\nu^\infty$ as we wish. 

\begin{lemma}\label{constructing arbitrary high iterates}%\normalfont
There is a complete iterate $\hq=(\Q, \Sigma_\Q)$ of $\hp$ such that $\T_{\hp, \hr}$ is based on $w_\hp$ and, for some $\Q$-inaccessible cardinal $\zeta\in w_\hr$, the hod pair construction of $\Q|\zeta$ in which extenders used have critical points $>\a$ reaches a complete iterate $\hs=(\S, \Psi_\S)$ of $\hk$ such that, if $\Phi$ is the strategy of $\S$ induced by $\Sigma_\Q$,\footnote{Via the resurrection procedure of \cite[Chapter 3.4, 3.5]{SteelCom}.} then $\Phi=\Psi_\S$. 
\end{lemma}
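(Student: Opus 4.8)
\textbf{Proof plan for Lemma \ref{constructing arbitrary high iterates}.}

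The plan is to use a genericity/universality argument combined with the self-iterability of $\V$ and Theorem \ref{invariance}. First I would fix the hod pair $\hk=(\K,\Psi)$ with $w(\Psi)<\nu^{\infty}$, as in the hypothesis. The key observation is that since $w(\Psi)<\nu^\infty=\pi_{\hp,\infty}(\nu)$, the strategy $\Psi$ is ``captured'' below the image of $\nu$; more precisely, using that $\nu$ is the least $<\d$-strong cardinal of $\P$ and the fact (from \rsubsec{sec: chang model}, in particular Theorem \ref{invariance}) that $M$ can be realized as the derived model of any genericity iterate $\hq$ of $\hp$ at $\eta_\hq$, there is a genericity iterate $\hq_0=(\Q_0,\Sigma_{\Q_0})$ of $\hp$ and a real $x$ coding a (code for) $(\K,\Psi)$ such that $x$ is generic over $\Q_0$ for an extender algebra at the first Woodin cardinal of $\Q_0$ above $\a$, and $(\K,\Psi)$ — as a pair in $M$ — is reconstructed correctly inside the derived model of $\Q_0$ computed from a maximal generic. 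The point is that $\Psi$, having small Wadge rank, is among the ``$\mathrm{Hom}^*$'' sets of that derived model and is therefore guided by (tail filtrations of) the strategy predicate $S^{\Q_0}$, which $\Q_0$ knows.

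Next I would run the hod pair construction of $\Q_0|\zeta$ (for $\zeta$ to be chosen an inaccessible cardinal of $\Q_0$ in the window $w_{\Q_0}=(\a,\d_{\Q_0})$) using only background extenders with critical point $>\a$. By the standard universality/comparison arguments for the hod pair construction (as developed in \cite{SteelCom}, and using that $\zeta$ sits below a Woodin cardinal of $\Q_0$ which is itself below the next Woodin, so there is enough room) this construction is not stuck and produces, cofinally in $\zeta$, hod pairs that are complete iterates of $\hk$; choosing $\zeta$ large enough (e.g.\ an inaccessible of $\Q_0$ in the window, above the stage where the construction has already absorbed $\hk$) we get a level $\hs=(\S,\Psi_\S)$ of the construction which is a complete $\Psi$-iterate of $\hk$. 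Here I would invoke the smallness of $\nu$ and \rlem{main lemma on small ordinals} to guarantee that iterations staying strictly above $\a$ (equivalently above $\nu$) behave well and do not move the relevant structure — this is what lets us keep $\T_{\hp,\hq}$ based on $w_\hp$ while still reaching arbitrarily high hod pairs. The internal strategy $\Phi$ induced on $\S$ by $\Sigma_{\Q_0}$ via the resurrection procedure of \cite[Chapter 3.4, 3.5]{SteelCom} then agrees with $\Psi_\S$ by the generic interpretability of $\Sigma$ (\cite[Chapter 11.1]{SteelCom}) together with the fact that both strategies extend the unique generic interpretation of the background strategy restricted to the relevant fragment; this is essentially the content of Proposition \ref{strategy coherence} applied to the copied trees.

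Finally I would absorb $\hq_0$ into the desired $\hq$: the iterate $\hq$ of the lemma is $\hq_0$ itself (or a trivial further iterate), so that $\T_{\hp,\hq}$ is based on $w_\hp$ as required, and $\zeta$ is the chosen inaccessible. The main obstacle, I expect, will be verifying that the hod pair construction of $\Q_0|\zeta$ with background condition ``critical point $>\a$'' genuinely reaches a \emph{complete} iterate of $\hk$ with the \emph{correct} strategy, rather than merely some iterate or a level with a merely self-consistent but wrong strategy; this requires carefully tracking how $\Psi$ is coded in the derived model of $\Q_0$ and how the resurrection maps interact with the backgrounded extenders, and is where the precise machinery of \cite{SteelCom} and \cite{MPSC} (full normalization, strong hull condensation, generic interpretability) must be brought to bear. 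A secondary subtlety is arranging $\zeta$ to be inaccessible in $\Q_0$ \emph{and} in the window $(\a,\d_{\Q_0})$ while still being above the construction stage that captures $\hk$; this is handled by iterating $\hp$ once more if necessary to reflect the relevant inaccessibles into the window, again using \rlem{main lemma on small ordinals} and \rcor{hull property cor} to control what such an iteration does below $\nu$.
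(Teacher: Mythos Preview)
Your plan has a genuine gap at the universality step, and this is precisely where the paper's proof diverges from what you propose.

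You suggest taking a genericity iterate $\hq_0$ of $\hp$, making a real coding $\hk$ generic, and then running the hod pair construction of $\Q_0|\zeta$ with background critical points above $\a$. You then invoke ``standard universality/comparison arguments'' to conclude that this construction reaches a complete iterate of $\hk$. But universality of a background construction against $\hk$ requires that the ordinal height be Woodin \emph{relative to} $\Psi$, not merely Woodin in $\Q_0$; and your $\zeta$ is only inaccessible, not even Woodin. Nothing in your setup explains why the construction inside $\Q_0|\zeta$ (or even $\Q_0|\d_{\hq_0}$) sees enough $\Psi$-Woodinness to outrun $\hk$. Your acknowledgment that ``the main obstacle will be verifying that the construction genuinely reaches a complete iterate of $\hk$'' is correct, but the obstacle is not just a matter of carefully tracking resurrection maps --- it is that the architecture you describe does not provide the needed Woodinness.

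The paper's approach is essentially the reverse of yours. Rather than iterating $\hp$ and then building a construction inside the iterate, the paper \emph{constructs} $\hq$ stage-by-stage (Conditions~1--7) so that every extender on $\Q$ with critical point above $\a$ is certified by a background extender in $\vec{E}^\P$, and at each stage $\M_\epsilon$ is compared with $\P|\d$ to produce $\T_\epsilon$. This ${\sf{Maximality}}$ construction has the crucial consequence that $\P|\d_\hq$ is \emph{generic} over $\Q^+$ for the ($\d_\hq$-generator) extender algebra. From this one extracts that $\T$, and hence $\X_0=\T^\nu_\hp$, lies in $\Q^+[\P|\d_\hq]$, so that $\Psi$ is ordinal definable there (via \cite[Theorems 1.9, 1.10]{MPSC}). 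The key Claim (due to ideas of Woodin) is then that $\M_1^{\#,\Psi}(\P|\d_\hq[g_\xi])\models ``\d_\hq$ is Woodin'': any $f:\d_\hq\to\d_\hq$ in this model is dominated by some $h\in\Q$ (by the $\d_\hq$-cc), and ${\sf{Maximality}}$ provides certified extenders witnessing Woodinness for $h$, hence for $f$. Only \emph{then} does universality apply to the hod pair construction (done inside $\Q^+|\d_\Q$ with critical points above $\xi$) to reach $\hk$. Your appeal to \rprop{strategy coherence} for strategy agreement is also not the right tool; the paper uses that the induced strategy $\Phi$ coincides with the relevant fragment of $\Sigma_\Q$ via the resurrection machinery of \cite[Chapter 8.4]{SteelCom}.
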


\begin{proof}
We can assume without loss of generality, by comparing $\hp|\d$ with $\hr$ and by using the results of \rsec{sec: chang model}, that for some complete iterate $\hk_0=(\K_0, \Sigma_{\K_0})$ of $\hp$, $\K=\K_0|\tau_{\hk_0}$ and $\Psi=\Sigma_{\K}$. 
Let $\xi\geq \d_\P$ be a cutpoint cardinal of $\P$ such that $\T_{\hp, \hk_0}\in \P[g\cap Coll(\omega, \xi)]$. 
Let $\xi'$ be the least Woodin cardinal of $\P$ above $\xi$. 

We now iterate $\hp$ to obtain a complete iterate $\hq=(\Q, \Sigma_\Q)$ of $\hp$ such that $\T_{\hp, \hq}$ is based on $\P|w_\P$ and the following maximality condition holds:\footnote{Our goal is to obtain $\Q$ as the output of a certain backgrounded construction. The reader is advised to keep this in mind while reading the definition of ${\sf{Maximality}}$.}\\\\
${\sf{Maximality}}:$ Suppose $\k\in (\xi, \xi')$ coheres $\Q$ in the sense that for each $\mu\in ((\k^+)^\P, \d_\hq)$, there is an extender $F\in \vec{E}^{\P|\d_\hq}$ such that $\cp(F)=\kappa$, $\gen(F)\geq \mu$ and $\Q|\mu\insegeq \pi_F(\Q|(\kappa^+)^\Q)$.
Then for a cofinal-in-$\d_\Q$ set of $\mu$, there is an $F\in \vec{E}^{\P|\d_\hq}$ such that
\begin{enumerate}
\item $\cp(F)=\kappa$, $\mu$ is an inaccessible cardinal of $\P$, and $\gen(F)=\mu$,
\item $\Q|\mu\insegeq \pi_F(\Q|(\kappa^+)^\Q)$, and
\item the Jensen completion\footnote{See \cite[Chapter 2]{SteelCom}.}  of $G=_{def}F\rest \mu\cap \Q$ is on the sequence of $\Q$.
    More precisely, if $F'$ is the $(\k, \pi_{G}^\Q(\k))$-extender derived from $\pi^\Q_{G}$, then $F'\in \vec{E}^\Q$ and is indexed at $\pi_{F'}^\Q((\k^+)^\Q)$.\\
\end{enumerate} 
We obtain $\hq$ as the last model of the construction that produces a sequence of models $(\M_\epsilon, \N_\epsilon : \epsilon <\iota)$\footnote{Unlike \cite{SteelCom}, here we will not be concerned with showing that the construction we describe converges. Such arguments were carefully done in \cite[Chapter 8.4]{SteelCom}. Because of this, we ignore the fine structural problems that arise while doing such constructions. Also, by ``fine structural core" we mean the coring procedure developed in \cite{SteelCom}.} and a sequence of iteration trees $(\T_\epsilon: \epsilon<\iota)$ meeting the following conditions.
The construction takes place in $\P|\xi'$.\\\\
\textbf{(Condition 1)} For all $\epsilon<\iota$, $\T_\epsilon$ is an iteration tree on $\P|w_\P$ of successor length and according to $\Sigma_{\P|\d_\P}$. Let $\hw_\epsilon=(\W_\epsilon, \Sigma_{\W_\epsilon})$ be the last model of $\T_\epsilon$.\\
\textbf{(Condition 2)} $\iota$ is a successor ordinal and 
\begin{enumerate}
\item for all $\epsilon<\iota$, if $\epsilon+1<\iota$, then $\M_\epsilon\inseg \W_\epsilon$, and 
\item for all $\epsilon<\iota$, $\T_\epsilon$ is produced by comparing $\P|\d^\P$ with 
$\M_\epsilon$,\footnote{In particular, this means that $\gen(\T_\epsilon)\subseteq {\sf{Ord}}\cap \M_\epsilon$.} and
\item if $\epsilon+1=\iota$, then $\W_\epsilon|\d_{\hw_\epsilon}=\M_\epsilon$.
\end{enumerate}
\textbf{(Condition 3)} For all $\epsilon<\iota$, if $\M_\epsilon=\W_\epsilon||\gg_\epsilon$, where $\gg_\epsilon={\sf{Ord}}\cap \M_\epsilon$, then 
\begin{enumerate}
\item $\N_\epsilon$ is the rudimentary closure of $\M_\epsilon$, and
\item $\M_{\epsilon+1}$ is the fine structural core of $\N_{\epsilon}$.
\end{enumerate}
\textbf{(Condition 4)} For all $\epsilon<\iota$, letting $\gg_\epsilon={\sf{Ord}}\cap \M_\epsilon$, if
\begin{itemize}
\item $\M_\epsilon=\W_\epsilon|\gg_\epsilon$, 
\item $\W_\epsilon||\gamma_\epsilon\not =\W_\epsilon|\gamma_\epsilon$, and 
\item the last predicate of $\W_\epsilon||\gamma_\epsilon$ is either a branch or an extender with critical point $\leq \a$,
\end{itemize}
then set $\N_\epsilon=\W_\epsilon||\gamma_\epsilon$, and let $\M_{\epsilon+1}$ be the fine structural core of $\N_\epsilon$\footnote{This means that we are just copying the predicates from $W$s to $\N$s.}. \\
\textbf{(Condition 5)} For all $\epsilon<\iota$, letting $\gg_\epsilon={\sf{Ord}}\cap \M_\epsilon$, if \begin{itemize}
\item $\M_\epsilon=\W_\epsilon|\gg_\epsilon$,
\item $\W_\epsilon||\gamma_\epsilon\not =\W_\epsilon|\gamma_\epsilon$, 
\item the last predicate of $\W_\epsilon||\gamma_\epsilon$ is an extender with critical point $>\a$, and 
\item letting $E$ be the last extender of $\W_\epsilon||\gamma_\epsilon$, there is no $F\in \vec{E}^\P$ such that $E$ is the Jensen completion of $\M_\epsilon|\gen(E)\cap F$,
\end{itemize}
then let
\begin{enumerate}
\item $\N_\epsilon$ be the rudimentary closure of $\M_\epsilon|\gg_\epsilon$, and
\item $\M_{\epsilon+1}$ be the fine structural core of $\N_\epsilon$.
\end{enumerate}
\textbf{(Condition 6)} For all $\epsilon<\iota$, letting $\gg_\epsilon={\sf{Ord}}\cap \M_\epsilon$, if
\begin{itemize}
\item $\M_\epsilon=\W_\epsilon|\gg_\epsilon$, 
\item $\W_\epsilon||\gamma_\epsilon\not =\W_\epsilon|\gamma_\epsilon$, 
\item the last predicate of $\W_\epsilon||\gamma_\epsilon$ is an extender with critical point $>\a$, and 
\item letting $E$ be the last extender of $\W_\epsilon||\gamma_\epsilon$, there is an $F\in \vec{E}^\P$ such that $E$ is the Jensen completion of $\M_\epsilon|\gen(E)\cap F$,
\end{itemize}
then let 
\begin{enumerate}
\item $\N_\epsilon=\W_\epsilon||\gg_\epsilon$, and  
\item $\M_{\epsilon+1}$ be the fine structural core of $\N_\epsilon$.
\end{enumerate}
\textbf{(Condition 7)} Assume $\epsilon<\iota$ is a limit ordinal. Then $\M_\epsilon$ is $limsup_{\rho\rightarrow \epsilon}\M_\rho$. More precisely, for each $\mu$, $\M_\epsilon|(\mu^+)^{\M_\epsilon}$ is the eventual value of the sequence $(\M_\rho|(\mu^+)^{\M_\rho}: \rho<\epsilon)$.\\\\
It then follows from \cite[Chapter 8.4]{SteelCom} that for some $\epsilon$, $\M_\epsilon=\W_\epsilon|\d^{\W_\epsilon}$. 
Fix such an $\epsilon$, and let $\hq=\hw_\epsilon=_{def}(\Q, \Sigma_\Q)$. 
It also follows from \cite[Chapter 8.4]{SteelCom} that, letting $\Phi$ be the strategy $\Q$ inherits from the background universe, i.e. the strategy induced by $\Sigma$ via the resurrection procedure,\footnote{$\Phi$ acts only on iteration trees that are based on $w_\hq$.} $\Phi$ is the same as the fragment of $\Sigma_\Q$ that acts on iterations that are based on $w_\hq$.
We observe\\\\
(1) $\P|\d_\hq$ is generic for the $\d_\hq$-generator version of the extender algebra of $\Q$ at $\d_\hq$ that uses extenders whose natural lengths are inaccessible cardinals in $\Q$.\footnote{This algebra is just like the extender algebra but has $\d_\hq$ many propositional symbols. See \cite{NegRes}.}\\\\
The reason behind (1) is that if $E\in \vec{E}^\Q$ is such that $\cp(E)>\a$ and $\gen(E)$ is an inaccessible cardinal in $\Q$, then if $F$ is the resurrection of $E$, we have an elementary embedding $k: Ult(\Q, E)\rightarrow \pi_F(\Q)$ such that $k\rest \gen(E)=id$ and $\pi_F^\P\rest \Q=k\circ \pi_E^\Q$.
As $\pi^\P_F(\P|\d_\hq)|\gen(E)=\P|\gen(E)$, we have that $\P|\d_\hq$ satisfies any axiom of the extender algebra that is generated by $E$.

Let $\hq^+=(\Q^+, \Sigma_{\Q^+})$ be the last model of the $id$-copy of $\T_{\hp|\d, \hq}$ onto $\P$, $i+1=\lh(\T_{\hp|\d, \hq})$ and $\T=\T_{\hp|\d, \hq}\rest i$. 
It follows from (1) that $\Q^+[\P|\d_\hq]$ makes sense and moreover $\d_\hq$ is a regular cardinal of $\Q^+[\P|\d_\hq]$. 
It then follows that\\\\
(2) $\T\in \Q^+[\P|\d_\hq]$.\\\\
Indeed, $\T$ is the output of the least-extender-disagreement-coiteration of $(\P|\d, \Q|\d_\hq)$.
The branches of $\T$ are chosen according to $S^{\P|\d_\hq}$, which is the strategy that is indexed on the sequence of $\P|\d_\hq$. Clearly we have that $S^{\P|\d_\hq}\in \Q^+[\P|\d_\hq]$. 
The next lemma, which establishes a stronger form of maximality, is standard, and its proof can be found in \cite{SteelCom} and \cite[Lemma 2.23]{schlutzenberg2016scales}.

\begin{lemma}\label{maximality}%\normalfont 
Set $V=\P$, and suppose $A\subseteq \d_\hq$ is such that $\d_\hq$ is Woodin relative to $(A, \Q)$, i.e., for some $\k\in w_\hq$, for any $\P$-cardinal $\mu\in (\k, \d_\hq)$, there is an $F\in \vec{E}^\P$ such that $\cp(F)=\k$, $\im(F)>\mu$, $\pi^\P_F(A)\cap \mu=A\cap \mu$, and $\pi_F^\P(\Q)|\mu=\Q|\mu$. 
Fix $\k<\d_\hq$ witnessing that $\d_\hq$ is Woodin relative to $(A, \Q)$. 
Then for an unbounded-in-$\d_\hq$ set of $\Q$-inaccessible cardinals $\mu$, there is an $F\in \vec{E}^\P$ such that $\cp(F)=\k$, $\im(F)>\mu$, $\pi_F^\P(A)\cap \mu=A\cap \mu$, $\pi_F^\P(\Q)|\mu=\Q|\mu$, and the Jensen completion of $F\cap \Q|\mu$ is on the sequence of $\Q$. 
\end{lemma}

\begin{proof} 
Let $\mu\in w_\hq$ be a measurable cardinal of $\Q$\footnote{Hence at least inaccessible in $V$. See \cite{NegRes}.} such that $\mu>\k$.
As the set of such $\mu$ is cofinal in $\d_\hq$, it is enough to prove that $\mu$ is as desired. Let $\mu'$ be the least measurable cardinal of $\Q$ above $\mu$, $\mu''$ be the least measurable cardinal of $\Q$ above $\mu'$, and $F\in \vec{E}^{\P|\d_\hq}$ be such that $\cp(F)=\k$, $\im(F)>\mu'$, $\pi_F^\P(A)\cap \mu'=A\cap \mu'$, and $\pi_F^\P(\Q)|\mu''=\Q|\mu''$. 
Setting $\W=\Q|(\k^+)^\Q$, we have that\\\\
(3) $\Q|\mu''\inseg \pi_F^\P(\W)$.\\\\
We now want to see that the Jensen completion of $F\cap \Q|\mu$ is on the extender sequence of $\Q$. 
In what follows we will abuse our notation and consider the trees $\T_\epsilon$ as trees on $\P$. 
Notice that we have some $\epsilon<\iota$ such that $\W=\M_\epsilon$. Let $\epsilon$ be such that $\W=\M_\epsilon$. 
Because $\k$ is an inaccessible cardinal of $\P$ and $\d<\k$, we have that $\lh(\T_\epsilon)=\k+1$. 
Then $\k$ is on the main branch of $\pi_F^\P(\T_\epsilon)$. 
Let $G$ be the first extender used on the main branch of $\pi_F^\P(\T_\epsilon)$ at $\k$.
We observe that $G$ and $F$ are compatible. 
Let $\S=\M_\k^{\pi_F^\P(\T_\epsilon)}$ and $\l=\pi_G^\S(\k)$. 

We claim that $\l>\mu$.
Suppose otherwise. 
It follows from (3) that $\pi_G^{\S}(\W)\insegeq \Q|\mu$, and so it follows from Condition 6 that $G$ cannot be used in $\pi_F(\T_\epsilon)$ (in $Ult(\P, F)$, we can use $F\rest \mu'$ to certify $G$).
Thus $\l>\mu$, and so $G$ must actually be on the sequence of $\Q$ (see Condition 6 above). 
\end{proof}

Now let $\X_0=\T^\nu_{\hp}(=\T_{\hp, \hp^\nu})$ and $\X_1=\T^\nu_{\hq^+}$. 
Full normalization implies that $\X_0$ is an initial segment of the full normalization of  $\T\oplus \X_1$.\footnote{The full normalization of $\T^\frown \X_1$ starts out by comparing $\P|\d_\P$ with $\mH|\nu^\infty$, and the resulting normal tree is $\X_0$.}
It follows that\\\\
(4) $\X_0\in \Q^+[\P|\d_\hq]$.\footnote{Notice that $\T$ is of limit length, but to compute $\X_0$ from $\T^\frown \X_1$, we do not need the last branch of $\T$; the full normalization process can be performed without having the last branch of $\T$.}\\

Let $\U=\T_{\hp, \hk_0}$. 
Abusing notation, we regard $\U$ as an iteration tree on $\P|\d$ as it is based on it anyway.
In $M$,\\\\
(5) $\Psi$ is ordinal definable from $\X_0$, $\U$ and $\K$.\\\\
This follows from \cite[Theorem 1.9 and 1.10]{MPSC}. 
(Indeed, recall that $\hk=\hk_0|\tau_{\hk_0}$ and $\Psi=\Sigma_\K$.) 
We can furthermore assume that $\gen(\U)\leq \nu_{\hk_0}$. 
We now have that an iteration tree $\Y$ on $\hk$ is according to $\Psi$ if and only if the embedding normalization $W(\U, \Y)$ is a weak hull\footnote{See \cite[Definition 1.7]{SteelCom}.} of $\X_0$. Recall that by the results of \cite[Chapter 6.6]{SteelCom}, $W(\U, \Y)$ uniquely determines the branches of $\Y$. 

Now $\hk$ is added by $g_\xi=g\cap Coll(\omega, <\xi)$. 
The following is our key claim. 
Its proof is based on ideas of Woodin. 

\begin{claim}\label{uni claim}\normalfont
$\M_1^{\#, \Psi}(\P|\d_\hq[g_\xi])\models ``\d_\hq$ is a Woodin cardinal". 
\end{claim}
\begin{proof}
Let $f:\d_\hq\rightarrow \d_\hq$ be a function in $\M_1^{\#, \Psi}(\P|\d_\hq[g_\xi])$. 
Since we can iterate $\Q^+$ above $\d_\hq$ to realize $M$ as the derived model at the image of $\eta_\hq$, and since $\{\X_0, \U, \K\}\subseteq \Q^+[\P|\d_\hq[g_\xi]]$ and (5) holds, we have that \[\M_1^{\#, \Psi}(\P|\d_\hq[g_\xi])\in \Q^+[(\P|\d_\hq)[g_\xi]].\]

Since the extender algebra has $\d_\hq$-cc, there is $h:\d_\hq\rightarrow \d_\hq$ in $\Q$ which dominates $f$ in the sense that for every $\rho<\d_\hq$, $f(\rho)<h(\rho)$. 
Let $E\in \vec{E}^\Q$ be an extender on the sequence of $\Q$ such that $\gen(E)$ is a measurable cardinal of $\Q$ and if $\k=\cp(E)$, then $\pi^\Q_E(h)(\k)<\gen(E)$. 
Then if $F$ is the background extender of $E$, then $E$ is the Jensen completion of $F\cap \Q|\gen(E)$. 
But now we have that $\gen(E)>\pi_F^\P(h)(\kappa)>\pi_F^\P(f)(\kappa)$. 
Thus, $F$ witnesses the Woodiness of $\d_\hq$ for $f$. 
\end{proof}

Now let $\N$ be the output of the hod pair construction of $\Q^+|\d_{\Q}$ that is built by using extenders with critical points $>\xi$. 
${\sf{Maximality}}$, which we verified in \rlem{maximality}, implies that in the comparison of this construction with $\hk$, only the later side moves (e.g. see \cite[Lemma 2.11]{HMMSC}). 
If the construction side wins the coiteration, then \rlem{constructing arbitrary high iterates} follows as witnessed by $\hq$. 
On the other hand, $\hk$ cannot win the coiteration, as this contradicts the universality of the construction. 

In order to use universality, however, we need to be more careful. Suppose $\hk$ iterates past $\N$, and let $\Y$ be the iteration tree according to $\Psi$ such that the last model of $\Y$ extends $\N$. 
We have that $\Y\in \M_1^{\#, \Psi}(\P|\d_\hq[g_\xi])$, which means that $\Y\in \Q^+[\P|\d_\hq[g_\xi]]$. 
As in all universality proofs, we can now find a continuous chain $\vec{X}=(X_\epsilon: \epsilon<\d_\hq)$ of submodels of $\M_1^{\#, \Psi}(\P|\d_\hq[g_\xi])$ such that for a club $C$ of $\epsilon<\d_\hq$, if $N_\epsilon$ is the transitive collapse of $X_\epsilon$ and $\sigma_\epsilon: N_\epsilon\rightarrow X_\epsilon$ is the uncollapse map, then $\epsilon=\cp(\sigma_\epsilon)$ and\\\\
(6) $\powerset(\epsilon)\cap N_\epsilon=\powerset(\epsilon)\cap \N$. \\\\
\rlem{maximality} implies that we can find an $F\in \vec{E}^{\P|\d_\Q}$ such that, letting $\k=\cp(F)$ and $\mu=\gen(F)$, the following holds:\\\\
(7.1) $\k\in C$ and $\mu$ is an inaccessible cardinal of both $\P$ and $\Q$,\\
(7.2) the Jensen completion of $F\cap \Q|\mu$, call it $E$, is on the extender sequence of $\Q$, and\\
(7.3) for all $\epsilon<\mu$, $N_\epsilon$ is the transitive collapse of $\pi_F^\P(\vec{X})_\epsilon$.\\\\
It then follows that if $\l=\min(C-(\k+1))$, then $E\cap \N|(\l^+)^\N$ is on the sequence of $\N$, witnessing that $\k$ is a superstrong cardinal.\footnote{The details of this argument appear in \cite[Claim 4.1]{ESC}, \cite[Chapters 8.4 and 9]{SteelCom}, \cite{HMMSC} and \cite{LSA}. The issue in our case is that we are applying universality to a model built inside a backgrounded universe, and the extra complication arises from the use of Jensen  indexing.} 
\end{proof}

\section{The proof of \rthm{main theorem}}

We apply the work of the previous sections to prove \rthm{main theorem}.
Notice that the entirety of \textsection\ref{sec: kom models} is developed under the hypothesis of \rthm{main theorem}, as it used the terminology and notation introduced in \rnot{notation for kom model}.
Let $M$ be as in \rthm{main theorem}.
By \rcor{zf in c}, \[M\models {\sf{ZF}}+\text{ ``$\omega_1$ is supercompact."}\]

We now verify that for every $n<\omega$, \[M\models \Join^n_{\l}.\] 
It suffices to verify that the following hold in $M$, where  $\l=\xi^\infty$ and $\k=\Theta^{\c_{\l}}$:
\begin{enumerate}
\item the order type of the set $C=\{\tau\in [\k, \l]: \c_\l\models``\tau$ is a cardinal$"\}$ is at least $n+1$,
\item $\l<\Theta$ and $\l\in C$,
\item $\kappa$ is a regular member of the Solovay sequence,
\item letting $\langle \k_i: i\leq n \rangle$ be the first $n+1$ members of $C$ enumerated in increasing order,\footnote{Thus, $\k_0=\k$.} for every $i\leq n$, $\c^+_{\lambda}\models ``\k_i$ is a regular cardinal",
\item $\c_\lambda^{-} \cap \cP(\bR)
%= \c_\a\cap \powerset(\bR)
=\c_{\lambda}^+\cap \cP(\bR)=\Delta_\kappa$,
\item for all $i < n$, $\cP(\k_i^{\omega}) \cap \c_{\k_{i+1}}^- = \cP(\k_i^{\omega}) \cap \c^{+}_{\lambda}$,
\item for all $i+1\leq n$, $\cf(\k_{i+1}) \geq \kappa$. 
%\item $\kappa\leq \cf(\lambda)$.
\end{enumerate}
\rthm{cating the powerset} and \rcor{zf in c} imply (1).
The first part of (2) is a consequence of the fact that in \rthm{main theorem}, $\d<\eta$; the second part follows from \rthm{lambda is regular}.
(3) follows from \rthm{first prop1}, and (4) follows from \rthm{first prop1} and \rthm{strongs are successors}.
(5) and (6) follow from \rthm{first prop} and \rthm{easier first prop}.
\rthm{cof is kappa} implies (7). \hfill$\qed$

\bibliographystyle{plain}
\bibliography{References}

\bls

\noindent\address{Department of Philosophy, University of Pittsburgh,
Pittsburgh, PA 15260}

\noindent\email{dwb44@pitt.edu}

\bls

\noindent\address{Department of Mathematics, Miami University, Oxford, Ohio 45056, USA}

\noindent\email{larsonpb@miamioh.edu}

\bls

\noindent\address{IMPAN, Antoniego Abrahama 18, 81-825 Sopot, Poland.}

\noindent\email{gsargsyan@impan.pl}

\end{document}